\documentclass{amsart}
\usepackage{graphicx, amsmath, amsthm, amsfonts, amssymb, tikz-cd, multirow, mathtools, mathrsfs}
\usepackage{hyperref}
\hypersetup{colorlinks=true,
linkcolor = blue,
citecolor = black,
}

\newtheorem{theorem}{Theorem}
\newtheorem{lemma}[theorem]{Lemma}
\newtheorem{proposition}[theorem]{Proposition}
\newtheorem{definition}[theorem]{Definition}

\newtheorem{corollary}[theorem]{Corollary}
\theoremstyle{remark}
\newtheorem{remark}[theorem]{Remark}
\newtheorem{example}[theorem]{Example}
\numberwithin{theorem}{section}
\numberwithin{equation}{section}

\makeatletter
\newcommand*{\doublerightarrow}[2]{\mathrel{
  \settowidth{\@tempdima}{$\scriptstyle#1$}
  \settowidth{\@tempdimb}{$\scriptstyle#2$}
  \ifdim\@tempdimb>\@tempdima \@tempdima=\@tempdimb\fi
  \mathop{\vcenter{
    \offinterlineskip\ialign{\hbox to\dimexpr\@tempdima+1em{##}\cr
    \rightarrowfill\cr\noalign{\kern.5ex}
    \rightarrowfill\cr}}}\limits^{\!#1}_{\!#2}}}
\makeatother

\makeatletter
\newcommand*\bigcdot{\mathpalette\bigcdot@{.5}}
\newcommand*\bigcdot@[2]{\mathbin{\vcenter{\hbox{\scalebox{#2}{$\m@th#1\bullet$}}}}}
\makeatother

\begin{document}
\setcounter{page}{1}

\centerline{}

\centerline{}

\title[The singular Hitchin fibration]{The singular Hitchin fibration, cameral data, and representation theory}

\author[A. Fr\"{u}h]{Alexander Fr\"{u}h}
\email{axf237@bham.ac.uk}
\address{School of Mathematics, University of Birmingham, Birmingham, UK.}

\subjclass{Primary 14D20; Secondary 14D23, 14L35, 17B35}

\keywords{Higgs bundles, Sheets, Orbit method}

\thanks{The author acknowledges support from the EPSRC grant EP/V520275/1, and the projects ``Wobbly and Very stable Higgs bundles'' (Proyectos de generación del conocimiento ref. PID2023-147785NA-I00) and ``Real Calabi-Yau and Hitchin systems'' (Proyectos de Consolidación ref. CNS2022-136042), funded by MICIU/AEI/10.13039/501100011033 and NextGenerationEU/PRTR}

\begin{abstract}
    We consider the Hitchin fibration on the moduli stack of Higgs bundles with arbitrary reductive structure group, and study its singular locus using the centraliser of the Higgs field. We restrict to the case where the Higgs field has constant centraliser dimension, and describe a non-abelian structure on the corresponding locus in the moduli stack. On a class of components of this locus, we construct a factorisation of the Hitchin map through an abelianised fibration, and describe the abelianised fibres with a generalisation of the cameral data of Donagi and Gaitsgory. We apply our results to Hitchin fibrations for real groups, and we also determine a connection between the geometry of the singular Hitchin fibration and the representation theory of the Lie algebra via the orbit method.
\end{abstract}
\maketitle

\tableofcontents

\section{Introduction}

The moduli space of Higgs bundles on an algebraic curve has been a fruitful source of study since its introduction nearly 40 years ago \cite{Hitchin1}. It has found a wide range of applications, e.g. in non-abelian Hodge theory and higher Teichmüller theory \cite{Simpson1}, \cite{Hitchin3}, \cite{BIW}, mirror symmetry \cite{HT}, \cite{GWZ}, the geometric Langlands program \cite{BD}, \cite{KW}, \cite{DP}, and mathematical physics \cite{KW}, \cite{GMN}. An important feature for many of these applications is the Hitchin fibration, which gives the moduli space the structure of an algebraic integrable system \cite{Hitchin2}.

The smooth fibres of the Hitchin map can be identified with abelian varieties, and these can be described using data derived directly from the Higgs bundles. Such a description was first done using spectral data in the case where the structure group of the Higgs bundles is classical \cite{Hitchin2}. Later, a Lie-theoretic description using cameral data was given which applies to Higgs bundles with arbitrary reductive structure group \cite{Faltings}, \cite{Scognamillo}, \cite{DG}. This latter approach extends over the full Hitchin base to describe the fibres in the dense open subspace of regular Higgs bundles, i.e. those Higgs bundles whose Higgs fields have minimal centraliser dimension when viewed as twisted Lie algebra-valued functions. 

Away from the regular locus, the description of the Hitchin fibration is much less complete. Some geometric properties and strong topological results for the full Hitchin fibration associated to an arbitrary reductive group have recently been established \cite{dCFFHM}, using a generalisation of the support theorem of \cite{Ngo2} and following similar results in the cases for $GL_n$ and $SL_n$ \cite{CL}, \cite{deCataldo}, \cite{MS}. The geometry of certain types of singular fibres has been described more explicitly using normalisations of spectral curves \cite{Hitchin4}, \cite{Horn1}, \cite{Horn2}, \cite{FraPN}, although in all of these cases, the corresponding Higgs bundles are generically regular. There are extensions of spectral data to the full Hitchin base for some of the classical groups \cite{BNR}, \cite{Schaub}, \cite{Carbone}, and an extension of cameral data has been constructed in the context of abstract Higgs bundles \cite{Pantev}. Nevertheless, the geometry of the singular Hitchin fibres remains mysterious.

Meanwhile, non-abelian versions of spectral data have appeared for Higgs bundles associated with non-quasi-split real groups \cite{HS}, \cite{Branco}, \cite{BS}, \cite{BBS}. The moduli space of Higgs bundles associated to such a group $G_\mathbb{R}$ sits entirely within the singular locus of the Hitchin fibration for the complexification $G$. There is a notion of regularity for $G_\mathbb{R}$-Higgs bundles, which in this case does not match with the regularity for $G$-Higgs bundles, but which still corresponds to the centraliser dimension of the Higgs field taking a constant value. The non-abelian structure has been explained Lie-theoretically \cite{GPPN}, \cite{HM}, and can be incorporated into a broader framework of generalised Hitchin fibrations \cite{Ngo3}. Cameral data is known for Hitchin fibrations for quasi-split real groups \cite{GPPN}, but is not yet known in the non-quasi-split cases.

The nonabelianisation phenomenon suggests that one way to systematically study the singular locus in the Hitchin fibration is by controlling the behaviour of the centraliser of the Higgs field. The first step for such an approach is to consider the case where the Higgs field has constant centraliser dimension, as occurs for regular $G_\mathbb{R}$-Higgs bundles. This is the focus of this paper.

We describe the restriction of the Hitchin fibration to the locus $\mathcal M^d$ parametrising $G$-Higgs bundles whose Higgs field has constant fixed centraliser dimension $d \in \mathbb{N}$. If $d$ is equal to the rank $r$ of $G$, then $\mathcal M^d$ is the locus of regular $G$-Higgs bundles, but, if $d> r$, $\mathcal M^d$ is contained deeply in the singular locus of the fibration. Nonetheless, the condition on the centraliser dimension ensures that the geometry of the Hitchin fibration on $\mathcal M^d$ can be studied via the geometry of the adjoint action of $G$ on its Lie algebra. We show under mild conditions that $\mathcal M^d$ has a non-abelian structure providing a new example of a generalised Hitchin fibration.

We also give a partial cameral description for the Hitchin fibration over certain components of $\mathcal M^d$; if $G$ is classical, this description applies over the locus of generically semisimple $G$-Higgs bundles in $\mathcal M^d$. The cameral data correspond to points in an abelian fibration which defines an ``abelianisation'' of the non-abelian structure on $\mathcal M^d$. As an application, we give a uniform cameral description for the abelian part of the Hitchin fibration associated to an arbitrary real group, extending the description in the quasi-split case.

In the examples we have calculated, the abelianised fibration can itself be described as a Hitchin fibration, up to a finite quotient. Moreover, in these examples, the abelianisation map extends to describe factorisations of full singular Hitchin fibres; we anticipate that this will allow properties of singular fibres to be lifted from the properties of the abelianised fibres.

As part of the local theory required to describe $\mathcal M^d$, we have proven auxiliary results on the geometry and Grothendieck-Springer theory of sheets. We have also observed explicit connections between the geometry of $\mathcal M^d$ and the representation theory of the Lie algebra of $G$ via the orbit method; as a corollary we show that two apparently distinct notions of multiplicity that arise in the latter context satisfy an asymptotic relationship. Since these results may be of separate interest, we have collated them in Section \ref{AdjointQuotientKatsyloscn}, and it is intended that this may be read largely independently from the rest of the paper.

We give some background and a detailed overview of the paper below.

\addtocontents{toc}{\protect\setcounter{tocdepth}{1}}

\subsection*{Generalised Hitchin fibrations} 

We give an outline of the framework of \emph{generalised Hitchin fibrations} of Morrissey and Ngô (as surveyed in \cite{Ngo3}) to motivate the statements of our results. For simplicity, throughout this work we will exclusively consider the case where the ground field is $\mathbb{C}$.

We first consider the usual Hitchin system. We denote by $\mathcal M$ the moduli stack of $G$-Higgs bundles on $\Sigma$, for a complex reductive group $G$ (with Lie algebra $\mathfrak{g}$) and a smooth projective curve $\Sigma$. In the formulation of \cite{Ngo1}, this is viewed as the stack of maps from the curve $\Sigma$ to a twist of the adjoint quotient stack $[\mathfrak{g}/G]$. The Hitchin map $h:\mathcal M \rightarrow \mathcal A$ is then viewed as the global version of the map $\chi_G:[\mathfrak{g}/G] \rightarrow \mathfrak{g}//G = \mathfrak{c}$ induced by the Chevalley restriction morphism. The abelianisation phenomenon for regular Higgs bundles can be seen as a consequence of the gerbe structure of $\chi_G$ when restricted to the regular locus $[\mathfrak{g}^{reg}/G]$, where $\mathfrak{g}^{reg}$ is the open subset of regular elements in $\mathfrak{g}$.

A number of the properties of the Hitchin fibration can be abstracted by replacing the adjoint action with an arbitrary action of $G$ on a normal affine variety $V$. In particular, we can define a stack of generalised Higgs bundles ${\bf M}(V)$, and a generalised Hitchin map ${\bf h}_V: {\bf M}(V) \rightarrow {\bf A}(V)$ as a global version of the affinisation map $\chi_{V,G}:[V/G] \rightarrow V//G$. We denote by $V^{reg}$ the locus of regular elements, i.e. those with minimal centraliser dimension. Unlike the special case of the adjoint action, the restriction $\chi_{V,G}:[V^{reg}/G] \rightarrow V//G$ may fail to be a gerbe. However, given a flat open subgroup scheme $\mathcal I^{fl}$ of the group scheme $\mathcal I^{reg}$ of centralisers on $V^{reg}$, one can construct a Deligne-Mumford stack $\mathcal B$, \emph{the regular quotient}, and a factorisation of $\chi_{V,G}$ through a gerbe $\rho_{V,G}:[V^{reg}/G] \rightarrow \mathcal B$. As a consequence, the fibres of $h_V$ on the regular locus $ {\bf M}^{reg}(V)$ can be described using stacks of torsors on $\Sigma$ whose structure group is determined by $\mathcal I^{fl}$.

One of the primary questions for generalised Hitchin fibrations is whether they can be described by cameral data. We sketch a simplified version of this for the usual Hitchin fibration. For any point $a \in \mathcal A$ in the Hitchin base, there is a finite cover $\check{\Sigma}_a$ of the curve $\Sigma$, the cameral curve associated to $a \in \mathcal A$, which admits an action of the Weyl group $W$ over $\Sigma$. We fix a Cartan subgroup $T$ for $G$, and denote by $\hat{\mathcal P}_a$ the stack of strongly $W$-equivariant $T$-bundles $\mathcal T$ with the following property:
\begin{itemize}
    \item[$(*)$] for any ramification point $x \in \check{\Sigma}_a$ of the cover $\check{\Sigma}_a \rightarrow \Sigma$, there is an isomorphism $\mathcal T_x \cong T$ of $T$-torsors which is equivariant with respect to the action of $Stab_W(x)$.
\end{itemize}
Then, for a generic choice of $a \in  \mathcal A$, the Hitchin fibre $h^{-1}(a)$ can be identified with $\hat{\mathcal P}_a$ up to isogeny (note that a substantially more precise statement is possible \cite[Theorem 6.4]{DG}).

This can be derived as a consequence of the structure of the centraliser group scheme $\mathcal I^{reg}$. Let $\mathfrak{t}$ be the Lie algebra of $T$, and recall that $\mathfrak{t}/W$ can be identified with the Chevalley base $\mathfrak{c}$. The Weil restriction $\Pi$ of the torus $T$ under the induced map $\mathfrak{t} \rightarrow \mathfrak{c}$ inherits a $W$-action, and the $W$-fixed points define a group scheme $\hat{\mathcal J}$ on $\mathfrak{c}$. Then, there is a canonical open embedding of group schemes 
    \begin{equation}\label{RegCameralMoreqn}
        \kappa:\mathcal I_{\mathfrak{c}} \hookrightarrow \hat{\mathcal J}
    \end{equation}
which is generically an isomorphism \cite[Proposition 12.5]{DG} (again, a stronger statement describing the image of the map $\kappa$ is possible \cite[Theorem 11.6]{DG}).

\subsection*{The non-abelian structure of \texorpdfstring{$\mathcal M^d$}{Md} (Sections \ref{Sheetsscn}, \ref{AdjointQscn}, \ref{SheetHiggssbn} and \ref{SHitchinsbn})} 
For any $d \in \mathbb{N}$, we now consider the substack $\mathcal M^d$ of $\mathcal M$ of $G$-Higgs bundles on $\Sigma$ whose Higgs field has centraliser of dimension $d$ at every point of $\Sigma$. The locus $\mathfrak{g}_d \subseteq \mathfrak{g}$ of elements with centraliser dimension $d$ is not necessarily irreducible; its irreducible components are known as sheets. The decomposition $$\mathfrak{g}_d = \bigcup_{S \in Irr(\mathfrak{g}_d)}S$$ defines a decomposition of $\mathcal M^d$ into closed substacks $\mathcal M_S$. It is more natural to describe the restriction of the Hitchin fibration to each of the substacks $\mathcal M_S$ than to directly study the fibration on $\mathcal M^d$; in particular $\mathcal M_S$ can be viewed as the regular locus for a generalised Hitchin system if the sheet $S$ is normal. Throughout, we will make the simplifying assumption that $S$ is in fact non-singular; this is not a particularly restrictive condition, as it includes all sheets in classical Lie algebras \cite{ImHof}, and most of the sheets in the exceptional Lie algebras \cite{Bulois}.

The geometry of sheets and their quotients under the restriction of the adjoint action is already well-studied. Their initial motivation came from the study of the primitive ideals of the universal enveloping algebra $\mathcal U(\mathfrak{g})$ \cite{Dixmier4}, \cite{BJ}, \cite{Borho1}, \cite{BK}. More recently, there has been renewed interest in sheets, and the related notion of birational sheets, for their connection to the representation theory of finite $W$-algebras and their role in the orbit method \cite{Premet3}, \cite{PT}, \cite{Losev4}, \cite{Topley}.

The centraliser group scheme $\mathcal I_S$ on $S$ is non-abelian whenever $S$ is not the regular sheet, and can fail to be flat as an $S$-scheme (see Example \ref{Sp4Sheetexm} and Corollary \ref{CentraliserKatsylocor}). Nonetheless, we show that $\mathcal I_S$ contains a maximal smooth normal subgroup scheme $\mathcal I_S^{sm}$ of finite index. This determines a canonical choice for the regular quotient $\mathcal B$ of \cite{Ngo3}, and thus a factorisation of the Chevalley map on $S$ through a morphism $\rho_S:S \rightarrow \mathcal B$, which we call the \emph{$S$-Chevalley map}.

The regular quotient $\mathcal B$ is a smooth Deligne-Mumford stack whose coarse moduli space is the geometric quotient $\mathfrak{c}_S$ for the $G$-action on the sheet $S$. Moreover, it can be described explicitly as a stack quotient of an affine space $\tilde{\mathfrak{c}}_S$ by a certain finite group $F$ associated to $S$. The group $F$, which we call the \emph{Katsylo group}, also plays a role in other aspects of the geometry of the sheet, and we give a number of different descriptions of it in this paper (see e.g. Definition \ref{KatsyloGpdef}, Remark \ref{NamikawaWeylGprmk}, and Theorem \ref{KatsylogpPolarisationthm}).

The map $\rho_S$ induces a factorisation of the Hitchin map on $\mathcal M_S$ as
\begin{equation}\label{IntroSHitchineqn}
\begin{tikzcd}
\mathcal M_S \arrow[r, "h_S"] & \mathcal A_S \arrow[r, "\tilde{\mu}_S"] & \mathcal A.
\end{tikzcd}
\end{equation}
Here, $\mathcal A_S$ is a smooth Deligne-Mumford stack; moreover, $\mathcal A_S$ contains a distinguished connected component $\mathcal A^0_S$ which is a stack quotient of an affine space $\tilde{\mathcal A}_S^0$ by an action of the Katsylo group $F$. The map $\tilde{\mu}_S$ is quasi-finite, and is generically injective on $\mathcal A^0_S$.

For any Higgs bundle $(E, \Phi)$ representing a $\mathbb{C}$-point of $\mathcal M_S$, the group scheme $\mathcal I_{(E,\Phi)}$ of local automorphisms of $(E,\Phi)$ over $\Sigma$ contains a smooth normal subgroup scheme $\mathcal I^{sm}_{(E,\Phi)}$ induced by the subgroup scheme $\mathcal I_S^{sm}$ of $\mathcal I_S$.

\begin{theorem}[(Theorem \ref{HiggsNonAbnthm})]\label{IntroNonAbnthm}
    Assume that $S$ is a non-singular sheet. For any $\mathbb{C}$-point $\tau$ of $\mathcal A_S$ and any $\mathbb{C}$-point $(E,\Phi)$ of the fibre $h_S^{-1}(\tau)$, there is an identification of $h^{-1}_S(\tau)$ with the stack ${\bf B}_\Sigma \mathcal I^{sm}_{(E,\Phi)}$ of $\mathcal I^{sm}_{(E,\Phi)}$-torsors on $\Sigma$.
\end{theorem}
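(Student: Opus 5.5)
The plan is to follow the standard argument for the regular locus of a generalised Hitchin fibration, as in \cite{Ngo3}, with the sheet $S$ in place of $\mathfrak{g}^{reg}$. The key local input is that the $S$-Chevalley map $\rho_S:[S/G]\rightarrow \mathcal B$ is a gerbe banded by the smooth normal subgroup scheme $\mathcal I^{sm}_S$. First I establish this local statement: $\rho_S$ is smooth and surjective, and the relative inertia of $[S/G]$ over $\mathcal B$ is exactly $\mathcal I^{sm}_S$, descended to $[S/G]$.

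For this I use that $S$ is non-singular, so that $\mathcal I_S$ has constant fibre dimension $d$. Its maximal smooth normal subgroup $\mathcal I^{sm}_S$ has finite index, and $\mathcal B=[\tilde{\mathfrak{c}}_S/F]$ is built as the quotient of $[S/G]$ that rigidifies away exactly $\mathcal I^{sm}_S$. Any two points of $S$ with the same image in $\mathcal B$ should then be étale-locally $G$-conjugate. I would prove this by lifting to $\tilde{\mathfrak{c}}_S$: the fibres of $S\rightarrow \tilde{\mathfrak{c}}_S$ are single $G$-orbits, using Katsylo's description of the geometric quotient together with the construction of $F$. Flatness of $\mathcal I^{sm}_S$, coming from smoothness, gives the gerbe property by the usual criterion.

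The global step goes as follows. Twist everything by the $\mathbb{G}_m$-torsor of the line bundle $L$ defining the Higgs fields. A point of $\mathcal M_S$ over $\tau\in\mathcal A_S(\mathbb{C})$ is then a section $\Sigma\rightarrow [S/G\times\mathbb{G}_m]_L$ lifting the fixed section $\tau:\Sigma\rightarrow \mathcal B_L$. Since $[S/G]_L\rightarrow \mathcal B_L$ is a gerbe banded by $\mathcal I^{sm}_S$, the stack of lifts of $\tau$ is a torsor under ${\bf B}_\Sigma(\tau^*\mathcal I^{sm})$; more precisely, it is a pseudo-torsor, and it is nonempty once a point is given. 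The given point $(E,\Phi)$ trivialises this torsor. Pulling back along $(E,\Phi)$ identifies $\tau^*\mathcal I^{sm}$ with $\mathcal I^{sm}_{(E,\Phi)}$. The identification is then the map sending an $\mathcal I^{sm}_{(E,\Phi)}$-torsor $P$ to the twist $(E,\Phi)\times^{\mathcal I^{sm}_{(E,\Phi)}}P$, and conversely sending $(E',\Phi')$ to the sheaf of local isomorphisms $(E,\Phi)\rightarrow(E',\Phi')$ compatible with the $\mathcal B$-structure.

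The step I expect to be the main obstacle is the local gerbe statement, and in particular showing that local isomorphisms between two Higgs bundles over the same $\tau$ form an $\mathcal I^{sm}$-torsor rather than an $\mathcal I_S$-torsor. Fibres of $S\rightarrow\mathfrak{c}_S$ are single orbits, but $\mathcal I_S$ may be non-flat and differs from $\mathcal I^{sm}_S$ by finite component groups. These component groups are what is absorbed into the $F$-stack structure of $\mathcal B$. To handle this I would use the conjugation action of $\mathcal I_S$ on $\mathcal I^{sm}_S$ together with the construction of $\mathcal B$ as the rigidification, and check the statement étale-locally on $\tilde{\mathfrak{c}}_S$, using the smoothness of $S\rightarrow\tilde{\mathfrak{c}}_S$ to obtain étale-local sections.
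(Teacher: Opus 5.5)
Your strategy is the paper's. In the paper, $\mathcal B$ is by definition the rigidification of $[S/G]$ by $\mathcal I^{sm}_{S,G}$, and equivariantly of $[S/G\times\mathbb{G}_m]$; the equivariant version needs the scaling action to preserve $\mathcal I^{sm}_S$, which is Proposition \ref{SmCentGmEquivprp}. By \cite[Theorem A.1, Remark A.2]{AOV}, $\rho_S$ is then a gerbe with relative inertia $\mathcal I^{sm}$, and one gets the Cartesian square of Corollary \ref{GmEquivStackyCBcor}. Applying that square to $f_{(E,\Phi)}:\Sigma\to[S/G\times\mathbb{G}_m]$ and taking sections over $\Sigma$ is the whole proof. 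Your $\underline{\mathrm{Isom}}$/contracted-product formula is exactly the resulting equivalence. So the ``main obstacle'' you name is not really one. That the sheaf of isomorphisms over the given identification with $\tau$ is an $\mathcal I^{sm}$-torsor, rather than an $\mathcal I_S$-torsor, is precisely what rigidifying by a flat normal subgroup of the inertia produces. No further local analysis is needed.

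Two auxiliary claims in your sketch are false and should be dropped rather than proved.

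First, when $F\neq 1$ there is no $G$-invariant map $S\to\tilde{\mathfrak c}_S$ compatible with $\chi_S$. Such a map would factor through the geometric quotient $\chi_S$ and so split the degree-$|F|$ quotient $\tilde{\mathfrak c}_S\to\tilde{\mathfrak c}_S/F=\mathfrak c_S$. Concretely, in Example \ref{Sp4Sheetexm}, $x_t$ and $x_{-t}=Ad_s(x_t)$ lie in one $G$-orbit but are distinct points of $\mathfrak K\cong\tilde{\mathfrak c}_S$. What does map smoothly to $\tilde{\mathfrak c}_S$ is $S\times_{\mathcal B}\mathfrak K$, an $F$-torsor over $S$; this is the base change of the \'etale atlas $\mathfrak K\to\mathcal B$ of Corollary \ref{GerbeTrivncor}.

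Second, $\mathcal I^{sm}_S$ is a group over $[S/G]$, not over $\mathcal B$. Because it is non-commutative, conjugation by the relative inertia prevents it from descending along $\rho_S$ (cf.\ Remark \ref{StackyChevalleyMaprmk}). Hence ``banded by $\mathcal I^{sm}_S$'', ``$\tau^*\mathcal I^{sm}$'' and ``torsor under ${\bf B}_\Sigma(\tau^*\mathcal I^{sm})$'' have no meaning. Also, ${\bf B}_\Sigma$ of a non-commutative group scheme is not a group stack. For example, take the zero sheet of a semisimple $\mathfrak g$. There $\mathcal B$ is a point and the fibre is the stack of $G$-bundles on $\Sigma$. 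The group $\mathcal I^{sm}_{(E,0)}=E\times^{G}G$ (with $G$ acting by conjugation) depends on $E$, not only on $\tau$. This is why the statement is pinned to the chosen $(E,\Phi)$: only $f_{(E,\Phi)}^*\mathcal I^{sm}$ exists, and the fibre is ${\bf B}_\Sigma$ of it via that base point. Your final formula already does exactly this.
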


We also give a global version of this statement over the component $\mathcal A^0_S$ by constructing a generalisation of the Hitchin section (Theorem \ref{GlobalNonabnthm}).

\subsection*{Cameral data and abelianisation (Sections \ref{Abelianisationscn} and \ref{IntAbFibrnsbn})}
In order to generalise the cameral description from the regular case, we first consider how to generalise the map $\kappa$ of (\ref{RegCameralMoreqn}). We consider this only when $S$ is a Dixmier sheet; i.e. $S$ contains a dense locus of semisimple elements. In this case the centraliser of any semisimple element in $S$ is conjugate to some Levi subgroup $L$ of $G$. If $\mathfrak{z}$ denotes the centre of $Lie(L)$, and $W_L = N_G(L)/L$ denotes the relative Weyl group, the geometric quotient $\mathfrak{c}_S$ of $S$ by the $G$-action can be identified with $\mathfrak{z}/W_L$.

There are obstacles to a cameral description for the group scheme $\mathcal I^{sm}_S$ in general. In particular, even for $G = GL_n$, the obvious generalisation of $\hat{\mathcal J}$ will not always be suitable, as there are examples of Levi subgroups for which $W_L$ is trivial (this can be compared with \cite[Example 4.2]{HM}). Despite this, it is possible to produce a generalised version of the map $\kappa$ defined in (\ref{RegCameralMoreqn}) by regarding it instead as an abelianisation map; this is natural from its construction which factors through the abelianisation of the universal Borel subgroup. We construct our abelianisation map $\kappa_S$, which we call the \emph{cameral homomorphism}, over the sheet $S$ instead of the regular quotient $\mathcal B$, using the generalised Grothendieck-Springer theory of sheets \cite{BB}, \cite{Broer1}. While it no longer makes sense to ask whether $\kappa_S$ is an open embedding in general, we show that $\kappa_S$ is smooth in the case that $G$ is a classical group.

We now assume that the abelianisation map $\kappa_S$ is smooth. Then, there is a factorisation of the $S$-Hitchin map as
\begin{equation}
    \begin{tikzcd}
        \mathcal M_S \arrow[r, "Ab_S"] & \mathcal M^{ab}_S \arrow[r, "h_S^{ab}"] & \mathcal A_S.
    \end{tikzcd} 
\end{equation}
We consider $h_S^{ab}:\mathcal M_S^{ab}\rightarrow \mathcal A_S$ to be the abelianisation of the $S$-Hitchin map. For any $\mathbb{C}$-point $\tau$ of $\mathcal A_S$, we can define a commutative group stack $\mathcal P_{S,\tau}$ over $\mathbb{C}$ using the morphism $\kappa_S$; and if we restrict to $\mathcal A_S^0$ this defines a commutative group stack $\mathcal P_S^0 \rightarrow \mathcal A_S^0$ by varying $\tau$ over $\mathcal A_S^0$. We denote by $\mathcal M_S^{ab,0}$ the restriction of $\mathcal M_S^{ab}$ to $\mathcal A_S^0$.

\begin{theorem}[(Proposition \ref{AbnisedSHitchinFibresprp}, Theorems \ref{SPicardTorsthm} and \ref{GlobalSHiggsAbnthm})]\label{IntroNonabnabnthm}
    Assume that $S$ is a non-singular Dixmier sheet in $\mathfrak{g}$ such that the cameral homomorphism $\kappa_S$ is smooth. For any $\mathbb{C}$-point $\tau$ of $\mathcal A_S$ such that the fibre $(h_S^{ab})^{-1}(\tau)$ is non-empty, $(h_S^{ab})^{-1}(\tau)$ is (non-canonically) isomorphic to $\mathcal P_{S,\tau}$.

    The stack $\mathcal M_S^{ab,0}$ is a torsor for an action of $\mathcal P_S^0$ over $\mathcal A_S^0$, which can be trivialised on the cover $\tilde{\mathcal A}_S^0$.
\end{theorem}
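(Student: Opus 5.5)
The strategy is to treat $h^{ab}_S$ as an abelian analogue of the non-abelian picture in Theorem \ref{IntroNonAbnthm}. The role of the smooth centraliser scheme $\mathcal I^{sm}_S$ is played by its image under $\kappa_S$. This image is a smooth commutative group scheme; call it $\mathcal J_S$. Since $\kappa_S$ is built from the generalised Grothendieck--Springer resolution, it is defined on $S$. The first step is to descend $\mathcal J_S$ along the $S$-Chevalley map $\rho_S:S\to\mathcal B$. For this I would use that $\mathcal J_S$ is commutative and that $\kappa_S$ is $G$-equivariant. Then the conjugation action of $\mathcal I^{sm}_S$ on its own abelianisation is trivial, so the standard gerbe-descent argument applies: since $\rho_S$ is an $\mathcal I^{sm}_S$-gerbe, commutative band data descend. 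The result is a smooth commutative group scheme $\mathcal J_{\mathcal B}$ on $\mathcal B$ with $\rho_S^*\mathcal J_{\mathcal B}\cong \mathcal J_S$. For a point $\tau:\Sigma\to \mathcal B$ (suitably twisted by the line bundle), I then define $\mathcal P_{S,\tau}={\bf B}_\Sigma(\tau^*\mathcal J_{\mathcal B})$. This is a commutative group stack, and it varies in a family over $\mathcal A_S^0$ to give $\mathcal P^0_S$.

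Next I construct the factorisation. The stack $\mathcal M^{ab}_S$ is obtained from $\mathcal M_S$ by pushing out the torsor structure along $\kappa_S$. By Theorem \ref{IntroNonAbnthm}, a Higgs bundle over $\tau$ amounts to an $\mathcal I^{sm}_{(E,\Phi)}$-torsor on $\Sigma$. Concretely, one takes the gerbe $[S/G]\to\mathcal B$ and forms its rigidification/abelianisation, i.e. the $\mathcal J_{\mathcal B}$-gerbe induced via $\kappa_S$. Mapping $\Sigma$ into the twisted version of this gerbe gives $\mathcal M^{ab}_S$ together with the maps $Ab_S$ and $h^{ab}_S$. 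The fibre $(h^{ab}_S)^{-1}(\tau)$ is then the stack of sections over $\Sigma$ of a $\tau^*\mathcal J_{\mathcal B}$-gerbe. The set of sections of such a gerbe is either empty or, once a section is chosen, a torsor under ${\bf B}_\Sigma(\tau^*\mathcal J_{\mathcal B})$, and the choice of section gives the non-canonical isomorphism with $\mathcal P_{S,\tau}$. This proves the first claim, which is the content of Proposition \ref{AbnisedSHitchinFibresprp}.

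For the global statement over $\mathcal A_S^0$, the same gerbe argument in families shows that $\mathcal M^{ab,0}_S$ is a pseudo-torsor for $\mathcal P^0_S$. It therefore suffices to produce a section after pulling back to $\tilde{\mathcal A}^0_S$. For this I would use the generalised Hitchin section of Theorem \ref{GlobalNonabnthm}. It is defined on the affine cover $\tilde{\mathcal A}^0_S$, presumably from a Kostant-type slice $\tilde{\mathfrak c}_S\to S$ lifting $\rho_S$. Composing it with $Ab_S$ gives a section of $\mathcal M^{ab,0}_S\times_{\mathcal A^0_S}\tilde{\mathcal A}^0_S$, which shows both that the pseudo-torsor is non-empty and that it is trivialised on the cover. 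Finally, flatness and smoothness of $\mathcal J_{\mathcal B}$ give that the action map is an isomorphism fppf-locally, so the pseudo-torsor is a genuine torsor.

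The main obstacle I expect is the descent step: verifying that $\mathcal J_S$, and hence $\kappa_S$, is invariant enough to descend along $\rho_S$. The stabilisers of $\mathcal B$ involve the Katsylo group $F$, and these must act trivially on the relevant fibres of $\mathcal J_S$; equivalently, the band of the gerbe must map to an abelian group on which inner automorphisms act trivially. My first attempt would be to check this on the dense semisimple locus of the Dixmier sheet. There the centraliser is the Levi subgroup $L$ and $\kappa_S$ is the map to $L/[L,L]$. I would then extend across $S$ using smoothness of $\kappa_S$ together with normality of $S$, arguing that a homomorphism of smooth group schemes agreeing on a dense open set agrees everywhere.
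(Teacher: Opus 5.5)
Your proposal is correct and is essentially the paper's argument. You rigidify $[S/G]$ by the descended kernel of $\kappa_S$ to get a gerbe over $\mathcal B$ banded by the descended image $\mathcal J_S$, and read off the fibres as ${\bf B}_\Sigma\mathcal J_\tau$. Non-emptiness and the trivialisation over $\tilde{\mathcal A}^0_S=H^0(\Sigma,\mathfrak K_{\mathcal L})$ then come from the Hitchin--Katsylo multisection, exactly as in Ng\^o's Proposition 4.3.3; that multisection is built from the Katsylo slice $\mathfrak K\cong\tilde{\mathfrak c}_S$ and needs a choice of $\mathcal L^{1/2}$, which you should state. The only real difference is in the descent step: the paper descends $\mathcal J_S$ as an open subgroup of $\rho_S^*\hat{\mathcal J}_S$, which is already pulled back from $\mathcal B$, so it needs only $G$-equivariance.

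Your ``main obstacle'' is not actually an obstacle, and it is partly misstated. Only the relative inertia $\mathcal I^{sm}_S$ must act trivially on $\mathcal J_S$. That is automatic, since $h\cdot\kappa_S(g)=\kappa_S(hgh^{-1})=\kappa_S(g)$ for $h\in\mathcal I^{sm}_S$, so no density argument is needed. By contrast, the $F$-stabilisers of $\mathcal B$ need not act trivially, and in general they do not. For $S_{Dix}\subset\mathfrak{sp}_4$ the generator of $F$ acts by inversion on $\bar Z=\mathbb G_m$ over $p(0)$. This is harmless for a group scheme on a Deligne--Mumford stack.
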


For any $\mathbb{C}$-point $\tau$ of $\mathcal A_S$, we can define a cover $\hat\Sigma_\tau \rightarrow \Sigma$ which we call the \emph{$S$-cameral curve}; generically, this is the normalisation of the reduced subscheme of the usual cameral curve $\check{\Sigma}_{\tilde{\mu}_S(\tau)}$. We denote by $\bar{Z}$ the abelianisation of the Levi subgroup $L$, and let $\hat{\mathcal P}_{S,\tau}$ be the stack of $W_L$-equivariant $\bar{Z}$-bundles $\mathcal Z$ satisfying the analogue of the property $(*)$ noted above for the regular case (see Theorem \ref{SCameralDatathm} for the precise statement).

\begin{theorem}[(Lemma \ref{PicardStackIsogenylem}, Proposition \ref{SCameralBunprp}, Theorem \ref{SCameralDatathm})]\label{IntroSCamDatathm}
    Assume that $S$ is a non-singular Dixmier sheet in $\mathfrak{g}$ such that the cameral homomorphism $\kappa_S$ is smooth. For a generic choice of $\mathbb{C}$-point $\tau$ of $\mathcal A_S^0$, there is an isogeny $\mathcal P_{S,\tau} \rightarrow \hat{\mathcal P}_{S,\tau}$ of commutative group stacks; thus, there is a finite essentially surjective map $(h_S^{ab})^{-1}(\tau) \rightarrow \hat{\mathcal P}_{S,\tau}$.
\end{theorem}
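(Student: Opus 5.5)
The plan mirrors the regular case of \cite{DG}: first establish a local statement about group schemes over the base, then globalise to stacks of torsors on $\Sigma$. Fix a generic $\mathbb{C}$-point $\tau$ of $\mathcal A_S^0$. After pulling back to $\tilde{\mathcal A}^0_S$ we may regard $\tau$ as a map $\tau:\Sigma\to \tilde{\mathfrak{c}}_S\times^{\mathbb{G}_m}\mathcal L$ (suitably twisted), where $\tilde{\mathfrak{c}}_S\to\mathfrak{c}_S=\mathfrak{z}/W_L$ is the Katsylo cover. By construction $\mathcal P_{S,\tau}$ is the Picard stack of torsors under the smooth commutative group scheme $J_\tau:=\tau^*\mathcal J_S$ on $\Sigma$. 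Here $\mathcal J_S$ is the abelian group scheme obtained from $\mathcal I_S^{sm}$ through the cameral homomorphism $\kappa_S$, which descends to $\mathcal B$ because $\kappa_S$ is smooth.

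The target should be described the same way. Let $\pi:\mathfrak{z}\to\mathfrak{z}/W_L$, form the Weil restriction $\Pi_S=\pi_*(\bar Z\times\mathfrak{z})$, and let $\hat{\mathcal J}_S\subseteq\Pi_S^{W_L}$ be the subgroup scheme cut out by the analogue of condition $(*)$ at points with nontrivial stabiliser in $W_L$. Since the $S$-cameral curve $\hat\Sigma_\tau$ is the pullback of $\mathfrak{z}\to\mathfrak{z}/W_L$ along $\tau$ (generically normal, by the genericity of $\tau$), the same argument as \cite[\S 6]{DG} should identify $\hat{\mathcal P}_{S,\tau}$ with ${\bf B}_\Sigma(\tau^*\hat{\mathcal J}_S)$. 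This is the content I expect Proposition \ref{SCameralBunprp} to provide.

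The key step, in the role of Lemma \ref{PicardStackIsogenylem}, is to construct a homomorphism $\mathcal J_S\to\hat{\mathcal J}_S$ over the base and show it is an isogeny over a dense open set, i.e. that it has finite kernel and cokernel over the generic point and over codimension-one points. On the semisimple locus of $S$ this is direct. For $x$ semisimple in $S$ with $Z_G(x)=L$, the centraliser is $L$, its smooth normal part is $L$ or $L^\circ$, and $\kappa_S$ sends it to $\bar Z$ through $L\to L/[L,L]$. Over the unramified locus of $\pi$ the point $\tau$ has $|W_L|$ lifts, and the $W_L$-invariants of $\Pi_S$ recover a single copy of $\bar Z$. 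The map is therefore surjective with kernel $[L,L]$; after abelianisation this leaves only a finite discrepancy coming from $\pi_0$ and from the Katsylo group $F$. Compatibility with $(*)$ should follow from the construction of $\kappa_S$ through the parabolic Grothendieck--Springer resolution $G\times^P(\mathfrak{z}+\mathfrak{n})\to S$ \cite{BB}, \cite{Broer1}: near a point fixed by $w\in W_L$, the image lands in elements fixed by $w$.

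Given this, the induced map on stacks of torsors ${\bf B}_\Sigma J_\tau\to{\bf B}_\Sigma\hat J_\tau$ has finite kernel and cokernel, hence is an isogeny. This uses the long exact sequence in $H^0$ and $H^1$ and the fact that a homomorphism of smooth commutative group schemes which is an isomorphism on a dense open set, with finite discrepancy at finitely many points, changes $H^1$ by finite groups. Combining this with the torsor statement of Theorem \ref{IntroNonabnthm} (its abelianised form, Proposition \ref{AbnisedSHitchinFibresprp}), which identifies $(h_S^{ab})^{-1}(\tau)\cong\mathcal P_{S,\tau}$ non-canonically, yields the finite essentially surjective map to $\hat{\mathcal P}_{S,\tau}$.

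I expect the main obstacle to be the behaviour at ramification points of $\hat\Sigma_\tau\to\Sigma$, where semisimple elements degenerate to non-semisimple elements of $S$. There one must show that the image of $\kappa_S$ is of finite index in the stalk of $\hat{\mathcal J}_S$, and not merely contained in it. My first attempt would be a dimension count: $\kappa_S$ is smooth and both group schemes have relative dimension $\dim\bar Z$, so the image is open. Finiteness of the index then reduces to component groups, which stay finite because $\tau$ is generic and the curve meets only the codimension-one strata of the discriminant.
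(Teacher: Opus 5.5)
Your overall scheme matches the paper's: identify $(h_S^{ab})^{-1}(\tau)$ with ${\bf B}_\Sigma\mathcal J_\tau$ (Proposition \ref{AbnisedSHitchinFibresprp}), compare $\mathcal J_\tau$ with the pseudo-cameral group through the $H^0$/$H^1$ sequence, and describe torsors under the latter as equivariant $\bar Z$-torsors as in \cite[Proposition 16.4]{DG}. The gap is that you build the wrong target. The error matters exactly when the Katsylo group $F$ is non-trivial, which is the new case this paper handles.

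You take the Weil restriction along the coarse quotient $\pi:\mathfrak z\to\mathfrak z/W_L=\mathfrak c_S$, and you claim that $\hat\Sigma_\tau$ is the $\pi$-pullback of $\tau$ and is generically normal. In the paper, $\Pi_S$ and $\hat\Sigma_\tau$ are defined using $p:\mathfrak z\to\mathcal B=[\tilde{\mathfrak c}_S/F]$, that is, $\mathfrak z\to\mathfrak z/W_S$ followed by an $F$-torsor. The curve $\hat\Sigma_\tau$ maps finitely and birationally onto (the reduction of) your $\pi$-pullback. That map fails to be injective exactly over the points where the lift $\tilde\tau$ meets the fixed locus of a non-trivial element of $F$, and genericity within $\mathcal A^0_S$ does not remove these points. (Also, $\pi$ is flat only when $W_L$ acts on $\mathfrak z$ as a reflection group, whereas $p$ is always finite flat by Lemma \ref{StackyZquotientlem}.)

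Concretely, take $S=S_{Dix}\subseteq\mathfrak{sp}_4$ (Example \ref{Sp4SDix2exm}). Here $\mathfrak z\cong\mathbb C$, $W_L=F=\mathbb Z/2$ acts by $-1$ on $\mathfrak z$ and by inversion on $\bar Z=\mathbb G_m$, and $W_S=1$. So $p$ is \'{e}tale while $\pi(t)=t^2$. Let $\tau\in\mathcal A^0_K(Sp_4;S_{Dix})$ be given by $0\neq b\in H^0(\Sigma,K)$.
\begin{itemize}
\item In the paper, $\hat\Sigma_\tau=\Sigma\sqcup\Sigma$, $\hat{\mathcal J}_\tau=\mathcal J_\tau=\mathbb G_m$, and $\hat{\mathcal P}_{S,\tau}={\bf Pic}(\Sigma)$.
\item Your $\pi$-pullback is $\{\lambda^2=b^2\}$: two copies of $\Sigma$ glued at the zeros of $b$, generically $2g-2$ simple zeros. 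This is never normal, since $\deg K>0$.
\item Your group is the subsheaf of $\mathcal O_\Sigma^\times$ of units equal to $\pm1$ at those zeros, and it is smaller still after any $(*)$-type condition.
\end{itemize}
Its cokernel in $\mathbb G_m$ is $\mathbb C^\times/\{\pm1\}$ at each zero. Hence its torsor stack maps onto ${\bf Pic}(\Sigma)$ with fibres of dimension $2g-3>0$, so the ``finite discrepancy coming from $F$'' you anticipate is positive-dimensional. Moreover, a unit need not be $\pm1$ at the zeros of $b$, so there is not even a homomorphism from $\mathcal J_\tau$ to your group extending the generic identification.

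The repair is to work over $\mathcal B$ throughout, as the paper does: take $\hat{\mathcal J}_S=\Pi_S^{W_L}$ with $\Pi_S=p_*(\bar Z\times\mathfrak z)$, and \emph{do not} cut it down by $(*)$. Condition $(*)$ is the characterisation of which $W_L$-equivariant $\bar Z$-torsors on $\hat\Sigma_\tau$ come from $\hat{\mathcal J}_\tau$-torsors (Proposition \ref{SCameralBunprp}). It is not part of the definition of the group, and cutting the group down changes the torsor stack (compare Remark \ref{SCameralBunrmk}).

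With the correct target, your ``main obstacle'' disappears by the observation in your last paragraph. Smoothness of $\kappa_S$ makes its image open, so $\mathcal J_S$ is by construction an open subgroup of $\hat{\mathcal J}_S$ (Proposition \ref{CameralDescentprp}), equal to it over $\mathcal B^{rs}$. For $\tau\in\mathcal A^\heartsuit(G;S)$ the quotient $\hat{\mathcal J}_\tau/\mathcal J_\tau$ is therefore a skyscraper of finite groups. Its $H^0$ is finite and its $H^1$ vanishes, which is the whole of Lemma \ref{PicardStackIsogenylem}. You need no codimension-one analysis of the discriminant and no separately constructed homomorphism.
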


\subsection*{Examples (Section \ref{Hitchinegscn})}
We make these constructions and results explicit in the examples $G = GL_n$ (for arbitrary $n$) and $G=Sp_4$. In each case we sketch a description of the locus $\mathcal M_S$ in $\mathcal M$, calculate the base $\mathcal A_S$, and, in the case of Dixmier sheets, describe the abelianised fibration $h_S^{ab}:\mathcal M_S^{ab} \rightarrow \mathcal A_S$ over the component $\mathcal A_S^0$ in terms of Hitchin fibrations for smaller groups. For the $GL_n$ cases, we consider these constructions via spectral data and give a description for the abelianisation map $Ab_S$ in terms of abelianised spectral data on a normalisation of the spectral curve. Thus, the abelianised fibration can be viewed as an analogue to the semi-abelian data described in \cite{Hitchin4}, \cite{Horn1}, \cite{Horn2} and \cite{FraPN}.

\subsection*{\texorpdfstring{$G_\mathbb{R}$}{GR}-Hitchin fibrations (Section \ref{RealHitchinscn})}
We also apply these results to the Hitchin fibration for a real form $G_\mathbb{R}$ of $G$, building on the work of \cite{GPPN} and \cite{HM}. Each real form is associated to an involution $\theta$ on $G$, and this determines the isotropy representation of the fixed point subgroup $H := G^\theta$ on the vector subspace $\mathfrak{m}$ of $\theta$-anti-invariants in $\mathfrak{g}$. The $G_\mathbb{R}$-Hitchin fibration is the corresponding generalised Hitchin fibration; we denote the moduli stack of $G_\mathbb{R}$-Higgs bundles by $\mathcal M(G_\mathbb{R})$ and the $G_\mathbb{R}$-Hitchin map by $h_\mathbb{R}:\mathcal M(G_\mathbb{R}) \rightarrow \mathcal A(G_\mathbb{R})$. There is also a natural map from the stack $\mathcal M(G_{\mathbb{R}})$ to the moduli stack $\mathcal M$ of $G$-Higgs bundles.

There is a notion of regularity for $G_\mathbb{R}$-Higgs bundles which determines a dense open substack $\mathcal M^{reg}(G_\mathbb{R})$ of $\mathcal M(G_\mathbb{R})$; but the image of $\mathcal M^{reg}(G_\mathbb{R})$ in $\mathcal M$ may be disjoint with $\mathcal M^{reg}$; this happens precisely when the real form $G_\mathbb{R}$ is non-quasi-split. However, for every real form $G_\mathbb{R}$ we show that there is a unique Dixmier sheet $S_H$ such that $\mathcal M^{reg}$ is contained in $\mathcal M_{S_H}$. We factorise $h_\mathbb{R}^{reg}:\mathcal M^{reg}(G_\mathbb{R}) \rightarrow \mathcal A(G_\mathbb{R})$ through an abelianised fibration $h_\mathbb{R}^{ab}:\mathcal M^{ab}(G_\mathbb{R}) \rightarrow \mathcal A(G_\mathbb{R})$, and interpret the fibres of $h_{\mathbb{R}}^{ab}$ using a $\theta$-equivariant analogue of cameral data, as in \cite{GPPN}. We explicitly determine the abelianised fibrations in the cases $G_\mathbb{R} = SU(p,q)$ for $|p-q| > 1$ and $G_\mathbb{R} = SO^*(4m+2)$; the Hitchin fibres in these cases have not been previously described in the literature.

\subsection*{Multiplicities and the orbit method (Section \ref{AdjointQuotientKatsyloscn})} As an additional application of our local theory, we outline some results relating to the representation theory of the Lie algebra $\mathfrak{g}$. We assume that $G$ is semisimple, so that $\mathfrak{g}$ is identified with $\mathfrak{g}^*$ under the Killing form.

The applications relate to two variants of the orbit method \cite{Kirillov}. The first involves the construction of Dixmier maps for sheets $\mathfrak{D}_S:S/G \rightarrow \mathscr X_\mathfrak{g}$ from the space of $G$-orbits of $S$ to the space $\mathscr X_\mathfrak{g}$ of primitive ideals of the universal enveloping algebra $\mathcal U(\mathfrak{g})$ \cite{Borho1} (see \cite{Dixmier1}, \cite{Dixmier2} for the original setting). In order to construct $\mathfrak{D}_S$, one must choose a polarisation, a certain type of parabolic subalgebra associated to $S$. We prove a relationship between the polarisations for $S$ and the Katsylo group $F$, using the Grothendieck-Springer theory for sheets. Moreover, this allows us to rewrite a theorem of \cite{BK} on the asymptotic behaviour of the multiplicity function $M:\mathfrak{g}/G \times \mathbb{N} \rightarrow \mathbb{N}$ in terms of the group $F$. We note that a similar connection between the geometry of Hitchin systems and polarisations has also been observed for parabolic Higgs bundles \cite{WWW1}, \cite{WWW2}.

The second variant of the orbit method we consider constructs a map $\mathfrak{I}:\mathfrak{g}/G \rightarrow \mathscr X_\mathfrak{g}$ \cite{Losev4}. We give a formula for the multiplicity $\mu$ of the ideal $\mathfrak{I}(\mathcal O)$ for any adjoint orbit contained in a non-singular sheet in terms of the Katsylo group $F$. If $\mathfrak{g}$ is classical, this is related to an action of $F$ on the space of one-dimensional representations for an associated finite $W$-algebra \cite{Topley}. Combining this with our description for the multiplicity function gives the following relationship between these two apparently distinct notions.

\begin{corollary}[(Corollary \ref{QCMultiplicitiescor})]\label{IntroMultiplicitiescor}
	Assume that $G$ is semisimple and let $\mathcal O$ be a $G$-orbit contained in a non-singular sheet $S$ of $\mathfrak{g}$. If $\mathcal O^{nil}$ is the (unique) nilpotent orbit in $S$, then
	\begin{equation}\label{IntroQCMultiplicitieseqn}
        		\mu(\mathfrak{I}(\mathcal O)) = \lim_{n \rightarrow\infty} \frac{M(\mathcal O;n)}{M(\mathcal O^{nil};n)}.
   	 \end{equation}
\end{corollary}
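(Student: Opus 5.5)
The plan is to compute both sides of (\ref{IntroQCMultiplicitieseqn}) separately in terms of the Katsylo group $F$ of $S$, and then compare.

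\textbf{Right-hand side.} Here I would use the Borho--Kraft theorem \cite{BK} on the asymptotics of $M(\mathcal O;n)$. It expresses the growth of the multiplicity of simple modules in the graded pieces of the coordinate ring $\mathbb{C}[\overline{\mathcal O}]$ (equivalently, of its associated graded for the deformation inside the sheet). For orbits in a sheet it gives
\begin{equation*}
\lim_{n\to\infty} \frac{M(\mathcal O;n)}{M(\mathcal O^{nil};n)} = \frac{[\,\cdot\,]_{\mathcal O}}{[\,\cdot\,]_{\mathcal O^{nil}}}.
\end{equation*}
This is a ratio of degrees of the covers $G\times^P(\ldots)\to \overline{\mathcal O}$ coming from a polarisation $\mathfrak p$ of $S$. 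The first step is therefore to fix a polarisation $\mathfrak p = \mathfrak l+\mathfrak n$ and use the generalised Grothendieck--Springer map $\tilde S = G\times^P(\mathfrak z(\mathfrak l)^{\circ}+\mathfrak n)\to \overline S$ \cite{BB}, \cite{Broer1}.

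I would then invoke the theorem relating polarisations to $F$ (Theorem \ref{KatsylogpPolarisationthm}). Over the orbit $\mathcal O$, the fibre of the Grothendieck--Springer map is a torsor-like set whose cardinality, relative to the nilpotent fibre, is $|F|/|\mathrm{Stab}_F(\tilde x)|$, where $\tilde x\in\tilde{\mathfrak c}_S$ is a lift of the image of $\mathcal O$. This rewrites the limit as $|F\cdot \tilde x|$, or its inverse depending on normalisation, so I need to track carefully which one appears.

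\textbf{Left-hand side.} I would use the formula for $\mu(\mathfrak I(\mathcal O))$ proved in Section \ref{AdjointQuotientKatsyloscn}, which is stated in terms of $F$ for orbits in non-singular sheets. The Losev map $\mathfrak I$ comes from quantising the sheet. The multiplicity of $\mathfrak I(\mathcal O)$ counts the points of the fibre of the quantised map $\tilde{\mathfrak c}_S \to \mathfrak c_S\cong \tilde{\mathfrak c}_S/F$ over the image of $\mathcal O$. Since $S$ is non-singular, the regular quotient $\mathcal B=[\tilde{\mathfrak c}_S/F]$ is smooth. Consequently the associated-graded multiplicity is the orbit size $|F\cdot\tilde x|$, computed at the generic point of the fibre.

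\textbf{Comparison and main obstacle.} Combining the two computations shows that both sides equal $|F\cdot \tilde x| = |F|/|\mathrm{Stab}_F(\tilde x)|$, which proves the corollary. For $\mathcal O=\mathcal O^{nil}$ the image of $\mathcal O$ is the origin, which is $F$-fixed, so both sides are $1$; this checks the normalisation.

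The main obstacle is the normalisation step. I need to confirm that the Borho--Kraft ratio is expressed via degrees of the polarisation maps, and that these degrees match $|F|/|\mathrm{Stab}_F|$ rather than its inverse. I would resolve this by comparing generic fibres (the semisimple orbits in a Dixmier sheet, where the stabiliser is trivial) with the nilpotent fibre, using flatness of $\tilde{\mathfrak c}_S\to\mathfrak c_S$ over the affine space $\tilde{\mathfrak c}_S$.
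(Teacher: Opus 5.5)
Your plan is essentially the paper's proof. The corollary is deduced by applying Proposition \ref{KatsyloMultiplicitiesprp} (Borho--Kraft's Theorem 7.2 rewritten via Theorem \ref{KatsylogpPolarisationthm}) with $y=e$, where $\mathcal F_e=F$, and comparing with Proposition \ref{OrbitMethodMultprp}; both sides then equal $|F|/|\mathcal F_x|$ for $x\in\mathfrak K\cap\mathcal O$.

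Your normalisation worry is settled as follows. By Theorem \ref{KatsylogpPolarisationthm}, the fibre of the generalised Grothendieck--Springer map $G\times^P(z+\overline{\mathcal O_L}+\mathfrak n)^{reg}\to\mathcal O(x)$ has $|\mathcal F_x|$ points, so $|F|$ points over $\mathcal O^{nil}$; the ratio $|F|/|\mathcal F_x|$ is the quotient of these two counts, not the fibre size itself. Borho--Kraft then make $|\mathcal F_x|M(\mathcal O(x);n)$ asymptotically independent of $x$, which fixes the direction of the limit. Phrasing the argument with this map, where $(L,\mathcal O_L)$ is the decomposition data, rather than with a polarisation also covers non-Dixmier sheets. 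No flatness of $\tilde{\mathfrak c}_S\to\mathfrak c_S$ is needed.
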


\subsection*{Acknowledgements}

I would like to thank the following people for helpful discussions and correspondence at various stages of this project: M. Bulois, M. Chaffe, A. Fernandez Herrero, S. Goodwin, J. Kimberley, I. McIntosh, B.C Ngô, J. Summerfield, L. Topley, and G. Wilkin. I would like to thank T. Pantev for his generous hospitality during a research visit to the University of Pennsylvania in February 2025. I would especially like to thank my supervisor A. Peón-Nieto for her constant support and guidance, and for providing careful comments on numerous drafts of this paper.

\addtocontents{toc}{\protect\setcounter{tocdepth}{2}}

\subsection{Notational conventions}

We will use $G$ to denote a connected reductive algebraic group over $\mathbb{C}$, and $\mathfrak{g}$ to denote its Lie algebra. Similarly, for an arbitrary algebraic group denoted by a capital letter (e.g. $A$), the lowercase gothic script (e.g. $\mathfrak{a}$) will denote its Lie algebra, unless otherwise stated. For any $g \in G$, the automorphism on $G$ defined by conjugation by $g$ will be denoted $I_g$.

We will always fix a Cartan subgroup $T \leq G$, and denote by $W$ the group $N_G(T)/T$, which for our purposes is the Weyl group of $G$ with respect to $T$. Moreover, if $L$ is a Levi subgroup of $G$, we will denote by $W_L$ the group $N_G(L)/L$, sometimes known as the \emph{relative Weyl group for $L$}.

We will denote by $\Sigma$ a non-singular connected projective curve over $\mathbb{C}$ of genus $g > 1$, and denote by $K$ its canonical line bundle. 

We adopt the following shorthand for restriction of schemes: if $X$ is a $Y$-scheme, and $Y'$ is a subscheme of $Y$, we will denote by $X_{Y'}$ the fibre product $X \times_Y Y'$. For a map of stacks $f:\mathcal X \rightarrow \mathcal Y$ and a $\mathbb{C}$-point $a \in \mathcal Y(\mathbb{C})$, we will abuse notation and write $f^{-1}(a)$ to denote the fibre product $\mathbb{C} \times_{a,f}\mathcal X$. 

\section{The centraliser stratification of a reductive Lie algebra}\label{Sheetsscn}

We give an overview of the theory of sheets of reductive Lie algebras and of their quotients under the adjoint action. We give a number of examples to illustrate the range of geometric phenomena which can occur. The content of this section is well-established; we provide it for the reader's convenience and to set our notation.

\subsection{Sheets and their geometric quotients}\label{SheetsQuotientssbn}
We recall the necessary constructions from the theory of sheets, as developed in \cite{BK} and \cite{Borho2}, and two constructions of their quotients under the adjoint action, one which generalises the Chevalley restriction theorem \cite{Borho2} and one which generalises Kostant's section \cite{Katsylo}. A readable and comprehensive overview of the theory of sheets from a purely algebraic viewpoint can be found in Sections 1 and 2 of \cite{ImHof}.

The adjoint action of $G$ on $\mathfrak{g}$ determines the \emph{centraliser group scheme} $\mathcal I$ over $\mathfrak{g}$; the fibre $\mathcal I_x$ of $\mathcal I$ over a point $x$ in $\mathfrak{g}$ is the centraliser group $C_G(x)$. The adjoint and scaling actions on $\mathfrak{g}$ determine actions of $G$ and $\mathbb{G}_m$ on $\mathcal I$, and these actions commute.

We can use the dimension of $\mathcal I$ to stratify the Lie algebra $\mathfrak{g}$; we decompose $\mathfrak{g}$ as   
\begin{equation}\label{Centraliserstrateqn}
    \mathfrak{g} = \bigcup_{d \in \mathbb{N}} \mathfrak{g}_d
\end{equation}
where
\begin{equation}\label{Centraliserdimeqn}
    \mathfrak{g}_d = \{ x \in \mathfrak{g} \, | \, \text{dim } \mathcal I_x = d \}.
\end{equation}

\begin{remark}\label{Centraliserstratrmk}
    The minimal value of $d$ for which $\mathfrak{g}_d$ is non-empty is equal to the rank $r$ of the Lie algebra, and the stratum $\mathfrak{g}_r$ is the regular locus of $\mathfrak{g}$, which is open and dense in $\mathfrak{g}$ \cite{Kostant}. The actions of $G$ and $\mathbb{G}_m$ on $\mathfrak{g}$ restrict to actions on each stratum $\mathfrak{g}_d$.
\end{remark}

In general, the strata $\mathfrak{g}_d$ are not irreducible (e.g. see Example \ref{Sp4Sheetexm}); thus we have the following definition from \cite{BK}.

\begin{definition}\label{Sheetdef}
    A \emph{sheet} is an irreducible component of 
    $$\coprod_{d \in \mathbb{N}} \mathfrak{g}_d.$$
\end{definition}

\begin{remark}\label{ActionsSheetsrmk}
   Since $G$ and $\mathbb{G}_m$ are connected, the $G \times \mathbb{G}_m$-action restricts to an action on each sheet.
\end{remark}

The related notion of decomposition classes is important for the classification of sheets. We define an equivalence relation $\sim$ on $\mathfrak{g}$ where $x \sim y$ if there exists $g \in G$ such that 
\begin{equation}
    C_G(Ad_g(x_{ss})) = C_G(y_{ss})
\end{equation}
and $Ad_g(x_n) = y_n$. Here $x = x_{ss}+x_n$ and $y=y_{ss}+y_n$ are the Jordan decompositions of $x$ and $y$ into semisimple and nilpotent parts.

\begin{definition}\label{Decompclassdef}
    A \emph{decomposition class} of $\mathfrak{g}$ is an equivalence class of the equivalence relation $\sim$.
\end{definition}

\begin{remark}\label{Decompdatarmk}
    By the definition of the equivalence relation $\sim$, the decomposition classes of $\mathfrak{g}$ are in bijection with $G$-conjugacy classes of pairs $(L,\mathcal O)$, where $L \leq G$ is a Levi subgroup and $\mathcal O \subset \mathfrak{l}$ is a nilpotent orbit of $L$. The $G$-conjugacy class of $(L, \mathcal O)$ is called the \emph{decomposition data} of the corresponding decomposition class. We shall often implicitly fix a representative of the conjugacy class and simply refer to the pair $(L,\mathcal O)$ as the decomposition data.
\end{remark} 

\begin{remark}\label{SheetClassificationrmk}
    Every sheet $S$ contains a unique decomposition class $\mathcal D$ such that $\mathcal D$ is dense in $S$. By a slight abuse of terminology, we shall refer to the decomposition data $(L,\mathcal O)$ for $\mathcal D$ as the decomposition data for the sheet $S$. The pairs $(L, \mathcal O)$ which define decomposition data for a sheet are exactly those for which the nilpotent orbit $\mathcal O$ itself constitutes a sheet in the subalgebra $\mathfrak{l}$ \cite[Satz 4.3 and Korollar 4.4]{Borho2}; such nilpotent orbits are called \emph{rigid}.
\end{remark}

The following class of sheets are of particular interest for our purposes.

\begin{definition}\label{DixmierSheetdef}
    A sheet $S$ is called \emph{Dixmier} if $S$ contains a semisimple element of $\mathfrak{g}$.
\end{definition}

\begin{remark}\label{DixmierSheetrmk}
   $S$ is Dixmier if and only if the $L$-orbit $\mathcal O$ in its decomposition data is the zero orbit \cite[4.4]{Borho2}. As a result, we refer to the Dixmier sheet with decomposition data $(L,0)$ as the \emph{(Dixmier) sheet associated to $L$}. See Example \ref{Sp4Sheetexm} for an example of a sheet which is not Dixmier. 
\end{remark}

We will now consider the quotient for the $G$-action on a sheet $S$ given in \cite{Borho2}. Let $(L,\mathcal O)$ be the decomposition data for $S$, with $L$ chosen such that its maximal torus is $T$. Let $\mathfrak{z}$ be the centre of $\mathfrak{l}$, which by our choice of $L$ is contained in $\mathfrak{t}$.

Let $\overline{S}$ be the closure of $S$ in $\mathfrak{g}$. Then $\overline{S}$ is an affine variety, and the $G$-action on $S$ extends to an action on $\overline{S}$. So, in particular, we can consider the affine GIT quotient $\chi_{\overline{S}}:\overline{S} \rightarrow \overline{S}//G$, where
$$\overline{S}//G = Spec(\mathbb{C}[\overline{S}]^G).$$
Since $G$ is reductive, the closed embedding $\iota:\overline{S} \hookrightarrow \mathfrak{g}$ induces a commutative diagram
\begin{equation}\label{AdjointGITcompareqn}
    \begin{tikzcd}
        \overline{S} \arrow[d, "\chi_{\overline{S}}"'] \arrow[r, hook, "\iota"] & \mathfrak{g} \arrow[d, "\chi"] \\
        \overline{S}//G \arrow[r, hook, "\iota_G"]                              & \mathfrak{t}/W,                
    \end{tikzcd}  
\end{equation}
where the lower horizontal arrow is a closed embedding; in (\ref{AdjointGITcompareqn}), we have used the Chevalley restriction theorem to identify $\mathfrak{g}//G$ with $\mathfrak{t}/W$. Then, Borho's generalisation of the Chevalley restriction theorem is given by the following proposition.

\begin{proposition}\label{BorhoChevalleyprp}\cite[Satz 6.3 and Korollar 6.4]{Borho2}
    There is an inclusion $\mathfrak{z} \hookrightarrow \overline{S}$, and the induced map $\nu_L:\mathfrak{z}/W_L \rightarrow \overline{S}//G$ is a normalisation map. Moreover, the composition $\iota_G \circ \nu_L:\mathfrak{z}/W_L \rightarrow \mathfrak{t}/W$ is the map induced by the inclusion $\mathfrak{z} \hookrightarrow \mathfrak{t}$.
\end{proposition}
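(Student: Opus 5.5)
We would prove the statement in three steps: show $\mathfrak{z}\subseteq\overline{S}$, show $\nu_L$ is finite and birational, and conclude by normality of $\mathfrak{z}/W_L$.

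\textbf{Step 1: the inclusion.} Let $(L,\mathcal O)$ be the decomposition data, with $T\le L$. Let $\mathfrak{z}^{reg}=\{z\in\mathfrak{z} : C_G(z)=L\}$; this is a dense open subset of $\mathfrak{z}$. For $z\in\mathfrak{z}^{reg}$ and $e\in\mathcal O$, the element $z+e$ has Jordan decomposition $(z,e)$. Hence $z+e$ lies in the decomposition class $\mathcal D$, which is dense in $S$. Thus $\mathfrak{z}^{reg}+\mathcal O\subseteq S$, and taking closures gives $\mathfrak{z}+\overline{\mathcal O}\subseteq\overline{S}$. In particular $\mathfrak{z}\subseteq\overline{S}$, since $0\in\overline{\mathcal O}$ (scale by $\mathbb{G}_m$).

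\textbf{Step 2: the map and its composite.} The restriction $\mathbb{C}[\overline{S}]^G\to\mathbb{C}[\mathfrak{z}]$ lands in $\mathbb{C}[\mathfrak{z}]^{W_L}$, because $N_G(L)$ preserves $\mathfrak{z}$ and acts on it through $W_L$. This defines $\nu_L$. The composite $\iota_G\circ\nu_L$ is given by restricting $\mathbb{C}[\mathfrak{g}]^G\cong\mathbb{C}[\mathfrak{t}]^W$ to $\mathfrak{z}$, which is the map induced by $\mathfrak{z}\hookrightarrow\mathfrak{t}$. This settles the last assertion of the proposition.

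\textbf{Step 3: finiteness.} The map $\mathfrak{z}\to\mathfrak{t}\to\mathfrak{t}/W$ is finite, as the composite of a closed embedding with a finite quotient map. So $\mathfrak{z}/W_L\to\mathfrak{t}/W$ is finite. Since $\iota_G$ is a closed embedding, $\nu_L$ is finite.

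\textbf{Step 4: dominance and birationality.} This is the main obstacle. First we show every element of $\mathcal D$ is $G$-conjugate to some $z+e$ with $z\in\mathfrak{z}^{reg}$ and $e\in\mathcal O$. Every closed $G$-orbit in $\overline{S}$ is semisimple. By Step 1 and density of $\mathcal D$, the semisimple parts $z$ of elements of $\mathcal D$ have orbits $G\cdot z$ closing up points of $\overline{S}//G$. Hence $\nu_L$ is dominant, and since it is finite, it is surjective.

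For birationality we compare generic fibres. Take $z,z'\in\mathfrak{z}^{reg}$ whose images agree in $\overline{S}//G$. Then the closed orbits coincide, so $z'=Ad_g z$ for some $g\in G$. Then $g$ conjugates $C_G(z)=L$ to $C_G(z')=L$, so $g\in N_G(L)$, and $z'=w\cdot z$ for $w\in W_L$. Hence $\nu_L$ is injective on the dense open set $\mathfrak{z}^{reg}/W_L$.

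Injectivity alone does not give birationality, since we must rule out inseparability or non-reducedness of the image. In characteristic $0$, a finite dominant morphism of varieties that is generically injective is birational onto its image, which is reduced and irreducible because $\overline{S}$ is. We would cite Zariski's main theorem for this, checking that $\overline{S}//G$ is integral.

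\textbf{Step 5: conclusion.} $\mathfrak{z}/W_L$ is normal, being a quotient of affine space by a finite group. So $\nu_L$ is finite, birational and has normal source, and is therefore the normalisation of $\overline{S}//G$.
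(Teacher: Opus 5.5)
Your argument is correct; the paper gives no proof of its own here and simply quotes Borho (Satz 6.3 and Korollar 6.4). Your proof is the standard self-contained one: finiteness through $\mathfrak{t}/W$, dominance from the density of $\mathcal D$, generic injectivity from $N_G(L)$-conjugacy on $\mathfrak{z}^{reg}$, and normality of $\mathfrak{z}/W_L$.

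Two small tidy-ups are needed.
\begin{itemize}
\item Birationality needs injectivity over a dense open subset of the target, not just on $\mathfrak{z}^{reg}/W_L$. This holds: any $G$-conjugate of $z\in\mathfrak{z}^{reg}$ that lies in $\mathfrak{z}$ has connected centraliser containing $L$ and of dimension $\dim L$, so it lies in $\mathfrak{z}^{reg}$.
\item The step from generic injectivity to birationality is not Zariski's main theorem. It is the characteristic-zero fact that a generic fibre has as many points as the degree of the function field extension.
\end{itemize}
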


\begin{remark}\label{GeometricQuotientrmk}
	We will denote the normalisation of $\overline{S}//G$ by $\mathfrak{c}_S$, and we will denote by $\nu_S:\mathfrak{c}_S \rightarrow \mathfrak{t}/W$ the map induced by (\ref{AdjointGITcompareqn}). By Proposition \ref{BorhoChevalleyprp}, $\mathfrak{c}_S$ is canonically isomorphic to $\mathfrak{z}/W_L$, but it is defined independently of the choice of pair $(L,\mathcal O)$.
\end{remark}

The situation is complicated by the fact that $\overline{S}$ may not be normal (even when $S$ is normal), e.g., see Example \ref{GLnSheetexm}. Nonetheless, if the sheet itself is normal, there is a geometric quotient for $S$ compatible with Proposition \ref{BorhoChevalleyprp}.

\begin{theorem}\label{BCGeometricQthm}\cite[Theorem 6.5]{Borho2}
    If $S$ is normal, then there is a geometric quotient $\chi_S:S \rightarrow \mathfrak{c}_S$ for the action of $G$ on $S$ which satisfies $\chi_{\overline{S}}|_S = \nu_S \circ \chi_S$. In particular, the following diagram commutes:
    \begin{equation}\label{BCGeometricQ1eqn}
        \begin{tikzcd}
            S \arrow[d, "\chi_{S}"'] \arrow[r, hook] & \mathfrak{g} \arrow[d, "\chi"] \\
            \mathfrak{c}_S \arrow[r, "\nu_S"]         & \mathfrak{t}/W.                
        \end{tikzcd}
    \end{equation}
\end{theorem}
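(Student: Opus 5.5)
The argument is a descent: first construct a morphism $S \to \mathfrak{c}_S$ by lifting $\chi_{\overline{S}}|_S$ through the normalisation $\nu_L$, then verify that it is a geometric quotient. Throughout I identify $\mathfrak{c}_S$ with $\mathfrak{z}/W_L$ using Proposition \ref{BorhoChevalleyprp}.

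\emph{Step 1: existence of the lift.} The composite $\chi_{\overline{S}}|_S : S \to \overline{S}//G$ is dominant. It is dominant because $\mathfrak{z}$ meets $S$ in a dense subset of $\mathfrak{z}$: the elements of $\mathfrak{z}$ with centraliser exactly $L$ lie in the decomposition class of $(L,0)$. I use the universal property of normalisation, which applies because $S$ is normal and irreducible. A dominant morphism from a normal irreducible variety into a variety $Y$ factors uniquely through the normalisation $\tilde{Y}$ provided every point of the source maps to a point over which $\tilde{Y} \to Y$ behaves well. In general this requires the morphism to be integral-closure compatible, i.e. that $\mathbb{C}(\overline{S}//G) \to \mathbb{C}(S)$ sends integral elements over $\mathbb{C}[\overline{S}//G]$ to regular functions on $S$. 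Since $S$ is normal and the pullbacks of $\mathbb{C}[\mathfrak{z}]^{W_L}$ are rational functions integral over the pullback of $\mathbb{C}[\overline{S}]^G \subset \mathbb{C}[S]$, they are regular on $S$. This gives a $G$-invariant morphism $\chi_S : S \to \mathfrak{c}_S$ with $\chi_{\overline{S}}|_S = \nu_S \circ \chi_S$. Commutativity of (\ref{BCGeometricQ1eqn}) then follows from (\ref{AdjointGITcompareqn}) together with the last clause of Proposition \ref{BorhoChevalleyprp}.

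\emph{Step 2: fibres are single orbits.} I need that each fibre of $\chi_S$ is a single $G$-orbit. Take $x \in S$ with $\chi_S(x) = [z]$, $z \in \mathfrak{z}$. Borho's theory of sheets as unions of induced orbits says that every orbit in $S$ is of the form $\operatorname{Ind}_{\mathfrak{l}}^{\mathfrak{g}}(z + \mathcal O)$, equivalently $G\cdot(z + \mathcal O + \mathfrak{n})$ intersected with $S$ for a parabolic $\mathfrak{p} = \mathfrak{l} \oplus \mathfrak{n}$. From this I would identify the map $x \mapsto [z]$ with $\chi_S$ by checking on the dense decomposition class, where $x \sim z + e$ with $e \in \mathcal O$. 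For Dixmier sheets this orbit is a single orbit, and in general it is a single orbit in $S$ because induced orbits are unique of their dimension and all orbits in $S$ have the same dimension. The hard part is the converse: if two induced orbits have the same $[z]$ modulo $W_L$, they coincide. This is where normality matters, because without it the map through $\overline{S}//G$ can identify distinct $W_L$-classes (cf. Example \ref{GLnSheetexm}). Lifting to $\mathfrak{z}/W_L$ separates them, since $\operatorname{Ind}(z + \mathcal O)$ depends only on the $N_G(L)$-orbit of $z$.

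\emph{Step 3: quotient properties.} I show that $\chi_S$ is surjective, which holds because every $[z]$ is realised by $\operatorname{Ind}(z + \mathcal O) \subset S$. Next, $\chi_S$ should be open, or at least submersive, and $\mathcal O_{\mathfrak{c}_S} = (\chi_{S*}\mathcal O_S)^G$. For the sheaf condition, a $G$-invariant regular function on $\chi_S^{-1}(U)$ is constant on fibres by Step 2, so it descends to a function on $U$. That function is rational, and since $U$ is normal and $\chi_S$ is dominant with connected fibres, a Zariski main theorem style argument makes it regular. Openness I would deduce from equidimensionality of the fibres, all of which have dimension $\dim S - \dim \mathfrak{z}$, together with normality of $\mathfrak{c}_S$ (Chevalley's criterion for universally open morphisms onto a normal target). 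I expect Step 2, the orbit separation, to be the main obstacle.
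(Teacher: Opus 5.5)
The paper gives no proof of this statement; it is quoted from \cite[Theorem 6.5]{Borho2}. The paper's later references to the construction of $\chi_S$ in \cite[5.1, Satz 5.6]{Borho2} show that there $\chi_S$ is tied to the parametrisation of orbits in $S$ by induction data. Your architecture uses the same ingredients and is sound. Normality of $S$ lets $\chi_{\overline S}|_S$ lift through the normalisation $\mathfrak z/W_L$, and your integrality argument for this is correct with no proviso needed. Borho's description $S=\hat p(\hat S^{reg})$ then identifies the fibres, and Chevalley's criterion plus Zariski's main theorem give the quotient properties. Three justifications need repair, however.

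The first is small. Your reason for dominance fails for non-Dixmier sheets: for $S=\mathcal O_{min}\subset\mathfrak{sp}_4$ (Example \ref{Sp4Sheetexm}) one has $\mathfrak z=0\notin S$. Dominance is immediate anyway, since $S$ is dense in $\overline S$ and $\chi_{\overline S}$ is surjective.

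The second is that Step 2 is argued backwards, and normality plays no role in it. Once $\chi_S$ exists, work on the irreducible variety $\hat S^{reg}=G\times^P(\mathfrak z+\overline{\mathcal O}+\mathfrak n)^{reg}$ of Section \ref{GSSheetssbn}. Compare the two morphisms $\hat S^{reg}\to\mathfrak z/W_L$ given by $\chi_S\circ\hat p$ and by $\hat\chi_S$ followed by the quotient map.

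\begin{itemize}
\item They agree over $\mathfrak z^{rs}$ (see (\ref{WeylLeviActioneqn})), because $\nu_L$ is injective over $\nu_L([z])$ for $z\in\mathfrak z^{rs}$. Indeed, if $z'\in\mathfrak z$ has $\chi(z')=\chi(z)$, then $z'=Ad_g(z)$ for some $g\in G$. Now $C_G(z')=gLg^{-1}$ is connected of the same dimension as $L$ and contains $L$, so $g\in N_G(L)$.
\item Hence the two morphisms agree everywhere, so $\chi_S$ is constant with value $[z]$ on $\operatorname{Ind}(z+\mathcal O)$.
\end{itemize}

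This gives ``distinct classes give distinct orbits'' for free, and this is where working over $\mathfrak z/W_L$ rather than $\overline S//G$ matters.

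The direction you single out as hard is the easy one. Your one-line argument for it silently uses $Ad_n(\mathcal O)=\mathcal O$ for every $n\in N_G(L)$, and you must state this. It is trivial for Dixmier sheets, and it is genuinely needed in general. If $n$ moved $\mathcal O$, then for generic $z$ and $e\in\mathcal O$ the points $z+e$ and $Ad_n(z)+e$ of $S$ would lie in the same fibre without being $G$-conjugate.

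The third concerns Step 3. You claim that the descended function $g$ of an invariant $f\in\mathcal O(\chi_S^{-1}(U))^G$ is rational and then regular ``since $\chi_S$ is dominant with connected fibres''. That is not an argument, and this step has to come after openness rather than before. The fix runs as follows.

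\begin{itemize}
\item Chevalley's criterion applies because the fibres are equidimensional and the target $\mathfrak z/W_L$ is normal. So $\chi_S$ is universally open, and hence $\chi_S\times\mathrm{id}_{\mathbb A^1}$ is open.
\item The complement of the graph of $f$ is open and saturated. Therefore the image $\Gamma\subset U\times\mathbb A^1$ of the graph is closed.
\item $\Gamma\to U$ is then a bijective morphism of irreducible varieties onto a normal one. It is birational because we are in characteristic $0$, and so it is an isomorphism by Zariski's main theorem. This is exactly the regularity of $g$.
\end{itemize}
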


\begin{remark}\label{BCEquivariancermk}
    The $\mathbb{G}_m$-action on $S$ extends to an action on $\overline{S}$, and the scaling actions on $\mathfrak{z}$ and $\mathfrak{t}$ induce $\mathbb{G}_m$-actions on $\mathfrak{c}_S$ and $\mathfrak{t}/W$ respectively. All of the relevant maps above (in particular $\chi_S$ and $\nu_S$) are equivariant with respect to these actions.
\end{remark}

For the regular sheet, there is a section to the Chevalley map $\chi$ defined in \cite{Kostant}; there is an analogous construction for an arbitrary sheet \cite{Katsylo}.

Let $e \in S$ be a nilpotent element (which always exists by \cite[Korollar 3.2]{BK}), and complete it to an $\mathfrak{sl}_2$-triple $(e,h,f)$, i.e. $h \in \mathfrak {g}$ is semisimple, $f \in \mathfrak{g}$ is nilpotent and the following relations are satisfied:
\begin{equation}\label{SL2Tripleeqn}
    [h,e] = 2e, \, [h,f] = 2f, \, [e,f] = h.
\end{equation}
We recall that a transverse slice at $e$ to the adjoint action on $\mathfrak{g}$ is given by the \emph{Slodowy slice} \cite[Section 7.4]{Slodowy}:
    \begin{equation}\label{SlodowySliceeqn}
        \mathfrak{S}= e+\mathfrak{c}_{\mathfrak{g}}(f).
    \end{equation}

\begin{definition}\label{KatsyloSlicedef}\cite{Katsylo}
    Let $(e,h,f)$ be an $\mathfrak{sl}_2$-triple with $e \in S$. The \emph{Katsylo slice} $\mathfrak{K}$ to $e$ is the affine subvariety $\mathfrak{K} = \mathfrak{S} \cap S$ of $S$.
\end{definition}

The first key property of the Katsylo slice is that it is a global transverse slice for the sheet (not just a slice at $e$).

\begin{proposition}\cite[Theorem 0.1]{Katsylo}\label{KatsyloSliceprp}
    The map $Ad:G \times \mathfrak{K} \rightarrow S$ given by the adjoint action is smooth and surjective.
\end{proposition}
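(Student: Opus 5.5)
Our plan is to show that $Ad:G\times\mathfrak{K}\to S$ is surjective and that its differential is surjective at every point, with $G\times\mathfrak{K}$ smooth. Since both source and target are then smooth (or, in general, since we can argue via flatness and smooth fibres), surjectivity of the differential will give smoothness.

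\textbf{Transversality at $e$.} We begin with the Slodowy slice. The $\mathfrak{sl}_2$-representation theory gives the splitting $\mathfrak{g}=[\mathfrak{g},e]\oplus\mathfrak{c}_{\mathfrak{g}}(f)$. Hence $Ad:G\times\mathfrak{S}\to\mathfrak{g}$ is smooth at $(1,e)$. We then use the contracting $\mathbb{G}_m$-action $t\mapsto t^{2}Ad_{\lambda(t^{-1})}$, where $\lambda$ is the cocharacter attached to $h$. This action preserves $\mathfrak{S}$, fixes $e$, and contracts $\mathfrak{S}$ to $e$. It upgrades smoothness at $e$ to smoothness of $G\times\mathfrak{S}\to\mathfrak{g}$ along all of $G\times\mathfrak{S}$, since the smooth locus is open and stable under the action.

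\textbf{Restriction to the sheet.} Because $S$ is $G$- and $\mathbb{G}_m$-stable and $\mathfrak{S}$ meets $S$ transversally, the base change $G\times\mathfrak{K}=(G\times\mathfrak{S})\times_{\mathfrak{g}}S\to S$ is again smooth. Here we use that $\mathfrak{K}=\mathfrak{S}\cap S$ and that $G\times\mathfrak{S}\to\mathfrak{g}$ factors as $(g,x)\mapsto Ad_g x$, so the fibre product is identified with $G\times\mathfrak{K}$. This step is essentially formal once the previous one is established.

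\textbf{Surjectivity (main obstacle).} The key issue is surjectivity: every $G$-orbit in $S$ must meet $\mathfrak{S}$. Openness of the image is given by smoothness, and the image is $G$- and $\mathbb{G}_m$-stable and contains $e$. To get all of $S$, we would argue as follows.

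1. By Remark \ref{SheetClassificationrmk}, each orbit $\mathcal{O}'\subset S$ has $\overline{\mathbb{G}_m\cdot\mathcal{O}'}$ containing a nilpotent orbit lying in $S$.
2. By \cite[Korollar 3.2]{BK}, the nilpotent orbit in $S$ is unique and equal to $Ge$.
3. Since the image is open, it contains $\mathbb{G}_m\cdot y$ for $y$ close to $e$ in the orbit closure.
4. By stability of the image under $G\times\mathbb{G}_m$, it therefore contains $\mathcal{O}'$.

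The delicate point is the uniqueness of the nilpotent orbit in the sheet. Ensuring that the limit point lies in $S$ itself rather than only in $\overline{S}$ requires that dimensions of centralisers are constant along $\mathbb{G}_m$-limits within $S$.
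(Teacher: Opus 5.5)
The paper gives no argument of its own here (it quotes \cite[Theorem 0.1]{Katsylo}), and your outline is the standard proof of that theorem. The smoothness half is sound. With $\mathbb{G}_m$ acting on $G\times\mathfrak{S}$ by $(g,x)\mapsto(g\lambda(t),t\cdot x)$, the map $Ad$ intertwines this action with scaling by $t^2$, so the contraction argument goes through. The base change along $S\hookrightarrow\mathfrak{g}$ does not need $S$ to be non-singular. It also shows that the scheme-theoretic intersection $\mathfrak{S}\times_{\mathfrak{g}}S$ is reduced (it is smooth over the reduced $S$ after multiplying by $G$), so it really is $\mathfrak{K}$.

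The step that is not justified as written is item 1 of the surjectivity argument, and this is the crux. Remark \ref{SheetClassificationrmk} says nothing about $\overline{\mathbb{G}_m\cdot\mathcal O'}$. The ``delicate point'' you describe is also not what is needed: centraliser dimension is not constant along $\mathbb{G}_m$-limits (for instance $0\in\overline{\mathbb{G}_m\cdot\mathcal O'}$). The correct argument is a dimension count.

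\begin{itemize}
\item If $\mathcal O'\subset S$ is nilpotent, it equals $Ge$ by uniqueness.
\item Otherwise $\chi(\mathcal O')\neq 0$, so $X=\overline{\mathbb{G}_m\cdot\mathcal O'}$ is irreducible of dimension $\dim\mathcal O'+1$, and $\chi$ maps it onto the curve $\overline{\mathbb{G}_m\cdot\chi(\mathcal O')}$. Hence every component of $X\cap\mathcal N=(\chi|_X)^{-1}(0)$ has dimension at least $\dim\mathcal O'$.
\item $X\cap\mathcal N$ is a finite union of nilpotent orbits (and is non-empty, as it contains $0$). So one of these orbits, say $\mathcal O_0$, has $\dim\mathcal O_0\ge\dim\mathcal O'$, i.e. centraliser dimension at most $d$.
\item By upper semicontinuity of centraliser dimension on $X\subseteq\overline S$, the centraliser dimension of $\mathcal O_0$ is also at least $d$. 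Hence $\mathcal O_0\subseteq\overline S\cap\mathfrak g_d$.
\item $\overline S\cap\mathfrak g_d=S$, because an irreducible component of $\mathfrak g_d$ is closed in $\mathfrak g_d$. So $\mathcal O_0\subseteq S$.
\item Uniqueness of the nilpotent orbit in $S$ gives $\mathcal O_0=Ge$. This uniqueness is a genuine external input, which you correctly flag. It follows that $e\in\overline{\mathbb G_m\cdot\mathcal O'}$.
\end{itemize}

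With this in place, your items 3--4 finish the proof. You could equally use the fact that $Ad(G\times\mathfrak S)$ is open in $\mathfrak g$ and stable under $G$ and scaling, which you already have from the first step.
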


There is also a natural $\mathbb{G}_m$-action on $\mathfrak{K}$, which is a restriction of a $\mathbb{G}_m$-action on $\mathfrak{S}$. We require the following lemma, whose content is contained in \cite[Sections 7.3 \& 7.4]{Slodowy}. Let 
\begin{equation}\label{WtDecompositioneqn}
    \mathfrak{g} = \bigoplus_{w \in \mathbb{Z}} \mathfrak{g}^w
\end{equation}
be the weight decomposition for the adjoint action of $h$ on $\mathfrak{g}$, i.e. $[h,x] = wx$ for all $x \in \mathfrak{g}^w$.

\begin{lemma}\cite{Slodowy}\label{KazhdanActionlem}
    There is a one-parameter subgroup $\lambda:\mathbb{G}_m \rightarrow G$ which acts as $Ad_{\lambda(t)}(x) = t^wx$ for all $x \in \mathfrak{g}^w$. In particular, $Ad_{\lambda(t)}(e)=t^2e$.
\end{lemma}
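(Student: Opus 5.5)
The plan is to build $\lambda$ from the $\mathfrak{sl}_2$-triple $(e,h,f)$ via the Jacobson--Morozov homomorphism, and then simply read off the weights. The triple determines a Lie algebra homomorphism $\phi:\mathfrak{sl}_2 \rightarrow \mathfrak{g}$ sending the standard basis $(E,H,F)$ to $(e,h,f)$. Since $SL_2$ is simply connected, $\phi$ integrates to a homomorphism of algebraic groups $\Phi: SL_2 \rightarrow G$ with $d\Phi = \phi$. I would then take the cocharacter
$$\lambda(t) = \Phi\left(\begin{pmatrix} t & 0 \\ 0 & t^{-1}\end{pmatrix}\right),$$
whose differential is $d\lambda(1) = \phi(H) = h$.

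The next step is to compute $Ad_{\lambda(t)}$ on each $\mathfrak{g}^w$. The composite $Ad\circ \lambda:\mathbb{G}_m \rightarrow GL(\mathfrak{g})$ is a representation of a torus, so it is diagonalisable: $\mathfrak{g}$ splits into character spaces on which $t$ acts by $t^k$. The derivative of this representation is $ad_h$, and on the $t^k$-character space it acts by the scalar $k$. So the $t^k$-character space is contained in the $k$-eigenspace of $ad_h$. Both families of subspaces decompose $\mathfrak{g}$ as a direct sum, so they coincide: the $t^k$-space is exactly $\mathfrak{g}^k$. This gives $Ad_{\lambda(t)}(x) = t^w x$ for all $x \in \mathfrak{g}^w$. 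In particular $[h,e]=2e$ puts $e \in \mathfrak{g}^2$, so $Ad_{\lambda(t)}(e) = t^2 e$.

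The only step needing care is the integration of $\phi$ when $G$ is reductive but not simply connected. To handle it, I would pass to the derived group: $\phi$ lands in $[\mathfrak{g},\mathfrak{g}]$, which is the Lie algebra of the semisimple group $G' = [G,G]$. Let $\tilde G'$ be the simply connected cover of $G'$. The homomorphism $\phi$ integrates to $SL_2 \rightarrow \tilde G'$, because both groups are simply connected and in characteristic $0$ homomorphisms from simply connected groups are determined by their Lie algebra maps. Composing with $\tilde G' \rightarrow G' \hookrightarrow G$ gives $\Phi$. Alternatively, one can cite \cite[Sections 7.3 \& 7.4]{Slodowy} directly, since the lemma is attributed there.
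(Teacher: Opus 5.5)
Your proof is correct and is the standard construction behind the paper's citation of \cite{Slodowy}: you integrate the $\mathfrak{sl}_2$-triple to a homomorphism $SL_2 \rightarrow G$, restrict it to the diagonal torus, and match the $\mathbb{G}_m$-character spaces with the $ad_h$-eigenspaces. The detour through $\tilde G'$ is unnecessary, because only the source $SL_2$ needs to be simply connected for $\phi$ to integrate; also, the fact you are using there is the \emph{existence} of the integrated homomorphism, not the uniqueness statement that homomorphisms are determined by their differentials.
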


\begin{definition}\label{KazhdanActiondef}
    The \emph{Kazhdan action} of $\mathbb{G}_m$ on $\mathfrak{S}$ is defined by
    $$t \cdot x = Ad_{\lambda(t^{-1})}(t^2x),$$
    where $\lambda$ is the cocharacter of Lemma \ref{KazhdanActionlem}.
\end{definition}

\begin{remark}\label{KatsyloActionsrmk}
    The Kazhdan action restricts to an action on $\mathfrak{K}$ and lifts to an action on the restriction of the centraliser $\mathcal I_{\mathfrak{K}}$ to $\mathfrak{K}$.
\end{remark}

The second key property of the Katsylo slice is that its intersection with a given $G$-orbit in $S$ is the orbit of a group acting on the slice.

\begin{definition}\label{ReductiveCentraliserdef}
    The \emph{reductive centraliser} of $e$ (with respect to the $\mathfrak{sl}_2$-triple $(e,h,f)$) is the group
    \begin{equation}
        A = \{g \in G \, | \, Ad_g(x) = x, \, \forall x \in \mathfrak{s}\} = Z_G(\mathfrak{s}),
    \end{equation}
    where $\mathfrak{s}$ is the copy of $\mathfrak{sl}_2$ generated by $(e,h,f)$.
\end{definition}

The restriction of the adjoint action of $G$ to an action of $A$ on $\mathfrak{g}$ defines an $A$-action on $\mathfrak{K}$, which commutes with the Kazhdan action.

\begin{theorem}\label{RedCentraliseractionthm}\cite[Theorems 0.2 and 0.3]{Katsylo}
    The $A$-action on $\mathfrak{K}$ satisfies the following properties.
    \begin{itemize}
        \item The identity component $A^\circ$ of $A$ acts trivially on $\mathfrak{K}$.
        \item If $x, y \in \mathfrak{K}$ are conjugate under $G$, they are also conjugate under $A$.
    \end{itemize}
\end{theorem}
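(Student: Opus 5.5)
The statement is Katsylo's Theorems 0.2 and 0.3. I would prove it by combining Proposition \ref{KatsyloSliceprp} with the Kazhdan contraction of $\mathfrak{K}$ onto $e$. The strategy has two parts. First, show that $A$ is exactly the stabiliser of the slice data; then reduce any $G$-conjugacy between points of $\mathfrak{K}$ to a conjugacy fixing $e$ and $\mathfrak{s}$.

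\textbf{Transversality and the Kazhdan action.} Since $Ad: G\times\mathfrak{K}\to S$ is smooth, at each $x\in\mathfrak{K}$ we get $T_xS=[\mathfrak{g},x]+T_x\mathfrak{K}$. Because $S$ has constant orbit dimension, $\dim\mathfrak{K}=\dim S-\dim\mathcal O_x$, so this sum is direct and the intersection $\mathfrak{K}\cap G\cdot x$ is finite, i.e. zero-dimensional. The Kazhdan action contracts $\mathfrak{S}$ to $e$, since it has positive weights on $\mathfrak{c}_{\mathfrak g}(f)$. Hence every $A^\circ$-orbit and every $G$-orbit meeting $\mathfrak{K}$ is stable under a contracting $\mathbb{G}_m$ that commutes with $A$.

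\textbf{First claim.} $A^\circ$ is connected and acts on $\mathfrak{K}$. The orbit $A^\circ\cdot x$ lies in $G\cdot x\cap\mathfrak{K}$, which is finite, so $A^\circ\cdot x$ is a connected finite set, hence the single point $x$. Therefore $A^\circ$ acts trivially.

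\textbf{Second claim.} Let $x,y\in\mathfrak{K}$ with $y=Ad_g x$. I would form the variety
\[
Z=\{(g,x,y)\in G\times\mathfrak{K}\times\mathfrak{K}: Ad_gx=y\}.
\]
This carries a $\mathbb{G}_m$-action $t\cdot(g,x,y)=(\lambda(t^{-1})g\lambda(t),\,t\cdot x,\,t\cdot y)$. The plan is to take the limit $t\to0$, sending $x,y\to e$. The difficulty is that $g$ need not have a limit. Instead I would argue via the map $Z\to\mathfrak{K}\times\mathfrak{K}$: its image $R$ is the graph of the equivalence relation "$G$-conjugate". This map is quasi-finite over $\mathfrak{K}$ in each factor, and $R$ is closed and $\mathbb{G}_m$-stable.

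By the finiteness above, each irreducible component $R_i$ of $R$ maps finitely and dominantly onto $\mathfrak{K}$. Because $\mathfrak{K}$ is contracted to $e$, each $R_i$ passes through $(e,e)$. Near $(e,e)$, the Slodowy slice structure gives a local description: the stabiliser of $e$ acting on $\mathfrak{S}$ near $e$ reduces, via Levi decomposition of $C_G(e)$, to its reductive part $C_G(e)\cap C_G(h)\cap C_G(f)=A$, since the unipotent radical preserves the slice only trivially. So locally the relation $R$ is generated by $A$, and each component is the graph of some $a\in A/A^\circ$. Connectedness of each $R_i$ plus the $\mathbb{G}_m$-contraction then globalises this: $R_i$ equals the closed graph of the action of some $a\in A$.

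\textbf{Main obstacle.} The hardest step is this local statement at $e$: that $G$-conjugacy between points of $\mathfrak{S}\cap S$ near $e$ is induced by $A$. I would first try Slodowy's local product structure, $G\times\mathfrak{S}\to\mathfrak{g}$ being smooth with fibres controlled by $C_G(e)$. Then I would use that the conjugating elements can be chosen in $C_G(e)$ up to the slice, and project them to the reductive quotient using the $\lambda$-grading, which kills the positive-weight unipotent part in the limit.
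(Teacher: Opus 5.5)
The paper gives no proof of this statement; it is quoted from Katsylo.

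\textbf{First bullet.} Your argument is correct. $A$ preserves $\mathfrak K$, and $A^\circ\cdot x$ is irreducible and lies in the finite set $G\cdot x\cap\mathfrak K$, so $A^\circ\cdot x=\{x\}$.

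The finiteness needs a different justification. Given smoothness of $G\times\mathfrak K\to S$, the equality $\dim\mathfrak K=\dim S-\dim\mathcal O_x$ amounts to finiteness of $G\cdot x\cap\mathfrak K$, so it cannot be derived from constant orbit dimension. Use transversality of the Slodowy slice instead. The condition $[\mathfrak g,x]+\mathfrak c_{\mathfrak g}(f)=\mathfrak g$ holds at $e$, is open, and spreads to all of $\mathfrak S$ by the Kazhdan contraction. Hence $\dim([\mathfrak g,x]\cap\mathfrak c_{\mathfrak g}(f))=\dim\mathcal O_x-\dim\mathcal O_e$, which is $0$ on $S$, so $G\cdot x\cap\mathfrak K$ has zero tangent space at each of its points.

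\textbf{Second bullet: there is a genuine gap.} The ``local statement at $e$'' is the entire content of the claim, and the mechanism you sketch does not prove it.
\begin{itemize}
\item $C_G(e)$ does not act on $\mathfrak S$. Its unipotent radical moves $f$, and so moves $e+\mathfrak c_{\mathfrak g}(f)$ off itself; only $A=C_G(e)\cap C_G(f)$ acts on the slice. So there is no ``action of the stabiliser of $e$ on the slice'' to cut down by a Levi decomposition.
\item Projecting to the reductive quotient does not produce an element conjugating $x$ to $y$. If $g_0=\lim_{t\to0}\lambda(t)^{-1}g\lambda(t)$ exists, it lies in $C_G(e)\cap C_G(h)=A$, but it only records that $e$ is conjugate to $e$.
\item The real obstruction is the one you noticed at the outset, that $g$ need not have a limit, and it has only been relocated. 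For $x,y\in\mathfrak K$ near $e$, nothing forces a conjugating element to lie near $C_G(e)$; it can run off to infinity in $G$.
\end{itemize}

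The supporting claims have the same problem. Closedness of $R$ and finiteness of $R_i\to\mathfrak K$ do not follow from constant orbit dimension. For $\mathbb G_m$ acting on $\mathbb A^2\setminus\{0\}$ with weights $(1,-1)$, all orbits are one-dimensional and $(1,\epsilon)\sim(\epsilon,1)$, yet the limits $(1,0)$ and $(0,1)$ are inequivalent. You also cannot borrow separatedness from the quotient $\mathfrak K/\Gamma$ of Theorem \ref{KatsyloGeometricQuotientthm}, since that quotient already presupposes the statement. Dominance of each $R_i$ is fine, but it comes from smoothness of $\mathcal R\to\mathfrak K$, not from quasi-finiteness.

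\textbf{What is missing.} You need to show that every conjugacy $y=Ad_g(x)$ inside $\mathfrak K$ can be realised by some $g$ for which $\lim_{t\to0}\lambda(t)^{-1}g\lambda(t)$ exists. It would suffice, for instance, that every connected component of the restricted action scheme $\mathcal R$ of Definition \ref{RestrictedActiondef} meets the fibre $C_G(e)$ over $e$.

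Granting this, your ingredients finish the proof, at least for non-singular $S$, where $\mathcal R$ is smooth and $\mathfrak K$ irreducible:
\begin{itemize}
\item Under $(g,x)\mapsto(\lambda(t)^{-1}g\lambda(t),t\cdot x)$, the point $(g,x)$ of $\mathcal R$ limits to $(a,e)$ with $a\in A$.
\item Hence $(g,x)$ lies in $a$ times the component of $\mathcal R$ through the identity section.
\item That component lies in $\mathcal I_{\mathfrak K}$. Its image in $\mathfrak K\times\mathfrak K$ is irreducible, contains the diagonal, and has dimension at most $\dim\mathfrak K$ by your finiteness result.
\item Therefore $Ad_g(x)=Ad_a(x)$.
\end{itemize}
Ruling out components of $\mathcal R$ that never reach $e$, or conjugating elements that escape to infinity, needs sheet-specific input beyond transversality and the contraction. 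The proposal does not supply it.
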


This gives a second description for the geometric quotient for the $G$-action on a sheet $S$. Consider the finite group
\begin{equation}\label{ComponentGroupeqn}
\Gamma = A/A^\circ;
\end{equation}
the map $\Gamma \rightarrow C_G(e)/C_G^\circ(e)$ induced by the inclusion $A \hookrightarrow C_G(e)$ is an isomorphism, i.e. $\Gamma$ is the \emph{component group of $e$}. 

\begin{theorem}\label{KatsyloGeometricQuotientthm}\cite[Theorem 0.4]{Katsylo}
There exists a map $\chi_e:S \rightarrow \mathfrak{K}/\Gamma$ which is a geometric quotient for the $G$-action on $S$. This map makes the diagram
\begin{equation}\label{KastyloGQeqn}
\begin{tikzcd}
\mathfrak{K} \arrow[rd, two heads] \arrow[r, hook] & S \arrow[d] \arrow[d, "\chi_e"] \\
                                                   & \mathfrak{K}/\Gamma            
\end{tikzcd}
\end{equation}
commute.
\end{theorem}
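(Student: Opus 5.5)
The plan is to build $\chi_e$ from the geometric quotient $\chi_S$ of Theorem \ref{BCGeometricQthm} together with the Katsylo slice. We define $\chi_e$ by descent along the smooth surjection $Ad: G\times\mathfrak{K}\to S$ of Proposition \ref{KatsyloSliceprp}. Consider the composite $G\times\mathfrak{K}\to\mathfrak{K}\to\mathfrak{K}/\Gamma$, where the first arrow is projection. This map is well defined because $A^\circ$ acts trivially on $\mathfrak{K}$ (Theorem \ref{RedCentraliseractionthm}), so the $A$-action on $\mathfrak{K}$ factors through $\Gamma$.

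To descend the composite to $S$, we must check that it is constant on the fibres of $Ad$ scheme-theoretically. Write $R=(G\times\mathfrak{K})\times_S(G\times\mathfrak{K})$. It suffices to show that the two projections $R\to\mathfrak{K}\to\mathfrak{K}/\Gamma$ agree.
\begin{itemize}
\item On points this is the second bullet of Theorem \ref{RedCentraliseractionthm}: if $Ad_g x=Ad_{g'}x'$ with $x,x'\in\mathfrak{K}$, then $x'$ is $A$-conjugate to $x$, so $x$ and $x'$ have the same image in $\mathfrak{K}/\Gamma$.
\item To pass from points to a morphism of schemes, I would use that $R$ is reduced. Indeed, $R$ is smooth over $G\times\mathfrak{K}$, and $\mathfrak{K}$ is reduced: it is smooth over $S$ locally (a slice), and $S$ is reduced.
\item A morphism into the separated scheme $\mathfrak{K}/\Gamma$ is then determined by its values on points.
\end{itemize}
Smooth surjective morphisms are fpqc coverings, so fpqc descent produces $\chi_e: S\to\mathfrak{K}/\Gamma$. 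The diagram (\ref{KastyloGQeqn}) commutes by construction, since we may restrict to $\{1\}\times\mathfrak{K}$. Moreover, $\chi_e$ is $G$-invariant.

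It remains to show that $\chi_e$ is a geometric quotient. I would check the standard conditions in turn.
\begin{itemize}
\item \emph{Surjectivity} holds because $\mathfrak{K}\to\mathfrak{K}/\Gamma$ is surjective.
\item \emph{Fibres are exactly orbits.} The points of $S$ in a fibre all meet $\mathfrak{K}$ by surjectivity of $Ad$, and their slice representatives are $\Gamma$-conjugate. Hence they are $G$-conjugate.
\item \emph{Invariant functions.} We need $\mathcal O_{\mathfrak{K}/\Gamma}=(\chi_{e*}\mathcal O_S)^G$. Pull back along the faithfully flat map $Ad$: a $G$-invariant function on (an open of) $S$ becomes a function on $G\times\mathfrak{K}$ that is independent of $G$ and invariant under $\Gamma$. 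It therefore comes from $\mathfrak{K}/\Gamma$.
\item \emph{Openness.} The map $\chi_e$ is open, since $\chi_e\circ Ad$ is the composite of the open projection and the open finite quotient map, and $Ad$ is surjective.
\end{itemize}

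The main obstacle is the descent step, specifically the reducedness and the precise behaviour of $R$; the alternative is to use that $S$ is normal-free and argue only with varieties. If that step is troublesome, a cleaner route is available when $S$ is normal. Compare $\chi_e$ with $\chi_S$ of Theorem \ref{BCGeometricQthm}: the restriction $\chi_S|_{\mathfrak{K}}$ is $\Gamma$-invariant, so it factors through a bijective morphism $\mathfrak{K}/\Gamma\to\mathfrak{c}_S$. Normality of $\mathfrak{K}/\Gamma$ and of $\mathfrak{c}_S$, together with Zariski's main theorem, would then make this map an isomorphism. For a general sheet, however, I would rely on the descent argument above.
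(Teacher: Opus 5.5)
The paper gives no proof of this statement; it quotes it from Katsylo [Theorem 0.4]. Katsylo's Theorems 0.1--0.3 are recorded separately in the paper as Proposition \ref{KatsyloSliceprp} and Theorem \ref{RedCentraliseractionthm}.

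Your proposal is a correct derivation of Theorem 0.4 from exactly those two inputs. You descend $\pi\circ\mathrm{pr}_2\colon G\times\mathfrak{K}\to\mathfrak{K}/\Gamma$ along the fppf cover $Ad$. This works for three reasons:
\begin{itemize}
\item the two composites on $R$ agree on $\mathbb{C}$-points, by the conjugacy statement of Theorem \ref{RedCentraliseractionthm};
\item $R$ is reduced;
\item $\mathfrak{K}/\Gamma$ is affine, hence separated.
\end{itemize}
The quotient axioms then follow as you describe. The identity $\chi_e(U)=\pi(\mathrm{pr}_2(Ad^{-1}U))$ gives openness, hence submersiveness. The invariant-function check can even be done scheme-theoretically: restrict $a^*f=\mathrm{pr}_2^*f$ along $G\times\mathfrak{K}\to G\times S$ and $A\times\mathfrak{K}\to G\times S$, with no appeal to points.

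One justification needs correcting. $\mathfrak{K}$ is not smooth over $S$; it is a subvariety of $S$. The correct reason $R$ is reduced is that $R\to G\times\mathfrak{K}\to S$ is a composite of smooth morphisms over the reduced scheme $S$. Alternatively, $\mathfrak{K}$ is a subvariety by definition, so $G\times\mathfrak{K}$ is reduced.

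Your fallback for normal $S$ is essentially the converse of Remark \ref{KatsyloGQrmk}. It borrows Borho's quotient $\chi_S$ from Theorem \ref{BCGeometricQthm}. It uses the same conjugacy theorem to get a bijection $\mathfrak{K}/\Gamma\to\mathfrak{c}_S$, and Zariski's main theorem to upgrade that bijection to an isomorphism. It is shorter, but it needs Borho's normality hypothesis. The descent argument instead works for arbitrary sheets, which is the generality the paper asserts in Remark \ref{KatsyloGQrmk}.
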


\begin{remark}\label{KatsyloGQrmk}
    Theorem \ref{KatsyloGeometricQuotientthm} is true for arbitrary $S$, with no normality assumption. If $S$ is normal, by uniqueness of geometric quotients and Theorem \ref{BCGeometricQthm}, there is a canonical isomorpism $\mathfrak{c}_S \cong \mathfrak{K}/\Gamma$. 
    
    If $S$ is normal, the quotient map $\mathfrak{K} \rightarrow \mathfrak{c}_S$ is $\mathbb{G}_m$-equivariant with respect to the Kazhdan action on $\mathfrak{K}$ and the \emph{square} of the $\mathbb{G}_m$-action on $\mathfrak{c}_S$ defined in Remark \ref{BCEquivariancermk}. We show in Corollary \ref{KatsyloAffinecor} that if $S$ is non-singular, the Kazhdan action admits a square-root compatible with the usual $\mathbb{G}_m$-action on $\mathfrak{c}_S$.
\end{remark}

\subsection{Examples of sheets}\label{Sheetsegsbn}
We recall the descriptions of the sheets in $\mathfrak{gl}_n$ and $\mathfrak{sp}_4$, and note how far the geometric features of the regular sheet carry over to these cases. These examples form the basis for Section \ref{Hitchinegscn}. 

We begin by considering the sheets for $G=GL_n$, which have much in common with the regular case. Sheets in Dynkin type A have been well-studied, e.g. see \cite{OW} and \cite{Kraft}.

\begin{example}\label{GLnSheetexm}
    Let $G=GL_n$. Every Levi subgroup $L$ of $G$ is a product of general linear groups $GL_{m_i}$, for some $m_i \in \mathbb{N}^+$ with $$\sum_{i}m_i = n.$$
    In particular, the conjugacy classes of Levis of $GL_n$ are in bijection with partitions ${\bf m} =(m_1 \geq ... \geq m_r)$.

    Similarly, there is a bijection between nilpotent orbits in $\mathfrak{gl_n}$ and partitions ${\bf n} = (n_1 \geq ... \geq n_s)$ corresponding to their Jordan normal form. We recall the following definition.

    \begin{definition}\label{ConjPartitiondef}
        Let ${\bf m} = (m_1 \geq ... \geq m_r)$ be a partition of $n$. The \emph{conjugate partition} to ${\bf m}$ is the partition ${\bf m}^* = (m^1 \geq ... \geq m^s)$ of $n$ with $s= m_1$ and
        \begin{equation}
            m^i = \#\{m_j \, | \, m_j \geq i \}.
        \end{equation}
    \end{definition}

    \begin{proposition}\label{GLninductionprp}\cite[Satz 2.2]{Kraft}, \cite[Satz 4.8]{Borho2}
        Let $L$ be a Levi subgroup corresponding to the partition ${\bf m}$. The Dixmier sheet associated to $L$ (defined as in Remark \ref{DixmierSheetrmk}) contains the nilpotent orbit corresponding to ${\bf m}^*$.
    \end{proposition}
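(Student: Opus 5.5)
The plan is to exhibit an element of the nilpotent orbit with Jordan type ${\bf m}^*$ inside the closure of the decomposition class of $(L,0)$, and then check that it lies in the sheet itself by comparing centraliser dimensions. Write $L = \prod_i GL_{m_i}$. Its Dixmier sheet $S$ contains the dense decomposition class $\mathcal D$ of semisimple elements whose centraliser is conjugate to $L$. Concretely, $\mathcal D$ consists of diagonalisable matrices having $r$ distinct eigenvalues $\lambda_1,\dots,\lambda_r$ with multiplicities $m_1,\dots,m_r$. Every element of $\mathcal D$ has centraliser dimension $d = \dim \mathfrak{l} = \sum_i m_i^2$. By Definition \ref{Sheetdef}, $S$ is the irreducible component of $\mathfrak{g}_d$ containing $\mathcal D$; more precisely, $S = \overline{\mathcal D}\cap \mathfrak{g}_d$, since $\mathcal D$ is dense in $S$ by Remark \ref{SheetClassificationrmk}. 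It therefore suffices to find a nilpotent $e\in\overline{\mathcal D}$ of type ${\bf m}^*$ and to verify that $\dim C_G(e) = d$.

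For the first step I would use a parabolic (Richardson) construction. Let $P = LU$ be the standard parabolic with Levi $L$. For $z\in\mathfrak{z}$ with distinct $\lambda_i$, every element of $z+\mathfrak{u}$ is $P$-conjugate to $z$, so $z+\mathfrak{u}\subset \mathcal D$. Scaling $z\to 0$, using the $\mathbb{G}_m$-action of Remark \ref{ActionsSheetsrmk}, shows that $\mathfrak{u}\subset\overline{\mathcal D}$. The Richardson orbit of $P$ meets $\mathfrak{u}$ densely. I would identify this orbit explicitly. Order the blocks $m_1\ge\dots\ge m_r$, and take $e\in\mathfrak{u}$ mapping a basis of block $i$ injectively into block $i+1$ (or the reverse, depending on orientation), chosen generically. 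A direct computation of $\dim\ker e^k$ gives $\dim\ker e^k = \sum_{j\le k} m^j$, where $m^j$ is the $j$th part of the conjugate partition. Equivalently, the Jordan type of $e$ is ${\bf m}^*$. This is the standard Gerstenhaber–Hesselink description of Richardson orbits in type A.

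For the second step I would use the formula $\dim \mathfrak{c}_{\mathfrak{gl}_n}(e) = \sum_j (2j-1)\,n_j$ for a nilpotent of type ${\bf n}$. This formula can be rewritten as $\sum_i (n^i)^2$, where the $n^i$ are the parts of the conjugate partition ${\bf n}^*$. Applying it to ${\bf n}={\bf m}^*$, and using $({\bf m}^*)^*={\bf m}$, gives $\dim C_G(e)=\sum_i m_i^2 = d$. Alternatively, the Richardson property gives $\dim G\cdot e = 2\dim\mathfrak{u} = \dim G-\dim L$ directly. Hence $e\in\overline{\mathcal D}\cap\mathfrak{g}_d$.

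The remaining point, which I expect to be the main subtlety, is that $e$ lies in the component $S$ and not in some other component of $\mathfrak{g}_d$ that meets $\overline{\mathcal D}$. I would handle this by dimension. $\overline{\mathcal D}\cap\mathfrak{g}_d$ is a locally closed subset of $\mathfrak{g}_d$ containing $\mathcal D$. Any sheet $S'\ni e$ would be contained in $\mathfrak{g}_d$, but $\overline{\mathcal D}$ is irreducible, and $\overline{\mathcal D}\cap\mathfrak{g}_d$ is an open subset of it. That open subset is therefore irreducible and contained in a single irreducible component of $\mathfrak{g}_d$, namely $S$. Hence $e\in S$.
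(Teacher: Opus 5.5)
Your argument is correct in outline, but the Jordan-type computation as written contains a slip. With $m_1\ge\cdots\ge m_r$ and $\mathfrak{u}=\bigoplus_{i<j}\mathrm{Hom}(V_j,V_i)$, the injections must run $V_{i+1}\hookrightarrow V_i$. The other orientation is impossible unless $m_i=m_{i+1}$. Then $\ker e^k=V_1\oplus\cdots\oplus V_k$, so $\dim\ker e^k=m_1+\cdots+m_k$ is a partial sum of ${\bf m}$, not of ${\bf m}^*$. The formula $\dim\ker e^k=\sum_{j\le k}m^j$ you wrote would force Jordan type ${\bf m}$; for instance, for $L=T$ it describes $e=0$. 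What identifies the type as ${\bf m}^*$ is that the number of Jordan blocks of size $\ge k$, namely $\dim\ker e^k-\dim\ker e^{k-1}$, equals $m_k$.

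With this corrected, the rest goes through, and no genericity is needed for this particular $e$. Your centraliser count $\sum_i m_i^2=d$ and the closing irreducibility argument are fine. The openness of $\overline{\mathcal D}\cap\mathfrak{g}_d$ in $\overline{\mathcal D}$ should be justified explicitly: it follows from upper semicontinuity of $x\mapsto\dim\mathfrak{c}_{\mathfrak g}(x)$ together with $\dim\mathfrak{c}_{\mathfrak g}(x)\ge d$ on $\overline{\mathcal D}$.

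The paper gives no argument of its own. It imports Borho's Satz 4.8, which describes every element of the sheet with data $(L,0)$ through induced orbits. For nilpotent $x$ this yields $\mathrm{Ind}_{\mathfrak l}^{\mathfrak g}(0)$, the Richardson orbit of a parabolic with Levi factor $L$. Kraft's type A result then identifies this orbit as having partition ${\bf m}^*$. You instead prove only the asserted inclusion, directly, from $\mathfrak u\subseteq\overline{\mathcal D}$, a dimension count, and irreducibility of $\overline{\mathcal D}\cap\mathfrak g_d$. This is elementary and self-contained. The cited theory gives more: it shows that the orbit of type ${\bf m}^*$ is the only nilpotent orbit in the sheet, and it provides the pointwise description of the sheet's elements used in Proposition \ref{SmValuedHiggsGLnprp}.
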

In particular, this implies that every sheet of $\mathfrak{gl}_n$ is Dixmier and that the sheets are pairwise disjoint.

We now consider the adjoint quotient space $\mathfrak{c}_S$ of Section \ref{SheetsQuotientssbn}; we use the notation of Proposition \ref{BorhoChevalleyprp} and Theorem \ref{BCGeometricQthm}. All the sheets of $\mathfrak{gl}_n$ are non-singular (by Theorem \ref{ClassicalSheetsthm} below) so that Theorem \ref{BCGeometricQthm} applies. 
    
Let $S$ be a sheet of $\mathfrak{gl}_n$ associated to a Levi subgroup $L$, with partition ${\bf m} = (m_1 \geq ... \geq m_r)$, and let ${\bf n} = (n_1 \geq ... \geq n_s)$ be its conjugate partition. Then $\mathfrak{z}$ is a vector space of dimension $r=n_1$, and $W_L$ is the product 
    $$W_L = \prod_{i=1}^sW_i,$$
    where $W_i=Sym_{l_i}$ is the symmetric group on $l_i=n_i - n_{i+1}$ elements.
    
    Moreover, there is a direct sum decomposition
    \begin{equation}\label{GLncentredecompeqn}
        \mathfrak{z} = \bigoplus_{i=1}^s\mathfrak{z}_i
    \end{equation}
    such that $\mathfrak{z}_i$ has dimension $l_i$, and the action of $W_L$ on $\mathfrak{z}$ decomposes into coordinate permutation actions of $W_i$ on $\mathfrak{z}_i$. Thus, $\mathfrak{c}_S = \mathfrak{z}/W_L$ is an affine space with a product decomposition
    \begin{equation}\label{GLnQuotientdecompeqn}
        \mathfrak{c}_S = \prod_{i=1}^s\mathfrak{c}_i = \prod_{i=1}^sSym^{l_i}(\mathbb{C})
    \end{equation}
    arising from (\ref{GLncentredecompeqn}); $Sym^{l_i}(\mathbb{C})$ denotes the $l_i$-th symmetric product of the variety $\mathbb{C}$, which is the space of unordered $l_i$-tuples of complex numbers. The map $\chi_S:S \rightarrow \mathfrak{c}_S$ can also be decomposed into its constituents $\chi_i:S \rightarrow \mathfrak{c}_i$. It is straightforward to describe $\chi_i:S \rightarrow \mathfrak{c}_i$ explicitly on the open subset $S^{ss} \subseteq S$ of semisimple elements. For any $x \in S^{ss}$, $x$ has exactly $l_m$ distinct eigenvalues of multiplicity $m$ for each value of $m \in \{1, \,  ...,\, s\}$; then
    \begin{equation}\label{ChevalleyMapGLneqn}
        \chi_i(x) = Sym^{l_i}(\boldsymbol{ \lambda}_i),
    \end{equation}
    where $\boldsymbol{ \lambda}_i$ is the tuple of eigenvalues of $x$ which occur with multiplicity $i$.

    For any element $x \in \mathfrak{gl}_n$, the group centraliser of $x$ is connected; so in particular, the component group of any nilpotent element in $\mathfrak{gl}_n$ is trivial. As a result, a choice of Katsylo slice $\mathfrak{K}$ determines a section $\mathfrak{c}_S \hookrightarrow S$ of $\chi_S$.

    While the sheet itself is non-singular, so in particular normal, its closure $\overline{S}$ in $\mathfrak{gl}_n$ is not normal and thus the map $\nu_S:\mathfrak{c}_S \rightarrow \mathfrak{t}/W$ of Remark \ref{GeometricQuotientrmk} may fail to be injective. This occurs, e.g. for the subregular sheet in $\mathfrak{gl}_4$ \cite[6.1]{Borho2}.
\end{example}
The sheets of $\mathfrak{gl}_n$ are particularly well-behaved and display many similarities with the regular sheet, but this is somewhat unrepresentative of the situation for general $G$. The low-rank example of $Sp_4$ provides a glimpse of the complications which can arise in general.
\begin{example}\label{Sp4Sheetexm}
    Let $G=Sp_4$. There are four Levi subgroups of $G$ up to conjugacy: the torus $T$, a copy of $GL_2$, a copy of $\mathbb{G}_m \times Sp_2$, and the full group $Sp_4$. There are also four nilpotent orbits: the regular orbit $\mathcal O_{reg}$, the subregular orbit $\mathcal O_{sub}$, the minimal orbit $\mathcal O_{min}$, and the zero orbit $0$. However, a relationship between the Levi subgroups and nilpotent orbits as in Proposition \ref{GLninductionprp} is not possible in this case.
    
    \begin{table}[ht]
        \centering
        \caption{Sheets in $Sp_4$}
        \begin{tabular}{|c|c|c|c|c|}
        \hline
            Sheet&Decomposition data & Nilpotent orbit & dim $\mathcal I_S$ & dim $\mathfrak{c}_S$  \\
            \hline
             $\mathfrak{g}^{reg}$ & $(T,0)$& $\mathcal O_{reg}$ & $2$ & $2$ \\
             $S_{Dix}$&$(\mathbb{G}_m \times Sp_2,0)$ & $\mathcal O_{sub}$ & $4$ & $1$ \\
             $S_{Dix}'$&$(GL_2,0)$& $\mathcal O_{sub}$ & $4$ & $1$ \\
             $\mathcal O_{min}$&$(Sp_4,\mathcal O_{min})$ & $\mathcal O_{min}$ & $6$ & $0$ \\
             $0$&$(Sp_4,0)$ & $0$ & $10$ & $0$ \\
             \hline
        \end{tabular}
        \label{Sp4Sheettbl}
    \end{table}
    There are five sheets in $\mathfrak{sp}_4$, listed in Table \ref{Sp4Sheettbl}. The table gives decomposition data for the sheet, the nilpotent orbit contained in the sheet, the relative dimension of the centraliser $\mathcal I_S$, and the dimension of the adjoint quotient space $\mathfrak{c}_S$. In contrast with Example \ref{GLnSheetexm}, there is a non-Dixmier sheet, the  rigid orbit $\mathcal O_{min}$, and additionally, the sheets $S_{Dix}$ and $S_{Dix}'$ have non-trivial intersection along the subregular orbit $\mathcal O_{sub}$.

    As for $\mathfrak{gl}_n$, all of the sheets of $\mathfrak{sp}_4$ are non-singular by Theorem \ref{ClassicalSheetsthm} below, and the adjoint quotient spaces $\mathfrak{c}_S$ are affine spaces. However, the geometric quotient map $\chi_S:S \rightarrow \mathfrak{c}_S$ for $S=S_{Dix}$ does not admit a section transverse to the $G$-action. This is a consequence of the non-triviality of the action of the component group $\Gamma$ of a nilpotent $e \in S$ on the Katsylo slice $\mathfrak{K}$ to $S_{Dix}$ at $e$ (see Theorem \ref{KatsyloGeometricQuotientthm}). As we will see in Section \ref{AdjointQscn}, this phenomenon is significant for the behaviour of the Hitchin fibration, so we consider this particular case in greater detail.

    We choose a matrix representation for $Sp_4$ as follows. Let
    $$J = \begin{pmatrix}
        0 & 0 & 0 & 1 \\
        0 & 0 & 1 & 0 \\
        0 & -1 & 0 & 0 \\
        -1 & 0 & 0 & 0
    \end{pmatrix}$$
    and define a non-degenerate skew-symmetric form $\Omega$ on $\mathbb{C}^4$ by $\Omega(v,w) = v^TJw$. Using the corresponding matrix representations for $Sp_4$ and $\mathfrak{sp}_4$, we can describe the adjoint orbits contained in $S_{Dix}$ explicitly as the orbits of
    $$\begin{pmatrix}
        t & 0 & 0 & 0 \\
        0 & 0 & 0 & 0 \\
        0 & 0 & 0 & 0 \\
        0 & 0 & 0 & -t
    \end{pmatrix}$$
    for all non-zero $t \in \mathbb{C}$ together with the orbit of
    $$e =\begin{pmatrix}
        0 & 0 & 1 & 0 \\
        0 & 0 & 0 & 1 \\
        0 & 0 & 0 & 0 \\
        0 & 0 & 0 & 0
    \end{pmatrix}.$$
    We can complete $e$ to the $\mathfrak{sl}_2$-triple $(e,h,f)$, where
    $$h = \begin{pmatrix}
        1 & 0 & 0 & 0\\
        0 & 1 & 0 & 0 \\
        0 & 0 & -1 & 0 \\
        0 & 0 & 0 & -1
    \end{pmatrix}
    \text{and }
    f = \begin{pmatrix}
        0 & 0 & 0 & 0 \\
        0 & 0 & 0 & 0 \\
        1 & 0 & 0 & 0 \\
        0 & 1 & 0 & 0
    \end{pmatrix}.$$
    The Katsylo slice $\mathfrak{K}$ for $S_{Dix}$ associated with this triple is given by
    \begin{equation}\label{Sp4Katsylo1eqn}
        \mathfrak{K} = \left\{ x_t \, |\,  t \in \mathbb{C} \right\},
    \end{equation}
    where
    \begin{equation}\label{Sp4Katsylo2eqn}
        x_t = \frac{1}{4}\begin{pmatrix}
            2t & 0 & 1 & 0 \\
            0 & -2t & 0 & 1 \\
            t^2 & 0 & 2t & 0 \\
            0 & t^2 & 0 & -2t
        \end{pmatrix}.
    \end{equation}
    The component group $\Gamma$ of $e$ is a group of order $2$, generated by the class of
    $$s= \begin{pmatrix}
        0 & 1 & 0 & 0 \\
        1 & 0 & 0 & 0 \\
        0 & 0 & 0 & 1 \\
        0 & 0 & 1 & 0
    \end{pmatrix},$$
    and since $Ad_s(x_t) = x_{-t}$, $\Gamma$ acts non-trivially on $\mathfrak{K}$.
    \end{example}

    We conclude this section by stating the main theorem of \cite{ImHof} which guarantees smoothness for sheets in classical Lie algebras (this is not true in general, see e.g. \cite[8.11]{Slodowy}). We make precise our terminology: by a \emph{classical Lie algebra} we mean a reductive Lie algebra whose semisimple part is a sum of copies of $\mathfrak{sl}_n$, $\mathfrak{so}_n$ and $\mathfrak{sp}_{2n}$. By a \emph{classical group} we mean any reductive group $G$ with classical Lie algebra $Lie(G)$. This is broader than the usual definition.

    \begin{theorem}\label{ClassicalSheetsthm}\cite{ImHof}
        If $S$ is a sheet in a classical Lie algebra, $S$ is non-singular.
    \end{theorem}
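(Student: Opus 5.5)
The plan is to reduce smoothness of $S$ to smoothness of its Katsylo slice. After that I would reduce to simple classical factors, and finally verify smoothness of the slice by an explicit parametrisation. I have not checked this parametrisation in the orthogonal and symplectic cases.

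First, the reductions. If $\mathfrak{g} = \mathfrak{z}(\mathfrak{g}) \oplus \bigoplus_i \mathfrak{g}_i$, then centraliser dimensions add. So every sheet of $\mathfrak{g}$ has the form $\mathfrak{z}(\mathfrak{g}) \times \prod_i S_i$ with each $S_i$ a sheet of $\mathfrak{g}_i$, and it suffices to treat $\mathfrak{g}$ simple of type $A$, $B$, $C$ or $D$. Next, Proposition \ref{KatsyloSliceprp} says that $Ad: G \times \mathfrak{K} \to S$ is smooth and surjective. Smoothness descends along smooth surjective morphisms, and $G \times \mathfrak{K}$ is smooth if and only if $\mathfrak{K}$ is. Hence $S$ is non-singular if and only if the Katsylo slice $\mathfrak{K} = S \cap (e + \mathfrak{c}_\mathfrak{g}(f))$ is non-singular. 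Using the contracting Kazhdan action (Remark \ref{KatsyloActionsrmk}), whose unique fixed point on $\mathfrak{K}$ is $e$, it is enough to check smoothness of $\mathfrak{K}$ at the single point $e$. Every other point has $e$ in the closure of its $\mathbb{G}_m$-orbit, and the singular locus is closed and $\mathbb{G}_m$-stable.

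Second, I would construct a parametrisation of $\mathfrak{K}$ by induction. Let $(L,\mathcal O)$ be the decomposition data of $S$ and let $P = L U_P$ be a parabolic subgroup with Levi factor $L$. The sheet closure is dominated by $G \times_P (\mathfrak{z} + \overline{\mathcal O} + \mathfrak{u}_P)$, which is the generalised Grothendieck--Springer picture. Intersecting the preimage of $e + \mathfrak{c}_\mathfrak{g}(f)$ with the fibre over $\mathfrak{z} + e_L$, where $e_L \in \mathcal O$, yields a finite surjective map from an affine space $\mathfrak{z} \times \mathfrak{K}_L$ onto $\mathfrak{K}$. Here $\mathfrak{K}_L$ is the Katsylo slice of the rigid orbit $\mathcal O$ in $\mathfrak{l}$, which is the point $e_L$. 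In the classical types, nilpotent orbits, induction and rigidity are described combinatorially by partitions (Kraft, Kempken, Spaltenstein; compare Proposition \ref{GLninductionprp}). I would use this to pass to an inductive step where $L$ is a maximal Levi, of the form $GL_k \times G'$ with $G'$ of the same type. For such $L$ one can write explicit normal forms for elements of $e + \mathfrak{c}_\mathfrak{g}(f)$ lying in $S$, much as in (\ref{Sp4Katsylo2eqn}).

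Third, I would show that the resulting map $\mathfrak{z} \to \mathfrak{K}$ is either a closed embedding or the quotient by a group generated by reflections (in the $\mathfrak{sp}_4$ example, $t \mapsto x_t$ is already an embedding). Either way, $\mathfrak{K}$ is an affine space, or at least its tangent space at $e$ has dimension $\dim \mathfrak{z}$. The tangent-space form is the cleaner target. I would compute $T_e\mathfrak{K} \subseteq \mathfrak{c}_\mathfrak{g}(f)$ as the first-order deformations of $e$ preserving the rank conditions defining $\overline{S}$. These are conditions on the ranks of powers of $x$ and of the shifts $x - \lambda$, read off from the partition. I would then bound $\dim T_e\mathfrak{K} \leq \dim \mathfrak{z}$ by linear algebra on the Jordan blocks.

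The main obstacle is this last bound in types $B$, $C$ and $D$. There several sheets can share a nilpotent orbit, as with $S_{Dix}$ and $S_{Dix}'$ in Example \ref{Sp4Sheetexm}, so the rank conditions cut out a union of slices. One must isolate the component belonging to $S$ and show that it is not tangent to the others at $e$. I would first try to separate the components by the eigenvalue-multiplicity pattern of the linear terms of the deformation. The fallback is an induction on rank via the maximal Levi step, using that sheets of $\mathfrak{gl}_k \times \mathfrak{g}'$ are products.
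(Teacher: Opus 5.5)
Your preliminary reductions are sound. Sheets of $\mathfrak{z}(\mathfrak{g})\oplus\bigoplus_i\mathfrak{g}_i$ are products of sheets. Proposition \ref{KatsyloSliceprp} makes smoothness of $S$ equivalent to smoothness of $\mathfrak{K}$. The contracting Kazhdan action then reduces everything to the single point $e$. The route you aim for next, showing that Katsylo's map $\mathcal E:\mathfrak{z}\to\mathfrak{K}$ of Lemma \ref{KatsyloMorphismlem} is a quotient by a reflection group, is exactly the strategy the paper attributes to \cite{ImHof} (Remark \ref{ClassicalKatsylormk}); the paper proves nothing itself here and only cites it.

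The gap is that your proposal stops where that proof begins, and the steps you do describe cannot see the point $e$ scheme-theoretically. Suppose $\mathcal E$ is a finite surjection that is generically a Galois quotient by a reflection group $W'$. This only identifies the \emph{normalisation} of $\mathfrak{K}$ with the affine space $\mathfrak{z}/W'$. $\mathfrak{K}$ could still be non-normal at $e$, just as a line maps finitely and birationally onto a cuspidal curve. Katsylo's construction of $\mathcal E$ does not use smoothness of $S$, and singular sheets exist outside the classical types (\cite[8.11]{Slodowy}). So neither the existence of $\mathcal E$ nor its generic behaviour proves anything. You also cannot borrow Corollaries \ref{KatsyloGpWeylGpcor} and \ref{KatsyloAffinecor}: the isomorphism $\mathfrak{z}/W_S\cong\mathfrak{K}$ there comes from Zariski's main theorem, with smoothness of $\mathfrak{K}$ as an input.

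What must be proved is that $\mathbb{C}[\mathfrak{K}]\to\mathbb{C}[\mathfrak{z}]^{W'}$ is surjective. For instance, show that a system of basic invariants of $W'$ arises by pulling back linear coordinates on $e+\mathfrak{c}_{\mathfrak g}(f)$ along $\mathcal E$, so that $\mathfrak{z}/W'\to e+\mathfrak{c}_{\mathfrak g}(f)$ is a closed immersion. In $\mathfrak{sp}_4$ this is visible because $t$ is, up to a scalar, a matrix entry of $x_t$.

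Your tangent-space variant needs the same input. You would need genuine equations in $x$ that vanish on $\mathfrak{K}$ but not on the slices of the other sheets through $e$, and whose differentials at $e$ cut $\mathfrak{c}_{\mathfrak g}(f)$ down to dimension $\dim\mathfrak{z}$. Conditions on the ranks of $x-\lambda$ involve an eigenvalue $\lambda$, which is a function only on a finite cover of $\mathfrak{K}$. First-order deformations computed there bound the tangent space of the cover, not of $\mathfrak{K}$. For types $B$, $C$, $D$ you offer no equations at all. That explicit case-by-case computation is the missing core of the proof.

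Your fallback cannot replace it. Induction on rank through Levi subalgebras can at best control $S$ near points with $x_{ss}\neq 0$, where $S$ is locally modelled on sheets of $\mathfrak{c}_{\mathfrak g}(x_{ss})$. Your own Kazhdan argument shows the whole question sits at $e$, where $\mathfrak{c}_{\mathfrak g}(e_{ss})=\mathfrak{g}$. Likewise, knowing that the sheets of a maximal Levi $\mathfrak{gl}_k\times\mathfrak{g}'$ are smooth only yields, through induction in stages, a smooth variety mapping onto $S$ with finite fibres near $e$. Smoothness does not descend along such maps.
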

    
    For the main body of this work, we will focus exclusively on sheets which are non-singular. This ensures that $\chi_S:S \rightarrow \mathfrak{c}_S$ is a geometric quotient (by Theorem \ref{BCGeometricQthm}) and for any Katsylo slice $\mathfrak{K}$ there is a canonical identification $\mathfrak{K}/\Gamma \cong \mathfrak{c}_S$ (by Remark \ref{KatsyloGQrmk}).
\section{The adjoint quotient stack of a sheet}\label{AdjointQscn}
For a non-singular sheet $S$, we describe the structure of the map $\chi_{S,G}:[S/G] \rightarrow \mathfrak{c}_S$ induced by the geometric quotient map $\chi_S$ of Theorem \ref{BCGeometricQthm}. As in \cite{Ngo3}, this dictates the behaviour of the corresponding generalised Hitchin fibration. As an intermediate step, we prove a result characterising the smoothness properties of the centraliser group scheme on a sheet.

\subsection{The smooth centraliser}\label{SmoothCentralisersbn}
It is well known that the centraliser is always smooth over the regular sheet (see e.g. \cite{DG}). This does not extend to the general case, but we can consider instead a finite-index subgroup scheme of the centraliser which is smooth. We will use the technology and terminology of groupoid schemes and quotients throughout this section (see Sections 1 and 2 of \cite{KM}).

Motivated by smoothness arguments in the regular case using the Kostant section (see e.g. \cite[Section 3.3]{Riche}), we first consider the restriction of the centraliser to a Katsylo slice. We begin by stating a simple but important lemma of \cite{PT}.

\begin{lemma}\cite[Remark 6(b)]{PT}\label{KatsyloSmoothlem}
    Any Katsylo slice $\mathfrak{K}$ for a non-singular sheet $S$ is non-singular and irreducible.
\end{lemma}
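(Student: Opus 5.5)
The plan is to get non-singularity from Proposition \ref{KatsyloSliceprp}, and irreducibility from the Kazhdan action together with connectedness of $S$.

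\emph{Non-singularity.} By Proposition \ref{KatsyloSliceprp}, the action map $Ad:G\times\mathfrak{K}\rightarrow S$ is smooth. Since $S$ is non-singular, $G\times\mathfrak{K}$ is therefore smooth over $\mathbb{C}$, because a smooth morphism into a smooth variety has smooth source. Now $G\times\mathfrak{K}\rightarrow\mathfrak{K}$ is the base change of $G\rightarrow \mathrm{Spec}\,\mathbb{C}$, so it is smooth and surjective, hence faithfully flat. Smoothness over $\mathbb{C}$ descends along faithfully flat morphisms; concretely, regularity of the local rings descends along flat local homomorphisms. It follows that $\mathfrak{K}$ is non-singular.

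\emph{Irreducibility.} Since $\mathfrak{K}$ is non-singular, it suffices to show that it is connected. I would use the Kazhdan action. Write $\mathfrak{S}=e+\mathfrak{c}_\mathfrak{g}(f)$; since $\mathfrak{c}_\mathfrak{g}(f)$ lies in non-positive $h$-weights, the action $t\cdot(e+y)=e+t^{2-w}y$ for $y\in\mathfrak{g}^w$ has all exponents $2-w\geq 2$. Hence for every $x\in\mathfrak{S}$ we have $\lim_{t\to 0}t\cdot x=e$. By Remark \ref{KatsyloActionsrmk}, $\mathfrak{K}$ is stable under this action. If $\mathfrak{K}$ were closed in $\mathfrak{S}$, the limit $e$ would lie in every connected component of $\mathfrak{K}$. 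Since $\mathfrak{K}$ is smooth, $e$ lies in a unique component, so $\mathfrak{K}$ would be connected.

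\emph{Main obstacle.} The difficulty is that $S$ is only locally closed, so the limit argument needs care. Each orbit map $t\mapsto t\cdot x$ extends to $\mathbb{A}^1\rightarrow\mathfrak{S}$ with image in $\overline{S}$. Each point $t\cdot x$ has centraliser dimension $d$ for $t\neq 0$, and $e$ also has centraliser dimension $d$ since $e\in S$. Being a limit of points of $S$ with the same centraliser dimension, $e$ lies in $\overline{S}\cap\mathfrak{g}_d$. However, this set may contain points of other sheets, so one cannot immediately conclude that the whole curve $\mathbb{A}^1\rightarrow\overline{S}$ lands in $S$.

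I would handle this as follows. Every component $C$ of $\mathfrak{K}$ is $\mathbb{G}_m$-stable, because $\mathbb{G}_m$ is connected. The closure $\overline{C}$ in $\mathfrak{S}$ then contains $e$, by the limit computation. Moreover, near $e$ the Kazhdan action contracts the slice, and $\mathfrak{K}$ is open in $\overline{S}\cap\mathfrak{S}$. Combining these, every component of $\mathfrak{K}$ passes through $e$. Since $\mathfrak{K}$ is smooth at $e$, exactly one component can pass through $e$, and therefore $\mathfrak{K}$ is irreducible.

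An alternative route avoids limits altogether. Smoothness of $Ad$ gives $\dim G\times\mathfrak{K}$ locally constant. Irreducibility of $S$, together with the fact from Theorem \ref{KatsyloGeometricQuotientthm} that $\Gamma$ acts transitively on the fibres of $\mathfrak{K}\rightarrow\mathfrak{c}_S$, shows that $\Gamma$ permutes the components of $\mathfrak{K}$ transitively. Each component contains the $\Gamma$-fixed point $e$, and smoothness at $e$ again forces there to be a single component.
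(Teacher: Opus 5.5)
The paper does not prove this lemma; it quotes \cite[Remark 6(b)]{PT}. Your proposal is a correct, self-contained proof. It uses only Proposition \ref{KatsyloSliceprp} and the Kazhdan action, so it is visibly independent of the later results that rely on this lemma (e.g.\ Lemma \ref{RASmoothlem} and Corollary \ref{KatsyloAffinecor}). That is what it buys over the bare citation. The non-singularity step is right: $G\times\mathfrak{K}$ is smooth as the source of a smooth map to $S$, and regularity descends along the faithfully flat projection to $\mathfrak{K}$.

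For irreducibility, the ``main obstacle'' does not exist, and you should drop that paragraph. Take $x=e+\sum_{w\le 0}y_w\in\mathfrak{K}$ and consider the map $\mathbb{A}^1\to\mathfrak{S}$, $t\mapsto e+\sum_w t^{2-w}y_w$.
\begin{itemize}
\item For $t\neq 0$ it lands in $\mathfrak{K}$ by Remark \ref{KatsyloActionsrmk}.
\item At $t=0$ it gives $e$, which lies in $\mathfrak{K}=S\cap\mathfrak{S}$ by definition.
\end{itemize}
So its image is a connected subset of $\mathfrak{K}$ joining $x$ to $e$. Hence $\mathfrak{K}$ is connected, with no reference to $\overline{S}$ at all.

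Your worry that $\overline{S}\cap\mathfrak{g}_d$ might contain points outside $S$ is also unfounded. $S$ is an irreducible component of $\mathfrak{g}_d$, hence closed in $\mathfrak{g}_d$, so $\overline{S}\cap\mathfrak{g}_d=S$. In fact your conditional ``if $\mathfrak{K}$ were closed in $\mathfrak{S}$'' holds. The Kazhdan action preserves centraliser dimension and contracts $\mathfrak{S}$ onto $e$, so every point of $\mathfrak{S}$ has centraliser dimension at most $d$. Therefore $\mathfrak{K}=\overline{S}\cap\mathfrak{S}$ is closed in $\mathfrak{S}$.

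Your eventual fix is valid, but only because an irreducible component $C$ is closed in $\mathfrak{K}$, so $e\in\overline{C}\cap\mathfrak{K}=C$. The appeals to openness of $\mathfrak{K}$ in $\overline{S}\cap\mathfrak{S}$ and to contraction ``near $e$'' play no role.

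The alternative route is also sound, but two inputs should be stated explicitly:
\begin{itemize}
\item Theorem \ref{RedCentraliseractionthm}: $G$-conjugate points of $\mathfrak{K}$ are $A$-conjugate, and $A$ fixes $e$.
\item Each $Ad(G\times C)$ is open in $S$. This follows from flatness of $Ad$ once the components of the smooth variety $\mathfrak{K}$ are known to be disjoint.
\end{itemize}
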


In fact, if $S$ is non-singular the Katsylo slice is an affine space by Corollary \ref{KatsyloAffinecor} below.

Let $S$ be a non-singular sheet of $\mathfrak{g}$ and fix a choice of $\mathfrak{sl}_2$-triple $(e,h,f)$ with $e \in S$, using the notation of (\ref{SL2Tripleeqn}). Let $\mathfrak{K}$ be the Katsylo slice to $e$ as in Definition \ref{KatsyloSlicedef} and let $A$ be the reductive centraliser as in Definition \ref{ReductiveCentraliserdef}. We define a group which plays a key role in the structure of the centraliser group scheme and the adjoint quotient stack.

\begin{definition}\label{KatsyloGpdef}
    The \emph{Katsylo group} for $S$ (with respect to the $\mathfrak{sl}_2$-triple $(e,h,f)$) is the group $F=A/N$ where $N$ is the kernel of the $A$-action on $\mathfrak{K}$ defined in Theorem \ref{RedCentraliseractionthm}.
\end{definition}

\begin{remark}\label{KatsyloGprmk}
    By Theorem \ref{RedCentraliseractionthm}, $F$ is a quotient of the component group $\Gamma$ of $e$ (defined by  (\ref{ComponentGroupeqn})); so in particular $F$ is finite. By construction, the action of $A$ on $\mathfrak{K}$ factors through a faithful action of $F$ on $\mathfrak{K}$, and $\mathfrak{K}/\Gamma = \mathfrak{K}/F$. Since the $A$-action on $\mathfrak{K}$ commutes with the Kazhdan action, so does the $F$-action.
\end{remark}

We define the following $\mathfrak{K}$-group scheme from the $F$-action on $\mathfrak{K}$.

\begin{definition}\label{FInertiadef}
    The \emph{$F$-inertia group scheme} $\mathcal F$ is the stabiliser of the action groupoid 
    \begin{equation}\label{FInertiaeqn}
        F \times \mathfrak{K} \rightrightarrows \mathfrak{K}.
    \end{equation}
\end{definition}

\begin{remark}\label{FInertia1rmk}
     For any point $x \in \mathfrak{K}$, $\mathcal F_x = Stab_F(x)$; moreover, since $F$ is finite, and the action of $F$ on $\mathfrak{K}$ is faithful, there is a dense open set $\mathfrak{K}^\circ \subseteq \mathfrak{K}$ such that $\mathcal F_{\mathfrak{K}^\circ}$ is the trivial group scheme.

    As a variety, we can decompose $\mathcal F$ as a disjoint union of connected components
    \begin{equation}\label{FInertia2eqn}
        \mathcal F = \coprod_{a \in F} \mathcal F^a
    \end{equation}
    where $\mathcal F^{\text{id}}$ is the image of the identity section, and for any $a \in F\backslash\{\text{id}\}$, the component $\mathcal F^a$ is supported on a proper closed subvariety of $\mathfrak{K}$ containing the nilpotent $e \in \mathfrak{K}$. In particular, $\mathcal F$ is not flat over $\mathfrak{K}$ unless $F$ is trivial.
\end{remark}

The following proposition is the key to understanding the broad geometric structure of the centraliser $\mathcal I_S$ on $S$.

\begin{proposition}\label{QuasiSteinHomprp}
    There is a smooth surjective homomorphism $\sigma:\mathcal I_{\mathfrak{K}} \rightarrow \mathcal F$ of group schemes over $\mathfrak{K}$.
\end{proposition}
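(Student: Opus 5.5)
The homomorphism $\sigma$ will come from the action groupoid $G \times S \rightrightarrows S$ restricted to the slice. Consider the closed subscheme $R = \{(g,x) \in G \times \mathfrak{K} \,|\, Ad_g(x) \in \mathfrak{K}\}$, which is a groupoid over $\mathfrak{K}$ with stabiliser $\mathcal I_{\mathfrak{K}}$. The action groupoid $A \times \mathfrak{K} \rightrightarrows \mathfrak{K}$ sits inside $R$, and its stabiliser is $\mathcal A := \{(a,x) \in A \times \mathfrak{K} \,|\, Ad_a(x) = x\}$. Because $A \to F$ factors the action, $\mathcal A \to \mathcal F$, $(a,x) \mapsto ([a],x)$, is a homomorphism over $\mathfrak{K}$. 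The plan is to show that every $g \in R_x$ can be written as $g = a\,c$ with $a \in A$ and $c \in C_G^\circ(x)$, or more canonically that $R$ is covered by the identity-component groupoid. Then $\sigma$ is defined by sending $g \in \mathcal I_x$ to the class in $F$ of any such $a$, and the main work is to show this is well defined and algebraic.

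\emph{Step 1: local structure of $R$.} By Proposition \ref{KatsyloSliceprp}, $Ad: G\times\mathfrak{K} \to S$ is smooth and surjective. Pulling back along it, the two source/target maps $R \to \mathfrak{K}$ are smooth; in fact $R = (G\times \mathfrak{K})\times_S \mathfrak{K}$ is a base change of a smooth map. So $R$ is smooth over $\mathfrak{K}$, and by Lemma \ref{KatsyloSmoothlem} $R$ is a smooth variety. Its relative dimension is $\dim G - \dim S + \dim\mathfrak{K} = \dim \mathcal I_S$, which is the fibre dimension of $\mathcal I_{\mathfrak{K}}$. I would then argue that $R^\circ$, the union of the components of $R$ containing the identity section, is an open subgroupoid. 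It is a smooth groupoid over $\mathfrak{K}$ whose fibres are the identity components, and it lies in the stabiliser because $\mathfrak{K}/R^\circ$ must be $\mathfrak{K}$. Indeed, orbits of $R^\circ$ in $\mathfrak{K}$ are connected subsets of the finite $A$-orbits (Theorem \ref{RedCentraliseractionthm}), hence points. Therefore $R^\circ \subseteq \mathcal I_{\mathfrak{K}}$, and $R^\circ$ is a smooth open subgroup scheme.

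\emph{Step 2: component decomposition.} By Theorem \ref{RedCentraliseractionthm}, $R = A\cdot R^\circ$: for $g\in R_x$ with $y = Ad_g x$, choose $a\in A$ with $Ad_a x = y$; then $a^{-1}g \in \mathcal I_x$. The remaining claim is that $\mathcal I_x = (\mathcal A_x)\cdot R^\circ_x$, i.e.\ that every component of $\mathcal I_x$ meets the stabiliser of $x$ in $A$. The key input is the Kazhdan action of Remark \ref{KatsyloActionsrmk}. It contracts $\mathfrak{K}$ to $e$, and together with $\lambda$ it acts on $\mathcal I_{\mathfrak{K}}$ and on $R$, so every component of $R$ meets the fibre over $e$. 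That fibre is $C_G(e) = A\ltimes U$ with $U$ unipotent and connected. Hence the components of $R$ are exactly $aR^\circ$ for $a\in A$, and $R/R^\circ \cong (A\times\mathfrak{K})/(\text{identity components})$, which collapses to the $F$-action groupoid $F\times\mathfrak{K}$ once the kernel $N$ is absorbed into $R^\circ$.

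This gives a morphism of groupoids $R \to F\times\mathfrak{K}$ that is smooth, being a quotient by the smooth open subgroupoid $R^\circ$. Restricting to stabilisers yields $\sigma:\mathcal I_{\mathfrak{K}} \to \mathcal F$, which is surjective since $\mathcal A \to \mathcal F$ is.

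\emph{Main obstacle.} The delicate step is showing that $N$, the kernel of the $A$-action, lies in $R^\circ$, equivalently that the component of $R$ through $(n,x)$ for $n\in N$ is the identity component. I expect to handle this by deforming from $e$: take a path in $\mathfrak{K}$ from a generic point to $e$ via the Kazhdan action. The section $x\mapsto (n,x)$ lies in a single component of $R$. One must check it meets the identity component, using that over the dense open $\mathfrak{K}^\circ$ where $\mathcal F$ is trivial the centraliser $\mathcal I_x$ has its components controlled by $R$. I would first try to show directly that the fibres of $R$ over generic $x$ are connected modulo $A$-translates, so that $n$ lies in $C_G^\circ(x)$ for generic $x$ and hence in $R^\circ$ by closedness.

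Smoothness of $\sigma$ then follows because $\sigma$ is the base change of the étale quotient map $R \to R/R^\circ$, which is a disjoint union of copies of smooth components over $\mathfrak{K}$.
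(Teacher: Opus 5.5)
There is a genuine gap: your Step~2 rests on a claim that is false in general.

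\textbf{Where it fails.} You want $\sigma$ to be the quotient of $R$ by an identity-component subgroupoid $R^\circ$, and this requires $N\subseteq R^\circ$. Take the rigid sheet $S=\mathcal O_{min}\subset\mathfrak{sp}_4$ of Example~\ref{Sp4Sheetexm}.
\begin{itemize}
\item Here $\mathfrak K=\{e\}$, so $R=\mathcal I_{\mathfrak K}=C_G(e)$, and $F$ and $\mathcal F$ are trivial.
\item Yet $N=A\cong Sp_2\times O_1$ is disconnected, so $R/R^\circ\cong\Gamma\cong\mathbb Z/2\neq F$.
\item Your fallback (showing $n\in C_G^\circ(x)$ for generic $x$) cannot succeed, since the only point is $e$.
\end{itemize}
More generally, the kernel of the correct $\sigma$ is $\mathcal I^{sm}_{\mathfrak K}$, which over $\mathfrak K^\circ$ is the full centraliser. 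For a non-Dixmier sheet this is $C_L(x_n)$ with $x_n$ rigid in $\mathfrak l$, and it may be disconnected. So no identity-component description of $\ker\sigma$ can work.

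\textbf{Secondary problems.}
\begin{itemize}
\item Your two descriptions of $R^\circ$ disagree: the connected component of $R$ through the identity section need not have connected fibres. For $S_{Dix}'\subset\mathfrak{sp}_4$ one has $F=1$ and $\Gamma\cong\mathbb Z/2$. Then $R=\mathcal I_{\mathfrak K}$ is smooth with connected generic fibre $GL_2$, hence irreducible, so its fibre at $e$ is all of $C_G(e)$.
\item The Kazhdan action does not show that every component of $R$ meets the fibre over $e$. It conjugates the $G$-coordinate by $\lambda(t)$, which does not contract $G$, and a $\mathbb G_m$-stable open subset of $\mathfrak K$ need not contain $e$.
\end{itemize}

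\textbf{The missing idea.} Instead of quotienting by a connected subgroupoid, \emph{label} the components of $R$ by elements of $F$; this is what the paper does.
\begin{itemize}
\item Each connected component $X$ of $R$ is smooth over $\mathfrak K$ via $s$. Hence it is irreducible with open image, and so it meets $s^{-1}(\mathfrak K^\circ)$.
\item By Theorem~\ref{RedCentraliseractionthm}, $(s,t)(X)$ lies in the finite union of the closed graphs of the elements of $F$. Hence it lies in the graph of a single $a_X\in F$, which is unique because the graphs are disjoint over $\mathfrak K^\circ$.
\item Setting $\hat\sigma|_X=(a_X,s|_X)$ gives a smooth morphism $R\to F\times\mathfrak K$.
\item It is surjective because the section $x\mapsto(a_A,x)$ lies in a component labelled $a$.
\item It is a groupoid morphism because over $\mathfrak K^\circ$ a point of $F\times\mathfrak K$ is determined by its source and target, and $(R\times_{s,t}R)_{\mathfrak K^\circ}$ is dense in $R\times_{s,t}R$.
\item Since $\hat\sigma$ commutes with $(s,t)$, one gets $\mathcal I_{\mathfrak K}=\hat\sigma^{-1}(\mathcal F)$. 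Thus $\sigma$ is a base change of $\hat\sigma$, hence smooth and surjective.
\end{itemize}
This labelling sends every $N$-section to $\mathrm{id}\in F$ no matter which component of $R$ contains it, which is exactly what quotienting by $R^\circ$ cannot do. The same irreducibility argument with $a_X=\mathrm{id}$ is also what actually proves your Step~1 claim $R^\circ\subseteq\mathcal I_{\mathfrak K}$. Your version needs the fibres of $R^\circ$ to be connected, and that is not available.
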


We split the proof of the proposition into a number of intermediate lemmas. We first define a related scheme which contains the centraliser on $\mathfrak{K}$ as a closed subscheme.

\begin{definition}\label{RestrictedActiondef}
    The \emph{restricted action scheme} $\mathcal R$ is the scheme defined by the Cartesian diagram
    \begin{equation}\label{RestrictedAction1eqn}
        \begin{tikzcd}
            \mathcal R \arrow[d] \arrow[r] & G \times \mathfrak{K} \arrow[d, "Ad"] \\
            \mathfrak{K} \arrow[r, hook]   & S.                                    
        \end{tikzcd}
    \end{equation}
\end{definition}

\begin{lemma}\label{RASmoothlem}
    The restricted action scheme $\mathcal R$ is non-singular.
\end{lemma}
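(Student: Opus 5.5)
The plan is to deduce non-singularity of $\mathcal R$ from the smoothness statement of Proposition \ref{KatsyloSliceprp}, via base change. By Definition \ref{RestrictedActiondef}, $\mathcal R$ is the fibre product $(G\times\mathfrak{K})\times_{S}\mathfrak{K}$, where the map $G \times \mathfrak{K} \to S$ is $Ad$ and $\mathfrak{K}\hookrightarrow S$ is the inclusion. Since $Ad$ is smooth, so is its base change, so the projection $\mathcal R \rightarrow \mathfrak{K}$ is smooth. By Lemma \ref{KatsyloSmoothlem}, $\mathfrak{K}$ is non-singular, and a scheme smooth over a non-singular variety over $\mathbb{C}$ is itself non-singular. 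This would already give the lemma.

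Concretely, the steps are the following.
\begin{enumerate}
    \item Identify the right vertical arrow of (\ref{RestrictedAction1eqn}) with the smooth surjection of Proposition \ref{KatsyloSliceprp}.
    \item Invoke stability of smoothness under base change to conclude that $\mathcal R\to\mathfrak{K}$ is smooth. It is also surjective, since every $x \in \mathfrak{K}$ lifts to $(\mathrm{id},x)$.
    \item Conclude with Lemma \ref{KatsyloSmoothlem}: locally $\mathcal R$ is étale over $\mathbb{A}^n_{\mathfrak{K}}$, hence regular.
\end{enumerate}
It is worth recording that the fibre dimension of $\mathcal R\to\mathfrak{K}$ is constant, equal to $\dim G + \dim\mathfrak{K} - \dim S$. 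Equivalently, $\mathcal R$ is equidimensional of dimension $\dim G + 2\dim \mathfrak{K} - \dim S$, which is useful later.

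The main point of care is the scheme structure. The fibre product must be taken with $S$ carrying its reduced, indeed non-singular, structure as a locally closed subvariety of $\mathfrak{g}$. The smoothness in Proposition \ref{KatsyloSliceprp} must also be smoothness of $Ad$ as a morphism to $S$, not just to $\mathfrak{g}$. If one were worried that Katsylo's statement only gives, say, surjectivity on tangent spaces at closed points, I would recover smoothness by the standard criterion. First, $G\times\mathfrak{K}$ and $S$ are non-singular by Lemma \ref{KatsyloSmoothlem} and hypothesis. Second, the differential of $Ad$ at $(g,x)$ is surjective, because $T_xS = [\mathfrak{g},x] + T_x\mathfrak{K}$; this holds since $\mathfrak{K}$ is transverse to orbits, as $\mathfrak{S}$ is transverse to orbits in $\mathfrak{g}$ and $\mathfrak{K}=\mathfrak{S}\cap S$ meets $S$ cleanly. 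A morphism of smooth varieties that is surjective on tangent spaces is smooth. The dimension count $\dim S = \dim G\cdot x + \dim \mathfrak{K}$, valid since $\dim \mathfrak{K}=\dim \mathfrak{c}_S$ by Theorem \ref{KatsyloGeometricQuotientthm}, confirms that these tangent spaces are compatible. I expect this verification, if needed at all, to be the only nontrivial step; otherwise the proof is a two-line base-change argument.
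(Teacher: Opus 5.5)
Your proposal is correct and matches the paper's proof. The projection $\mathcal R \to \mathfrak{K}$ is smooth as the base change of the smooth map $Ad: G\times\mathfrak{K}\to S$ from Proposition \ref{KatsyloSliceprp}, and since $\mathfrak{K}$ is non-singular by Lemma \ref{KatsyloSmoothlem}, $\mathcal R$ is non-singular; the tangent-space fallback you sketch is not needed, because Proposition \ref{KatsyloSliceprp} already asserts that $Ad$ is smooth as a morphism to $S$.
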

\begin{proof}
Since it is the pullback of the smooth morphism $Ad:G \times \mathfrak{K} \rightarrow S$, the left-hand arrow in (\ref{RestrictedAction1eqn}) is smooth. But then since $\mathfrak{K}$ is non-singular, $\mathcal R$ is also non-singular.
\end{proof}

\begin{remark}\label{RestrictedActionrmk}
    The diagram (\ref{RestrictedAction1eqn}) defines two natural maps $s:\mathcal R \rightarrow \mathfrak{K}$ (induced by the upper horizontal arrow) and $t:\mathcal R \rightarrow \mathfrak{K}$ (which is the left-hand arrow); indeed, the diagram
    \begin{equation}\label{RestrictedAction2eqn}
        \mathcal R \doublerightarrow{s}{t} \mathfrak{K} 
    \end{equation}
    has the structure of a groupoid scheme, which is the restriction of the action groupoid
    \begin{equation}\label{RestrictedAction3eqn}
        G \times S \doublerightarrow{\pi_2}{Ad} S 
    \end{equation}
    along the inclusion $\mathfrak{K} \hookrightarrow S$. Hence, $\mathcal I_{\mathfrak{K}}$ embeds into $\mathcal R$ as its stabiliser group scheme.

    We have already observed in the proof of Lemma \ref{RASmoothlem} that $t:\mathcal R \rightarrow \mathfrak{K}$ is smooth; so since $t$ and $s$ are related by an automorphism of $\mathfrak{K}$, $s:\mathcal R \rightarrow \mathfrak{K}$ is also smooth.
\end{remark}

\begin{remark}\label{RedCentRAActionrmk}
    The group $A$ acts on $S$ by conjugation and there is also an $A$-action on $G \times \mathfrak{K}$ given by the left multiplication action on $G$. These actions together induce an $A$-action on $\mathcal R$. With respect to this action, the map $s:\mathcal R \rightarrow \mathfrak{K}$ is $A$-invariant and the map $t:\mathcal R \rightarrow \mathfrak{K}$ is $A$-equivariant.
\end{remark}

\begin{lemma}\label{RADecomplem}
    Consider the $\mathfrak{K}$-scheme $s:\mathcal R\rightarrow \mathfrak{K}$ and let $\mathfrak{K}^\circ \subseteq \mathfrak{K}$ be the open set over which $F$ acts freely (as in Remark \ref{FInertia1rmk}). There is an isomorphism $\phi:\mathcal R_{\mathfrak{K}^\circ} \xrightarrow{\sim} F \times \mathcal I_{\mathfrak{K}^\circ}$ of $\mathfrak{K}^\circ$-schemes such that $\phi$ maps $\mathcal I_{\mathfrak{K}^\circ}$ to $\{\text{\emph{id}}_F\} \times \mathcal I_{\mathfrak{K}^\circ}$.
\end{lemma}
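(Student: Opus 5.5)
The plan is to decompose $\mathcal R_{\mathfrak{K}^\circ}$ according to which element of $F$ relates the source and target points, using Theorem \ref{RedCentraliseractionthm}. A $\mathbb{C}$-point of $\mathcal R$ over $x=s(r)$ is a pair $(g,x)$ with $Ad_g(x)\in\mathfrak{K}$; its target is $t(r)=Ad_g(x)$. By the second part of Theorem \ref{RedCentraliseractionthm}, $Ad_g(x)=Ad_a(x)$ for some $a\in A$. Over $\mathfrak{K}^\circ$ the class of $a$ in $F$ is unique, because $F$ acts freely there. Moreover, $a$ preserves $\mathfrak{K}^\circ$, since freeness is an $F$-invariant condition. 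This yields a map $\mathcal R_{\mathfrak{K}^\circ}\to F$ recording that class.

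First I construct this map scheme-theoretically. For each $a\in F$, I consider the closed subscheme $\mathcal R^a\subseteq \mathcal R_{\mathfrak{K}^\circ}$ cut out by the equation $t=a\cdot s$, that is, the preimage of the graph $\Gamma_a\subset \mathfrak{K}^\circ\times\mathfrak{K}^\circ$ under $(s,t)$. The graphs $\Gamma_a$ are closed, pairwise disjoint (by freeness) and finite in number. The pointwise argument above shows that $(s,t)$ maps $\mathcal R_{\mathfrak{K}^\circ}$ set-theoretically into $\bigcup_a\Gamma_a$. Since $\mathcal R_{\mathfrak{K}^\circ}$ is non-singular by Lemma \ref{RASmoothlem}, it is in particular reduced. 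Hence $(s,t)$ factors scheme-theoretically through the reduced union $\coprod_a\Gamma_a$, giving a decomposition $\mathcal R_{\mathfrak{K}^\circ}=\coprod_{a\in F}\mathcal R^a$ into open and closed pieces. The component $\mathcal R^{\mathrm{id}}$ is exactly the locus where $t=s$, which is the stabiliser $\mathcal I_{\mathfrak{K}^\circ}$ by Remark \ref{RestrictedActionrmk}.

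Next I identify each $\mathcal R^a$ with $\mathcal I_{\mathfrak{K}^\circ}$. I choose a lift $\tilde a\in A$ of each $a\in F$, taking $\tilde{\mathrm{id}}=1$. Using the $A$-action of Remark \ref{RedCentRAActionrmk}, left multiplication $(g,x)\mapsto(\tilde a g,x)$ preserves $s$ and replaces $t$ by $\tilde a\cdot t$. It therefore maps $\mathcal R^{b}$ isomorphically onto $\mathcal R^{ab}$ over $\mathfrak{K}^\circ$, with inverse given by $\tilde a^{-1}$. Defining $\phi$ on $\mathcal R^a$ as the composite $\mathcal R^a\xrightarrow{\tilde a^{-1}\cdot}\mathcal R^{\mathrm{id}}=\mathcal I_{\mathfrak{K}^\circ}$, tagged by $a$, gives an isomorphism of $\mathfrak{K}^\circ$-schemes $\mathcal R_{\mathfrak{K}^\circ}\cong F\times\mathcal I_{\mathfrak{K}^\circ}$. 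This map sends $\mathcal I_{\mathfrak{K}^\circ}$ identically to $\{\mathrm{id}_F\}\times\mathcal I_{\mathfrak{K}^\circ}$, as required.

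The main obstacle is the scheme-theoretic factorisation through $\coprod_a\Gamma_a$ in the first step. The Katsylo result is only a statement about points, so I rely on reducedness of $\mathcal R$ from Lemma \ref{RASmoothlem} to upgrade it. I also need the finitely many graphs $\Gamma_a$ to be disjoint closed subsets, which holds because $F$ is finite and acts freely on $\mathfrak{K}^\circ$.
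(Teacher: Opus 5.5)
Your proof is correct, and the isomorphism you obtain is the paper's. The paper defines $\psi\colon F\times\mathcal I_{\mathfrak{K}^\circ}\to\mathcal R_{\mathfrak{K}^\circ}$ by translating the copy $\{a\}\times\mathcal I_{\mathfrak{K}^\circ}$ by the chosen lift $a_A\in A$, and sets $\phi=\psi^{-1}$; on $\mathcal R^a$ this is your map ``translate by $\tilde a^{-1}$''. The difference lies in how the isomorphism is verified. The paper checks that $\psi$ is bijective on $\mathbb{C}$-points, with injectivity from freeness on $\mathfrak{K}^\circ$ and surjectivity from Theorem \ref{RedCentraliseractionthm}. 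It then appeals to Zariski's main theorem, using non-singularity of $\mathcal R_{\mathfrak{K}^\circ}$. You instead first cut $\mathcal R_{\mathfrak{K}^\circ}$ into the open and closed pieces $(s,t)^{-1}(\Gamma_a)$, for which reducedness of $\mathcal R_{\mathfrak{K}^\circ}$ suffices, and then permute the pieces by translation automorphisms. This is slightly longer but more elementary and more transparent scheme-theoretically. It shows directly that $\mathcal I_{\mathfrak{K}^\circ}$ is an open and closed subscheme of $\mathcal R_{\mathfrak{K}^\circ}$, hence smooth. So you never need to know in advance that the source $F\times\mathcal I_{\mathfrak{K}^\circ}$ is reduced, which a bijectivity-plus-Zariski argument implicitly uses. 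Your route also produces the labelling map $\mathcal R_{\mathfrak{K}^\circ}\to\coprod_a\Gamma_a\cong F\times\mathfrak{K}^\circ$ canonically, with no choice of lifts. This is precisely the map $\hat{\sigma}_{\mathfrak{K}^\circ}$ of Lemma \ref{RAQuasiSteinlem}, and its compatibility with the source and target maps, needed in Lemma \ref{RAQSGroupoidlem}, then holds by construction.
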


\begin{proof}
 Choose representatives $a_A \in A$ for each element $a \in F = A/N$, choosing $\text{id}_A$ as the representative for $\{\text{id}_F\}$. Then, viewing $\mathcal I_{\mathfrak{K}^\circ}$ as a subscheme of $\mathcal R$, the $A$-action defines a map $A \times \mathcal I_{\mathfrak{K}^\circ} \rightarrow \mathcal R_{\mathfrak{K^\circ}}$, and thus a map $\psi:F \times \mathcal I_{\mathfrak{K}^\circ} \rightarrow \mathcal R_{\mathfrak{K}^\circ}$, via the choice of representatives $f_A \in A$. Since the $F$-action is free on $\mathfrak{K^\circ}$, $\psi$ is injective; and by Theorem \ref{RedCentraliseractionthm} and the definitions of $F$ and $\mathcal R$, $\psi$ is surjective. Since $\mathcal R_{\mathfrak{K}^\circ}$ is non-singular by Lemma \ref{RASmoothlem}, $\psi$ is an isomorphism by Zariski's main theorem, and we set $\phi$ to be its inverse.
 \end{proof}

\begin{lemma}\label{RAQuasiSteinlem}
    Let $\hat{\sigma}_{\mathfrak{K}^\circ}:\mathcal R_{\mathfrak{K}^{\circ}} \rightarrow F \times \mathfrak{K}^{\circ}$ be defined as the composition
    \begin{equation}\label{RAQuasiStein1eqn}
\begin{tikzcd}
\hat{\sigma}_{\mathfrak{K}^\circ}:\mathcal R_{\mathfrak{K}^\circ} \arrow[r, "\phi"] & F \times \mathcal I_{\mathfrak{K}^\circ} \arrow[r, "\pi_\mathcal I"] & F \times \mathfrak{K}^{\circ},
\end{tikzcd}
    \end{equation}
    where $\phi$ is the morphism of Lemma \ref{RADecomplem}, and $\pi_{\mathcal I}$ is induced by the structure map $\mathcal I_{\mathfrak{K}^\circ}\rightarrow \mathfrak{K}^\circ$. Then $\hat{\sigma}_{\mathfrak{K}^\circ}$ extends to a smooth surjective morphism $\hat{\sigma}:\mathcal R \rightarrow F \times \mathfrak{K}$.
\end{lemma}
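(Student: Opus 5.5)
The idea is to use $\hat{\sigma}_{\mathfrak{K}^\circ}$ to split $\mathcal R_{\mathfrak{K}^\circ}$ into pieces indexed by $F$, take closures in $\mathcal R$, and show these closures are open and closed in $\mathcal R$ and partition it. Define $\hat\sigma$ by sending the piece indexed by $a$ to $\{a\}\times\mathfrak{K}$ via $s$; it is then smooth because $s$ is (Remark \ref{RestrictedActionrmk}).

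First I would describe $\hat{\sigma}_{\mathfrak{K}^\circ}$ intrinsically. Under $\phi$, a point $(a,g)\in F\times\mathcal I_{\mathfrak{K}^\circ}$ corresponds to $a_A g\in\mathcal R$. Here $g$ is a point of $\mathcal R$ with $s=t=x$, and $a_A g$ has source $x$ and target $Ad_{a_A}(x)=a\cdot x$. Thus over $\mathfrak{K}^\circ$, where $F$ acts freely, the first coordinate of $\hat\sigma_{\mathfrak{K}^\circ}$ is the unique $a\in F$ with $t=a\cdot s$, and the second coordinate is $s$ (or $t$, depending on which structure map we regard). So $\mathcal R_{\mathfrak{K}^\circ}=\coprod_{a\in F}\mathcal R^a_\circ$, where
\[\mathcal R^a_\circ=\{r : t(r)=a\cdot s(r)\}\cap\mathcal R_{\mathfrak{K}^\circ}.\]

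Next, on all of $\mathcal R$ consider the closed subschemes $\mathcal R^a=\{r : t(r)=a\cdot s(r)\}$. By Theorem \ref{RedCentraliseractionthm}, every point of $\mathcal R$ lies in some $\mathcal R^a$: if $Ad_g(x)=y$ with $x,y\in\mathfrak{K}$, then $y=Ad_{a'}(x)$ for some $a'\in A$. However, these subschemes overlap over the non-free locus, so they do not directly give a disjoint decomposition. The remedy is to use connected components. The scheme $\mathcal R$ is non-singular by Lemma \ref{RASmoothlem}, so its connected components equal its irreducible components. Since $s$ is smooth, hence open, and $\mathfrak{K}$ is irreducible (Lemma \ref{KatsyloSmoothlem}), each irreducible component of $\mathcal R$ dominates $\mathfrak{K}$ and therefore meets $\mathcal R_{\mathfrak{K}^\circ}$ densely. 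Hence each connected component $C$ of $\mathcal R$ meets some $\mathcal R^a_\circ$. Since $C\cap\mathcal R_{\mathfrak{K}^\circ}$ is dense in the irreducible $C$ and $\mathcal R^a$ is closed, $C\subseteq\mathcal R^a$ for that $a$.

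I expect the main obstacle to be showing that $a$ is well-defined on $C$, i.e.\ that $C\cap\mathcal R_{\mathfrak{K}^\circ}$ does not meet two different $\mathcal R^a_\circ$. This holds because the $\mathcal R^a_\circ$ are disjoint open and closed subsets of $\mathcal R_{\mathfrak{K}^\circ}$, and $C\cap\mathcal R_{\mathfrak{K}^\circ}$ is a nonempty open subset of an irreducible space, hence irreducible and connected. So each component $C$ determines a unique $a(C)\in F$. Setting $\hat\sigma|_C=(a(C),s|_C)$ gives a morphism $\hat\sigma:\mathcal R\to F\times\mathfrak{K}$ that extends $\hat\sigma_{\mathfrak{K}^\circ}$. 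On each component it is the restriction of the smooth map $s$ to an open subscheme, composed with the inclusion of a component of $F\times\mathfrak{K}$, so $\hat\sigma$ is smooth.

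Finally, for surjectivity: for each $a\in F$, the set $\mathcal R^a_\circ\cong\{a\}\times\mathcal I_{\mathfrak{K}^\circ}$ is nonempty, so some component maps to $\{a\}\times\mathfrak{K}$. Its image is open (smooth maps are open) and contains $\{a\}\times\mathfrak{K}^\circ$. To obtain all of $\{a\}\times\mathfrak{K}$, note that for any $x\in\mathfrak{K}$ the point $a_A\in G$ lies in $\mathcal R$ over $x$, because $Ad_{a_A}(x)\in\mathfrak{K}$. It therefore suffices to check that $a_A$ lies in a component $C$ with $a(C)=a$. The family $x\mapsto(a_A,x)$ is a section of $s$ with irreducible image, and over $\mathfrak{K}^\circ$ it lies in $\mathcal R^a_\circ$. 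Its image is irreducible, so it lies in a single component, and that component must have label $a$. Hence $\hat\sigma$ is surjective.
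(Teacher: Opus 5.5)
Your proof is correct and follows essentially the same route as the paper. You extend $\hat\sigma_{\mathfrak{K}^\circ}$ one connected (equivalently, irreducible) component of the non-singular scheme $\mathcal R$ at a time: each component dominates $\mathfrak{K}$ under the smooth map $s$, so its part over $\mathfrak{K}^\circ$ is connected and lands in a single $\{a\}\times\mathfrak{K}^\circ$, and you set $\hat\sigma=(a,s)$ there; surjectivity then comes from the translated identity sections $x\mapsto a_A\cdot\mathrm{id}_x$, exactly as in the paper. Your intrinsic description of $\hat\sigma_{\mathfrak{K}^\circ}$ via $t=a\cdot s$, and your irreducibility argument placing each translated section inside a single component labelled $a$, spell out what the paper leaves to ``continuity''.
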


\begin{proof}
We can construct $\hat{\sigma}$ on each connected component of $\mathcal R$ individually. Let $X$ be a connected component of $\mathcal R$. Since $s:\mathcal R \rightarrow \mathfrak{K}$ is smooth, so is the restriction $s|_X:X \rightarrow \mathfrak{K}$. In particular, the support of $X$ is open in $\mathfrak{K}$, so $X_{\mathfrak{K}^\circ}$ is non-empty, and since $X$ is connected and non-singular, so is $X_{\mathfrak{K}^\circ}$. In particular, $\hat{\sigma}_{\mathfrak{K}^\circ}$ must map $X_{\mathfrak{K}^\circ}$ to $\{a\} \times \mathfrak{K}^\circ$ for some fixed $a \in F$, and this map is naturally identified with the structure map for $X_{\mathfrak{K}^\circ}$ as a $\mathfrak{K}^\circ$-scheme, as $\phi$ is an isomorphism of $\mathfrak{K}^\circ$-schemes. Hence, the structure map $s|_X:X\rightarrow \mathfrak{K}$ induces the extension $\hat{\sigma}|_X:X \rightarrow \{a\} \times \mathfrak{K}$, which is smooth since $s|_X$ is smooth.

To show that $\hat{\sigma}$ is surjective, we let $\mathfrak{K}_{\text{id}}$ be the identity section of $\mathcal I_{\mathfrak{K}}$, and let $\mathfrak{K}_{\text{id}}^\circ$ be its restriction to $\mathfrak{K}^\circ$. By the construction of $\phi$, $\hat{\sigma}_{\mathfrak{K}^\circ}$ maps $a_A\cdot\mathfrak{K}_{\text{id}}^\circ \subseteq \mathcal R_{\mathfrak{K}^\circ}$ surjectively to $\{a\} \times \mathfrak{K}^\circ$ (where $a_A$ is as in the proof of Lemma \ref{RADecomplem}). So by continuity of $\hat{\sigma}$, it must map $a_A\cdot\mathfrak{K}_{\text{id}}$ surjectively to $\{a\} \times \mathfrak{K}$; so $\hat{\sigma}$ is surjective.
\end{proof}

\begin{lemma}\label{RAQSGroupoidlem}
    The map $\hat{\sigma}:\mathcal R \rightarrow F \times \mathfrak{K}$ defines a morphism of groupoid schemes.
\end{lemma}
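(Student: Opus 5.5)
We must show $\hat\sigma$ intertwines the groupoid structures of $\mathcal R \rightrightarrows \mathfrak{K}$ (Remark \ref{RestrictedActionrmk}) and the action groupoid $F\times\mathfrak{K}\rightrightarrows\mathfrak{K}$ of (\ref{FInertiaeqn}). Concretely, we check compatibility with source and target, identity section, inverse and composition:
\[
s_F\circ\hat\sigma = s,\quad t_F\circ\hat\sigma = t,\quad \hat\sigma\circ e_{\mathcal R} = e_F,\quad \hat\sigma\circ\iota_{\mathcal R}=\iota_F\circ\hat\sigma,\quad \hat\sigma\circ m_{\mathcal R} = m_F\circ(\hat\sigma\times\hat\sigma).
\]
Here we use the convention in which the $F$-action groupoid has source the projection and target the action, matching $s,t$ on $\mathcal R$.

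The strategy is a density argument. All schemes involved are reduced: $\mathcal R$ is non-singular by Lemma \ref{RASmoothlem}, and $F\times\mathfrak{K}$ is separated. It therefore suffices to verify each identity on a dense open subset of the relevant source scheme.

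\begin{itemize}
\item For $s,t$, the identity section and the inverse, the source is $\mathcal R$ or $\mathfrak{K}$. Here $\mathcal R_{\mathfrak{K}^\circ}$ is dense in $\mathcal R$, since $s$ is smooth, hence open, so every component of $\mathcal R$ meets $s^{-1}(\mathfrak{K}^\circ)$.
\item For composition, the source is $\mathcal R\times_{s,\mathfrak{K},t}\mathcal R$. This is smooth over $\mathcal R$ because it is a base change of the smooth map $t$, so it is non-singular. Its open subset lying over $\mathfrak{K}^\circ$ is dense for the same reason.
\end{itemize}

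Note also that $\mathfrak{K}^\circ$ is $F$-stable, so $t$ maps $\mathcal R_{\mathfrak{K}^\circ}$ into $\mathfrak{K}^\circ$. This holds because $t(g,x)=Ad_g x$ is $A$-conjugate to $x$ by Theorem \ref{RedCentraliseractionthm}, and stabilisers are conjugate.

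Over $\mathfrak{K}^\circ$ we compute explicitly using $\phi$ from Lemma \ref{RADecomplem}. Every point of $\mathcal R_{\mathfrak{K}^\circ}$ has the form $(a_A g, x)$ with $g\in C_G(x)$. By construction $\hat\sigma(a_Ag,x)=(a,x)$, and its target is $Ad_{a_A}x = a\cdot x$. Hence $t_F\circ\hat\sigma=t$ and $s_F\circ\hat\sigma=s$.

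The identity section is $(\mathrm{id},x)$, which maps to $(\mathrm{id}_F,x)$. For composition, take $(a_Ag,x)$ and $(b_Ag',y)$ with $y=a\cdot x$. Their product is $(b_Ag'a_Ag,x)$. Since $g'\in C_G(a\cdot x)$ we have $a_A^{-1}g'a_A\in C_G(x)$, so
\[
b_Ag'a_Ag = b_Aa_A\,(a_A^{-1}g'a_A)\,g .
\]
Now $b_Aa_A$ differs from $(ba)_A$ by an element $n\in N$. Because $N$ fixes $\mathfrak{K}$ pointwise, $n\in C_G(x)$. The product therefore lies in $(ba)_A C_G(x)$ and maps to $(ba,x)=(b,a\cdot x)\cdot(a,x)$. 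The inverse is handled the same way, or follows formally from the unit and composition compatibilities.

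The main obstacle is the hidden subtlety that $\hat\sigma$ is defined componentwise by extension from $\mathfrak{K}^\circ$, so the identities on the closed complement are not directly visible. This is exactly what the reducedness and density argument resolves. The point needing care is that the fibre product $\mathcal R\times_{\mathfrak{K}}\mathcal R$ is reduced and has its part over $\mathfrak{K}^\circ$ dense, which follows from the smoothness of $s$ and $t$ noted in Remark \ref{RestrictedActionrmk}.
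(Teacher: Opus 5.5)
Your proposal is correct and follows the paper's route. You verify the groupoid identities over the free locus $\mathfrak{K}^\circ$ and then extend by density, using smoothness of $s$ and $t$ (so that $\mathcal R$ and $\mathcal R\times_{s,t}\mathcal R$ are reduced with dense part over $\mathfrak{K}^\circ$) and separatedness of the target. The differences are minor: over $\mathfrak{K}^\circ$ you check composition by an explicit coset computation, whereas the paper observes that freeness of the $F$-action means an arrow of $F\times\mathfrak{K}^\circ$ is determined by its source and target. You also make explicit the $F$-stability of $\mathfrak{K}^\circ$, which the paper uses implicitly.
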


\begin{proof}
We first show that the restriction $\hat{\sigma}_{\mathfrak{K}^\circ}$ of $\sigma$ to $\mathcal R_{\mathfrak{K}^\circ}$ defines a morphism of groupoid schemes. By the construction of $\phi$, the $A$-invariance of $s$ and $A$-equivariance of $t$ (see Remark \ref{RedCentRAActionrmk}), $\hat{\sigma}_{\mathfrak{K}^\circ}$ respects the source and target maps of (\ref{RestrictedAction2eqn}) and (\ref{FInertiaeqn}). So we need only check that $\hat{\sigma}_{\mathfrak{K}^\circ}$ respects the composition morphisms for the groupoids; we will denote these both by $c$. Since the $F$-action is free on $\mathfrak{K}^\circ$, a point in $F \times \mathfrak{K}^\circ$ is determined by its images under the source and target maps of (\ref{FInertiaeqn}). But then for any composable points $x, y \in \mathcal R_{\mathfrak{K}^\circ}$, $\hat{\sigma}(c(x,y))$ and $c(\hat{\sigma}(x),\hat{\sigma}(y))$ both have source $s(y)$ and target $t(x)$, so must be the same point. Hence, $\hat{\sigma}_{\mathfrak{K}^\circ}$ defines a morphism of groupoid schemes.

Finally, to conclude that $\hat{\sigma}$ defines a morphism of groupoid schemes we use a continuity argument. We elaborate on this for the compatibility of composition; the compatibility of the source and target maps can be shown in a similar way.

We need to show that the maps $\hat{\sigma} \circ c$ and $c \circ (\hat{\sigma} \times \hat{\sigma})$ are equal. We consider the Cartesian diagram
\begin{equation}\label{RAQSGroupoideqn}
\begin{tikzcd}
{\mathcal R \times_{s,t} \mathcal R} \arrow[r, "\pi_s"] \arrow[d, "\pi_t"] & \mathcal R \arrow[d, "t"] \\
\mathcal R \arrow[r, "s"]                                                  & \mathfrak{K}             
\end{tikzcd} 
\end{equation}
and view $\mathcal R \times_{s,t}\mathcal R$ as a $\mathfrak{K}$-scheme via $t \circ \pi_s$. All the arrows in (\ref{RAQSGroupoideqn}) are smooth, so $\mathcal R \times_{s,t}\mathcal R$ is smooth as a $\mathfrak{K}$-scheme. So $\mathcal R_{\mathfrak{K}^\circ} \times_{s,t} \mathcal R_{\mathfrak{K}^\circ} = (\mathcal R \times_{s,t} \mathcal R)_{\mathfrak{K}^\circ}$ is dense in $\mathcal R \times_{s,t} \mathcal R$, and we have already shown that $\hat{\sigma} \circ c$ and $c \circ (\hat{\sigma} \times \hat{\sigma})$ agree on $\mathcal R_{\mathfrak{K}^\circ} \times_{s,t} \mathcal R_{\mathfrak{K}^\circ}$. Hence, since $\mathfrak{K}$ is separated, these maps agree on all of $\mathcal R \times_{s,t}\mathcal R$.
\end{proof}

\begin{proof}[Proof of Proposition \ref{QuasiSteinHomprp}] 
Since groupoid morphisms respect stabilisers, $\hat{\sigma}$ maps $\mathcal I_{\mathfrak{K}}$ to $\mathcal F$, i.e. defines a group homomorphism $\sigma:\mathcal I_{\mathfrak{K}} \rightarrow \mathcal F$ by restriction. Moreover, the diagram
\begin{equation}\label{QuasiSteinHomeqn}
\begin{tikzcd}
\mathcal I_{\mathfrak{K}} \arrow[r, "\sigma"] \arrow[d, "\iota"'] & \mathcal F \arrow[d]  \\
\mathcal R \arrow[r, "\hat{\sigma}"]                              & F \times \mathfrak{K}
\end{tikzcd}
\end{equation}
is Cartesian. Hence, $\sigma$ is smooth and surjective.
\end{proof}

\begin{corollary}\label{SmoothCentralisercor}
    Denote 
    \begin{equation}\label{SmoothCentraliser1qn}
        \mathcal I^{sm}_\mathfrak{K} := Ker(\sigma);
    \end{equation}
    then $\mathcal I^{sm}_{\mathfrak{K}}$ is a smooth normal closed subgroup scheme of the $\mathfrak{K}$-group scheme $\mathcal I_{\mathfrak{K}}$.
\end{corollary}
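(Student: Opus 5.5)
The plan is to deduce everything from Proposition \ref{QuasiSteinHomprp}, by viewing $\mathcal I^{sm}_{\mathfrak{K}}$ as a fibre of the smooth map $\sigma$ over a section of $\mathcal F$. There are three properties to establish: closedness (together with being a subgroup scheme), normality, and smoothness over $\mathfrak{K}$.

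First, $\mathcal I^{sm}_{\mathfrak{K}} = Ker(\sigma)$ is by definition the fibre product $\mathcal I_{\mathfrak{K}} \times_{\sigma, \mathcal F, e} \mathfrak{K}$, where $e:\mathfrak{K} \rightarrow \mathcal F$ is the identity section.
\begin{itemize}
\item The identity section $e$ is a closed immersion, since $\mathcal F \rightarrow \mathfrak{K}$ is separated; indeed $\mathcal F$ is a closed subscheme of $F \times \mathfrak{K}$.
\item By Remark \ref{FInertia1rmk}, $e$ is in fact an isomorphism onto the connected component $\mathcal F^{\text{id}}$, which is open and closed in $\mathcal F$.
\item Hence $Ker(\sigma) = \sigma^{-1}(\mathcal F^{\text{id}})$ is an open and closed subscheme of $\mathcal I_{\mathfrak{K}}$.
\end{itemize}
Because $\sigma$ is a homomorphism of group schemes over $\mathfrak{K}$, its kernel is a subgroup scheme. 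Normality is also formal: for any $\mathfrak{K}$-scheme $T$, the subgroup $Ker(\sigma)(T)$ is the kernel of the group homomorphism $\mathcal I_{\mathfrak{K}}(T) \rightarrow \mathcal F(T)$, so it is normal in $\mathcal I_{\mathfrak{K}}(T)$. This gives normality as a subgroup functor, hence as a subgroup scheme.

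For smoothness, base change $\sigma$ along $e:\mathfrak{K} \rightarrow \mathcal F$. Smooth morphisms are stable under base change, so $Ker(\sigma) \rightarrow \mathfrak{K}$ is smooth. Equivalently, the kernel is the open subscheme $\sigma^{-1}(\mathcal F^{\text{id}})$, and $\sigma|: \sigma^{-1}(\mathcal F^{\text{id}}) \rightarrow \mathcal F^{\text{id}} \cong \mathfrak{K}$ is smooth.

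The only point needing care is the identification $\mathcal F^{\text{id}} \cong \mathfrak{K}$ via the identity section, used to see that the kernel is also open. This holds because $F$ is finite and discrete, so $F \times \mathfrak{K}$ is a disjoint union of copies of $\mathfrak{K}$, and the component $\{\text{id}\} \times \mathfrak{K}$ lies entirely in the stabiliser scheme. Strictly, closedness and smoothness only need base change along the closed immersion $e$, so the identification is not essential; I expect no genuine obstacle here.
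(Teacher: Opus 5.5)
Your proof is correct and follows the paper's own argument. The paper also obtains smoothness from the Cartesian square exhibiting $Ker(\sigma)$ as the base change of the smooth homomorphism $\sigma$ along the identity section of $\mathcal F$, and treats normality and closedness as formal consequences of $\mathcal I^{sm}_{\mathfrak{K}}$ being a kernel. Your extra remark that the kernel is also open, as $\sigma^{-1}(\mathcal F^{\text{id}})$, is accurate and anticipates Lemma \ref{CentraliserCompslem}.
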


\begin{proof} 
Smoothness of $\mathcal I^{sm}_{\mathfrak{K}}$ over $\mathfrak{K}$ follows from the Cartesian diagram
\begin{equation}\label{SmoothCentraliser2eqn}
\begin{tikzcd}
\mathcal I^{sm}_\mathfrak{K} \arrow[r] \arrow[d] & \mathcal I_{\mathfrak{K}} \arrow[d, "\sigma"] \\
\mathfrak{K} \arrow[r, "\text{id}_\mathcal F"]           & \mathcal F                                   
\end{tikzcd}
\end{equation}
where $\text{id}_{\mathcal F}$ is the identity section of $\mathcal F$. The fact that $\mathcal I^{sm}_\mathfrak{K}$ is normal and closed in $\mathcal I_{\mathfrak{K}}$ is automatic since it is the kernel of a homomorphism of group schemes.
\end{proof}

We now wish to extend these considerations to the full sheet $S$. We first consider the pullback of $\mathcal I_S$ to $G \times \mathfrak{K}$ along the action map. There is a pullback diagram of group schemes
\begin{equation}\label{CentraliserPullbackeqn}
\begin{tikzcd}
G \times \mathcal I_{\mathfrak{K}} \arrow[r] \arrow[d] & \mathcal I_S \arrow[d] \\
G \times \mathfrak{K} \arrow[r, "Ad"]                  & S                     
\end{tikzcd}
\end{equation}
where the top arrow is defined by the $G$-action map on $\mathcal I_S$. This suggests that the desired extension of the group scheme $\mathcal I^{sm}_{\mathfrak{K}}$ defined in Corollary \ref{SmoothCentralisercor} should be constructed via faithfully flat descent. 

\begin{proposition}\label{SmCentraliserDescentprp}
    The closed group subscheme $G\times \mathcal I_{\mathfrak{K}}^{sm}$ of $G \times \mathcal I_{\mathfrak{K}}$ (as group schemes over $G\times \mathfrak{K}$) descends to a closed subgroup scheme $\mathcal I_S^{sm}$ of $\mathcal I_S$ (as group schemes over $S$) along the action map $Ad:G \times \mathfrak{K} \rightarrow S$.
\end{proposition}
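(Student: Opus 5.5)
We use faithfully flat descent along $p := Ad : G \times \mathfrak{K} \rightarrow S$, which is smooth and surjective by Proposition \ref{KatsyloSliceprp}, hence fppf. The pullback of $\mathcal I_S$ along $p$ is $G \times \mathcal I_{\mathfrak{K}}$ by (\ref{CentraliserPullbackeqn}). Closed subschemes descend along fppf covers exactly when the two pullbacks to $(G\times\mathfrak{K})\times_S(G\times\mathfrak{K})$ agree. So it suffices to show that $\mathrm{pr}_1^*(G\times\mathcal I^{sm}_{\mathfrak{K}})$ and $\mathrm{pr}_2^*(G\times\mathcal I^{sm}_{\mathfrak{K}})$ coincide as closed subschemes of the pullback of $\mathcal I_S$. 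Once this holds, the descended closed subscheme $\mathcal I^{sm}_S$ is automatically a subgroup scheme, since the group axioms can be checked after faithfully flat base change.

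First identify the fibre product. A point of $(G\times\mathfrak{K})\times_S(G\times\mathfrak{K})$ is $(g_1,x_1,g_2,x_2)$ with $Ad_{g_1}x_1 = Ad_{g_2}x_2$. Setting $r = g_2^{-1}g_1$, the point $(r,x_1)$ lies in the restricted action scheme $\mathcal R$. Hence the fibre product is isomorphic to $G \times \mathcal R$ via $(g_1,x_1,g_2,x_2)\mapsto (g_2,(r,x_1))$, where $s(r,x_1)=x_1$ and $t(r,x_1)=x_2$. Under this identification the two pulled-back subgroup schemes are $s^*\mathcal I^{sm}_{\mathfrak{K}}$ and $t^*\mathcal I^{sm}_{\mathfrak{K}}$. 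These are compared inside $s^*\mathcal I_{\mathfrak{K}}\cong t^*\mathcal I_{\mathfrak{K}}$, the isomorphism being conjugation by $r$: $\mathcal I_{x_1}\rightarrow \mathcal I_{x_2}$, $u\mapsto rur^{-1}$. The factor $G$ is harmless, so descent reduces to the following claim. Over $\mathcal R$, conjugation by the tautological $r$ carries $s^*\mathcal I^{sm}_{\mathfrak{K}}$ onto $t^*\mathcal I^{sm}_{\mathfrak{K}}$. Equivalently, $\sigma(rur^{-1}) = \mathrm{id}$ whenever $\sigma(u)=\mathrm{id}$.

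This claim is the main step, and it follows from Lemma \ref{RAQSGroupoidlem}. Since $\hat\sigma:\mathcal R\rightarrow F\times\mathfrak{K}$ is a morphism of groupoids, it respects composition and inverses. Thus $\hat\sigma(r u r^{-1}) = \hat\sigma(r)\,\hat\sigma(u)\,\hat\sigma(r)^{-1}$ in the groupoid $F\times\mathfrak{K}$. When $\sigma(u)$ is the identity arrow at $x_1$, the right-hand side is the identity arrow at $x_2$. To make this scheme-theoretic rather than pointwise, I would write both sides as morphisms $s^*\mathcal I_{\mathfrak{K}} \rightarrow F\times\mathfrak{K}$ built from the composition maps $c$ and inversion of the two groupoids. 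Their equality is then an identity of morphisms given by the groupoid compatibility. This shows conjugation maps $s^*\mathcal I^{sm}_{\mathfrak{K}}$ into $t^*\mathcal I^{sm}_{\mathfrak{K}}$, and the inverse $r^{-1}$ gives the reverse inclusion.

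The cocycle condition on the triple fibre product is automatic for closed subschemes, so the descent datum is effective. Alternatively, the same argument shows that the closed subscheme $\mathcal I^{sm}_{\mathfrak{K}}\times G$ of $G\times\mathcal I_{\mathfrak{K}}$ is $G\times\mathcal I_{\mathfrak K}$-stable under the groupoid action, which is what descent requires. This yields a closed subgroup scheme $\mathcal I^{sm}_S\subseteq \mathcal I_S$ with $p^*\mathcal I^{sm}_S = G\times\mathcal I^{sm}_{\mathfrak{K}}$.

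The obstacle I anticipate is bookkeeping. One must check that the identification of the fibre product with $G\times\mathcal R$ is compatible with how (\ref{CentraliserPullbackeqn}) pulls $\mathcal I_S$ back along each projection, since the top arrow of (\ref{CentraliserPullbackeqn}) involves the $G$-action on $\mathcal I_S$. In particular, conjugation must come out as $r$ and not $r^{-1}$, and the subscheme equality must hold scheme-theoretically and not only on points. For the latter, I would argue as in Lemma \ref{RAQSGroupoidlem}: $s^*\mathcal I^{sm}_{\mathfrak K}$ is smooth over the smooth scheme $\mathcal R$, hence reduced, so an inclusion of reduced closed subschemes that holds on points is enough.
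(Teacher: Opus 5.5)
Your proof is correct, but it handles the key stability step differently from the paper.

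\textbf{Where the two proofs agree.} The paper sets up the same descent problem. Its formula (\ref{SmCentraliserDescent3eqn}) is exactly your conjugation by $r=g_2^{-1}g_1$.

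\textbf{How the paper proves stability.} It argues on $\mathbb{C}$-points. By Theorem \ref{RedCentraliseractionthm} it writes $g_2^{-1}g_1=ak$ with $a\in A$ and $k\in C_G(x_1)$. Conjugation by $k$ preserves $\mathcal I^{sm}_{x_1}$ because $\mathcal I^{sm}_{\mathfrak K}$ is normal (Corollary \ref{SmoothCentralisercor}). The element $a$ preserves $\mathcal I^{sm}_{\mathfrak K}$ by Lemma \ref{RedCentSCActionlem}.

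\textbf{How you prove it.} You use Lemma \ref{RAQSGroupoidlem}: it makes $\hat\sigma$ a functor of groupoids on $T$-points for every test scheme $T$. Hence $\sigma(rur^{-1})=\hat\sigma(r)\,\sigma(u)\,\hat\sigma(r)^{-1}$, so kernels are carried to kernels.

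\textbf{What your route gains.}
\begin{itemize}
\item It is uniform in families. The paper chooses the decomposition $r=ak$ point by point, and that choice does not vary algebraically with $r$. So the paper's argument only gives stability on $\mathbb{C}$-points. To conclude scheme-theoretically it tacitly needs the pulled-back subgroup scheme to be reduced, which holds because it is smooth over a non-singular base. Your functor-of-points argument gives the scheme-theoretic statement directly, so your final reducedness fallback is unnecessary, though harmless.
\item It recovers normality and Lemma \ref{RedCentSCActionlem} as the special cases $r\in C_G(x_1)$ and $r\in A$.
\end{itemize}

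\textbf{What it does not avoid.} Katsylo's conjugacy theorem is relocated, not eliminated. It enters through the surjectivity in Lemma \ref{RADecomplem}. The construction of $\hat\sigma$, and hence Lemma \ref{RAQSGroupoidlem}, depends on that surjectivity.

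\textbf{What the paper's route gains.} It is more concrete and hands-on, working directly with Katsylo's theorem. It does not pass through the groupoid-morphism structure of $\hat\sigma$.
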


In order for the descent to work, we need some auxiliary lemmas.

\begin{lemma}\label{CentraliserCompslem}
    As a subscheme, $\mathcal I_{\mathfrak{K}}^{sm}$ is a union of connected components of $\mathcal I_{\mathfrak{K}}$. Any connected component of $\mathcal I_{\mathfrak{K}}$ which is not contained in $\mathcal I_{\mathfrak{K}}^{sm}$ has support on a subset of $\mathfrak{K}$ of codimension at least 1.
\end{lemma}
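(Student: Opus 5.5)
The plan is to read everything off the homomorphism $\sigma:\mathcal I_{\mathfrak{K}} \rightarrow \mathcal F$ of Proposition \ref{QuasiSteinHomprp} and the decomposition $\mathcal F = \coprod_{a \in F}\mathcal F^a$ of Remark \ref{FInertia1rmk}. Since $F$ is finite, this decomposition is a decomposition of $\mathcal F$ into open and closed subschemes, with $\mathcal F^{\text{id}}$ equal to the image of the identity section of $\mathcal F$.

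First, I would show that $\mathcal I^{sm}_{\mathfrak{K}}$ is open and closed in $\mathcal I_{\mathfrak{K}}$. By the Cartesian square (\ref{SmoothCentraliser2eqn}), $\mathcal I^{sm}_{\mathfrak{K}} = \sigma^{-1}(\mathcal F^{\text{id}})$. More generally, $\mathcal I_{\mathfrak{K}} = \coprod_{a \in F} \sigma^{-1}(\mathcal F^a)$ is a decomposition into open and closed subschemes. So $\mathcal I^{sm}_{\mathfrak{K}}$ is open and closed, hence a union of connected components. Each connected component $X$ of $\mathcal I_{\mathfrak{K}}$ lies in exactly one $\sigma^{-1}(\mathcal F^a)$.

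Second, I would handle a component $X$ not contained in $\mathcal I^{sm}_{\mathfrak{K}}$. Then $X \subseteq \sigma^{-1}(\mathcal F^a)$ for some $a \neq \text{id}$, so its image in $\mathfrak{K}$ lies in the image of $\mathcal F^a$ in $\mathfrak{K}$. This image is $\mathfrak{K}^a = \{x \in \mathfrak{K} \mid a\cdot x = x\}$, the fixed locus of $a$. Because the $F$-action on $\mathfrak{K}$ is faithful (Remark \ref{KatsyloGprmk}), $\mathfrak{K}^a$ is a proper closed subset. Since $\mathfrak{K}$ is irreducible by Lemma \ref{KatsyloSmoothlem}, $\mathfrak{K}^a$ has codimension at least $1$. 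Hence the support of $X$ lies in a subset of codimension at least $1$, which is the claim.

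The only step needing care is identifying $\mathcal F^a$ with the closed subscheme $\{a\}\times\mathfrak{K}^a$ of $F \times \mathfrak{K}$ and confirming that the decomposition of $\mathcal F$ pulls back along $\sigma$. This follows directly from the definition of $\mathcal F$ as the stabiliser of the action groupoid (\ref{FInertiaeqn}): $\mathcal F$ is the fibre product of the map $F \times \mathfrak{K} \rightarrow \mathfrak{K}\times\mathfrak{K}$ with the diagonal. I therefore expect no serious obstacle. One small point is that "connected component" is meant topologically, and open-and-closed subsets are unions of components, which is standard for the Noetherian scheme $\mathcal I_{\mathfrak{K}}$.
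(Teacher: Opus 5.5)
Your proposal is correct and follows the paper's argument. The paper's proof just cites the definition $\mathcal I^{sm}_{\mathfrak{K}} = \mathrm{Ker}(\sigma)$ and the decomposition $\mathcal F = \coprod_{a \in F}\mathcal F^a$ of Remark \ref{FInertia1rmk}; you spell out the same two ingredients by pulling this open-and-closed decomposition back along $\sigma$, and then use faithfulness of the $F$-action and irreducibility of $\mathfrak{K}$ to get the codimension bound.
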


\begin{proof}
These statements follow from the definition of $\mathcal I_{\mathfrak{K}}^{sm}$ and Remark \ref{FInertia1rmk}.
\end{proof}

\begin{lemma}\label{RedCentSCActionlem}
    The $A$-action on $\mathcal I_{\mathfrak{K}}$ (defined via the $G$-action on $\mathcal I_S$) restricts to an $A$-action on $\mathcal I_{\mathfrak{K}}^{sm}$.
\end{lemma}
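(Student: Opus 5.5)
The plan is to show that for each $a \in A$ the automorphism $I_a$ of $\mathcal I_{\mathfrak{K}}$ (covering the action of $a$ on $\mathfrak{K}$) carries $\mathcal I^{sm}_{\mathfrak{K}}$ into itself. I would do this with the connected-component description of Lemma \ref{CentraliserCompslem} rather than by computing $\sigma$ directly. First note that the $A$-action on $\mathcal I_{\mathfrak{K}}$ is well defined. Since $A$ commutes with $\mathfrak{s}$, it preserves $\mathfrak{S}$; it also preserves $S$, and hence $\mathfrak{K}$. Conjugation $g \mapsto aga^{-1}$ then sends $\mathcal I_x$ to $\mathcal I_{Ad_a(x)}$, so each $a$ acts by an automorphism of the scheme $\mathcal I_{\mathfrak{K}}$. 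In particular, $a$ permutes the connected components of $\mathcal I_{\mathfrak{K}}$.

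Next, I characterise $\mathcal I^{sm}_{\mathfrak{K}}$ intrinsically. By Lemma \ref{CentraliserCompslem} it is a union of connected components of $\mathcal I_{\mathfrak{K}}$, and every component outside it lies over a subset of codimension at least one. The key claim is that every connected component of $\mathcal I^{sm}_{\mathfrak{K}}$ dominates $\mathfrak{K}$. Indeed, $\mathcal I^{sm}_{\mathfrak{K}}$ is smooth over $\mathfrak{K}$ by Corollary \ref{SmoothCentralisercor}, so each component maps to an open subset. That subset is also closed: the component has the form $\sigma^{-1}(\mathcal F^{\text{id}})$ intersected with a component of $\mathcal I_{\mathfrak{K}}$, and since $\mathcal F^{\text{id}} \cong \mathfrak{K}$, the image is a nonempty open subset of the irreducible $\mathfrak{K}$ (Lemma \ref{KatsyloSmoothlem}), hence dense. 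This yields the characterisation: $\mathcal I^{sm}_{\mathfrak{K}}$ is exactly the union of the connected components of $\mathcal I_{\mathfrak{K}}$ whose image in $\mathfrak{K}$ is dense.

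Finally, for $a \in A$ the automorphism of $\mathcal I_{\mathfrak{K}}$ covers an automorphism of $\mathfrak{K}$, and automorphisms of $\mathfrak{K}$ preserve density of images. So $a$ maps components with dense image to components with dense image, and therefore preserves $\mathcal I^{sm}_{\mathfrak{K}}$. Because $\mathcal I^{sm}_{\mathfrak{K}}$ is an open and closed subscheme, the action morphism $A \times \mathcal I_{\mathfrak{K}} \to \mathcal I_{\mathfrak{K}}$ restricts scheme-theoretically to $A \times \mathcal I^{sm}_{\mathfrak{K}} \to \mathcal I^{sm}_{\mathfrak{K}}$; this is checked on points, using that the source $A \times \mathcal I^{sm}_{\mathfrak{K}}$ is reduced.

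The main obstacle is the claim that every component of $\mathcal I^{sm}_{\mathfrak{K}}$ has dense image; Lemma \ref{CentraliserCompslem} as stated gives only the converse. If smoothness plus irreducibility of $\mathfrak{K}$ were not enough, I would fall back on a more direct argument. One checks that $\sigma(a g a^{-1}) = \bar a\,\sigma(g)\,\bar a^{-1}$ on the dense open set $\mathfrak{K}^\circ$, where $\mathcal F$ is trivial so both sides are the identity. Then the closed set where the equivariance holds contains the dense set $\mathcal I_{\mathfrak{K}^\circ}$, and one extends by density over $\mathcal I^{sm}_{\mathfrak{K}}$, which is smooth and hence has $\mathcal I^{sm}_{\mathfrak{K}^\circ}$ dense in it.
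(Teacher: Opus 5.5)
Your argument is correct and is essentially the paper's proof: smoothness of $\mathcal I^{sm}_{\mathfrak{K}}$ over the irreducible $\mathfrak{K}$ gives every connected component dense support, each $a \in A$ acts through an automorphism of $\mathfrak{K}$ and so preserves density, and Lemma \ref{CentraliserCompslem} then forces the image into $\mathcal I^{sm}_{\mathfrak{K}}$. One small point: your aside that the image of each component is ``also closed'' is neither justified nor needed, since openness together with nonemptiness already gives density.
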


\begin{proof}
Since $A \times \mathcal I_{\mathfrak{K}}^{sm}$ is smooth over $\mathfrak{K}$, all of its connected components have dense support in $\mathfrak{K}$. If $X$ is a component of $A \times \mathcal I_{\mathfrak{K}}^{sm}$ supported on $supp(X) \subseteq \mathfrak{K}$, then the image of $X$ under the action morphism has support on the image of $supp(X)$ under an automorphism of $\mathfrak{K}$. So in particular the image of $X$ also has dense support in $\mathfrak{K}$, and must be contained in $\mathcal I_{\mathfrak{K}}^{sm}$ by Lemma \ref{CentraliserCompslem}.
\end{proof}

\begin{proof}[Proof of Proposition \ref{SmCentraliserDescentprp}] Consider the Cartesian diagram
\begin{equation}\label{SmCentraliserDescent1eqn}
\begin{tikzcd}
(G \times \mathfrak{K}) \times_S (G \times \mathfrak{K}) \arrow[r, "\pi_1"] \arrow[d, "\pi_2"'] & G \times \mathfrak{K} \arrow[d, "Ad"] \\
G \times \mathfrak{K} \arrow[r, "Ad"]                                                           & S                                    
\end{tikzcd}
\end{equation}
Then the diagram (\ref{CentraliserPullbackeqn}) induces a canonical isomorphism $\psi:\pi_1^*(G \times \mathcal I_{\mathfrak{K}}) \rightarrow \pi_2^*(G \times \mathcal I_{\mathfrak{K}})$. Since the action morphism is smooth and surjective, to descend $G \times \mathcal I_{\mathfrak{K}}^{sm}$ as a closed subgroup scheme of $G \times \mathcal I_{\mathfrak{K}}$, it suffices to show that $\psi$ maps $\pi_1^*(G \times \mathcal I_{\mathfrak{K}}^{sm})$ to $\pi_2^*(G \times \mathcal I_{\mathfrak{K}}^{sm})$ (note that the cocycle condition for $G \times \mathcal I_{\mathfrak{K}}^{sm}$ will be satisfied automatically since it is satisfied for $G \times \mathcal I_{\mathfrak{K}}$).

We can make explicit identifications of both $\pi_i^*(G \times \mathcal I_{\mathfrak{K}})$ with
\begin{equation}\label{SmCentraliserDescent2eqn}
    (G \times \mathcal I_{\mathfrak{K}}) \times _S (G \times \mathfrak{K})
\end{equation}
such that $\psi$ is identified with the map whose action on $\mathbb{C}$-points is
\begin{equation}\label{SmCentraliserDescent3eqn}
    ((g_1,(h,x_1)),(g_2,x_2)) \mapsto ((g_2, g_2^{-1}g_1 \cdot (h,x_1)),(g_1,x_1)),
\end{equation}
where we have $g_1,g_2 \in G$, $x_1,x_2 \in \mathfrak{K}$ and $h \in C_G(x_1)$ with
\begin{equation}\label{SmCentraliserDescent4eqn}
    Ad_{g_1}(x_1)=Ad_{g_2}(x_2).
\end{equation}
By Theorem \ref{RedCentraliseractionthm}, there exist $a \in A$ and $k \in C_G(x_1)$ such that $g_2^{-1}g_1 = ak$. Then, by Corollary \ref{SmoothCentralisercor} and Lemma \ref{RedCentSCActionlem}, the subscheme
\begin{equation}\label{SmCentraliserDescent5eqn}
    (G \times \mathcal I_{\mathfrak{K}}^{sm}) \times _S (G \times \mathfrak{K})
\end{equation}
of (\ref{SmCentraliserDescent2eqn}) is stable under the map defined by (\ref{SmCentraliserDescent3eqn}); this gives the required statement.
\end{proof}

The group scheme $\mathcal I_S^{sm}$ has a number of desirable properties.

\begin{proposition}\label{SmCentraliserPropsprp}
    $\mathcal I_S^{sm}$ is a smooth closed normal subgroup scheme of $\mathcal I_S$, and is the maximal smooth subgroup scheme of $\mathcal I_S$, i.e. if $\mathcal H$ is a smooth subgroup scheme of $\mathcal I_S$ then $\mathcal H$ is a subgroup scheme of $\mathcal I^{sm}_S$. The $G$-action on $\mathcal I_S$ restricts to a $G$-action on $\mathcal I_S^{sm}$. 
\end{proposition}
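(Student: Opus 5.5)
Every claimed property will be checked after pulling back along the smooth surjection $Ad:G\times\mathfrak{K}\rightarrow S$, where by construction (Proposition \ref{SmCentraliserDescentprp}) $\mathcal I_S^{sm}$ pulls back to $G\times\mathcal I^{sm}_{\mathfrak{K}}$ inside $Ad^*\mathcal I_S\cong G\times\mathcal I_{\mathfrak{K}}$ (diagram (\ref{CentraliserPullbackeqn})). The descent is along a smooth surjective (hence fpqc) morphism, so the following properties descend. Smoothness over $S$ holds because $G\times\mathcal I^{sm}_{\mathfrak{K}}$ is smooth over $G\times\mathfrak{K}$ by Corollary \ref{SmoothCentralisercor}. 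Closedness holds because closed immersions descend. The subgroup property holds because the multiplication, unit and inverse of $\mathcal I_S$ restrict to $\mathcal I_S^{sm}$: this can be checked after the faithfully flat base change. Normality holds because the conjugation map $\mathcal I_S\times_S\mathcal I_S^{sm}\rightarrow\mathcal I_S$ factors through $\mathcal I_S^{sm}$ after pullback, where it is conjugation in $G\times\mathcal I_{\mathfrak{K}}$ and $\mathcal I_{\mathfrak{K}}^{sm}=Ker(\sigma)$ is normal.

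For $G$-stability, I will use that the $G$-action on $\mathcal I_S$ lifts to $G\times\mathcal I_{\mathfrak{K}}$ as left multiplication on the first factor, covering left multiplication on $G\times\mathfrak{K}$, and that $Ad$ is equivariant. The subscheme $G\times\mathcal I^{sm}_{\mathfrak{K}}$ is visibly stable under this lift. Hence $g\cdot\mathcal I_S^{sm}$ and $\mathcal I_S^{sm}$ are closed subschemes of $\mathcal I_S$ with the same pullback, and therefore they coincide. Doing this over the group $G$ itself rather than pointwise, via the action map $G\times\mathcal I_S\rightarrow\mathcal I_S$, gives a genuine action.

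Maximality is the main step. Let $\mathcal H\subseteq\mathcal I_S$ be a smooth subgroup scheme. Since the inclusion $\mathcal H\subseteq\mathcal I^{sm}_S$ can be checked after smooth base change, it suffices to treat $\mathcal H_{\mathfrak{K}}$, more precisely the pullback of $\mathcal H$ to $G\times\mathfrak{K}$. It is cleanest to restrict along $\mathfrak{K}\hookrightarrow S$. That restriction is not flat, but $Ad^*\mathcal H\cong G\times\mathcal H'$, where $\mathcal H'$ is the base change to $\{\text{id}\}\times\mathfrak{K}$, so $\mathcal H'$ is a smooth subgroup scheme of $\mathcal I_{\mathfrak{K}}$. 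The plan is then to show that $\sigma|_{\mathcal H'}:\mathcal H'\rightarrow\mathcal F$ is trivial. Every connected component of $\mathcal H'$ is smooth over the irreducible base $\mathfrak{K}$, and smooth maps are open, so each component dominates $\mathfrak{K}$ and meets the preimage of $\mathfrak{K}^\circ$ densely. Over $\mathfrak{K}^\circ$ the scheme $\mathcal F$ is the identity section, so $\sigma$ sends a dense open subset of each component into the closed component $\mathcal F^{\text{id}}$ of (\ref{FInertia2eqn}). By continuity the whole component then maps to $\mathcal F^{\text{id}}$, so $\mathcal H'\subseteq Ker(\sigma)=\mathcal I^{sm}_{\mathfrak{K}}$; this is essentially Lemma \ref{CentraliserCompslem}.

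The delicate point is the density claim for a possibly non-reduced or non-flat situation. It is resolved by smoothness of $\mathcal H'$: smooth implies flat and reduced, and flatness makes the preimage of the dense open $\mathfrak{K}^\circ$ schematically dense. The inclusion therefore holds scheme-theoretically, not just on points. It then descends back along $Ad$, giving $\mathcal H\subseteq\mathcal I_S^{sm}$.
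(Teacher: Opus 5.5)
Your treatment of smoothness, closedness, the subgroup property, normality and $G$-stability is the same fppf-descent argument the paper uses, and it is fine. The gap is in the maximality step, at the claim $Ad^*\mathcal H\cong G\times\mathcal H'$. The identification $Ad^*\mathcal I_S\cong G\times\mathcal I_{\mathfrak{K}}$ of (\ref{CentraliserPullbackeqn}) is built from the $G$-action: it sends $k\in C_G(x)$ over $(g,x)$ to $gkg^{-1}\in C_G(Ad_g(x))$. Under it, $Ad^*\mathcal H$ becomes the subscheme whose fibre over $(g,x)$ is $g^{-1}\mathcal H_{Ad_g(x)}g$. That is of the form $G\times\mathcal H'$ only if $\mathcal H$ is stable under the $G$-action on $\mathcal I_S$, and the statement does not assume this. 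For example, for the zero sheet, where $\mathcal I_S=G$, take $\mathcal H$ a Borel subgroup $B$: then $Ad^*\mathcal H=\{(g,k)\, |\, k\in g^{-1}Bg\}$, not $G\times B$.

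So proving $\mathcal H'=\mathcal H_{\mathfrak{K}}\subseteq\mathcal I^{sm}_{\mathfrak{K}}$ only controls $\mathcal H$ along $\mathfrak{K}$, not along its translates $Ad_g(\mathfrak{K})$, and the descent back to $S$ does not go through as written. Note also that smoothness of $\mathcal H_{\mathfrak{K}}$ over $\mathfrak{K}$ is automatic by base change; non-flatness of $\mathfrak{K}\hookrightarrow S$ was never the issue. The issue is whether data on $\mathfrak{K}$ determines $\mathcal H$ everywhere.

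Your continuity argument on the slice is itself sound, so the repair is short. You have three options.
\begin{itemize}
\item Apply it to each translate $g^{-1}\cdot\mathcal H$, which is again a smooth subgroup scheme of $\mathcal I_S$. Using $G$-stability of $\mathcal I^{sm}_S$, this gives $\mathcal H_{Ad_g(\mathfrak{K})}\subseteq\mathcal I^{sm}_S$ for every $g\in G(\mathbb{C})$. Then conclude from reducedness of $\mathcal H$ and closedness of $\mathcal I^{sm}_S$.
\item Run the same component argument on $G\times\mathfrak{K}$, with the dense open $G\times\mathfrak{K}^\circ$ and the map $\sigma\circ\text{pr}_2$, instead of restricting to $\{\text{id}\}\times\mathfrak{K}$.
\item Follow the paper and skip the slice entirely. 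On the open set $S^\circ=Ad(G)(\mathfrak{K}^\circ)$ one has $\mathcal I^{sm}_{S^\circ}=\mathcal I_{S^\circ}$. Since $\mathcal H$ is flat over $S$, $\mathcal H_{S^\circ}$ is schematically dense in $\mathcal H$. Since $\mathcal I^{sm}_S$ is closed in $\mathcal I_S$, it follows that $\mathcal H\subseteq\mathcal I^{sm}_S$.
\end{itemize}
The paper's version is the cleanest because it never needs to compare $\mathcal H$ with the $G$-equivariant structure of the pullback.
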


\begin{proof}
The first statement follows from Corollary \ref{SmoothCentralisercor}, since these properties are preserved under fppf descent. For maximality, since $\mathcal I_S^{sm}$ is a closed subgroup of $\mathcal I_S$, it suffices to check that any smooth subgroup $\mathcal H$ of $\mathcal I_S$ is contained in $\mathcal I_S^{sm}$ over a dense open subset of $S$; but this is automatic since over $S^\circ = Ad(G)(\mathfrak{K}^\circ)$ (which is open by Proposition \ref{KatsyloSliceprp}), $\mathcal I^{sm}_{S^\circ} = \mathcal I_{S^\circ}$. 

For the final statement, it suffices to observe that in the diagram (\ref{CentraliserPullbackeqn}) the $G$-action on $\mathcal I_S$ pulls back to the action on $G \times \mathcal I_{\mathfrak{K}}$ induced by left multiplication on $G$, which clearly leaves the subscheme $G \times \mathcal I_{\mathfrak{K}}^{sm}$ stable.
\end{proof}

Proposition \ref{SmCentraliserPropsprp} shows that $\mathcal I_S^{sm}$ is a centraliser for the action of $G$ on $S$ in the category of smooth $S$-group schemes; as such we refer to $\mathcal I_S^{sm}$ as the \emph{smooth centraliser} on $S$. We observe the following corollary of the proposition which encapsulates the relationship between the centraliser $\mathcal I_S$ and the Katsylo group $F$.

\begin{corollary}\label{CentraliserKatsylocor}
    The $S$-group scheme $\mathcal I_S$ is smooth if and only if $F$ is trivial.
\end{corollary}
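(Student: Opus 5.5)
The plan is to reduce both directions to the Katsylo slice and then transport back to $S$ through the smooth surjective action map $Ad:G \times \mathfrak{K} \rightarrow S$. Smoothness of a morphism is local in the smooth topology on the target, so by the Cartesian diagram (\ref{CentraliserPullbackeqn}), $\mathcal I_S$ is smooth over $S$ if and only if $G \times \mathcal I_{\mathfrak{K}}$ is smooth over $G \times \mathfrak{K}$. This in turn holds if and only if $\mathcal I_{\mathfrak{K}}$ is smooth over $\mathfrak{K}$. The statement is therefore equivalent to: $\mathcal I_{\mathfrak{K}}$ is smooth over $\mathfrak{K}$ if and only if $F$ is trivial.

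Suppose first that $F$ is trivial. Then $\mathcal F$ is the trivial group scheme $\mathfrak{K}$, so $\sigma$ is the structure map and $\mathcal I^{sm}_{\mathfrak{K}} = Ker(\sigma) = \mathcal I_{\mathfrak{K}}$. Corollary \ref{SmoothCentralisercor} then gives smoothness of $\mathcal I_{\mathfrak{K}}$. Equivalently, $\mathcal I^{sm}_S = \mathcal I_S$ by descent, and Proposition \ref{SmCentraliserPropsprp} applies directly.

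For the converse, assume $\mathcal I_{\mathfrak{K}}$ is smooth over $\mathfrak{K}$. By Proposition \ref{QuasiSteinHomprp}, $\sigma:\mathcal I_{\mathfrak{K}} \rightarrow \mathcal F$ is smooth and surjective, so $\mathcal F$ inherits flatness over $\mathfrak{K}$: a smooth surjection from a flat $\mathfrak{K}$-scheme has flat target. This step needs care. Rather than invoke a general descent-of-flatness claim, I would argue concretely. Since $\mathcal I_{\mathfrak{K}}$ is smooth over $\mathfrak{K}$, every connected component of $\mathcal I_{\mathfrak{K}}$ dominates $\mathfrak{K}$, because smooth maps are open and $\mathfrak{K}$ is irreducible by Lemma \ref{KatsyloSmoothlem}. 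The map $\sigma$ sends each component onto a component $\mathcal F^a$ by continuity, and $\sigma$ is surjective. Hence every $\mathcal F^a$ is the image of a component with dense support, so each $\mathcal F^a$ has dense support in $\mathfrak{K}$. But Remark \ref{FInertia1rmk} says that for $a \neq \text{id}$, the component $\mathcal F^a$ is supported on a proper closed subvariety of $\mathfrak{K}$. So no $a \neq \text{id}$ exists, and $F$ is trivial.

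The main obstacle is the reduction to the slice. One must check that smoothness of a group scheme descends along the smooth surjection $Ad$ via the Cartesian square (\ref{CentraliserPullbackeqn}), and that smoothness of $G \times \mathcal I_{\mathfrak{K}}$ over $G \times \mathfrak{K}$ is equivalent to smoothness of $\mathcal I_{\mathfrak{K}}$ over $\mathfrak{K}$. Both are standard: smoothness is fpqc-local on the base, and the second equivalence is base change along the faithfully flat projection $G \times \mathfrak{K} \rightarrow \mathfrak{K}$. A secondary point is the support argument in the converse, which relies on $\mathfrak{K}$ being irreducible so that open supports are dense.
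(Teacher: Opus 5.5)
Your argument is correct, and the forward direction is the paper's: when $F$ is trivial, $\mathcal F$ is trivial, so $\mathcal I^{sm}_{\mathfrak K}=Ker(\sigma)=\mathcal I_{\mathfrak K}$. The converse reaches the same conclusion by a different packaging of the same idea.

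The paper stays on $S$. Surjectivity of $\sigma$ onto a nontrivial $\mathcal F$ makes $\mathcal I^{sm}_S$ a proper closed subgroup of $\mathcal I_S$. The maximality clause of Proposition \ref{SmCentraliserPropsprp} (every smooth subgroup scheme lies in $\mathcal I^{sm}_S$) then rules out smoothness of $\mathcal I_S$.

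You instead work on the slice and run the density argument by hand. Components of a smooth $\mathcal I_{\mathfrak K}$ dominate the irreducible $\mathfrak K$. Since $\sigma$ is a surjective map of $\mathfrak K$-schemes, each $\mathcal F^a$ would then have dense support; each is nonempty because it contains $(a,e)$. This contradicts Remark \ref{FInertia1rmk}. It is the same mechanism that proves the maximality statement, just unpacked.

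What each route buys: yours needs neither the descended $\mathcal I^{sm}_S$ nor its maximality. For this direction you also do not need fpqc descent of smoothness, since base change along $\mathfrak K\hookrightarrow S$ already gives smoothness of $\mathcal I_{\mathfrak K}=\mathcal I_S\times_S\mathfrak K$. The paper's route is shorter once the machinery of Section \ref{SmoothCentralisersbn} is in place.

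Two small points. First, $\sigma$ maps each connected component \emph{into}, not necessarily onto, some $\mathcal F^a$; that is all you actually use. Second, the flatness claim you set aside is standard: if $X\to Y$ is faithfully flat and $X$ is flat over $\mathfrak K$, then $Y$ is flat over $\mathfrak K$. Combined with the non-flatness assertion in Remark \ref{FInertia1rmk}, that would give a one-line converse.
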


\begin{proof} 
If $F$ is trivial, then so is $\mathcal F$, so by construction $\mathcal I_{\mathfrak{K}}^{sm} =\mathcal I_{\mathfrak{K}}$ and $\mathcal I_S^{sm}=\mathcal I_S$. 

Conversely, if $F$ is non-trivial, then $\mathcal F$ is non-trivial, so since the homomorphism $\sigma$ of Proposition \ref{QuasiSteinHomprp} is surjective, $\mathcal I_\mathfrak{K}^{sm}$ is a proper subscheme of $\mathcal I_{\mathfrak{K}}$. So $\mathcal I_S^{sm}$ is a proper subscheme of $\mathcal I_S$, so by Proposition \ref{SmCentraliserPropsprp}, $\mathcal I_S$ cannot be smooth.
\end{proof}

The constructions above are $\mathbb{G}_m$-equivariant with respect to the relevant actions.

\begin{proposition}\label{SmCentGmEquivprp}
    The Kazhdan action on $\mathcal I_{\mathfrak{K}}$ restricts to a $\mathbb{G}_m$-action on $\mathcal I_{\mathfrak{K}}^{sm}$. Similarly, the scaling action on $\mathcal I_S$ restricts to a $\mathbb{G}_m$-action on $\mathcal I_S^{sm}$.
\end{proposition}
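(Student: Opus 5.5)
The plan is to use the same mechanism as in Lemmas \ref{CentraliserCompslem} and \ref{RedCentSCActionlem}. A group action preserves $\mathcal I^{sm}$ provided it sends connected components with dense support to connected components with dense support.

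For the first statement, recall from Remark \ref{KatsyloActionsrmk} that the Kazhdan action on $\mathcal I_{\mathfrak{K}}$ is given by $t\cdot(g,x) = (\lambda(t^{-1})g\lambda(t^{-1})^{-1}, t\cdot x)$. We must check two things.

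First, $\mathbb{G}_m\times\mathcal I_{\mathfrak{K}}\rightarrow \mathcal I_{\mathfrak{K}}$ is a morphism over the Kazhdan action on $\mathfrak{K}$. Since $\lambda(t)$ fixes $h$ and scales $e$, $f$, it normalises the relevant structures, so the conjugation is compatible with centralisers.

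Second, the image of $\mathbb{G}_m\times \mathcal I^{sm}_{\mathfrak{K}}$ lands in $\mathcal I^{sm}_{\mathfrak{K}}$. The cleanest route is the argument of Lemma \ref{RedCentSCActionlem}. Since $\mathbb{G}_m$ is connected, it suffices to look at each $t$ separately. For fixed $t$ the action map is an automorphism of $\mathcal I_{\mathfrak{K}}$ covering the automorphism $x\mapsto t\cdot x$ of $\mathfrak{K}$. It therefore permutes connected components and carries a component with dense support to a component with dense support. By Lemma \ref{CentraliserCompslem}, the components with dense support are exactly those of $\mathcal I^{sm}_{\mathfrak{K}}$.

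Alternatively, one can argue via $\mathbb{G}_m$-equivariance of $\sigma$. The Kazhdan action commutes with the $A$-action, by Remark \ref{KatsyloGprmk}, so $\mathbb{G}_m$ acts on $\mathcal F$ through its action on $\mathfrak{K}$. One would then check that $\sigma$ intertwines the actions by density over $\mathfrak{K}^\circ$, using the explicit description through $\phi$. Either route gives the first claim. I will use the component argument, since connectedness of $\mathbb{G}_m$ makes it immediate.

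For the second statement, the cleanest approach is again the component argument, applied directly on $S$. The scaling action $t\cdot(g,x)=(g,tx)$ on $\mathcal I_S$ covers the automorphism $x\mapsto tx$ of $S$, by Remark \ref{ActionsSheetsrmk}. Since $\mathcal I_S^{sm}$ is smooth, its connected components all have dense open support. What is needed is a characterisation of $\mathcal I_S^{sm}$ analogous to Lemma \ref{CentraliserCompslem} for $\mathcal I_S$. This follows by descent, because $Ad:G\times\mathfrak{K}\rightarrow S$ is smooth and surjective and is compatible with the pullback diagram (\ref{CentraliserPullbackeqn}).

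A slicker alternative is the maximality in Proposition \ref{SmCentraliserPropsprp}. For each $t$, the image of $\mathcal I_S^{sm}$ under scaling by $t$ is a smooth subgroup scheme of $\mathcal I_S$, being the pullback of a smooth group scheme along an automorphism of $S$. By maximality it is therefore contained in $\mathcal I_S^{sm}$, and applying this to $t$ and $t^{-1}$ gives equality.

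To make this an action of $\mathbb{G}_m$ as a group scheme rather than pointwise, I would show that the closed subscheme $(\mathrm{act})^{-1}(\mathcal I_S^{sm})$ contains $\mathbb{G}_m\times\mathcal I_S^{sm}$. This holds on $\mathbb{C}$-points, and the source is reduced because it is smooth, which finishes the argument. The same maximality-plus-reducedness device could also replace the component argument in the first part.

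The main obstacle is mild: making the pointwise-in-$t$ statement scheme-theoretic. This is resolved by reducedness of $\mathbb{G}_m\times\mathcal I^{sm}$.
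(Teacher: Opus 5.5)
Your proposal is correct. For the first statement it matches the paper: the paper's entire proof is that both claims follow as in Lemma \ref{RedCentSCActionlem}. That argument runs as follows: an automorphism of $\mathcal I_{\mathfrak{K}}$ covering an automorphism of $\mathfrak{K}$ sends components with dense support to components with dense support, and by Lemma \ref{CentraliserCompslem} these components make up $\mathcal I^{sm}_{\mathfrak{K}}$.

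For the second statement your main route is genuinely different. The paper's ``same proof'' tacitly requires transporting the component characterisation to $\mathcal I_S$; you correctly note this can be done by descent along $Ad$. Your primary argument avoids this entirely:
\begin{itemize}
\item $m_t(\mathcal I^{sm}_S)$ is the base change of $\mathcal I^{sm}_S$ along the automorphism $x\mapsto t^{-1}x$ of $S$, so it is a smooth subgroup scheme of $\mathcal I_S$;
\item by the maximality clause of Proposition \ref{SmCentraliserPropsprp}, it is therefore contained in $\mathcal I^{sm}_S$;
\item reducedness of $\mathbb{G}_m\times\mathcal I^{sm}_S$ then makes this scheme-theoretic.
\end{itemize}

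What each route buys: yours is intrinsic, never returns to the Katsylo slice, and applies verbatim to any automorphism of $S$ that lifts to $\mathcal I_S$. It also covers the first statement, via the analogous maximality on $\mathfrak{K}$, which holds by the same proof since $\mathcal I^{sm}_{\mathfrak{K}^\circ}=\mathcal I_{\mathfrak{K}^\circ}$. The paper's component argument instead uses that $\mathcal I^{sm}$ is open and closed, which makes the scheme-theoretic upgrade automatic.

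One small correction. Connectedness of $\mathbb{G}_m$ is not what lets you argue one $t$ at a time; that comes from $\mathcal I^{sm}$ being open and closed, or from the reduced source. Used properly, connectedness gives the shortest proof of all. For a connected component $X$ of $\mathcal I_{\mathfrak{K}}$ (or of $\mathcal I_S$), $\mathbb{G}_m\times X$ is connected and contains $\{1\}\times X$, so its image lies in $X$. Hence every connected component is $\mathbb{G}_m$-stable, and no dense-support argument is needed. That argument is needed in Lemma \ref{RedCentSCActionlem} only because $A$ may be disconnected.
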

\begin{proof}
The proofs are the same as that of Lemma \ref{RedCentSCActionlem}.
\end{proof}

\subsection{The adjoint quotient as a gerbe}\label{AdjointQGerbesbn}

We can use the considerations of the previous subsection to describe the structure of the quotient stack $[S/G]$. As before, let $S$ be a non-singular sheet of $\mathfrak{g}$. First, we note that since the map $\chi_S:S \rightarrow \mathfrak{c}_S$ of Theorem \ref{BCGeometricQthm} is a geometric quotient for the $G$-action, it induces a map $\chi_{S,G}:[S/G] \rightarrow \mathfrak{c}_S$ which is bijective on $\mathbb{C}$-points. This suggests a gerbe structure for $\chi_{S,G}$; we recall the definition.

\begin{definition}\cite[Section 2]{Giraud}\label{Gerbedef}
    A morphism of algebraic stacks $f: \mathcal X \rightarrow \mathcal Y$ is a \emph{gerbe} if there is an fppf cover $g:Z \rightarrow \mathcal Y$ and a sheaf of groups $G$ on $Z$ such that there is an isomorphism
    \begin{equation}\label{Gerbeeqn}
        Z \times_\mathcal Y \mathcal X \cong {\bf B}G
    \end{equation}
    where ${\bf B}G$ is the classifying stack for $G$, i.e. the stack quotient $[Z/G]$ for the trivial action of $G$ on $Z$.

    Such a $Z$ is called a \emph{trivialisation} of the gerbe, and a gerbe is called \emph{trivial} if there is a section $s:\mathcal Y\rightarrow \mathcal X$.
\end{definition}

\begin{remark}\label{Gerbermk}
    We have combined \cite[Définition 2.1.1]{Giraud} and \cite[Corollaire 2.2.6]{Giraud} in the above definition; \cite[Corollaire 2.2.6]{Giraud} also implies that if the gerbe is trivial any fppf cover $g:Z\rightarrow \mathcal Y$ is a trivialisation, and moreover there is a sheaf of groups $\mathcal G$ on $\mathcal Y$ such that $G = g^*\mathcal G$ (in the notation of (\ref{Gerbeeqn})).

    If the gerbe is non-trivial, there may in general be no sheaf of groups $\mathcal G$ for which $G$ is the pullback of $\mathcal G$ on $\mathcal Y$ for any given trivialisation; however, if such a sheaf $\mathcal G$ exists, we call it the \emph{structure group} of the gerbe. If $\mathcal G$ is commutative, we say that the gerbe is \emph{banded by $\mathcal G$}. There is a notion of a band for non-abelian gerbes but we will not use it here.
\end{remark}

If $S$ is the regular sheet, the map $\chi_{S,G}$ is a gerbe (by \cite[Proposition 3.5]{Ngo1}). For general $S$, this is no longer the case, but we can replace $\mathfrak{c}_S$ by a Deligne-Mumford enhancement over which $[S/G]$ is a gerbe. We use the process of \emph{rigidification}, as defined in Appendix A of \cite{AOV}.

The smooth centraliser $\mathcal I^{sm}_S$, defined by Proposition \ref{SmCentraliserDescentprp}, descends under the quotient map $S \rightarrow [S/G]$ to a closed subgroup stack of the inertia stack $\mathcal I_{S,G}$ for $[S/G]$; moreover $\mathcal I_{S,G}^{sm}$ is representable by schemes over $[S/G]$. Thus $\mathcal I_{S,G}^{sm}$ satisfies the conditions of  \cite[Theorem A.1]{AOV}, and we can make the following definition.

\begin{definition}\label{StackyChevalleyBasedef}
We define the \emph{$S$-Chevalley base} $\mathcal B$ for $S$ to be the algebraic stack obtained by the rigidification of $[S/G]$ by $\mathcal I_{S,G}^{sm}$. We will denote the rigidification map by $\rho_{S,G}:[S/G] \rightarrow \mathcal B$.
\end{definition}

\begin{remark}\label{RegularQuotientrmk}
    The $G$-action on the closure $\overline{S}$ lifts to an action on the normalisation $\tilde{S}$ of $\overline{S}$, and since $S$ is non-singular, $S$ can be identified with the points in $\tilde{S}$ which are regular for the $G$-action (i.e. have minimal centraliser dimension). In the terminology of \cite[Section 4.2]{Ngo3}, the $S$-Chevalley base is the regular quotient for the action of $G$ on $\tilde{S}$. In general, the regular quotient construction requires a choice of open flat subgroup scheme of the centraliser $\mathcal I_S$, but in this case the choice is canonical.

    If $S$ is not Dixmier, $\tilde{S}$ fails the \emph{Luna-Richardson} criterion of \cite[Section 4.1]{Ngo3} (i.e. there is no closed regular orbit in $\tilde{S}$), but still admits an ``invariant-theoretic'' description for the GIT quotient $\mathfrak{c}_S = \tilde{S}//G$ by Proposition \ref{BorhoChevalleyprp}.
\end{remark}

\begin{proposition}\label{StackyChevalleyBaseprp}
The map $\rho_{S,G}:[S/G] \rightarrow \mathcal B$ is a gerbe, which is smooth as a morphism of stacks. In particular, for any scheme $X$ and any morphism $f:X \rightarrow [S/G]$, there is a Cartesian diagram
\begin{equation}\label{StackyChevalleyBaseeqn}
\begin{tikzcd}
{\bf B} f^*\mathcal I_{S,G}^{sm}  \arrow[r] \arrow[d] & {[S/G]} \arrow[d,"\rho_{S,G}"] \\
X \arrow[r, "\rho_{S,G} \circ f"]                  & \mathcal B                     
\end{tikzcd}
\end{equation}
where ${\bf B} f^*\mathcal I_{S,G}^{sm}$ is the classifying stack for the $X$-group scheme $f^*\mathcal I_{S,G}^{sm}$.
\end{proposition}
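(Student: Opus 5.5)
The plan is to use the general theory of rigidification from \cite[Appendix A]{AOV}. By construction, $\mathcal I^{sm}_{S,G}$ is a flat (indeed smooth) normal closed subgroup stack of the inertia $\mathcal I_{S,G}$ of $[S/G]$. For such a subgroup, \cite[Theorem A.1]{AOV} already gives that the rigidification map $\rho_{S,G}:[S/G]\rightarrow\mathcal B$ is an fppf gerbe. Its structure is controlled by the subgroup being rigidified: for any object $\xi$ of $[S/G]$ over a scheme, the automorphism group of $\xi$ relative to $\mathcal B$ is exactly $\xi^*\mathcal I^{sm}_{S,G}$.

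So the first step is to confirm the hypotheses. Normality in $\mathcal I_{S,G}$ and $G$-stability follow from Proposition \ref{SmCentraliserPropsprp}. Flatness and finite presentation over $[S/G]$ follow from smoothness of $\mathcal I^{sm}_S$ over $S$ together with descent along the smooth cover $S\rightarrow[S/G]$.

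Smoothness of $\rho_{S,G}$ is checked smooth-locally. Since $S\rightarrow[S/G]$ is smooth and surjective, it is enough to show that the composite $S\rightarrow\mathcal B$ is smooth. I would argue via the fibre: the relative inertia of a gerbe is the band $\mathcal I^{sm}_{S,G}$, which is smooth. For an fppf gerbe $\mathcal X\rightarrow\mathcal Y$ with smooth structure group, the morphism is smooth, because fppf-locally on $\mathcal Y$ it is isomorphic to ${\bf B}H\rightarrow Z$ with $H$ a smooth $Z$-group scheme. The map ${\bf B}H\rightarrow Z$ is smooth, since $Z\rightarrow{\bf B}H$ is an $H$-torsor, hence smooth and surjective, and the composite $Z\rightarrow{\bf B}H\rightarrow Z$ is the identity. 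Smoothness descends along fppf covers of the target, which gives the claim.

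For the Cartesian diagram (\ref{StackyChevalleyBaseeqn}), take $f:X\rightarrow[S/G]$. The map $f$ is a section of the base change $X\times_{\mathcal B}[S/G]\rightarrow X$, so that gerbe is trivial. A trivial gerbe with a chosen section $f$ is isomorphic to the classifying stack of the automorphism sheaf of the section relative to the base. By the property of rigidification recalled above, this sheaf is $f^*\mathcal I^{sm}_{S,G}$. Concretely, I would write down the functor ${\bf B}f^*\mathcal I^{sm}_{S,G}\rightarrow X\times_{\mathcal B}[S/G]$ that sends a torsor $P$ to the twist of $f$ by $P$. This functor is fully faithful by the automorphism computation, and essentially surjective because any two objects of a gerbe over the same base are locally isomorphic.

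The main subtlety I expect lies in this last identification. One has to check that the local isomorphism sheaf between $f$ and another object $\eta$ of $X\times_{\mathcal B}[S/G]$ is an $f^*\mathcal I^{sm}_{S,G}$-torsor, not merely an $f^*\mathcal I_{S,G}$-torsor. This depends on the description in \cite{AOV} of morphisms in the rigidification as $\mathcal I^{sm}$-orbits of morphisms in $[S/G]$. I would verify it by reducing fppf-locally to the case where $\eta$ becomes isomorphic to $f$ in $[S/G]$.
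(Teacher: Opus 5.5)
Your proposal is correct and is essentially the paper's argument. The paper cites \cite[Theorem A.1]{AOV} for the gerbe property and \cite[Remark A.2]{AOV} for the Cartesian square. Your steps are exactly the unravelling the paper leaves implicit: checking the hypotheses via Proposition \ref{SmCentraliserPropsprp}, identifying the relative automorphism sheaf of $f$ with $f^*\mathcal I^{sm}_{S,G}$ through the kernel statement of Theorem A.1, and deducing smoothness from the fppf-local form ${\bf B}H\rightarrow Z$ with $H$ smooth.

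The subtlety you flag at the end is automatic. Isomorphisms in the fibre product $X\times_{\mathcal B}[S/G]$ are by definition compatible with the given identifications in $\mathcal B$, so the Isom sheaf is a pseudo-torsor under $f^*\mathcal I^{sm}_{S,G}$, and the gerbe property makes it a torsor.
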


\begin{proof} 
The first statement follows from \cite[Theorem A.1]{AOV}, while the second statement follows from \cite[Remark A.2]{AOV} after unravelling the definitions.
\end{proof}

\begin{remark}\label{StackyChevalleyMaprmk}
    It is unclear if the gerbe $\rho_{S,G}$ has a structure group in general (in the sense of Remark \ref{Gerbermk}).
    
    We will denote by $\rho_S:S \rightarrow \mathcal B$ the composition
\begin{equation}\label{StackyChevalleyMapeqn}
\begin{tikzcd}
\rho_S:S \arrow[r] & {[S/G]} \arrow[r, "\rho_{S,G}"] & \mathcal B,
\end{tikzcd}
\end{equation}
and refer to the map $\rho_S$ as the \emph{$S$-Chevalley map}. This map is smooth since it is a composition of smooth morphisms.
\end{remark}

\begin{proposition}\label{StackyCBDiagramsprp}
There is a factorisation of the map $\chi_{S,G}$ as
\begin{equation}\label{StackyCBDiagrams1eqn}
\begin{tikzcd}
\chi_{S,G}:{[S/G]} \arrow[r, "\rho_{S,G}"] & \mathcal B \arrow[r, "C"] & \mathfrak{c}_S,
\end{tikzcd}
\end{equation}
and a commutative diagram
\begin{equation}\label{StackyCBDiagrams2eqn}
\begin{tikzcd}
{[S/G]} \arrow[r, hook] \arrow[d, "{\rho_{S,G}}"'] & {[\mathfrak{g}/G]} \arrow[d, "\chi_G"] \\
\mathcal B \arrow[r, "\tilde{\nu}_S"]            & \mathfrak{c}                        
\end{tikzcd}
\end{equation}
where $\chi_G$ is the map induced by the usual Chevalley map $\chi:\mathfrak{g} \rightarrow \mathfrak{c}$, for $\mathfrak{c} =\mathfrak{t}/W$.
\end{proposition}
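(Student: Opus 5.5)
The factorisation (\ref{StackyCBDiagrams1eqn}) should come from the universal property of rigidification in \cite[Theorem A.1]{AOV}. That theorem says a morphism from $[S/G]$ to a stack $\mathcal Y$ factors (uniquely up to unique 2-isomorphism) through $\rho_{S,G}$ exactly when the induced map on inertia kills the subgroup $\mathcal I^{sm}_{S,G}$. Apply this with $\mathcal Y = \mathfrak{c}_S$. Since $\mathfrak{c}_S$ is a scheme, its inertia is trivial, so the map $\mathcal I_{S,G} \to \mathcal I_{\mathfrak{c}_S}$ induced by $\chi_{S,G}$ kills all of $\mathcal I_{S,G}$, and in particular $\mathcal I^{sm}_{S,G}$. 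Hence $\chi_{S,G}$ factors as $C \circ \rho_{S,G}$ for a morphism $C:\mathcal B \to \mathfrak{c}_S$. Here $\chi_{S,G}$ is the morphism induced by the $G$-invariant map $\chi_S$ of Theorem \ref{BCGeometricQthm}.

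For the diagram (\ref{StackyCBDiagrams2eqn}), define $\tilde{\nu}_S := \nu_S \circ C$, with $\nu_S:\mathfrak{c}_S \to \mathfrak{t}/W = \mathfrak{c}$ as in Remark \ref{GeometricQuotientrmk}. Commutativity then reduces to the identity
\[
\chi_G \circ j = \nu_S \circ \chi_{S,G},
\]
where $j:[S/G] \hookrightarrow [\mathfrak{g}/G]$ is induced by the $G$-equivariant inclusion $S \hookrightarrow \mathfrak{g}$. Both sides are morphisms from the quotient stack $[S/G]$ to the scheme $\mathfrak{c}$. Such morphisms are the same as $G$-invariant morphisms $S \to \mathfrak{c}$, because maps from a quotient stack to a scheme (or algebraic space) are determined by their pullbacks to the atlas $S \to [S/G]$. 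Pulled back to $S$, the two sides are $\chi|_S$ and $\nu_S \circ \chi_S$, and these agree by the commutative square (\ref{BCGeometricQ1eqn}) of Theorem \ref{BCGeometricQthm}. So the square commutes, and the 2-isomorphism is canonical because the target is a scheme.

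The one point needing care is the hypothesis of the universal property. It must be checked in the form stated in \cite{AOV}: the subgroup $\mathcal I^{sm}_{S,G}$ maps to the identity in the inertia of the target, which is automatic for a target with trivial inertia. One should also note that $\mathfrak{c}_S$ is an algebraic stack in the sense required there, which is clear since it is an affine variety. I therefore expect no real obstacle. The argument is essentially formal, resting on the universal property of rigidification and on Theorem \ref{BCGeometricQthm}.
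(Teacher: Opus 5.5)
Your proposal is correct and follows the paper's proof. The factorisation comes from the rigidification of \cite[Theorem A.1]{AOV}, with the inertia condition automatic because $\mathfrak{c}_S$ is a scheme; one then sets $\tilde{\nu}_S := \nu_S \circ C$, and commutativity of (\ref{StackyCBDiagrams2eqn}) reduces to the square (\ref{BCGeometricQ1eqn}) of Theorem \ref{BCGeometricQthm}. Your extra step of checking equality of the two maps to the scheme $\mathfrak{c}$ on the atlas $S$ simply makes explicit what the paper leaves implicit.
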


\begin{proof} 
The factorisation (\ref{StackyCBDiagrams1eqn}) exists by the construction of the rigidification in \cite[Theorem A.1]{AOV}. Define $\tilde{\nu}_S = \nu_S \circ C$ for $\nu_S$ defined as in Remark \ref{GeometricQuotientrmk}. This gives the commutative diagram (\ref{StackyCBDiagrams2eqn}) by commutativity of (\ref{BCGeometricQ1eqn}).
\end{proof}

We also have $\mathbb{G}_m$-equivariant versions of these statements, using the notions of \cite{Romagny} for group actions and quotients of stacks. The group stack $\mathcal I_{S,G}^{sm}$ descends further to a group stack $\mathcal I^{sm}_{G\times \mathbb{G}_m}$ on $[S/G\times \mathbb{G}_m]$ with the same properties. Moreover, the scalar action on $S$ induces strict $\mathbb{G}_m$-actions on $[S/G]$ and $\mathcal B$ making the diagram (\ref{StackyCBDiagrams1eqn}) $\mathbb{G}_m$-equivariant. The corollary below then follows immediately.

\begin{corollary}\label{GmEquivStackyCBcor}
    The induced map $\rho_{S,G\times\mathbb{G}_m}:[S/G \times \mathbb{G}_m] \rightarrow  \mathcal B/\mathbb{G}_m$ is a gerbe: for any scheme $X$ and any morphism $f:X \rightarrow [S/G\times \mathbb{G}_m]$, there is a Cartesian diagram
    \begin{equation}\label{GmEquivStackyCB1eqn}
        \begin{tikzcd}
            {\bf B} f^*\mathcal I_{S,G\times \mathbb{G}_m}^{sm}  \arrow[r] \arrow[d] & {[S/G\times \mathbb{G}_m]} \arrow[d,"\rho_{S,G\times \mathbb{G}_m}"] \\
            X \arrow[r, "\rho_{S,G \times \mathbb{G}_m} \circ f"]                  & {\mathcal B/\mathbb{G}_m}.                    
        \end{tikzcd}
    \end{equation}
\end{corollary}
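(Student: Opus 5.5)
The plan is to deduce Corollary \ref{GmEquivStackyCBcor} from Proposition \ref{StackyChevalleyBaseprp} by passing to the $\mathbb{G}_m$-quotients. The $\mathbb{G}_m$-quotient of the rigidification should be the rigidification of the $\mathbb{G}_m$-quotient. Concretely, I would first check that the $\mathbb{G}_m$-action on $[S/G]$ preserves the subgroup stack $\mathcal I^{sm}_{S,G}$. This follows from Proposition \ref{SmCentGmEquivprp}, since the scaling action on $\mathcal I_S$ restricts to $\mathcal I_S^{sm}$, and from Proposition \ref{SmCentraliserPropsprp}, since the $G$-action restricts as well. Hence $\mathcal I^{sm}_{S,G}$ descends along $[S/G] \rightarrow [S/G\times\mathbb{G}_m]$ to a closed subgroup stack $\mathcal I^{sm}_{S,G\times\mathbb{G}_m}$ of the inertia stack of $[S/G\times\mathbb{G}_m]$, and it is flat and representable because these properties are fppf-local.

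Next I would identify $[S/G\times \mathbb{G}_m]$ with the Romagny quotient $[S/G]/\mathbb{G}_m$. Since $\rho_{S,G}$ is $\mathbb{G}_m$-equivariant by the functoriality of rigidification in \cite[Theorem A.1]{AOV}, it induces $\rho_{S,G\times\mathbb{G}_m}:[S/G\times\mathbb{G}_m]\rightarrow \mathcal B/\mathbb{G}_m$. There is a $2$-Cartesian square whose top row is $[S/G]\rightarrow[S/G\times\mathbb{G}_m]$, whose bottom row is $\mathcal B\rightarrow\mathcal B/\mathbb{G}_m$, and whose vertical arrows are the two $\rho$ maps. The square is Cartesian because both horizontal maps are $\mathbb{G}_m$-torsors and the map between them is equivariant.

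Being a gerbe is fppf-local on the target, and $\mathcal B\rightarrow\mathcal B/\mathbb{G}_m$ is an fppf cover. So the pullback square shows that $\rho_{S,G\times\mathbb{G}_m}$ is a gerbe, since its base change is the gerbe $\rho_{S,G}$. Equivalently, one can note that $\rho_{S,G\times\mathbb{G}_m}$ is the rigidification of $[S/G\times\mathbb{G}_m]$ by $\mathcal I^{sm}_{S,G\times\mathbb{G}_m}$, by the uniqueness in \cite[Theorem A.1]{AOV}.

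For the Cartesian diagram (\ref{GmEquivStackyCB1eqn}), I would argue as in Proposition \ref{StackyChevalleyBaseprp} via \cite[Remark A.2]{AOV}. Given $f:X\rightarrow[S/G\times\mathbb{G}_m]$, the fibre product $X\times_{\mathcal B/\mathbb{G}_m}[S/G\times\mathbb{G}_m]$ is the rigidified-fibre gerbe. It has a tautological section induced by $f$, so it is the trivial gerbe ${\bf B}f^*\mathcal I^{sm}_{S,G\times\mathbb{G}_m}$.

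The main obstacle I expect is the bookkeeping in the first step: making precise that the $\mathbb{G}_m$-action on the stack $[S/G]$ is strict and that the equivariant descent of $\mathcal I^{sm}_{S,G}$ is compatible with the rigidification. The other steps are formal consequences of locality of gerbes.
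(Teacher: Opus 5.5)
Your proposal is correct and follows the same route as the paper. You descend $\mathcal I^{sm}_{S,G}$ to $[S/G\times\mathbb{G}_m]$ using its stability under the $G$- and $\mathbb{G}_m$-actions, note that $\rho_{S,G}$ is strictly $\mathbb{G}_m$-equivariant, and conclude via the rigidification formalism of \cite[Theorem A.1, Remark A.2]{AOV}; your base-change argument along the $\mathbb{G}_m$-torsor $\mathcal B\rightarrow\mathcal B/\mathbb{G}_m$ just makes explicit what the paper says ``follows immediately''.
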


To ease notation, we will drop the subscripts $G$ and $G \times \mathbb{G}_m$ where there is no possibility of confusion.

We now describe $\mathcal B$ explicitly as the quotient of a Katsylo slice $\mathfrak{K}$ for $S$ by the Katsylo group $F$ (see Definitions \ref{KatsyloSlicedef} and \ref{KatsyloGpdef}).

\begin{proposition}\label{StackyCBKQprp}
    There is an isomorphism $[\mathfrak{K}/F] \cong \mathcal B$ making the diagram
    \begin{equation}\label{StackyCBKQeqn}
\begin{tikzcd}
\mathfrak{K} \arrow[r, hook] \arrow[d] & S \arrow[d, "\rho_S"] \\
{[\mathfrak{K}/F]} \arrow[r, "\cong"]  & \mathcal B         
\end{tikzcd}
    \end{equation}
    commute.
    
    Under this isomorphism, the map $C:\mathcal B \rightarrow \mathfrak{c}_S$ is identified with the map $[\mathfrak{K}/F] \rightarrow \mathfrak{K}/F$ sending $[\mathfrak{K}/F]$ to its coarse moduli space.
\end{proposition}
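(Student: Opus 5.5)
The plan is to build the comparison map from the slice, and to check it is an isomorphism after pulling back along the smooth cover $\rho_S$. I first pass from $S$ to the slice groupoid. By Proposition \ref{KatsyloSliceprp}, $Ad:G\times\mathfrak{K}\to S$ is smooth and surjective. Hence $\mathfrak{K}\hookrightarrow S\to[S/G]$ is a smooth surjective atlas, and $[S/G]$ is the quotient of the groupoid $\mathcal R\rightrightarrows\mathfrak{K}$ of Remark \ref{RestrictedActionrmk}. Under this presentation, the pullback of $\mathcal I^{sm}_{S,G}$ to $\mathfrak{K}$ is $\mathcal I^{sm}_{\mathfrak{K}}=Ker(\sigma)$, by the descent construction in Proposition \ref{SmCentraliserDescentprp}.

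By \cite[Theorem A.1]{AOV}, the rigidification $\mathcal B$ is then the quotient of the groupoid $\mathcal R/\mathcal I^{sm}_{\mathfrak{K}}\rightrightarrows\mathfrak{K}$. Here $\mathcal I^{sm}_{\mathfrak{K}}$ acts on $\mathcal R$ by composition (equivalently, one can use the Keel–Mori style presentation of a rigidified groupoid). The key step is to identify this quotient groupoid with the action groupoid $F\times\mathfrak{K}\rightrightarrows\mathfrak{K}$. For this I use the morphism $\hat\sigma:\mathcal R\to F\times\mathfrak{K}$ of Lemmas \ref{RAQuasiSteinlem} and \ref{RAQSGroupoidlem}, which is a smooth surjective morphism of groupoids over the identity of $\mathfrak{K}$.

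I then check that $\hat\sigma$ is an $\mathcal I^{sm}_{\mathfrak{K}}$-torsor, i.e.\ that $\mathcal R\times_{s,t}\mathcal I^{sm}_{\mathfrak{K}}\to\mathcal R\times_{F\times\mathfrak{K}}\mathcal R$, $(r,h)\mapsto(r,rh)$, is an isomorphism. Compatibility of $\hat\sigma$ with composition and inverses shows the map lands in the fibre product: $\hat\sigma(rh)=\hat\sigma(r)\sigma(h)=\hat\sigma(r)$. Conversely, if $\hat\sigma(r)=\hat\sigma(r')$, then $h:=r^{-1}r'$ is a loop with $\hat\sigma(h)=\mathrm{id}$. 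Such an $h$ lies in $\mathcal I_{\mathfrak{K}}$ with $\sigma(h)=\mathrm{id}$, hence lies in $\mathcal I^{sm}_{\mathfrak{K}}$. This works scheme-theoretically because the square (\ref{QuasiSteinHomeqn}) is Cartesian. Combined with the smooth surjectivity of $\hat\sigma$, this shows $F\times\mathfrak{K}=\mathcal R/\mathcal I^{sm}_{\mathfrak{K}}$ as groupoids. Therefore $[\mathfrak{K}/F]\cong\mathcal B$.

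Commutativity of (\ref{StackyCBKQeqn}) is immediate, because both composites are induced by the identity on the object scheme $\mathfrak{K}$. For the last claim, $C$ is the map from $\mathcal B$ to the geometric quotient $\mathfrak{c}_S$ through which $\chi_{S,G}$ factors. On the other hand, $[\mathfrak{K}/F]\to\mathfrak{K}/F$ is the coarse moduli map. These agree via Remark \ref{KatsyloGQrmk}, since $\mathfrak{c}_S\cong\mathfrak{K}/\Gamma=\mathfrak{K}/F$ compatibly with $\mathfrak{K}\hookrightarrow S\to\mathfrak{c}_S$. Two maps out of $[\mathfrak{K}/F]$ that agree after composing with the epimorphism $\mathfrak{K}\to[\mathfrak{K}/F]$ are equal, because $\mathfrak{c}_S$ is a scheme.

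I expect the main obstacle to be the identification of the rigidification with the groupoid quotient $\mathcal R/\mathcal I^{sm}_{\mathfrak{K}}$. Concretely, this requires checking that the fppf quotient sheaf is representable by $F\times\mathfrak{K}$, which is exactly the torsor statement above. Everything after that is formal.
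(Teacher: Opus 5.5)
Your proposal is correct and follows essentially the same route as the paper. Both present $[S/G]$ as $[\mathfrak{K}/\mathcal R]$, use the groupoid morphism $\hat\sigma:\mathcal R\to F\times\mathfrak{K}$ to induce the map to $[\mathfrak{K}/F]$, and identify the result with the rigidification by $\mathcal I^{sm}_{\mathfrak{K}}=Ker(\sigma)$. Your explicit check that $\hat\sigma$ is an $\mathcal I^{sm}_{\mathfrak{K}}$-torsor (using the Cartesian square (\ref{QuasiSteinHomeqn})) fills in the step the paper compresses into ``by the construction of $\mathcal I_{\mathfrak{K}}^{sm}$''.
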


 \begin{proof}
 We first observe that the embedding $\mathfrak{K} \hookrightarrow S$ induces an isomorphism of stacks $[\mathfrak{K}/\mathcal R] \cong [S/G]$, where $\mathcal R$ is the restricted action scheme of Definition \ref{RestrictedActiondef} with its groupoid structure (\ref{RestrictedAction2eqn}); this is because the map $\mathfrak{K} \rightarrow [S/G]$ is a smooth cover, and $\mathcal R$ is the restriction of the $G$-action groupoid to $\mathfrak{K}$.

Under this isomorphism, $\mathcal B$ is identified with the rigidification of $[\mathfrak{K}/\mathcal R]$ by a group stack which descends from $\mathcal I_\mathfrak{K}^{sm}$. Moreover, by Lemma \ref{RAQSGroupoidlem}, the map $\hat{\sigma}:\mathcal R \rightarrow F \times \mathfrak{K}$ induces a morphism $\rho_{\sigma}:[\mathfrak{K}/\mathcal R] \rightarrow [\mathfrak{K}/F]$, and by the construction of $\mathcal I_{\mathfrak{K}}^{sm}$, $\rho_\sigma$ induces the required isomorphism on the rigidification.  The remaining properties are clear. 
\end{proof}

We have the following immediate corollaries.

\begin{corollary}\label{StackyCBSmDMcor}
    The $S$-Chevalley base $\mathcal B$ is a smooth Deligne-Mumford stack. It is a scheme if and only if $F$ is trivial, and in this case it is the geometric quotient space $\mathfrak{c}_S$.
\end{corollary}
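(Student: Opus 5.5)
The plan is to read everything off the presentation $\mathcal B \cong [\mathfrak{K}/F]$ of Proposition \ref{StackyCBKQprp}. For smoothness, $\mathfrak{K}$ is non-singular and irreducible by Lemma \ref{KatsyloSmoothlem}, and the atlas $\mathfrak{K} \rightarrow [\mathfrak{K}/F]$ is a finite étale $F$-torsor, so it is in particular smooth and surjective. It follows that $\mathcal B$ is smooth. For the Deligne-Mumford property, I will use that $F$ is a finite group by Remark \ref{KatsyloGprmk}, so $[\mathfrak{K}/F]$ is a quotient of a scheme by a finite (hence étale) group. Such a stack is Deligne-Mumford: it has the étale atlas $\mathfrak{K}$, and its diagonal is unramified because the stabilisers $\mathcal F_x = Stab_F(x)$ are finite and reduced.

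For the scheme criterion, suppose first that $F$ is trivial. Then $[\mathfrak{K}/F] = \mathfrak{K}$, and the coarse moduli map is the identity $\mathfrak{K} \rightarrow \mathfrak{K}/F = \mathfrak{K}$. By Proposition \ref{StackyCBKQprp}, $C:\mathcal B \rightarrow \mathfrak{c}_S$ is identified with the coarse moduli map. Combined with the canonical identification $\mathfrak{K}/F = \mathfrak{K}/\Gamma \cong \mathfrak{c}_S$ from Remarks \ref{KatsyloGQrmk} and \ref{KatsyloGprmk}, this shows that $\mathcal B$ is the scheme $\mathfrak{c}_S$.

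Conversely, suppose $F$ is non-trivial. The nilpotent $e \in \mathfrak{K}$ is fixed by all of $A$, since $A = Z_G(\mathfrak{s})$ centralises $e$. Hence $Stab_F(e) = F \neq 1$, so the point of $\mathcal B$ given by $e$ has non-trivial automorphism group, namely the fibre of the inertia stack $\mathcal F$ at $e$ (compare Remark \ref{FInertia1rmk}). A scheme, or even an algebraic space, has trivial inertia, so $\mathcal B$ cannot be a scheme. An equivalent route is to note that the inertia of $[\mathfrak{K}/F]$ is $[\mathcal F/F]$, which is not isomorphic to the base, because $\mathcal F$ has components $\mathcal F^a$ for $a \neq \text{id}$.

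The only point I expect to need care is the last one: checking that the inertia of $\mathcal B$ really is computed by $\mathcal F$ under the identification of Proposition \ref{StackyCBKQprp}. This is standard for quotient stacks $[X/F]$ by finite groups. Together with the faithfulness of the $F$-action (which makes $\mathfrak{K}^\circ$ dense but still leaves $e$ with full stabiliser), it completes the argument.
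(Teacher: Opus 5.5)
Your proposal is correct and takes the paper's route. The paper states this corollary as an immediate consequence of the presentation $\mathcal B \cong [\mathfrak{K}/F]$ from Proposition \ref{StackyCBKQprp}, using that $\mathfrak{K}$ is non-singular, that $F$ is finite, and that $C$ is the coarse moduli map onto $\mathfrak{K}/F \cong \mathfrak{c}_S$. Your observation that $e$ is fixed by all of $A$, so that $Stab_F(e)=F$ gives non-trivial inertia when $F \neq 1$, is exactly the point left implicit there; the paper uses the same argument for Proposition \ref{SHitchinBasePropsprp}(iii).
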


\begin{corollary}\label{GerbeTrivncor}
    Any choice of Katsylo slice defines an \'{e}tale trivialising cover $\mathfrak{K} \rightarrow \mathcal B$ for the gerbe $\rho_S:[S/G]\rightarrow \mathcal B$. In particular, if $F$ is trivial, the gerbe is trivial.
\end{corollary}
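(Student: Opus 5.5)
The plan is to read everything off the identification $[\mathfrak{K}/F]\cong\mathcal B$ of Proposition \ref{StackyCBKQprp}, which fits into the commutative square (\ref{StackyCBKQeqn}). The quotient map $\mathfrak{K}\to[\mathfrak{K}/F]$ is a finite étale $F$-torsor, because $F$ is a finite group acting on $\mathfrak{K}$; in particular it is an étale surjection. Composing with the isomorphism gives an étale cover $\mathfrak{K}\to\mathcal B$, and by (\ref{StackyCBKQeqn}) this cover equals $\rho_S|_{\mathfrak{K}}$.

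The main step is to show that $\mathfrak{K}\to\mathcal B$ trivialises the gerbe in the sense of Definition \ref{Gerbedef}. I apply Proposition \ref{StackyChevalleyBaseprp} to $X=\mathfrak{K}$ and to the morphism $f:\mathfrak{K}\hookrightarrow S\to[S/G]$. The composite $\rho_{S,G}\circ f$ is exactly the cover above, so the Cartesian square (\ref{StackyChevalleyBaseeqn}) yields
\[
\mathfrak{K}\times_{\mathcal B}[S/G]\cong{\bf B}f^*\mathcal I^{sm}_{S,G}.
\]
It remains to identify $f^*\mathcal I^{sm}_{S,G}$. Pulling back to $S$ gives $\mathcal I^{sm}_S$, and restricting along $\mathfrak{K}\hookrightarrow S$ gives $\mathcal I^{sm}_{\mathfrak{K}}$. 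The descent in Proposition \ref{SmCentraliserDescentprp}, restricted along $\{1\}\times\mathfrak{K}\subset G\times\mathfrak{K}$, shows these agree. So the fibre product is ${\bf B}\mathcal I^{sm}_{\mathfrak{K}}$, and $\mathfrak{K}$ is an étale trivialising cover. Since an étale surjection is fppf, the definition is satisfied.

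For the second claim, suppose $F$ is trivial. Then Corollary \ref{StackyCBSmDMcor} gives $\mathcal B\cong\mathfrak{K}\cong\mathfrak{c}_S$, and the map $\mathfrak{K}\to\mathcal B$ is an isomorphism. Its inverse, followed by the inclusion $\mathfrak{K}\hookrightarrow S\to[S/G]$, is a morphism $s:\mathcal B\to[S/G]$. Commutativity of (\ref{StackyCBKQeqn}) gives $\rho_{S,G}\circ s=\mathrm{id}_{\mathcal B}$, so $s$ is a section and the gerbe is trivial in the sense of Definition \ref{Gerbedef}.

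The only point that needs care is the compatibility $f^*\mathcal I^{sm}_{S,G}=\mathcal I^{sm}_{\mathfrak{K}}$, and it follows directly from the construction of $\mathcal I^{sm}_S$ by descent.
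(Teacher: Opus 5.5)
Your proposal is correct and is essentially the argument the paper intends. The paper lists this as an immediate corollary of Proposition \ref{StackyCBKQprp}: the étale cover $\mathfrak{K}\to[\mathfrak{K}/F]\cong\mathcal B$ trivialises the gerbe through the Cartesian square of Proposition \ref{StackyChevalleyBaseprp}, and when $F$ is trivial the inclusion $\mathfrak{K}\hookrightarrow S$ gives a section. Your identification $f^*\mathcal I^{sm}_{S,G}\cong\mathcal I^{sm}_{\mathfrak{K}}$ is correct, though the trivialisation claim does not need it.
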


There are two possible ways to upgrade the theorem to a $\mathbb{G}_m$-equivariant version. First, let $(L, \mathcal O)$ be decomposition data for $S$ (see Remark \ref{SheetClassificationrmk}), and consider $\mathfrak{z} = Lie(Z(L))$ with its $W_L$-action. We can identify $\mathfrak{K}$ with the affine space $\tilde{\mathfrak{c}}_S = \mathfrak{z}/W_S$ for a subgroup $W_S$ of $W_L$, as in Corollary \ref{KatsyloGpWeylGpcor}, and consider the $\mathbb{G}_m$-action on $\tilde{\mathfrak{c}}_S$ induced by the scaling action on $\mathfrak{z}$.

\begin{proposition}\label{GmEquivStackyCBKAprp}
    The isomorphism $[\tilde{\mathfrak{c}}_S/F] \cong \mathcal B$ is $\mathbb{G}_m$-equivariant, i.e. it induces an isomorphism $[\tilde{\mathfrak{c}}_S/F \times \mathbb{G}_m] \cong \mathcal B/\mathbb{G}_m$.
\end{proposition}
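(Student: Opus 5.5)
The plan is to reduce the statement to a compatibility between two $\mathbb{G}_m$-actions on $\mathfrak{K}$, and then transport the $\mathbb{G}_m$-equivariant structure through the construction of Proposition \ref{StackyCBKQprp}. The isomorphism $[\mathfrak{K}/F]\cong\mathcal B$ there was produced from the groupoid morphism $\hat{\sigma}:\mathcal R\rightarrow F\times\mathfrak{K}$ of Lemma \ref{RAQSGroupoidlem}. So we first equip $\mathfrak{K}$, $\mathcal R$ and $F\times\mathfrak{K}$ with compatible $\mathbb{G}_m$-actions, and then check that $\hat{\sigma}$ and the identification $[\mathfrak{K}/\mathcal R]\cong[S/G]$ are equivariant. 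Equivariance then passes to the rigidifications, since rigidification in the sense of \cite{AOV} is functorial, and by Proposition \ref{SmCentGmEquivprp} the subgroup $\mathcal I^{sm}$ is $\mathbb{G}_m$-stable.

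\emph{Choice of action on $\mathfrak{K}$.} The scaling action on $S$ does not preserve $\mathfrak{K}$. The Kazhdan action does, but by Remark \ref{KatsyloGQrmk} it lifts only the square of the action on $\mathfrak{c}_S$. I would therefore use the identification $\mathfrak{K}\cong\tilde{\mathfrak{c}}_S=\mathfrak{z}/W_S$ of Corollary \ref{KatsyloGpWeylGpcor}, together with the square root of the Kazhdan action from Corollary \ref{KatsyloAffinecor}. That square root is the action induced by scaling on $\mathfrak{z}$, and I would verify that it corresponds to $t\mapsto \mathrm{Ad}_{\lambda(t)^{-1}}\circ(\text{scaling by }t)$ restricted to $\mathfrak{K}$, up to the sign/square-root ambiguity. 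Concretely, for $x\in\mathfrak{K}$ the element $t x$ lies in $S$ and is $G$-conjugate to a unique $A$-orbit in $\mathfrak{K}$. The map $x\mapsto$ (the point of $\mathfrak{K}$ in $\tilde{\mathfrak c}_S$ corresponding to $t\cdot z$ when $x$ corresponds to $z\in\mathfrak{z}$) is, by construction, $F$-equivariant and lifts the scaling on $\mathfrak{c}_S=\mathfrak{K}/F$. Its $F$-equivariance follows from the fact that the $F$-action commutes with the Kazhdan action (Remark \ref{KatsyloGprmk}), together with a density argument on $\mathfrak{K}^\circ$. This gives a $\mathbb{G}_m\times F$-action on $\tilde{\mathfrak{c}}_S$, hence the stack $[\tilde{\mathfrak{c}}_S/F\times\mathbb{G}_m]$.

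\emph{Equivariant cover.} The map $\mathbb{G}_m\times G\times\mathfrak{K}\rightarrow S$, $(t,g,x)\mapsto t\,\mathrm{Ad}_g(x)$, is a smooth cover. For each $t$, the point $t x\in S$ is $G$-conjugate to the image of $x$ under the new action: on the semisimple locus both have the same image in $\mathfrak{z}/W_L$, and by continuity this holds everywhere. So I expect to choose, étale locally in $t$ (or after passing to the double cover $t\mapsto t^2$ and descending), an element $g(t)\in G$ realising this conjugacy, for example $g(t)=\lambda(t)$ where the Kazhdan formula applies. This gives a morphism $[\tilde{\mathfrak{c}}_S/F\times\mathbb{G}_m]\rightarrow\mathcal B/\mathbb{G}_m$ extending the non-equivariant isomorphism.

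\emph{Main obstacle.} The hard step is the square-root issue: the natural conjugating cocharacter only matches the Kazhdan action, which is the \emph{square} of the desired action. To handle it, I would first prove equivariance for the squared action via $\lambda$, i.e.\ over the cover $\mathbb{G}_m\xrightarrow{t^2}\mathbb{G}_m$. I would then show that the kernel $\mu_2$ acts compatibly on both sides: on $\tilde{\mathfrak{c}}_S$ by $-1$ on $\mathfrak{z}$, and on $\mathcal B$ via $\lambda(-1)$ composed with scaling by $-1$, where I would need to check that the discrepancy is an element of $F$ or lies in the rigidified part $\mathcal I^{sm}$. Descending along $\mu_2$ then yields the equivariant isomorphism. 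Once the morphism exists, being an isomorphism can be checked after forgetting the $\mathbb{G}_m$-quotient, and that case is Proposition \ref{StackyCBKQprp}.
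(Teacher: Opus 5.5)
Your route is to build the equivariance on the groupoid $\mathcal R$ via the cocharacter $\lambda$, and then descend along $sq:\mathbb{G}_m^{[2]}\to\mathbb{G}_m$. This is viable, but the step you call the main obstacle is left undone, and it is misdescribed.

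First, $\mathrm{Ad}_{\lambda(t)^{-1}}(tx)$ does not lie in $\mathfrak{K}$: its $h$-weight-$2$ component is $t^{-1}e$. The only uniform conjugator is $\lambda(s)^{-1}$ with $s^2=t$. So your groupoid construction yields only $\mathbb{G}_m^{[2]}$-equivariance for the Kazhdan action, which is essentially Proposition \ref{StackyHKSectionprp}.

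Second, in the descent step the kernel $\mu_2$ of $sq$ acts \emph{trivially} on both $\tilde{\mathfrak c}_S$ and $\mathcal B$, not by $-1$ on $\mathfrak z$. Also, $\mathrm{Ad}_{\lambda(-1)}$ composed with scaling by $-1$ does not preserve $\mathfrak K$. What must actually be shown is that the 2-cells $\alpha_s$ and $\alpha_{-s}$ of the $\mathbb{G}_m^{[2]}$-equivariant structure agree. Equivalently, the 2-automorphism $\alpha_{-1}$ of the isomorphism, given pointwise by $\lambda(-1)$, must be trivial in $\mathcal B$; it is not enough for it to be ``an element of $F$''.

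This check does go through. By Lemma \ref{KatsyloMorphismlem}, the Kazhdan action of $-1$, which is $\mathrm{Ad}_{\lambda(-1)}$, is trivial on $\mathfrak K$. Moreover $\lambda(-1)$ centralises $e,h,f$, so $\lambda(-1)\in N$, and $\hat\sigma$ sends the corresponding arrows to the identity of $F$. More conceptually: every 2-automorphism of a morphism into $\mathcal B\cong[\tilde{\mathfrak c}_S/F]$ from a connected reduced source, a dense open of which lands in the locus where $F$ acts freely, is trivial. This is because $F$ acts faithfully and the identity section of the inertia of a separated Deligne--Mumford stack is open and closed.

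That rigidity is exactly what the paper exploits, and it makes the groupoid construction and the square-root bookkeeping unnecessary. Both $[\tilde{\mathfrak c}_S/F]$ and $\mathcal B$ are normal separated Deligne--Mumford stacks with dense schematic locus. By \cite[Proposition A.1]{FMN}, the two composites $\mathbb{G}_m\times[\tilde{\mathfrak c}_S/F]\to\mathcal B$ (act then map, and map then act) therefore coincide once they agree on the schematic locus. That reduces to the two $\mathbb{G}_m$-actions agreeing on the coarse space $\mathfrak c_S=\mathfrak z/W_L$, which holds by Remark \ref{BCEquivariancermk}.

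Once the descent is justified, your approach gives an explicit equivariant structure. In effect it proves Proposition \ref{StackyHKSectionprp} first and deduces the present statement from it. The paper goes the other way, obtaining Proposition \ref{StackyHKSectionprp} as a consequence of this one.
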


\begin{proof}
Since the locus on which $F$ acts freely is dense in $\tilde{\mathfrak{c}}_S$, the schematic locus is dense in $[\tilde{\mathfrak{c}}_S/F]$ (hence also in $\mathcal B)$. Since both stacks are normal separated Deligne-Mumford stacks, by \cite[Proposition A.1]{FMN} it suffices to note that the $\mathbb{G}_m$-actions agree on their schematic locus, or equivalently on the coarse moduli space $\mathfrak{c}_S$.
\end{proof}

Alternatively, we can consider the $\mathbb{G}_m$-action on $[\mathfrak{K}/F]$ induced by the Kazhdan action defined as in Definition \ref{KazhdanActiondef}. As in \cite[Proposition 2.5]{Ngo1} we denote by $sq:\mathbb{G}_m^{[2]} \rightarrow \mathbb{G}_m$ the squaring homomorphism (for $\mathbb{G}_m^{[2]}$ a copy of $\mathbb{G}_m$), and for a stack $\mathcal X$ with a given $\mathbb{G}_m$-action, we denote the quotient of $\mathcal X$ by the square of this action as $\mathcal X/\mathbb{G}_m^{[2]}$ to distinguish it from the usual quotient $\mathcal X/\mathbb{G}_m$.

We take the $\mathbb{G}_m$-action on $\mathfrak{K}$ to be the Kazhdan action and the $\mathbb{G}_m$-action on $\mathcal B$ to be as above.
\begin{proposition}\label{StackyHKSectionprp}
   There is an isomorphism $[\mathfrak{K}/F\times \mathbb{G}_m] \cong \mathcal B/\mathbb{G}_m^{[2]}$ and a commutative diagram
   \begin{equation}\label{StackyHKSection1eqn}
        \begin{tikzcd}
            {[\mathfrak{K}/\mathbb{G}_m]} \arrow[r] \arrow[d] & {[S/G \times \mathbb{G}_m^{[2]} ]} \arrow[d] \\
            {[\mathfrak{K}/F \times \mathbb{G}_m]} \arrow[r, "\cong"]  & {\mathcal B/\mathbb{G}_m^{[2]}}         
        \end{tikzcd}
    \end{equation}
    compatible with the diagram (\ref{StackyCBKQeqn}).
\end{proposition}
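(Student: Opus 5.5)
The plan is to run the proof of Proposition \ref{StackyCBKQprp} again, this time $\mathbb{G}_m$-equivariantly, with the Kazhdan action placed on the slice side. The first step is to make the actions on the top row of (\ref{StackyHKSection1eqn}) explicit. The Kazhdan action is $t\cdot x = Ad_{\lambda(t^{-1})}(t^2x)$. This is the square of the scaling action, twisted by the cocharacter $\lambda$ of Lemma \ref{KazhdanActionlem}. Hence the map $\mathfrak{K}\hookrightarrow S$ is equivariant for the group homomorphism $\mathbb{G}_m \rightarrow G\times\mathbb{G}_m^{[2]}$, $t\mapsto(\lambda(t^{-1}),t)$, where $\mathbb{G}_m^{[2]}$ acts on $S$ through $sq$ composed with scaling. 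This gives the top horizontal arrow $[\mathfrak{K}/\mathbb{G}_m]\rightarrow[S/G\times\mathbb{G}_m^{[2]}]$. The right vertical arrow is the $\mathbb{G}_m^{[2]}$-quotient of $\rho_{S,G}$, which exists because (\ref{StackyCBDiagrams1eqn}) is $\mathbb{G}_m$-equivariant and so also equivariant for the squared action.

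Next I would enlarge the restricted action groupoid $\mathcal R\rightrightarrows\mathfrak{K}$ to a groupoid $\mathcal R\rtimes\mathbb{G}_m\rightrightarrows\mathfrak{K}$. This is the restriction of the action groupoid of $G\times\mathbb{G}_m^{[2]}$ on $S$ along $\mathfrak{K}$, with $\mathbb{G}_m$ acting on $\mathcal R$ by Kazhdan-twisted conjugation. Since $\mathfrak{K}\rightarrow[S/G]$ is a smooth cover, $\mathfrak{K}\rightarrow[S/G\times\mathbb{G}_m^{[2]}]$ is one as well, and this enlarged groupoid presents that quotient. I would then check that $\hat\sigma:\mathcal R\rightarrow F\times\mathfrak{K}$ of Lemma \ref{RAQuasiSteinlem} is $\mathbb{G}_m$-equivariant when $\mathbb{G}_m$ acts on $F\times\mathfrak{K}$ through $\mathfrak{K}$ alone; this uses that the $F$-action commutes with the Kazhdan action (Remark \ref{KatsyloGprmk}). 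Equivariance can be checked over the dense locus $\mathfrak{K}^\circ$, where $\hat\sigma$ is determined by source and target exactly as in Lemma \ref{RAQSGroupoidlem}. It then extends to all of $\mathcal R$ by the same density and separatedness argument. Given this, $\hat\sigma$ induces a morphism of groupoids $\mathcal R\rtimes\mathbb{G}_m\rightarrow (F\times\mathbb{G}_m)\times\mathfrak{K}$, and hence a morphism $[S/G\times\mathbb{G}_m^{[2]}]\rightarrow[\mathfrak{K}/F\times\mathbb{G}_m]$.

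This morphism kills the inertia coming from $\mathcal I^{sm}_{G\times\mathbb{G}_m}$, because the Kazhdan action preserves $\mathcal I^{sm}_{\mathfrak{K}}$ (Proposition \ref{SmCentGmEquivprp}). It therefore factors through the rigidification, which is $\mathcal B/\mathbb{G}_m^{[2]}$; rigidification commutes with taking the further $\mathbb{G}_m$-quotient, as in Corollary \ref{GmEquivStackyCBcor}. To see that the induced map is an isomorphism, I would note that its pullback along the atlas $\mathfrak{K}\rightarrow\mathcal B\rightarrow\mathcal B/\mathbb{G}_m^{[2]}$ is, by construction, the $\mathbb{G}_m$-torsor version of the isomorphism $[\mathfrak{K}/F]\cong\mathcal B$ of Proposition \ref{StackyCBKQprp}. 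So the map is an isomorphism after forgetting $\mathbb{G}_m$, and it is a morphism of $\mathbb{G}_m$-quotients. The left vertical arrow is the quotient by $F$, and commutativity of (\ref{StackyHKSection1eqn}) together with compatibility with (\ref{StackyCBKQeqn}) follows by forgetting the $\mathbb{G}_m$-factors.

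I expect the main obstacle to be the bookkeeping in the enlarged groupoid. One has to show that the twist by $\lambda(t^{-1})$ intertwines correctly with $\hat\sigma$, which is defined only after choosing representatives $a_A\in A$. Since $\lambda$ takes values in the torus generated by $h$, which commutes with $A$, left multiplication by $a_A$ commutes with the twist. The remaining delicate point is the density argument on $\mathcal R\times\mathbb{G}_m$.
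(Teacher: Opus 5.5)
Your argument is correct, but it takes a different route from the paper. The paper's proof is two lines. The top arrow is equivariant by the definition of the Kazhdan action. The bottom isomorphism is equivariant by Proposition \ref{GmEquivStackyCBKAprp}: there $\mathfrak{K}$ is identified with $\tilde{\mathfrak{c}}_S=\mathfrak{z}/W_S$ via Katsylo's morphism $\mathcal E$ (Corollary \ref{KatsyloGpWeylGpcor}), and under this identification the Kazhdan action is the square of the scaling action (Lemma \ref{KatsyloMorphismlem}). The paper then only checks that the two $\mathbb{G}_m$-actions agree on the coarse space $\mathfrak{c}_S$. It promotes this to an equivariant isomorphism using \cite[Proposition A.1]{FMN}, since both stacks are normal separated Deligne--Mumford stacks with dense schematic locus.

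You instead build the equivariant structure by hand on groupoid presentations, through $\mathcal R\rtimes\mathbb{G}_m$ and the equivariance of $\hat\sigma$. This keeps the argument inside Section~\ref{AdjointQscn}, with no forward reference to $\mathcal E$, the Shephard--Todd theorem, or the affine structure on $\mathfrak{K}$. It also produces the equivariance datum explicitly; for stacks this is structure, not a property. And it makes commutativity of (\ref{StackyHKSection1eqn}) literal, since the identity section of $\mathcal R$ goes to the identity of $F$. The cost is the semidirect-product bookkeeping, plus the standard compatibility of rigidification with the further $\mathbb{G}_m$-quotient, which the paper also uses implicitly.

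Three small points:
\begin{itemize}
\item The map kills $\mathcal I^{sm}$ because $\mathcal I^{sm}_{\mathfrak{K}}=\ker\sigma$. Kazhdan-stability of $\mathcal I^{sm}_{\mathfrak{K}}$ (Proposition \ref{SmCentGmEquivprp}) is instead what makes the rigidification of the $\mathbb{G}_m$-quotient well defined.
\item The isomorphism check should be a base change along $[\mathfrak{K}/F]\rightarrow[\mathfrak{K}/F\times\mathbb{G}_m]$ on the target side. That square is Cartesian because your map is the quotient of a $\mathbb{G}_m$-equivariant isomorphism.
\item Your worry about the representatives $a_A$ is moot. Over $\mathfrak{K}^\circ$, $\hat\sigma$ is determined by source and target, which is exactly what you use.
\end{itemize}
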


\begin{proof} 
To prove the proposition, we must show that the horizontal arrows in the diagram
\begin{equation}\label{StackyHKSection2eqn}
    \begin{tikzcd}
        \mathfrak{K} \arrow[r, hook] \arrow[d] & {[S/G]} \arrow[d, "\rho_S"] \\
        {[\mathfrak{K}/F]} \arrow[r, "\cong"]  & \mathcal B         
    \end{tikzcd}
\end{equation}
are equivariant with respect to the appropriate $\mathbb{G}_m$-actions. The equivariance of the top arrow follows from the definition of the Kazhdan action. The equivariance of the isomorphism follows from Proposition \ref{GmEquivStackyCBKAprp}.
\end{proof}

\section{Abelianisation and the cameral group}\label{Abelianisationscn}
In this section, we construct a canonical homomorphism from the centraliser on a Dixmier sheet $S$ to a ``cameral group" as in \cite{DG} and \cite{Ngo2}. This homomorphism realises the cameral group as an abelianisation of the centraliser on $S$. Under certain conditions (in particular whenever $G$ is a classical group), we use this to factorise the gerbe $\rho_S:[S/G] \rightarrow \mathcal B$ through an abelian gerbe. As a preliminary, we outline the interaction between the $S$-Chevalley base and the Grothendieck-Springer theory of sheets (as reviewed in Section \ref{GSSheetssbn}).

\subsection{The \texorpdfstring{$S$}{S}-Chevalley base and Grothendieck-Springer theory}\label{SCBGSsbn}
In the regular case, the Grothendieck-Springer resolution plays an important role in the cameral descriptions of the centraliser in \cite{DG} and \cite{Ngo2}. In order to adapt this description to our setting, we incorporate the $S$-Chevalley base into the Grothendieck-Springer theory for sheets (see Section \ref{GSSheetssbn}). In fact, doing this resolves some of the complications of the theory which are outlined in \cite{Broer1}; in particular, replacing the geometric quotient $\mathfrak{c}_S$ with the $S$-Chevalley base $\mathcal B$ in the Grothendieck-Springer diagram (\ref{GSDiagrameqn}) makes the diagram Cartesian.

We fix a non-singular sheet $S$ together with decomposition data $(L,\mathcal O)$ as in Remark \ref{SheetClassificationrmk}. First, we must replace the quotient map $\mathfrak{z} \rightarrow \mathfrak{z}/W_L = \mathfrak{c}_S$ with a map to the $S$-Chevalley base $\mathcal B$ constructed in Definition \ref{StackyChevalleyBasedef}.

\begin{lemma}\label{StackyZquotientlem}
    There is a unique morphism $p:\mathfrak{z} \rightarrow \mathcal B$ inducing a factorisation
    \begin{equation}\label{StackyZquotient1eqn}
        \begin{tikzcd}
            \mathfrak{z} \arrow[r, "p"] & \mathcal B \arrow[r, "C"] & \mathfrak{c}_S
        \end{tikzcd}
    \end{equation}
    of the quotient map $\mathfrak{z} \rightarrow \mathfrak{c}_S$, where $C$ is defined as in Proposition \ref{StackyCBDiagramsprp}. The morphism $p$ is finite, flat, representable by schemes, and $\mathbb{G}_m$-equivariant.
\end{lemma}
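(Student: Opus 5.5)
The plan is to build $p$ from the identification of the Katsylo slice with a quotient of $\mathfrak{z}$, and to get uniqueness from a density argument. Concretely, I will use the identification $\mathfrak{K} \cong \tilde{\mathfrak{c}}_S = \mathfrak{z}/W_S$ for a subgroup $W_S \le W_L$ (Corollary \ref{KatsyloGpWeylGpcor}, already invoked before Proposition \ref{GmEquivStackyCBKAprp}). Under it, the $F$-action on $\mathfrak{K}$ corresponds to an action of $W_L/W_S$ on $\mathfrak{z}/W_S$, in such a way that $\mathfrak{K}/F = \mathfrak{z}/W_L = \mathfrak{c}_S$ compatibly with Remark \ref{KatsyloGQrmk}. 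I then define $p$ as the composite
\begin{equation*}
\mathfrak{z} \longrightarrow \mathfrak{z}/W_S = \tilde{\mathfrak{c}}_S \longrightarrow [\tilde{\mathfrak{c}}_S/F] \cong \mathcal B ,
\end{equation*}
using Proposition \ref{StackyCBKQprp} for the last isomorphism. By that proposition, $C$ corresponds to the coarse moduli map $[\tilde{\mathfrak{c}}_S/F] \to \tilde{\mathfrak{c}}_S/F = \mathfrak{c}_S$. Hence $C \circ p$ is $\mathfrak{z} \to \mathfrak{z}/W_S \to \mathfrak{z}/W_L$, which is the quotient map, giving the factorisation (\ref{StackyZquotient1eqn}).

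For the properties of $p$, I treat the two factors separately.
\begin{itemize}
\item The map $\tilde{\mathfrak{c}}_S \to [\tilde{\mathfrak{c}}_S/F]$ is an $F$-torsor. It is therefore representable by schemes and finite étale.
\item The map $\mathfrak{z} \to \mathfrak{z}/W_S$ is a quotient by a finite group, so it is finite. Its target is an affine space (Corollary \ref{KatsyloAffinecor}, since $\mathfrak{K}$ is an affine space) and its source is smooth of the same dimension. Miracle flatness then shows it is flat.
\end{itemize}
The composite is therefore finite, flat and representable by schemes. Equivariance for $\mathbb{G}_m$ follows because $\mathfrak{z} \to \tilde{\mathfrak{c}}_S$ is equivariant for the scaling actions, and because Proposition \ref{GmEquivStackyCBKAprp} makes $[\tilde{\mathfrak{c}}_S/F] \cong \mathcal B$ equivariant.

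Uniqueness uses the separatedness argument already used in the proof of Proposition \ref{GmEquivStackyCBKAprp}. Take $U \subseteq \mathfrak{c}_S$ to be the open locus over which $C$ is an isomorphism, i.e. the schematic locus, which is dense. Over $C^{-1}(U)$, any factorisation is forced to equal the quotient map. Now $\mathcal B$ is a separated Deligne--Mumford stack and $\mathfrak{z}$ is a reduced (indeed normal) scheme, and two maps from such a scheme to such a stack that agree on a dense open subset agree everywhere, up to unique 2-isomorphism. This is \cite[Proposition A.1]{FMN}, or a direct argument via the finite unramified diagonal.

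The main obstacle is matching the two descriptions of $F$. I need the isomorphism $\mathfrak{K} \cong \mathfrak{z}/W_S$ to intertwine the $F$-action with the residual $W_L/W_S$-action, and the induced map on quotients to be the canonical isomorphism $\mathfrak{K}/\Gamma \cong \mathfrak{c}_S$. If I cannot extract this directly from Corollary \ref{KatsyloGpWeylGpcor}, I will fall back on constructing $p$ intrinsically. On the dense open $\mathfrak{z}^{\circ} = \mathfrak{z} \cap S$ (elements with centraliser exactly $L$), set $p = \rho_S|_{\mathfrak{z}^\circ}$. I would then extend across the complement, which has codimension at least two in $\mathfrak{z}$ after removing the non-generic walls, using purity for maps from a smooth scheme into a smooth Deligne--Mumford stack with finite inertia. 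The properties of $p$ would then be checked étale-locally on $\mathcal B$ via the trivialising cover $\mathfrak{K} \to \mathcal B$ of Corollary \ref{GerbeTrivncor}.
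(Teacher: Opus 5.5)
Your proof is correct and follows the paper's argument: the same composite $\mathfrak{z}\to\mathfrak{z}/W_S\to[\tilde{\mathfrak{c}}_S/F]\cong\mathcal B$, the same constituent-by-constituent check of finiteness, flatness and representability, equivariance via Proposition \ref{GmEquivStackyCBKAprp}, and uniqueness via agreement over the schematic locus of $\mathcal B$ together with \cite[Proposition A.1]{FMN}. The compatibility you flag as the main obstacle is exactly what Corollary \ref{KatsyloGpWeylGpcor} and diagram (\ref{KMPropseqn}) provide, so the fallback is unnecessary. It would also not work as sketched: the complement of $\mathfrak{z}\cap S$ in $\mathfrak{z}$ is a union of hyperplanes rather than a codimension-two set, and $\mathfrak{z}\cap S$ is empty when $S$ is not Dixmier.
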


\begin{proof} 
We identify $\mathcal B$ with $[\tilde{\mathfrak{c}}_S/F]$ as in Proposition \ref{GmEquivStackyCBKAprp}, where $\tilde{\mathfrak{c}}_S = \mathfrak{z}/W_S$ for the subgroup $W_S \leq W_L$ defined in Corollary \ref{KatsyloGpWeylGpcor}. Then we can construct a morphism $p$ satisfying the factorisation (\ref{StackyZquotient1eqn}) by
\begin{equation}\label{StackyZquotient2eqn}
        \begin{tikzcd}
            p:\mathfrak{z} \arrow[r] & \mathfrak{z}/W_S = \tilde{\mathfrak{c}}_S \arrow[r] & {[\tilde{\mathfrak{c}}_S/F]},
        \end{tikzcd}
    \end{equation}
where each of the constituent arrows is the relevant quotient map.

To see that this is the only map inducing a factorisation (\ref{StackyZquotient1eqn}), we observe that any other morphism $p':\mathfrak{z} \rightarrow \mathcal B$ which also induces such a factorisation agrees with $p$ over the schematic locus of $\mathcal B$. So, $p$ and $p'$ are the same morphism by \cite[Proposition A.1]{FMN}.

The morphism $p$ is finite, flat and representable since both of the constituent arrows in (\ref{StackyZquotient2eqn}) satisfy these properties. It is $\mathbb{G}_m$-equivariant by Proposition \ref{GmEquivStackyCBKAprp}.
\end{proof}

We will need the following lemma in the next section.

\begin{lemma}\label{WLInvariancelem}
    For any scheme $X$ and map $X \rightarrow \mathcal B$, the $W_L$-action on $\mathfrak{z}$ lifts to a $W_L$-action on $X \times_{\mathcal B} \mathfrak{z}$.
\end{lemma}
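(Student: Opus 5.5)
The plan is to reduce the statement to the universal case $X = \mathcal B$ (more precisely, to an action on the stack $\mathfrak{z}$ over $\mathcal B$) and then base change. Since $X \times_{\mathcal B} \mathfrak{z}$ is functorial in $X$, it suffices to produce a $W_L$-action on $\mathfrak{z}$ by automorphisms \emph{over} $\mathcal B$. Such an action consists of, for each $w \in W_L$, a $2$-isomorphism $\alpha_w : p \circ w \Rightarrow p$, together with the compatibility $\alpha_{ww'} = \alpha_w \circ (\alpha_{w'} \star w)$ (suitably interpreted). Given these data, the action on $X \times_{\mathcal B} \mathfrak{z}$ is $(x, z, \beta) \mapsto (x, w z, \alpha_w^{-1}\circ \beta)$, where $\beta$ denotes an isomorphism between the two images in $\mathcal B$.

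To construct the $\alpha_w$, I would use the presentation $\mathcal B \cong [\tilde{\mathfrak{c}}_S/F]$ from Proposition \ref{GmEquivStackyCBKAprp}, with $\tilde{\mathfrak{c}}_S = \mathfrak{z}/W_S$ and $p$ the composite (\ref{StackyZquotient2eqn}). The key input, which I expect to be supplied by Corollary \ref{KatsyloGpWeylGpcor}, is that $W_S$ is normal in $W_L$ and that $F \cong W_L/W_S$, with the $F$-action on $\tilde{\mathfrak{c}}_S$ being the one induced from $W_L$ on $\mathfrak{z}$. Granting this, the map $\mathfrak{z} \to [\tilde{\mathfrak{c}}_S/F]$ corresponds to the trivial $F$-torsor $F \times \mathfrak{z}$ on $\mathfrak{z}$, equipped with the $F$-equivariant map $(f, z) \mapsto f\cdot q(z)$, where $q:\mathfrak{z} \to \tilde{\mathfrak{c}}_S$ is the quotient map. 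Pulling back along $w$ gives the torsor $F\times \mathfrak{z}$ with map $(f,z)\mapsto f\cdot q(wz) = f\bar w \cdot q(z)$. Right multiplication by $\bar w$ on the $F$-factor then gives an isomorphism of torsors with equivariant maps, and this is $\alpha_w$. The cocycle identity is just associativity in $F$. Equivalently, $[\mathfrak{z}/W_S \big/ F] = [\mathfrak{z}/W_L]$ relative to $W_S$, so $p$ factors as $\mathfrak{z} \to [\mathfrak{z}/W_L] \to \mathcal B$ with the second map the rigidification by $W_S$. The $W_L$-action over $[\mathfrak{z}/W_L]$ is tautological, and it descends to an action over $\mathcal B$.

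The main obstacle is the group-theoretic input: the normality of $W_S$ in $W_L$ and the identification $F \cong W_L/W_S$ compatible with the actions. If Corollary \ref{KatsyloGpWeylGpcor} only gives $\mathfrak{K} \cong \mathfrak{z}/W_S$ abstractly, I would argue as follows. Both $F$ and $W_L/W_S$ (once normality is known) act faithfully on $\tilde{\mathfrak{c}}_S$ with the same quotient $\mathfrak{c}_S = \mathfrak{z}/W_L$, and they agree on the free locus. By the density argument via \cite[Proposition A.1]{FMN} already used in Lemma \ref{StackyZquotientlem}, this upgrades to an equality of the stacks. For normality, I would note that each $w \in W_L$ induces an automorphism of $\tilde{\mathfrak{c}}_S$ over $\mathfrak{c}_S$, because $\tilde{\mathfrak{c}}_S \to \mathfrak{c}_S$ is Galois, being a quotient of the Galois cover $\mathfrak{z}\to\mathfrak{z}/W_L$ with the deck group of $\mathfrak{K}\to\mathfrak{K}/F$. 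Finally, the uniqueness statement of Lemma \ref{StackyZquotientlem} shows that $p\circ w = p$ up to a canonical $2$-isomorphism, since both maps factor $\mathfrak{z}\to\mathfrak{c}_S$. This gives the $\alpha_w$ directly, and it gives the cocycle condition by the same uniqueness, because $\mathcal B$ has trivial generic stabilisers and so $2$-morphisms are determined on a dense open.
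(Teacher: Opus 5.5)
Your proposal is correct. Its closing argument is the paper's entire proof: $p\circ w$ satisfies the same factorisation (\ref{StackyZquotient1eqn}) as $p$, so by the uniqueness in Lemma \ref{StackyZquotientlem} it coincides with $p$, and therefore $w$ lifts to $X\times_{\mathcal B}\mathfrak z$.

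Your main route is genuinely different. You use $\mathcal B\cong[\tilde{\mathfrak c}_S/F]$ to describe $p$ as the trivial $F$-torsor $F\times\mathfrak z$ with map $(f,z)\mapsto f\cdot q(z)$. You then take $\alpha_w$ to be right multiplication by $f_{\mathcal E}(w)$, so the cocycle identity becomes associativity in $F$.

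The group-theoretic input you flag as the main obstacle is already in the paper. $W_S$ is by definition the kernel of the surjective homomorphism $f_{\mathcal E}\colon W_L\to F$ of Proposition \ref{KatsyloGpWeylGpprp}, so it is normal. Corollary \ref{KatsyloGpWeylGpcor} identifies the $F$-action on $\mathfrak K$ with the $W_L/W_S$-action on $\mathfrak z/W_S$. Your fallback argument is therefore unnecessary.

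One terminological slip: the map $[\mathfrak z/W_L]\to[\tilde{\mathfrak c}_S/F]$ is not a rigidification in the sense of \cite{AOV}. $W_S$ acts non-trivially on $\mathfrak z$, so it does not lie in the inertia of $[\mathfrak z/W_L]$. The map is simply the one induced by the pair $(f_{\mathcal E},q)$, which is all your argument uses.

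On what each approach buys: the paper's proof is two lines and needs nothing beyond the uniqueness of $p$. However, it treats ``$p\circ w$ and $p$ are the same map'' loosely and never discusses the $2$-isomorphisms or their compatibility, which an honest action on $X\times_{\mathcal B}\mathfrak z$ requires. Your remark that a $2$-isomorphism $p\circ w\Rightarrow p$ is unique when it exists closes that gap and makes the cocycle condition automatic in the paper's approach. The reason is that the pullback of the inertia of $\mathcal B$ to the connected reduced scheme $\mathfrak z$ is unramified and generically trivial. Your explicit construction instead verifies the cocycle condition directly. It also makes visible that the action comes from the factorisation $\mathfrak z\to[\mathfrak z/W_L]\to\mathcal B$.
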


\begin{proof} 
For any $w \in W_L$, the map $p \circ w:\mathfrak{z} \rightarrow \mathcal B$ satisfies the same factorisation (\ref{StackyZquotient1eqn}) as $p$; so by Lemma \ref{StackyZquotientlem}, $p \circ w$ and $p$ are the same map, and so the map $w:\mathfrak{z} \rightarrow \mathfrak{z}$ lifts to a map $w:X \times_{\mathcal B} \mathfrak{z} \rightarrow X \times_{\mathcal B} \mathfrak{z}$. This defines the required $W_L$-action on $X \times_{\mathcal B} \mathfrak{z}$.
\end{proof}

We now incorporate the $S$-Chevalley base into the Grothendieck-Springer theory for the sheet; we use the notation of Section \ref{GSSheetssbn}. We consider the variety $\hat{S}^{reg}$ of (\ref{GeneralisedGSRegeqn}); let $\hat{p}:\hat{S}^{reg} \rightarrow S$ be the generalised Grothendieck-Springer map, and let $\hat{\chi}_S:\hat{S}^{reg} \rightarrow \mathfrak{z}$ be defined as in the Grothendieck-Springer diagram (\ref{GSDiagrameqn}). In the example of most importance for us, when $S$ is a Dixmier sheet associated to a Levi subgroup $L$ of $G$, $\hat{S}^{reg} = G \times^P \mathfrak{r}^{reg}$ where $P$ is a parabolic subgroup of $G$ with Levi factor $L$ and $\mathfrak{r}$ is the solvable radical of $\mathfrak{p}$ (see Remark \ref{DixmierGSrmk}). In that case, $\hat{p}$ is the $G$-action map and $\hat{\chi}_S$ is induced by the projection $\mathfrak{r} \rightarrow \mathfrak{z}$.

\begin{proposition}\label{StackyGSprp}
    The diagram
    \begin{equation}\label{StackyGSeqn}
        \begin{tikzcd}
         \hat{S}^{reg} \arrow[r, "\hat{\chi}_S"] \arrow[d, "\hat{p}"] & \mathfrak{z} \arrow[d, "p"] \\
            S \arrow[r, "\rho_S"]                                       & \mathcal B              
        \end{tikzcd}
    \end{equation}
    is commutative and Cartesian, where $\rho_S:S \rightarrow \mathcal B$ is the $S$-Chevalley map defined in (\ref{StackyChevalleyMapeqn}).
\end{proposition}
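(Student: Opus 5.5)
Commutativity will come first, from the uniqueness of maps into normal separated Deligne-Mumford stacks already used in Lemma \ref{StackyZquotientlem}. Both $\rho_S\circ\hat p$ and $p\circ\hat\chi_S$ are maps $\hat S^{reg}\to\mathcal B$. After composing with $C:\mathcal B\to\mathfrak{c}_S$ they agree: $C\circ\rho_S=\chi_S$ by Proposition \ref{StackyCBDiagramsprp}, $C\circ p$ is the quotient $\mathfrak{z}\to\mathfrak{z}/W_L$, and the classical Grothendieck--Springer diagram (\ref{GSDiagrameqn}) for the sheet commutes at the level of $\mathfrak{c}_S$. The source $\hat S^{reg}$ is non-singular, hence normal. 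It is irreducible, and the preimage of the schematic locus of $\mathcal B$ (the image of $\mathfrak{K}^\circ$) is dense, since both $p$ and $\rho_S$ are open there. The two maps therefore agree on a dense open subset where $\mathcal B$ is an algebraic space, and \cite[Proposition A.1]{FMN} gives equality everywhere. Strictly, this yields a 2-commutative square with a canonical 2-isomorphism, because $\mathcal B$ has trivial generic stabiliser.

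For the Cartesian property, I will form $Y:=S\times_{\mathcal B}\mathfrak{z}$ and compare it with $\hat S^{reg}$ through the induced map $\Psi:\hat S^{reg}\to Y$. Since $p$ is finite, flat and representable (Lemma \ref{StackyZquotientlem}), $Y\to S$ is finite flat, so $Y$ is a scheme. Since $\rho_S$ is smooth, $Y\to\mathfrak{z}$ is smooth, and hence $Y$ is non-singular. The generalised Grothendieck--Springer map $\hat p$ is finite (it is the restriction to the regular locus of a proper map with finite fibres over $S$; cf.\ Section \ref{GSSheetssbn}), and it is generically a $W_L$-Galois cover over the semisimple or generic locus. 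The plan is to show that $\Psi$ is a finite birational morphism between normal varieties and then conclude by Zariski's main theorem. This requires checking three things: that $Y$ is irreducible, or at least that $\Psi$ is a bijection onto the union of its components; that $\Psi$ is finite; and that $\Psi$ is birational.

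Finiteness of $\Psi$ will follow because $\hat p$ is finite and $Y\to S$ is separated. For birationality, I will work over the open set $S^\circ=Ad(G)(\mathfrak{K}^\circ)$, where $\mathcal B$ is the scheme $\mathfrak{K}^\circ/F$. There, after base change along $\mathfrak{K}\to S$ via the Katsylo slice, the fibre product becomes $\mathfrak{K}^\circ\times_{\tilde{\mathfrak c}_S}\mathfrak{z}$. By Corollary \ref{KatsyloGpWeylGpcor} this is a $W_S$-cover, and over $\mathfrak{c}_S$ it is a $W_L$-cover. I will compare degrees, expecting $\deg(\hat p)=|W_S|$ on the slice, matching $\deg(Y\to S)=\deg(p)=|W_S|\cdot|F|/|F|$. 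Equivalently, the fibre of $\hat p$ over a generic semisimple $x$ is the set of parabolics of type $P$ containing $x$ in their radical, and these are permuted by $W_L$ compatibly with $\hat\chi_S$. This degree comparison is the main obstacle: it needs the identification of $\mathfrak{K}$ with $\mathfrak{z}/W_S$ from Corollary \ref{KatsyloGpWeylGpcor}, and a check that the number of preimages of a generic slice point under $\hat p$ is $|W_S|$ rather than $|W_L|$. I would try first to verify this using the factorisation $\mathfrak{z}\to\tilde{\mathfrak c}_S\to\mathfrak{c}_S$ and the fact that $\hat\chi_S$ restricted to $\hat p^{-1}(\mathfrak{K})$ lands in a single $W_S$-orbit over each point.

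With degrees matched, $\Psi$ is an isomorphism generically. Because $Y$ is smooth over the irreducible $\mathfrak{z}$ with dense generic fibre, it has no extra components, so $\Psi$ is finite and birational onto the normal scheme $Y$. Zariski's main theorem then gives that $\Psi$ is an isomorphism, and the square is Cartesian.
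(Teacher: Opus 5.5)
Your skeleton matches the paper's. Commutativity comes from \cite[Proposition A.1]{FMN}, $Y=S\times_{\mathcal B}\mathfrak z$ is a smooth scheme, $\Psi$ is finite because $\hat p$ is finite and $Y\to S$ is separated, and Zariski's main theorem finishes.

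The problem is the birationality step, which you single out as the crux. Both degrees you propose to match are wrong, and the check you plan would fail.
\begin{itemize}
\item Base change along the slice gives $Y\times_S\mathfrak K=\mathfrak K\times_{\mathcal B}\mathfrak z$, not $\mathfrak K\times_{\tilde{\mathfrak c}_S}\mathfrak z$. Since $\mathfrak K\to\mathcal B=[\mathfrak K/F]$ is an $F$-torsor through which $p$ factors, $\mathfrak K\times_{\mathcal B}\mathfrak z\cong F\times\mathfrak z$. Hence $\deg(Y\to S)=\deg p=|F|\cdot|W_S|=|W_L|$.
\item A generic point of $S$ has $|W_L|$ preimages under $\hat p$. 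Your own description of the fibre shows this: the conjugates of $P$ with $x$ in their radical are permuted simply transitively by $W_L$. The Cartesian square (\ref{CameralHom2eqn}) shows it too. So the check ``$|W_S|$ rather than $|W_L|$'' cannot succeed.
\end{itemize}

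The correct degrees do agree, so the argument is repairable. However, equal degrees give birationality only once you know $Y$ is irreducible. ``Smooth over the irreducible $\mathfrak z$'' does not imply that. You would also need that the smooth surjection $Y\to\mathfrak z$ has connected fibres; they are of the form $G/\mathcal I^{sm}_x$, which is connected since $G$ is.

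The paper bypasses the degree count using Proposition \ref{GSCartesianprp}.
\begin{itemize}
\item Over the schematic locus of $\mathcal B$, the coarse map $C$ is an open immersion onto $\mathfrak K^\circ/F\subseteq\mathfrak c_S$. So over that locus $Y$ coincides with $S\times_{\mathfrak c_S}\mathfrak z$.
\item This piece of $Y$ is smooth, hence normal. Therefore the normalisation $\nu$, with which $\Psi$ agrees there, is an isomorphism over it.
\item Since $Y\to S$ is flat, this open piece is dense in $Y$. So $\Psi$ is finite and birational.
\item Being finite with dense image, $\Psi$ is also surjective, which shows $Y$ is irreducible. Zariski's main theorem then applies.
\end{itemize}
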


\begin{proof}
As in the proof of Lemma \ref{StackyZquotientlem}, both maps $p \circ \hat{\chi}_S$ and $\rho_S \circ \hat{p}$ agree on the schematic locus of $\mathcal B$ (by the commutativity of the Grothendieck-Springer diagram (\ref{GSDiagrameqn})), so define the same morphism, i.e. the diagram commutes.

The fibre product of stacks $S \times_{\mathcal B} \mathfrak{z}$ is a scheme, since $p$ is representable. Moreover, the map $S \times_{\mathcal B} \mathfrak{z} \rightarrow \mathfrak{z}$ is the pullback of the smooth map $\rho_S$, so is smooth; hence $S \times_{\mathcal B} \mathfrak{z}$ is non-singular.

Consider the induced map $\mu:\hat{S}^{reg} \rightarrow S \times_\mathcal B \mathfrak{z}$; over the schematic locus of $\mathcal B$, the map $\mu$ agrees with the normalisation $\nu:\hat{S}^{reg} \rightarrow S \times_{\mathfrak{c}_S} \mathfrak{z}$ (see Proposition \ref{GSCartesianprp}). It also factors through the map $\hat{p}$, which is finite by Lemma \ref{GSDiagramlem}. Hence, $\mu$ is finite and birational, so by Zariski's main theorem it is an isomorphism.
\end{proof}

\begin{remark}\label{StackyGSrmk}
    Using $\mathcal B$ as a base instead of $\mathfrak{c}_S$ corrects the failure of the Grothendieck-Springer diagram (\ref{GSDiagrameqn}) to be Cartesian. It further ensures that the arrows in (\ref{StackyGSeqn}) have suitably nice properties, i.e. $\rho_S$ is smooth and $p$ is flat; the corresponding arrows in (\ref{GSDiagrameqn}) may fail to have these properties. See \cite{Broer1} for further details.
\end{remark}

\subsection{The cameral group for non-singular Dixmier sheets}\label{Camgpsbn}
For the rest of this section we will assume that $S$ is a non-singular Dixmier sheet corresponding to a fixed Levi subgroup $L$ of $G$, i.e. $S$ has an open dense locus of semisimple elements, whose centralisers are conjugate to $L$. We give a generalisation of the homomorphism $\kappa$ in (\ref{RegCameralMoreqn}) as an abelianisation of the smooth centraliser $\mathcal I^{sm}_S$ of $S$, and under certain conditions, which are always satisfied if $G$ is a classical group, we prove a smoothness property necessary for constructing the abelianised fibrations in Section \ref{IntAbFibrnsbn}.

Let $\bar{Z}$ be the abelianisation of $L$, i.e. $\bar{Z} = L/L^{der}$, where
$$L^{der} = \langle aba^{-1}b^{-1} \,|\, a,b \in L \rangle.$$

We let
\begin{equation}\label{WeilRestrictioneqn}
    \Pi_S = p_*(\bar{Z} \times \mathfrak{z})
\end{equation}
be the Weil restriction of the constant group scheme $\bar{Z} \times \mathfrak{z}$ under the map $p:\mathfrak{z} \rightarrow \mathcal B$ of Lemma \ref{StackyZquotientlem} (see e.g. \cite{HR} for the definition and properties of Weil restriction in the context of algebraic stacks). The set of $X$-points of $\Pi_S$ over $\mathcal B$, for a scheme $X$ with a given map $X \rightarrow \mathcal B$, can be described as
\begin{equation}
    (\Pi_S)_{\mathcal B}(X) = Maps(X \times_{\mathcal B} \mathfrak{z}, \bar{Z}). 
\end{equation}

\begin{lemma}\label{WeilRestrictionlem}
    The group stack $\Pi_S$ is smooth and representable by schemes over $\mathcal B$.
\end{lemma}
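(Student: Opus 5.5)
The plan is to reduce to the case of a scheme base and invoke standard facts about Weil restriction along finite flat morphisms. By Lemma \ref{StackyZquotientlem}, $p:\mathfrak{z}\rightarrow\mathcal B$ is finite, flat and representable by schemes. Representability of $\Pi_S$ over $\mathcal B$ and its smoothness are both local on $\mathcal B$ for the smooth (indeed \'etale) topology. So it suffices to check them after base change along a smooth cover. Moreover, Weil restriction commutes with arbitrary base change on $\mathcal B$ (see \cite{HR}): for any scheme $X\rightarrow\mathcal B$ we have $\Pi_S\times_{\mathcal B}X \cong (p_X)_*(\bar Z\times (X\times_{\mathcal B}\mathfrak{z}))$, where $p_X:X\times_{\mathcal B}\mathfrak{z}\rightarrow X$ is the base change of $p$. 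This follows directly from the description of points $(\Pi_S)_{\mathcal B}(X)=Maps(X\times_{\mathcal B}\mathfrak{z},\bar Z)$.

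Concretely, I would take the \'etale cover $\tilde{\mathfrak{c}}_S\rightarrow[\tilde{\mathfrak{c}}_S/F]\cong\mathcal B$ of Proposition \ref{GmEquivStackyCBKAprp}. Its base change of $p$ is $\tilde{\mathfrak{c}}_S\times_{\mathcal B}\mathfrak{z}\cong F\times\mathfrak{z}\rightarrow\tilde{\mathfrak{c}}_S$, which is a disjoint union of copies of the finite flat quotient map $q:\mathfrak{z}\rightarrow\mathfrak{z}/W_S$. Alternatively, one can just work with an arbitrary affine scheme $X$ and the finite flat, hence finite locally free, map $p_X$. The pullback of $\Pi_S$ is then a product over $F$ of $q_*(\bar Z\times\mathfrak{z})$, so it suffices to treat a finite locally free morphism $q:Y\rightarrow X$ of affine schemes and the constant smooth affine group $\bar Z$.

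For representability, I would use the classical result (e.g. \cite[7.6]{BLR}-style, or directly \cite{HR}) that the Weil restriction of a quasi-projective, in particular affine, $Y$-scheme along a finite locally free morphism is representable by a scheme. For affine $\bar Z\times Y$ it is even affine over $X$. Concretely, locally $q_*\mathcal O_Y$ is free of rank $n$, and writing $\bar Z$ as a closed subscheme of an affine space cut out by equations, the restriction is cut out by the coefficient equations in $\mathbb{A}^{n\cdot N}_X$.

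For smoothness, I would verify formal smoothness via the infinitesimal lifting criterion. Given an affine $X'$-scheme $T$ with square-zero closed subscheme $T_0$, a $T_0$-point of $\Pi_S$ is a map $T_0\times_X Y\rightarrow\bar Z$. Now $T_0\times_XY\hookrightarrow T\times_XY$ is again a square-zero thickening of affine schemes, since $q$ is affine. Since $\bar Z$ is smooth, the map lifts. Combined with local finite presentation, which is inherited from $\bar Z$ and $q$ being finite locally free, this gives smoothness.

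I expect the only mildly delicate step to be justifying that Weil restriction over the stack $\mathcal B$ is compatible with the smooth-local reduction, i.e. base change commutes with $p_*$. That is formal from the functor-of-points description, so the main content is the classical scheme-level statement.
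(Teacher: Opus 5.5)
Your proposal is correct and is essentially the paper's proof: the paper likewise uses that Weil restriction commutes with base change to reduce to a finite flat morphism of schemes, then cites \cite[Section 7.6, Theorem 4]{BLR} for representability and \cite[Section 7.6, Proposition 5]{BLR} for smoothness, and the proof of the latter is exactly your infinitesimal lifting argument. One step needs tightening: representability by schemes is not \'{e}tale-local on the target in general, so the step is actually closed either by your alternative of working with an arbitrary affine $X\rightarrow\mathcal B$, or by combining the \'{e}tale cover with the fact that the restriction is affine over the base and affine morphisms descend.
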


\begin{proof} 
We recall from Lemma \ref{StackyZquotientlem} that $p$ is finite, flat and representable. To deduce that $\Pi_S \rightarrow \mathcal B$ is representable, it suffices to observe that Weil restriction commutes with pullback, and the Weil restriction of a scheme under a finite flat morphism of schemes is representable by \cite[Section 7.6, Theorem 4]{BLR}. Similarly, smoothness follows by \cite[Section 7.6, Proposition 5]{BLR}.
\end{proof}

\begin{remark}\label{WeilRestrictionrmk}
  Since the morphism $p$ is $\mathbb{G}_m$-equivariant, the $\mathbb{G}_m$-action on $\bar{Z}\times \mathfrak{z}$ given by the scalar action on the second factor determines a strict $\mathbb{G}_m$-action on $\Pi_S$ which lifts the action on $\mathcal B$. 
\end{remark}

The normaliser $N_G(L)$ of $L$ in $G$ acts by conjugation on $L$, and this induces an action of $W_L = N_G(L)/L$ on $\bar{Z}$. By Lemma \ref{WLInvariancelem}, we can define a strict $W_L$-action on $\Pi_S$ over $\mathcal B$ induced by the diagonal action on $\bar{Z} \times \mathfrak{z}$.

\begin{definition}\label{PseudoCameralgpdef}
    We call the fixed point subgroup stack of $\Pi_S$ under $W_L$ the \emph{pseudo-cameral group} and denote it by $\hat{\mathcal J}_S$.
\end{definition}

The set of $X$-points of $\hat{\mathcal J}_S$ over $\mathcal B$ is the set of $W_L$-equivariant maps
\begin{equation}\label{PseudoCameralgpeqn}
    (\hat{\mathcal{J}}_S)_{\mathcal B}(X) = Maps^{W_L}(X \times_{\mathcal B} \mathfrak{z}, \bar{Z}).
\end{equation}

\begin{lemma}\label{PseudoCameralgplem}
    The pseudo-cameral group $\hat{\mathcal J}_S$ is a smooth closed subgroup stack of $\Pi_S$, representable by schemes over $\mathcal B$.
\end{lemma}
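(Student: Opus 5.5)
Since being a smooth closed subgroup representable by schemes is fppf-local on $\mathcal B$ and compatible with base change, I will work over an étale atlas. Concretely, I pull everything back along the étale cover $\tilde{\mathfrak{c}}_S \rightarrow [\tilde{\mathfrak{c}}_S/F] \cong \mathcal B$ from Proposition \ref{GmEquivStackyCBKAprp}. Weil restriction commutes with base change, so the pullback of $\Pi_S$ to $\tilde{\mathfrak{c}}_S$ is the Weil restriction $\Pi' = q_*(\bar{Z}\times\mathfrak{z})$ along the finite flat map $q:\tilde{\mathfrak{c}}_S\times_{\mathcal B}\mathfrak{z}\rightarrow\tilde{\mathfrak{c}}_S$. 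By Lemma \ref{WeilRestrictionlem}, $\Pi'$ is a smooth affine group scheme over $\tilde{\mathfrak{c}}_S$. The $W_L$-action of Lemma \ref{WLInvariancelem} makes $\Pi'$ a group scheme with an action of the finite group $W_L$ by group automorphisms over the base, and the pullback of $\hat{\mathcal J}_S$ is the fixed point functor $(\Pi')^{W_L}$.

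The first step is closedness and representability. The fixed locus is the equaliser of the maps $\mathrm{id}$ and $w$ for each $w\in W_L$. Since $\Pi'$ is separated (indeed affine) over $\tilde{\mathfrak{c}}_S$, each equaliser is a closed subscheme. The intersection over the finitely many $w$ is therefore a closed subgroup scheme. This descends to the closed subgroup stack $\hat{\mathcal J}_S\subseteq\Pi_S$, and it is representable by schemes over $\mathcal B$.

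The second step, and the main point, is smoothness. Here I invoke the standard result (e.g. \cite{BLR}, or SGA3 / Edixhoven) that the fixed points of a smooth scheme under a finite group of order invertible on the base are smooth. We are in characteristic zero, so $|W_L|$ is invertible. I would verify smoothness via the infinitesimal lifting criterion. Given a square-zero thickening $X_0\hookrightarrow X$ of affine $\tilde{\mathfrak{c}}_S$-schemes and a $W_L$-fixed point $x_0\in\Pi'(X_0)$, smoothness of $\Pi'$ gives lifts $x\in\Pi'(X)$. These lifts form a torsor under the module $M=\mathrm{Hom}(x_0^*\Omega,I)$, which carries a $W_L$-action. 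The obstruction to a $W_L$-invariant lift is a class in $H^1(W_L,M)$. Since $M$ is a $\mathbb{Q}$-module, this cohomology group vanishes, and explicitly averaging $\frac{1}{|W_L|}\sum_w w\cdot x$ in the torsor produces a fixed lift.

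Smoothness is local on the base, so it descends along the étale cover, and $\hat{\mathcal J}_S$ is smooth over $\mathcal B$. I expect the only delicate point to be ensuring that the $W_L$-action on $\Pi'$ is genuinely an action over $\tilde{\mathfrak{c}}_S$, i.e. trivial on the base, so that the fixed point argument applies fibrewise. This follows from Lemma \ref{WLInvariancelem}, which lifts the action to $X\times_{\mathcal B}\mathfrak{z}$ over $X$.
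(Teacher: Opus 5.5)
Your proposal is correct and takes essentially the same approach as the paper, which simply defers to the proof of \cite[Lemme 2.4.1]{Ngo2}: the fixed-point subgroup is closed because the Weil restriction $\Pi_S$ is separated, and it is smooth because fixed points of a finite group of invertible order acting on a smooth group scheme are smooth. Your reduction to the étale atlas $\tilde{\mathfrak{c}}_S \rightarrow \mathcal B$ and your averaging of infinitesimal lifts spell out details that the paper leaves to that reference.
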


\begin{proof} 
The proof is the same as that of \cite[Lemme 2.4.1]{Ngo2}.
\end{proof}

\begin{remark}\label{PseudoCameralgprmk}
    Since the actions of $\mathbb{G}_m$ and $W_L$ on $\mathfrak{z}$ commute, the $\mathbb{G}_m$-action on $\Pi_S$ restricts to an action on $\hat{\mathcal J}_S$. In particular, $\hat{\mathcal J}_S$ descends to a group stack $\hat{\mathcal J}_{S, \mathbb{G}_m}$ on $\mathcal B/\mathbb{G}_m$.
\end{remark}

The pseudo-cameral group is canonical in the following sense. Suppose $L'$ is a different choice of Levi subgroup of $G$ corresponding to the Dixmier sheet $S$, i.e. $L'$ is a $G$-conjugate of $L$. Let $\hat{\mathcal J}_S'$ be the group defined by replacing $L$ with $L'$ in the construction of $\mathcal J_S$.

\begin{proposition}\label{PseudoCameralLIndprp}
    There is a canonical isomorphism $\hat{\mathcal J}_S \cong \hat{\mathcal J}_S'$.
\end{proposition}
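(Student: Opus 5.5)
Fix $g \in G$ with $L' = I_g(L)$. Conjugation by $g$ induces an isomorphism $I_g:L \to L'$, hence isomorphisms $\bar Z \to \bar Z'$ on abelianisations, $Ad_g:\mathfrak{z} \to \mathfrak{z}'$ on centres, and $W_L \to W_{L'}$ (via $N_G(L) \to N_G(L')$). These maps are compatible with the actions of $W_L$ on $\bar Z$ and $\mathfrak{z}$. The plan is first to show that $Ad_g:\mathfrak{z}\to\mathfrak{z}'$ is a morphism over $\mathcal B$, i.e. $p' \circ Ad_g = p$. By the uniqueness part of Lemma \ref{StackyZquotientlem}, it suffices to check $C \circ p' \circ Ad_g = C\circ p$ as maps $\mathfrak{z} \to \mathfrak{c}_S$. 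Both quotient maps $\mathfrak{z} \to \mathfrak{c}_S$ and $\mathfrak{z}' \to \mathfrak{c}_S$ are the restrictions of $\chi_S$, since $\mathfrak{z}^{reg} \subseteq S$ and $\chi_S$ is $G$-invariant. Hence they agree after composing with $Ad_g$ on the dense open set $\mathfrak{z}\cap S$, and thus everywhere. Note that the identification $\mathfrak{z}/W_L \cong \mathfrak{c}_S$ is the Borho map of Proposition \ref{BorhoChevalleyprp}, which is canonical by Remark \ref{GeometricQuotientrmk}.

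With this in hand, for any $X \to \mathcal B$ the map $Ad_g$ induces an isomorphism $X\times_{\mathcal B}\mathfrak{z} \to X \times_{\mathcal B}\mathfrak{z}'$, equivariant for $W_L \cong W_{L'}$; this equivariance uses Lemma \ref{WLInvariancelem} and the same uniqueness argument. Precomposing with its inverse and postcomposing with $\bar Z \to \bar Z'$ sends $W_L$-equivariant maps to $W_{L'}$-equivariant maps. This is functorial in $X$, so by the description (\ref{PseudoCameralgpeqn}) it defines an isomorphism of group stacks $\Pi_S \cong \Pi_S'$ restricting to $\hat{\mathcal J}_S \cong \hat{\mathcal J}_S'$. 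Alternatively, one can argue via compatibility of Weil restriction with isomorphisms of the source.

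The main obstacle is canonicity: showing the isomorphism is independent of the choice of $g$. Two choices differ by an element $n \in N_G(L)$, so we need that the automorphism of $\hat{\mathcal J}_S$ induced by $n$ is the identity. On $\bar Z$ and $\mathfrak{z}$, $n$ acts through its class $w\in W_L$. The induced automorphism therefore sends an equivariant map $\phi$ to $w\circ\phi\circ w^{-1}$, where $w^{-1}$ acts on $X\times_{\mathcal B}\mathfrak{z}$ by the lift of Lemma \ref{WLInvariancelem}. This equals $\phi$ precisely by $W_L$-equivariance of $\phi$.

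The point requiring care is that the lift of $Ad_n$ to $X\times_{\mathcal B}\mathfrak{z}$ used in the construction coincides with the $W_L$-action of Lemma \ref{WLInvariancelem}. Both are lifts of the same map $\mathfrak{z}\to\mathfrak{z}$ over $\mathcal B$, so they should agree. I would verify this via the uniqueness statement of Lemma \ref{StackyZquotientlem}, together with \cite[Proposition A.1]{FMN} to compare 2-morphisms over the dense schematic locus.
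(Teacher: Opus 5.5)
Your proposal is correct and follows essentially the same route as the paper: you build $\phi_g$ from $I_g$ on $\bar Z$, $Ad_g$ on $\mathfrak{z}$ and $W_L \to W_{L'}$, and then reduce canonicity to showing $\phi_n$ is the identity for $n \in N_G(L)$, which holds because $n$ acts through the diagonal $W_L$-action on $\bar Z \times \mathfrak{z}$ and so fixes every $W_L$-equivariant map. Your extra checks fill in compatibilities the paper leaves implicit: that $p' \circ Ad_g = p$, via the uniqueness in Lemma \ref{StackyZquotientlem}, and that the relevant 2-morphisms are rigid.
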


\begin{proof}
For some $g \in G$, $L' = I_g(L)$, and this induces isomorphisms $I_g:\bar{Z} \rightarrow \bar{Z}'$, $Ad_g:\mathfrak{z} \rightarrow \mathfrak{z}'$ and $I_g:W_L \rightarrow W_{L'}$ (where $\bar{Z}'$, $\mathfrak{z}'$ and $W_{L'}$ are the analogues of $\bar{Z}$, $\mathfrak{z}$ and $W_L$ for $L'$). Thus, these induce an isomorphism $\phi_g:\hat{\mathcal J}_S \rightarrow \hat{\mathcal J}_S'$, and the assignment $g \mapsto \phi_g$ is functorial. To show that this isomorphism is canonical, it suffices to show that for each $n \in N_G(L)$, the automorphism $\phi_n$ is the identity. But this is clear since it is determined by the automorphism in $Aut(\bar{Z} \times \mathfrak{z})$ given by the diagonal action of $nL \in W_L$; by definition, this is the identity on $\hat{\mathcal J}_S$.
\end{proof}

 We recall the $S$-Chevalley map $\rho_S:S \rightarrow \mathcal B$ defined in (\ref{StackyChevalleyMapeqn}). Note that the $S$-group scheme $\rho_S^*\Pi_S$ has a natural $G$-action (which restricts to an action on $\rho_S^*\hat{\mathcal J}_S$), since $\rho_S$ factors through the quotient $S \rightarrow [S/G]$. The following construction is the desired generalisation of (\ref{RegCameralMoreqn}).

\begin{proposition}\label{CameralHomprp}
There is a homomorphism of group schemes $\kappa_S:\mathcal I^{sm}_S \rightarrow \mathcal \rho_S^*\hat{\mathcal J}_S$, equivariant with respect to the actions of $G$ and $\mathbb{G}_m$, such that for any semisimple element $x \in S$, $\kappa_{S,x}: \mathcal I^{sm}_{S,x} \rightarrow \hat{\mathcal J}_{S, \rho_S(x)}$ realises the abelianisation of the group $\mathcal I^{sm}_{S,x}$.
\end{proposition}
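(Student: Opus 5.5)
The plan is to build $\kappa_S$ through the Cartesian square of Proposition \ref{StackyGSprp}. By that square, for any $S$-scheme $X$ we have $X\times_{\mathcal B}\mathfrak{z} = X\times_S \hat{S}^{reg}$. Hence a section of $\rho_S^*\Pi_S$ over $X$ is the same as a morphism $X\times_S\hat{S}^{reg}\rightarrow \bar{Z}$, and a section of $\rho_S^*\hat{\mathcal J}_S$ is such a morphism which is $W_L$-equivariant; here $W_L$ acts on $X\times_S\hat S^{reg}$ via Lemma \ref{WLInvariancelem}. So, taking $X=\mathcal I^{sm}_S$, it suffices to construct a $W_L$-equivariant morphism
$$\tilde\kappa:\mathcal I^{sm}_S\times_S \hat{S}^{reg}\rightarrow \bar{Z}$$
which is multiplicative in the $\mathcal I^{sm}_S$-variable for each fixed point of $\hat S^{reg}$.

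We use the description $\hat{S}^{reg}=G\times^P\mathfrak{r}^{reg}$ from Remark \ref{DixmierGSrmk}, with $\hat p$ the action map. The centraliser $\mathcal I_S$ acts on $\hat S^{reg}$ over $S$ by $g\cdot[h,y]=[gh,y]$. The first and main step is to show that the subgroup $\mathcal I^{sm}_S$ acts \emph{trivially} on $\hat S^{reg}$. Concretely, the two maps
$$\mathcal I^{sm}_S\times_S\hat S^{reg}\rightrightarrows \hat S^{reg},$$
given by the action and by projection, should coincide. I would check this on the dense open locus of semisimple $x\in S$ with $C_G(x)$ conjugate to $L$. There $\hat p^{-1}(x)$ is a finite set of points $[h,y]$ with $x=Ad_h y$, and $y$ is semisimple in $\mathfrak{z}$, so $x$ lies in $Ad_h(\mathfrak z)$. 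Then $C_G(x)=I_h(L)\subseteq I_h(P)$, so $C_G(x)$ fixes each point of the fibre. Moreover, since Levi subgroups are connected, $\mathcal I^{sm}_{S,x}=C_G(x)$ there.

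To pass from this open locus to all of $S$, I would argue as follows. The scheme $\mathcal I^{sm}_S\times_S\hat S^{reg}$ is smooth over $\hat S^{reg}$, by Proposition \ref{SmCentraliserPropsprp}, and $\hat S^{reg}$ is non-singular. Its components therefore dominate $\hat S^{reg}$, and the open preimage of the semisimple locus is dense. Since $\hat S^{reg}$ is separated and the source is reduced, the two maps agree everywhere. This density argument is where I expect the real work: one must verify that the preimage of the generic locus is dense in each component, exactly as in Lemma \ref{RedCentSCActionlem}. It is also where the passage from $\mathcal I_S$ to $\mathcal I^{sm}_S$ is essential, since the non-smooth components of $\mathcal I_S$ may permute the fibres.

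Granting triviality, for $(g,[h,y])$ we have $gh\in hP$, i.e. $h^{-1}gh\in P$. We then define $\tilde\kappa(g,[h,y])$ to be the image of $h^{-1}gh$ under
$$P\rightarrow P/R_u(P)\cong L\rightarrow \bar Z.$$
This is well defined: replacing $h$ by $hq$ with $q\in P$ changes $h^{-1}gh$ by $P$-conjugation, which dies in the abelian quotient $\bar Z$. It is a morphism of schemes since it can be written on the smooth cover $G\times\mathfrak r^{reg}$ and descended. Multiplicativity in $g$ for fixed $[h,y]$ is immediate. $G$-equivariance follows from $[h,y]\mapsto[kh,y]$, and $\mathbb{G}_m$-equivariance holds because scaling does not change $h$. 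For $W_L$-equivariance, I would compare the two maps on the dense semisimple locus. There the $W_L$-action is induced by $n\in N_G(L)$ through $[h,y]\mapsto[hn^{-1},Ad_n y]$, up to the appropriate identification, and conjugation by $n$ is exactly the $W_L$-action on $\bar Z$. The identity then extends by the same density and separatedness argument.

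Finally, for semisimple $x\in S$ we may conjugate so that $x\in\mathfrak{z}$ is generic and $\mathcal I^{sm}_{S,x}=L$. The fibre $\hat p^{-1}(x)\cong x\times_{\mathcal B}\mathfrak z$ is then a $W_L$-torsor. A $W_L$-equivariant map from it to $\bar Z$ is determined by its value at one point, so $\hat{\mathcal J}_{S,\rho_S(x)}\cong\bar Z$, and $\kappa_{S,x}$ becomes $L\rightarrow\bar Z=L/L^{der}$, which is the abelianisation. For a semisimple $x$ that is not generic, the same reasoning applies provided the fibre is still a $W_L$-torsor. This should follow from the flatness of $p$ together with the centraliser dimension being constant on $S$; I would verify it directly.
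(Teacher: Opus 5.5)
Your proposal is correct and follows essentially the paper's route: the paper also builds $\kappa_S$ by adjunction from the map on $\hat p^*\mathcal I^{sm}_S$ sending $g$ over $P(h,y)$ to the image of $h^{-1}gh$ under $P\to P/P^{der}=\bar Z$. It gets the needed fact that $\hat p^*\mathcal I^{sm}_S$ lies in the universal parabolic (your ``trivial action'' step, equivalently $\mathcal I^{sm}_{S,y}\subseteq C_P(y)$ for $y\in\mathfrak r^{reg}$) from Proposition \ref{ParabolicCentraliserprp}, whose proof is the same density argument, and then checks $W_L$-equivariance and the abelianisation property over $\mathfrak z^{rs}$ using $\hat p^{-1}(\mathfrak z^{rs})\cong W_L\times\mathfrak z^{rs}$, exactly as you do. Your worry about non-generic semisimple elements is moot: every semisimple element of a Dixmier sheet has centraliser conjugate to $L$ (a semisimple $x\in S$ is conjugate to some $z\in\mathfrak z$ with $C_G(z)\supseteq L$ connected and $\dim C_G(z)=\dim L$), so $S^{ss}=Ad(G)(\mathfrak z^{rs})$ and your generic-case computation already covers all of them.
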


\begin{proof} 
Let $P$ be a parabolic subgroup of $G$ with Levi factor $L$, and let $\mathfrak{r}$ be the solvable radical of $\mathfrak{p} = Lie(P)$. Let $\hat{p}:G \times^P\mathfrak{r}^{reg} \rightarrow S$ be the generalised Grothendieck-Springer map defined in (\ref{GSDiagrameqn}); by Proposition \ref{StackyGSprp}, we have that $\rho_S^* \Pi_S = \hat{p}_*(\bar{Z} \times (G \times^P \mathfrak{r}^{reg}))$. To construct a homomorphism $\kappa_S:\mathcal I^{sm}_S \rightarrow \mathcal \rho_S^*\Pi_S$, it is equivalent by adjunction to construct a homomorphism
\begin{equation}\label{CameralHomLifteqn}
    \hat{\kappa}_S:\hat{p}^*\mathcal I^{sm}_S \rightarrow \bar{Z} \times (G \times^P \mathfrak{r}^{reg}).
\end{equation}

There is a group scheme $\mathcal P$ over $G \times^P \mathfrak{r}^{reg}$ pulled back from the universal parabolic subgroup scheme of the constant group $G$ over $G/P$; in particular, for a $\mathbb{C}$-point $P(g,x)$ of $G \times^P \mathfrak{r}^{reg}$, the fibre of $\mathcal P$ over this point is $I_g(P)$. By Proposition \ref{ParabolicCentraliserprp}, there is an inclusion $\mathcal I_S^{sm} \hookrightarrow \mathcal P$. Moreover, there is a homomorphism from $\mathcal P$ to the constant group $\bar{Z}$ over $G \times^P \mathfrak{r}^{reg}$, which over the $\mathbb{C}$-point $P(g,x)$ is given by the composition
\begin{equation}\label{CameralHom1eqn}
\begin{tikzcd}
gPg^{-1} \arrow[r, "I_{g^{-1}}"] & P \arrow[r] & P/P^{der} = \bar{Z}. 
\end{tikzcd}
\end{equation}
To see that this is well-defined, we observe that if $gP = g'P$, then $I_g$ and $I_{g'}$ differ by conjugation by an element of $p$, and so define the same map after projection to $\bar{Z}$. Hence, we have a well-defined homomorphism $\hat{\kappa}_S$, and thus a well-defined homomorphism $\kappa_S:\mathcal I^{sm}_S \rightarrow \rho_S^*\Pi_S$. It is clear from the construction that $\kappa_S$ is equivariant with respect to the actions of $G$ and $\mathbb{G}_m$.

Since $\mathcal I_S^{sm}$ is smooth, to prove that $\kappa_S$ takes its image in the closed subgroup $\rho_S^* \hat{\mathcal J}_S$ of $\rho_S^*\Pi_S$, it suffices to do so over the dense open subset of semisimple elements $S^{ss} \subseteq S$. If we let $\mathfrak{z}^{rs}$ be the open subset of $\mathfrak{z}$ defined by
\begin{equation}\label{WeylLeviActioneqn}
    \mathfrak{z}^{rs} = \{ x \in \mathfrak{g} \, | \,C_G(x) = L \},
\end{equation}
then $S^{ss} = Ad(G)(\mathfrak{z}^{rs})$. By $G$-equivariance, $\kappa_S|_{S^{ss}}$ is determined by its value on $\mathfrak{z}^{rs}$, so in particular, $\kappa_S|_{S^{ss}}$ takes its image in $\rho_S^* \hat{\mathcal J}_S|_{S^{ss}}$ if and only if $\kappa_S|_{\mathfrak{z}^{rs}}$ takes its image in $\rho_S^* \hat{\mathcal J}_S|_{\mathfrak{z}^{rs}}$. 

There is a Cartesian diagram
\begin{equation}\label{CameralHom2eqn}
\begin{tikzcd}
W_L \times \mathfrak{z}^{rs} \arrow[d, "\pi_2"] \arrow[r, "\hat{\iota}"] & G \times^P \mathfrak{r}^{reg} \arrow[d, "\hat{p}"] \\
\mathfrak{z}^{rs} \arrow[r, hook]                                 & S                                                   
\end{tikzcd}
\end{equation}
where $\hat{\iota}$ is defined on $\mathbb{C}$-points by
\begin{equation}\label{CameralHom3eqn}
    (nL,x) \mapsto P(n^{-1},Ad_n(x)), 
\end{equation}
for $nL \in N_G(L)/L = W_L$ and $x \in \mathfrak{z}^{rs}$. We can identify the pullback of $\hat{\kappa}_S$ under $\hat{\iota}$ with the map
\begin{equation}\label{CameralHom4eqn}
    L \times (W_L \times \mathfrak{z}^{rs}) \rightarrow \bar{Z} \times (W_L \times \mathfrak{z}^{rs}) 
\end{equation}
which over $\{w\} \times \mathfrak{z}^{rs}$ is given by
the constant group homomorphism
\begin{equation}\label{CameralHom5eqn}
\begin{tikzcd}
L \arrow[r] & L/L^{der} = \bar{Z} \arrow[r, "w"] & \bar{Z}.
\end{tikzcd}  
\end{equation}
This is $W_L$-equivariant, where the $W_L$-action on $L \times (W_L \times \mathfrak{z}^{rs}) \cong \mathcal I_S^{sm} \times_{\mathcal B} \mathfrak{z}$ is given by Lemma \ref{WLInvariancelem}; hence $\kappa_S|_{\mathfrak{z}^{rs}}$ takes its image in $\rho_S^* \hat{\mathcal J}_{\mathfrak{z}^{rs}}$. This also gives the required characterisation of $\kappa_{S,x}$ for $x \in S^{ss}$.
\end{proof}

\begin{proposition}\label{CameralPindprp}
  The homomorphism $\kappa_S$ constructed in the proof of Proposition \ref{CameralHomprp} is independent of the choice of parabolic subgroup $P$.   
\end{proposition}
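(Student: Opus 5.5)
The plan is a density argument: compare the homomorphisms $\kappa_S$ and $\kappa_S'$, built from two parabolics $P$ and $P'$ with Levi factor $L$, on the dense open semisimple locus, and then extend the equality to all of $S$. Both are homomorphisms $\mathcal I^{sm}_S \rightarrow \rho_S^*\hat{\mathcal J}_S$ of $S$-group schemes. The source $\mathcal I^{sm}_S$ is smooth over $S$, hence flat, so $\mathcal I^{sm}_{S^{ss}}$ is schematically dense in $\mathcal I^{sm}_S$. The target $\rho_S^*\hat{\mathcal J}_S$ is representable and separated over $S$ by Lemma \ref{PseudoCameralgplem} (it is a closed subgroup of a Weil restriction of an affine group). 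Hence it suffices to show $\kappa_S|_{S^{ss}} = \kappa'_S|_{S^{ss}}$.

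Next I reduce further to $\mathfrak{z}^{rs}$. Both maps are $G$-equivariant by Proposition \ref{CameralHomprp}, and $S^{ss} = Ad(G)(\mathfrak{z}^{rs})$. So it suffices to compare the restrictions to $\mathfrak{z}^{rs}$, or, after pulling back along the finite étale cover $\hat{\iota}$ of (\ref{CameralHom2eqn}), the lifted maps $\hat{\kappa}_S$ and $\hat{\kappa}_S'$ over $W_L \times \mathfrak{z}^{rs}$. The subtlety is that the two Grothendieck--Springer varieties $G\times^P\mathfrak{r}^{reg}$ and $G\times^{P'}\mathfrak{r}'^{reg}$ are different schemes. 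I would use Proposition \ref{StackyGSprp}: each is canonically isomorphic to $S\times_{\mathcal B}\mathfrak{z}$. The adjunction defining $\kappa_S$ therefore takes place on the same scheme, and it suffices to check that the two lifts $\hat{\kappa}_S, \hat{\kappa}_S' : \mathcal I^{sm}_S \times_{\mathcal B}\mathfrak{z} \rightarrow \bar Z$ agree.

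The key step, which I expect to be the main obstacle, is checking that the two identifications with $S\times_{\mathcal B}\mathfrak{z}$ match the two copies of $W_L\times\mathfrak{z}^{rs}$ compatibly. Concretely, I need that the point $P(n^{-1},Ad_n x)$ in the $P$-model and the point $P'(n^{-1},Ad_n x)$ in the $P'$-model both map to $(x, Ad_n x)\in S\times_{\mathcal B}\mathfrak{z}$. This should follow because $\hat{\chi}_S$ is induced by the projections $\mathfrak{r}\to\mathfrak{z}$ and $\mathfrak{r}'\to\mathfrak{z}$, both of which restrict to the identity on $\mathfrak{z}$. The point uses that $L$ is a common Levi factor of $P$ and $P'$, so $\mathfrak{z}\subset\mathfrak{r}\cap\mathfrak{r}'$.

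Granting this, over $\{w\}\times\mathfrak{z}^{rs}$ both lifted maps are the same constant homomorphism $L\to L/L^{der}=\bar Z \xrightarrow{w}\bar Z$ of (\ref{CameralHom5eqn}). Here one also checks that the composite $L \hookrightarrow P \to P/P^{der}$ is identified with $L\to L/L^{der}$ for both $P$ and $P'$. This holds because the unipotent radical lies in $P^{der}$, so $P/P^{der}\cong L/L^{der}$ canonically via $L$. This gives equality on $\mathfrak{z}^{rs}$, hence on $S^{ss}$, hence everywhere by density. If needed, the argument also applies to parabolics with a conjugate Levi, combining with Proposition \ref{PseudoCameralLIndprp}.
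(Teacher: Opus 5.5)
Your proposal is correct and is essentially the paper's argument: you use density of $S^{ss}$, then $G$-equivariance to reduce to $\mathfrak{z}^{rs}$, and then the observation that over $\mathfrak{z}^{rs}$ the map is the pushforward of (\ref{CameralHom4eqn}), which does not involve $P$. Your check that both versions of $\hat{\iota}$ send $(nL,x)$ to the same point $(x,Ad_n(x))$ of $S\times_{\mathcal B}\mathfrak{z}$ makes explicit a step the paper leaves implicit; one minor slip is that $\hat{\iota}$ is a locally closed embedding, whereas the finite \'etale cover is $\pi_2$.
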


\begin{proof}
Again, it suffices to observe that the homomorphism is uniquely defined over the dense open subset $S^{ss} \subseteq S$. But this follows from $G$-equivariance and the description of $\kappa_S$ over $\mathfrak{z}^{rs}$ as the pushforward of the map (\ref{CameralHom4eqn}), since this does not depend on $P$.
\end{proof}

\begin{remark}\label{CameralHomrmk}
    We call $\kappa_S$ the \emph{cameral homomorphism} for $S$ (with respect to $L$). If $\phi:G \rightarrow G'$ is an isomorphism of groups, then the isomorphism identifies the cameral homomorphism $\kappa_S$ for $S$ with respect to a Levi subgroup $L \leq G$ with the cameral homomorphism $\kappa_S'$ for $\phi(S)$ with respect to the Levi subgroup $\phi(L) \leq G'$. In particular, if $L$ and $L'$ are conjugate Levi subgroups in $G$, the cameral homomorphisms for $S$ with respect to $L$ and $L'$ are canonically identified.
\end{remark} 

We now restrict to a particular class of examples of sheets for which we prove that the cameral homomorphism is smooth.

\begin{definition}\label{ClassRamTypedef}
    We say that a Dixmier sheet $S$ associated to a Levi subgroup $L$ is of \emph{classical reduction type (CRT)} if, for any Levi subgroup $M$ of $G$ containing $L$ minimally  (i.e. there is no Levi subgroup $M'$ of $G$ with $L \subsetneq M' \subsetneq M$), $M$ is a classical group.
\end{definition}

\begin{remark}\label{ClassRamTypermk}
    If $\mathfrak{g}$ is classical, then every Dixmier sheet in $\mathfrak{g}$ is automatically of classical reduction type. The definition also covers a large class of examples in the exceptional Lie algebras; the CRT condition is rather artificial in this case, but is useful for our purposes since all but one of the examples which we require in Section \ref{RealHitchinscn} are CRT. It is easy to check when a given Dixmier sheet $S$ is CRT using the Dynkin subdiagram of the corresponding Levi subgroup.
\end{remark}

In this case, we have the following key property.

\begin{proposition}\label{CameralSmoothprp}
    If $S$ is a non-singular Dixmier sheet of classical reduction type, the cameral homomorphism $\kappa_S:\mathcal I_S^{sm} \rightarrow \rho_S^*\hat{\mathcal J}_S$ is smooth.
\end{proposition}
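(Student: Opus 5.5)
Both $\mathcal I^{sm}_S$ and $\rho_S^*\hat{\mathcal J}_S$ are smooth group schemes over the smooth base $S$, by Proposition \ref{SmCentraliserPropsprp} and Lemma \ref{PseudoCameralgplem}. So smoothness of the homomorphism $\kappa_S$ reduces to a fibrewise statement. It suffices to show that for every $\mathbb{C}$-point $x \in S$ the differential $Lie(\mathcal I^{sm}_{S,x}) = \mathfrak{c}_{\mathfrak{g}}(x) \rightarrow Lie(\hat{\mathcal J}_{S,\rho_S(x)})$ is surjective. Granting this, $\kappa_S$ is a homomorphism of smooth $S$-group schemes which is smooth on fibres, so by the fibrewise criterion for flatness it is flat with smooth fibres, hence smooth.

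First I will use $G$-equivariance of $\kappa_S$ (Proposition \ref{CameralHomprp}) and the $\mathbb{G}_m$-equivariance to reduce to a normal form. Write $x = x_{ss}+x_n$. The set where the differential is surjective is open and $G\times\mathbb{G}_m$-stable, and every $\mathbb{G}_m$-orbit closure in $S$ meets the nilpotent orbit. So I could reduce to nilpotent $e \in S$. However, the Jordan decomposition gives a better localisation. Set $M = C_G(x_{ss})$; this is a Levi subgroup containing a conjugate of $L$. Near $x$, the sheet $S$ is étale-locally $G\times^M(S_M)$, where $S_M$ is the Dixmier sheet of $\mathfrak m$ attached to $L$ (translated by the centre of $\mathfrak m$). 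I will then check, through the Grothendieck–Springer description $\rho_S^*\Pi_S = \hat p_*(\bar Z\times G\times^P\mathfrak r^{reg})$ of Proposition \ref{StackyGSprp}, that both $\mathcal I^{sm}_S$ and $\rho_S^*\hat{\mathcal J}_S$ are compatible with this parabolic induction. The $W_L$-invariants localise to $W_L\cap M$-invariants on the relevant component of $X\times_{\mathcal B}\mathfrak z$. This reduces the problem to $\mathfrak m$. A further induction on the semisimple rank of the intermediate Levi, together with the multiplicativity of the cameral group under products, reduces to the case where $M$ contains $L$ minimally and $x$ is nilpotent in $\mathfrak m$ modulo the centre. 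By the CRT hypothesis, $M$ is then classical.

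The central step, and the main obstacle, is the surjectivity at a nilpotent $e$ in the Dixmier sheet of a classical $\mathfrak m$ attached to a Levi $L$ that is maximal in $M$. Here $\dim \mathfrak z(\mathfrak l)/\mathfrak z(\mathfrak m) = 1$, and $W_L\cap M$ is of order at most $2$. Consequently the Lie algebra of $\hat{\mathcal J}$ at the ramified point is computable: it is the $W$-invariant part of the fibre of $\bar{\mathfrak z}\otimes\mathcal O_{X\times_{\mathcal B}\mathfrak z}$. I would compute $\kappa_S$ on $\mathfrak c_{\mathfrak m}(e)$ via the map to $\mathfrak p/\mathfrak p^{der}$ from Proposition \ref{ParabolicCentraliserprp}. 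I would check surjectivity case by case for $\mathfrak{gl}_n$, $\mathfrak{sp}_{2n}$ and $\mathfrak{so}_n$, using the explicit Jordan-type (Kraft–Procesi) description of the induced orbit. The $Sp_4$ computation of Example \ref{Sp4Sheetexm} illustrates the nontrivial-$F$ case. I expect that the subtle point is when $W_L\cap M$ acts by $-1$ on the centre direction, for then the invariants contain a ramified ``$\mathbb{G}_m$ twisted by sign'' factor. There one needs an element of $\mathfrak c(e)$ mapping to the odd generator, which I would exhibit using the $\mathfrak{sl}_2$-triple together with the Kazhdan weight grading.
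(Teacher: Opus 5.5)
\paragraph{The gap: the reduction to the minimal case.} Your reduction to the case where $M$ contains $L$ minimally does not go through. Localising at the Jordan decomposition only replaces $G$ by $M=C_G(x_{ss})$. Suppose $x$ lies in a decomposition class $\mathcal D(M,\mathrm{Ind}_{\mathfrak l}^{\mathfrak m}(0))$ with $L$ \emph{not} maximal in $M$. Then you arrive at an element which is nilpotent modulo $\mathfrak z(\mathfrak m)$ in the Dixmier sheet of $\mathfrak m$ attached to a non-maximal Levi, and there is nothing left to localise.

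No ``induction on the semisimple rank of the intermediate Levi'' can pass from such a point to a smaller Levi. Openness of the locus where $d\kappa_S$ is surjective also runs the wrong way: surjectivity at nearby points of codimension-one classes says nothing about the special point. The extreme case is a nilpotent $x$, where $M=G$. Classical reduction type only constrains the \emph{minimal} Levi overgroups of $L$, so $M$ can be exceptional. For instance, the regular sheet of $E_8$ is of classical reduction type, and your plan gives no way to treat the regular nilpotent orbit.

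Your preliminary remark that it suffices to treat a nilpotent $e$ is true once you use $G\times\mathbb G_m$-orbit closures rather than $\mathbb G_m$-orbit closures. But it makes matters worse, since it puts everything on the deepest stratum.

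\paragraph{The missing idea: purity.} You need an argument that lets you discard strata of codimension $\geq 2$. The paper uses that the locus where $\kappa_{S,Lie}\colon Lie(\mathcal I^{sm}_S)\to Lie(\rho_S^*\hat{\mathcal J}_S)$ fails to be surjective is a divisor if non-empty. So it suffices to check surjectivity on $U=S^{ss}\cup\bigcup_M\mathcal D(M,\mathcal O_{\mathfrak m})$, with $M$ ranging over Levi subgroups containing $L$ minimally. Since $\mathcal D(M,\mathcal O_{\mathfrak m})$ has codimension $\dim\mathfrak z(\mathfrak l)-\dim\mathfrak z(\mathfrak m)$ in $S$, the complement of $U$ has codimension at least $2$ (Borho, Korollar 3.6). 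On $U$ your Jordan localisation does land in the minimal, hence classical, case. From there your outline agrees with the paper: Levi compatibility of $\kappa_S$, then a case-by-case check for maximal Levi subgroups of $GL_n$, $SO_n$ and $Sp_{2m}$.

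The divisor property is not automatic, because the ranks $\dim\mathfrak l$ and $\dim\mathfrak z$ differ. One way to see it: $\kappa_{S,Lie}$ kills the conormal bundle $(TS)^\perp\subseteq Lie(\mathcal I_S)$, which equals $[\mathfrak l,\mathfrak l]$ at points of $\mathfrak z^{rs}$. Hence it factors through a map of bundles of the same rank $\dim\mathfrak z$, and it degenerates exactly along a determinant.

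\paragraph{A smaller point in the maximal case.} The relevant dichotomy is whether $p\colon\mathfrak z\to\mathcal B$ ramifies at $0$ (i.e.\ $W_S\neq 1$), not whether $W_L$ acts by $-1$. For $S_{Dix}\subseteq\mathfrak{sp}_4$ one has $F=W_L$ and $p$ is \'{e}tale. What is needed there is an element of $\mathfrak c_{\mathfrak g}(e)\cap\mathfrak l$ with non-zero image in $\bar{\mathfrak z}$. Only in the ramified case is the target the ``odd'' line. In that case $e$ itself works, via comparison with the regular sheet of the $\mathfrak{sl}_2$ spanned by $(e,h,f)$.
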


Our method of proof is the same as the proof of \cite[Proposition 2.4.7]{Ngo2}, and involves reducing to checking the statement on a finite class of examples by passing to Levi subgroups of $G$. The proof in these examples involves a case-by-case check on the interaction between $\mathfrak{sl}_2$-triples and the abelianisation map for the Levi subalgebra. We have provided the full details in Appendix \ref{DixmierClassicalscn}, and give only a sketch of the argument in the proof of Proposition \ref{CameralSmoothprp} below.  

First, we give some preliminaries on Levi reduction for the Grothendieck-Springer theory of sheets. For the following lemmas we allow $S$ to be a non-singular Dixmier sheet in $\mathfrak{g}$ associated with a Levi subgroup $L \leq G$. Let $x \in S$ and let $x = x_{ss}+x_n$ be its Jordan decomposition. By \cite[Satz 4.8]{Borho2}, there exists $g \in G$ such that the Levi subgroup $M: = C_G(x_{ss})$ contains $I_g(L)$ and $x_n$ is a representative for $\mathcal O_{\mathfrak{m}}$, where
\begin{equation}\label{InducedOrbiteqn}
        \mathcal O_{\mathfrak{m}} = \text{Ind}_{\mathfrak{l}}^{\mathfrak{m}}(0)
    \end{equation}
is the \emph{orbit induced from $0$} in the sense of \cite[Theorem 1.3]{LS}. For simplicity, we will suppose that $g =\text{id}_G$; in particular, $x_{ss} \in \mathfrak{z}$ since $\mathfrak{l} \subseteq \mathfrak{c}_{\mathfrak{g}}(x_{ss})$. Let $S_M$ be the Dixmier sheet in $\mathfrak{m}$ corresponding to $L$ as a Levi subgroup of $M$. Note that this depends on the specific subgroup $L \leq G$, as the $G$-conjugacy class of $L$ may split into distinct $M$-conjugacy classes. 

\begin{lemma}\label{SheetsLIlem}
    There exist parabolic subgroups $P_M$ of $M$ and $P$ of $G$, both with Levi factor $L$, such that $P_M = P \cap M$ and $x \in \mathfrak{r}_M^{reg}$, where $\mathfrak{r}_M$ is the solvable radical of $\mathfrak{p}_M = Lie(P_M)$. In particular, $x \in S_M$.
\end{lemma}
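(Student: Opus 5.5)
We start from the fact, used above, that $x$ lies in the Dixmier sheet $S$ associated to $L$. Every element of $S$ lies in $\mathfrak{r}^{reg}$ for some parabolic with Levi factor conjugate to $L$; this is the surjectivity of the generalised Grothendieck-Springer map $\hat{p}:G\times^{P}\mathfrak{r}^{reg}\rightarrow S$ (Section \ref{GSSheetssbn}, Lemma \ref{GSDiagramlem}). The task is to choose this parabolic compatibly with $M = C_G(x_{ss})$.

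First we construct $P_M$. By the assumption $g=\text{id}_G$, we have $L\subseteq M$ and $x_n\in \mathcal O_{\mathfrak{m}} = \text{Ind}_{\mathfrak{l}}^{\mathfrak{m}}(0)$. By the definition of induction in \cite{LS}, for any parabolic $P_M$ of $M$ with Levi factor $L$ and nilradical $\mathfrak{n}_M$, the orbit $\mathcal O_{\mathfrak{m}}$ meets $\mathfrak{n}_M$ in a dense open subset. Conjugating by an element of $M$ (which fixes $x_{ss}$, since $x_{ss}$ is central in $\mathfrak{m}$), we may therefore assume $x_n\in\mathfrak{n}_M$ and $x_n\in\mathcal O_{\mathfrak{m}}$. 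Since $x_{ss}\in\mathfrak{z}(\mathfrak{m})\subseteq\mathfrak{z}$, we get $x = x_{ss}+x_n\in\mathfrak{z}+\mathfrak{n}_M = \mathfrak{r}_M$. The centraliser of $x$ in $\mathfrak{m}$ is the centraliser of $x_n$ in $\mathfrak{m}$, of dimension $\dim\mathfrak{l}$ by the dimension formula for induced orbits. So $x$ lies in the Dixmier sheet $S_M$, and in particular $x\in\mathfrak{r}_M^{reg}$, where regularity means centraliser dimension equal to that of the sheet.

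Next we extend $P_M$ to a parabolic $P$ of $G$. We choose a cocharacter $\lambda$ of $Z(L)^\circ$ defining $P_M$ inside $M$, and a cocharacter $\mu$ of $Z(M)^\circ$ defining some parabolic of $G$ with Levi factor $M$. For $N\gg 0$, the cocharacter $N\mu+\lambda$ defines a parabolic $P$ of $G$ with Levi factor $C_G(N\mu+\lambda)$. This Levi factor equals $L$ for generic choices: its roots are those vanishing on both $\mu$ and $\lambda$, namely the roots of $M$ vanishing on $\lambda$, which are the roots of $L$. Moreover $P\cap M = P_M$. Then $\mathfrak{r} = \mathfrak{z}+\mathfrak{n}$ with $\mathfrak{n}\supseteq\mathfrak{n}_M$, so $x\in\mathfrak{r}$.

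It remains to check that $x\in\mathfrak{r}^{reg}$, i.e. that $\dim \mathfrak{c}_{\mathfrak{g}}(x)$ equals the sheet dimension of $S$. This is the step I expect to be the main obstacle, since $\mathfrak{r}^{reg}$ must be characterised as the intersection $\mathfrak{r}\cap S$ (Section \ref{GSSheetssbn}). Because $x\in S$ by hypothesis, $x\in\mathfrak{r}\cap S=\mathfrak{r}^{reg}$ follows, provided the appendix's description of $\mathfrak{r}^{reg}$ is exactly this intersection. If instead $\mathfrak{r}^{reg}$ is defined via $\mathfrak{c}_{\mathfrak{g}}(x)$ having dimension $\dim\mathfrak{l}$, we use transitivity of induction, $\text{Ind}_{\mathfrak{m}}^{\mathfrak{g}}\text{Ind}_{\mathfrak{l}}^{\mathfrak{m}} = \text{Ind}_{\mathfrak{l}}^{\mathfrak{g}}$, together with $\dim\mathfrak{c}_{\mathfrak{g}}(x) = \dim\mathfrak{c}_{\mathfrak{m}}(x_n) = \dim\mathfrak{l}$. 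Either way, $x\in S_M$ has already been obtained in the first step.
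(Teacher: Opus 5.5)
Your proposal is correct and follows essentially the paper's argument: you use Lusztig--Spaltenstein to place $x_n$ in $\mathcal O_{\mathfrak{m}}\cap\mathfrak{n}_M$, use that $x_{ss}$ is central in $\mathfrak{m}$ to get $x\in\mathfrak{r}_M^{reg}$, and then extend $P_M$ to $G$, where your parabolic $P(N\mu+\lambda)$ is exactly the paper's $P = P_MU'$. Your last paragraph, showing $x\in\mathfrak{r}^{reg}$, is not needed for this lemma; it is nonetheless correct, since $\mathfrak{r}\cap S=\mathfrak{r}^{reg}$ by a dimension count and $\mathfrak{c}_{\mathfrak{g}}(x)=\mathfrak{c}_{\mathfrak{m}}(x_n)$ holds directly without transitivity of induction.
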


\begin{proof}
By the construction of the induced orbit in \cite{LS}, there is a parabolic subgroup $P_M$ of $M$ with Levi factor $L$ such that $x_n$ is a representative for the dense $P_M$-orbit in the nilradical $\mathfrak{n}_M$ of $\mathfrak{p}_M$. Then, $x \in \mathfrak{z} \oplus \mathfrak{n}_M = \mathfrak{r}_M$; moreover, the $P_M$-orbits of $x$ and $x_n$ have the same dimension since $x_{ss}$ is central in $\mathfrak{m}$, so $x \in \mathfrak{r}_M^{reg}$.

To define $P$, let $P'$ be a parabolic subgroup of $G$ with Levi factor $M$, and let $U'$ be its unipotent radical. Then $P = P_MU'$ is a parabolic subgroup of $G$ with the required properties.
\end{proof}

We now assume that $S_M$ is non-singular, and with the choices of $P$ and $P_M$ of Lemma \ref{SheetsLIlem}, we consider the generalised Grothendieck-Springer morphisms $\hat{p}:G \times^P \mathfrak{r} \rightarrow \overline{S}$ and $\hat{p}_M:M \times^{P_M} \mathfrak{r}_M \rightarrow \overline{S}_M$ defined in (\ref{GeneralisedGSeqn}).

\begin{lemma}\label{GSLevilem}
     There are closed embeddings $\overline{S}_M \hookrightarrow\overline{S}$ and $\iota_M:M \times^{P_M} \mathfrak{r}_M \hookrightarrow G \times^P \mathfrak{r}$ making the diagram
    \begin{equation}\label{GSLevi1eqn}
\begin{tikzcd}
 M \times^{P_M} \mathfrak{r}_M \arrow[r, "\iota_M", hook] \arrow[d, "\hat{p}_M"] & G \times^P \mathfrak{r} \arrow[d, "\hat{p}"] \\
\overline{S}_M \arrow[r, hook]                                                                                 & \overline{S},                                  
\end{tikzcd}
    \end{equation}
    commute.
\end{lemma}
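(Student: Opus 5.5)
We construct both embeddings explicitly and then check commutativity on points. The generalised Grothendieck-Springer maps in (\ref{GeneralisedGSeqn}) are the action maps $\hat{p}(P(g,y)) = Ad_g(y)$ and $\hat{p}_M(P_M(m,y)) = Ad_m(y)$. Their images are $\overline{S}$ and $\overline{S}_M$ respectively, being the closures of $Ad(G)(\mathfrak{r})$ and $Ad(M)(\mathfrak{r}_M)$.

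\textbf{The embedding $\overline{S}_M \hookrightarrow \overline{S}$.} Since $P_M = P\cap M$ by Lemma \ref{SheetsLIlem}, and both parabolics have Levi factor $L$, the radicals are $\mathfrak{r}_M = \mathfrak{z}\oplus\mathfrak{n}_M$ and $\mathfrak{r} = \mathfrak{z}\oplus\mathfrak{n}$. Here $\mathfrak{n} = \mathfrak{n}_M \oplus \mathfrak{u}'$, with $\mathfrak{u}'$ the nilradical of $\mathfrak{p}'$, so $\mathfrak{r}_M \subseteq \mathfrak{r}$. Hence $Ad(M)(\mathfrak{r}_M) \subseteq Ad(G)(\mathfrak{r})$, and taking closures gives $\overline{S}_M \subseteq \overline{S}$. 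Since $\overline{S}_M$ is closed in $\mathfrak{m}$, which is closed in $\mathfrak{g}$, this inclusion is a closed embedding.

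\textbf{The map $\iota_M$.} Define $\iota_M$ on points by $P_M(m,y)\mapsto P(m,y)$, that is, as the map induced by $M\times\mathfrak{r}_M \hookrightarrow G\times\mathfrak{r}$. It is well defined because $P_M\subseteq P$ and the $P_M$-action on $M\times \mathfrak{r}_M$ is the restriction of the $P$-action. Commutativity of (\ref{GSLevi1eqn}) is then immediate, since both composites send $P_M(m,y)$ to $Ad_m(y)$.

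\textbf{$\iota_M$ is a closed embedding.} This is the only real step. I would factor $\iota_M$ as a map of bundles over the base morphism $M/P_M\to G/P$, $mP_M\mapsto mP$.
\begin{itemize}
\item The base map is injective: if $m^{-1}m'\in P$ with $m,m' \in M$, then $m^{-1}m' \in P\cap M = P_M$.
\item It is a closed embedding: $M/P_M$ is projective, so the image is closed. The map is an orbit map of $M$ acting on $G/P$, so its image is an $M$-orbit. The stabiliser of $P$ is the scheme-theoretic intersection $P\cap M$, which is smooth (we are in characteristic zero), so the orbit map is an isomorphism onto this closed orbit.
\item On fibres the map is the linear inclusion $Ad_m(\mathfrak{r}_M)\hookrightarrow Ad_m(\mathfrak{r})$.
\end{itemize}
Concretely, $G\times^P\mathfrak{r}$ is a vector subbundle of the trivial bundle $G/P\times\mathfrak{g}$ via $P(g,y)\mapsto(gP,Ad_g y)$, and likewise for $M$. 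Under these identifications $\iota_M$ is the restriction of the closed embedding $M/P_M\times\mathfrak{m}\hookrightarrow G/P\times\mathfrak{g}$ to the closed subvariety $M\times^{P_M}\mathfrak{r}_M$. Therefore $\iota_M$ is a closed embedding.

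The main care needed is in the scheme-theoretic identification of the stabiliser with $P_M$, which rests on $P\cap M = P_M$ from Lemma \ref{SheetsLIlem} together with smoothness in characteristic zero.
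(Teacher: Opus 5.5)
Your proposal is correct and follows essentially the same route as the paper. The inclusion $\overline{S}_M\subseteq\overline{S}$ comes from $\mathfrak{r}_M\subseteq\mathfrak{r}$ together with $\overline{S}_M=M\mathfrak{r}_M$ and $\overline{S}=G\mathfrak{r}$ (the paper cites \cite[Theorem 5.4]{BK} for these identifications, which you use implicitly). As in the paper, $\iota_M$ is the obvious map, and it is a closed embedding because the base $M/P_M$ sits closedly in $G/P$ and the fibres include linearly.

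The only difference is packaging. The paper factors $\iota_M$ as $M\times^{P_M}\mathfrak{r}_M\hookrightarrow M\times^{P_M}\mathfrak{r}\cong P'\times^{P}\mathfrak{r}\hookrightarrow G\times^P\mathfrak{r}$, so closedness of the base is that of the fibre $P'/P$ of $G/P\to G/P'$. You instead check directly, via the orbit map, that $M/P_M\hookrightarrow G/P$ is a closed immersion, and then restrict the embedding of trivial bundles $M/P_M\times\mathfrak{m}\hookrightarrow G/P\times\mathfrak{g}$. Your version spells out a detail the paper leaves implicit.
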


\begin{proof} 
There is an inclusion $\overline{S}_M \subseteq \overline{S}$ since $\overline{S}_M = M\mathfrak{r}_M$ and $\overline{S} = G\mathfrak{r}$ by \cite[Theorem 5.4]{BK}, which defines a closed embedding. 

Let $P'$ be as in the proof of the previous lemma. Since $P' = MP$ and $P_M = M \cap P$, there is a canonical isomorphism $M \times^{P_M} \mathfrak{r} \cong P' \times^P \mathfrak{r}$. Thus the obvious choice for $\iota_M$ decomposes into closed embeddings as
\begin{equation}\label{GSLevi2eqn}
\begin{tikzcd}
M \times^{P_M} \mathfrak{r}_M \arrow[r, hook] & M \times^{P_M} \mathfrak{r} \cong P' \times^P \mathfrak{r} \arrow[r, hook] & G \times^P \mathfrak{r}.
\end{tikzcd}
\end{equation} 
\end{proof}

We denote $W_L^M=N_M(L)/L$. We recall that there is a $W_L$-action on $G \times^P\mathfrak{r}^{reg}$ lifting the action on $\mathfrak{z}$, and similarly a $W_L^M$-action on $M \times^{P_M} \mathfrak{r}_M^{reg}$. The following lemma should be compared with \cite[Proposition 10.6]{DG}.

\begin{lemma}\label{GSLeviCartesianlem}
There are open subsets $\mathfrak{r}_M^* \subseteq \mathfrak{r}_M$ and $S^*_M \subseteq S_M$, with $x \in \mathfrak{r}_M^* \subseteq S_M^*$, such that there is a Cartesian diagram
\begin{equation}\label{GSLeviCartesian1eqn}
\begin{tikzcd}
W_L \times^{W^M_L} (M \times^{P_M} \mathfrak{r}_M^*) \arrow[r, "\hat{\iota}_M", hook] \arrow[d, "\hat{p}_M \circ \pi_2"] & G \times^P \mathfrak{r}^{reg} \arrow[d, "\hat{p}"] \\
S^*_M \arrow[r, hook]                                                                                 & S.                                  
\end{tikzcd}
    \end{equation}
where $\hat{\iota}_M$ is a $W_L$-equivariant extension of the embedding $\iota_M$ (defined in Lemma \ref{GSLevilem}) over $M \times^{P_M} \mathfrak{r}_M^*$.
\end{lemma}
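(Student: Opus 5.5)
We would choose $S_M^*$ as the set of $y \in S_M$ whose semisimple part $y_{ss}$ satisfies $C_G(y_{ss}) \subseteq M$. Equivalently, $\mathfrak{c}_{\mathfrak{g}}(y_{ss}) \subseteq \mathfrak{m}$, meaning that no root of $\mathfrak{g}$ outside $\mathfrak{m}$ vanishes on $y_{ss}$. This is the analogue of the open set in \cite[Proposition 10.6]{DG}. We would then set $\mathfrak{r}_M^* = \mathfrak{r}_M^{reg} \cap S_M^*$. The condition depends only on the image of $y$ in $\mathfrak{z}$ via the $M$-Chevalley map, since $y_{ss}$ is $M$-conjugate into $\mathfrak{z}$ and the roots outside $\mathfrak{m}$ are permuted by $M$. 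It is therefore open. It contains $x$ because $x_{ss}$ is central in $\mathfrak{m} = \mathfrak{c}_{\mathfrak{g}}(x_{ss})$. We also need $S_M^* \subseteq S$, which follows from the centraliser dimension count $\dim \mathfrak{c}_{\mathfrak{g}}(y) = \dim \mathfrak{c}_{\mathfrak{m}}(y)$ for such $y$, together with Lemma \ref{GSLevilem}.

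\textbf{Defining $\hat{\iota}_M$.} On $\{1\} \times (M \times^{P_M} \mathfrak{r}_M^*)$ we take $\hat{\iota}_M = \iota_M$ from Lemma \ref{GSLevilem}. The image lands in $G \times^P \mathfrak{r}^{reg}$ because $\hat{p}(\iota_M(\cdot)) \in S$ and $\hat{p}^{-1}(S)$ is the regular part. We then extend $W_L$-equivariantly using the $W_L$-action on $G \times^P \mathfrak{r}^{reg}$, which lifts the action on $\mathfrak{z}$ via Proposition \ref{StackyGSprp} and Lemma \ref{WLInvariancelem}. For this to be well defined on the contracted product, $\iota_M$ must be $W_L^M$-equivariant. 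Both $W_L^M$-actions are characterised as the unique lifts of the actions on $\mathfrak{z}$ compatible with the maps to $\mathcal B_M$ and $\mathcal B$ respectively. So it suffices to check agreement on the dense open locus over $\mathfrak{z}^{rs}$, where the explicit description (\ref{CameralHom3eqn}) makes it clear, and then conclude by separatedness.

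\textbf{Cartesianness.} Both vertical maps are finite; $\hat{p}$ is finite by Lemma \ref{GSDiagramlem}. By Proposition \ref{StackyGSprp}, $\hat{p}$ is the base change of $p:\mathfrak{z} \to \mathcal B$, so its fibres have length $|W_L|/|W_S|$ generically and the map is flat. The analogous statement holds for $\hat{p}_M$ with $W_L^M$, so the left map has fibre degree $|W_L/W_L^M| \cdot \deg \hat{p}_M$. We would form the induced map
$$W_L \times^{W_L^M}(M \times^{P_M}\mathfrak{r}_M^*) \to S_M^* \times_S (G \times^P \mathfrak{r}^{reg})$$
and show it is an isomorphism as follows.

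First, the map is a closed embedding. Injectivity on points comes from the condition $C_G(y_{ss}) \subseteq M$: a point $P(g,z)$ over $y \in S_M^*$ has $gPg^{-1} \ni$ the semisimple part. After $M$-conjugation, $g$ can be chosen in $N_G(L)M$, which gives exactly the $W_L/W_L^M$ cosets. Injectivity on tangent spaces would be checked via smoothness of both sides over $\mathfrak{z}$.

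Second, the map is surjective. Both sides are finite flat over $S_M^*$ of the same degree, so a closed embedding between them must be an isomorphism. The degrees agree since $\deg \hat{p} = |W_L/W_S|$ equals $|W_L/W_L^M|$ times the $M$-version near such $y$, which we would verify over the semisimple locus and propagate by flatness.

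\textbf{Main obstacle.} The hard part is the pointwise identification of $\hat{p}^{-1}(y)$ for non-semisimple $y$: every parabolic conjugate $gPg^{-1}$ with $y \in \mathrm{Ad}_g(\mathfrak{r}^{reg})$ must arise from $M$ up to $W_L$. We would try to handle this by projecting $\mathrm{Ad}_{g^{-1}}(y)$ to $\mathfrak{z}$, showing that $gPg^{-1} \supseteq C_G(y_{ss})^\circ$-compatible data, and using the Bruhat decomposition relative to $M$.
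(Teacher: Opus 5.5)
The parts of your plan before the embedding step match the paper: the choice of $S_M^*$ and $\mathfrak{r}_M^*$, the check that $S_M^*\subseteq S$ and that $\iota_M$ lands in $G\times^P\mathfrak{r}^{reg}$, and the $W_L^M$-equivariance of $\iota_M$ verified over $\mathfrak{z}^{rs}$. Your closing step also matches: a closed embedding between finite flat $S_M^*$-schemes of the same degree is an isomorphism.

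\textbf{The gap is in the closed-embedding step.} Unramifiedness is not the issue. The piece indexed by a coset $nW_L^M$ maps by $\iota_M$ followed by the automorphism $n$ of $G\times^P\mathfrak{r}^{reg}$, and $\iota_M$ is already an embedding by Lemma \ref{GSLevilem}.

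What you actually need is that pieces indexed by distinct cosets have disjoint images, and your argument does not show this. The sentence ``after $M$-conjugation, $g$ can be chosen in $N_G(L)M$'' says that every point of $\hat p^{-1}(y)$ comes from some coset. That is a surjectivity statement. The condition $C_G(y_{ss})\subseteq M$ does not by itself separate the cosets.

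The paper supplies the missing ingredient. Since the image of $\iota_M$ lies in $P'\times^P\mathfrak{r}$, disjointness reduces to showing that $n\in N_G(L)\cap P'$ forces $n\in M$. This goes as follows:
\begin{itemize}
\item $N_G(L)\cap P'\le LN_G(T)\cap P'$;
\item if $ln\in P'$ with $l\in L\le P'$ and $n\in N_G(T)$, then $n\in P'$;
\item $N_G(T)\cap P'=N_M(T)$ by the Bruhat decomposition.
\end{itemize}
Together these give $N_G(L)\cap P'\le LN_M(T)\le M$.

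Conversely, the ``main obstacle'' you isolate is not needed. That is the claim that every $gPg^{-1}$ with $y\in\mathrm{Ad}_g(\mathfrak{r}^{reg})$ arises from $M$ up to $W_L$. Once the map is a closed embedding, surjectivity comes for free from the degree count.

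\textbf{Your degree bookkeeping is also wrong.} The map $\hat p$ has degree $|W_L|$, not $|W_L|/|W_S|=|F|$. Its fibre over $\mathfrak{z}^{rs}$ is a copy of $W_L$ by the Cartesian square (\ref{CameralHom2eqn}). Equivalently, $p$ is the degree-$|W_S|$ quotient $\mathfrak{z}\to\mathfrak{z}/W_S=\tilde{\mathfrak{c}}_S$ followed by the $F$-torsor $\tilde{\mathfrak{c}}_S\to[\tilde{\mathfrak{c}}_S/F]$.

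With your formula, the required equality of degrees would amount to $|W_S|=|W_{S_M}|$, where $W_{S_M}$ is the analogue of $W_S$ for $S_M\subseteq\mathfrak{m}$. This already fails for the regular sheet ($L=T$, $M\neq G$), where $W_S=W$ but $W_{S_M}=W_M$. With the correct formula both vertical maps have degree $|W_L|=|W_L/W_L^M|\cdot|W_L^M|$, which is what the paper uses. Your proposed check on the semisimple locus would produce this count, but the formulas you wrote down would not.
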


\begin{proof} 
Let $\mathfrak{z}^* \subseteq \mathfrak{z}$ be the locus defined by
\begin{equation}\label{GSLeviCartesian2eqn}
    \mathfrak{z}^* = \{ x \in \mathfrak{z} \, | \,C_G(x) \leq M \}.
\end{equation}
This is an open set (e.g. since it can be defined by root non-vanishing conditions), and it is stable under the action of $W_L^M$. Hence, if we define $S_M^* = \chi_{S_M}^{-1}(\mathfrak{z}^*/W_L^M)$, where $\chi_{S_M}$ is the geometric quotient map of Theorem \ref{BCGeometricQthm}, then $S_M^* \subseteq S_M$ and $\mathfrak{r}_M^* = S_M^* \cap \mathfrak{r}_M \subseteq \mathfrak{r}_M$ are open inclusions, and $x \in \mathfrak{r}_M^*$ by assumption. Moreover, it is clear that $\hat{p}_M$ restricts to $\hat{p}_M:M \times^{P_M} \mathfrak{r}^* \rightarrow S_M^*$ over $S_M^*$.

For any $y \in S_M^*$, its semisimple part $y_{ss}$ is conjugate under $M$ to an element of $\mathfrak{z}^*$ (by the construction of $\chi_{S_M}$ in \cite[Satz 5.6]{Borho2}). So $C_G(y) \leq C_G(y_{ss}) \leq M$, i.e. $C_G(y) = C_M(y)$, and $y \in S$ (e.g. since $S$ is the locus in $\overline{S}$ of fixed $G$-centraliser dimension equal to $\text{dim}(L) = \text{dim}(C_M(y))$). Hence also, $\iota_M$ maps $M \times^{P_M} \mathfrak{r}_M^*$ into $G \times^P \mathfrak{r}^{reg}$.

Over the dense open subset $S^{ss}_M \subseteq S^*_M$ of semisimple elements, the restriction of $\iota_M$ is $W_L^M$-equivariant; since every $P$-orbit in $\mathfrak{r}^{rs} = S^{ss} \cap \mathfrak{r}$, and similarly every $P_M$-orbit in $\mathfrak{r}^{rs}_M = S^{ss}_M \cap \mathfrak{r}$, is the orbit of a unique point in $\mathfrak{z}^{rs}$ (defined in (\ref{WeylLeviActioneqn})), and the $W_L^M$-actions agree on the $\mathbb{C}$-points of $M \times^{P_M} \mathfrak{r}_M^{reg}$ and $G \times^P\mathfrak{r}^{reg}$ represented by these orbits. Hence by continuity $\iota_M$ is $W_L^M$-equivariant on $M \times^{P_M} \mathfrak{r}_M^*$, so defines the $W_L$-equivariant extension $\hat{\iota}_M$ making the diagram (\ref{GSLeviCartesian1eqn}) commute. The left hand arrow in (\ref{GSLeviCartesian1eqn}) makes sense since $\hat{p}_M$ is $W_L^M$-invariant.

We show that $\hat{\iota}_M$ is a locally closed embedding, by showing that the images of each of the components $\{n\} \times^{W_L^M} (M \times^{P_M} \mathfrak{r}_M^*)$ are disjoint (where each $n$ is a representative of a coset $nW_L^M \in W_L/W_L^M$). By construction of $\iota_M$, it suffices to show that if $n \in N_G(L) \cap P'$, then $n \in M$. We have $N_G(L) \cap P' \leq LN_G(T) \cap P'$, and if $ln \in P'$ for some $l \in L$ and $n \in N_G(T)$, then $n \in P'$. But $ N_G(T) \cap P' = N_M(T)$ (e.g. using the Bruhat decomposition with respect to a suitable set of simple roots), so $LN_G(T) \cap P' \leq LN_M(T) \leq M$.

Then the diagram (\ref{GSLeviCartesian1eqn}) is Cartesian, since both the maps $\hat{p}_M \circ \pi_2$ and $\hat{p}$ are finite and flat, and have the same degree.
\end{proof}

As a result, we have the following compatibility of cameral homomorphisms.

\begin{lemma}\label{CameralLIlem}
    Over $S_M^* \subseteq S_M,$ there are isomorphisms $$\mathcal I_{S_M^*}^{sm} \cong (\mathcal I_{S}^{sm})|_{S_M^*} \text{ and } (\rho_{S_M}^*\hat{\mathcal J}_{S_M})|_{S_M^*} \cong (\rho_S^*\hat{\mathcal J}_S)|_{S_M^*},$$ identifying the cameral homomorphisms $\kappa_S$ and $\kappa_{S_M}$. In particular, at $x$, there is an identification between the cameral homomorphisms $\kappa_{S_M,x}$ and $\kappa_{S,x}$.
\end{lemma}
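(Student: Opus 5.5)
The approach is to establish both isomorphisms over $S_M^*$, check that the cameral homomorphisms agree over the dense semisimple locus, and conclude by a closedness/density argument. Throughout, the input is the Cartesian diagram (\ref{GSLeviCartesian1eqn}) of Lemma \ref{GSLeviCartesianlem}.

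\emph{Centralisers.} For $y \in S_M^*$ the proof of Lemma \ref{GSLeviCartesianlem} gives $C_G(y)=C_M(y)$. Scheme-theoretically, $\mathcal I_{S}|_{S_M^*}$ and $\mathcal I_{S_M^*}$ are both closed subgroup schemes of $G\times S_M^*$, and $\mathcal I_{S_M^*}$ is contained in $\mathcal I_S|_{S_M^*}$. Equality should follow from the fact that $C_G(y_{ss})\le M$ is an open condition in $y$ (it is defined by root non-vanishing on the semisimple part). Concretely, $\mathcal I_S|_{S_M^*}\subseteq C_G(y_{ss})\times S_M^*$, and over $\mathfrak{z}^*$ this group is $M$.

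Now restrict to the smooth parts. $\mathcal I^{sm}_{S_M^*}$ is smooth over $S_M^*$, so it is a smooth subgroup scheme of $\mathcal I_S|_{S_M^*}$. By the base-change form of the maximality in Proposition \ref{SmCentraliserPropsprp}, it is therefore contained in $\mathcal I^{sm}_S|_{S_M^*}$, and symmetrically $\mathcal I^{sm}_S|_{S_M^*}$ lies in $\mathcal I^{sm}_{S_M^*}$. Both groups equal the full centraliser over the dense open locus where the relevant Katsylo groups act freely, and they are closed. I expect this to be where care is needed: maximality was stated over $S$ rather than over a locally closed subvariety. I would handle it by rerunning the density argument from that proposition, since both groups are smooth and hence flat, so each is the closure of its restriction to a dense open.

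\emph{Pseudo-cameral groups.} By Proposition \ref{StackyGSprp}, $\rho_S^*\Pi_S=\hat p_*(\bar Z\times G\times^P\mathfrak r^{reg})$, so $\rho_S^*\hat{\mathcal J}_S$ consists of $W_L$-equivariant maps from $\hat p^{-1}(U)$ to $\bar Z$. Base change along $S_M^*\hookrightarrow S$ commutes with Weil restriction, and the Cartesian diagram (\ref{GSLeviCartesian1eqn}) identifies $\hat p^{-1}(S_M^*)$ with $W_L\times^{W_L^M}(M\times^{P_M}\mathfrak r_M^*)$. A $W_L$-equivariant map from an induced space $W_L\times^{W_L^M}Y$ is the same as a $W_L^M$-equivariant map from $Y$. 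Since the abelianisation $\bar Z$ of $L$ is the same whether $L$ is viewed in $G$ or in $M$, this yields $(\rho_S^*\hat{\mathcal J}_S)|_{S_M^*}\cong(\rho_{S_M}^*\hat{\mathcal J}_{S_M})|_{S_M^*}$.

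\emph{Compatibility of homomorphisms.} Both sides are closed subgroup schemes over a reduced base, and the sources are smooth. It therefore suffices to compare $\kappa_S$ and $\kappa_{S_M}$ on the dense open semisimple locus, or even on $\mathfrak z^{rs}_M\cap\mathfrak z^*$ after using $M$-equivariance. There, by (\ref{CameralHom4eqn})--(\ref{CameralHom5eqn}), each homomorphism is the map $L\to\bar Z$ twisted by $w$, on the components indexed by $W_L$ and $W_L^M$ respectively. The identification of induced spaces matches these component by component: the component for the coset $nW_L^M$ is transported by $n$.

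Alternatively, this can be checked directly via Lemma \ref{GSLevilem}. With $P_M=P\cap M$, the embedding $\iota_M$ satisfies $I_g(P_M)\subseteq I_g(P)$ for $g\in M$, and the projections $P_M\to\bar Z$ and $P\to\bar Z$ agree on $P_M$, because both kill the unipotent radical and $L^{der}$. Hence the parabolic description of $\hat\kappa$ in (\ref{CameralHom1eqn}) restricts correctly along $\iota_M$, and by $W_L$-equivariance the identification extends to the other components. Specialising at $x\in\mathfrak r_M^*\subseteq S_M^*$ gives the final claim.
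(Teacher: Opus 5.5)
Your treatment of the second isomorphism matches the paper: you restrict $W_L$-equivariant maps along the Cartesian diagram (\ref{GSLeviCartesian1eqn}) to the component $M\times^{P_M}\mathfrak r_M^*$. Your check of compatibility also matches what the paper calls ``straightforward'': the abelianisations $P\to\bar Z$ and $P_M\to\bar Z$ agree on $P_M=P\cap M$.

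The genuine difference is the first isomorphism. The paper argues fibrewise. It writes $y=Ad_m(z)$ with $z\in\mathfrak r_M^*$ and uses Proposition \ref{ParabolicCentraliserprp} together with $C_G(z)\le M$ to get $(\mathcal I^{sm}_{S_M})_y=I_m(C_{P_M}(z))=I_m(C_P(z)\cap M)=I_m(C_P(z))=(\mathcal I^{sm}_S)_y$. You instead compare two closed subgroup schemes of $G\times S_M^*$, both flat over the reduced base $S_M^*$, which agree over a dense open subset and so coincide.

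Your route avoids the parabolic description of the smooth centraliser, and hence the fibre-counting behind Proposition \ref{ParabolicCentraliserprp}. It would apply to any smooth subfamily meeting the free loci densely. The paper's route is shorter given that proposition and gives an explicit description at every point, including $x$.

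Two points need tightening.
\begin{itemize}
\item The ``base-change form of maximality'' is not valid as a general principle. Over a single point $y$ with $\mathcal F_y\neq 1$, the smooth group $C_G(y)$ is not contained in $\mathcal I^{sm}_{S,y}$. So you should rely only on the density argument.
\item That density argument needs $S_M^*\cap S^\circ\cap S_M^\circ$ to be dense in $S_M^*$, which you assert without proof. It holds because a semisimple $y\in S_M^*$ has $C_G(y)=C_M(y)$ conjugate to the connected group $L$. Since $\mathcal I^{sm}_{S,y}$ and $\mathcal I^{sm}_{S_M,y}$ are kernels of maps to finite groups (Proposition \ref{QuasiSteinHomprp}), they contain the identity component and so equal the full centraliser there. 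Over this locus the two full centraliser schemes are smooth with equal fibres, hence equal. This replaces your heuristic first paragraph about $C_G(y_{ss})\times S_M^*$.
\end{itemize}
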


\begin{proof}
As noted in the proof of the previous lemma, for every $y \in S_M^*$, $C_G(y) \leq M$; and so if $y = Ad_m(z)$ for $z \in \mathfrak{r}_M^*$, by Proposition \ref{ParabolicCentraliserprp} $$(\mathcal I_{S_M}^{sm})_y = I_m(C_{P_M}(z)) = I_m(C_P(z) \cap M) = I_m(C_P(z)) =(\mathcal I_S^{sm})_y$$
as subgroups of $M$. Thus $\mathcal I_{S_M^*}^{sm}$ and $ (\mathcal I_{S}^{sm})|_{S_M^*}$ coincide as subschemes of the constant group $M$.

By Lemma \ref{GSLeviCartesianlem}, there is a natural identification between $W_L$-invariant maps from $S_M^* \times_S(G \times^P \mathfrak{r}^{reg})$ to $\bar{Z}$ and $W_L^M$-invariant maps from $M \times^{P_M} \mathfrak{r}_M^*$ to $\bar{Z}$; more specifically, the identification is given by restriction to the subscheme $M \times^{P_M} \mathfrak{r}_M^*$. This induces the second isomorphism.

It is straightforward to see that $\hat{\kappa}_S|_{M \times^{P_M} \mathfrak{r}_M^*}$ and $\hat{\kappa}_{S_M}$, as defined in (\ref{CameralHomLifteqn}), agree over $S_M^*$. Hence $\kappa_S$ and $\kappa_{S_M}$ agree on $S_M^*$.
\end{proof}

\begin{proof}[Proof of Proposition \ref{CameralSmoothprp}]
Since both $\mathcal I_S^{sm}$ and $\rho_S^* \hat{\mathcal J}_S$ are smooth over $S$, they are non-singular as varieties. So to check that $\kappa_S$ is smooth it suffices to check that its differential $d\kappa_S$ is a surjective map on tangent bundles. Since $\kappa_S$ is a homomorphism of group schemes, this is equivalent to the induced map
\begin{equation}\label{DiffCameralHomeqn}
    \kappa_{S,Lie}:Lie(\mathcal I_S^{sm}) \rightarrow Lie(\rho_S^* \hat{\mathcal J}_S)
\end{equation}
being a surjective map on vector bundles, where these vector bundles are the relative Lie algebras for the group schemes. If the locus where $\kappa_{S,Lie}$ fails to be surjective were non-empty, it would be a divisor of $S$, so it suffices to check this on an open set whose complement has codimension 2.

For any Levi subgroup $M \leq G$, we let $\mathcal O_{\mathfrak{m}}$ be defined by (\ref{InducedOrbiteqn}) and denote the decomposition class associated to the decomposition data $(M,\mathcal O_{\mathfrak{m}})$ by $\mathcal D(M,\mathcal O_{\mathfrak{m}})$. Consider
\begin{equation}\label{CameralSmooth1eqn}
    U = \bigcup_{M \geq L} \mathcal D(M,\mathcal O_{\mathfrak{m}}), 
\end{equation}
where $M$ runs over the Levi subgroups of $G$ which contain $L$ minimally (as in Definition \ref{ClassRamTypedef}). By \cite[Korollar 3.6]{Borho2}, $U$ is an open subset of $S$ whose complement has codimension 2. So it suffices to check for all $x \in U$ that $(\kappa_{S,Lie})_x$ is surjective; in fact, since $\kappa_S$ is $G$-equivariant, it suffices to check surjectivity of $(\kappa_{S, Lie})_x$ for any representative of the orbit $Ad(G)(x)$. 

If $x$ is semisimple, then the statement is clear by Proposition \ref{CameralHomprp}, so we may assume that $x \in \mathcal D(M, \mathcal O_{\mathfrak{m}})$, where $M$ contains $L$ as a maximal proper Levi subgroup.  As in the discussion above, by replacing $x$ with a different representative of its $G$-orbit, we may assume that $x \in S_M$ (the Dixmier sheet in $\mathfrak{m}$ associated with $L$). By the assumption on $S$, $M$ is a classical group, and in particular the sheet $S_M$ is non-singular by Theorem \ref{ClassicalSheetsthm}; so $\kappa_{S,x} = \kappa_{S_M,x}$ as in Lemma \ref{CameralLIlem}. Thus $(\kappa_{S,Lie})_x = (\kappa_{S_M,Lie})_x$, and so surjectivity of $(\kappa_{S,Lie})_x$ is implied by the smoothness of $\kappa_{S_M}$.  Thus we have reduced to checking the statement in the case that $S$ is associated with a maximal proper Levi subgroup $L$ in a classical group. This is done in Proposition \ref{MLCameralSmoothprp}; we sketch the argument below.

If $L$ is a maximal Levi subgroup in $G$, the centre $\mathfrak{z}$ of $\mathfrak{l}$ is 1-dimensional; to prove that $\kappa_S$ is smooth, it suffices to do so at a nilpotent element $e \in S$. There are two possibilities for the ramification of the map $p:\mathfrak{z} \rightarrow \mathcal B$ at $0$; it is either unramified or it has ramification of order $2$. In the unramified case, to prove that $\kappa_S$ is smooth it suffices to find an element of $\mathfrak{l}$ which centralises $e$ and does not map to $0$ under the abelianisation map; this is possible by Lemma \ref{KTAblem}. 

In the ramified case, we can complete $e$ to an $\mathfrak{sl}_2$-triple such that the inclusion of the corresponding copy of $\mathfrak{sl}_2$ into $\mathfrak{g}$ induces an inclusion on the respective centraliser subalgebras of $e$ and an isomorphism on the Lie algebras of the pseudo-cameral groups, compatible with the cameral homomorphisms. The smoothness of the cameral homomorphism for $S$ then follows from the smoothness of the cameral homomorphism in the regular case \cite[Proposition 12.5]{DG}.
\end{proof}

Since the cameral homomorphism is smooth, its image $\mathcal {I}m(\kappa_S)$ defines an open subgroup scheme of $\rho_S^*\hat{\mathcal J}_S$.

\begin{proposition}\label{CameralDescentprp}
    The group scheme $\mathcal{I}m(\kappa_S)$ descends under $\rho_S$ to a smooth open subgroup stack $\mathcal J_S$ of $\hat{\mathcal J}_S$, representable by schemes over $\mathcal B$.
\end{proposition}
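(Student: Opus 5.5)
We descend $\mathcal{I}m(\kappa_S)$ along $\rho_S:S \rightarrow \mathcal B$ by fppf (in fact smooth) descent, exactly as $G\times\mathcal I^{sm}_{\mathfrak{K}}$ was descended in Proposition \ref{SmCentraliserDescentprp}. The map $\rho_S$ is smooth and surjective (Remark \ref{StackyChevalleyMaprmk}). The target group $\rho_S^*\hat{\mathcal J}_S$ is already a pullback from $\mathcal B$, so it carries canonical descent data. It therefore suffices to show that the open subscheme $\mathcal{I}m(\kappa_S)\subseteq \rho_S^*\hat{\mathcal J}_S$ is stable under this descent isomorphism. The cocycle condition is then inherited from $\rho_S^*\hat{\mathcal J}_S$, as in the earlier proof.

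To check stability, I would factor $\rho_S$ as $S\rightarrow [S/G]\rightarrow \mathcal B$ and treat the two steps separately.

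\textbf{First step: $S\rightarrow[S/G]$.} Here descent data amount to $G$-equivariance. By Proposition \ref{CameralHomprp}, $\kappa_S$ is $G$-equivariant, so its image is a $G$-stable open subgroup scheme. It therefore descends to an open subgroup stack $\mathcal J_{S,G}$ of $\rho_{S,G}^*\hat{\mathcal J}_S$ on $[S/G]$.

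\textbf{Second step: $[S/G]\rightarrow\mathcal B$.} This is the gerbe of Proposition \ref{StackyChevalleyBaseprp}. Over a test scheme $X\rightarrow[S/G]$, the fibre product $X\times_{\mathcal B}[S/G]$ is ${\bf B}f^*\mathcal I^{sm}_{S,G}$. The relevant descent isomorphism on $\rho_{S,G}^*\hat{\mathcal J}_S$ is induced by the action of $\mathcal I^{sm}$ on this pullback. That action is trivial, because $\hat{\mathcal J}_S$ is pulled back from $\mathcal B$.

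Two facts then finish the argument. First, $\hat{\mathcal J}_S$ is commutative, being a subgroup of the Weil restriction of the torus $\bar Z$. Second, $\kappa_S$ is a homomorphism, so it intertwines conjugation by $\mathcal I^{sm}$ on the source with the trivial action on the target. Consequently $\mathcal{I}m(\kappa_S)$ is preserved, and it descends to an open substack $\mathcal J_S\subseteq\hat{\mathcal J}_S$.

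An alternative for this step is to work on the étale atlas $\mathfrak{K}\rightarrow\mathcal B$ of Corollary \ref{GerbeTrivncor}, with $\mathcal B=[\mathfrak{K}/F]$. It would then suffice to check that $\mathcal{I}m(\kappa_S)|_{\mathfrak{K}}$ is stable under the $F$-action, which lifts from the $A$-action. This holds because $A\subseteq G$ and $\kappa_S$ is $G$-equivariant.

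The remaining properties follow formally:
\begin{itemize}
\item \emph{Openness:} it is checked after the faithfully flat base change $\rho_S$, where it holds because $\kappa_S$ is smooth (Proposition \ref{CameralSmoothprp}), hence flat and open.
\item \emph{Closure under the group operations:} being a subgroup also descends.
\item \emph{Representability:} $\mathcal J_S$ is open in $\hat{\mathcal J}_S$, which is representable by schemes over $\mathcal B$ (Lemma \ref{PseudoCameralgplem}).
\item \emph{Smoothness:} $\mathcal J_S$ is open in the smooth $\hat{\mathcal J}_S$.
\end{itemize}

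The main obstacle is the gerbe step: making precise that the descent datum along the non-representable, stacky map $[S/G]\rightarrow\mathcal B$ reduces to invariance under the inertia $\mathcal I^{sm}$. I would handle this through the explicit atlas $\mathfrak{K}\rightarrow[\mathfrak{K}/F]$, because there the datum is concretely an $F$-action, covered by the $A$-action, and stability follows from $G$-equivariance together with the commutativity of $\hat{\mathcal J}_S$.
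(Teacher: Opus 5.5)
Your proposal is correct and follows essentially the paper's argument. The paper descends $\mathcal{I}m(\kappa_S)$ in one step along the smooth cover $\rho_S$: it checks that the two pullbacks of the image agree over $S\times_{\mathcal B}S$, which follows from $G$-equivariance of $\kappa_S$. It then reads off openness, smoothness and representability from descent and from openness in $\hat{\mathcal J}_S$, exactly as you do.

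Your separate treatment of the gerbe step is harmless but not needed. Because $G\times S\rightarrow S\times_{\mathcal B}S$ is surjective, $G$-stability alone suffices. Moreover, the inertia already acts trivially on the pulled-back $\hat{\mathcal J}_S$, so the commutativity argument is redundant.
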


\begin{proof}
As in Proposition \ref{SmCentraliserDescentprp}, to show that $\mathcal I m(\kappa_S)$ descends under $\rho_S$, it suffices to show that $\mathcal I m(\pi_1^*\kappa_S)$ and $\mathcal I m(\pi_2^*\kappa_S)$ coincide in $(\rho_S \circ \pi_1)^*\hat{\mathcal J}_S =(\rho_S \circ \pi_2)^*\hat{\mathcal J}_S$ over $S \times_{\mathcal B} S$ (where $\pi_1$ and $\pi_2$ are the projection maps to $S$). This follows from the $G$-equivariance of $\kappa_S$.

The resulting subgroup stack $\mathcal J_S$ of $\hat{\mathcal J}_S$ is open and smooth over $\mathcal B$ by descent, and is representable by schemes over $\mathcal B$ since it is an open substack of $\hat{\mathcal J}_S$.
\end{proof}

\begin{remark}\label{SmallCameralgprmk}
    We call $\mathcal J_S$ the \emph{cameral group}. Since it is open in the pseudo-cameral group $\hat{\mathcal J}_S$, $\mathcal J_S$ has finite index in $\hat{\mathcal J}_S$, i.e. for any map $X \rightarrow \mathcal B$ where $X$ is a connected scheme, the group $(\mathcal J_S)_{\mathcal B}(X)$ has finite index in $(\hat{\mathcal J}_S)_{\mathcal B}(X)$.
    
    It seems likely that there should be an analogue of \cite[Proposition 2.4.7]{Ngo2} describing $\mathcal J_S$ inside $\hat{\mathcal J}_S$ in terms of vanishing conditions at ramification points of $p$ determined by the root system for the reflection group $W_S$ constructed in Corollary \ref{KatsyloGpWeylGpcor}. However, neither the proof of \cite[Proposition 2.4.7]{Ngo2} nor the original proof in \cite[Proposition 12.6]{DG} can be immediately adapted to this more general setting.
\end{remark}

The remark shows that if $\hat{\mathcal J}_S$ has connected fibres, then $\mathcal J_S = \hat{\mathcal J_S}$. In particular we have the following special case.

\begin{proposition}\label{GLnCameralgp}
    If $G=GL_n$, the cameral group $\mathcal J_S$ is equal to the pseudo-cameral group $\hat{\mathcal J}_S$.
\end{proposition}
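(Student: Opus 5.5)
Following the remark just made, it suffices to show that for $G = GL_n$ the pseudo-cameral group $\hat{\mathcal J}_S$ has connected fibres over $\mathcal B$. Since $\mathcal J_S$ is an open subgroup stack of $\hat{\mathcal J}_S$ (Proposition \ref{CameralDescentprp}), it contains the fibrewise identity components. An open subgroup of a group scheme with connected fibres over a reduced base is the whole group, so connectedness of fibres gives $\mathcal J_S = \hat{\mathcal J}_S$.

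First I would reduce to a concrete description. For $GL_n$ all centralisers are connected, so the component group $\Gamma$ is trivial, hence $F$ is trivial. Then $\mathcal B = \mathfrak{c}_S$ by Corollary \ref{StackyCBSmDMcor}, and $p:\mathfrak{z}\to\mathfrak{c}_S$ is the quotient by $W_L=\prod_i Sym_{l_i}$.

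Next I would write $L$ explicitly. We have $L \cong \prod_j GL_{m_j}$, so $\bar Z \cong \mathbb{G}_m^{r}$ via the determinants, and $W_L$ permutes the blocks of equal size. Grouping the blocks by size $i$, with $l_i$ blocks of each size, gives $\bar Z = \prod_i \mathbb{G}_m^{l_i}$ compatibly with $\mathfrak{z} = \bigoplus_i \mathfrak{z}_i$. The factor $W_i = Sym_{l_i}$ permutes coordinates on both $\mathfrak{z}_i \cong \mathbb{C}^{l_i}$ and $\mathbb{G}_m^{l_i}$, and acts trivially on the other factors.

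Consequently, for a point $a=(a_i)\in\mathfrak{c}_S$, the fibre $\hat{\mathcal J}_{S,a}$ is the group of $W_L$-equivariant maps $\prod_i p_i^{-1}(a_i)\to\prod_i\mathbb{G}_m^{l_i}$, where each $p_i^{-1}(a_i)$ is a finite scheme with a $W_i$-action. Equivariance forces the $i$-th component to depend only on the $i$-th factor: it must be $\prod_{k\neq i}W_k$-invariant, and these groups act transitively on the other fibres. So the fibre splits as $\prod_i Maps^{Sym_{l_i}}(p_i^{-1}(a_i),\mathbb{G}_m^{l_i})$.

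Each factor is the fibre of the usual regular cameral group $\hat{\mathcal J}$ for $GL_{l_i}$ over the Chevalley base $\mathfrak{t}_{l_i}/Sym_{l_i}$. Concretely, it is $(\mathbb{C}[x]/f(x))^\times$ with $f$ the degree-$l_i$ polynomial encoding $a_i$: a $Sym_{l_i}$-equivariant map to $\mathbb{G}_m^{l_i}$ is determined by its first coordinate, which is a $Sym_{l_i-1}$-invariant function on $p_i^{-1}(a_i)$, i.e. a unit in $\mathbb{C}[x]/f$. The unit group of a finite-dimensional commutative algebra is an open subset of an affine space, so it is connected. The main thing to check carefully is this last identification, namely that $Sym_{l_i-1}$-invariants of the scheme-theoretic fibre are exactly $\mathbb{C}[x]/f$. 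This is the standard identification $\mathbb{C}[t_1,\dots,t_l]^{Sym_{l-1}} = \mathbb{C}[\mathfrak{c}][t_1]$, combined with flatness of $p_i$ so that taking invariants commutes with base change in characteristic zero.
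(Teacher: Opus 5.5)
Your argument is correct, but it reaches connectedness of the fibres of $\hat{\mathcal J}_S$ by explicit computation rather than the paper's route. The paper never computes a fibre. It identifies $\bar Z$, $W_L$-equivariantly, with $Z(L)\cong\prod_j\mathbb{G}_m$, which is an open subscheme of $\mathfrak{z}$ (the invertible block scalars). Weil restriction preserves open immersions \cite[Section 7.6, Proposition 2]{BLR}, and so does passing to $W_L$-fixed points. Hence $\hat{\mathcal J}_S\times_{\mathcal B}X$ is an open subscheme of its Lie algebra, a linear space containing the identity section, and connectedness is immediate. Your route goes through trivial $F$, the splitting by block sizes, reduction to the regular $GL_{l_i}$ case, and $\mathbb{C}[t_1,\dots,t_l]^{Sym_{l-1}}=\mathbb{C}[\mathfrak{c}][t_1]$. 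It ends at the same underlying fact: $(\mathbb{C}[x]/f)^\times$ is exactly the open subset of the Lie algebra fibre $\mathbb{C}[x]/f$. What it buys is an explicit description of the fibres as products of regular $GL_{l_i}$-centralisers, matching the spectral picture of Proposition \ref{GLnAbHitchinprp}. The price is the invariant-theoretic base change (flatness plus the Reynolds operator), which the paper's argument sidesteps entirely.

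One step needs tightening. You justify that the $i$-th component factors through $p_i^{-1}(a_i)$ because $\prod_{k\neq i}W_k$ acts transitively on the other fibres. That is only a statement about points. At non-generic $a$ the scheme-theoretic fibres $p_k^{-1}(a_k)$ are non-reduced (for instance over $a_k=0$), and transitivity on points does not force invariant functions to be constant. What you actually need is $\mathcal O(p_k^{-1}(a_k))^{W_k}=\mathbb{C}$. This follows from the same flatness/Reynolds argument you invoke at the end, namely that invariants commute with the base change $\mathbb{C}[\mathfrak{z}_k]^{W_k}\to\mathbb{C}$. You should also run it with coefficients in an arbitrary $\mathbb{C}$-algebra $R$, so that the product decomposition and the identification with $\prod_i(\mathbb{C}[x]/f_i)^\times$ hold as group schemes and not merely on $\mathbb{C}$-points. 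That scheme-level identification is what your connectedness conclusion requires.
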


\begin{proof}
In this case, we can identify $\bar{Z}$ with the centre of $L$; then the open embedding $GL_n \hookrightarrow \mathfrak{gl}_n$ restricts to an open embedding $\bar{Z} \hookrightarrow \mathfrak{z}$, which is moreover $W_L$-equivariant. By \cite[Section 7.6, Proposition 2]{BLR}, for any scheme $X$ and map $X \rightarrow \mathcal B$ this induces an open embedding $\hat{\mathcal J}_S\times _{\mathcal B} X \hookrightarrow Lie(\hat{\mathcal J}_S\times_{\mathcal B}X)$. If $X$ is connected, then this implies that $\hat{\mathcal J}_S\times_{\mathcal B}X$ is connected, so $\mathcal J_S \times_{\mathcal B} X = \hat{\mathcal J}_S \times_{\mathcal B} X$. From this, it can be deduced that $\mathcal J_S = \hat{\mathcal J}_S$.
\end{proof}

The following gives an interpretation of the cameral group as an abelianisation of the smooth centraliser.

\begin{proposition}\label{SmAbelianisationprp}
    Let $\mathcal A$ be a separated commutative group scheme over $S$, and let $\phi:\mathcal I_S^{sm} \rightarrow \mathcal A$ be a homomorphism of group schemes. There is a unique homomorphism of group schemes $\phi^{ab}:\rho_S^*\mathcal J_S \rightarrow \mathcal A$ inducing a factorisation of $\phi$ as
    \begin{equation}\label{SmAbelianisationeqn}
\begin{tikzcd}
\mathcal I_S^{sm} \arrow[r, "\kappa_S"] & \rho_S^*\mathcal J_S \arrow[r, "\phi^{ab}"] & \mathcal A.
\end{tikzcd}
    \end{equation}
\end{proposition}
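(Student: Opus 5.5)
The plan is to use that $\kappa_S:\mathcal I_S^{sm} \rightarrow \rho_S^*\mathcal J_S$ is a smooth surjective homomorphism of smooth $S$-group schemes (Proposition \ref{CameralSmoothprp} together with the definition of $\mathcal J_S$ as the descended image in Proposition \ref{CameralDescentprp}). A smooth surjective homomorphism is faithfully flat, so $\rho_S^*\mathcal J_S$ is the fppf quotient sheaf $\mathcal I_S^{sm}/\mathcal K$, where $\mathcal K := Ker(\kappa_S)$ is a closed normal subgroup scheme, flat over $S$. Thus a homomorphism $\phi^{ab}$ with $\phi = \phi^{ab}\circ\kappa_S$ exists, and is unique, exactly when $\phi|_{\mathcal K}$ is trivial. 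Uniqueness is then immediate from $\kappa_S$ being an epimorphism of fppf sheaves. For existence, once $\phi$ kills $\mathcal K$, it factors through the quotient by fppf descent of morphisms: the two pullbacks of $\phi$ to $\mathcal I_S^{sm}\times_{\rho_S^*\mathcal J_S}\mathcal I_S^{sm} \cong \mathcal I_S^{sm}\times_S \mathcal K$ agree. The factored map is automatically a homomorphism, because $\kappa_S\times\kappa_S$ is also an epimorphism.

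The main step is showing that $\phi$ vanishes on $\mathcal K$. Since $\mathcal A$ is commutative, $\phi$ kills the commutator subgroup functor of $\mathcal I_S^{sm}$. First I would work over the dense open locus $S^{ss}$ of semisimple elements. There Proposition \ref{CameralHomprp} says $\kappa_{S,x}$ realises the abelianisation of $\mathcal I^{sm}_{S,x}\cong L$ (up to conjugacy) for every $x\in S^{ss}$. Over $\mathfrak{z}^{rs}$ the map is the constant homomorphism $L\rightarrow \bar Z$, so $\mathcal K|_{\mathfrak{z}^{rs}} = L^{der}\times\mathfrak{z}^{rs}$, and by $G$-equivariance $\mathcal K|_{S^{ss}}$ is the derived subgroup scheme. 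Because $L^{der}$ is generated by commutators (in fact the commutator map $L\times L \rightarrow L^{der}$ composed a bounded number of times is surjective), $\phi$ restricted to $\mathcal K|_{S^{ss}}$ factors through products of commutators and is therefore trivial. This triviality may be checked after the smooth base change $G\times\mathfrak{z}^{rs}\rightarrow S^{ss}$.

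To extend from $S^{ss}$ to all of $S$, I would compare the two morphisms $\phi|_{\mathcal K}$ and the unit section $e\circ\pi:\mathcal K\rightarrow \mathcal A$. Since $\mathcal A$ is separated, their equaliser is a closed subscheme of $\mathcal K$. It therefore suffices to show that $\mathcal K|_{S^{ss}}$ is schematically dense in $\mathcal K$. This is where I expect the main obstacle: the statement needs $\mathcal K$ to be flat over $S$, hence without embedded components or components supported over $S\setminus S^{ss}$. I would obtain this from the fact that $\mathcal K$ is the kernel of a smooth surjection between smooth group schemes, so $\mathcal K\rightarrow S$ is smooth; in particular $\mathcal K$ is a non-singular variety every component of which dominates an open subset of the irreducible $S$. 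Hence $\mathcal K_{S^{ss}}$ is dense in the reduced scheme $\mathcal K$, and the equaliser is all of $\mathcal K$.

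Combining these steps, $\phi|_{\mathcal K}$ is trivial, and $\phi$ descends uniquely along the fppf epimorphism $\kappa_S$ to a homomorphism $\phi^{ab}:\rho_S^*\mathcal J_S\rightarrow\mathcal A$, giving the factorisation (\ref{SmAbelianisationeqn}).
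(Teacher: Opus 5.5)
Your proposal is correct and follows essentially the same route as the paper: both show that the kernel of $\kappa_S$ is smooth over $S$, check that $\phi$ kills it over the dense open $S^{ss}$ (where $\kappa_S$ is the fibrewise abelianisation), extend this to all of $S$ by density using that $\mathcal A$ is separated, and then descend $\phi$ along the smooth surjection $\kappa_S$. You also spell out uniqueness and the homomorphism property of $\phi^{ab}$ via the fppf-epimorphism property of $\kappa_S$, which the paper leaves implicit.
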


\begin{proof} 
Let $\mathcal N_S$ be the kernel of $\kappa_S$ and $\mathcal N_\mathcal A$ be the kernel of $\phi$. These are both closed subgroup schemes of $\mathcal I_S^{sm}$ since $\rho_S^*\mathcal J$ and $\mathcal A$ are separated over $S$; moreover $\mathcal N_S$ is smooth over $S$ (as in Corollary \ref{SmoothCentralisercor}). Thus, to show that $\mathcal N_S$ is contained in $\mathcal N_A$ it suffices to observe that this containment occurs over the open subset of $S^{ss} \subseteq S$. But this occurs since $\kappa_S|_{S^{ss}}$ is the fibrewise abelianisation of $\mathcal I_S^{sm}$ by Proposition \ref{CameralHomprp}, so that for any $x \in S^{ss}$, $\mathcal N_{S,x} = (\mathcal I_{S,x}^{sm})^{der} \leq Ker(\phi^{ab})_x$. Since $\kappa_S$ is a smooth homomorphism of group schemes with kernel $\mathcal N$, the map $\phi:\mathcal I_S^{sm} \rightarrow \mathcal A$ descends to a homomorphism $\phi^{ab}$ with the required factorisation (\ref{SmAbelianisationeqn}).
\end{proof}

Thus the cameral homomorphism realises $\rho_S^*\mathcal J_S$ as the abelianisation of $\mathcal I_S^{sm}$ in the category of separated group schemes over $S$.

\subsection{The abelianised quotient stack}\label{AbQuotsbn}
We can use the cameral homomorphism to give a factorisation of the gerbe $\rho_S:[S/G] \rightarrow \mathcal B$ (constructed in Defintion \ref{StackyChevalleyBasedef}) into abelian and non-abelian parts. We assume that $S$ is a non-singular Dixmier sheet of classical reduction type (Definition \ref{ClassRamTypedef}) and keep the notation of the previous section.

\begin{lemma}\label{CamKernelDescentlem}
    The kernel $\mathcal N_S$ of the cameral homomorphism $\kappa_S$, defined in Proposition \ref{CameralHomprp}, descends to a smooth closed normal subgroup stack $\mathcal N_{S,G}$ of the inertia stack $\mathcal I_{S,G}$ for $[S/G]$, representable in schemes over $[S/G]$.
\end{lemma}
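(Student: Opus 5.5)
We mirror the way $\mathcal I_S^{sm}$ was descended to the subgroup stack $\mathcal I_{S,G}^{sm}$ before Definition \ref{StackyChevalleyBasedef}. The starting point is that $\mathcal N_S = Ker(\kappa_S)$ is a closed normal subgroup scheme of $\mathcal I_S^{sm}$, hence of $\mathcal I_S$. It is also smooth over $S$. By Proposition \ref{CameralSmoothprp}, $\kappa_S$ is smooth, and $\mathcal N_S$ is the pullback of $\kappa_S$ along the identity section of $\rho_S^*\hat{\mathcal J}_S$. This is the same Cartesian-square argument as in Corollary \ref{SmoothCentralisercor}. Closedness uses that $\rho_S^*\hat{\mathcal J}_S$ is separated over $S$, being representable and a closed subgroup of a Weil restriction (Lemma \ref{PseudoCameralgplem}).

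The descent is along the smooth cover $S \rightarrow [S/G]$. The inertia stack $\mathcal I_{S,G}$ pulls back to $\mathcal I_S$ with its natural $G$-action. A closed subgroup of $\mathcal I_S$ therefore descends to a closed subgroup stack of $\mathcal I_{S,G}$, representable in schemes over $[S/G]$, exactly when it is stable under the $G$-action. Equivalently, the two pullbacks to $G\times S$ under the action and projection maps must agree inside the pullbacks of $\mathcal I_S$, which are identified by the conjugation isomorphism $(g,h,x)\mapsto (g, ghg^{-1}, Ad_g x)$.

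The key step is therefore $G$-stability of $\mathcal N_S$. By Proposition \ref{CameralHomprp}, $\kappa_S$ is $G$-equivariant. Here $G$ acts on $\mathcal I_S^{sm}$ by conjugation (Proposition \ref{SmCentraliserPropsprp}), and on $\rho_S^*\hat{\mathcal J}_S$ through the factorisation $S \rightarrow [S/G] \rightarrow \mathcal B$. Under that second action, $g$ carries the fibre over $x$ to the fibre over $Ad_g(x)$ by the identity of $\hat{\mathcal J}_{S,\rho_S(x)}$, and in particular it preserves identity sections. Hence for $h \in \mathcal N_{S,x}$ we get $\kappa_S(ghg^{-1}) = g\cdot\kappa_S(h) = 1$, which shows $I_g(\mathcal N_{S,x}) = \mathcal N_{S,Ad_g x}$. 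This must hold scheme-theoretically, not only on $\mathbb{C}$-points. That follows by writing the equivariance as an equality of morphisms over $G\times S$ and taking the fibre over the identity section, which is again a Cartesian-square statement.

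Fppf descent then preserves the remaining properties. Closedness, smoothness over the base and normality in the inertia are all local for the smooth topology and hold upstairs, so they hold for the descended $\mathcal N_{S,G}$. The cocycle condition is automatic because $\mathcal N_{S,G}$ is a subobject of the already-descended $\mathcal I_{S,G}$, as remarked in the proof of Proposition \ref{SmCentraliserDescentprp}. Representability in schemes is inherited from $\mathcal I_{S,G}^{sm}$, since $\mathcal N_{S,G}$ is a closed substack of it.

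The main obstacle I expect is making precise that the $G$-action on $\rho_S^*\hat{\mathcal J}_S$ is the trivial, descent-datum action, so that equivariance of $\kappa_S$ really forces $G$-stability of the kernel. The equivariance is given by Proposition \ref{CameralHomprp}, so this should reduce to unwinding definitions.
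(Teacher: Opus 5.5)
Your route is the same as the paper's. The paper's proof consists of the observation that $\mathcal N_S$ is $G$-stable and normal in $\mathcal I_S$ because $\kappa_S$ is $G$-equivariant, and your extra checks of smoothness (base change of the smooth $\kappa_S$ along the identity section) and closedness are correct.

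\textbf{The gap.} One inference is invalid. You say $\mathcal N_S$ is a closed normal subgroup of $\mathcal I_S^{sm}$, \emph{hence} of $\mathcal I_S$, and you later rely on this for ``normality in the inertia'' upstairs.
\begin{itemize}
\item Closedness does pass along $\mathcal N_S\subseteq\mathcal I_S^{sm}\subseteq\mathcal I_S$, but normality is not transitive.
\item $\mathcal I^{sm}_{S,x}\subsetneq C_G(x)$ at every point with non-trivial Katsylo stabiliser, and such points exist as soon as $F\neq 1$ (Corollary \ref{CentraliserKatsylocor}).
\item Normality of $\mathcal N_S$ in $\mathcal I_S^{sm}$, together with normality of $\mathcal I_S^{sm}$ in $\mathcal I_S$, does not control where conjugation by an element of $C_G(x)\setminus\mathcal I^{sm}_{S,x}$ sends $\mathcal N_{S,x}$.
\end{itemize}
This is exactly the point the paper's one-line proof singles out (``normal in $\mathcal I_S$ (not just $\mathcal I_S^{sm}$)''), and the rigidification in Definition \ref{AbelianisedStackdef} needs it.

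The fix is already in your third paragraph. Run your stability computation with $g\in C_G(x)$ to get $I_g(\mathcal N_{S,x})=\mathcal N_{S,x}$. Scheme-theoretically, the conjugation map $\mathcal I_S\times_S\mathcal N_S\to\mathcal I_S$ is the restriction of the action map $G\times\mathcal N_S\to\mathcal I_S$, which you have shown lands in $\mathcal N_S$.

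\textbf{A false side claim.} You assert that $g$ acts on $\rho_S^*\hat{\mathcal J}_S$ ``by the identity of $\hat{\mathcal J}_{S,\rho_S(x)}$''. This is false in general. An element $g\in C_G(x)$ acts on the fibre over $x$ through its image in the automorphism group of $\rho_S(x)$ in $\mathcal B$, i.e.\ through the Katsylo stabiliser, and that action can be non-trivial.

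For $S_{Dix}\subseteq\mathfrak{sp}_4$ at $x=e$, take the element $s$ of Example \ref{Sp4Sheetexm}, which represents the non-trivial element of $\Gamma=F$. It swaps the two isotropic lines spanned by the first two standard basis vectors in the image of $e$. Meanwhile $\mathcal I^{sm}_{S,e}=C_G(e)^\circ$ acts on these two lines by mutually inverse characters. Equivariance of $\kappa_S$ then forces $s$ to act by inversion on the fibre $\hat{\mathcal J}_{S,\rho_S(e)}\cong\bar Z=\mathbb{G}_m$.

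This does not break your argument, since you only use that $G$ acts by group-scheme automorphisms and therefore preserves identity sections. It does mean the ``main obstacle'' you anticipate is a non-issue, and trying to resolve it by proving the action trivial would fail.
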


\begin{proof} 
We need only note that $\mathcal N_S$ is stable under the $G$-action, and normal in $\mathcal I_S$ (not just $\mathcal I_S^{sm}$), since $\kappa_S$ is $G$-equivariant.
\end{proof}

As in Section \ref{AdjointQGerbesbn}, we use the rigidification construction given in Appendix A of \cite{AOV}.

\begin{definition}\label{AbelianisedStackdef}
    We define the \emph{abelianised quotient stack} $[S/G]^{ab}$ to be the rigidification of $[S/G]$ by $\mathcal N_{S,G}$.
\end{definition}

We will denote the rigidification map by $\rho_S^n:[S/G] \rightarrow [S/G]^{ab}$.

\begin{proposition}\label{AbnGerbeprp}
    The map $\rho_S^n:[S/G] \rightarrow [S/G]^{ab}$ is a gerbe. There is a map $\rho_S^{ab}:[S/G]^{ab} \rightarrow \mathcal B$ which induces a factorisation of the gerbe $\rho_S:[S/G] \rightarrow \mathcal B$ as
    \begin{equation}\label{AbnGerbe1eqn}
\begin{tikzcd}
{[S/G]} \arrow[r, "\rho_S^n"] & {[S/G]^{ab}} \arrow[r, "\rho_S^{ab}"] & \mathcal B.
\end{tikzcd}
    \end{equation}
    The map $\rho_S^{ab}$ is a gerbe over $\mathcal B$ banded by $\mathcal J_S$.
\end{proposition}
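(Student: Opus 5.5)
The plan has three parts: the gerbe statement for $\rho_S^n$, the construction of $\rho_S^{ab}$ and the factorisation, and the banding by $\mathcal J_S$.

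\textbf{Gerbe statement for $\rho_S^n$.} This follows directly from the rigidification formalism of \cite[Theorem A.1]{AOV}, exactly as for $\rho_{S,G}$ in Proposition \ref{StackyChevalleyBaseprp}. By Lemma \ref{CamKernelDescentlem}, $\mathcal N_{S,G}$ is a flat (indeed smooth) closed normal subgroup stack of the inertia $\mathcal I_{S,G}$, representable in schemes over $[S/G]$. Hence the rigidification $[S/G]^{ab}$ exists, and $\rho_S^n$ is a gerbe, with fibres over $X\to[S/G]$ the classifying stacks ${\bf B}f^*\mathcal N_{S,G}$ by \cite[Remark A.2]{AOV}.

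\textbf{Construction of $\rho_S^{ab}$ and the factorisation.} Since $\mathcal N_{S,G}\subseteq \mathcal I^{sm}_{S,G}$, the universal property of rigidification applies. Indeed, $\rho_{S,G}$ kills $\mathcal I^{sm}_{S,G}$ on inertia, so it kills $\mathcal N_{S,G}$. Therefore $\rho_{S,G}$ factors uniquely (up to unique 2-isomorphism) through $\rho_S^n$, giving $\rho_S^{ab}$ and the factorisation (\ref{AbnGerbe1eqn}).

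Alternatively, and more concretely, $\mathcal B$ is the rigidification of $[S/G]^{ab}$ by the image group $\mathcal I^{sm}_{S,G}/\mathcal N_{S,G}$ of its inertia. Under $\kappa_S$, this image is identified with the descent of $\rho_S^*\mathcal J_S$: here one uses Proposition \ref{CameralDescentprp}, and the fact that $\kappa_S$ is smooth (Proposition \ref{CameralSmoothprp}) hence surjective onto its open image. This quotient is a flat subgroup of the inertia of $[S/G]^{ab}$, representable by schemes. Applying \cite[Theorem A.1]{AOV} again then shows $\rho_S^{ab}$ is a gerbe. It must be checked that rigidifying in two stages agrees with rigidifying by $\mathcal I^{sm}_{S,G}$ at once. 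Both stacks are separated Deligne--Mumford stacks receiving $[S/G]$ with the same universal property, so this holds formally.

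\textbf{Banding by $\mathcal J_S$.} We pull back along the étale atlas $\mathfrak K\to\mathcal B$ of Corollary \ref{GerbeTrivncor}. The section $\mathfrak K\to[S/G]\to[S/G]^{ab}$ trivialises $\rho_S^{ab}$ over $\mathfrak K$. Its automorphism group scheme there is $\mathcal I^{sm}_{\mathfrak K}/\mathcal N_{\mathfrak K}\cong \mathcal I m(\kappa_S)|_{\mathfrak K}=(\rho_S^*\mathcal J_S)|_{\mathfrak K}$, which is the pullback of the sheaf $\mathcal J_S$ from $\mathcal B$. This group is commutative, since $\mathcal J_S\subseteq\hat{\mathcal J}_S\subseteq\Pi_S$.

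The main obstacle is showing that this identification is compatible with the descent data. Concretely, on $\mathfrak K\times_{\mathcal B}\mathfrak K$, or equivalently on the groupoid $\mathcal R$, the two pullback identifications of the automorphism groups with $\mathcal J_S$ must agree. Only then is the band the sheaf $\mathcal J_S$ on $\mathcal B$ itself, rather than merely a sheaf on the cover. For this I would use the $G$-equivariance of $\kappa_S$ from Proposition \ref{CameralHomprp}. Conjugation by an element of $\mathcal R$ acts on $\mathcal I^{sm}$, and on the target $\rho_S^*\mathcal J_S$ it acts through the pullback structure, i.e. trivially relative to $\mathcal B$. This is the same argument as in Proposition \ref{CameralDescentprp}.

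Commutativity ensures that inner automorphisms, which are the ambiguity in choosing local sections, act trivially. Hence the canonical isomorphism between the inertia of $[S/G]^{ab}$ over $\mathcal B$ and the pullback of $\mathcal J_S$ is well defined globally, and $\rho_S^{ab}$ is banded by $\mathcal J_S$.
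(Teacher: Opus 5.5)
Your proposal is correct and follows the paper's proof: \cite[Theorem A.1]{AOV} gives the gerbe $\rho_S^n$, the universal property of rigidification (since $\mathcal N_{S,G}\subseteq\mathcal I^{sm}_{S,G}$) gives $\rho_S^{ab}$ and the factorisation, and the band is the relative inertia $\mathcal I^{sm}/\mathcal N\cong\mathcal{I}m(\kappa_S)$, i.e.\ $\mathcal J_S$; your descent check via the $G$-equivariance of $\kappa_S$ and commutativity spells out what the paper compresses into ``by the construction of $\mathcal N$''. The only variation is how you show $\rho_S^{ab}$ is a gerbe: you re-rigidify $[S/G]^{ab}$ by $\mathcal I^{sm}_{S,G}/\mathcal N_{S,G}$, whereas the paper deduces it directly from $\rho_S=\rho_S^{ab}\circ\rho_S^n$ and $\rho_S^n$ both being gerbes, which avoids checking flatness of the quotient and comparing two-stage with one-stage rigidification.
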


\begin{proof}
The first statement is \cite[Theorem A.1 (a)]{AOV}. The construction of the map $\rho_S^{ab}$ and the factorisation (\ref{AbnGerbe1eqn}) are straightforward using the construction of the rigidification in \cite[Theorem A.1]{AOV}. Then since $\rho_S$ is a gerbe by Proposition \ref{StackyChevalleyBaseprp}, $\rho_S^{ab}$ must also be a gerbe, and it has structure group $\mathcal J_S$ by the construction of $\mathcal N_G$ in Lemma \ref{CamKernelDescentlem}.
\end{proof}

\begin{remark}\label{AbnGerbermk}
    The gerbe $\rho_S^{ab}$ is maximally abelian in the following sense: if $\mathcal X$ is an algebraic stack equipped with a map $\mathcal X \rightarrow \mathcal B$ whose relative inertia stack $\mathcal I_{\mathcal X/\mathcal B}$ is representable by separated commutative group schemes over $\mathcal X$, then for any map $[S/G] \rightarrow \mathcal X$ over $\mathcal B$, we can use Proposition \ref{SmAbelianisationprp} to construct an induced map $[S/G]^{ab} \rightarrow \mathcal X$.
\end{remark}

By the $\mathbb{G}_m$-equivariance of the cameral homomorphism, the $\mathbb{G}_m$-action on $\hat{\mathcal J_S}$ of Remark \ref{PseudoCameralgprmk} restricts to a $\mathbb{G}_m$-action on $\mathcal J_S$; so $\mathcal J_S$ descends to a smooth subgroup stack $\mathcal J_{S, \mathbb{G}_m}$ of $\hat{\mathcal J}_{S, \mathbb{G}_m}$, representable by schemes over $\mathcal B/\mathbb{G}_m$. Moreover, there is a strict $\mathbb{G}_m$-action on $[S/G]^{ab}$ such that the maps $\rho_S^n$ and $\rho_S^{ab}$ are $\mathbb{G}_m$-equivariant. Hence, we have the following $\mathbb{G}_m$-equivariant version of Proposition \ref{AbnGerbeprp}. 

\begin{corollary}\label{GmEquivAbnGerbecor}
    The maps
    $$\rho_{S,\mathbb{G}_m}^n:[S/G\times\mathbb{G}_m] \rightarrow [S/G]^{ab}/\mathbb{G}_m,$$
    $$\rho_{S,\mathbb{G}_m}^{ab}:[S/G]^{ab}/\mathbb{G}_m \rightarrow \mathcal B/\mathbb{G}_m$$
    induced by (\ref{AbnGerbe1eqn}) are both gerbes, and $\rho_{S,\mathbb{G}_m}^{ab}$ is banded by $\mathcal J_{S, \mathbb{G}_m}$.
\end{corollary}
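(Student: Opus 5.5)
The plan is to deduce the corollary from Proposition \ref{AbnGerbeprp} by running the same arguments $\mathbb{G}_m$-equivariantly, in parallel with how Corollary \ref{GmEquivStackyCBcor} was obtained from Proposition \ref{StackyChevalleyBaseprp}. The first step is to check that the subgroup stack $\mathcal N_{S,G}$ of Lemma \ref{CamKernelDescentlem} is stable under the $\mathbb{G}_m$-action on the inertia of $[S/G]$. This holds because $\kappa_S$ is $\mathbb{G}_m$-equivariant (Proposition \ref{CameralHomprp}) and $\mathcal I_S^{sm}$ is $\mathbb{G}_m$-stable (Proposition \ref{SmCentGmEquivprp}). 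It follows that $\mathcal N_S$ descends along $S \rightarrow [S/G\times\mathbb{G}_m]$ to a smooth closed normal subgroup stack $\mathcal N_{S,G\times\mathbb{G}_m}$ of the inertia of $[S/G\times\mathbb{G}_m]$, again representable by schemes. The descent datum is the one from Lemma \ref{CamKernelDescentlem}, augmented by the $\mathbb{G}_m$-equivariance.

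Next I would apply \cite[Theorem A.1]{AOV} to $[S/G\times\mathbb{G}_m]$ and $\mathcal N_{S,G\times\mathbb{G}_m}$. This gives a rigidification together with a gerbe onto it. The key identification to establish is that this rigidification is $[S/G]^{ab}/\mathbb{G}_m$, in the sense of \cite{Romagny}. To see this, I would use that rigidification commutes with flat base change. Pulling back along the $\mathbb{G}_m$-torsor $[S/G]\rightarrow[S/G\times\mathbb{G}_m]$ recovers the rigidification of $[S/G]$ by $\mathcal N_{S,G}$, which is $[S/G]^{ab}$. The descent datum for this torsor is precisely the strict $\mathbb{G}_m$-action on $[S/G]^{ab}$ described before the corollary. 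Once this identification is made, $\rho^n_{S,\mathbb{G}_m}$ is a gerbe by \cite[Theorem A.1(a)]{AOV}.

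For $\rho^{ab}_{S,\mathbb{G}_m}$, the first point is that it is a gerbe. This can be checked after the fppf base change $\mathcal B\rightarrow\mathcal B/\mathbb{G}_m$, since being a gerbe is fppf-local on the target. That base change recovers $\rho_S^{ab}$, which is a gerbe by Proposition \ref{AbnGerbeprp}. The second point is banding. The inertia of $[S/G]^{ab}/\mathbb{G}_m$ relative to $\mathcal B/\mathbb{G}_m$ is the quotient of the relative inertia of $[S/G\times\mathbb{G}_m]$, namely the descended $\mathcal I^{sm}$, by $\mathcal N_{S,G\times\mathbb{G}_m}$. Via $\kappa_S$ this quotient is identified with the pullback of $\mathcal J_{S,\mathbb{G}_m}$. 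Since that group is commutative and descends to $\mathcal B/\mathbb{G}_m$, the gerbe is banded by it.

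The main obstacle is the compatibility identification $[S/G\times\mathbb{G}_m]^{\mathrm{rig}}\cong[S/G]^{ab}/\mathbb{G}_m$, and in particular checking that the strict action really comes from the rigidification. I would handle this through the universal property of rigidification: the map $[S/G]\rightarrow[S/G]^{ab}$ is initial among maps killing $\mathcal N$, and this makes the equivariance structures unique.
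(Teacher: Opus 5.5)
Your proposal is correct and takes the paper's route: the paper also deduces the corollary from Proposition \ref{AbnGerbeprp}, using the $\mathbb{G}_m$-equivariance of $\kappa_S$ (so that $\mathcal J_S$ descends to $\mathcal J_{S,\mathbb{G}_m}$) and a strict $\mathbb{G}_m$-action on $[S/G]^{ab}$ for which $\rho_S^n$ and $\rho_S^{ab}$ are equivariant. Your descent of $\mathcal N_S$ to $[S/G\times\mathbb{G}_m]$, and your identification of its rigidification with $[S/G]^{ab}/\mathbb{G}_m$, spell out what the paper leaves implicit; the identification is best phrased as base change along $\text{Spec}(\mathbb{C})\rightarrow{\bf B}\mathbb{G}_m$, which is legitimate because $\mathcal N_S\subseteq\mathcal I_S$ maps trivially to the inertia of ${\bf B}\mathbb{G}_m$.
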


We will drop the suffix $\mathbb{G}_m$ from these maps when there is no possibility of confusion.

\section{Non-abelian Hitchin fibres}\label{NonabnHitchinscn}
We now apply the above considerations to the moduli stack of $G$-Higgs bundles on the curve $\Sigma$. We consider the locus in the moduli stack whose Higgs field has fixed centraliser dimension $d$, and decompose it into stacks of ``sheet-valued Higgs bundles". For a non-singular sheet $S$, the restriction of the Hitchin fibration to the stack of $S$-valued Higgs bundles can be described as a generalised Hitchin fibration. There is a Deligne-Mumford enhancement of the Hitchin base over which the locus of $S$-valued Higgs bundles fibres in moduli stacks of torsors over $\Sigma$, which are non-abelian if $S$ is not the regular sheet. If $G$ is a classical group, we describe an ``abelianised" fibration, whose fibres are spaces of equivariant torus bundles on a finite flat cover of $\Sigma$. 

\subsection{Higgs bundles with fixed centraliser dimension}\label{SheetHiggssbn}
We let $G$ be an arbitrary connected reductive group, and consider twisted $G$-Higgs bundles on $\Sigma$ (see, e.g. \cite{Hitchin1}, \cite{Hitchin2} and Section 6 of \cite{Simpson1}). As in \cite{Ngo1}, we view the moduli stack of twisted Higgs bundles as a mapping stack, and allow twists by arbitrary line bundles.

\begin{definition}\label{HiggsModuliStackdef}
    The \emph{moduli stack of twisted $G$-Higgs bundles on $\Sigma$} is the mapping stack
    \begin{equation}\label{HiggsModuliStack1eqn}
        \mathcal M(G) = Maps(\Sigma,[\mathfrak{g}/G\times{\mathbb{G}_m}]).
    \end{equation}
    If $\mathcal L$ is a line bundle on $\Sigma$, then the \emph{moduli stack of $\mathcal L$-twisted $G$-Higgs bundles on $\Sigma$}, $\mathcal M_\mathcal L(G)$, is the fibre of the map
    \begin{equation}\label{HiggsModuliStack2eqn}
        \mathcal M(G) \rightarrow Maps(\Sigma,[\text{\emph{Spec}}( \mathbb{C})/\mathbb{G}_m]) = {\bf Pic}(\Sigma)
    \end{equation}
    over the $\mathbb{C}$-point of ${\bf Pic}(\Sigma)$ defined by $\mathcal L$.
\end{definition}

We will often refer to the objects of $\mathcal M(G)$ simply as Higgs bundles if there is no possibility of confusion.

\begin{remark}\label{HiggsModuliStackrmk}
    A Higgs bundle can be described by a triple $(E, \mathcal L, \Phi)$ for $E$ a $G$-bundle on $\Sigma$, $\mathcal L$ a line bundle on $\Sigma$ (the \emph{twist}) and $\Phi$ a global section of $\text{ad}(E) \otimes \mathcal L$ (the \emph{Higgs field}). We will always use $\mathcal L$ as a subscript to denote a fixed choice of twist in the subsequent constructions (as in e.g. Lemma \ref{SheetHiggsNonElem} below); we will also often switch between the line bundle $\mathcal L$ and its corresponding $\mathbb{G}_m$-torsor without changing notation. If $\mathcal L = K$ is the canonical bundle on $\Sigma$, we recover the usual $K$-twisted Higgs bundles on $\Sigma$.

    The stack $\mathcal M(G)$ is a quasi-separated algebraic stack, locally of finite presentation over $\mathbb{C}$ by \cite[Theorem 1.2]{HR}. It is usual to restrict $\mathcal M(G)$ to a locus where the twist $\mathcal L$ has sufficiently high degree, to ensure that $\mathcal M_{\mathcal L}(G)$ has reasonable geometric properties; we will in general place no restrictions on $\mathcal L$, but we will sometimes require that it admits a square root and/or a global section (e.g. see Lemma \ref{SheetHiggsNonElem} and Proposition \ref{SHitchinBaseNormprp}).
\end{remark}

Let $\mathcal M^{d}(G)$ be the locally closed substack
\begin{equation}\label{CentHiggsOpeneqn}
   \mathcal M^{d}(G) = Maps(\Sigma, [\mathfrak{g}_d/G \times \mathbb{G}_m]) 
\end{equation}
of $\mathcal M(G)$. Our primary goal is to describe the Hitchin fibration on this locus; we do this by decomposing it into closed substacks of sheet-valued Higgs bundles.

\begin{definition}\label{SheetHiggsdef}
    For a sheet $S$ in the Lie algebra $\mathfrak{g}$, we define the \emph{moduli stack of $S$-valued Higgs bundles on $\Sigma$} as the substack $\mathcal M(G;S)$ of $\mathcal M$ defined by
    \begin{equation}\label{Sheetiggseqn}
        \mathcal M(G;S) = Maps(\Sigma,[S/G\times\mathbb{G}_m]).
    \end{equation}
    We refer to the $\mathbb{C}$-points of $\mathcal M(G;S)$ as \emph{$S$-valued Higgs bundles on $\Sigma$}.
\end{definition}

\begin{remark}\label{SheetHiggsrmk}
   If $S = \mathfrak{g}^{reg}$ is the regular sheet, $\mathcal M(G;S) = \mathcal M^{reg}(G)$ is the usual dense open substack of regular Higgs bundles. Every Higgs bundle is \emph{generically $S$-valued} for some (not necessarily unique) sheet $S$, i.e. the corresponding map $\Sigma \rightarrow [\mathfrak{g}/G \times \mathbb{G}_m]$ factors through $[S/G \times \mathbb{G}_m]$ at all but finitely many points of $\Sigma$.

   In the terminology of \cite[Section 4.3]{Ngo3}, $\mathcal M_\mathcal L(G;S)$ is the regular locus of the stack ${\bf M}(\tilde{S}_\mathcal L)$ of generalised Higgs bundles for the $G$-action on $\tilde{S}_\mathcal L := \tilde{S} \times^{\mathbb{G}_m} \mathcal L$, where $\tilde{S}$ is the normalisation of $\overline{S}$ as in Remark \ref{RegularQuotientrmk}.
\end{remark}

\begin{lemma}\label{SheetHiggsNonElem}
   If $\mathcal L$ is a line bundle on $\Sigma$ such that $\mathcal L$ admits a square root, the stack $\mathcal M_{\mathcal L}(G;S)$ is non-empty.
\end{lemma}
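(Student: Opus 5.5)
We will construct an explicit $S$-valued Higgs bundle, generalising the construction of the Hitchin section at a nilpotent point. The simplest candidate is a Higgs bundle whose Higgs field is, at every point of $\Sigma$, conjugate to the nilpotent element $e \in S$ (which exists by \cite[Korollar 3.2]{BK}). Since the $\mathbb{G}_m$-twisting by $\mathcal L$ scales the Higgs field, we cannot simply take the trivial bundle with constant field $e$. Instead we will use the Kazhdan-type cocharacter $\lambda$ of Lemma \ref{KazhdanActionlem} to absorb the twist. This is where the square root is needed, because $Ad_{\lambda(t)}(e) = t^2 e$.

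The steps are as follows. Fix an $\mathfrak{sl}_2$-triple $(e,h,f)$ with $e \in S$, and let $\lambda:\mathbb{G}_m \rightarrow G$ be as in Lemma \ref{KazhdanActionlem}. Choose a line bundle $\mathcal L^{1/2}$ with $(\mathcal L^{1/2})^{\otimes 2} \cong \mathcal L$, and regard it as a $\mathbb{G}_m$-torsor. Define the $G$-bundle $E = \mathcal L^{1/2} \times^{\mathbb{G}_m,\lambda} G$.

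The adjoint bundle then splits according to the weight decomposition (\ref{WtDecompositioneqn}):
\[
\text{ad}(E) = \bigoplus_w \mathfrak{g}^w \otimes (\mathcal L^{1/2})^{\otimes w}.
\]
Hence $\text{ad}(E)\otimes \mathcal L$ contains the summand $\mathfrak{g}^2 \otimes \mathcal L^{1/2 \cdot 2}\otimes\mathcal L$. The sign convention needs care here. The Higgs field should be a section of $\mathfrak{g}^2 \otimes (\mathcal L^{1/2})^{\otimes \pm 2}\otimes \mathcal L$, and the correct choice of sign (equivalently, replacing $\lambda$ by $\lambda^{-1}$, or $\mathcal L^{1/2}$ by its dual) makes this line bundle $\mathfrak{g}^2 \otimes \mathcal O_\Sigma$. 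The vector $e \in \mathfrak{g}^2$ then gives a nowhere-vanishing section $\Phi$.

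Equivalently, in the mapping-stack language, the pair $(E,\mathcal L)$ is the $G \times \mathbb{G}_m$-torsor induced from $\mathcal L^{1/2}$ via the homomorphism $t \mapsto (\lambda(t^{\mp1}), t^2)$. The point $e$ is fixed by this copy of $\mathbb{G}_m$ acting on $\mathfrak{g}$ through the combined adjoint and scaling action. The Higgs field is therefore the composite
\[
\Sigma \rightarrow [\text{Spec}\,\mathbb{C}/\mathbb{G}_m] \xrightarrow{e} [S/G\times\mathbb{G}_m].
\]
Its twist is $\mathcal L$, so it defines a point of $\mathcal M_{\mathcal L}(G;S)$. This formulation is cleaner and avoids any sign bookkeeping: one only needs to check that $\{e\}$ is invariant under $t\mapsto (\lambda(t^{-1}),t^2)$. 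This holds by the same computation as the Kazhdan action in Definition \ref{KazhdanActiondef}, namely $Ad_{\lambda(t^{-1})}(t^2 e) = e$.

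The only genuine point is matching the weight of $\lambda$ on $e$ against the scaling weight. This is exactly why $\mathcal L$ must be a square. No further obstacle is expected, since $e\in S$ and $S$ is $G\times\mathbb{G}_m$-stable by Remark \ref{ActionsSheetsrmk}, so the resulting map genuinely lands in $[S/G\times\mathbb{G}_m]$.
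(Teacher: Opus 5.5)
Your argument is correct, and it is the same underlying construction as the paper's. In both, the point produced is the nilpotent $e$, which is fixed by $t \mapsto (\lambda(t^{-1}),t^2)$ (i.e.\ by the Kazhdan action), twisted by $\mathcal L^{1/2}$. The difference is in the packaging.

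The paper takes the constant section at $e$ of the twisted Katsylo slice $\mathfrak{K}_{\mathcal L} = \mathfrak{K}\times^{\mathbb{G}_m}\mathcal L$, formed with the square root of the Kazhdan action from Corollary \ref{KatsyloAffinecor}. It then pushes this section along the map $\mathfrak{K}_{\mathcal L}\to[S_{\mathcal L}/G]$ that a choice of $\mathcal L^{1/2}$ induces via Proposition \ref{StackyHKSectionprp}. Its point is therefore exactly the value at $0$ of the Hitchin--Katsylo multisection $\varepsilon$ of Proposition \ref{HKMSectionprp}, and that is how the construction is reused later.

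You instead push $\mathcal L^{1/2}$ directly along $t\mapsto(\lambda(t^{-1}),t^2)$, using only Lemma \ref{KazhdanActionlem} and the existence of a nilpotent element in $S$. This is more elementary. Unlike the paper's route, it does not use non-singularity of $S$, which both Corollary \ref{KatsyloAffinecor} and Proposition \ref{StackyHKSectionprp} require, so it proves the lemma for an arbitrary sheet. What your route does not give is the placement of this point inside the global family over $H^0(\Sigma,\mathfrak{K}_{\mathcal L})$.
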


\begin{proof}
Let $S_{\mathcal L} = S \times^{\mathbb{G}_m}\mathcal L$; this is a fibre bundle in $S$ over $\Sigma$. Then
\begin{equation}\label{SheetHiggsNonE1eqn}
    \mathcal M_{\mathcal L}(G;S) = Sec(\Sigma,[S_{\mathcal L}/G]),
\end{equation}
the stack of sections of $[S_{\mathcal L}/G]$ over $\Sigma$.

Let $\mathfrak{K}$ be a Katsylo slice for the sheet $S$ (see Definition \ref{KatsyloSlicedef}), and let $e \in \mathfrak{K}$ be the corresponding nilpotent element. We set $\mathfrak{K}_\mathcal L = \mathfrak{K} \times^{\mathbb{G}_m} \mathcal L$ for the $\mathbb{G}_m$-action on $\mathfrak{K}$ defined by the square-root of the Kazhdan action as in Corollary \ref{KatsyloAffinecor}. Let $(U_i)_{i\in I}$ be a trivialising cover on $\Sigma$ for the $\mathbb{G}_m$-torsor $\mathcal L$, and let $(s_i)_{i \in I}$ be local sections of $\mathcal L$ (as a $\mathbb{G}_m$-torsor) over this cover. These locally define sections of $\mathfrak{K}_\mathcal L$, given on $\mathbb{C}$-points $x \in U_i$ by
\begin{equation}\label{SheetHiggsNonE2eqn}
    x  \mapsto \mathbb{G}_m(e,s_i(x)); 
\end{equation}
and since $e$ is fixed by the $\mathbb{G}_m$-action, these agree on overlaps and thus define a global section $s:\Sigma \rightarrow \mathfrak{K}_\mathcal L$. A choice of square root of $\mathcal L$ defines a map $\mathfrak{K}_\mathcal L \rightarrow [S_\mathcal L/G]$ over $\Sigma$ (see Proposition \ref{StackyHKSectionprp}); hence $s$ defines a point of $\mathcal M_\mathcal L(G;S)$.
\end{proof}

\begin{proposition}\label{SheetHiggsOpenprp}
    The stack $\mathcal M^{d}(G)$ has a decomposition
    \begin{equation}\label{SheetHiggsOpen1eqn}
        \mathcal M^{d}(G) = \bigcup_{S \subseteq \mathfrak{g}_d} \mathcal M(G;S)
    \end{equation}
    into non-empty closed substacks, where $S$ runs across the sheets of $\mathfrak{g}$ contained in $\mathfrak{g}_d$.
\end{proposition}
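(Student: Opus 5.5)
The plan is to work locally on the target stack $[\mathfrak{g}_d/G\times\mathbb{G}_m]$ and then transfer the statements to mapping stacks. First, $\mathfrak{g}_d$ is a locally closed subvariety of $\mathfrak{g}$. It is stable under $G\times\mathbb{G}_m$ (Remark \ref{Centraliserstratrmk}), and its irreducible components are exactly the sheets $S\subseteq\mathfrak{g}_d$ (Definition \ref{Sheetdef}). Each such sheet is stable under $G\times\mathbb{G}_m$ by Remark \ref{ActionsSheetsrmk}. Each sheet $S$ is closed in $\mathfrak{g}_d$, being an irreducible component of a noetherian space. Hence $[S/G\times\mathbb{G}_m]\hookrightarrow[\mathfrak{g}_d/G\times\mathbb{G}_m]$ is a closed immersion, and there are only finitely many such sheets. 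Passing to mapping stacks, $\mathcal M(G;S)=Maps(\Sigma,[S/G\times\mathbb{G}_m])\to Maps(\Sigma,[\mathfrak{g}_d/G\times\mathbb{G}_m])=\mathcal M^d(G)$ is a closed immersion. To see this, for a test scheme $T$ and a map $\Sigma\times T\to[\mathfrak{g}_d/G\times\mathbb{G}_m]$, the condition of factoring through the closed substack is the vanishing of a section on $\Sigma\times T$. Because $\Sigma$ is proper and flat, the locus in $T$ where $\Sigma\times T$ lies in the preimage is closed: it is the complement of the image of the open complement under the proper projection $\Sigma\times T\to T$. One also checks representability and that it is a monomorphism, both of which are clear.

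The main step is to show that the union covers $\mathcal M^d(G)$. It suffices to check this on $\mathbb{C}$-points, or, for a reduced test scheme, on geometric points. Let $(E,\mathcal L,\Phi)$ be a $\mathbb{C}$-point of $\mathcal M^d(G)$. Locally on $\Sigma$, the Higgs field gives a map from an open of $\Sigma$ to $\mathfrak{g}_d$, well defined up to $G\times\mathbb{G}_m$. Since $\Sigma$ is irreducible, the closures of the preimages of the finitely many sheets cover $\Sigma$. Hence the generic point of $\Sigma$ maps into some sheet $S$, which is $G\times\mathbb{G}_m$-stable, so the condition is independent of trivialisation. The closedness established above then shows that the whole curve maps into $S$: the preimage of $[S/G\times\mathbb{G}_m]$ in $\Sigma$ is closed and contains the generic point, so it is all of $\Sigma$. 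Thus every point lies in some $\mathcal M(G;S)$.

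The subtle point is where the closedness of $S$ in $\mathfrak{g}_d$ is used, namely the step above passing from the generic point to all of $\Sigma$. This matters because sheets in $\mathfrak{g}_d$ can intersect, as with $S_{Dix}$ and $S'_{Dix}$ in Example \ref{Sp4Sheetexm}. So the decomposition is a union of closed substacks, not a disjoint one, and I will only claim a union. Finally, non-emptiness follows from Lemma \ref{SheetHiggsNonElem}. Choose $\mathcal L$ with a square root, for instance $K$ or $\mathcal O_\Sigma$; then $\mathcal M_{\mathcal L}(G;S)\subseteq\mathcal M(G;S)$ is non-empty. The expected obstacle is only the careful justification that the mapping-stack functor preserves closed immersions, which I would deduce from properness of $\Sigma$ as sketched.
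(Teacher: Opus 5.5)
Your proof is correct, but it obtains closedness by a different mechanism from the paper. The paper fixes one point $p\in\Sigma$ and pulls back the closed cover $[\mathfrak{g}_d/G\times\mathbb{G}_m]=\bigcup_S[S/G\times\mathbb{G}_m]$ along $ev_p:\mathcal M^d(G)\to[\mathfrak{g}_d/G\times\mathbb{G}_m]$. It then identifies $ev_p^{-1}([S/G\times\mathbb{G}_m])$ with $\mathcal M(G;S)$, citing irreducibility of $\Sigma$. This makes closedness automatic, since it is a pullback of a closed immersion, and avoids any statement about mapping stacks.

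You instead show that $Maps(\Sigma,-)$ carries the closed immersion $[S/G\times\mathbb{G}_m]\hookrightarrow[\mathfrak{g}_d/G\times\mathbb{G}_m]$ to a closed immersion, and you get the covering from the generic point of $\Sigma$. Your route is the more robust one, because a single fixed $p$ cannot detect the sheet when $\Phi_p$ lies in an intersection of sheets. In Example \ref{Sp4Sheetexm}, take an $S_{Dix}$-valued Higgs field with $\Phi_p\in\mathcal O_{sub}\subseteq S_{Dix}\cap S'_{Dix}$; one comes from the Hitchin--Katsylo section of a nonzero section of $K$ vanishing at $p$. It lies in $ev_p^{-1}([S'_{Dix}/G\times\mathbb{G}_m])$ but not in $\mathcal M(G;S'_{Dix})$. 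So in general $ev_p^{-1}([S/G\times\mathbb{G}_m])$ only contains $\mathcal M(G;S)$. What is actually needed is your generic-point argument, or intersecting $ev_p^{-1}([S/G\times\mathbb{G}_m])$ over all $p\in\Sigma$. Your remark about intersecting sheets pinpoints exactly this issue.

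Three points to tighten:
\begin{enumerate}
\item The complement-of-the-image argument only shows that the factorisation locus is closed on underlying sets. For a closed immersion you need the scheme-theoretic fact: if $\Sigma_T\to T$ is proper, flat and of finite presentation and $Z\subseteq\Sigma_T$ is a closed subscheme, then the functor of $T'\to T$ with $Z_{T'}=\Sigma_{T'}$ is represented by a closed subscheme of $T$. This is where flatness is used, not just properness.
\item Passing from ``the preimage contains the generic point'' to a factorisation of $\Sigma\to[\mathfrak{g}_d/G\times\mathbb{G}_m]$ uses that $\Sigma$ is reduced.
\item Non-emptiness is simpler than Lemma \ref{SheetHiggsNonElem}. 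That lemma's proof relies on the square root of the Kazhdan action from Corollary \ref{KatsyloAffinecor}, hence on non-singularity of $S$, whereas the proposition concerns all sheets in $\mathfrak{g}_d$. The trivial $G$-bundle with twist $\mathcal L=\mathcal O_\Sigma$ and a constant Higgs field $x\in S$ already gives a point of $\mathcal M(G;S)$ for every sheet.
\end{enumerate}
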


\begin{proof}
Fix a point $p \in \Sigma$; this determines an evaluation map $$ev_p:\mathcal M^{d}(G) \rightarrow [\mathfrak{g}_d/G\times \mathbb{G}_m].$$ For any sheet $S \subseteq \mathfrak{g}_d$, the substack $\mathcal M(G;S)$ maps under $ev_p$ to the closed substack $[S/G \times \mathbb{G}_m]$; conversely, any point of $\mathcal M^{d}(G)$ mapping to a point of $[S/G\times\mathbb{G}_m]$ under $ev_p$ must be a point of $\mathcal M(G;S)$, since $\Sigma$ is irreducible and $S$ is an irreducible component of $\mathfrak{g}_d$. Hence we have the decomposition (\ref{SheetHiggsOpen1eqn}) by pulling back the decomposition
\begin{equation}\label{SheetHiggsOpen3eqn}
      [\mathfrak{g}_d/G\times\mathbb{G}_m] = \bigcup_{S \subseteq \mathfrak{g}_d} [S/G\times \mathbb{G}_m].
\end{equation}
\end{proof}

\subsection{The \texorpdfstring{$S$}{S}-Hitchin map}\label{SHitchinsbn}
 We study the restriction of the Hitchin map to the locally closed substack $\mathcal M(G;S)$. We first recall the usual definitions \cite{Hitchin2}, again using the perspective of \cite{Ngo1}.

\begin{definition}\label{HitchinBasedef}
    The \emph{Hitchin base} (for $G$-Higgs bundles on $\Sigma$) is the mapping stack
    \begin{equation}\label{HitchinBase1eqn}
        \mathcal A(G) = Maps(\Sigma,[\mathfrak{c}/\mathbb{G}_m])
    \end{equation}
    where $\mathfrak{c} = \mathfrak{t}/W$.
\end{definition}

\begin{remark}\label{HitchinBasermk}
    As in Definition \ref{HiggsModuliStackdef}, we will continue to use a subscript to denote a fixed twist $\mathcal L$ for the Hitchin base and its variants we consider below. For any $\mathcal L$, the stack $\mathcal A_{\mathcal L}(G)$ is representable by a vector space, namely the space of sections of the vector bundle $(\mathfrak{t} \otimes \mathcal L)/W$ on $\Sigma$. Hence, $\mathcal A(G)$ is a smooth algebraic stack, representable by linear schemes over ${\bf Pic}(\Sigma)$.
\end{remark}

\begin{definition}\label{HitchinMapdef}
    The \emph{Hitchin map} (for $G$-Higgs bundles on $\Sigma$) is the morphism of stacks $h_G:\mathcal M(G) \rightarrow \mathcal A(G)$ induced by the map $\chi:[\mathfrak{g}/G\times \mathbb{G}_m]\rightarrow [\mathfrak{c}/\mathbb{G}_m]$ which descends from the Chevalley map.
\end{definition} 

For the rest of this section, we will fix a non-singular sheet $S$ of $\mathfrak{g}$. We will use the constructions of Section \ref{AdjointQGerbesbn} to define an augmented version of the Hitchin map $S$.

\begin{definition}\label{SHitchinmapdef}
    We define the \emph{$S$-Hitchin base} $\mathcal A (G;S)$ to be the mapping stack
    \begin{equation}\label{SHitchinBase1eqn}
        \mathcal A(G;S) = Maps(\Sigma,\mathcal B/\mathbb{G}_m)
    \end{equation}
    where $\mathcal B$ is the $S$-Chevalley base as defined in Definition \ref{StackyChevalleyBasedef}.

    The \emph{$S$-Hitchin map} $h_S:\mathcal M(G;S) \rightarrow \mathcal A(G;S)$ is the morphism of stacks induced by the map $\rho_S:[S/G\times \mathbb{G}_m] \rightarrow \mathcal B/\mathbb{G}_m$ defined as in Corollary \ref{GmEquivStackyCBcor}.
\end{definition}
\begin{remark}\label{SHitchinBasermk}
     Using the notation of Remark \ref{SheetHiggsrmk}, if ${\bf h}_{\tilde{S}}:{\bf M}(\tilde{S}_\mathcal L) \rightarrow {\bf A}(\tilde{S}_\mathcal L)$ is the generalised Hitchin fibration for the $G$-action on $\tilde{S}$ in the sense of \cite[Section 4.3]{Ngo3}, then the generalised Hitchin base ${\bf A}(\tilde{S}_\mathcal L)$ is the stack of maps from $\Sigma$ to $\mathfrak{c}_{S, \mathcal L} := \mathfrak{c}_S \times^{\mathbb{G}_m} \mathcal L$. From the proof of Proposition \ref{StackyCBDiagramsprp}, the diagram (\ref{StackyCBDiagrams1eqn}) induces a factorisation
     \begin{equation}\label{SHitchinBase2eqn}
        \begin{tikzcd}
            \mathcal M_\mathcal L(G;S) \arrow[r, "h_S"] & \mathcal A_\mathcal L(G;S) \arrow[r, "\gamma_S"] & {\bf A}(\tilde{S}_\mathcal L) \arrow[r, "\mu_S"] & \mathcal A_\mathcal L(G),
        \end{tikzcd}
     \end{equation}
     of the restriction of the usual Hitchin map $h$ to $\mathcal M(G;S)$; in general, neither of the last two maps in (\ref{SHitchinBase2eqn}) are isomorphisms. The composition $\gamma_S \circ h_S$ of the first two maps in (\ref{SHitchinBase2eqn}) is the restriction of ${\bf h}_{\tilde{S}}$ to $\mathcal M_\mathcal L(G;S)$, and we denote the composition $\mu_S \circ \gamma_S$ of the last two maps in (\ref{SHitchinBase2eqn}) by $\tilde{\mu}_S:\mathcal A(G;S) \rightarrow \mathcal A(G)$.
\end{remark}

\begin{proposition}\label{SHitchinBasePropsprp}
    Let $\mathcal L$ be a fixed twisting line bundle on $\Sigma$.
    \begin{itemize}
        \item[(i)] $\mathcal A_\mathcal L(G;S)$ is a smooth non-empty Deligne-Mumford stack.
        \item[(ii)] The connected components of the stack $\mathcal A_\mathcal L(G;S)$ are indexed by the (finite) set $H^1(\Sigma,F)$ of fppf $F$-torsors on $\Sigma$ up to isomorphism, where $F$ is the Katsylo group defined in Definition \ref{KatsyloGpdef}.
        \item[(iii)] The stack $\mathcal A_\mathcal L(G;S)$ is representable by a scheme if and only if $F$ is trivial.
    \end{itemize}
\end{proposition}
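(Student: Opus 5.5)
The plan is to use Proposition \ref{GmEquivStackyCBKAprp}, which identifies $\mathcal B/\mathbb{G}_m \cong [\tilde{\mathfrak{c}}_S/F\times\mathbb{G}_m]$, where $\tilde{\mathfrak{c}}_S = \mathfrak{z}/W_S$ is an affine space with linear $\mathbb{G}_m$-action commuting with $F$. Fixing the twist $\mathcal L$, a map $\Sigma\to[\tilde{\mathfrak{c}}_S/F\times\mathbb{G}_m]$ lying over $\mathcal L$ amounts to an $F$-torsor $P\to\Sigma$ together with an $F$-equivariant section of the vector bundle $\tilde{\mathfrak{c}}_{S}\times^{\mathbb{G}_m}\mathcal L$ pulled back to $P$, equivalently a section of the twisted bundle $V_{P} := (P\times^{\mathbb{G}_m}\mathcal L)\times^{F\times\mathbb{G}_m}\tilde{\mathfrak{c}}_S$ on $\Sigma$. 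The forgetful map $\mathcal A_\mathcal L(G;S)\to Maps(\Sigma,{\bf B}F)$ thus exhibits $\mathcal A_\mathcal L(G;S)$ as fibred over the stack of $F$-torsors on $\Sigma$, with fibre over $P$ the vector space $H^0(\Sigma,V_P)$.

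For (ii), note that $Maps(\Sigma,{\bf B}F)$ is a finite disjoint union of residual gerbes ${\bf B}Aut(P)$, indexed by $H^1(\Sigma,F)$. This set is finite because $\Sigma$ is a projective curve and $F$ is a finite group, so the classes are given by homomorphisms $\pi_1(\Sigma)\to F$ up to conjugation. The preimage of the point indexed by $P$ is $[H^0(\Sigma,V_P)/Aut(P)]$, where $Aut(P)$ is the finite group of $F$-equivariant automorphisms acting linearly. Since a vector space is connected, this substack is connected and non-empty, as it contains the zero section. Hence the components are indexed by $H^1(\Sigma,F)$, and non-emptiness in (i) follows at the same time.

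For (i) and (iii), each component $[H^0(\Sigma,V_P)/Aut(P)]$ is the quotient of a smooth affine space by a finite group, so it is a smooth Deligne-Mumford stack. This gives (i). For (iii), if $F$ is trivial then every $Aut(P)$ is trivial and $\mathcal A_\mathcal L(G;S)=H^0(\Sigma,\tilde{\mathfrak{c}}_{S,\mathcal L})$ is a scheme. Conversely, suppose $F$ is non-trivial and take $P$ to be the trivial torsor. Then $Aut(P)\supseteq F$ (acting by right multiplication, or by the centre if $F$ is non-abelian; in any case $Aut(P)\cong F$ for the trivial torsor). This group fixes the zero section, so the point $0$ has non-trivial automorphism group, and the stack is not an algebraic space.

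The main obstacle is making the description of maps into the quotient stack $[\tilde{\mathfrak{c}}_S/F\times\mathbb{G}_m]$ precise with fixed $\mathbb{G}_m$-part $\mathcal L$. One must check that the fibre over a given torsor really is $[H^0(\Sigma,V_P)/Aut(P)]$, and that the stabilizer of the zero section is $Aut(P)$ rather than being partly absorbed by the $\mathbb{G}_m$-automorphisms. The latter holds because the $\mathbb{G}_m$-automorphisms are fixed by the fibre condition over ${\bf Pic}(\Sigma)$ at $\mathcal L$, which pins down the $\mathbb{G}_m$-torsor up to the identity.
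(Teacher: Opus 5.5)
Your proposal is correct and follows essentially the same route as the paper. You fibre $\mathcal A_\mathcal L(G;S)$ over the discrete stack ${\bf B}_\Sigma F$ of $F$-torsors and identify the fibre over $P$ with the space of $F$-equivariant sections; your $H^0(\Sigma,V_P)$ is the paper's $(H^0(\tilde{\Sigma},\pi^*\mathfrak{K}_\mathcal L))^F$, which is smooth and contracted to the zero section by $\mathbb{G}_m$. You then use the zero section to exhibit non-trivial automorphisms when $F\neq 1$, as the paper does.

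One detail should be made explicit: $V_P$ is a vector bundle only once $F$ acts linearly on $\tilde{\mathfrak{c}}_S$. You can arrange this by choosing $F$-stable homogeneous generators of $\mathbb{C}[\mathfrak{z}]^{W_S}$ (Maschke). The paper avoids the issue by using only that the $F$-fixed locus is smooth and $\mathbb{G}_m$-contractible.
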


\begin{proof} 
We choose a Katsylo slice $\mathfrak{K}$ for $S$ as defined in Definition \ref{KatsyloSlicedef}, and identify $\mathcal B$ with $[\mathfrak{K}/F]$ by Proposition \ref{StackyCBKQprp}; this induces an identification 
\begin{equation}\label{SHitchinBaseProps2eqn}
    \mathcal A_\mathcal L(G;S) \cong Maps(\Sigma,\mathfrak{K}_\mathcal{L}),
\end{equation}
for $\mathfrak{K}_\mathcal{L} = \mathfrak{K} \times^{\mathbb{G}_m}\mathcal L$ as in Lemma \ref{SheetHiggsNonElem}.

The $\mathbb{C}$-points of $Maps(\Sigma,\mathfrak{K}_\mathcal L)$ are pairs $(\tilde{\Sigma},\sigma)$, where $\pi:\tilde{\Sigma} \rightarrow \Sigma$ is an $F$-torsor and $\sigma:\tilde{\Sigma} \rightarrow \mathfrak{K}_\mathcal L$ is an $F$-equivariant morphism. In particular, there is a map $\mathcal A_\mathcal L(G;S) \rightarrow {\bf B}_{\Sigma}F$, where ${\bf B}_{\Sigma}F$ is the classifying stack of $F$-torsors over $\Sigma$. The fibre over the $\mathbb{C}$-point of ${\bf B}_{\Sigma}F$ defined by an $F$-torsor $\pi:\tilde{\Sigma} \rightarrow \Sigma$ is the space $Maps^F_{\Sigma}(\tilde{\Sigma},\mathfrak{K}_{\mathcal L})$ of $F$-equivariant maps from $\tilde{\Sigma}$ to $\mathfrak{K}_{\mathcal L}$ over $\Sigma$, where $\mathfrak{K}_\mathcal L$ is as in the proof of Lemma \ref{SheetHiggsNonElem}.

By Corollary \ref{KatsyloAffinecor}, $\mathfrak{K}_\mathcal L$ can be identified with a vector bundle on $\Sigma$, so that
$$Maps_{\Sigma}(\tilde{\Sigma},\mathfrak{K}_\mathcal L) = H^0(\tilde{\Sigma},\pi^*\mathfrak{K}_\mathcal L)$$ 
is an affine space, and the fixed point space 
$$Maps^F_{\Sigma}(\tilde{\Sigma},\mathfrak{K}_{\mathcal L})=(H^0(\tilde{\Sigma},\pi^*\mathfrak{K}_\mathcal L))^F$$
is a non-singular variety (as in the proof of Lemma \ref{PseudoCameralgplem}). This proves (i).

The space $(H^0(\tilde{\Sigma},\pi^*\mathfrak{K}_\mathcal L))^F$ is connected: since the global scalar action on the vector bundle $\mathfrak{K}_\mathcal L$ commutes with the $F$-action, there is a $\mathbb{G}_m$-action on $(H^0(\tilde{\Sigma},\pi^*\mathfrak{K}_\mathcal L))^F$ contracting the variety to the point $0_{\tilde{\Sigma}} \in (H^0(\tilde{\Sigma},\pi^*\mathfrak{K}_\mathcal L))^F$ representing the zero-section of $\pi^*\mathfrak{K}_\mathcal L$. This proves (ii).

From the above, if $F$ is trivial then $\mathcal A_\mathcal L(G;S)$ is representable by an affine space. Conversely, if $F$ is non-trivial, the image in $\mathcal A_\mathcal L(G;S)$ of the point $0_{\tilde{\Sigma}} \in (H^0(\tilde{\Sigma},\pi^*\mathfrak{K}_\mathcal L))^F$, for an $F$-torsor $\tilde{\Sigma} \rightarrow \Sigma$, has non-trivial automorphisms. This proves (iii).
\end{proof}

\begin{remark}\label{SHitchinBasePropsrmk}
    The proof of the Proposition \ref{SHitchinBasePropsprp} shows that there is a distinguished component $\mathcal A^0_{\mathcal L}(G;S)$ of $\mathcal A_\mathcal L(G;S)$, corresponding to the trivial $F$-torsor on $\Sigma$. This component is exacly the image of the map $q:H^0(\Sigma,\mathfrak{K}_\mathcal L) \rightarrow \mathcal A(G;S)$ and $q$ realises $\mathcal A^0_{\mathcal L}(G;S)$ as the quotient $[H^0(\Sigma,\mathfrak{K}_\mathcal L)/F]$.
    
    In particular, if $F$ is trivial, then $\mathcal A_{\mathcal L}(G;S) = \mathcal A_{\mathcal L}^0(G;S)$, and this can be represented by the affine space
    \begin{equation}\label{SHitchinBasePropseqn}
        H^0(\Sigma,\mathfrak{K}_\mathcal L) = \bigoplus_{i=1}^rH^0(\Sigma,\mathcal L^{e_i}),
    \end{equation}
    for the weights $e_i \in \mathbb{N}$ of Corollary \ref{KatsyloAffinecor}.
\end{remark}

\begin{proposition}\label{SHitchinBaseNormprp}
    The map $\tilde{\mu}_S:\mathcal A_{\mathcal L}(G;S) \rightarrow \mathcal A_\mathcal L(G)$ of Remark \ref{SHitchinBasermk} is quasi-finite (i.e. it has finite fibres over $\mathbb{C}$-points of $\mathcal A_{\mathcal L}(G)$). There is an open subscheme $\mathcal A_{\mathcal L}^\heartsuit(G;S)$ of $\mathcal A_{\mathcal L}(G;S)$ on which $\tilde{\mu}_S$ is injective, and if $\mathcal L$ admits a global section, $\mathcal A^\heartsuit_{\mathcal L}(G;S)$ has non-trivial intersection with the component $\mathcal A_\mathcal L^0(G;S)$.
\end{proposition}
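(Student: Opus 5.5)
We factor $\tilde{\mu}_S = \mu_S\circ\gamma_S$ as in Remark \ref{SHitchinBasermk} and treat the two maps separately. Quasi-finiteness is checked on $\mathbb{C}$-points. A $\mathbb{C}$-point of $\mathcal A_\mathcal L(G;S)$ is a pair $(\tilde\Sigma,\sigma)$, with $\tilde\Sigma\to\Sigma$ an $F$-torsor and $\sigma:\tilde\Sigma\to\mathfrak{K}_\mathcal L$ an $F$-equivariant map. Its image under $\tilde\mu_S$ is the section of $(\mathfrak{t}\otimes\mathcal L)/W$ obtained by composing $\sigma$ with the quotient $\mathfrak{K}\to\mathfrak{K}/F=\mathfrak{c}_S$ and with $\nu_S:\mathfrak{c}_S\to\mathfrak{t}/W$.

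\textbf{Step 1 (quasi-finiteness).} The map $\nu_S$ is finite, since by Proposition \ref{BorhoChevalleyprp} it is a normalisation followed by a closed embedding. Fix $a\in\mathcal A_\mathcal L(G)$ and choose a nonempty open $U\subseteq\Sigma$ over which $a$ takes values in the locus where $\nu_S$ is étale, or at least has reduced finite fibres. On $U$, the lifts of $a$ through $\nu_S$ correspond to sections of a finite cover. A lift of $a$ to $\mathfrak{c}_{S,\mathcal L}$ is determined by its restriction to $U$ because $\mathfrak{c}_{S,\mathcal L}$ is separated. The possible restrictions are the sections of a finite étale cover of a connected curve, and there are finitely many of these. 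This shows ${\bf A}(\tilde S_\mathcal L)\to\mathcal A_\mathcal L(G)$ has finite fibres.

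For $\gamma_S$, the fibre over a section $b$ of $\mathfrak{c}_{S,\mathcal L}$ consists of the $F$-torsors $\tilde\Sigma$ together with equivariant lifts of $b$ through $\mathfrak{K}\to\mathfrak{K}/F$. The set $H^1(\Sigma,F)$ is finite. For a fixed torsor, two lifts agree on the dense open set where $b$ meets the free locus $\mathfrak{K}^\circ$, up to the finite group $F$. This only works if $b$ meets that locus, which is the delicate case. If $b(\Sigma)$ lies in the non-free locus, I would stratify $\mathfrak{K}$ by stabiliser type and run the same argument with the normaliser quotients $N_F(F')/F'$ acting freely on the stratum of points with stabiliser $F'$. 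In every case the lifts are finite in number.

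\textbf{Step 2 (the injectivity locus).} Let $\mathfrak{c}_S^{\heartsuit}\subseteq\mathfrak{c}_S$ be the open locus over which $\nu_S$ is injective and étale onto its image. This locus is nonempty: $\nu_S$ is birational onto its image, and it is injective near images of points of $\mathfrak{z}^{rs}$ with trivial $W$-stabiliser modulo $W_L$. Let $\mathfrak{K}^\heartsuit=\mathfrak{K}^\circ\cap q^{-1}(\mathfrak{c}_S^\heartsuit)$. Define $\mathcal A^\heartsuit_\mathcal L(G;S)$ as the open locus of those $(\tilde\Sigma,\sigma)$ for which $\sigma$ meets $\mathfrak{K}^\heartsuit_\mathcal L$ at some point; openness holds because this is an open condition on maps. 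Points of this locus have trivial automorphisms, since the stabiliser in $F$ is generically trivial, so the locus is a scheme.

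For injectivity, suppose two points have the same image $a$. They then agree on the open set where $a$ lands in $\nu_S(\mathfrak{c}_S^\heartsuit)$, after pulling back through the injective map $\nu_S$. Over that open set the $F$-torsor is recovered as the pullback of the $F$-torsor $\mathfrak{K}^\circ\to\mathfrak{K}^\circ/F$ along $b$. The torsor over all of $\Sigma$ is the normalisation of $\Sigma$ in this generic torsor, since torsors are étale, so $\sigma$ is determined by separatedness. This gives injectivity.

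\textbf{Step 3 (meeting $\mathcal A^0$).} By Remark \ref{SHitchinBasePropsrmk}, $\mathcal A^0_\mathcal L(G;S)$ is the image of $H^0(\Sigma,\mathfrak{K}_\mathcal L)=\bigoplus_i H^0(\Sigma,\mathcal L^{e_i})$, where the $e_i\ge1$ are the weights of Corollary \ref{KatsyloAffinecor}. Take a nonzero $s\in H^0(\Sigma,\mathcal L)$ and a point $k\in\mathfrak{K}^\heartsuit$ with coordinates $(k_i)$. The section $(k_i s^{e_i})_i$ takes the value $k$ up to $\mathbb{G}_m$-scaling at every point where $s\neq0$. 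The locus $\mathfrak{K}^\heartsuit$ is $\mathbb{G}_m$-stable, since $\nu_S$, $q$ and the $F$-action are all equivariant. Hence this section lies in $\mathcal A^\heartsuit$.

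I expect the main obstacle to be Step 1 for $\gamma_S$ when the section stays inside the non-free locus, which forces the stratification argument. A second difficulty is making precise that $\nu_S$ is injective on a dense open set, which needs Borho's description of the normalisation.
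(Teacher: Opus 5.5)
Your Step 1 has a genuine gap in the treatment of $\mu_S$. You cannot in general choose a nonempty open $U\subseteq\Sigma$ on which $a$ lands in the unramified (or ``reduced fibre'') locus of $\nu_S$, because points in the image of $\tilde{\mu}_S$ can lie entirely in the branch locus of $\nu_S$. For example, for the sheet $S_{(2,1)}\subseteq\mathfrak{gl}_3$ one has $W_L=1$ and $\mathfrak{c}_S=\mathfrak{z}\cong\mathbb{C}^2$, and $\nu_S(\lambda_1,\lambda_2)$ is the coefficient vector of $(T-\lambda_1)^2(T-\lambda_2)$. At $(b,b)$ all three differentials are multiples of $2\,d\lambda_1+d\lambda_2$, so $\nu_S$ is ramified along the whole diagonal. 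The point $(b,b)$ of $\mathcal A_\mathcal L(GL_3;S_{(2,1)})=H^0(\Sigma,\mathcal L)^{\oplus 2}$ maps to $a=(T-b)^3$, which never meets the unramified locus; already $b=0$ gives an example.

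No genericity is needed; the missing observation is the one the paper uses. Lifts of $a$ are sections of the finite morphism $\Sigma\times_{\mathfrak{c}_\mathcal L}\mathfrak{c}_{S,\mathcal L}\to\Sigma$. Since $\Sigma$ is a smooth connected curve and every irreducible component of this finite cover has dimension at most one, a section is a closed immersion identifying $\Sigma$ with the reduced structure on one of the finitely many irreducible components, and it is determined by that component. Apply the same argument to $\Sigma\times_{\mathfrak{c}_{S,\mathcal L}}(\tilde\Sigma\times^F\mathfrak{K}_\mathcal L)\to\Sigma$ for each of the finitely many $F$-torsors $\tilde\Sigma$. This treats $\gamma_S$ uniformly, so the stabiliser-type stratification you sketch (but do not carry out) for sections in the non-free locus is unnecessary.

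The remaining steps are essentially sound, with one small omission in Step 2. Trivial automorphism groups only make $\mathcal A^\heartsuit_\mathcal L(G;S)$ an algebraic space, and to conclude it is a scheme you need an extra input. One option is the paper's use of \cite[Th\'eor\`eme A.2]{LMB} via the quasi-finite map $\tilde\mu_S$ to the scheme $\mathcal A_\mathcal L(G)$. Another is to observe that on each component it is the quotient of an open subset of the affine variety $(H^0(\tilde\Sigma,\pi^*\mathfrak{K}_\mathcal L))^F$ by a free action of the finite automorphism group of the torsor $\tilde\Sigma$.

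Your injectivity argument is a valid, more elementary substitute for the paper's appeal to \cite[Proposition A.1]{FMN}: you recover the torsor over all of $\Sigma$ as the normalisation of $\Sigma$ in its restriction over the free locus, then use separatedness for $\sigma$. Your $\mathfrak{K}^\heartsuit$ plays the role of the paper's $\mathcal B^{rs}=p(\mathfrak{z}^{rs})$, which lies both in the schematic locus of $\mathcal B$ and over the locus where $\nu_S$ is an isomorphism onto its image. Step 3 is fine.
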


\begin{proof}
We choose a Katsylo slice $\mathfrak{K}$ and decomposition data $(L,\mathcal O)$ for $S$ (see Remark \ref{Decompdatarmk}) and identify $\mathfrak{K}$ with a finite quotient of $\mathfrak{z} = Lie(Z(L))$ as in Corollary \ref{KatsyloAffinecor}. The map $\tilde{\mu}_S$ is induced by the composition
\begin{equation}\label{SHitchinBaseNormeqn}
\begin{tikzcd}
\mathcal B \arrow[r, "C"] & \mathfrak{z}/W_L \arrow[r, "\nu_L"] & \mathfrak{t}/W
\end{tikzcd}
\end{equation}
where $C$ is the coarsification map $C:[\mathfrak{K}/F] \rightarrow \mathfrak{K}/F \cong \mathfrak{z}/W_L$ (identifying $\mathcal B$ with $[\mathfrak{K}/F]$ as in the proof of the previous proposition), and $\nu_L$ is a normalisation onto its image. We will denote $\mathfrak{c}_{S,\mathcal L} = \mathfrak{z} \otimes\mathcal L/W_L$ and $\mathfrak{c}_\mathcal L = \mathfrak{t} \otimes \mathcal L/W$.

Suppose $\sigma:\Sigma \rightarrow \mathfrak{c}_\mathcal L$ represents a $\mathbb{C}$-point of $\mathcal A(G)$ in the image of $\tilde{\mu}_S$. Any lift $\hat{\sigma}:\Sigma \rightarrow \mathfrak{c}_{S,\mathcal L}$ of $\sigma$ is equivalent to a section $\hat{\sigma}:\Sigma \rightarrow \Sigma \times_{\mathfrak{c}_\mathcal L}\mathfrak{c}_{S,\mathcal L}$ of the finite surjective map $\Sigma \times_{\mathfrak{c}_\mathcal L}\mathfrak{c}_{S,\mathcal L} \rightarrow \Sigma$. Since $\Sigma$ is non-singular and connected, and every irreducible component of $\Sigma \times_{\mathfrak{c}_\mathcal L}\mathfrak{c}_{S,\mathcal L}$ has dimension less than or equal to that of $\Sigma$, such a section $\hat{\sigma}$ is an identification of $\Sigma$ with the reduced subscheme of an irreducible component of $\Sigma \times_{\mathfrak{c}_\mathcal L}\mathfrak{c}_{S,\mathcal L}$; as there are finitely many of these, there are finitely many such $\hat{\sigma}$.

Meanwhile, any map $\hat{\sigma}':\Sigma \rightarrow \mathcal B_\mathcal L:=\mathcal B \times^{\mathbb{G}_m}\mathcal L \cong [\mathfrak{K}_\mathcal L/F]$ lifting the choice of map $\hat{\sigma}$ corresponds to an isomorphism class of pairs $(\tilde{\Sigma},\tilde{\sigma})$ where $\tilde{\Sigma}$ is an $F$-torsor on $\Sigma$ and $\tilde{\sigma}$ is a section of $\tilde{\Sigma} \times^F\mathfrak{K}_\mathcal L$ lifting the map $\hat{\sigma}:\Sigma \rightarrow \mathfrak{c}_{S,\mathcal L} \cong  \mathfrak{K}_\mathcal L/F$. There are finitely many choices of $F$-torsor $\tilde{\Sigma}$ up to isomorphism, and for a fixed $\tilde{\Sigma}$ there are finitely many choices of $\tilde{\sigma}$ by the same argument as the previous paragraph. Hence, there are finitely many $\mathbb{C}$-points of $\mathcal A_\mathcal L(G;S)$ mapping to $\sigma$.

Consider the $\mathbb{G}_m$-stable open subscheme $\mathcal B^{rs}$ of $\mathcal B$ given by the image of the open subscheme $\mathfrak{z}^{rs} \subseteq \mathfrak{z}$ (defined in (\ref{WeylLeviActioneqn})) under the map $p:\mathfrak{z}\rightarrow \mathcal B$ of Lemma \ref{StackyZquotientlem}. This defines an open substack $\mathcal A_{\mathcal L}^\heartsuit(G;S)$ of $\mathcal A_\mathcal L(G;S)$ with $\mathbb{C}$-points corresponding to maps $\tau:\Sigma \rightarrow \mathcal B_{\mathcal L}$ whose image has non-trivial intersection with $\mathcal B^{rs}_\mathcal L := \mathcal B^{rs} \times^{\mathbb{G}_m} \mathcal L$. Moreover, $\mathcal A_{\mathcal L}^\heartsuit(G;S)$ is representable by a scheme: $\mathcal A_{\mathcal L}^\heartsuit(G;S)$ has trivial inertia stack by \cite[Proposition A.1]{FMN} since $\mathcal B^{rs}_{\mathcal L}$ is a scheme, so is an algebraic space; and by \cite[Théorème A.2]{LMB} it is a scheme since it admits a quasi-finite map $\tilde{\mu}_S$ to a scheme.

Suppose now $\sigma:\Sigma \rightarrow \mathfrak{c}_\mathcal L$ represents a $\mathbb{C}$-point of $\mathcal A_\mathcal L(G)$ in the image of $\mathcal A_{\mathcal L}^\heartsuit(G;S)$ under $\tilde{\mu}_S$. The map $\tilde{\nu}_S$ (defined by (\ref{SHitchinBaseNormeqn})) is an isomorphism over $\mathcal B^{rs}_{\mathcal L}$ and so any two lifts $\tau_1:\Sigma \rightarrow \mathcal B_\mathcal L$ and $\tau_2:\Sigma \rightarrow \mathcal B_\mathcal L$ of $\hat{\sigma}:\Sigma \rightarrow \mathcal \mathfrak{c}_{S,\mathcal L}$ agree over $\mathcal B^{rs}_{\mathcal L}$. But then by \cite[Proposition A.1]{FMN}, the maps $\tau_1$ and $\tau_2$ coincide. Hence $\tilde{\mu}_S$ is injective on $\mathcal A_{\mathcal L}^\heartsuit(G;S)$. If $\mathcal L$ admits a global section, then $\mathcal A_{\mathcal L}^\heartsuit(G;S)$ has non-empty intersection with $\mathcal A^0_{\mathcal L}(G;S)$, since any section of $\mathfrak{z} \otimes \mathcal L$ which is not contained in one of a finite number of root hyperplanes determines a point of $\mathcal A^0_{\mathcal L}(G;S)$ which is contained in $\mathcal A_{\mathcal L}^\heartsuit(G;S)$.
\end{proof}

\begin{remark}\label{GenHFAheartrmk}
    In the case that $S$ is Dixmier (so that $\tilde{S}$ is Luna-Richardson, see Remark \ref{RegularQuotientrmk}) the construction of $\mathcal A^\heartsuit_\mathcal L(G,S)$ is outlined in a more general context in \cite[Section 4.3]{Ngo3}. It is shown there that the map $\gamma_S$ of (\ref{SHitchinBase2eqn}) is an isomorphism onto an open subscheme ${\bf A}^\heartsuit(\tilde{S}_\mathcal L)$ of the generalised Hitchin base.
\end{remark}

We can use the results of Section \ref{AdjointQGerbesbn} to describe the fibres of $h_S$. An $S$-valued Higgs bundle $(E,\mathcal L, \Phi)$ on $\Sigma$ corresponds to a map $f_{(E, \mathcal L, \Phi)}:\Sigma \rightarrow [S/G\times \mathbb{G}_m]$, and thus we can define a group scheme $\mathcal I^{sm}_{(E, \mathcal L, \Phi)} = f^*_{(E, \mathcal L, \Phi)}\mathcal I^{sm}_{S,G\times \mathbb{G}_m}$ over $\Sigma$ (in the notation of Corollary \ref{GmEquivStackyCBcor}).

\begin{theorem}\label{HiggsNonAbnthm}
         Given an $S$-valued Higgs bundle $(E, \mathcal L, \Phi)$ on $\Sigma$ mapping to a $\mathbb{C}$-point $\tau$ of $\mathcal A(G;S)$, the fibre $h_S^{-1}(\tau)$ can be identified with the stack ${\bf B}_{\Sigma} \mathcal I^{sm}_{(E, \mathcal L, \Phi)}$ of $\mathcal I^{sm}_{(E, \mathcal L, \Phi)}$-torsors on $\Sigma$.
\end{theorem}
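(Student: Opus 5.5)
The plan is to deduce the statement from the gerbe structure of Corollary \ref{GmEquivStackyCBcor} by passing to mapping stacks. The fibre $h_S^{-1}(\tau)$ is by definition the stack of maps $f:\Sigma \rightarrow [S/G\times\mathbb{G}_m]$ together with a $2$-isomorphism $\rho_S \circ f \cong \tau$, where $\tau:\Sigma \rightarrow \mathcal B/\mathbb{G}_m$. This is the same as the stack of sections over $\Sigma$ of the base change
\begin{equation*}
\mathcal X_\tau := \Sigma \times_{\tau, \mathcal B/\mathbb{G}_m} [S/G\times\mathbb{G}_m] \longrightarrow \Sigma .
\end{equation*}
So the problem reduces to identifying the $\Sigma$-stack $\mathcal X_\tau$ and then computing its stack of sections.

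First, the given $S$-valued Higgs bundle $(E,\mathcal L,\Phi)$ corresponds to $f_0 = f_{(E,\mathcal L,\Phi)}:\Sigma \rightarrow [S/G\times\mathbb{G}_m]$ with $\rho_S\circ f_0 \cong \tau$. Hence $\mathcal X_\tau \rightarrow \Sigma$ has a section $s_0$ induced by $f_0$. Next, apply Corollary \ref{GmEquivStackyCBcor} with $X=\Sigma$ and $f = f_0$. The Cartesian square (\ref{GmEquivStackyCB1eqn}) says exactly that $\mathcal X_\tau \cong {\bf B}(f_0^*\mathcal I^{sm}_{S,G\times\mathbb{G}_m}) = {\bf B}\mathcal I^{sm}_{(E,\mathcal L,\Phi)}$ as stacks over $\Sigma$, and under this isomorphism $s_0$ is the trivial torsor. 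Here we are using that $\rho_{S,G\times\mathbb{G}_m}$ is a gerbe trivialised by the section $s_0$, and that its automorphism sheaf along $s_0$ is the pullback of $\mathcal I^{sm}$ by construction of the rigidification (\cite[Theorem A.1, Remark A.2]{AOV}).

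Finally, sections over $\Sigma$ of the relative classifying stack ${\bf B}\mathcal I^{sm}_{(E,\mathcal L,\Phi)} \rightarrow \Sigma$ are by definition $\mathcal I^{sm}_{(E,\mathcal L,\Phi)}$-torsors on $\Sigma$, and this holds functorially in families $\Sigma\times T$, because forming ${\bf B}$ commutes with base change. Therefore $h_S^{-1}(\tau) \cong Sec(\Sigma, \mathcal X_\tau) \cong {\bf B}_\Sigma \mathcal I^{sm}_{(E,\mathcal L,\Phi)}$. The torsor is fppf here, and since $\mathcal I^{sm}$ is smooth by Proposition \ref{SmCentraliserPropsprp} it is even étale-locally trivial.

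The main obstacle is the identification of $\mathcal X_\tau$ in the second step. We must check that the base change of the $\mathbb{G}_m$-equivariant gerbe along $\tau$ really is ${\bf B}$ of the pulled-back group scheme, and that the $2$-isomorphism $\rho_S\circ f_0\cong\tau$ is compatible with the twist $\mathcal L$. Concretely, over the $\mathbb{G}_m$-quotients one must confirm that $\mathcal I^{sm}_{S,G\times\mathbb{G}_m}$ is the correct band, i.e. that the $\mathbb{G}_m$-action preserves $\mathcal I^{sm}_S$. This is Proposition \ref{SmCentGmEquivprp}, and with it the required statement is already packaged in Corollary \ref{GmEquivStackyCBcor}. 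The rest is formal unwinding of mapping stacks.
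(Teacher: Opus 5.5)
Your proposal is correct and follows the same route as the paper, which deduces the theorem in one line from the gerbe description of Corollary \ref{GmEquivStackyCBcor} together with functoriality of the mapping stack construction. Your identification of $h_S^{-1}(\tau)$ with the stack of sections of $\Sigma\times_{\tau,\mathcal B/\mathbb{G}_m}[S/G\times\mathbb{G}_m]\cong {\bf B}\mathcal I^{sm}_{(E,\mathcal L,\Phi)}$ is simply an explicit unwinding of that argument.
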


\begin{proof}
This follows from Corollary \ref{GmEquivStackyCBcor} since the mapping stack construction is functorial. 
\end{proof}

\begin{remark}\label{HiggsNonAbnrmk}
    For a different choice of Higgs bundle $(F, \mathcal L', \Psi)$ mapping to the same point $\tau$, the group scheme $\mathcal I^{sm}_{(F, \mathcal L', \Psi)}$ is isomorphic to $\mathcal I^{sm}_{(E, \mathcal L, \Phi)}$; however the isomorphism is non-canonical. Unless $S$ is the regular sheet, the group scheme $\mathcal I^{sm}_{(E, \mathcal L, \Phi)}$ is not commutative. 

    By Proposition \ref{SHitchinBaseNormprp}, the intersection of the usual Hitchin fibre $h_G^{-1}(\sigma)$ with $\mathcal M(G;S)$ is a disjoint finite union of stacks of torsors on $\Sigma$. 
\end{remark}

We can construct an analogue to the Hitchin section over the distinguished component $\mathcal A^0_\mathcal L(G;S)$, as in \cite[Proposition 2.5]{Ngo1}. We suppose $\mathcal L$ is a line bundle on $\Sigma$ which admits a square-root. Let $\mathcal M_\mathcal L^0(G;S)$ be the restriction of $\mathcal M_\mathcal L(G;S)$ to $\mathcal A_\mathcal L^0(G;S)$; $\mathcal M_\mathcal L^0(G;S)$ is a union of irreducible components of $\mathcal M_{\mathcal L}(G;S)$.

\begin{proposition}\label{HKMSectionprp}
     A choice of Katsylo slice $\mathfrak{K}$ for $S$ and a choice of square root $\mathcal L^{1/2}$ of $\mathcal L$ determines a map $\varepsilon: H^0(\Sigma,\mathfrak{K}_\mathcal L) \rightarrow \mathcal M_\mathcal L^0(G;S)$ making the diagram
\begin{equation} \label{HKMSectioneqn}    
\begin{tikzcd}
                   & \mathcal M^0_\mathcal L(G;S) \arrow[d, "h_S"] \\
{H^0(\Sigma,\mathfrak{K}_\mathcal L)} \arrow[r, "q"] \arrow[ru, "\varepsilon"] & \mathcal A^0_\mathcal L(G;S)                 
\end{tikzcd}
\end{equation}
     commute. In particular, $\mathcal A_{\mathcal L}^0(G;S)$ is in the image of $h_S$.
\end{proposition}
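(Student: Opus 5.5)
The approach is to globalise the local map of Proposition \ref{StackyHKSectionprp}, in the same way as in the proof of Lemma \ref{SheetHiggsNonElem}, which already builds the point $\varepsilon(0)$. The key input is the $\mathbb{G}_m$-equivariant square
\begin{equation*}
\begin{tikzcd}
{[\mathfrak{K}/\mathbb{G}_m]} \arrow[r] \arrow[d] & {[S/G \times \mathbb{G}_m^{[2]}]} \arrow[d] \\
{[\mathfrak{K}/F \times \mathbb{G}_m]} \arrow[r, "\cong"] & {\mathcal B/\mathbb{G}_m^{[2]}}
\end{tikzcd}
\end{equation*}
from (\ref{StackyHKSection1eqn}). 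Here the $\mathbb{G}_m$-action on $\mathfrak{K}$ is the Kazhdan action. By Corollary \ref{KatsyloAffinecor} this action has a square root, which is compatible with the scaling action on $\mathcal B$.

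\textbf{Step 1: recast the square as a map over $\Sigma$.} Twisting by the $\mathbb{G}_m$-torsor $\mathcal L^{1/2}$, with the Kazhdan action in the role of the square of the scaling action, the top arrow gives a map of stacks over $\Sigma$:
\[
\mathfrak{K}_{\mathcal L} = \mathfrak{K} \times^{\mathbb{G}_m} \mathcal L \;\longrightarrow\; [S_{\mathcal L}/G].
\]
Concretely, the Kazhdan action is $t\cdot x = Ad_{\lambda(t^{-1})}(t^2 x)$. The factor $t^2$ accounts for the twist by $\mathcal L = (\mathcal L^{1/2})^{2}$. The factor $\lambda(t^{-1})$ is absorbed into a change of structure group of the $G$-bundle, namely the bundle $\mathcal L^{1/2}\times^{\mathbb{G}_m,\lambda^{-1}} G$.

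\textbf{Step 2: take sections and define $\varepsilon$.} Passing to sections over $\Sigma$ gives a morphism
\[
\varepsilon : H^0(\Sigma,\mathfrak{K}_{\mathcal L}) = Sec(\Sigma,\mathfrak{K}_{\mathcal L}) \longrightarrow Sec(\Sigma,[S_{\mathcal L}/G]) = \mathcal M_{\mathcal L}(G;S),
\]
using (\ref{SheetHiggsNonE1eqn}). This works in families, so $\varepsilon$ is a morphism of stacks; the mapping stack construction is functorial. The identification of $Sec(\Sigma,\mathfrak{K}_{\mathcal L})$ with $H^0$ of a vector bundle uses the linearisation of Corollary \ref{KatsyloAffinecor}.

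\textbf{Step 3: commutativity of (\ref{HKMSectioneqn}).} Apply the same functor to the commutative square above. The composite through $[S/G\times\mathbb{G}_m^{[2]}]$ and $\mathcal B/\mathbb{G}_m^{[2]}$ yields $h_S\circ\varepsilon$. The composite through $[\mathfrak{K}/F\times\mathbb{G}_m]$ yields
\[
H^0(\Sigma,\mathfrak{K}_{\mathcal L}) \longrightarrow Maps(\Sigma,[\mathfrak{K}_{\mathcal L}/F]) \cong \mathcal A_{\mathcal L}(G;S),
\]
which by Remark \ref{SHitchinBasePropsrmk} is the map $q$ onto the component $\mathcal A^0_{\mathcal L}(G;S)$: $F$-equivariant maps from the trivial torsor. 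In particular $\varepsilon$ lands in $\mathcal M^0_{\mathcal L}(G;S)$. Surjectivity of $q$ then gives the final claim that $\mathcal A^0_{\mathcal L}(G;S)$ lies in the image of $h_S$.

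\textbf{Main obstacle.} The main obstacle is Step 1: checking that the square-root twist is consistent. The identification $\mathcal B/\mathbb{G}_m^{[2]}$ twisted by $\mathcal L^{1/2}$ must agree with $\mathcal B/\mathbb{G}_m$ twisted by $\mathcal L$, and under this the $F$-action and the lift to $[S/G]$ must remain compatible. I would handle this as in \cite[Proposition 2.5]{Ngo1}: use the squaring homomorphism $sq$ to identify $\mathcal X/\mathbb{G}_m^{[2]}\times_{{\bf B}\mathbb{G}_m^{[2]}}\{\mathcal L^{1/2}\}$ with $\mathcal X/\mathbb{G}_m\times_{{\bf B}\mathbb{G}_m}\{\mathcal L\}$. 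The compatibility with the scaling action on $\mathcal B$ is then supplied by Proposition \ref{GmEquivStackyCBKAprp}.
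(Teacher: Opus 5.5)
Your proposal is correct and follows the paper's own argument. The paper also twists the $\mathbb{G}_m$-equivariant square (\ref{StackyHKSection1eqn}) of Proposition \ref{StackyHKSectionprp} by $\mathcal L^{1/2}$ to obtain $\mathfrak{K}_{\mathcal L} \rightarrow [S_{\mathcal L}/G]$, takes sections to define $\varepsilon$, and deduces commutativity of (\ref{HKMSectioneqn}) from commutativity of that square; your Steps 1--3 spell out the square-root bookkeeping and the identification of the bottom composite with $q$, which the paper leaves implicit.
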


\begin{proof} 
Proposition \ref{StackyHKSectionprp} shows that the choice of square-root of $\mathcal L$ and Katsylo slice defines a map $\mathfrak{K}_\mathcal L \rightarrow [S_{\mathcal L}/G]$ which induces the map $\varepsilon$, and commutativity of (\ref{HKMSectioneqn}) follows from commutativity of (\ref{StackyHKSection1eqn}).
\end{proof}

\begin{remark}\label{HKMSectionrmk}
    We refer to $\varepsilon$ as a \emph{Hitchin-Katsylo multisection}, regarding it as a multi-valued section for the $S$-Hitchin map defined on an étale cover of $\mathcal A_\mathcal L^0(G;S)$. If $F$ is trivial, then $q$ is an isomorphism and $\varepsilon$ defines a genuine global section to $h_S$.
\end{remark}

The Hitchin-Katsylo multisection gives a global version of Theorem \ref{HiggsNonAbnthm}. In particular, it defines a map $\delta: H^0(\Sigma,\mathfrak{K}_\mathcal L) \times \Sigma \rightarrow [S_{\mathcal L}/G]$ via the evaluation map, and thus defines a group scheme $\mathcal I_{\delta}^{sm} = \delta^*(\mathcal I^{sm}_{S,G} \times^{\mathbb{G}_m}\mathcal L)$ on $H^0(\Sigma,\mathfrak{K}_\mathcal L) \times \Sigma$.

\begin{theorem}\label{GlobalNonabnthm}
  A choice of Katsylo slice $\mathfrak{K}$ for $S$ and square-root $\mathcal L^{1/2}$ of $\mathcal L$ determines a Cartesian diagram
  \begin{equation}\label{GlobalNonabn1eqn}
\begin{tikzcd}
{\bf B}_\Sigma(\mathcal I_{\delta}^{sm}) \arrow[d] \arrow[r] & \mathcal M^0_\mathcal L(G;S) \arrow[d, "h_S"] \\
{H^0(\Sigma,\mathfrak{K}_\mathcal L)} \arrow[r, "q"]         & \mathcal A^0_\mathcal L(G;S)                 
\end{tikzcd}
  \end{equation}
  
  Here, ${\bf B}_\Sigma(\mathcal I_{\delta}^{sm})$ denotes the mapping stack
  \begin{equation}\label{GlobalNonabn2eqn}
      {\bf B}_\Sigma(\mathcal I_{\delta}^{sm}) = Sec(\Sigma, {\bf B}(\mathcal I_{\delta}^{sm})),
  \end{equation}
  where ${\bf B}(\mathcal I_{\mathcal \delta}^{sm})$ is the classifying stack for the group scheme $\mathcal I_{\mathcal \delta}^{sm}$ defined above.
\end{theorem}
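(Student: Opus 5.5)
The plan is to globalise the pointwise statement of Theorem \ref{HiggsNonAbnthm}, working with the lift $\varepsilon$ from Proposition \ref{HKMSectionprp} rather than with a single Higgs bundle. Write $Y = H^0(\Sigma,\mathfrak{K}_\mathcal L)$. Step one is to compute the fibre product $Y \times_{q,\mathcal A^0_\mathcal L(G;S),h_S} \mathcal M^0_\mathcal L(G;S)$ as a mapping stack. Since $\mathcal M_\mathcal L(G;S) = Sec(\Sigma,[S_\mathcal L/G])$ and $\mathcal A_\mathcal L(G;S) = Sec(\Sigma,\mathcal B_\mathcal L)$, with $h_S$ induced by the twisted gerbe $\rho_{S,\mathcal L}:[S_\mathcal L/G]\to \mathcal B_\mathcal L$ (Corollary \ref{GmEquivStackyCBcor} twisted by $\mathcal L$), an $X$-point of the fibre product is a section over $X\times\Sigma$ of the pullback of $\rho_{S,\mathcal L}$ along the evaluation map $X\times\Sigma\to Y\times\Sigma\to\mathcal B_\mathcal L$. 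Because forming sections commutes with base change, the fibre product becomes $Sec_{Y}(Y\times\Sigma, (Y\times\Sigma)\times_{\mathcal B_\mathcal L}[S_\mathcal L/G])$, taken relative to $Y$.

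Step two is to identify the stack $(Y\times\Sigma)\times_{\mathcal B_\mathcal L}[S_\mathcal L/G]$. The map $Y\times\Sigma\to\mathcal B_\mathcal L$ factors through $[S_\mathcal L/G]$ via $\delta$. This factorisation is exactly what Proposition \ref{HKMSectionprp} and diagram (\ref{StackyHKSection1eqn}) supply once the Katsylo slice and the square root $\mathcal L^{1/2}$ are fixed, since they give $\mathfrak{K}_\mathcal L\to[S_\mathcal L/G]$ compatible with $\mathfrak{K}_\mathcal L\to\mathcal B_\mathcal L$. Applying the Cartesian square (\ref{GmEquivStackyCB1eqn}) with $f=\delta$ then identifies this fibre product with ${\bf B}(\delta^*\mathcal I^{sm}) = {\bf B}(\mathcal I^{sm}_\delta)$ over $Y\times\Sigma$. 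Taking sections over $\Sigma$ gives ${\bf B}_\Sigma(\mathcal I^{sm}_\delta)$, which produces the diagram (\ref{GlobalNonabn1eqn}). The top arrow is the composite of the section $\varepsilon$ with twisting by torsors, and the square is Cartesian by construction.

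The main obstacle is bookkeeping of the $\mathbb{G}_m$-twists. The gerbe statement is proved for $[S/G\times\mathbb{G}_m]\to\mathcal B/\mathbb{G}_m$, but the section $\varepsilon$ uses the Kazhdan action and $\mathcal B/\mathbb{G}_m^{[2]}$ (Proposition \ref{StackyHKSectionprp}). I would handle this by passing everything to the $\mathbb{G}_m^{[2]}$-cover determined by $\mathcal L^{1/2}$. I would then check that the relevant group scheme $\mathcal I^{sm}_{S,G}\times^{\mathbb{G}_m}\mathcal L$ agrees with the pullback through the squared action, which holds because $\mathcal I^{sm}_S$ is stable under both actions (Proposition \ref{SmCentGmEquivprp}). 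A secondary check is that the relative-section description is compatible with restricting to the component $\mathcal A^0_\mathcal L(G;S)$. This is immediate, since $q$ lands in that component by Remark \ref{SHitchinBasePropsrmk}.
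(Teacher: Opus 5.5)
Your proposal is correct and is essentially the paper's argument: apply the Cartesian square (\ref{GmEquivStackyCB1eqn}) with $X = H^0(\Sigma,\mathfrak{K}_\mathcal L)\times\Sigma$ and $f=\delta$, then take sections over $\Sigma$, using that mapping stacks preserve fibre products; the $\mathbb{G}_m$-twist bookkeeping you flag is already handled by the commutativity in Proposition \ref{HKMSectionprp}. The only cosmetic difference is that the paper applies the absolute $Sec(\Sigma,-)$ and then uses $Maps(\Sigma,H^0(\Sigma,\mathfrak{K}_\mathcal L))\cong H^0(\Sigma,\mathfrak{K}_\mathcal L)$ (by projectivity of $\Sigma$), whereas you work with sections relative to $H^0(\Sigma,\mathfrak{K}_\mathcal L)$ from the start, which comes to the same thing.
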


\begin{proof}
The diagram (\ref{GmEquivStackyCB1eqn}) induces the diagram (\ref{GlobalNonabn1eqn}), noting that there is an isomorphism $Maps(\Sigma,H^0(\Sigma,\mathfrak{K}_\mathcal L)) \cong H^0(\Sigma,\mathfrak{K}_L)$ since $\Sigma$ is projective.
\end{proof}

\subsection{The abelianised fibration}\label{IntAbFibrnsbn}

We now assume that $S$ is a non-singular Dixmier sheet and that the cameral homomorphism $\kappa_S$ (constructed in Proposition \ref{CameralHomprp}) is smooth, as is the case when $G$ is a classical group by Proposition \ref{CameralSmoothprp}. We assume that $S$ corresponds to some fixed Levi subgroup $L\leq G$, i.e. every semisimple element in $S$ has centraliser conjugate to $L$. We can use the construction of Section \ref{AbQuotsbn} to factor the $S$-Hitchin map through an ``abelianised $S$-Hitchin map'', whose fibres are commutative group stacks which can be described up to isogeny in terms of cameral data.

\begin{definition}\label{AbnSHiggsdef}
    The \emph{stack $\mathcal M^{ab}(G;S)$ of abelianised $S$-valued Higgs bundles on $\Sigma$} is the mapping stack
    \begin{equation}\label{AbnSHiggseqn}
     \mathcal M^{ab}(G;S) = Maps(\Sigma,[S/G]^{ab}/\mathbb{G}_m)   
    \end{equation}
    where $[S/G]^{ab}$ is the abelianised quotient stack of Definition \ref{AbelianisedStackdef}.
\end{definition}

\begin{remark}\label{AbnSHiggsrmk}
    We will regard the $\mathbb{C}$-points of $\mathcal M^{ab}(G;S)$, i.e. maps $f:\Sigma \rightarrow [S/G]^{ab}/\mathbb{G}_m$, as abelianised Higgs bundles on $\Sigma$; we will interpret these more explicitly in the examples in Section \ref{Hitchinegscn} and Section \ref{NonQSRealFormsbn}, but it is unclear if these have a more natural modular interpretation in general.
\end{remark}

\begin{proposition}\label{AbnSHiggsprp}
    The stack $\mathcal M^{ab}(G;S)$ is a quasi-separated algebraic stack, locally of finite presentation over $\mathbb{C}$.
\end{proposition}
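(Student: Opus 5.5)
The plan is to deduce the statement from the general theorem of Hall and Rydh on mapping stacks \cite[Theorem 1.2]{HR}, in the same way as for $\mathcal M(G)$ in Remark \ref{HiggsModuliStackrmk}. That theorem says $Maps(\Sigma,\mathcal Y)$ is a quasi-separated algebraic stack, locally of finite presentation over $\mathbb{C}$, whenever $\Sigma$ is proper and flat over $\mathbb{C}$ and $\mathcal Y$ is a quasi-separated algebraic stack, locally of finite presentation, with affine stabilisers. Since $\Sigma$ is a smooth projective curve, the whole task is to check these three properties for $\mathcal Y = [S/G]^{ab}/\mathbb{G}_m$.

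\emph{Algebraicity and finite presentation.} By Definition \ref{AbelianisedStackdef} and \cite[Theorem A.1]{AOV}, $[S/G]^{ab}$ is an algebraic stack. The rigidification map $\rho_S^n:[S/G]\rightarrow [S/G]^{ab}$ is a gerbe by Proposition \ref{AbnGerbeprp}, and its band $\mathcal N_{S,G}$ is smooth by Lemma \ref{CamKernelDescentlem}, so $\rho_S^n$ is a smooth surjection. Since $[S/G]$ is of finite type over $\mathbb{C}$, smooth descent gives that $[S/G]^{ab}$ is locally of finite presentation; in fact, being covered by a finite type stack, it is of finite type. Quotienting by the strict $\mathbb{G}_m$-action preserves algebraicity and finite presentation \cite{Romagny}.

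\emph{Quasi-separatedness.} The diagonal of $[S/G]^{ab}$ should factor through the diagonal of $\mathcal B$ as follows. The map $\rho_S^{ab}:[S/G]^{ab}\rightarrow\mathcal B$ is a gerbe banded by $\mathcal J_S$ (Proposition \ref{AbnGerbeprp}), so its relative diagonal is a $\mathcal J_S$-torsor. By Proposition \ref{CameralDescentprp}, $\mathcal J_S$ is representable by schemes and of finite type over $\mathcal B$, hence this relative diagonal is quasi-compact and quasi-separated. By Corollary \ref{StackyCBSmDMcor} and Proposition \ref{StackyCBKQprp}, $\mathcal B\cong[\mathfrak{K}/F]$ with $F$ finite, which has a finite, hence quasi-compact and separated, diagonal. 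Composing, the diagonal of $[S/G]^{ab}$ is quasi-compact and quasi-separated. The same argument applies after the $\mathbb{G}_m$-quotient, using Corollary \ref{GmEquivAbnGerbecor}.

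\emph{Affine stabilisers.} This is the step I expect to require the most care. Take a $\mathbb{C}$-point $y$ of $[S/G]^{ab}$ lifting $x\in S$. Its stabiliser is an extension of the stabiliser of $\rho_S^{ab}(y)$ in $\mathcal B$, which is a subgroup of the finite group $F$, by the fibre $\mathcal J_{S,\rho_S(x)}$. The fibre $\mathcal J_{S,\rho_S(x)}$ is an open subgroup of $\hat{\mathcal J}_{S,\rho_S(x)}$. The latter is the $W_L$-fixed part of the Weil restriction of $\bar Z$ along the finite fibre $\mathfrak{z}\times_{\mathcal B}\mathrm{pt}$, so it is a closed subgroup of a finite product of copies of the torus $\bar Z$, and hence affine. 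Alternatively, the stabiliser is the quotient $C_G(x)/\mathcal N_{S,x}$ of an affine group by a closed normal subgroup, and such a quotient is again affine. An extension of a finite group by an affine group is affine. Passing to the $\mathbb{G}_m$-quotient only extends these stabilisers further by subgroups of $\mathbb{G}_m$, which keeps them affine.

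Combining these three properties with \cite[Theorem 1.2]{HR} gives the proposition.
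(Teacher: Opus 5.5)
This is essentially the paper's proof. The paper also applies \cite[Theorem 1.2]{HR} and checks affine stabilisers by your first argument: an extension of a subgroup of the Katsylo group $F$ by a fibre of the cameral group, which is affine as a closed subgroup of a Weil restriction along a finite cover of a point. You additionally verify local finite presentation and quasi-separatedness, which the paper takes for granted.

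One of your justifications is wrong, though harmlessly so. Where $p:\mathfrak{z}\rightarrow\mathcal B$ ramifies, the fibre $\mathfrak{z}\times_{\mathcal B}\mathrm{Spec}(\mathbb{C})$ is non-reduced, and the Weil restriction of $\bar Z$ then has unipotent directions. So the fibre of $\hat{\mathcal J}_S$ need not be a closed subgroup of a product of copies of $\bar Z$. For instance, for the regular sheet the fibre over $0\in\mathfrak{c}$ contains the centraliser of a regular nilpotent, which is not diagonalisable. The fibre is nevertheless affine by \cite[Section 7.6, Theorem 4]{BLR}. Your alternative description of the stabiliser as $C_G(x)/\mathcal N_{S,x}$ avoids the issue altogether.

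Also, \cite{HR} requires affine stabilisers at points over arbitrary fields, not only at $\mathbb{C}$-points. Your argument runs verbatim over any algebraically closed field, and that suffices because affineness descends along field extensions.
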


\begin{proof}
This follows from \cite[Theorem 1.2]{HR} provided we justify that $[S/G]^{ab}$ has affine stabilisers. Fix a field $k$, and a $k$-point $x$ of $[S/G]^{ab}$, and define a $k$-algebraic group $J :=(\rho_S^{ab}\circ x)^*\mathcal J_{S, \mathbb{G}_m}$, where $\mathcal J_{S, \mathbb{G}_m}$ is as in Corollary \ref{GmEquivAbnGerbecor}. The stabiliser group $Aut(x)$ is an extension of a subgroup of the Katsylo group $F$ by the group $J$. The group $J$ is a closed subgroup of a Weil restriction from a finite flat cover of $\text{Spec}(k)$, so is affine (e.g. by the construction in \cite[Section 7.6, Theorem 4]{BLR}); meanwhile $F$ is a finite group, so any extension of a subgroup of $F$ by $J$ is also affine. So $Aut(x)$ is affine.
\end{proof}

The factorisation (\ref{AbnGerbe1eqn}) induces a factorisation
\begin{equation}\label{AbnSHitchinFacteqn}
   \begin{tikzcd}
\mathcal M(G;S) \arrow[r, "Ab_S"] & \mathcal M^{ab}(G;S) \arrow[r, "h_S^{ab}"] & \mathcal A(G;S)
\end{tikzcd} 
\end{equation}
of the $S$-Hitchin map $h_S$. We have the following analogue of Theorem \ref{HiggsNonAbnthm}, which follows from Corollary \ref{GmEquivAbnGerbecor}. Let $\tau$ be a $\mathbb{C}$-point of $\mathcal A(G;S)$ such that the fibre $(h_S^{ab})^{-1}(\tau)$ is non-empty.

\begin{proposition}\label{AbnisedSHitchinFibresprp}
    The fibre $(h_S^{ab})^{-1}(\tau)$ can be identified with the stack ${\bf B}_\Sigma\mathcal J_\tau$ of $\mathcal J_\tau$-torsors on $\Sigma$, where $\mathcal J_\tau = \tau^*\mathcal J_{S, \mathbb{G}_m}$ (for $\mathcal J_{S, \mathbb{G}_m}$ as in Corollary \ref{GmEquivAbnGerbecor}).
\end{proposition}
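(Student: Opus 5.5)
The plan is to mirror the proof of Theorem \ref{HiggsNonAbnthm}, with Corollary \ref{GmEquivStackyCBcor} replaced by Corollary \ref{GmEquivAbnGerbecor}. Write $\tau:\Sigma\rightarrow \mathcal B/\mathbb{G}_m$ for the given $\mathbb{C}$-point. Then $(h_S^{ab})^{-1}(\tau)$ is, by definition of the mapping stack, the stack of lifts of $\tau$ through $\rho^{ab}_{S}:[S/G]^{ab}/\mathbb{G}_m\rightarrow \mathcal B/\mathbb{G}_m$. Equivalently, it is the stack of sections over $\Sigma$ of the pulled-back stack $\mathcal X_\tau := \Sigma\times_{\tau,\mathcal B/\mathbb{G}_m}([S/G]^{ab}/\mathbb{G}_m)$. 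This identification works in families: for a test scheme $T$, the $T$-points of the fibre are exactly the sections of $\mathcal X_\tau\times T\rightarrow\Sigma\times T$.

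By Corollary \ref{GmEquivAbnGerbecor}, $\rho^{ab}_{S}$ is a gerbe banded by the commutative group stack $\mathcal J_{S,\mathbb{G}_m}$. Hence $\mathcal X_\tau\rightarrow\Sigma$ is a gerbe banded by the commutative group scheme $\mathcal J_\tau=\tau^*\mathcal J_{S,\mathbb{G}_m}$, which is representable by schemes by Proposition \ref{CameralDescentprp}.

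The key step uses non-emptiness. By hypothesis the fibre contains a point, that is, a section $f:\Sigma\rightarrow\mathcal X_\tau$. By Remark \ref{Gerbermk} (Giraud), a gerbe with a global section is trivial, so $f$ gives an isomorphism $\mathcal X_\tau\cong {\bf B}(\mathcal{A}ut(f))$ over $\Sigma$. The main point to check is that $\mathcal{A}ut(f)\cong\mathcal J_\tau$ canonically. Because the band is commutative, the automorphism sheaf of any local object of $\mathcal X_\tau$ is identified with $\mathcal J_\tau$ via the banding, and this identification does not depend on $f$. This is exactly where commutativity matters; in the non-abelian case of Theorem \ref{HiggsNonAbnthm} one only obtains the inner form $\mathcal I^{sm}_{(E,\mathcal L,\Phi)}$ depending on the chosen point.

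Taking sections then identifies $(h_S^{ab})^{-1}(\tau)$ with $Sec(\Sigma,{\bf B}\mathcal J_\tau)={\bf B}_\Sigma\mathcal J_\tau$, functorially in $T$. The main obstacle is making the banding isomorphism precise at the level of stacks. I would handle this by pulling back along the étale cover $\mathfrak{K}\rightarrow\mathcal B$ of Corollary \ref{GerbeTrivncor}, where the gerbe $[S/G]^{ab}$ becomes ${\bf B}(\rho_S^*\mathcal J_S)$ by construction of the rigidification in Proposition \ref{AbnGerbeprp}. Then I would descend, using that conjugation acts trivially on the commutative group $\mathcal J_S$.
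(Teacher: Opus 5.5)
Your proposal is correct and follows essentially the same route as the paper. The paper deduces the statement in one line from the gerbe structure in Corollary \ref{GmEquivAbnGerbecor}, using functoriality of the mapping stack exactly as for Theorem \ref{HiggsNonAbnthm}. Your argument spells that line out: a point of the non-empty fibre trivialises the pulled-back gerbe over $\Sigma$, and the commutative band identifies its automorphism sheaf with $\mathcal J_\tau$.
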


\begin{remark}\label{AbnisedSHitchinFibresrmk}
    The fibres of $Ab_S$ over $\mathcal M^{ab}(G;S)$ have a similar description in terms of stacks of non-abelian torsors on $\Sigma$. 
\end{remark}

Since $\mathcal J_S$ is commutative, the fibres of $h_S^{ab}$ inherit the structure of commutative group stacks. We can describe these fibres over the locus $\mathcal A^\heartsuit(G;S)$ (defined as in Proposition \ref{SHitchinBaseNormprp}) quite explicitly up to isogeny - in this context we shall say that a homomorphism of group stacks is an isogeny if it is finite and essentially surjective.

\begin{lemma}\label{PicardStackIsogenylem}
   For any $\tau \in \mathcal A^\heartsuit(G;S)$, the inclusion $\mathcal J_\tau \hookrightarrow \hat{\mathcal J}_\tau$ (where $\hat{\mathcal J}_\tau = \tau^*\hat{\mathcal J}_{S, \mathbb{G}_m}$ for $\hat{\mathcal J}_{S, \mathbb{G}_m}$ as in Corollary \ref{GmEquivAbnGerbecor}) induces an isogeny ${\bf B}_\Sigma \mathcal J_\tau \rightarrow {\bf B}_\Sigma \hat{\mathcal J}_\tau$ of group stacks. 
\end{lemma}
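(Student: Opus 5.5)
The plan is to compare $\mathcal J_\tau$ and $\hat{\mathcal J}_\tau$ as smooth commutative group schemes over $\Sigma$. I expect them to agree away from finitely many points. The induced map on stacks of torsors should then have finite kernel and be essentially surjective with finite cokernel data.

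\textbf{Step 1: the quotient is a skyscraper.} By Proposition \ref{CameralDescentprp}, $\mathcal J_S$ is an open subgroup of $\hat{\mathcal J}_S$. Pulling back along $\tau$ therefore gives an open embedding of smooth commutative $\Sigma$-group schemes $\mathcal J_\tau \hookrightarrow \hat{\mathcal J}_\tau$. Since $\tau \in \mathcal A^\heartsuit(G;S)$, the generic point of $\Sigma$ maps into $\mathcal B^{rs}$. On $\mathcal B^{rs}$ the fibres of $\hat{\mathcal J}_S$ are computed from $W_L\times \mathfrak z^{rs}$ (cf. diagram (\ref{CameralHom2eqn})), and there $\kappa_S$ is fibrewise the abelianisation $L\to \bar Z$ by Proposition \ref{CameralHomprp}. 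Hence $\mathcal J_S=\hat{\mathcal J}_S$ over $\mathcal B^{rs}$, so the two group schemes coincide outside a finite set $D\subset\Sigma$.

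The quotient $\hat{\mathcal J}_\tau/\mathcal J_\tau$ is an étale sheaf. I will check that it is supported on $D$ with finite stalks there. The stalk at $x\in D$ is $\pi_0$ of the fibre $\hat{\mathcal J}_{\tau,x}$ modulo the image of $\mathcal J_{\tau,x}$, and this is finite because $\mathcal J_\tau$ is open, hence contains the identity component of each fibre. Call this skyscraper sheaf $Q=\bigoplus_{x\in D} i_{x*}Q_x$, with each $Q_x$ a finite abelian group.

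\textbf{Step 2: cohomology.} The short exact sequence $1\to\mathcal J_\tau\to\hat{\mathcal J}_\tau\to Q\to 1$ of étale sheaves gives a long exact sequence
\[
H^0(\Sigma,\hat{\mathcal J}_\tau)\to H^0(\Sigma,Q)\to H^1(\Sigma,\mathcal J_\tau)\to H^1(\Sigma,\hat{\mathcal J}_\tau)\to H^1(\Sigma,Q)=0 .
\]
The last term vanishes because $Q$ is a skyscraper. From this I read off two facts about ${\bf B}_\Sigma\mathcal J_\tau\to{\bf B}_\Sigma\hat{\mathcal J}_\tau$:
\begin{itemize}
\item It is surjective on isomorphism classes, and since every point has the same automorphisms up to finite index, it is essentially surjective.
\item The kernel on $\pi_0$ is a quotient of the finite group $\bigoplus_x Q_x$. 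On automorphism groups, $H^0(\mathcal J_\tau)\to H^0(\hat{\mathcal J}_\tau)$ is injective, with finite-index image.
\end{itemize}

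\textbf{Step 3: finiteness of the morphism.} To upgrade this to finiteness as a morphism of algebraic stacks, I would follow the argument used for cameral data in \cite{DG} and \cite{Ngo2}. The fibre over any point is a torsor under the relative object. That object is a finite group stack: an extension of a quotient of $\prod Q_x$ by ${\bf B}$ of a finite group. The map is also representable-by-finite on the level of connected components, because the Lie algebras $H^1(\Sigma,\mathrm{Lie}\,\mathcal J_\tau)$ and $H^1(\Sigma,\mathrm{Lie}\,\hat{\mathcal J}_\tau)$ agree: the Lie algebras coincide since $\mathcal J_\tau$ is open.

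\textbf{Main obstacle.} I expect the hardest point to be making Step 3 rigorous in families, namely that the map is finite and not merely quasi-finite and surjective on points. To handle this I would work locally on the étale trivialisation $\tilde{\mathcal A}$ of the base and use properness of the Hecke-type modification at the finitely many points of $D$.
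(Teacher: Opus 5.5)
Your proposal is correct and is essentially the paper's argument. The paper also uses the exact sequence $H^0(\Sigma;\hat{\mathcal J}_\tau/\mathcal J_\tau)\rightarrow H^1(\Sigma;\mathcal J_\tau)\rightarrow H^1(\Sigma;\hat{\mathcal J}_\tau)\rightarrow H^1(\Sigma;\hat{\mathcal J}_\tau/\mathcal J_\tau)=0$ for the finite skyscraper quotient, together with the finite-index inclusion on $H^0$; it simply asserts the skyscraper property that your Step 1 actually justifies, and it stops at this cohomological level with no analogue of your Step 3.

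If you keep Step 3, correct one slip: the fibre of ${\bf B}_\Sigma\mathcal J_\tau\rightarrow{\bf B}_\Sigma\hat{\mathcal J}_\tau$ over any point is the discrete finite set $H^0(\Sigma;\hat{\mathcal J}_\tau/\mathcal J_\tau)$, not an extension involving ${\bf B}$ of a finite group. Its automorphisms are trivial because $H^0(\Sigma;\mathcal J_\tau)\rightarrow H^0(\Sigma;\hat{\mathcal J}_\tau)$ is injective.
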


\begin{proof}
Since $\mathcal J_\tau \hookrightarrow \hat{\mathcal J}_\tau$ is a homomorphism of commutative group schemes, the induced map ${\bf B}_\Sigma \mathcal J_\tau \rightarrow {\bf B}_\Sigma \hat{\mathcal J}_\tau$ is a homomorphism of group stacks. We can describe these stacks in terms of fppf cohomology groups as
\begin{equation}\label{PicardStackIsogeny1eqn}
    {\bf B}_\Sigma \mathcal J_\tau = [H^1(\Sigma;\mathcal J_\tau)/H^0(\Sigma;\mathcal J_\tau)]
\end{equation}
and
\begin{equation}\label{PicardStackIsogeny2eqn}
    {\bf B}_\Sigma \hat{\mathcal J_\tau} = [H^1(\Sigma;\hat{\mathcal J}_\tau)/H^0(\Sigma;\hat{\mathcal J}_\tau)],
\end{equation}
and the map between them is defined by the induced maps on cohomology. The map $H^0(\Sigma;\mathcal J_\tau) \rightarrow H^0(\Sigma;\hat{\mathcal J_\tau})$ is the inclusion of a finite index subgroup. Meanwhile, there is an exact sequence on cohomology
\begin{equation}\label{PicardStackIsogeny3eqn}
\begin{tikzcd}
H^0(\Sigma;\hat{\mathcal J}_\tau/\mathcal J_\tau)  \arrow[r] & H^1(\Sigma;\mathcal J_\tau)  \arrow[r] & H^1(\Sigma;\hat{\mathcal J}_\tau)  \arrow[r] & H^1(\Sigma;\hat{\mathcal J}_\tau/\mathcal J_\tau). 
\end{tikzcd}
\end{equation}
The sheaf $\hat{\mathcal J}_\tau/\mathcal J_\tau$ is a sheaf of finite groups, and is non-trivial only over finitely many points of $\Sigma$; hence $H^0(\Sigma;\hat{\mathcal J}_\tau/\mathcal J_\tau)$ is finite and $H^1(\Sigma;\hat{\mathcal J}_\tau/\mathcal J_\tau)=0$, so that the map $H^1(\Sigma;\mathcal J_\tau) \rightarrow H^1(\Sigma;\hat{\mathcal J_\tau})$ is an isogeny. This implies the statement of the lemma.
\end{proof}

We can describe the stack ${\bf B}_\Sigma\hat{\mathcal J}_\tau$ in terms of bundles over a cover of $\Sigma$; this is the analogue of the cameral curve of \cite{Donagi} in our set-up.

\begin{definition}\label{SCameralcurvedef}
    For a $\mathbb{C}$-point $\tau$ of $\mathcal A(G;S)$, we define the \emph{$S$-cameral curve} to be the finite flat cover $p_\tau:\hat{\Sigma}_{\tau} \rightarrow \Sigma$ determined by the Cartesian diagram:
    \begin{equation}\label{SCameralcurveeqn}
\begin{tikzcd}
\hat{\Sigma}_\tau \arrow[d,"p_\tau"] \arrow[r, "\hat{\tau}"] & {[\mathfrak{z}/\mathbb{G}_m]} \arrow[d, "p_{\mathbb{G}_m}"] \\
\Sigma \arrow[r, "\tau"']       & \mathcal B/\mathbb{G}_m.           
\end{tikzcd}
    \end{equation}
    Here $\mathfrak{z}$ is the centre of the Lie algebra $Lie(L)$, and $p_{\mathbb{G}_m}$ is the map induced by the $\mathbb{G}_m$-equivariant map $p$ defined in Lemma \ref{StackyZquotientlem}.
\end{definition}
\begin{remark}\label{SCameralcurvermk}
    Since the map $p$ is representable, $\hat{\Sigma}_{\tau}$ is a scheme. It inherits an action of $W_L=N_G(L)/L$ over $\Sigma$ from the $W_L$-action on $\mathfrak{z}$ by Lemma \ref{WLInvariancelem}.
    
    If $\tau$ maps to a $\mathbb{C}$-point $\sigma$ of $\mathcal A(G)$ then we can consider the usual cameral curve $\check{\Sigma}_{\sigma}$ defined by the Cartesian diagram:
    \begin{equation}\label{Cameralcurveeqn}
\begin{tikzcd}
\check{\Sigma}_\sigma \arrow[d] \arrow[r] & {[\mathfrak{t}/\mathbb{G}_m]} \arrow[d] \\
\Sigma \arrow[r, "\sigma"']       & {[\mathfrak{c}/\mathbb{G}_m]}.       
\end{tikzcd}
    \end{equation}
    The diagrams (\ref{SCameralcurveeqn}) and (\ref{Cameralcurveeqn}), together with the inclusion $\mathfrak{z} \hookrightarrow \mathfrak{t}$ and the map $\tilde{\nu}_S:\mathcal B \rightarrow \mathfrak{c}$, induce a map $n_\tau:\hat{\Sigma}_\tau \rightarrow \check{\Sigma}_\sigma$, which is finite since $p_\tau$ is finite. We suppose that $\tau \in \mathcal A^\heartsuit(G;S)$; then $\hat{\Sigma}_\tau$ is reduced (as in Lemme 4.1.5 of \cite{Ngo2}), and so $n_\tau$ factors through the reduced subscheme $\check{\Sigma}_\sigma^{red}$ of $\check{\Sigma}_\sigma$. One can also observe directly from the definitions that $n_\tau:\hat{\Sigma}_\tau \rightarrow \check{\Sigma}_\sigma^{red}$ is a bijection on the open subset of $\hat{\Sigma}^\tau$ which maps to $[\mathfrak{z}^{rs}/\mathbb{G}_m]$ under $\hat{\tau}$ (where $\mathfrak{z}^{rs}$ is defined as in (\ref{WeylLeviActioneqn})). In particular if $\hat{\Sigma}_\tau$ is normal, it is the normalisation of the reduced subscheme of the usual cameral curve.
    
    Unlike the usual cameral cover, the cover $p_\tau$ may be unramified; indeed, we can decompose the map $p_\tau$ as $\hat{\Sigma}_\tau \rightarrow \tilde{\Sigma} \rightarrow \Sigma$, where $\tilde{\Sigma}$ is an $F$-torsor over $\Sigma$, and $\hat{\Sigma}_\tau \rightarrow \tilde{\Sigma}$ is a $W_S$-cover which is locally pulled back from the quotient map $\mathfrak{z}\rightarrow \mathfrak{z}/W_S$ (where $W_S$ is the subgroup of $W_L$ defined as in Corollary \ref{KatsyloGpWeylGpcor}).
\end{remark}

Let $\bar{Z} = L/L^{der}$ as in Section \ref{Camgpsbn}, and suppose $X$ is a scheme with a $W_L$-action. We make a definition analogous to \cite[Definition 5.7]{DG} (although we use a different notational convention). For a $\bar{Z}$-torsor $\mathcal Z$ on $X$, we denote by $w^*\mathcal Z$ the pullback of $\mathcal Z$ by the automorphism of $X$ defined by $w$; and we denote by $\mathcal Z^w$ the $\bar{Z}$-torsor with the same underlying scheme as $\mathcal Z$ but with $\bar{Z}$-action given by
\begin{equation}\label{WTwistTorsoreqn}
\begin{tikzcd}
\mathcal Z \times \bar{Z} \arrow[r, "{(\text{id},w^{-1})}"] & \mathcal Z \times \bar{Z} \arrow[r, "Act"] & \mathcal{Z}.
\end{tikzcd}
\end{equation}

\begin{definition}\label{WLEquivTorsordef}
     We say that a $\bar{Z}$-torsor $\mathcal Z$ on $X$ is \emph{(strongly) $W_L$-equivariant} if for every $w \in W_L$ there is an isomorphism $\widetilde{w}:w^*\mathcal Z^w \rightarrow \mathcal Z$ of $\bar{Z}$-torsors such that $\widetilde{\text{id}}_{W_L} = \text{id}_\mathcal Z$ and $\widetilde{w_1w_2} = \widetilde{w}_1 \circ \widetilde{w}_2$ (after making the usual canonical identifications).
\end{definition}

Let $\tau$ be a $\mathbb{C}$-point of $\mathcal A^\heartsuit(G;S)$. The diagonal action of $W_L$ on $\bar{Z} \times \hat{\Sigma}_\tau$ induces a strict $W_L$-action on the stack ${\bf B}_{\hat\Sigma_\tau}\bar{Z}$ of $\bar{Z}$-torsors on $\hat\Sigma_\tau$. The fixed point stack $({\bf B}_{\hat\Sigma_\tau}\bar{Z})^{W_L}$ is the stack of strongly $W_L$-equivariant torsors on $\hat\Sigma_\tau$; morphisms between objects of this stack are given by $W_L$-equivariant isomorphisms of $\bar{Z}$-torsors.

By the definition of $\mathcal A^\heartsuit(G;S)$, there are only finitely many ramification points of $p_\tau$ and these are exactly the points of $\hat\Sigma_\tau$ which have non-trivial stabiliser under the $W_L$-action. For each ramification point $x \in \hat\Sigma_\tau$, with stabiliser $W_{x}=Stab_{W_L}(x)$, there is a map of stacks $r_x:({\bf B}_{\hat\Sigma_\tau} \bar{Z})^{W_L} \rightarrow (\textbf{B}\bar{Z})^{W_x}$ given by restriction of torsors to the point $x \in \hat\Sigma_\tau$. Thus we can define a map
\begin{equation}\label{WLEquivRestrictioneqn}
    r:=\prod_{x \in R} r_x:({\bf B}_{\hat\Sigma_\tau} \bar{Z})^{W_L} \rightarrow \prod_{x\in R}({\bf B}\bar{Z})^{W_x}
\end{equation}
where $R$ is the set of ramification points of $p_\tau$.

For each $x \in R$, the stack $({\bf B}\bar{Z})^{W_x}$ is a union of copies of ${\bf B}(\bar{Z}^{W_x})$ indexed by the group cohomology $H^1(W_x;\bar{Z})$; in particular, there is a component ${\bf B}^0(\bar{Z}^{W_x})$ whose image consists of the substack of $W_x$-equivariant $\bar{Z}$-torsors which are fppf-locally isomorphic to the trivial torsor with its natural equivariant structure.

\begin{proposition}\label{SCameralBunprp}
    For $\tau \in \mathcal A^\heartsuit(G;S)$, the stack ${\bf B}_{\hat\Sigma}\hat{\mathcal J}_\tau$ is isomorphic to the fibre product 
    \begin{equation}\label{SCameralBun1eqn}
        ({\bf B}_{\hat\Sigma_\tau} \bar{Z})^{W_L} \times_r \left(\prod_{x\in R} {\bf B}^0(\bar{Z}^{W_x})\right).
    \end{equation}
\end{proposition}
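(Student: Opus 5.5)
Following the strategy of \cite[Theorem 6.4]{DG} (see also \cite[Section 4.4]{Ngo2}), the plan is to identify $\hat{\mathcal J}_\tau$ with a $W_L$-invariant Weil restriction. I would then compare $\hat{\mathcal J}_\tau$-torsors on $\Sigma$ with $W_L$-equivariant $\bar{Z}$-torsors on $\hat\Sigma_\tau$, correcting the discrepancy at ramification points.

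\emph{First step: pull back the definition.} Since Weil restriction commutes with base change and $p_{\mathbb{G}_m}$ is finite flat and representable, Definition \ref{SCameralcurvedef} together with (\ref{PseudoCameralgpeqn}) gives
$$\hat{\mathcal J}_\tau = (p_{\tau *}(\bar{Z}\times\hat\Sigma_\tau))^{W_L},$$
where the $W_L$-action is diagonal, using the action on $\hat\Sigma_\tau$ from Remark \ref{SCameralcurvermk}. One must account for the $\mathbb{G}_m$-twist; $\bar{Z}$ carries the trivial $\mathbb{G}_m$-action, so the twist only affects the cover.

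\emph{Second step: construct the comparison functor.} I would define a functor ${\bf B}_\Sigma\hat{\mathcal J}_\tau \to ({\bf B}_{\hat\Sigma_\tau}\bar{Z})^{W_L}$ sending a $\hat{\mathcal J}_\tau$-torsor $\mathcal E$ to $\mathcal Z := p_\tau^*\mathcal E \times^{p_\tau^*\hat{\mathcal J}_\tau} \bar{Z}$. The map $p_\tau^*\hat{\mathcal J}_\tau \to \bar{Z}\times\hat\Sigma_\tau$ is the counit of the Weil restriction adjunction, and it is $W_L$-equivariant. This equivariance produces the isomorphisms $\widetilde{w}$ of Definition \ref{WLEquivTorsordef}. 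At a ramification point $x$, the fibre $\mathcal Z_x$ is induced from $\mathcal E_{p_\tau(x)}$ via the map $\hat{\mathcal J}_{\tau,p_\tau(x)} \to \bar{Z}^{W_x}$. Hence $\mathcal Z_x$ is locally the trivial equivariant torsor, which gives a canonical lift to $\prod_{x\in R}{\bf B}^0(\bar{Z}^{W_x})$. This defines a functor to the fibre product (\ref{SCameralBun1eqn}).

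\emph{Third step: construct the inverse.} Conversely, given $\mathcal Z$ with trivialised-type data at $R$, I would take the sheaf $(p_{\tau*}\mathcal Z)^{W_L}$. Over the unramified locus $\Sigma\setminus p_\tau(R)$, the cover is étale, so this is a $\hat{\mathcal J}_\tau$-torsor by Galois descent. Near each point of $p_\tau(R)$, the condition of lying in ${\bf B}^0$ allows one to choose, étale-locally on $\Sigma$, a $W_L$-equivariant trivialisation of $\mathcal Z$. This is the key point. Then $(p_{\tau*}\mathcal Z)^{W_L}$ is locally isomorphic to $\hat{\mathcal J}_\tau$ itself, and I would check that the two constructions are mutually inverse on objects and morphisms.

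\emph{Main obstacle.} The hardest step is the local statement at a ramification point: a $W_L$-equivariant $\bar{Z}$-torsor on the strict henselisation of $\hat\Sigma_\tau$ at $p_\tau^{-1}(y)$ is equivariantly trivial if and only if its fibre at $x$ lies in ${\bf B}^0(\bar{Z}^{W_x})$. I would first reduce to the induced situation $W_L\times^{W_x}(\text{local scheme at }x)$. Then I would use that $\bar{Z}$ is smooth and the base is henselian, lifting an equivariant trivialisation at the closed point by a $W_x$-averaging argument. The obstruction to lifting lives in $H^1(W_x;\ker(\bar{Z}(\mathcal O)\to\bar{Z}))$. That kernel is pro-unipotent and uniquely divisible, so the obstruction group vanishes.
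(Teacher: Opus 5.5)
Your proposal is correct and takes essentially the paper's route: the paper's proof just defers to \cite[Proposition 16.4]{DG}, which likewise identifies $\hat{\mathcal J}_\tau=(p_{\tau*}(\bar Z\times\hat\Sigma_\tau))^{W_L}$ with the automorphism sheaf of the trivial $W_L$-equivariant $\bar Z$-torsor and compares torsors \'{e}tale-locally on $\Sigma$, the only real input being equivariant triviality near ramification points. Your argument for that step, vanishing of $H^1(W_x;\ker(\bar Z(\mathcal O)\to\bar Z))$ by unique divisibility, is sound, and the same argument works in families over a test scheme if you replace henselian local rings by henselian pairs.
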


\begin{proof}
The proof is analogous to that of \cite[Proposition 16.4]{DG}.
\end{proof}

We can identify the stack (\ref{SCameralBun1eqn}) with the stack $\hat{\mathcal P}_{S,\tau}$ of $W_L$-equivariant $\bar{Z}$-torsors $\mathcal Z$ on $\hat\Sigma_\tau$, satsifying the following additional condition:
\begin{itemize}
    \item[$(*)$] for each ramification point $x \in R$, there is an isomorphism $\mathcal Z_x\cong \bar{Z}$ which is equivariant with respect to the actions of both $\bar{Z}$ and $W_x$.
\end{itemize}
Then Lemma \ref{PicardStackIsogenylem} and Proposition \ref{SCameralBunprp} combine to give a cameral description for the fibres of $h_S^{ab}$.

\begin{theorem}\label{SCameralDatathm}
    For any $\tau \in \mathcal A^\heartsuit(G;S)$ such that $(h_S^{ab})^{-1}(\tau)$ is non-empty, there is a finite essentially surjective map $(h_S^{ab})^{-1}(\tau) \rightarrow \hat{\mathcal P}_{S,\tau}$.
\end{theorem}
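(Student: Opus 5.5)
The proof is a composition of three identifications already available in the text. First, since $(h_S^{ab})^{-1}(\tau)$ is non-empty, Proposition \ref{AbnisedSHitchinFibresprp} identifies it with ${\bf B}_\Sigma\mathcal J_\tau$. This identification comes from the gerbe $\rho_S^{ab}$ of Corollary \ref{GmEquivAbnGerbecor} being banded by the commutative group $\mathcal J_{S,\mathbb{G}_m}$. A choice of point in the fibre trivialises the pulled-back gerbe over $\Sigma$, so the fibre is a torsor under ${\bf B}_\Sigma\mathcal J_\tau$, and choosing the base point gives an isomorphism of stacks. Finiteness and essential surjectivity are insensitive to this non-canonical choice, because any two such isomorphisms differ by translation by an object of ${\bf B}_\Sigma\mathcal J_\tau$.

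Second, since $\tau\in\mathcal A^\heartsuit(G;S)$, Lemma \ref{PicardStackIsogenylem} shows that the open inclusion $\mathcal J_\tau\hookrightarrow\hat{\mathcal J}_\tau$ induces an isogeny ${\bf B}_\Sigma\mathcal J_\tau\to{\bf B}_\Sigma\hat{\mathcal J}_\tau$, meaning a finite, essentially surjective homomorphism of group stacks. Third, Proposition \ref{SCameralBunprp} identifies ${\bf B}_\Sigma\hat{\mathcal J}_\tau$ with the fibre product (\ref{SCameralBun1eqn}). The required map is the composite of these three: the identification of the fibre, then the isogeny, then the identification with (\ref{SCameralBun1eqn}). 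Composing finite essentially surjective maps with isomorphisms preserves both properties.

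It remains to check that (\ref{SCameralBun1eqn}) is really $\hat{\mathcal P}_{S,\tau}$, as asserted just before the theorem. An object of the fibre product is a strongly $W_L$-equivariant $\bar Z$-torsor $\mathcal Z$ on $\hat\Sigma_\tau$, together with, for each $x\in R$, an isomorphism of its restriction $r_x(\mathcal Z)$ with an object of ${\bf B}^0(\bar Z^{W_x})$. Such an object is, fppf-locally, the trivial torsor with its natural $W_x$-action. Over the point $x=\operatorname{Spec}\mathbb{C}$, every torsor is trivial, so this condition says exactly that $r_x(\mathcal Z)$ lies in the component of $({\bf B}\bar Z)^{W_x}$ indexed by the trivial class in $H^1(W_x;\bar Z)$. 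That is precisely condition $(*)$. Since ${\bf B}^0(\bar Z^{W_x})\to({\bf B}\bar Z)^{W_x}$ is the inclusion of a union of components, hence a monomorphism, the fibre product is the full substack $\hat{\mathcal P}_{S,\tau}$ cut out by $(*)$.

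The main obstacle is the isogeny step, which is where the substance lies. One needs $\hat{\mathcal J}_\tau/\mathcal J_\tau$ to be a skyscraper sheaf of finite groups. This holds because $\mathcal J_S=\hat{\mathcal J}_S$ over the image of $\mathfrak z^{rs}$, by Proposition \ref{CameralHomprp} (abelianisation at semisimple points). The heart condition ensures that $\tau$ meets this locus generically, so the quotient is supported at finitely many points. This vanishing of $H^1$ of the quotient is what makes the map on $H^1$ surjective with finite kernel, and it is already done in Lemma \ref{PicardStackIsogenylem}. The theorem then reduces to the formal composition above.
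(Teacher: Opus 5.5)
Your proposal is correct and follows the paper's own argument. The paper also gets the theorem by composing three steps: the identification of the fibre with ${\bf B}_\Sigma\mathcal J_\tau$ (Proposition \ref{AbnisedSHitchinFibresprp}), the isogeny of Lemma \ref{PicardStackIsogenylem}, and the identification of ${\bf B}_\Sigma\hat{\mathcal J}_\tau$ with $\hat{\mathcal P}_{S,\tau}$ via Proposition \ref{SCameralBunprp}. Your explanation of why $\hat{\mathcal J}_\tau/\mathcal J_\tau$ is supported at finitely many points ($\mathcal J_S=\hat{\mathcal J}_S$ over $p(\mathfrak z^{rs})$, which $\tau$ meets generically) supplies a step that the lemma asserts without comment.
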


\begin{remark}\label{SCameralBunrmk}
    In the case of regular Higgs bundles this map determines the variant of the homogenised cameral data of \cite[16.7]{DG}, which does not include the data of the trivialisations $\beta_{i,0}$ (in the notation of \cite[16.3]{DG}). This latter discrepancy is due to the difference in general between $\mathcal J_S$ and $\hat{\mathcal J_S}$, e.g. see Remark \ref{SmallCameralgprmk}.
\end{remark}

If the $S$-cameral curve $\hat\Sigma_\tau$ is non-singular, we can use this to describe the geometry of the fibres of $h_S^{ab}$.

\begin{definition}\cite[Définition 4.6.5]{Ngo2}\label{AbelianStackdef}
    A group stack $\mathcal G$ is an \emph{abelian stack} if it is the quotient of an abelian variety by the trivial action of a diagonalisable group.
\end{definition}

\begin{proposition}\label{SmCameralCurveprp}
    If $\tau \in \mathcal A^\heartsuit(G;S)$ is such that $\hat\Sigma_\tau$ is non-singular, then the fibre $(h_S^{ab})^{-1}(\tau)$ is isomorphic to a disjoint union of abelian stacks.
\end{proposition}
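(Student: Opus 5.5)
By Proposition \ref{AbnisedSHitchinFibresprp}, the fibre $(h_S^{ab})^{-1}(\tau)$ is identified with ${\bf B}_\Sigma\mathcal J_\tau$. Since $\mathcal J_\tau$ is a smooth commutative group scheme over $\Sigma$, we can write
\[
{\bf B}_\Sigma\mathcal J_\tau=[H^1(\Sigma;\mathcal J_\tau)/H^0(\Sigma;\mathcal J_\tau)],
\]
exactly as in the proof of Lemma \ref{PicardStackIsogenylem}. The plan is then to prove two facts. First, $H^0(\Sigma;\mathcal J_\tau)$ is a diagonalisable group acting trivially. Second, each connected component of the Picard scheme $H^1(\Sigma;\mathcal J_\tau)$ is an abelian variety. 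Together these give the decomposition into abelian stacks in the sense of Definition \ref{AbelianStackdef}.

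To control $\mathcal J_\tau$, I will use non-singularity of $\hat\Sigma_\tau$. Because the cover $p_\tau$ is finite flat, the description (\ref{PseudoCameralgpeqn}) gives
\[
\hat{\mathcal J}_\tau=(p_{\tau,*}(\bar Z\times\hat\Sigma_\tau))^{W_L},
\]
the $W_L$-fixed part of the Weil restriction of the torus $\bar Z$ along $p_\tau$.

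\begin{itemize}
\item \emph{Global sections.} Since $\hat\Sigma_\tau$ is a smooth projective curve, $H^0(\hat\Sigma_\tau,\bar Z)=\bar Z^{\pi_0(\hat\Sigma_\tau)}$. Taking $W_L$-invariants gives a closed subgroup of a torus, hence a diagonalisable group. The subgroup $H^0(\Sigma;\mathcal J_\tau)$ is again diagonalisable, and it acts trivially on $H^1$ because the group is commutative.
\item \emph{First cohomology.} I will compare $H^1(\Sigma;\hat{\mathcal J}_\tau)$ with the stack $\hat{\mathcal P}_{S,\tau}$ of Proposition \ref{SCameralBunprp}. Using a cocharacter decomposition of $\bar Z$, the space $({\bf B}_{\hat\Sigma_\tau}\bar Z)^{W_L}$ has neutral component isogenous to a $W_L$-invariant subtorus-type piece of a product of Jacobians of the components of $\hat\Sigma_\tau$. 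That piece is an abelian variety, because $\hat\Sigma_\tau$ is smooth and projective.
\item \emph{Local conditions.} The condition $(*)$ at the finitely many ramification points only cuts out unions of components, or finite-index data. It therefore does not change the fact that the identity component is an abelian variety.
\end{itemize}

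The key point to establish is that $H^1(\Sigma;\mathcal J_\tau)^0$ has no affine (torus or unipotent) part. The main obstacle is doing this directly for $\mathcal J_\tau$ rather than for $\hat{\mathcal J}_\tau$. My approach is a Chevalley-type argument. The torsor stack is smooth of dimension $\dim H^1(\Sigma,\mathrm{Lie}\,\mathcal J_\tau)$. A connected commutative group that is isogenous, via Lemma \ref{PicardStackIsogenylem}, to an abelian variety must itself be an abelian variety, since an isogeny cannot create or destroy an affine part.

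To rule out the affine part for $\hat{\mathcal J}_\tau$, I use that a Néron-type model of a torus over a smooth projective curve has an affine part of $\mathrm{Pic}^0$ only when the group scheme degenerates, for example through toric or additive reduction at singular points. Over the smooth curve $\hat\Sigma_\tau$, the Weil restriction of the constant torus has no such degeneration. The $W_L$-invariants are the fixed points of a finite group acting on an abelian variety, and up to finitely many components these form an abelian variety.

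Finally, I will check that the set of connected components $\pi_0$ is discrete. This follows from the finite-generation of the component groups of Picard-type schemes of tori on curves. The conclusion is that the fibre is a disjoint union of quotients of abelian varieties by the trivial action of the diagonalisable group $H^0(\Sigma;\mathcal J_\tau)$.
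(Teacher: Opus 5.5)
Your proposal is correct and follows essentially the same route as the paper, which just invokes the proof of Ngô's Proposition 4.8.2(2). That proof identifies the fibre with ${\bf B}_\Sigma\mathcal J_\tau$, passes through the finite-index inclusion $\mathcal J_\tau\subseteq\hat{\mathcal J}_\tau$ (an isogeny on torsor stacks, as in Lemma \ref{PicardStackIsogenylem}), and uses that for smooth $\hat\Sigma_\tau$ the group $\hat{\mathcal J}_\tau$ is the $W_L$-invariant Weil restriction of a torus along a smooth cover, so the automorphism groups are diagonalisable and the neutral component of the coarse space is an abelian variety. Your two loose points, the splitting ${\bf B}_\Sigma\mathcal J_\tau\cong[H^1/H^0]$ and the finiteness of $H^1(W_L,H^0(\hat\Sigma_\tau,\bar Z))$ behind your ``isogenous'' claim, are at the level of detail the paper itself leaves implicit.
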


\begin{proof}
The proof is the same as that of \cite[Proposition 4.8.2 (2)]{Ngo2}.
\end{proof}

\begin{remark}\label{SmCameralCurvermk}
    As in \cite[Section 4.6]{Ngo2}, one can define a non-empty open substack $\mathcal A^\diamondsuit(G;S)$ of $\mathcal A^\heartsuit(G;S)$ such that the $\mathbb{C}$-points of $\mathcal A^\diamondsuit(G;S)$ correspond to non-singular cameral curves. If the degree of $\mathcal L$ is suitably high, $\mathcal A^\diamondsuit(G;S)$ has non-empty intersection with $\mathcal A_\mathcal L^0(G;S).$
\end{remark}

We conclude this section by giving the abelian analogues of Theorem \ref{GlobalNonabnthm}. We let $\mathcal L$ be a line bundle on $\Sigma$ which admits a square root, and we denote by $\mathcal M_\mathcal L^{ab,0}(G;S)$ the restriction of $\mathcal M_\mathcal L^{ab}(G;S)$ to $\mathcal A_\mathcal L^0(G;S)$. The pullback $v^*\mathcal J_{S, \mathbb{G}_m}$ under the evaluation map $v:\mathcal A_\mathcal L^0(G;S) \times \Sigma \rightarrow \mathcal B/\mathbb{G}_m$ defines a smooth commutative group stack on $\mathcal A_\mathcal L^0(G;S)$ representable by group schemes. We set
\begin{equation}\label{SPicardeqn}
    \mathcal P^0_S = Sec(\Sigma, {\bf B}(v^*\mathcal J_{S,\mathbb{G}_m}))
\end{equation}
where ${\bf B}(v^*\mathcal J_{S,\mathbb{G}_m})$ is the classifying stack of $v^*\mathcal J_{S,\mathbb{G}_m}$-torsors; $\mathcal P_S^0$ is a commutative group stack over $\mathcal A_\mathcal L^0(G;S)$.

Let $\tau$ be a $\mathbb{C}$-point of $\mathcal A_\mathcal L^0(G;S)$. For any abelianised Higgs bundle $f:\Sigma \rightarrow [S/G]^{ab}/\mathbb{G}_m$ representing a point in the fibre $(h_S^{ab})^{-1}(\tau)$, its group scheme of local automorphisms over $\Sigma$ is $Aut_\Sigma(f) =\mathcal J_\tau$. Thus, the fibre $\mathcal P^0_{S,\tau} = {\bf B}_\Sigma \mathcal J_\tau$ of $\mathcal P^0_S$ over $\tau$ acts canonically on $(h_S^{ab})^{-1}(\tau)$ by twisting by $\mathcal J_\tau$-torsors. This defines a global action of the group stack $\mathcal P_S^0$ on $\mathcal M_\mathcal L^{ab,0}(G;S)$ over $\mathcal A_\mathcal L^0(G;S)$.

\begin{theorem}\label{SPicardTorsthm}
    The stack $\mathcal M_\mathcal L^{ab,0}(G;S)$ is a torsor for the action of $\mathcal P_S^0$.
\end{theorem}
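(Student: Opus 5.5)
To show that $\mathcal M_\mathcal L^{ab,0}(G;S)$ is a $\mathcal P_S^0$-torsor over $\mathcal A_\mathcal L^0(G;S)$, two things are needed. First, the action map $\mathcal P_S^0 \times_{\mathcal A^0} \mathcal M^{ab,0} \rightarrow \mathcal M^{ab,0}\times_{\mathcal A^0}\mathcal M^{ab,0}$ must be an isomorphism. Second, $h_S^{ab}$ must admit sections fppf- (in fact étale-) locally on $\mathcal A^0_\mathcal L(G;S)$. I will treat these separately, working over the cover $q:H^0(\Sigma,\mathfrak{K}_\mathcal L)\rightarrow \mathcal A_\mathcal L^0(G;S)$ of Remark \ref{SHitchinBasePropsrmk}, which is an $F$-torsor and in particular étale and surjective. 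Both properties can be checked after base change along $q$.

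\emph{Local sections.} Proposition \ref{HKMSectionprp} gives the Hitchin–Katsylo multisection $\varepsilon: H^0(\Sigma,\mathfrak{K}_\mathcal L)\rightarrow \mathcal M^0_\mathcal L(G;S)$ with $h_S\circ\varepsilon = q$. Composing with $Ab_S$ gives $Ab_S\circ \varepsilon$, and then $h_S^{ab}\circ Ab_S\circ\varepsilon = q$ by the factorisation (\ref{AbnSHitchinFacteqn}). So $h^{ab}_S$ acquires a section after pullback along $q$, and every fibre over $\mathcal A^0$ is non-empty.

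\emph{Simple transitivity.} Base change along $q$ and use the section $Ab_S\circ\varepsilon$. Let $\delta^{ab}:H^0(\Sigma,\mathfrak{K}_\mathcal L)\times\Sigma \rightarrow [S/G]^{ab}/\mathbb{G}_m$ be the resulting evaluation map. Corollary \ref{GmEquivAbnGerbecor} says that $\rho^{ab}_{S}$ is a gerbe banded by $\mathcal J_{S,\mathbb{G}_m}$. Hence the pullback of $\rho^{ab}_S$ along $v\circ(q\times \mathrm{id})$ is identified with ${\bf B}((q\times\mathrm{id})^*v^*\mathcal J_{S,\mathbb{G}_m})$, the identification being induced by the section $\delta^{ab}$, exactly as in the Cartesian diagram (\ref{StackyChevalleyBaseeqn}). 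Applying the functorial construction $Sec(\Sigma,-)$, as in the proof of Theorem \ref{GlobalNonabnthm}, gives an isomorphism
\[
H^0(\Sigma,\mathfrak{K}_\mathcal L)\times_{\mathcal A^0}\mathcal M^{ab,0}_\mathcal L(G;S)\cong H^0(\Sigma,\mathfrak{K}_\mathcal L)\times_{\mathcal A^0}\mathcal P^0_S .
\]
It remains to check that this isomorphism is $\mathcal P_S^0$-equivariant. The action was defined by twisting by $\mathcal J_\tau$-torsors, and since the gerbe is banded, the automorphism group scheme of any object is canonically $\mathcal J_\tau$; under the identification, twisting therefore becomes translation in ${\bf B}_\Sigma$. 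This yields both the torsor property and the trivialisation on the cover $\tilde{\mathcal A}^0_S = H^0(\Sigma,\mathfrak{K}_\mathcal L)$ asserted in Theorem \ref{IntroNonabnabnthm}.

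\emph{Main obstacle.} The delicate step is the equivariance check, namely that the banding of $\rho^{ab}_S$ by $\mathcal J_S$ is compatible with the canonical identification $Aut_\Sigma(f)=\mathcal J_\tau$ used to define the action, uniformly in families over $H^0(\Sigma,\mathfrak{K}_\mathcal L)$. I would handle this by working with the gerbe over the base stack itself. Because the band is commutative, the twisting action of ${\bf B}\mathcal J$ on a $\mathcal J$-banded gerbe is defined globally on $[S/G]^{ab}/\mathbb{G}_m$ over $\mathcal B/\mathbb{G}_m$, and makes it a ${\bf B}\mathcal J_{S,\mathbb{G}_m}$-torsor, which is a standard fact about abelian gerbes. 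Applying $Sec(\Sigma,-)$ to this torsor structure would give the global action directly, and its compatibility with the fibrewise description then follows from functoriality.
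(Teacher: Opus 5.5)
Your proposal is correct and follows essentially the paper's route. The paper simply invokes the argument of \cite[Proposition 4.3.3]{Ngo2}: the mapping stack into a gerbe banded by the commutative group $\mathcal J_{S,\mathbb{G}_m}$ is a torsor under the stack of $\mathcal J$-torsors once it has local sections, and non-emptiness over $\mathcal A^0_\mathcal L(G;S)$ comes from the Hitchin--Katsylo multisection of Proposition \ref{HKMSectionprp}; your final paragraph together with your local-sections step is exactly this, while your base change along $q$ is not needed for the torsor statement but gives the trivialisation of Theorem \ref{GlobalSHiggsAbnthm} at the same time.
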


\begin{proof}
The proof is the same as that of \cite[Proposition 4.3.3]{Ngo2}, noting that the fibres of $h_S^{ab}$ are non-empty over $\mathcal A^0(G;S)$ by Proposition \ref{HKMSectionprp}.
\end{proof}

Given a choice of Katsylo slice $\mathfrak{K}$ for $S$, this torsor structure can be trivialised over $H^0(\Sigma,\mathfrak{K}_\mathcal L)$. We have a map $\eta:H^0(\Sigma,\mathfrak{K}_\mathcal L) \times \Sigma \rightarrow \mathcal B/\mathbb{G}_m$ induced by the evaluation map, and we can define a group scheme $\mathcal J_\eta = \eta^*\mathcal J_{S, \mathbb{G}_m}$ on $H^0(\Sigma,\mathfrak{K}_\mathcal L) \times \Sigma$.

\begin{theorem}\label{GlobalSHiggsAbnthm}
    A choice of Katsylo slice $\mathfrak{K}$ for $S$ and square-root $\mathcal L^{1/2}$ of $\mathcal L$ determines a Cartesian diagram
  \begin{equation}\label{GlobalSHiggsAbn1eqn}
\begin{tikzcd}
{\bf B}_\Sigma(\mathcal J_{\eta}) \arrow[d] \arrow[r] & \mathcal M^{ab,0}_\mathcal L(G;S) \arrow[d, "h_S^{ab}"] \\
{H^0(\Sigma,\mathfrak{K}_\mathcal L)} \arrow[r, "q"]         & \mathcal A^0_\mathcal L(G;S)                 
\end{tikzcd}
  \end{equation}
  
  Here, ${\bf B}_\Sigma(\mathcal J_{\eta})$ denotes the mapping stack
  \begin{equation}\label{GlobalSHiggsAbn2eqn}
      {\bf B}_\Sigma(\mathcal J_{\eta}) = Sec(\Sigma, {\bf B}(\mathcal J_{\eta})),
  \end{equation}
  which is a commutative group stack over $H^0(\Sigma,\mathfrak{K}_\mathcal L)$.
\end{theorem}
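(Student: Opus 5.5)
The plan mirrors the proof of Theorem \ref{GlobalNonabnthm}, with the gerbe $\rho_S$ replaced by the abelian gerbe $\rho_S^{ab}$ of Corollary \ref{GmEquivAbnGerbecor}. The choice of $\mathfrak{K}$ and $\mathcal L^{1/2}$ gives, via Proposition \ref{StackyHKSectionprp}, a map $\mathfrak{K}_\mathcal L \rightarrow [S_\mathcal L/G]$. Composing with the rigidification $\rho_S^n$ gives a map $\mathfrak{K}_\mathcal L \rightarrow [S/G]^{ab}\times^{\mathbb{G}_m}\mathcal L$ over $\Sigma$, and hence, by functoriality of mapping stacks, a multisection $\varepsilon^{ab} = Ab_S\circ\varepsilon : H^0(\Sigma,\mathfrak{K}_\mathcal L) \rightarrow \mathcal M^{ab,0}_\mathcal L(G;S)$ satisfying $h_S^{ab}\circ\varepsilon^{ab} = q$. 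Commutativity follows from Proposition \ref{HKMSectionprp} together with the factorisation (\ref{AbnSHitchinFacteqn}).

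Next, I would form the fibre product $H^0(\Sigma,\mathfrak{K}_\mathcal L)\times_{q,\mathcal A^0_\mathcal L(G;S)}\mathcal M^{ab,0}_\mathcal L(G;S)$. A $T$-point of it consists of a map $t:T\to H^0(\Sigma,\mathfrak{K}_\mathcal L)$ together with a lift of the induced map $T\times\Sigma \to \mathcal B/\mathbb{G}_m$ (which is $\eta\circ(t\times\mathrm{id})$) through $\rho^{ab}_{S}$. Since $\rho^{ab}_S$ is a gerbe banded by the commutative group $\mathcal J_{S,\mathbb{G}_m}$, its pullback along $\eta$ is a $\mathcal J_\eta$-gerbe over $H^0(\Sigma,\mathfrak{K}_\mathcal L)\times\Sigma$. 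This gerbe has a global section, namely the evaluation of $\varepsilon^{ab}$. A gerbe with a section is neutral, so it is isomorphic to ${\bf B}(\mathcal J_\eta)$ over $H^0(\Sigma,\mathfrak{K}_\mathcal L)\times\Sigma$. Because $\mathcal J_\eta$ is commutative, the automorphism group of the section is exactly $\mathcal J_\eta$ and no inner-twisting subtleties arise. Taking sections over $\Sigma$ relative to $H^0(\Sigma,\mathfrak{K}_\mathcal L)$ then gives the Cartesian square (\ref{GlobalSHiggsAbn1eqn}). I would use that $Maps(\Sigma,H^0(\Sigma,\mathfrak{K}_\mathcal L))\cong H^0(\Sigma,\mathfrak{K}_\mathcal L)$ because $\Sigma$ is projective, exactly as in Theorem \ref{GlobalNonabnthm}.

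The commutative group stack structure on ${\bf B}_\Sigma(\mathcal J_\eta)$ comes from commutativity of $\mathcal J_\eta$: tensor product of torsors gives it a Picard stack structure over the base. The identification is compatible with the $\mathcal P^0_S$-action of Theorem \ref{SPicardTorsthm} pulled back along $q$, since $\eta^*\mathcal J_{S,\mathbb{G}_m} = (v\circ(q\times\mathrm{id}))^*\mathcal J_{S,\mathbb{G}_m}$. So the diagram also exhibits the trivialisation of the torsor from Theorem \ref{SPicardTorsthm} over $H^0(\Sigma,\mathfrak{K}_\mathcal L)$.

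The main obstacle I expect is making the neutral-gerbe identification precise at the level of stacks rather than points. One must check that the pullback of $\rho^{ab}_{S,\mathbb{G}_m}$ along $\eta$ really becomes ${\bf B}(\mathcal J_\eta)$, with $\mathcal J_\eta$ equal to the automorphism sheaf of the section $\varepsilon^{ab}$. I would deduce this from the Cartesian diagram of Corollary \ref{GmEquivStackyCBcor}, applied to the section $\delta$, and then pushed through the rigidification by $\mathcal N_{S,G}$. This pushing-through uses \cite[Theorem A.1]{AOV} and the fact that the induced quotient of $\mathcal I^{sm}$ by $\mathcal N$ is identified with $\rho_S^*\mathcal J_S$ by the smoothness and surjectivity of $\kappa_S$ onto its image (Proposition \ref{CameralDescentprp}).
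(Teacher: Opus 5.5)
Your proposal is correct and matches the paper's argument. The paper gives no separate proof and treats the statement as the abelian analogue of Theorem \ref{GlobalNonabnthm}: the banded gerbe $\rho^{ab}_{S,\mathbb{G}_m}$ of Corollary \ref{GmEquivAbnGerbecor} is pulled back along $\eta$, neutralised by the section coming from the Hitchin--Katsylo multisection, and (\ref{GlobalSHiggsAbn1eqn}) follows after identifying $Maps(\Sigma,H^0(\Sigma,\mathfrak{K}_\mathcal L))$ with $H^0(\Sigma,\mathfrak{K}_\mathcal L)$. Your planned route through Corollary \ref{GmEquivStackyCBcor} and the rigidification by $\mathcal N_{S,G}$ works, but Corollary \ref{GmEquivAbnGerbecor} already provides the band $\mathcal J_{S,\mathbb{G}_m}$ directly, so you can skip that detour.
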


\begin{remark}\label{AbSurjectivermk}
    Theorems \ref{GlobalNonabnthm} and \ref{GlobalSHiggsAbnthm} ensure that the fibres of $h_S$ and $h_S^{ab}$ are always non-empty over $\mathcal A_\mathcal L^0(G;S)$. On the other hand, there is no guarantee that the map $Ab_S:\mathcal M^0_\mathcal L(G;S) \rightarrow \mathcal M^{ab, 0}_\mathcal L(G;S)$ is necessarily essentially surjective (although it is in the examples we calculate in Sections \ref{Hitchinegscn} and \ref{NonQSRealFormsbn}).
\end{remark}

\section{Examples}\label{Hitchinegscn}
We make explicit the constructions of the previous section for the two main examples of sheets which were discussed in Section \ref{Sheetsegsbn}, i.e., the sheets in $\mathfrak{gl}_n$ for $n$ arbitrary, and the sheets in $\mathfrak{sp}_4$. We consider these using the spectral correspondence of \cite{Hitchin2} and \cite{BNR} (and its extensions to non-integral curves by \cite{Schaub} and \cite{Carbone}). The abelianised Hitchin fibrations in these cases involve spectral data on the normalisation of the reduced subscheme of the spectral curve, providing a non-abelian analogue to similar constructions for ``semi-abelian" cases in the singular locus of the Hitchin fibration \cite{Hitchin4}, \cite{Horn1}, \cite{Horn2}, \cite{FraPN}.

For concreteness, we fix the twisting line bundle $\mathcal L$ on $\Sigma$ to be the canonical bundle $K$.

\subsection{Spectral data for sheet-valued \texorpdfstring{$GL_n$}{GLn}-Higgs bundles}\label{GLnHitchinsbn}

We consider the case where $G = GL_n$, i.e. the setting of Example \ref{GLnSheetexm}. 

We recall from Proposition \ref{GLninductionprp} that every sheet $S$ in $\mathfrak{gl}_n$ is Dixmier, i.e. contains a semisimple element $x \in S$, and thus the sheets are in bijective correspondence with conjugacy classes of Levi subgroups $L \leq GL_n$ (for example by taking $L=C_G(x)$); these in turn correspond to partitions ${\bf m} = (m_1 \geq ...\geq m_r)$ of $n$. We will denote by $S_{\bf m}$ the sheet corresponding to the partition ${\bf m}$, and $L_{\bf m}$ its associated Levi subgroup.

A $GL_n$-Higgs bundle on $\Sigma$ corresponds to a pair $(V,\Phi)$ where $V$ is a vector bundle on $\Sigma$ and $\Phi \in H^0(\Sigma,End(V)\otimes K)$. For a sheet $S_{\bf m}$, we can characterise the $S_{\bf m}$-valued Higgs bundles as follows. 

\begin{proposition}\label{SmValuedHiggsGLnprp}
    For a Higgs bundle $(V,\Phi)$ on $\Sigma$, the following are equivalent:
    \begin{itemize}
        \item[(i)] $(V,\Phi)$ is an $S_{\bf m}$-valued Higgs bundle;
        \item[(ii)] for every $x \in \Sigma$, $\Phi_x$ can be identified with an endomorphism of $\mathbb{C}^n$ with Jordan decomposition $\Phi_x = \Phi_x^{ss}+\Phi_x^{nil}$, such that $\Phi_x^{ss}$ decomposes as
        \begin{equation}\label{SmValuedHiggsGLn1eqn}
            \Phi_x^{ss} = \oplus_{i=1}^r\lambda_iI_{m_i} \in \bigoplus_{i=1}^r End(\mathbb{C}^{m_i})
        \end{equation}
        for some scalars $\lambda_i \in \mathbb{C}$, and $\Phi_x^{nil}$ satisfies 
        \begin{equation}\label{SmValuedHiggsGln2eqn}
            \Phi_x^{nil} \in \bigoplus_{1\leq i<j \leq r}Hom(\mathbb{C}^{m_j},\mathbb{C}^{m_i})
        \end{equation}
        with minimal possible centraliser dimension subject to this condition.

    \end{itemize}
\end{proposition}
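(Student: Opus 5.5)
The statement is pointwise: by Definition \ref{SheetHiggsdef}, $(V,\Phi)$ is $S_{\bf m}$-valued exactly when, for every $x\in\Sigma$, the element $\Phi_x$ lies in $S_{\bf m}$ after choosing a trivialisation of $V_x$ and of $K_x$. Since $S_{\bf m}$ is stable under $G\times\mathbb{G}_m$, this does not depend on those choices. So the plan is to prove the Lie-algebra statement: an element $y\in\mathfrak{gl}_n$ lies in $S_{\bf m}$ if and only if, after conjugation, it has the block form described in (ii).

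First fix the standard parabolic $P$ with Levi factor $L_{\bf m}=\prod GL_{m_i}$, using block upper triangular matrices with blocks $m_1,\dots,m_r$. Its solvable radical is
\[
\mathfrak{r}=\mathfrak{z}\oplus\mathfrak{n}, \qquad \mathfrak{z}=\bigoplus_i \mathbb{C} I_{m_i}, \qquad \mathfrak{n}=\bigoplus_{i<j}Hom(\mathbb{C}^{m_j},\mathbb{C}^{m_i}).
\]
By \cite[Theorem 5.4]{BK}, already used in the proof of Lemma \ref{GSLevilem}, we have $\overline{S}_{\bf m}=G\cdot\mathfrak{r}$. By Example \ref{GLnSheetexm} the sheets of $\mathfrak{gl}_n$ are pairwise disjoint, and $\mathfrak{gl}_n$ is their union, so every element lies in exactly one sheet. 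Hence $S_{\bf m}=\overline{S}_{\bf m}\cap\mathfrak{g}_d$, where $d=\dim L_{\bf m}$ is the common centraliser dimension on $S_{\bf m}$. Therefore $y\in S_{\bf m}$ if and only if $y$ is conjugate to some $z\in\mathfrak{r}$ with $\dim C_G(z)=d$, and $d$ is the minimal centraliser dimension on $\mathfrak{r}$.

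Next compare this with (ii). Take $z=s+u\in\mathfrak{r}$ with $s\in\mathfrak{z}$ and $u\in\mathfrak{n}$; here $s$ is of the form (\ref{SmValuedHiggsGLn1eqn}) and $u$ is of the form (\ref{SmValuedHiggsGln2eqn}). The point to check is that $s+u$ is the Jordan decomposition of $z$ when $z$ is generic, and more generally after a $P$-conjugation. The natural route is to group together the blocks whose $\lambda_i$ coincide. After conjugating by $P$, one can remove the components of $u$ that link blocks with distinct eigenvalues, since $ad(s)$ is invertible on those root spaces. This puts $z$ in the form $s+u'$ with $[s,u']=0$, which is a genuine Jordan decomposition of the shape required in (ii).

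The remaining issue is matching the minimality condition in (ii) with $\dim C_G(z)=d$. The centraliser dimension is upper semicontinuous on the irreducible variety $\mathfrak{r}$ and attains the value $d$ on the dense set $\mathfrak{z}^{rs}+\mathfrak{n}$. So, for fixed $s$, minimality over the allowed $u$ gives dimension $d$ exactly when $s+\mathfrak{n}$ meets $S_{\bf m}$; this holds for every $s\in\mathfrak{z}$, by the Richardson-type fact that $P$ has a dense orbit in $s+\mathfrak{n}$ with centraliser of dimension $\dim L$, which is part of the theory of induction in \cite{LS}. Conversely, given (ii), $y$ is conjugate to such a $z$, so $y\in S_{\bf m}$.

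The main obstacle is the interpretation of ``minimal possible centraliser dimension'' when several $\lambda_i$ coincide. If minimality is taken over $u\in\mathfrak{n}$ with $s$ fixed, I need that this minimum always equals $\dim L$, and I would justify this by the induction argument above. As a cross-check I would compare with Proposition \ref{GLninductionprp}, applied blockwise to each Levi $M=C_G(s)$: $S_{\bf m}\cap(s+\mathfrak{m})$ is $s$ plus the orbit induced from $L$ in $\mathfrak{m}$.
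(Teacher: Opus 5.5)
Your argument is correct, but it takes a more hands-on route than the paper. The paper's proof is a single citation: \cite[Satz 4.8]{Borho2} already characterises elements of the Dixmier sheet attached to $L$ by their Jordan decomposition. The semisimple part is conjugate into $\mathfrak{z}$, and the nilpotent part lies in $\text{Ind}_{\mathfrak{l}}^{\mathfrak{m}}(0)$ for $\mathfrak{m}=\mathfrak{c}_{\mathfrak{g}}(x_{ss})$. The Lusztig--Spaltenstein definition of the induced orbit, as the orbit meeting $\mathfrak{n}\cap\mathfrak{m}$ densely, then turns this into the block description in (ii).

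You instead rederive that characterisation from four ingredients:
\begin{itemize}
\item $\overline{S}_{\bf m}=G\cdot\mathfrak{r}$;
\item the equality $S_{\bf m}=\overline{S}_{\bf m}\cap\mathfrak{g}_d$;
\item conjugation by the unipotent radical of $P$, which turns $s+v$ into an honest Jordan decomposition $s+u'$ with $u'\in\mathfrak{n}\cap\mathfrak{c}_{\mathfrak{g}}(s)$;
\item the dimension formula for induced orbits, which is where \cite{LS} enters in both arguments.
\end{itemize}
What your route buys is a self-contained proof that works verbatim for any Dixmier sheet. It also pins down which minimality is meant: minimal $\dim C_G(\Phi_x)=\dim C_M(\Phi_x^{nil})$ among nilpotents in $\mathfrak{n}\cap\mathfrak{c}_{\mathfrak{g}}(\Phi_x^{ss})$. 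For $GL_n$, minimising $\dim C_G(\Phi_x^{nil})$ instead singles out the same locus, by a dominance-order argument on Jordan types. So the ambiguity you flag is harmless. The citation, by contrast, is immediate.

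Two small corrections:
\begin{itemize}
\item $S_{\bf m}=\overline{S}_{\bf m}\cap\mathfrak{g}_d$ holds simply because an irreducible component of $\mathfrak{g}_d$ is closed in $\mathfrak{g}_d$. Disjointness of the sheets is not what makes your ``Hence'' work.
\item In your cross-check, $S_{\bf m}\cap(s+\mathfrak{m})$ should be replaced by the intersection of $S_{\bf m}$ with $s$ plus the nilpotent cone of $\mathfrak{m}$. For example, with $s=0$ and $\mathfrak{m}=\mathfrak{g}$ the former is all of $S_{\bf m}$.
\end{itemize}
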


\begin{proof}
This follows from \cite[Satz 4.8]{Borho2} and the definition of the induced orbit in \cite{LS}.
\end{proof}

This description is somewhat unwieldy, but it is substantially more straightforward to describe the $S_{\bf m}$-Hitchin base of Definition \ref{SHitchinmapdef}. We recall that the Katsylo group $F$, defined in Definition \ref{KatsyloGpdef}, is always trivial for any sheet in $G=GL_n$. We use the notation of Example \ref{GLnSheetexm}. By Remark \ref{SHitchinBasePropsrmk}, we have
\begin{equation}\label{SmHitchinBaseGLneqn}
\begin{aligned}
    \mathcal A_K(GL_n;S_{\bf m}) &= \bigoplus_{i=1}^sH^0(\Sigma,\mathfrak{z}_i \otimes  K/W_i) 
    \\
    &= \bigoplus_{i=1}^s\left(\bigoplus_{j=1}^{l_i}H^0(\Sigma,K^j) \right),
\end{aligned}
\end{equation}
noting that $\mathfrak{z}_i$ is a vector space of dimension $l_i$ and $W_i$ is the symmetric group on $l_i$ elements acting by permutation of the coordinates of $\mathfrak{z}_i$. For any semisimple element in $x \in S_{\textbf{m}}$, $l_i$ is the number of distinct eigenvalues of $x$ with multiplicity $i$.

We recall the interpretation of the usual Hitchin base $\mathcal A_K(GL_n)$ as a space of spectral covers over $\Sigma$ \cite{Hitchin2}. If $\pi:T^*\Sigma\rightarrow \Sigma$ is the total space of the line bundle $K$ on $\Sigma$, a point $(a_1,...,a_n)$ of
\begin{equation}\label{HitchinBaseGLneqn}
\begin{aligned}
    \mathcal A_K(GL_n) &= \bigoplus_{i=1}^nH^0(\Sigma,K^i)
\end{aligned}
\end{equation}
corresponds to a section $\xi$ of $\pi^*K^n$ over $T^*\Sigma$ of the form $$\xi=\lambda^n+a_1\lambda^{n-1}+ ...+a_n,$$
where $\lambda$ is the canonical section of $\pi^*K$ over $T^*\Sigma$. This section in turn corresponds to a cover ${\Sigma}_\xi \rightarrow \Sigma$ together with a closed immersion ${\Sigma}_\xi \hookrightarrow T^*\Sigma$; this is the \emph{spectral cover of $\Sigma$} corresponding to $\xi$.

The points $((b_i)_j)$ of $\mathcal A_K(GL_n;S_{\bf m})$ correspond analogously to tuples $(\xi_1,...,\xi_s)$, where $\xi_i$ is a section of $\pi^*K^{l_i}$ defined by
$$\xi_i=\lambda^{l_i}+(b_i)_1\lambda^{l_i-1}+ ...+(b_i)_{l_i},$$
and thus to tuples $({\Sigma}_{\xi_i} \rightarrow \Sigma)_i$ of covers together with their closed immersions ${\Sigma}_{\xi_i} \hookrightarrow T^*\Sigma$. We could simply omit any $i$ with $l_i=0$, but it is notationally convenient to instead take $\xi_i=1$ so that ${\Sigma}_{\xi_i}$ is the empty scheme. The points of the open subscheme $\mathcal A_K^\heartsuit(GL_n;S_{\bf m})$ of $\mathcal A_K(GL_n;S_{\bf m})$ defined as in Proposition \ref{SHitchinBaseNormprp} correspond exactly to the tuples $({\Sigma}_{\xi_i} \rightarrow \Sigma)_i$ such that each ${\Sigma}_{\xi_i}$ is reduced.

\begin{proposition}\label{GLnHitchinBaseNormprp}
    The map $\tilde{\mu}_S:\mathcal A_K(GL_n;S_{\bf m}) \rightarrow \mathcal A_K(GL_n)$ in Remark \ref{SHitchinBasermk} sends a tuple $(\xi_1,...,\xi_s)$ to the section
    \begin{equation}\label{GLnHitchinBaseNorm1eqn}
        \xi=\prod_{i=1}^s(\xi_i)^i
    \end{equation}
    of $\pi^*K^n$.
\end{proposition}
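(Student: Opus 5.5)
The plan is to reduce the statement to a pointwise computation of the map $\nu_S\circ C:\mathfrak{c}_S \rightarrow \mathfrak{t}/W$ on characteristic polynomials, and then globalise over $\Sigma$. By Remark \ref{SHitchinBasermk} and Proposition \ref{SHitchinBaseNormprp}, $\tilde{\mu}_S$ is induced by the composition (\ref{SHitchinBaseNormeqn}). Here $F$ is trivial, so $C$ is the identity. By Proposition \ref{BorhoChevalleyprp}, $\nu_L:\mathfrak{z}/W_L\rightarrow \mathfrak{t}/W$ is the map induced by the inclusion $\mathfrak{z}\hookrightarrow\mathfrak{t}$. All these maps are $\mathbb{G}_m$-equivariant (Remark \ref{BCEquivariancermk}), so after twisting by $K$ and taking sections it suffices to compute the map on the fibres $\mathfrak{z}/W_L \rightarrow \mathfrak{t}/W$, written in the coordinates used in (\ref{SmHitchinBaseGLneqn}) and (\ref{HitchinBaseGLneqn}).

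First, we identify the coordinates. On $\mathfrak{t}/W$, a point is recorded by the coefficients of the characteristic polynomial $\det(\lambda - x)$. On $\mathfrak{c}_i = \mathfrak{z}_i/W_i = Sym^{l_i}(\mathbb{C})$, a point $\boldsymbol{\lambda}_i=(\lambda_{i,1},\dots,\lambda_{i,l_i})$ is recorded by the coefficients $(b_i)_j$ of the monic polynomial $\prod_{j}(\lambda-\lambda_{i,j})$, that is, by elementary symmetric functions up to sign. This is exactly the identification underlying (\ref{SmHitchinBaseGLneqn}).

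Next, we compute on a generic element. Take $z\in\mathfrak{z}^{rs}$, viewed as a semisimple element of $S_{\bf m}$. By (\ref{ChevalleyMapGLneqn}), $z$ has exactly $l_i$ distinct eigenvalues $\boldsymbol{\lambda}_i$ of multiplicity $i$ for each $i$. Its characteristic polynomial is therefore $\prod_{i}\prod_j(\lambda-\lambda_{i,j})^i=\prod_i \xi_i(\lambda)^i$. Since both sides are polynomial (regular) functions on $\mathfrak{z}$ and agree on the dense open set $\mathfrak{z}^{rs}$, they agree on all of $\mathfrak{z}$, and hence on $\mathfrak{z}/W_L$. One small point needs checking: the decomposition (\ref{GLncentredecompeqn}) assigns to the coordinates in $\mathfrak{z}_i$ exactly the blocks whose eigenvalues occur with multiplicity $i$. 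This is the content of the description in Example \ref{GLnSheetexm}, where the blocks of $L_{\bf m}$ of equal size $i$ are grouped together and permuted by $W_i$.

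Finally, we globalise. Under the twist by $K$, the coefficient of $\lambda^{n-k}$ lies in $K^k$. The identity $\xi=\prod_i\xi_i^i$ is an identity of homogeneous polynomials, so it holds for sections of $\pi^*K^n$ on $T^*\Sigma$ with $\lambda$ the tautological section; the degrees are consistent because $\sum_i i\,l_i = n$. Applying the identity pointwise over $\Sigma$ gives (\ref{GLnHitchinBaseNorm1eqn}).

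The only step needing care is the bookkeeping that matches the $\mathfrak{z}_i$ with eigenvalue multiplicity $i$. This includes the convention $\xi_i=1$ when $l_i=0$, which contributes a trivial factor. Beyond that, everything is routine density and equivariance.
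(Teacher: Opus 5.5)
Your proposal is correct and follows essentially the same route as the paper: reduce $\tilde{\mu}_S$ to the map $\nu_L:\prod_i \mathfrak{z}_i/W_i\rightarrow\mathfrak{t}/W$ induced by $\mathfrak{z}\hookrightarrow\mathfrak{t}$, read it off on eigenvalue tuples (each coordinate of $\mathfrak{z}_i$ appearing with multiplicity $i$), and translate into characteristic polynomials. The only difference is cosmetic: the paper describes $\nu_L$ directly on all points, because every $z\in\mathfrak{z}$ is a diagonal matrix in which each $\mathfrak{z}_i$-coordinate is repeated $i$ times, so your density argument over $\mathfrak{z}^{rs}$ is harmless but unnecessary.
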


\begin{proof}
The map $\tilde{\mu}_S$ is induced by the map
$$\nu_L:\prod_{i=1}^s\mathfrak{z}_i/W_i \rightarrow \mathfrak{t}/W.$$ As in Example \ref{GLnSheetexm}, points of the left hand side are of the form $(\boldsymbol{\lambda}_1,\, ...,\, \boldsymbol{\lambda}_s)$, where each $\boldsymbol{\lambda}_i$ is an unordered $l_i$-tuple $((\lambda_i)_1,\, ..., \, (\lambda_i)_{l_i})$. The map $\nu_L$ sends $(\boldsymbol{\lambda}_1,\,...,\,\boldsymbol{\lambda}_s)$ to the unordered $n$-tuple of all of the $(\lambda_i)_j$, where each $(\lambda_i)_j$ appears $i$ times (for each occurence of this value in $\boldsymbol{\lambda}_i$). If a section $(s_i)_x$ corresponds to the tuple $((\lambda_i)_1, \, ..., \, (\lambda_i)_{l_j})$, then
$$(s_i)_x = (\lambda_x - (\lambda_i)_1)\,\cdots\,(\lambda_x -(\lambda_i)_{l_i});$$
a similar expression holds for $s_x$. Thus the statement of the proposition follows.
\end{proof}

\begin{remark}\label{GLnHitchinBaseNormrmk}
    For a tuple $(\xi_1,...,\xi_s)$ corresponding to a point of $\mathcal A^\heartsuit_K(GL_n;S_{\bf m})$, each of the irreducible factors of $\xi_i$ occurs with multiplicity one; hence the expression (\ref{GLnHitchinBaseNorm1eqn}) is unique, i.e. the map $\tilde{\mu}_S$ in Remark \ref{SHitchinBasermk} is injective on this open subset as predicted by Proposition \ref{SHitchinBaseNormprp}. On the other hand, if we take $n=4$ and ${\bf m} = (2, 1^2)$, then for any $a \in H^0(\Sigma,K^2)$, the points in $\mathcal A(GL_4;S_{\bf m})$ corresponding to $((\lambda-a)^2,(\lambda+a))$ and $((\lambda+a)^2,(\lambda-a))$ both map to the same section $(\lambda-a)^2(\lambda+a)^2$ in $\mathcal A(GL_4)$. This is the global version of the failure of injectivity of $\nu_L$ noted at the end of Example \ref{GLnSheetexm}.
\end{remark}

We recall that a Higgs bundle $(V,\Phi)$ maps to $\xi(\lambda;\Phi)$ under the Hitchin map $h_{GL_n}$, where $\xi(T;\Phi) = \det(T -\Phi)$ is the characteristic polynomial of $\Phi$ whose coefficients are given by sections of powers of $\pi^*K$. In particular, for an $S_{\bf m}$-valued Higgs bundle $(V,\Phi)$, the characteristic equation $\xi(T;\Phi)$ has a decomposition
\begin{equation}\label{GLnHitchinNorm2eqn}
    \xi(T;\Phi) = \prod_{i=1}^s\xi_i(T;\Phi)^i
\end{equation}
where each $\xi_i(T;\Phi)$ is a polynomial of degree $l_i$; this decomposition is unique if $(V,\Phi)$ is a point of $\mathcal M_K^\heartsuit(GL_n;S_{\bf m})$. In that case, by taking each factor $\xi_i(T;\Phi)$ only once, we can define a polynomial by
\begin{equation}\label{MinPolyHiggsFibreeqn}
    m(T;\Phi) = \prod_{i=1}^s\xi_i(T;\Phi).
\end{equation}

\begin{lemma}\label{MinPolyHiggsFibrelem}
    For a $\mathbb{C}$-point $(V,\Phi)$ of $\mathcal M_K^\heartsuit(GL_n;S_{\bf m})$, the $K^r$-twisted endomorphism $m=m(\Phi;\Phi)$ of $V$ is $0$.
\end{lemma}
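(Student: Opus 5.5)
The plan is to reduce the statement to a pointwise claim on a dense open subset of $\Sigma$, and then extend by continuity. Here $r$ denotes the degree of $m(T;\Phi)$, namely $\sum_i l_i$ (the number of distinct eigenvalues of a semisimple element of $S_{\bf m}$). The endomorphism $m(\Phi;\Phi)$ is a global section of the locally free sheaf $End(V)\otimes K^r$ on the integral curve $\Sigma$. It therefore vanishes identically as soon as it vanishes at every point of a dense open subset $U\subseteq\Sigma$, since a section of a vector bundle on an integral scheme that vanishes on a dense open set is zero. So it suffices to find such a $U$ on which the fibre $m(\Phi_x;\Phi_x)$ is zero.

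The natural choice is the locus where $\Phi_x$ is semisimple. Because $(V,\Phi)$ is a point of $\mathcal M_K^\heartsuit(GL_n;S_{\bf m})$, the image of $h_S(V,\Phi)$ meets $\mathcal B^{rs}$, which is the image of $\mathfrak{z}^{rs}$. This is an open condition on $\Sigma$, so there is a dense open $U$ on which $\Phi_x$ lies in $S^{ss}=Ad(G)(\mathfrak{z}^{rs})$. Equivalently, on $U$ the factors $\xi_i(T;\Phi)$ are pairwise coprime and have simple roots, so $m(T;\Phi_x)$ has simple roots.

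Next we check the pointwise claim. For $x\in U$, the element $\Phi_x$ is semisimple, and by Proposition~\ref{SmValuedHiggsGLnprp} (or (\ref{ChevalleyMapGLneqn})) its eigenvalues are exactly the roots of the $\xi_i(T;\Phi_x)$, with multiplicity $i$. The polynomial $m(T;\Phi_x)=\prod_i\xi_i(T;\Phi_x)$ has each distinct eigenvalue of $\Phi_x$ as a root, so the minimal polynomial of the diagonalisable endomorphism $\Phi_x$ divides it. Hence $m(\Phi_x;\Phi_x)=0$ for all $x\in U$. This argument needs the coefficients of $m(T;\Phi)$ to be well-defined sections of $K^j$; this holds because each $\xi_i$ is globally defined from the point of $\mathcal A_K(GL_n;S_{\bf m})$, using uniqueness of the decomposition (\ref{GLnHitchinNorm2eqn}) on the $\heartsuit$ locus.

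The main obstacle is the density of the semisimple locus $U$ on $\Sigma$. This requires unwinding the definition of $\mathcal A^\heartsuit$ from Proposition~\ref{SHitchinBaseNormprp}: the image of $\tau$ meets the open set $\mathcal B^{rs}$, and the preimage of an open set under a map from an irreducible curve is dense when non-empty. We also need that a point of $S$ lying over $\mathcal B^{rs}$ is semisimple. This follows because the fibre of $\chi_S$ over a point of $\mathfrak{z}^{rs}/W_L$ is a single orbit, which is closed, namely the semisimple orbit. Granting these two facts, the vanishing of $m(\Phi;\Phi)$ follows as described above.
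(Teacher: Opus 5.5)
Your proposal is correct and follows the paper's proof: the paper also notes that vanishing of $m(\Phi;\Phi)$ is a closed condition on $\Sigma$, so it suffices to check it on the open locus where $\Phi_x$ is semisimple, where it follows from Proposition~\ref{SmValuedHiggsGLnprp}. You add detail the paper leaves implicit, namely that this locus is dense because of the $\heartsuit$ condition, and the minimal-polynomial argument at semisimple points.
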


\begin{proof}
The condition $m_x=0$ is a closed condition on $x\in \Sigma$, thus it suffices to check this on the open subset $\Sigma^{ss}$ over which $\Phi_x$ is semisimple; this is clear from Proposition \ref{SmValuedHiggsGLnprp}.
\end{proof}

The Higgs bundles in the fibre of a point $p$ can be described in terms of sheaves on the corresponding spectral curve.

\begin{theorem}\label{SpectralDatathm}\cite{Hitchin2}, \cite{BNR}, \cite{Schaub}, \cite{deCataldo}
There is a functorial correspondence between rank 1 torsion-free sheaves on ${\Sigma}_{\xi(\lambda;\Phi)}$ and Higgs bundles on $\Sigma$ with characteristic polynomial $\xi(T;\Phi)$. The correspondence sends a sheaf $\mathcal E$ on ${\Sigma}_{\xi(\lambda;\Phi)}$ (regarded as a compactly supported sheaf on $T^*\Sigma$) to the Higgs bundle $(V,\Phi) = (\pi_*\mathcal E,\pi_*\lambda)$, where $\pi_*\lambda$ denotes the pushforward of the ``multiplication by $\lambda$" map determined by the $\mathcal O_{T^*\Sigma}$-structure on $\mathcal E$.
\end{theorem}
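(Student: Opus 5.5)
The statement is the spectral correspondence of \cite{Hitchin2}, \cite{BNR}, extended to non-integral (possibly non-reduced) spectral curves by \cite{Schaub}, \cite{deCataldo}. The plan is to run the Beauville--Narasimhan--Ramanan argument in the language of modules over the spectral algebra. Fix the characteristic polynomial $\xi = \xi(T;\Phi)$, with coefficients $a_i \in H^0(\Sigma,K^i)$. Since $\pi:\Sigma_\xi \rightarrow \Sigma$ is finite, affine and flat of degree $n$, the curve is $\Sigma_\xi = \mathrm{Spec}_\Sigma \mathcal A_\xi$ for the sheaf of $\mathcal O_\Sigma$-algebras
\[
\mathcal A_\xi = \mathrm{Sym}(K^{-1})/(\xi) \cong \mathcal O_\Sigma \oplus K^{-1} \oplus \cdots \oplus K^{-(n-1)}.
\]
Quasi-coherent sheaves on $\Sigma_\xi$ are therefore the same as quasi-coherent $\mathcal O_\Sigma$-modules $V$ with an action of $\mathcal A_\xi$. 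Such an action is in turn the same as a map $\Phi: V \rightarrow V \otimes K$ (the action of the generator $\lambda$) satisfying $\xi(\Phi)=0$ as a $K^n$-twisted endomorphism. This is the dictionary, via $\mathcal E \mapsto (\pi_*\mathcal E, \pi_*\lambda)$, and it is clearly functorial.

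Next I match the finiteness conditions on the two sides. Since $\pi$ is finite and $\Sigma$ is a smooth curve, $\mathcal E$ is torsion-free (pure of dimension one) on $\Sigma_\xi$ exactly when $\pi_*\mathcal E$ is torsion-free on $\Sigma$, that is, locally free. For the rank, I will use the Fitting-ideal/determinant formulation of rank $1$ as in \cite{Schaub}, \cite{deCataldo}. This says that $\mathcal E$ is rank $1$ on $\Sigma_\xi$ precisely when $\pi_*\mathcal E$ has rank $n$ and the characteristic polynomial of the induced $\Phi$ equals $\xi$. At the generic point of each component of $\Sigma_\xi$, which has multiplicity $m$, rank $1$ means length $m$, and the total length is then $n$.

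With this dictionary, Cayley--Hamilton shows that a Higgs bundle $(V,\Phi)$ with $\det(T-\Phi)=\xi$ satisfies $\xi(\Phi)=0$. Hence $V$ is an $\mathcal A_\xi$-module, giving $\mathcal E$ on $\Sigma_\xi$ with $\pi_*\mathcal E = V$. Conversely, starting from $\mathcal E$, I compute $\det(T-\pi_*\lambda)$ locally, using the norm map $\mathrm{Nm}_{\mathcal A_\xi/\mathcal O_\Sigma}$ applied to $T-\lambda$; for a rank 1 sheaf this gives $\xi$.

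The main obstacle is this last compatibility when $\Sigma_\xi$ is non-reduced. There, rank $1$ torsion-free sheaves need not be locally free, and the identity $\det(T - \pi_*\lambda|_{\pi_*\mathcal E}) = \xi$ cannot be checked at the generic point simply by counting multiplicity. I would handle it by first showing both sides are monic of degree $n$ and agree on a dense open subset of $\Sigma$ where $\mathcal E$ is locally free over its support. That reduces the check to a statement over a local Artinian algebra $\mathbb{C}(\Sigma)[\lambda]/(\xi)$, where the determinant equals the norm $\mathrm{Nm}(T-\lambda)$ by the length calculation; I would cite \cite{Schaub} for exactly this step.
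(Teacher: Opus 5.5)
Your strategy is the one behind the results the paper cites; the paper itself gives no argument beyond \cite{Hitchin2}, \cite{BNR}, \cite{Schaub}, \cite{deCataldo}. Your dictionary ($\Sigma_\xi=\mathrm{Spec}_\Sigma\mathcal A_\xi$, $\mathcal A_\xi$-modules as pairs $(V,\Phi)$ with $\xi(\Phi)=0$, Cayley--Hamilton, purity of $\mathcal E$ $\Leftrightarrow$ local freeness of $\pi_*\mathcal E$) is the Beauville--Narasimhan--Ramanan proof.

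The gap is the rank~$1$ step, which is the only new ingredient when $\Sigma_\xi$ is not integral. The paper defines rank~$1$ in Remark~\ref{SpectralDatarmk}: at each generic point $\eta_i$ of $\Sigma_\xi$, $\mathcal E_{\eta_i}$ has the same length as $\mathcal O_{\Sigma_\xi,\eta_i}$. With that definition, the equivalence ``rank~$1$ $\Leftrightarrow$ $\det(T-\pi_*\lambda)=\xi$'' is exactly what must be shown. Importing it as ``the Fitting-ideal/determinant formulation of rank~$1$'' amounts to citing the statement you are proving. In particular, the direction you need after Cayley--Hamilton is never argued: that the sheaf attached to a Higgs bundle with characteristic polynomial $\xi$ has rank~$1$.

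Your sketch of the other direction also misplaces the difficulty. The identity \emph{can} be checked at the generic point of $\Sigma$. By contrast, the proposed reduction to an open set on which $\mathcal E$ is locally free over its support is not available in general. Take $\xi=\lambda^3$ and $V=L\oplus LK^{-1}\oplus M$, with $\Phi$ mapping $L$ isomorphically onto $LK^{-1}\otimes K$ and killing the other summands. Then $\Phi^2=0$, the characteristic polynomial is $T^3$, and the generic stalk is $F[\lambda]/(\lambda^2)\oplus F$ over $F[\lambda]/(\lambda^3)$, where $F=\mathbb{C}(\Sigma)$. This sheaf has rank~$1$, yet it is locally free neither over $\Sigma_\xi$ nor over its scheme-theoretic support on any non-empty open set. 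Two smaller slips: $F[\lambda]/(\xi)$ is Artinian but not local in general. Also, it is the $F$-dimension of the generic stalk, not its length, that equals $n$; for integral $\Sigma_\xi$ the length is $1$.

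The repair is a short length count at the generic point.
\begin{itemize}
\item The coefficients of $\det(T-\Phi)$ and of $\xi$ are sections of the $K^j$ on the integral curve $\Sigma$, so it suffices to compare them over $F$.
\item Factor $\xi=\prod_i f_i^{m_i}$ into distinct monic irreducibles in $F[\lambda]$. Then $F[\lambda]/(\xi)\cong\prod_i F[\lambda]/(f_i^{m_i})$, the generic points $\eta_i$ correspond to the $f_i$, and $\mathcal O_{\Sigma_\xi,\eta_i}=F[\lambda]/(f_i^{m_i})$ has length $m_i$.
\item A composition series of $V_F=\bigoplus_i\mathcal E_{\eta_i}$ has simple factors $F[\lambda]/(f_i)$, on which $\det(T-\lambda)=f_i(T)$. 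By multiplicativity, $\det(T-\lambda\,|\,V_F)=\prod_i f_i(T)^{\ell_i}$ with $\ell_i=\mathrm{length}\,\mathcal E_{\eta_i}$.
\item By unique factorisation in $F[T]$, this equals $\xi$ if and only if $\ell_i=m_i$ for all $i$.
\end{itemize}
This gives both directions at once and never uses local freeness of $\mathcal E$.

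Equivalently, on the smooth surface $T^*\Sigma$ the BNR resolution $0\to\pi^*(V\otimes K^{-1})\xrightarrow{\lambda-\pi^*\Phi}\pi^*V\to\mathcal E\to 0$ gives $\mathrm{Fitt}_0(\mathcal E)=(\det(\lambda-\Phi))$. Its order of vanishing along each component is the generic length of $\mathcal E$ there, by Smith normal form over the discrete valuation ring at that component. This is the actual content of the Fitting formulation you invoked.
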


\begin{remark}\label{SpectralDatarmk}
    For our purposes, a sheaf $\mathcal E$ on ${\Sigma}_\xi$ has \emph{rank 1} if for every $x\in {\Sigma}_\xi$, $\mathcal E_x$ has the same length as $\mathcal O_{{\Sigma}_\xi,x}$ as an $\mathcal O_{{\Sigma}_\xi,x}$-module.
\end{remark}

There are a particular class of rank 1 sheaves on the spectral curve corresponding to $S_{\bf m}$-valued Higgs bundles. We make the following definition.

\begin{definition}\label{ReducedSheafdef}
    Let $X$ be a scheme, and $\mathcal E$ be a sheaf on $X$. We say $\mathcal E$ is \emph{reduced} if there is a sheaf $\mathcal E_{X^r}$ on the reduced subscheme $\iota^r:X^r \hookrightarrow X$ such that $\mathcal E = \iota^r_*\mathcal E_{X^r}$.
\end{definition}

\begin{proposition}\label{GLnSmSpectralDataprp}
    Let $\xi \in \mathcal A_K(GL_n)$ be a point in the image of $\mathcal A^\heartsuit_K(GL_n;S_{\bf m})$ under $\tilde{\mu}_S$. The sheaves $\mathcal E$ on ${\Sigma}_\xi$ corresponding to $S_{\bf m}$-valued Higgs bundles under Theorem \ref{SpectralDatathm} are reduced. 

    Conversely, any reduced rank 1 torsion-free sheaf $\mathcal E$ corresponds to a \emph{generically} $S_{\bf m}$-valued Higgs bundle $(V,\Phi)$; i.e. for all but finitely many $x \in \Sigma$, $\Phi_x \in S_{\bf m}$ (in any trivialisation). 
\end{proposition}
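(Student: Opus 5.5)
Both directions reduce to the fact that, on the spectral side, $\Phi$ is the pushforward of multiplication by $\lambda$, so $\mathcal E$ is annihilated by a polynomial in $\lambda$ exactly when $\Phi$ satisfies it. Throughout, write $\xi=\prod_{i=1}^s\xi_i^i$ as in Proposition \ref{GLnHitchinBaseNormprp}. Since $\xi$ comes from $\mathcal A^\heartsuit_K(GL_n;S_{\bf m})$ (Remark \ref{GLnHitchinBaseNormrmk}), each $\xi_i$ has no repeated irreducible factors, and distinct $\xi_i$ share no factors. Hence $m=\prod_i\xi_i$ is the reduced polynomial with the same irreducible factors as $\xi$. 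Since $T^*\Sigma$ is a smooth surface, the reduced subscheme of $\Sigma_\xi$ is the spectral curve $\Sigma_m$. Consequently, a coherent sheaf $\mathcal E$ on $\Sigma_\xi$ is reduced in the sense of Definition \ref{ReducedSheafdef} if and only if the section $m(\lambda)$ of $\pi^*K^r$ acts by zero on $\mathcal E$.

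\emph{Forward direction.} Let $(V,\Phi)$ be $S_{\bf m}$-valued with spectral sheaf $\mathcal E$. First, the decomposition (\ref{GLnHitchinNorm2eqn}) of its characteristic polynomial must be the given one. This holds because the $S_{\bf m}$-Hitchin point of $(V,\Phi)$ maps to $\xi$ under $\tilde{\mu}_S$, and $\tilde{\mu}_S$ is injective on $\mathcal A^\heartsuit_K(GL_n;S_{\bf m})$ (Proposition \ref{SHitchinBaseNormprp}). Lemma \ref{MinPolyHiggsFibrelem} then gives $m(\Phi)=0$. By Theorem \ref{SpectralDatathm}, $\pi_*(m(\lambda)\cdot)=m(\Phi)$, and $\pi$ is affine on $\Sigma_\xi$. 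So multiplication by $m(\lambda)$ is zero on $\mathcal E$, and $\mathcal E$ is pushed forward from $\Sigma_m=\Sigma_\xi^{red}$.

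\emph{Converse.} Let $\mathcal E=\iota^r_*\mathcal E'$ be reduced, rank 1 and torsion free, and set $(V,\Phi)=(\pi_*\mathcal E,\pi_*\lambda)$. Then $m(\Phi)=0$. Choose the finite set $D\subset\Sigma$ of branch points of $\Sigma_m\to\Sigma$, meaning the points where the $\sum_i l_i$ roots of $m$ fail to be distinct. Over $x\notin D$, the polynomial $m(T)$ has distinct roots, so $\Phi_x$ is semisimple with eigenvalues among the roots of $m$. Its characteristic polynomial is $\xi_x=\prod_i\xi_{i,x}^i$, so each root of $\xi_{i,x}$ is an eigenvalue of multiplicity exactly $i$. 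This is precisely the semisimple locus of $S_{\bf m}$ described in Example \ref{GLnSheetexm}: the centraliser is conjugate to $L_{\bf m}$, with $l_i$ distinct eigenvalues of multiplicity $i$. Hence $\Phi_x\in S_{\bf m}$ for all $x\notin D$.

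The step I expect to need the most care is the identification of sheaves killed by $m(\lambda)$ with sheaves pushed forward from $\Sigma_\xi^{red}$. It depends on $m$ generating the radical of the ideal of $\Sigma_\xi$ in $T^*\Sigma$, which uses the $\heartsuit$ hypothesis that the $\xi_i$ are squarefree and pairwise coprime. A secondary point is ensuring that the rank-1 condition is compatible with the reduced structure. I would not attempt to prove $S_{\bf m}$-valuedness over $D$, as the statement only claims it generically.
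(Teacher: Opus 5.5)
Your proposal is correct and follows essentially the paper's own argument. You identify $\Sigma_\xi^{red}$ with the zero locus of $m=\prod_i\xi_i$, so that reducedness of $\mathcal E$ means annihilation by $m(\lambda)$; you invoke Lemma \ref{MinPolyHiggsFibrelem} for the forward direction; and you use diagonalisability of $\Phi$, with the prescribed eigenvalue multiplicities away from the branch points of $\Sigma_\xi^{red}\to\Sigma$, for the converse. One small point: injectivity of $\tilde{\mu}_S$ on $\mathcal A^\heartsuit_K(GL_n;S_{\bf m})$ only applies once you know the $S_{\bf m}$-Hitchin point of $(V,\Phi)$ already lies in that open set, which is easiest to see from uniqueness of the factorisation $\xi=\prod_i\xi_i^i$ (take $\xi_s$ to be the product of the top-multiplicity factors, then induct) --- a step the paper also leaves implicit.
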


\begin{proof} 
Let $(\xi_1, \, ..., \, \xi_s)$ be the point in $\mathcal A_K^\heartsuit(GL_n;S_{\bf m})$ such that $\xi = \tilde{\mu}_S(\xi_1, \, ..., \, \xi_s)$. Viewing the spectral curve as the vanishing of the ideal sheaf on $T^*\Sigma$ generated by $\xi = \xi_1(\xi_2)^2...(\xi_s)^s$, we see that the reduced subscheme of ${\Sigma}_\xi$ is defined by the vanishing of the ideal sheaf generated by $m = \xi_1\xi_2...\xi_s$. Hence, a sheaf $\mathcal E$ on ${\Sigma}_\xi$ being reduced corresponds exactly to it being annihilated under multiplication by $m$. Hence the first statement follows by Lemma \ref{MinPolyHiggsFibrelem}.

The second statement follows by observing that, away from the ramification of the cover ${\Sigma}^r_\xi \rightarrow \Sigma$ defined by the reduced subscheme ${\Sigma}^r_\xi \subseteq\hat{\Sigma}_\xi$, the reducedness condition on the sheaf $\mathcal E$ corresponds to the Higgs field $\Phi$ being diagonalisable, with eigenvalues whose multiplicities are prescribed by ${\bf m}$.
\end{proof}

\begin{remark}\label{GlnSmSpectralDatarmk}
   Suppose $l_i \neq 0$ for only one value of $i$, i.e. $\xi = (\xi_i)^i$, and suppose the reduced subscheme of ${\Sigma}_\xi$ is non-singular. In this case, every reduced rank 1 torsion-free sheaf on ${\Sigma}_\xi$ corresponds to an $S_{\bf m}$-valued Higgs bundle, and Proposition \ref{GLnSmSpectralDataprp} gives the generalised spectral correspondence of \cite[Theorem 6.7]{BR}.

    In general, choosing a rank $i$ vector bundle $\mathcal E_i$ on each ${\Sigma}_{\xi_i}$ determines a sheaf $$\mathcal E = \iota^r_*\bigoplus_{i=1}^s\mathcal E_i,$$ which is rank 1, torsion-free and reduced. But if at least two of the values of $l_i$ are non-zero, the resulting Higgs bundle under the spectral correspondence is not an $S_{\bf m}$-valued Higgs bundle.
\end{remark}

We will now describe the abelianised fibration $h_{S_{\bf m}}^{ab}$ as a Hitchin fibration; we use the notation of Section \ref{IntAbFibrnsbn}. We can describe the map explicitly over the subscheme $\mathcal A^\diamondsuit_K(GL_n;S_{\bf m})$ of $\mathcal A_K(GL_n;S_{\bf m})$ whose points correspond to tuples $({\Sigma}_{\xi_i} \rightarrow \Sigma)_i$ such that each ${\Sigma}_{\xi_i}$ is non-singular; this is a non-empty open subscheme by a Bertini-type argument as in \cite[Section 5.1]{Hitchin2}. We denote by $\mathcal M^\diamondsuit_K(GL_n;S_{\bf m})$ the restriction of $\mathcal M_K(GL_n;S_{\bf m})$ to $\mathcal A^\diamondsuit_K(GL_n;S_{\bf m})$ under $h_{S_{\bf m}}$. 

We construct a map
\begin{equation}\label{GLnAbHitchin1eqn}
    \mathcal M^\diamondsuit_K(GL_n;S_{\bf m})\rightarrow \prod_{i=1}^s\mathcal M_K^\diamondsuit(GL_{l_i}).
\end{equation}
 Suppose $(V,\Phi)$ is a $\mathbb{C}$-point of $\mathcal M^\diamondsuit_K(GL_n;S_{\bf m})$, which maps to $(\xi_1,\xi_2,...,\xi_s)$ under $h_{S_{\bf m}}$. The spectral curve ${\Sigma}_\xi \rightarrow \Sigma$ for $(V,\Phi)$ has reduced subscheme ${\Sigma}_\xi^r$ with irreducible components given by ${\Sigma}_{\xi_i}$. The sheaf $\mathcal E$ on $\hat{\Sigma}$ corresponding to $(V,\Phi)$ under Theorem \ref{SpectralDatathm} defines rank $i$ vector bundles $\mathcal E_i$ on each ${\Sigma}_{\xi_i}$ via restriction (since each ${\Sigma}_{\xi_i}$ is non-singular). The determinants $\wedge^i \mathcal E_i$ thus define line bundles on each ${\Sigma}_{\xi_i}$, each of which can be viewed as the spectral cover of $\Sigma$ for the point $\xi_i \in \mathcal A^\diamondsuit_K(GL_{l_i})$. Alternatively, we can consider the collection $(\wedge^i\mathcal E_i)$ as defining a sheaf $\mathcal F$ on the normalisation $\tilde{\Sigma}_\xi$ of ${\Sigma}_\xi^r$, and we can write
\begin{equation}\label{NormalisedSpectralDataeqn}
    \mathcal F = \det((\iota^r \circ \nu)^*\mathcal E),
\end{equation}
where $\nu:\tilde{\Sigma}_\xi \rightarrow {\Sigma}_\xi^r$ is the normalisation map.

Under Theorem \ref{SpectralDatathm}, each $\wedge^i\mathcal E_i$ corresponds to a $GL_{l_i}$-Higgs bundle $(V_i^{ab},\Phi_i^{ab})$, and this defines an assignment
\begin{equation}\label{GLnAbHitchin2eqn}
    (V,\Phi) \mapsto ((V_1^{ab},\Phi_1^{ab}),...,(V_s^{ab},\Phi_s^{ab}))
\end{equation}
which constructs (\ref{GLnAbHitchin1eqn}) at the level of $\mathbb{C}$-points.

\begin{proposition}\label{GLnAbHitchinprp}
    The assignment (\ref{GLnAbHitchin2eqn}) defines a morphism which extends to a map
    \begin{equation}\label{GLnAbHitchinFulleqn}
        \mathcal M_K(GL_n;S_{\bf m})\rightarrow \prod_{i=1}^s\mathcal M_K(GL_{l_i})
    \end{equation}
    realising $Ab_{S_{\bf m}}$ and identifying $h_{S_{\bf m}}^{ab}$ with the product of the Hitchin maps $h_{GL_{l_i}}$.
\end{proposition}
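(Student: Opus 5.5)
Rather than define the map on $\mathbb{C}$-points and then try to globalise, I will build the map at the level of the local quotient stacks and then apply the mapping-stack functor. By Remark \ref{AbnGerbermk}, any map $[S/G] \rightarrow \mathcal X$ over $\mathcal B$ with $\mathcal X \rightarrow \mathcal B$ having commutative, separated relative inertia factors through $[S/G]^{ab}$. The target here is $\mathcal X = \prod_i [\mathfrak{gl}_{l_i}^{reg}/GL_{l_i}]$.

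\textbf{Step 1: the local map.} I would construct a $GL_n\times\mathbb{G}_m$-equivariant morphism $S_{\bf m} \rightarrow \prod_i [\mathfrak{gl}_{l_i}/GL_{l_i}]$. Write $L = L_{\bf m}$ and take a parabolic $P$ with solvable radical $\mathfrak{r}$. On the Grothendieck--Springer cover $G\times^P\mathfrak{r}^{reg} \cong S\times_{\mathcal B}\mathfrak{z}$ (Proposition \ref{StackyGSprp}), the tautological flag provides subquotients $V_j/V_{j-1}$ of rank $m_j$. On each of them $x$ acts by the scalar $z_j$, where $z=(z_j)$ is the image of $x$ in $\mathfrak{z}$.

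Taking determinants gives lines $\det(V_j/V_{j-1})$. Group these according to the multiplicity class $i$, i.e. according to which $m_j = i$ (equivalently, via $\mathfrak{z}=\oplus\mathfrak{z}_i$). For each $i$ this gives a rank-$l_i$ bundle, namely the direct sum of the $l_i$ lines, carrying the diagonal endomorphism $\mathrm{diag}(z_j)$. This datum is $W_L$-equivariant, since $W_i$ permutes the summands. So the pushforward along the finite flat map $\mathfrak{z} \rightarrow \mathfrak{z}_i/W_i$ should descend to a rank-$l_i$ bundle with a regular endomorphism over $S$. This is the local, Lie-theoretic shadow of $\mathcal E \mapsto \wedge^i\mathcal E_i$ on the normalised spectral curve.

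\textbf{Step 2: globalise.} Applying $Maps(\Sigma,-)$ to the resulting map $[S/G]^{ab}/\mathbb{G}_m \rightarrow \prod_i[\mathfrak{gl}^{reg}_{l_i}/GL_{l_i}\times\mathbb{G}_m]$ over $\mathcal B/\mathbb{G}_m = \prod_i \mathfrak{c}_i/\mathbb{G}_m$ yields a map $\mathcal M^{ab}_K(GL_n;S_{\bf m}) \rightarrow \prod_i \mathcal M_K^{reg}(GL_{l_i})$ lying over the identity of the bases, by (\ref{SmHitchinBaseGLneqn}). Precomposing with $Ab_{S_{\bf m}}$ gives (\ref{GLnAbHitchinFulleqn}).

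To identify this map with (\ref{GLnAbHitchin2eqn}) over $\mathcal A^\diamondsuit$, it suffices to compare the two on the dense locus where $\Phi$ is semisimple, by separatedness. There both reduce to taking determinants of eigenbundles. Since the normalised spectral curve is $\coprod_i \Sigma_{\xi_i}$, and regular $GL_{l_i}$-Higgs bundles correspond to line bundles on $\Sigma_{\xi_i}$, the two constructions agree.

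\textbf{Step 3: the map is an isomorphism onto its image fibrewise.} By Proposition \ref{GLnCameralgp}, $\mathcal J_S = \hat{\mathcal J}_S$ is the Weil restriction of $\bar Z = \prod_j \mathbb{G}_m$ along $\mathfrak{z} \rightarrow \mathcal B$, taken $W_L$-invariantly. This should equal $\prod_i$ of the regular centraliser of $\mathfrak{gl}_{l_i}$, which is the Weil restriction of $\mathbb{G}_m$ along $\mathfrak{z}_i \rightarrow \mathfrak{z}_i/W_i$. The reason is that $\bar Z$ is a direct sum of $\mathbb{G}_m$'s indexed by the blocks, and $W_L$ acts on these indices through the $W_i$.

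Hence the local map is a morphism of gerbes over $\mathcal B$ banded by isomorphic groups (Proposition \ref{AbnGerbeprp}), and so it is an isomorphism $[S/G]^{ab} \cong \prod_i [\mathfrak{gl}^{reg}_{l_i}/GL_{l_i}]$. Passing to mapping stacks then identifies $h^{ab}_{S_{\bf m}}$ with $\prod_i h_{GL_{l_i}}$.

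The main obstacle is Step 1: descending the determinant data from $G\times^P\mathfrak{r}^{reg}$ to $S$, particularly at non-semisimple points where eigenvalues of different multiplicity classes collide. I would handle this by checking it on the complement of a codimension-2 locus, as in the proof of Proposition \ref{CameralSmoothprp}, and then extending by normality of $S$.
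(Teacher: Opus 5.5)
Your architecture differs from the paper's. The paper never builds a local map $[S/G]^{ab}\to\prod_i[\mathfrak{gl}^{reg}_{l_i}/GL_{l_i}]$. Instead it:
\begin{itemize}
\item uses Theorem~\ref{GlobalSHiggsAbnthm} (here $F$ is trivial, so $q$ is an isomorphism) to identify $\mathcal M^{ab}_K(GL_n;S_{\bf m})$ with ${\bf B}_\Sigma(\mathcal J_\eta)$;
\item splits $\mathcal J_\eta=\prod_i\mathcal J^i_{\eta_i}$ into the regular cameral groups of the $GL_{l_i}$ (this is your Step 3, and that part is correct);
\item uses the regular case to identify each factor with the $GL_{l_i}$-Hitchin fibration;
\item only then checks on $\mathbb C$-points that the result is $\mathcal E\mapsto(\wedge^i\mathcal E_i)$. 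It does this via the $P$-reduction on the whole $S$-cameral curve $\check\Sigma_{\xi_1}\times_\Sigma\cdots\times_\Sigma\check\Sigma_{\xi_s}$ and the maps $\check\Sigma_{\xi_i}\to\Sigma_{\xi_i}$.
\end{itemize}
A local, section-free construction would be a legitimate alternative. However, the two steps that carry the content do not work as written.

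\emph{Step 1.} The permutation equivariance of $\bigoplus_{m_j=i}\det(V_j/V_{j-1})$ exists naturally only over $S^{ss}$. The locus where eigenvalues collide is a divisor in $S$. It is therefore exactly the generic points $\mathcal D(M,\mathcal O_{\mathfrak m})$ that your codimension-two reduction still requires you to check, and there your datum fails. The natural identifications $w^*\det(V_j/V_{j-1})\to\det(V_{w(j)}/V_{w(j)-1})$ degenerate along it. Already for the regular sheet of $\mathfrak{gl}_2$, on $\{(x,\ell)\}$ the map $s^*\ell=\ell'\to V/\ell$ vanishes exactly where $\ell'=\ell$. To get a genuinely strongly $W_L$-equivariant structure satisfying the analogue of $(*)$, you must twist by the collision divisors, the analogue of the Donagi--Gaitsgory $\rho$-shift. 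Only then can you descend a single line $\det(V_{j_0}/V_{j_0-1})$ to the intermediate cover $S\times_{\mathcal B}(\mathfrak z/\mathrm{Stab}_{W_L}(j_0))$; $(*)$ is what makes the stabilisers act trivially on its fibres. Pushing that line forward along the degree-$l_i$ map then gives a rank-$l_i$ bundle on which multiplication by $z_{j_0}$ is a regular endomorphism. This is the real content of the local construction, and it is missing from your sketch.

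\emph{Step 2.} Agreement on the semisimple locus does not give agreement ``by separatedness''. The stack $[\mathfrak{gl}^{reg}_{l_i}/GL_{l_i}\times\mathbb G_m]$ has non-proper inertia, so maps out of $\Sigma$ that agree off finitely many points need not be isomorphic. For $l_i=1$, $(N,\phi)$ and $(N(y),\phi)$ agree off $y$ but are not isomorphic. The same holds for maps out of $S$, where $S\setminus S^{ss}$ is a divisor. Your local datum is not even independent of $P$. For ${\bf m}=(2,1)$ the two block orderings give $\det\ker(x-a)$ and $\det(V/\mathrm{im}(x-a))$. These coincide on $S^{ss}$, but the natural map between them has determinant vanishing to first order along $\{a=b\}$. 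So matching your map with (\ref{GLnAbHitchin2eqn}) needs one of two things:
\begin{itemize}
\item a genuine comparison at the non-semisimple points of $\Sigma$, which is what the paper's argument through $\check\Sigma_{\xi_i}\to\Sigma_{\xi_i}$ is for; or
\item an argument that any discrepancy is a translation by a section of $\mathcal P^0_S$, which would be harmless for realising $Ab_{S_{\bf m}}$.
\end{itemize}
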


\begin{proof}
Theorem \ref{GlobalSHiggsAbnthm} implies that $\mathcal M^{ab}_K(GL_n;S_{\bf m})$ is isomorphic over $\mathcal A_K(GL_n;S_{\bf m})$ to the commutative group stack ${\bf B}_\Sigma(\mathcal J_\eta)$, where $\mathcal J_\eta$ is the group scheme over $\mathcal A_K(GL_n;S_{\bf m}) \times \Sigma$ which is pulled back from the representable group stack $\mathcal J_{S, \mathbb{G}_m}$ on $[\mathfrak{c}_{S_{\bf m}}/\mathbb{G}_m]$. 

There is a decomposition of the $W_L$-action on $\mathfrak{z}$ into the permutation actions of $W_i = Sym_{l_i}$ on $\mathfrak{z}_i = \mathbb{C}^{l_i}$ and a corresponding decomposition of the $W_L$-action on $\bar{Z} = (\mathbb{G}_m)^r$ into permutation actions of $Sym_{l_i}$ on $\bar{Z}_i := (\mathbb{G}_m)^{l_i}$. If we denote $\pi_i:\mathfrak{z}_i \rightarrow \mathfrak{z}_i/W_i$, we can define smooth group schemes $\mathcal J^i = (\pi_i)_*(\bar{Z}_i\times\mathfrak{z}_i)^{W_i}$, which descend to representable group stacks $\mathcal J^i_{\mathbb{G}_m}$ on each $[\mathfrak{c}_i/\mathbb{G}_m]$. The map $\eta$ decomposes into maps $\eta_i:\mathcal A_K(GL_{l_i}) \times \Sigma \rightarrow [\mathfrak{c}_i/\mathbb{G}_m]$, and there is a decomposition $$\mathcal J_\eta = \prod_{i=1}^s\mathcal J^i_{\eta_i},$$ where $\mathcal J^i_{\eta_i}$ is the pullback of $\mathcal J^i_{\mathbb{G}_m}$ under $\eta_i$. The group schemes $\mathcal J^i_{\mathbb{G}_m}$ can also be regarded as the cameral groups for the regular sheets for $GL_{l_i}$, and so by Theorem \ref{GlobalSHiggsAbnthm} (which in this case follows from \cite[Theorem 4.4]{DG}), ${\bf B}_\Sigma(\mathcal J_\eta)$ is isomorphic over $$\mathcal A_K(GL_n;S_{\bf m}) = \prod_{i=1}^s \mathcal A_K(GL_{l_i})$$ to $\prod_{i=1}^s\mathcal M_K(GL_{l_i})$.

Thus, abstractly, we can identify $Ab_{S_{\bf m}}$ with a map (\ref{GLnAbHitchinFulleqn}) such that $h_{S_{\bf m}}^{ab}$ is identified with the product of the Hitchin maps. We need only check that on $\mathbb{C}$-points this map agrees with the assignment (\ref{GLnAbHitchin2eqn}) - we will fix the identification of $\mathcal M^{ab}_K(GL_n;S_{\bf m})$ with ${\bf B}_{\Sigma}(\mathcal J_\eta)$.

Fix an $S_{\bf m}$-valued Higgs bundle $(V,\Phi)$ mapping to $(\xi_1,...,\xi_s)\in \mathcal A_K(GL_n;S_{\bf m})$. Let
$$\hat\Sigma_{(\xi_1,...,\xi_s)} \rightarrow \Sigma$$
be the $S_{\bf m}$-cameral cover of $\Sigma$ as defined in Definition \ref{SCameralcurvedef}; this decomposes as 
\begin{equation}\label{GLnAbHitchin3eqn}
    \hat\Sigma_{(\xi_1,...,\xi_s)}=\check\Sigma_{\xi_1} \times_\Sigma \, ... \, \times_\Sigma \check\Sigma_{\xi_s}
\end{equation}
where $\check\Sigma_{\xi_i} \rightarrow \Sigma$ is the usual $GL_{l_i}$-cameral cover. Let $P$ be a parabolic subgroup of $GL_n$ with Levi factor $L$ corresponding to ${\bf m}$, and let $\mathfrak{r}$ be the solvable radical of $\mathfrak{p}=Lie(P)$. The map $Ab_{S_{\bf m}}$ is defined as follows. By Proposition \ref{StackyGSprp}, the pullback of $(V,\Phi)$ to the $S_{\bf m}$-cameral curve defines a reduction of the structure group $V_P$ of the bundle $V$ to $P$ such that the Higgs field is valued in $\mathfrak{r}^{reg}$. The map $Ab_{S_{\bf m}}$ outputs the $\bar{Z}$-bundle $V_{\bar{Z}}$ induced by the abelianisation of $P$, together with its $W_L$-equivariant structure. The bundle $V_{\bar{Z}}$ (with its $W_L$-equivariant structure) decomposes into $\bar{Z}_i$-bundles $V_{\bar{Z}_i}$ (with $W_i$-equivariant structures) on each factor $\check\Sigma_{\xi_i}$.

Let $L_i$ be the factor of $L$ consisting of all of the simple factors of $L$ of the form $GL_i$; the $\bar{Z}_i$-bundles $V_{\bar{Z}_i}$ can be regarded as the abelianisations of the $L_i$-bundles $V_{L_i}$ (induced by the maps $P \rightarrow L_i$), and such abelianisations are given by taking the determinant line bundles of the vector bundles corresponding to the $GL_i$-factors.

For $x \in \Sigma$, we can regard the points in the fibre $\check{p}_{\xi_i}^{-1}(x)$ of $\check{p}_{\xi_i}:\check\Sigma_{\xi_i}\rightarrow \Sigma$ as corresponding to orderings of the $l_i$-tuple $\boldsymbol{\lambda}_i$ of eigenvalues of the Higgs field $\Phi_x$; meanwhile, the points of ${\Sigma}_{\xi_i}$ correspond to choices of an eigenvalue $\lambda$ from the tuple $\boldsymbol{\lambda}_i$. As in \cite[Section 9]{DG}, the cover $\check\Sigma_{\xi_i} \rightarrow \Sigma$ factors through a map $\check\Sigma_{\xi_i} \rightarrow {\Sigma}_{\xi_i}$ sending an ordering of $\boldsymbol{\lambda}_i$ to its last coordinate. Under this map, the $GL_i$-bundle factor of $V_{L_i}$ corresponding to the last coordinate is identified with the bundle $\mathcal E_i$ on ${\Sigma}_{\xi_i}$, and thus $V_{\bar{Z}_i}$ is identified with $\wedge^i \mathcal E_i$, which implies the statement.
\end{proof}

We note the similarity with the constructions in Sections 3 and 7 of \cite{FraPN}, which describe the Hitchin fibre for a reducible spectral curve via its normalisation. In that case, the pullback and pushforward under the normalisation map can be used to define maps between the Hitchin fibres for $GL_n$ and for an associated Levi subgroup. 

\begin{remark}\label{GLnAbHitchinrmk}
    The assignment (\ref{GLnAbHitchin2eqn}) can in fact be made for any Higgs bundle $(V,\Phi)$ which maps under $h_{GL_n}$ to a point in the image of $\mathcal A^\diamondsuit_K(GL_n;S_{\bf m})$, i.e. $Ab_{S_{\bf m}}$ can be extended to the full moduli space of Higgs bundles over the image of $\mathcal A^\diamondsuit_K(GL_n;S_{\bf m})$. For any $a \in \mathcal A^\diamondsuit_K(GL_n;S_{\bf m})$, this implies a fibration of the singular Hitchin fibre $h_{GL_n}^{-1}(a)$ over $(h_{S_{\bf m}}^{ab})^{-1}(a)$.
\end{remark}

\subsection{Sheet-valued Higgs bundles for \texorpdfstring{$Sp_4$}{Sp4}}\label{Sp4Hitchinsbn}
We consider the case of $G= Sp_4$ as in Example \ref{Sp4Sheetexm}. We sketch the constructions for each of the sheets in Table \ref{Sp4Sheettbl}. 

We recall from \cite[Section 5.10]{Hitchin2} that $Sp_4$-Higgs bundles on $\Sigma$ correspond to triples $(V,\omega,\Phi)$, where $V$ is a rank $4$ vector bundle on $\Sigma$, $\omega$ is a symplectic form on $V$, and $\Phi \in H^0(\Sigma,End(V)\otimes K)$ satisfies $\Phi = -\Phi^* $, where $\Phi^*$ is the adjoint of $\Phi$ under $\omega$. 

The Hitchin base for $Sp_4$ is given by
\begin{equation}\label{Sp4Hitchinbaseeqn}
    \mathcal A_K(Sp_4) = H^0(\Sigma,K^2) \oplus H^0(\Sigma,K^4).
\end{equation}
Indeed, taking the $GL_4$-Higgs bundle $(V,\Phi)$ corresponding to an $Sp_4$-Higgs bundle, the characteristic equation $\xi(T;\Phi)$ is of the form $\xi(T;\Phi) = T^4+a_2T^2+a_4$, and the Hitchin map $h_{Sp_4}$ coincides with the pullback of the $GL_4$-Hitchin map $h_{GL_4}$ under the map
\begin{equation}\label{Sp4toGL4Higgseqn}
    \mathcal M_K(Sp_4) \rightarrow \mathcal M_K(GL_4).
\end{equation}
In particular, we can associate to $(V,\omega,\Phi)$ the spectral curve ${\Sigma}_{\xi(\lambda;\Phi)}$ for the $GL_4$-Higgs bundle $(V,\Phi)$. 

There is an involution $\theta$ on ${\Sigma}_{\xi(\lambda;\Phi)}$ given by sending $\lambda$ to $-\lambda$. There is a spectral correspondence for $Sp_4$ which gives an isomorphism between the fibres of $h_{Sp_4}$ and a ``Prym stack" for the involution $\theta$ on ${\Sigma}_{\xi(\lambda;\Phi)}$. See \cite[Section 4.3]{Carbone} for the extension of \cite{Hitchin2} to the case where the spectral curve is not smooth.

We recall that there are 5 sheets of $Sp_4$. The $0$ sheet is trivial, while the Hitchin fibration for the regular sheet is dealt with by the specifics of \cite{Hitchin2} and the general results of \cite{DG} and \cite{Ngo1}. We consider the remaining three cases individually.

\begin{example}\label{Sp4MinOrbexm}
    If $S$ is the sheet consisting only of the minimal orbit $\mathcal O_{min}$, the $S$-Hitchin base is a single point, and the stack $\mathcal M_K(Sp_4;\mathcal O_{min})$ is isomorphic to the stack of $C_G(e)$-torsors for any representative $e \in \mathcal O_{min}$. This is the general picture for sheets consisting of a single rigid orbit.
\end{example}

\begin{example}\label{Sp4SDix1exm}
    If $S$ is the sheet $S_{Dix}'$ corresponding to the Levi subgroup $L = GL_2$, then an $Sp_4$-Higgs bundle $(V,\omega,\Phi)$ is $S_{Dix}'$-valued if and only if $(V,\Phi)$ is an $S_{(2^2)}$-valued $GL_4$-Higgs bundle (see Remark \ref{ClassicalPolrmk}). The analysis of the previous subsection can be made compatible with the involution $\theta$, and it can be shown as in Proposition \ref{GLnAbHitchinprp} that the abelianised fibration can be realised as the Hitchin fibration for $Sp_2$. We omit the details.
\end{example}

The final example differs from the others we have considered in that it has non-trivial Katsylo group. We consider this in greater detail.

\begin{example}\label{Sp4SDix2exm}
    Let $S$ be the Dixmier sheet $S_{Dix}$ corresponding to the Levi subgroup $L=\mathbb{G}_m \times Sp_2$. Explicitly, $\mathcal M_K(Sp_4;S_{Dix})$ consists of the $Sp_4$-Higgs bundles $(V,\omega,\Phi)$ such that for every $y \in \Sigma$, $\Phi_y$ can be represented in some trivialisation by the matrix $x_t$ for some $t \in \mathbb{C}$, using the notation of (\ref{Sp4Katsylo2eqn}). We observe from the form of $x_t$ that $\Phi$ always has a kernel of rank 2, and we can define the quotient $GL_2$-Higgs bundle $(\overline{V},\overline{\Phi})$, noting that $\overline{\Phi}_y=0$ at every branch point $y \in \Sigma$ of the spectral cover for $(\overline{V},\overline{\Phi})$.

In this case, the Katsylo group $F$, defined in Definition \ref{KatsyloGpdef}, is non-trivial (as the calculations of Example \ref{Sp4Sheetexm} show); indeed, $F$ can be identified with the group $W_L \cong \mathbb{Z}/2$, and its non-trivial element acts by $-1$ on the $1$-dimensional space $\mathfrak{z} = Lie(Z(L))$. By Proposition \ref{SHitchinBasePropsprp}, the $S_{Dix}$-Hitchin base has components indexed by the $\mathbb{Z}/2$-torsors on $\Sigma$; the component corresponding to the trivial torsor is given by
\begin{equation}\label{Sp4DixHitchinbaseeqn}
    \mathcal A^0_K(Sp_4;S_{Dix}) = [H^0(\Sigma,K)/(\mathbb{Z}/2)]
\end{equation}
and any component indexed by a non-trivial torsor $\pi:\tilde{\Sigma} \rightarrow \Sigma$ can be described as a quotient by $\mathbb{Z}/2$ of a subvariety of $H^0(\tilde{\Sigma},\pi^*K)$. The characteristic equation of an $S_{Dix}$-valued $Sp_4$-Higgs bundle $(V,\omega,\Phi)$ has the form $\xi(T;\Phi) = T^4-aT^2$ for some $a\in H^0(\Sigma,K^2)$. If $a \neq 0$, the corresponding spectral curve ${\Sigma}_{\xi(\lambda;\Phi)}$ decomposes into a non-reduced copy of $\Sigma$ and the spectral curve $\hat{\Sigma}_{GL_2}$ of the quotient $GL_2$-Higgs bundle $(\overline{V},\overline{\Phi})$. Since $\overline{\Phi}_x = 0$ at every branch point $x \in \Sigma$, the curve $\hat{\Sigma}_{GL_2}$ has a node at every ramification point; the normalisation $\tilde{\Sigma} \rightarrow \hat{\Sigma}_{GL_2}$ defines the $F$-torsor associated to the corresponding component of $\mathcal A_K(Sp_4;S_{Dix})$.

We restrict to the component $\mathcal A^0_K(Sp_4;S_{Dix})$ as in Theorems \ref{GlobalNonabnthm} and \ref{GlobalSHiggsAbnthm}. We observe that the image of $\mathcal A^0_K(Sp_4;S_{Dix})$ under $\tilde{\mu}_S$ is exactly given by the sections $a \in H^0(\Sigma,K^2)$ of the form $a = b^2$ for some $b \in H^0(\Sigma,K)$. In particular, for $b \neq 0$ the curve ${\hat\Sigma}_{GL_2}$ decomposes into two irreducible components both isomorphic to $\Sigma$, embedded as the vanishing of the sections $\lambda + b$ and $\lambda - b$ in $T^*\Sigma$. The $GL_2$-Higgs bundle $(\overline{V},\overline{\Phi})$ in this case is of the form $(L \oplus L^*,\overline{\Phi})$ where $\overline{\Phi}$ acts diagonally, by $b$ and $-b$ on $L$ and $L^*$ respectively. We can regard the pair $(L,b)$ as a $\mathbb{G}_m$-Higgs bundle.

We observe that there is an involution $\Theta$ on $\mathcal M_K(\mathbb{G}_m)$ exchanging $(L,b)$ and $(L^*,-b)$. The above construction determines a map
\begin{equation}\label{Sp4DixAbneqn}
   \mathcal M_K^0(Sp_4;S_{Dix}) \rightarrow [\mathcal M_K(\mathbb{G}_m)/\Theta] 
\end{equation}
realising $Ab_{S_{Dix}}$ and identifying $h_{S_{Dix}}^{ab}$ with the quotient of the Hitchin fibration for $\mathbb{G}_m$.
\end{example}

\section{The Hitchin fibration for real forms}\label{RealHitchinscn}
We now apply the above constructions for sheet-valued Higgs bundles to the Hitchin fibration for regular Higgs bundles for a real form $G_{\mathbb{R}}$ of $G$, or equivalently  regular Higgs bundles associated to a symmetric pair $(G,G^\theta)$ for some holomorphic involution $\theta$ on $G$. We show that the existing gerbe descriptions of \cite{GPPN} and \cite{HM} for real regular Higgs bundles can be viewed as $\theta$-equivariant versions of the gerbe descriptions for the $S$-Hitchin fibration associated to a suitable Dixmier sheet $S$. We adapt the abelianised fibration of Section \ref{IntAbFibrnsbn} to the Hitchin fibration for an arbitrary real form, and make it explicit in the non-quasi-split cases $G_{\mathbb{R}}=SU(p,q)$ and $G_{\mathbb{R}}=SO^*(4m+2)$. 

The general results in this section depend upon work in preparation of Bulois on smoothness of certain sheets in exceptional Lie algebras \cite{Bulois}.

\subsection{Regular \texorpdfstring{$G_{\mathbb{R}}$}{GR}-Higgs bundles as sheet-valued \texorpdfstring{$G$}{G}-Higgs bundles}\label{RealHitchinsbn}

We first recall the definitions for Higgs bundles associated to real forms; further details can be found in Sections 2 and 4 of \cite{GPPN}.
 
A \emph{real form} $G_{\mathbb{R}}$ is a real Lie subgroup of $G$ which is the fixed point group for an anti-holomorphic involution $\sigma$ on $G$. There is a correspondence between real forms $G_\mathbb{R}$ of $G$ and holomorphic involutions $\theta$ on $G$, up to conjugacy \cite[Section VI.3]{Knapp}. The real forms of $G$ are thus classified by Satake diagrams  \cite{Araki}.

Let $H$ be the fixed-point group for the involution $\theta$ and consider the eigenspace decomposition of $\theta$ on $\mathfrak{g}$ given by
\begin{equation}\label{CartanDecompositioneqn}
        \mathfrak{g} = \mathfrak{h}\oplus \mathfrak{m},
    \end{equation}
    where $\mathfrak{h} = Lie(H)$ is the $+1$-eigenspace and $\mathfrak{m}$ is the $-1$-eigenspace. The group $H$ acts on $\mathfrak{m}$ through the adjoint action of $G$; this is the \emph{isotropy representation} of $H$ on $\mathfrak{m}$. 
    
    We define the moduli stack of $G_\mathbb{R}$-Higgs bundles as in \cite[Section 4.1]{Peon-Nieto1}.

\begin{definition}\label{RealHiggsdef}
    The \emph{moduli stack of (twisted) $G_{\mathbb{R}}$-Higgs bundles on $\Sigma$} is the mapping stack 
    \begin{equation}\label{RealHiggseqn}
        \mathcal M(G_{\mathbb{R}}) = Maps(\Sigma,[\mathfrak{m} /H\times \mathbb{G}_m]).
    \end{equation}
\end{definition}

\begin{remark}\label{RealHiggsrmk}
    A \emph{$G_\mathbb{R}$-Higgs bundle} is then a triple $(E, \mathcal L, \Phi)$ for $E$ an $H$-bundle on $\Sigma$, $\mathcal L$ a line bundle on $\Sigma$, and $\Phi$ a global section of $(E \times^H\mathfrak{m}) \otimes \mathcal L$. This definition originates from Section 6 of \cite{Simpson1} (though specific cases had already been considered in \cite{Hitchin1} and \cite{Hitchin3}) and can be motivated by non-abelian Hodge theory; see e.g. \cite[Corollary 6.16]{Simpson1} and \cite[Theorem 3.32]{GPGMIR}.
    
    As for the complex group case, $\mathcal M(G_\mathbb{R})$ is a quasi-separated algebraic stack, locally of finite presentation over $\mathbb{C}$. We will use the same notational convention as in Section \ref{NonabnHitchinscn} when fixing a twist $\mathcal L$.
\end{remark}

We give the constructions for the Hitchin base and Hitchin map for real Higgs bundles, following \cite[Sections 3.1 and 4.1]{Peon-Nieto1}. We may assume that the torus $T$ of $G$ is stable under $\theta$ and the fixed point subgroup $T^\theta$ has minimal possible dimenison; $T$ is said to be \emph{maximally split} for the involution. This induces a decomposition of $\mathfrak{t}$ as
\begin{equation}\label{TorusCartandecompeqn}
    \mathfrak{t} = \mathfrak{d}\oplus \mathfrak{a}
\end{equation}
for $\mathfrak{d} = \mathfrak{t} \cap \mathfrak{h}$ and $\mathfrak{a} = \mathfrak{t} \cap \mathfrak{m}$. 

Let $W_{\mathfrak{a}} = N_H(\mathfrak{a})/C_H(\mathfrak{a})$; this is a finite group which acts by reflections on $\mathfrak{a}$. The Chevalley restriction theorem gives an isomorphism of GIT quotients $\mathfrak{m}//H \cong \mathfrak{a}/W_{\mathfrak{a}}$; we will denote $\mathfrak{c}_\mathbb{R} = \mathfrak{a}/W_{\mathfrak{a}}$. This defines a Chevalley map $\chi_\mathbb{R}:\mathfrak{m} \rightarrow \mathfrak{c}_\mathbb{R}$, which descends to a map $[\mathfrak{m}/H \times\mathbb{G}_m]\rightarrow [\mathfrak{c}_\mathbb{R}/\mathbb{G}_m]$ (which we also denote by $\chi_\mathbb{R}$).

\begin{definition}\label{RealHitchinmapdef}
   The \emph{$G_{\mathbb{R}}$-Hitchin base} is the mapping stack
   \begin{equation}\label{RealHitchinBasedef}
    \mathcal A(G_{\mathbb{R}})=Maps(\Sigma,[\mathfrak{c}_\mathbb{R}/\mathbb{G}_m]).
    \end{equation}
    The \emph{$G_\mathbb{R}$-Hitchin map} is the morphism $h_{G_\mathbb{R}}:\mathcal M(G_\mathbb{R}) \rightarrow \mathcal A(G_\mathbb{R})$ induced by the Chevalley map $\chi_\mathbb{R}:[\mathfrak{m}/H \times\mathbb{G}_m]\rightarrow [\mathfrak{c}_\mathbb{R}/\mathbb{G}_m]$.
\end{definition}

\begin{remark}\label{RealHitchinBasermk}
    For a line bundle $\mathcal L$, the $\mathcal L$-twisted $G_\mathbb{R}$-Hitchin base $\mathcal A_\mathcal L(G_\mathbb{R})$ is representable by the vector space $H^0(\Sigma,\mathfrak{a}\otimes\mathcal L/W_\mathfrak{a})$. 
\end{remark}

 There is a map $[\mathfrak{m}/H] \rightarrow [\mathfrak{g}/G]$ induced by the inclusions $\mathfrak{m} \hookrightarrow \mathfrak{g}$ and $H \hookrightarrow G$. Similarly, there is a map $\mathfrak{c}_\mathbb{R} \rightarrow \mathfrak{c}$ and a commutative diagram
\begin{equation}\label{RealtoComplexCheveqn}
\begin{tikzcd}
{[\mathfrak{m}/H]} \arrow[r] \arrow[d, "\chi_{\mathbb{R}}"'] & {[\mathfrak{g}/G]} \arrow[d, "\chi"] \\
\mathfrak{c}_\mathbb{R} \arrow[r]                      & \mathfrak{c}.                      
\end{tikzcd}
\end{equation}
This induces a commutative diagram
\begin{equation}\label{RealtoComplexHitchineqn}
\begin{tikzcd}
{\mathcal M(G_{\mathbb{R}})} \arrow[r] \arrow[d, "h_{G_{\mathbb{R}}}"'] & {\mathcal M(G)} \arrow[d, "h_G"] \\
{\mathcal A( G_{\mathbb{R}})} \arrow[r]                             & {\mathcal A(G)}.              
\end{tikzcd}
\end{equation}

We will now restrict our attention to regular $G_\mathbb{R}$-Higgs bundles. A point $x \in \mathfrak{m}$ is \emph{regular} if $C_H(x)$ has minimal possible dimension; we denote the locus of regular points by $\mathfrak{m}^{reg}$, which is a dense open subset of $\mathfrak{m}$. Importantly, this notion of regularity does not necessarily coincide with regularity in $\mathfrak{g}$ under the adjoint action.

\begin{definition}\label{QuasiSplitdef}
    A real form $G_\mathbb{R}$ of $G$ is \emph{quasi-split} if $\mathfrak{g}^{reg} \cap \mathfrak{m} \neq \emptyset$.
\end{definition}

Let $L = C_G(\mathfrak{a})$; this is a Levi subgroup of $G$ since $\mathfrak{a}$ is a toral subalgebra of $\mathfrak{g}$, and there is an associated Dixmier sheet $S_H$ (see Definition \ref{DixmierSheetdef} and Remark \ref{DixmierSheetrmk}).

\begin{lemma}\label{RealSheetlem}
    The Dixmier sheet $S_H$ associated to $L$ is the unique sheet containing $\mathfrak{m}^{reg}$.
\end{lemma}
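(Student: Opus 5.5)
The plan is to show two things. First, that every $x\in\mathfrak{m}^{reg}$ lies in the decomposition class of $(L,\mathcal O)$ for a suitable $\mathcal O$ and has centraliser dimension $\dim L$, so that $\mathfrak{m}^{reg}\subseteq S_H$. Second, that no other sheet can contain $\mathfrak{m}^{reg}$.

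I will work first on the dense open subset $\mathfrak{m}^{reg}\cap\mathfrak{m}^{ss}$ of regular semisimple elements of $\mathfrak{m}$. By the Kostant--Rallis theory, every semisimple element of $\mathfrak{m}$ is $H$-conjugate into $\mathfrak{a}$. Moreover, $x\in\mathfrak{a}$ is $H$-regular exactly when it avoids the restricted root hyperplanes. For such $x$ one has $C_{\mathfrak g}(x)=C_{\mathfrak g}(\mathfrak a)=\mathfrak l$, and hence $C_G(x)^\circ=L^\circ$; this is the standard fact that the centraliser of a generic element of a toral subalgebra equals the centraliser of the subalgebra. It follows that every regular semisimple element of $\mathfrak m$ is $G$-conjugate to an element $z\in\mathfrak{z}=Lie(Z(L))$ with $C_G(z)=L$. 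Such an element lies in the decomposition class of $(L,0)$, and therefore in the Dixmier sheet $S_H$, with centraliser dimension $d=\dim L$.

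Next I extend to all of $\mathfrak{m}^{reg}$. By Kostant--Rallis, $\dim C_{\mathfrak m}(x)=\dim\mathfrak a$ is constant on $\mathfrak m^{reg}$. The identity $\dim C_{\mathfrak g}(x)=\dim C_{\mathfrak h}(x)+\dim C_{\mathfrak m}(x)$ holds because $\theta$ preserves $C_{\mathfrak g}(x)$ up to sign. Combining this with $\dim C_{\mathfrak h}(x)-\dim C_{\mathfrak m}(x)=\dim\mathfrak h-\dim\mathfrak m$ (the standard orbit-dimension identity $\dim [\mathfrak h,x]=\dim\mathfrak m-\dim C_{\mathfrak m}(x)$ together with $\dim[\mathfrak m,x]=\dim\mathfrak h-\dim C_{\mathfrak h}(x)$, since the two spaces are dual under the Killing form) shows that $\dim C_{\mathfrak g}(x)$ is constant on $\mathfrak m^{reg}$. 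Hence $\mathfrak m^{reg}\subseteq\mathfrak g_d$ with $d=\dim L$.

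Since $\mathfrak m^{reg}$ is irreducible (it is open in a vector space), it lies in a single irreducible component of $\mathfrak g_d$, i.e.\ in one sheet. That sheet contains the regular semisimple locus of $\mathfrak m$, which lies in $S_H$. Uniqueness then follows because two sheets sharing a dense subset of $\mathfrak m^{reg}$ must coincide: a semisimple element $z$ with $C_G(z)=L$ lies only in the sheet $S_H$. Indeed, a sheet containing a semisimple element with centraliser $L$ contains its decomposition class $\mathcal D(L,0)$, and by Remark \ref{SheetClassificationrmk} and the classification of \cite{Borho2}, $\overline{\mathcal D(L,0)}\cap\mathfrak g_d$ is exactly $S_H$, which is the unique sheet through points of $\mathcal D(L,0)$ because that class is open in $\mathfrak g_d$.

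I expect the main obstacle to be this uniqueness step, namely making precise that the semisimple elements of $\mathcal D(L,0)$ do not lie in any other sheet of $\mathfrak g_d$. I would first try to argue via \cite[Korollar 3.6]{Borho2} that $\mathcal D(L,0)$ is open in $\mathfrak g_d$. A fallback is that every sheet is a union of decomposition classes whose semisimple parts have centralisers containing the centraliser for its dense class, and $L$ is minimal among these.
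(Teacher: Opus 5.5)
Your proposal is correct and follows essentially the same route as the paper: constant $G$-centraliser dimension on $\mathfrak m^{reg}$, irreducibility to land in a sheet, elements of $\mathfrak a$ with centraliser exactly $L$, and the fact that $S_H$ is the only sheet containing such elements. The only difference is in how much is proved. The paper cites \cite[Proposition 5]{KR} for the constant dimension and simply asserts the final uniqueness fact. You derive the former from the $\theta$-splitting $C_{\mathfrak g}(x)=C_{\mathfrak h}(x)\oplus C_{\mathfrak m}(x)$ and Killing-form duality, and you also justify the latter. Your fallback for uniqueness works cleanly if phrased as follows: a sheet with data $(L',\mathcal O')$ through $z$ forces a conjugate of $L'$ inside $L$ with $\dim L'-\dim\mathcal O'=\dim L$, hence $L'$ is conjugate to $L$ and $\mathcal O'=0$.
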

\begin{proof}
By \cite[Proposition 5]{KR}, the elements of $
\mathfrak{m}^{reg}$ all have $G$-centraliser of the same dimension. Hence, since $\mathfrak{m}^{reg}$ is irreducible it must be contained in a sheet of $\mathfrak{g}$. Moreover, by \cite[Remark 3]{KR} and \cite[Proposition 8]{KR}, there exist elements in $\mathfrak{a}$ with centraliser $L$, and these are contained in $\mathfrak{m}^{reg}$. The sheet $S_H$ is the unique sheet containing such elements, so $\mathfrak{m}^{reg}$ must be contained in $S_H$.
\end{proof}

\begin{remark}\label{RealSheetrmk}
     The real form $G_{\mathbb{R}}$ is quasi-split exactly when $S_H$ is the regular sheet.
\end{remark}

In order to apply the constructions from Section \ref{NonabnHitchinscn} to $G_\mathbb{R}$-Hitchin fibrations for arbitrary $G_\mathbb{R}$, we require that the sheets $S_H$ which can appear in Lemma \ref{RealSheetlem} are non-singular. If $G$ is a classical group, this is automatic by Theorem \ref{ClassicalSheetsthm}, and it is implied in general by work in preparation of \cite{Bulois}.

There is an open substack of $\mathcal M(G_{\mathbb{R}})$ defined by
\begin{equation}\label{RegularRealHiggsrmk}
    \mathcal M^{reg}(G_{\mathbb{R}}) = Maps(\Sigma,[\mathfrak{m}^{reg}/H \times \mathbb{G}_m]),
\end{equation}
the stack of \emph{regular $G_{\mathbb{R}}$-Higgs bundles}.

\begin{proposition}\label{RealSheetprp}
    There is a commutative diagram
\begin{equation}\label{RealSheet1eqn}
\begin{tikzcd}
{\mathcal M^{reg}(G_{\mathbb{R}})} \arrow[r] \arrow[d, "h_{G_{\mathbb{R}}}"'] & {\mathcal M(G;S_H)} \arrow[r, hook] \arrow[d, "h_{S_H}"'] & {\mathcal M(G)} \arrow[d, "h_G"'] \\
{\mathcal A(G_{\mathbb{R}})} \arrow[r]                             & {\mathcal A( G;{S_H})} \arrow[r, "\tilde{\mu}_{S_H}"]                  & {\mathcal A(G)}               
\end{tikzcd}
\end{equation}
compatible with the diagrams (\ref{SHitchinBase2eqn}) and (\ref{RealtoComplexHitchineqn}).
\end{proposition}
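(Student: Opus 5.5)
Since all four stacks in (\ref{RealSheet1eqn}) are mapping stacks out of $\Sigma$, the plan is to build the diagram locally and then apply $Maps(\Sigma,-)$. Concretely, I will construct a 2-commutative diagram
\begin{equation*}
\begin{tikzcd}
{[\mathfrak{m}^{reg}/H\times\mathbb{G}_m]} \arrow[r] \arrow[d, "\chi_{\mathbb{R}}"'] & {[S_H/G\times\mathbb{G}_m]} \arrow[r, hook] \arrow[d, "\rho_{S_H}"'] & {[\mathfrak{g}/G\times\mathbb{G}_m]} \arrow[d, "\chi"'] \\
{[\mathfrak{c}_\mathbb{R}/\mathbb{G}_m]} \arrow[r] & {\mathcal B/\mathbb{G}_m} \arrow[r, "\tilde{\nu}_{S_H}"] & {[\mathfrak{c}/\mathbb{G}_m]}
\end{tikzcd}
\end{equation*}
whose outer rectangle is the $\mathbb{G}_m$-quotient of (\ref{RealtoComplexCheveqn}), restricted to $\mathfrak{m}^{reg}$, and whose right square is the $\mathbb{G}_m$-equivariant version of (\ref{StackyCBDiagrams2eqn}) (Proposition \ref{StackyCBDiagramsprp}, Corollary \ref{GmEquivStackyCBcor}). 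Functoriality of mapping stacks then produces (\ref{RealSheet1eqn}), and compatibility with (\ref{SHitchinBase2eqn}) and (\ref{RealtoComplexHitchineqn}) follows because $\tilde{\mu}_{S_H}$ is induced by $\tilde{\nu}_{S_H}$.

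The top-left arrow is straightforward. By Lemma \ref{RealSheetlem}, $\mathfrak{m}^{reg}\subseteq S_H$, so the inclusions $\mathfrak{m}^{reg}\hookrightarrow S_H$ and $H\hookrightarrow G$, together with the scaling action, give a map of quotient stacks.

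For the bottom-left arrow I first work with coarse spaces. Here $L=C_G(\mathfrak{a})$, so $\mathfrak{a}\subseteq\mathfrak{z}$; since $\dim\mathfrak{c}_\mathbb{R}=\dim\mathfrak{c}_{S_H}$ by \cite{KR}, in fact $\mathfrak{a}=\mathfrak{z}$. The inclusion $N_H(\mathfrak{a})\subseteq N_G(L)$ induces a homomorphism $W_\mathfrak{a}\to W_L$, and hence a map $\mathfrak{a}/W_\mathfrak{a}\to\mathfrak{z}/W_L=\mathfrak{c}_{S_H}$ over $\mathfrak{c}$. Lifting this to $\mathcal B=[\tilde{\mathfrak{c}}_{S_H}/F]$ is the delicate part. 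I would define the lift as the composite $\mathfrak{c}_\mathbb{R}\to[\mathfrak{m}^{reg}/H]\to[S_H/G]\to\mathcal B$, which requires a section of $\chi_\mathbb{R}$. For this I use the Kostant--Rallis section: it gives a slice $\mathfrak{c}_\mathbb{R}\hookrightarrow\mathfrak{m}^{reg}$ built from a normal $\mathfrak{sl}_2$-triple, together with its own $\mathbb{G}_m$-action, which should be compatible with the square-root Kazhdan action appearing in Proposition \ref{StackyHKSectionprp}. This choice-dependent description is only an auxiliary step. To show that the resulting morphism $\mathfrak{c}_\mathbb{R}\to\mathcal B$ is canonical and makes the left square commute, I will use the same argument as in Lemma \ref{StackyZquotientlem}. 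Both $\rho_{S_H}\circ\iota$ and the constructed map land over the same point of $\mathfrak{c}_{S_H}$. They agree on the dense open set where $\mathcal B$ is schematic, namely the preimage of $\mathfrak{z}^{rs}$. Both stacks are normal and separated Deligne--Mumford, so by \cite[Proposition A.1]{FMN} the two maps coincide, and $\mathbb{G}_m$-equivariance follows in the same way.

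The main obstacle is that the left square must 2-commute on all of $[\mathfrak{m}^{reg}/H]$, including over non-semisimple points where the inertia in $\mathcal B$ (the Katsylo group $F$) is non-trivial. The density argument via \cite{FMN} applies to maps whose source is a normal scheme or DM stack, whereas $[\mathfrak{m}^{reg}/H]$ is an Artin stack. I would deal with this by working on the smooth atlas $\mathfrak{m}^{reg}$, which is a smooth variety. There the two maps $\mathfrak{m}^{reg}\to\mathcal B$ to compare are $\rho_{S_H}|_{\mathfrak{m}^{reg}}$ and the composite through $\chi_\mathbb{R}$. They agree on the dense open subset of $\theta$-regular semisimple elements, since over $\mathfrak{z}^{rs}$ the stack $\mathcal B$ is a scheme, and \cite[Proposition A.1]{FMN} then gives equality on all of $\mathfrak{m}^{reg}$. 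Finally, the $H$-invariance of both maps lets the equality descend to $[\mathfrak{m}^{reg}/H\times\mathbb{G}_m]$.
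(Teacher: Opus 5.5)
This is essentially the paper's argument: the top arrow comes from $\mathfrak{m}^{reg}\subseteq S_H$ and $H\le G$, the bottom arrow is a Kostant--Rallis section followed by $\rho_{S_H}$, and independence of the section and $\mathbb{G}_m$-equivariance both come from \cite[Proposition A.1]{FMN} via agreement over $\mathcal B^{rs}$; your extra check that the left square 2-commutes, done on the smooth atlas $\mathfrak{m}^{reg}$ and then descended, spells out a step the paper leaves implicit. One aside is false and should be dropped: in general $\dim\mathfrak{c}_\mathbb{R}\neq\dim\mathfrak{c}_{S_H}$ (for $SU(p,q)$ with $q\geq 1$ one has $\dim\mathfrak{a}=q$, while $L$ has partition $(p-q,1^{2q})$, so $\dim\mathfrak{z}=2q$ and $\mathfrak{a}\subsetneq\mathfrak{z}$), but nothing depends on it, since you only need $\mathfrak{a}\subseteq\mathfrak{z}$ and that generic elements of $\mathfrak{a}$ lie in $\mathfrak{z}^{rs}$, which holds because $L=C_G(\mathfrak{a})$.
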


\begin{proof} 
Let $\rho_{S_H}:S_H \rightarrow \mathcal B$ be the $S$-Chevalley map for $S_H$. By Proposition \ref{StackyCBKQprp}, it suffices to construct a commutative diagram
\begin{equation}\label{RealSheet2eqn}
    \begin{tikzcd}
{[\mathfrak{m}^{reg}/H]} \arrow[r] \arrow[d, "\chi_\mathbb{R}"] & {[S/G]} \arrow[d, "\rho_{S_H}"] \\
\mathfrak{c}_\mathbb{R} \arrow[r]           & \mathcal B 
\end{tikzcd}
\end{equation}
compatible with the $\mathbb{G}_m$-actions and the diagrams (\ref{StackyCBDiagrams2eqn}) and (\ref{RealtoComplexCheveqn}). The upper horizontal arrow in (\ref{RealSheet2eqn}) is induced by the inclusions $\mathfrak{m}^{reg} \subseteq S$ and $H \leq G$. Choosing a Kostant-Rallis section of the map $\chi_{\mathbb{R}}$ \cite[Theorems 11, 12 and 13]{KR} constructs the lower horizontal map. To see that this does not depend on the choice of Kostant-Rallis section, we observe that any two choices for the lower horizontal map will agree over the dense open locus $\mathcal B^{rs}$ in $\mathcal B$ (defined as in the proof of Propositon \ref{SHitchinBaseNormprp}); thus by \cite[Proposition A.1]{FMN}, any two such choices define the same map. The $\mathbb{G}_m$-equivariance follows by a similar argument (as in the proof of Proposition \ref{GmEquivStackyCBKAprp}).
\end{proof}

The map $\chi_\mathbb{R}:[\mathfrak{m}/H] \rightarrow \mathfrak{c}_\mathbb{R}$ is not always a gerbe; unlike the case for sheets, $\mathfrak{c}_\mathbb{R}$ is not in general a geometric quotient for the $H$-action on $\mathfrak{m}^{reg}$. However, by \cite[Section 4.2]{GPPN} and \cite[Theorem 3.16]{HM}, there is a scheme $\mathfrak{C}_{\mathbb{R}}$ with a $\mathbb{G}_m$-action and a $\mathbb{G}_m$-equivariant factorisation
\begin{equation}\label{RealChevGerbeeqn}
\begin{tikzcd}
{[\mathfrak{m}^{reg}/H]} \arrow[r, "{\boldsymbol{ \chi}}_{\mathbb{R}}"] & {\mathfrak{C}_{\mathbb{R}}} \arrow[r] & {\mathfrak{c}_\mathbb{R}}
\end{tikzcd}
\end{equation}
such that $\boldsymbol{\chi}_{\mathbb{R}}$ is a gerbe and the map $\mathfrak{C}_{\mathbb{R}} \rightarrow \mathfrak{c}_\mathbb{R}$ is surjective, quasi-finite and generically injective, but not in general separated. As is discussed in \cite[Section 3]{HM}, the scheme $\mathfrak{C}_\mathbb{R}$ is the regular quotient for the $H$-action on $\mathfrak{m}$ in the sense of \cite[Section 4.2]{Ngo3}.

This induces a factorisation of the real Hitchin map
\begin{equation}\label{RealHitchinGerbeeqn}
\begin{tikzcd}
{\mathcal M_{reg}(G_{\mathbb{R}})} \arrow[r, " \tilde{h}_{\mathbb{R}}"] & {\mathfrak{A}(G_{\mathbb{R}})} \arrow[r, "\xi"] & {\mathcal A(G_{\mathbb{R}})}
\end{tikzcd}
\end{equation}
 where $ \mathfrak{A}(G_\mathbb{R}) = Maps(\Sigma,[\mathfrak{C}_{\mathbb{R}}/\mathbb{G}_m])$ has the following properties.

\begin{proposition}\label{RealHitchinGerbeprp}\cite[Section 4.2]{GPPN}, \cite[Theorems 5.2 and 5.3]{HM}
    The map $\xi$ is generically \'{e}tale; and for each $G_{\mathbb{R}}$-Higgs bundle $(E,\mathcal L,\Phi)$ mapping to a $\mathbb{C}$-point $\tau' \in  \mathfrak{A}(G_{\mathbb{R}})$ under ${\tilde{h}}_{\mathbb{R}}$, there is a group scheme $\mathcal I^H_{(E,\mathcal L, \Phi)}$ on $\Sigma$ such that the fibre $h^{-1}(\tau')$ can be identified with the stack ${\bf B}_{\Sigma} \mathcal I_{(E,\mathcal L,\Phi)}^H$ of $\mathcal I_{(E,\mathcal L, \Phi)}^H$-torsors on $\Sigma$.
\end{proposition}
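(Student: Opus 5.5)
The statement is cited from \cite[Section 4.2]{GPPN} and \cite[Theorems 5.2 and 5.3]{HM}. I would prove it by copying the structure of the proof of Theorem \ref{HiggsNonAbnthm}, with the regular quotient $\mathfrak{C}_\mathbb{R}$ playing the role of $\mathcal B$.

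\textbf{Step 1: the gerbe over a fibre.} I take as input the $\mathbb{G}_m$-equivariant gerbe $\boldsymbol{\chi}_\mathbb{R}:[\mathfrak{m}^{reg}/H] \rightarrow \mathfrak{C}_\mathbb{R}$ of (\ref{RealChevGerbeeqn}). Its structure group is a smooth open subgroup scheme $\mathcal I^{H,fl}$ of the centraliser group scheme of $H$ acting on $\mathfrak{m}^{reg}$, descended to $[\mathfrak{m}^{reg}/H \times \mathbb{G}_m]$; this is the regular-quotient construction of \cite[Section 4.2]{Ngo3}. I then argue exactly as in Proposition \ref{StackyChevalleyBaseprp} and Corollary \ref{GmEquivStackyCBcor}. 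For any scheme $X$ and any map $f:X \rightarrow [\mathfrak{m}^{reg}/H\times\mathbb{G}_m]$, the fibre product of $X \rightarrow [\mathfrak{C}_\mathbb{R}/\mathbb{G}_m]$ with $\boldsymbol{\chi}_\mathbb{R}$ is ${\bf B}f^*\mathcal I^{H,fl}$. This follows from \cite[Remark A.2]{AOV}, since $\boldsymbol{\chi}_\mathbb{R}$ is a rigidification.

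\textbf{Step 2: pass to mapping stacks.} I apply $Maps(\Sigma,-)$, which is functorial and commutes with fibre products. The fibre $\tilde{h}_\mathbb{R}^{-1}(\tau')$ is then identified with ${\bf B}_\Sigma \mathcal I^H_{(E,\mathcal L,\Phi)}$, where $\mathcal I^H_{(E,\mathcal L,\Phi)} := f^*_{(E,\mathcal L,\Phi)}\mathcal I^{H,fl}$. The identification sends a torsor $\mathcal T$ to the twist of $(E,\mathcal L,\Phi)$ by $\mathcal T$. This is the same argument as in Theorem \ref{HiggsNonAbnthm}.

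\textbf{Step 3: $\xi$ is generically étale.} I use that $\mathfrak{C}_\mathbb{R} \rightarrow \mathfrak{c}_\mathbb{R}$ is quasi-finite, and an isomorphism over the open locus $\mathfrak{c}_\mathbb{R}^{rs}$ of images of regular semisimple elements of $\mathfrak{a}$. Over $\mathfrak{c}_\mathbb{R}^{rs}$ the $H$-orbits are closed and unique in each fibre.

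Consider the open subset of $\mathcal A(G_\mathbb{R})$ consisting of maps $\Sigma \rightarrow [\mathfrak{c}_\mathbb{R}/\mathbb{G}_m]$ meeting $\mathfrak{c}_\mathbb{R}^{rs}$. On this set, a lift to $\mathfrak{C}_\mathbb{R}$ is determined on a dense open subset of $\Sigma$. Near the finitely many bad points, the lift is a choice among the finitely many étale-local branches of $\mathfrak{C}_\mathbb{R} \rightarrow \mathfrak{c}_\mathbb{R}$. This gives quasi-finiteness of $\xi$ on this locus.

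For étaleness I compare deformation theories. Tangent spaces to mapping stacks are $H^0(\Sigma,\tau'^*T_{\mathfrak{C}_\mathbb{R}}\otimes\mathcal L)$ and the analogous space for $\mathfrak{c}_\mathbb{R}$. Since $\mathfrak{C}_\mathbb{R} \rightarrow \mathfrak{c}_\mathbb{R}$ is étale on a neighbourhood of the image of a generic $\tau'$, $d\xi$ is an isomorphism there.

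\textbf{Main obstacle.} I expect Step 3 to be the hardest, because $\mathfrak{C}_\mathbb{R}$ is non-separated. I would first show that $\mathfrak{C}_\mathbb{R} \rightarrow \mathfrak{c}_\mathbb{R}$ is étale away from a closed subset of codimension at least $2$, or at least that it is a local isomorphism at the Kostant--Rallis section points, using \cite[Theorems 11--13]{KR}. I would then restrict to those $\tau$ avoiding the bad locus, which is a generic condition when $\deg \mathcal L$ is large.
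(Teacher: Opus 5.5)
The paper gives no proof of this statement; it cites \cite{GPPN} and \cite{HM}. Your Steps 1 and 2 are the right argument, and they are the same one the paper uses for the analogous Theorem \ref{HiggsNonAbnthm}: apply $Maps(\Sigma,-)$ to the gerbe statement of Corollary \ref{GmEquivStackyCBcor}. Two points need fixing.

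\textbf{The structure group.} The group in Step 1 cannot be left as an unspecified open subgroup $\mathcal I^{H,fl}$. Since $\mathfrak{C}_\mathbb{R}$ is a scheme, the rigidification $[\mathfrak{m}^{reg}/H]\to\mathfrak{C}_\mathbb{R}$ must kill all of the inertia. A proper open subgroup would leave residual inertia $\mathcal I^H/\mathcal I^{H,fl}$, as happens for $\mathcal B$ over $\mathfrak{c}_S$. So the band is the full $H$-centraliser $\mathcal I^H$ on $\mathfrak{m}^{reg}$, which is therefore flat, hence smooth; this is the input from \cite{HM} used in Lemma \ref{SmCentraliserInvlem}. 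This is the identification recorded in Remark \ref{RealHitchinGerbermk}, and Corollary \ref{RealHitchinGerbecor} depends on it. Your argument proves the bare existence claim, but you should make this identification explicit.

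\textbf{Step 3.} As planned, this step has a gap. Your fallback through a codimension-two bad locus only gives \'etaleness on components $\mathfrak{A}_{\mathcal L}(G_\mathbb{R})$ with $\deg\mathcal L$ large. The loci of fixed $\deg\mathcal L$ are open and closed in $\mathfrak{A}(G_\mathbb{R})$, and the statement has no degree hypothesis; for $\deg\mathcal L<0$, for instance, every $\tau'$ lies over $0$.

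The obstacle you anticipate is not real; you only need \'etaleness of $\mathfrak{C}_\mathbb{R}\to\mathfrak{c}_\mathbb{R}$, which holds everywhere:
\begin{itemize}
\item By \cite{KR}, $d\chi_\mathbb{R}$ is surjective exactly on $\mathfrak{m}^{reg}$, so $\chi_\mathbb{R}:\mathfrak{m}^{reg}\to\mathfrak{c}_\mathbb{R}$ is smooth.
\item The map $\mathfrak{m}^{reg}\to[\mathfrak{m}^{reg}/H]\to\mathfrak{C}_\mathbb{R}$ is a smooth surjection because the band is smooth, as in Proposition \ref{StackyChevalleyBaseprp}.
\item Smoothness is local on the source in the smooth topology, so $\mathfrak{C}_\mathbb{R}\to\mathfrak{c}_\mathbb{R}$ is smooth. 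Being quasi-finite, it is \'etale everywhere.
\end{itemize}

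Now take a square-zero thickening $\Sigma_A\subset\Sigma_{A'}$. The \'etale map $\mathfrak{C}_\mathbb{R}\times^{\mathbb{G}_m}\mathcal L\to\mathfrak{c}_\mathbb{R}\times^{\mathbb{G}_m}\mathcal L$ has unique infinitesimal lifts over arbitrary thickenings, not only affine ones, since local lifts are unique and glue. Hence $\xi$ is formally \'etale, and so \'etale, being locally of finite presentation.

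Non-separatedness of $\mathfrak{C}_\mathbb{R}$ only makes $\xi$ non-separated and non-finite; it plays no role in \'etaleness. This argument also replaces your tangent-space comparison, which by itself controls neither obstructions nor non-generic $\tau'$. Note too that the relevant tangent space is $H^0(\Sigma,\tau'^*T_{(\mathfrak{C}_\mathbb{R}\times^{\mathbb{G}_m}\mathcal L)/\Sigma})$, with the weighted twist, not $H^0(\Sigma,\tau'^*T_{\mathfrak{C}_\mathbb{R}}\otimes\mathcal L)$.
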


\begin{remark}\label{RealHitchinGerbermk}
    More specifically, the $H$-centraliser group scheme $\mathcal I^H$ on $\mathfrak{m}^{reg}$ descends to the inertia stack $\mathcal I^H_{H\times \mathbb{G}_m}$ on $[\mathfrak{m}^{reg}/H\times \mathbb{G}_m]$, and $\mathcal I_{(E,\mathcal L,\Phi)}^H = f_{(E,\mathcal L,\Phi)}^*\mathcal I^H_{H\times \mathbb{G}_m}$ for the map $f_{(E,\mathcal L,\Phi)}:\Sigma \rightarrow [\mathfrak{m}^{reg}/H \times \mathbb{G}_m]$ defined by $(E,\mathcal L, \Phi)$.
\end{remark}

We observe that this description is compatible with our description in Theorem \ref{HiggsNonAbnthm} for the sheet $S_H$. The involution $\theta$ on $G$ defines an involution on $\mathcal I_S|_{\mathfrak{m}^{reg}}$, which we also denote by $\theta$ and which is equivariant with respect to the actions by $H$ and $\mathbb{G}_m$. Moreover, by definition, $\mathcal I^H$ is the fixed point subgroup scheme of $\mathcal I_S|_{\mathfrak{m}^{reg}}$ under the $\theta$-action.

\begin{lemma}\label{SmCentraliserInvlem}
   The involution $\theta$ restricts to an involution on $\mathcal I_S^{sm}|_{\mathfrak{m}^{reg}}$, and $\mathcal I^H$ is the fixed point subgroup scheme $\mathcal I^{sm}_S|_{\mathfrak{m}^{reg}}$ under $\theta$.
\end{lemma}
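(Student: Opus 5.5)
The plan is to build the involution on $\mathcal I_S$ from the automorphism $\theta$ of $\mathfrak{g}$ together with the scalar action by $-1$, and to show that it preserves $\mathcal I_S^{sm}$ using the maximality in Proposition \ref{SmCentraliserPropsprp}. The fixed-point statement then reduces to a connected-components argument. Throughout, write $S = S_H$.

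\textbf{Step 1: $\theta$ preserves $S$.} Since $\theta$ is an automorphism of $\mathfrak{g}$ with $C_G(\theta(x)) = \theta(C_G(x))$, it permutes the sheets of $\mathfrak{g}$. For $x \in \mathfrak{m}$ we have $\theta(x) = -x$, and $\mathfrak{m}^{reg}$ is stable under scaling. Hence $\theta(S)$ is a sheet containing $\mathfrak{m}^{reg}$, and by the uniqueness in Lemma \ref{RealSheetlem} we get $\theta(S) = S$.

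\textbf{Step 2: $\theta$ preserves $\mathcal I_S^{sm}$.} The map $(g,x) \mapsto (\theta(g), \theta(x))$ is an automorphism of the group scheme $\mathcal I_S$ covering $\theta|_S$. The image of $\mathcal I_S^{sm}$ under it is again a smooth subgroup scheme of $\mathcal I_S$, since it is the pullback of a smooth subgroup scheme along an automorphism of $S$. By maximality (Proposition \ref{SmCentraliserPropsprp}), this image is contained in $\mathcal I_S^{sm}$; applying the same argument to $\theta^{-1}=\theta$ gives equality. Over $\mathfrak{m}^{reg}$ the base map is $x \mapsto -x$, so I compose with the scalar action of $-1 \in \mathbb{G}_m$. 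That action preserves $\mathcal I_S^{sm}$ by Proposition \ref{SmCentGmEquivprp}, and on $\mathcal I_S$ it is just $(g,x) \mapsto (g,-x)$. The composite is an involution of $\mathcal I_S^{sm}|_{\mathfrak{m}^{reg}}$ over the identity of $\mathfrak{m}^{reg}$, acting as $g \mapsto \theta(g)$ on fibres, and it is the restriction of the involution $\theta$ on $\mathcal I_S|_{\mathfrak{m}^{reg}}$. This gives the first claim.

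\textbf{Step 3: the fixed-point statement.} Taking fixed points, $(\mathcal I_S^{sm}|_{\mathfrak{m}^{reg}})^\theta = \mathcal I^H \cap \mathcal I_S^{sm}$. It therefore suffices to show $\mathcal I^H \subseteq \mathcal I^{sm}_S$. I will use three facts.
\begin{itemize}
\item $\mathcal I^H$ is smooth over the irreducible base $\mathfrak{m}^{reg}$ (from \cite{GPPN}, \cite{HM}), so every connected component of $\mathcal I^H$ dominates $\mathfrak{m}^{reg}$.
\item $\mathcal I_S^{sm}$ is open and closed in $\mathcal I_S$. By Lemma \ref{CentraliserCompslem} this holds after pulling back along the smooth surjection $G \times \mathfrak{K} \to S$, and it descends.
\item $\mathcal I^{sm}_S = \mathcal I_S$ over the dense open $S^\circ = Ad(G)(\mathfrak{K}^\circ)$.
\end{itemize}
Given these, each connected component of $\mathcal I^H$ meets $\mathcal I_S^{sm}$ over $\mathfrak{m}^{reg} \cap S^\circ$, provided this intersection is non-empty. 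Being connected, the component is then contained in the open and closed subscheme $\mathcal I_S^{sm}$.

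\textbf{Main obstacle.} The hard step is showing $\mathfrak{m}^{reg} \cap S^\circ \neq \emptyset$. I would take $x \in \mathfrak{a}$ with $C_G(x) = L$, which lies in $\mathfrak{m}^{reg}$ by \cite{KR}, and argue that $S^{ss} = Ad(G)(\mathfrak{z}^{rs}) \subseteq S^\circ$. For this I would use the identification $\mathcal B \cong [\tilde{\mathfrak{c}}_S/F]$ with $\tilde{\mathfrak{c}}_S = \mathfrak{z}/W_S$: $F$ acts freely on the image of $\mathfrak{z}^{rs}$, because $W_L = W_S \rtimes F$ (or at least $F$ is a quotient of $W_L$) and $W_L$ acts freely on $\mathfrak{z}^{rs}$. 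If that identification is awkward to invoke at this point, the fallback is a direct check: over a semisimple $x \in S$ we have $C_G(x) \cong L$, which is connected, so $\mathcal I_{S,x}$ is connected and its fibre equals $\mathcal I^{sm}_{S,x}$.
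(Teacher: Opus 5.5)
Your proof is correct. It differs from the paper's mainly in how the first claim is obtained.

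\textbf{First claim.} The paper works directly over $\mathfrak{m}^{reg}$. It uses that $\mathcal I_S^{sm}=\mathcal I_S$ over the dense open subset $\mathfrak{m}^{rs}$ of semisimple elements, so $\theta$ trivially preserves $\mathcal I_S^{sm}|_{\mathfrak{m}^{rs}}$. It then extends this using density of $\mathcal I_S^{sm}|_{\mathfrak{m}^{rs}}$ in the smooth scheme $\mathcal I_S^{sm}|_{\mathfrak{m}^{reg}}$, together with closedness of $\mathcal I_S^{sm}$ in $\mathcal I_S$.

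You instead work over the whole sheet. After checking $\theta(S)=S$ via Lemma~\ref{RealSheetlem}, you apply the maximality of Proposition~\ref{SmCentraliserPropsprp} to $\theta(\mathcal I_S^{sm})$. You then correct the base map by the scalar action of $-1$ (Proposition~\ref{SmCentGmEquivprp}). This costs the extra step $\theta(S)=S$. In exchange, it only uses $\mathcal I_S^{sm}=\mathcal I_S$ over the dense open $S^\circ$ of the full sheet, which is already built into maximality. So your first claim needs no information about semisimple points of $\mathfrak{m}$.

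\textbf{Second claim.} Here the two arguments agree in substance. The paper appeals to Proposition~\ref{SmCentraliserPropsprp} and \cite[Theorem 3.10]{HM}, but $\mathcal I^H$ lives over $\mathfrak{m}^{reg}$, not over $S$. So that appeal really requires the maximality argument run over $\mathfrak{m}^{reg}$, which rests on the equality $\mathcal I_S^{sm}=\mathcal I_S$ on $\mathfrak{m}^{rs}$ from the paper's first step. This is exactly the input you isolate as the main obstacle. Your connected-component argument, using that $\mathcal I_S^{sm}$ is open and closed in $\mathcal I_S$, is an equivalent replacement for the paper's density-plus-closedness argument.

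\textbf{On the obstacle.} The paper only provides $F\cong W_L/W_S$ (Proposition~\ref{KatsyloGpWeylGpprp}), not a semidirect product decomposition $W_L = W_S \rtimes F$. The quotient statement is enough for your freeness argument. Your fallback is cleaner still. Centralisers in $G$ of semisimple elements of $\mathfrak g$ are connected, and $\mathcal I_S^{sm}$ is open in $\mathcal I_S$. Hence $\mathcal I_S^{sm}=\mathcal I_S$ over the open set $S^{ss}$, which you can use in place of $S^\circ$. It meets $\mathfrak{m}^{reg}$ at the elements of $\mathfrak a$ with centraliser $L$.
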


\begin{proof} 
We observe that on the dense open subset $\mathfrak{m}^{rs} \subseteq \mathfrak{m}^{reg}$ of semisimple elements, $\mathcal I_S^{sm}|_{\mathfrak{m}^{rs}} = \mathcal I_S|_{\mathfrak{m}^{rs}}$ (e.g. by Corollary \ref{KatsyloAffinecor}), so certainly $\theta$ sends $\mathcal I_S^{sm}|_{\mathfrak{m}^{rs}}$ to itself; moreover, since $\mathcal I_S^{sm}|_{\mathfrak{m}^{reg}}$ is smooth over $\mathfrak{m}^{reg}$, $\mathcal I_S^{sm}|_{\mathfrak{m}^{rs}}$ is dense in $\mathcal I_S^{sm}|_{\mathfrak{m}^{reg}}$. Then since $\mathcal I_S^{sm}|_{\mathfrak{m}^{reg}}$ is a closed subgroup of $\mathcal I_S|_{\mathfrak{m}^{reg}}$, $\theta$ must send $\mathcal I_S^{sm}|_{\mathfrak{m}^{reg}}$ to itself. The second statement follows from Proposition \ref{SmCentraliserPropsprp} and \cite[Theorem 3.10]{HM}.
\end{proof}

The following corollary is then straightforward. Let $(E,\mathcal L, \Phi)$ be a regular $G_\mathbb{R}$-Higgs bundle, which we also view as an $S_H$-valued Higgs bundle. Let $\tau' \in \mathfrak{A}(G_\mathbb{R})$ be the image of $(E,\mathcal L, \Phi)$ under the map $\tilde{h}_\mathbb{R}$, and let $\tau \in \mathcal A(G;S_H)$ be the image of $(E,\mathcal L, \Phi)$ under $h_{S_H}$. We use the notation of Theorem \ref{HiggsNonAbnthm} and Proposition \ref{RealHitchinGerbeprp}, and note that the involution $\theta$ on $\mathcal I_S^{sm}|_{\mathfrak{m}^{reg}}$ defines an involution on $\mathcal I^{sm}_{(E,\mathcal L,\Phi)}$.

\begin{corollary}\label{RealHitchinGerbecor}
    The group scheme $\mathcal I^H_{(E,\mathcal L,\Phi)}$ can be identified with the fixed point group scheme $(\mathcal I^{sm}_{(E,\mathcal L,\Phi)})^{\theta}$. Moreover, the map of Hitchin fibres $\tilde{h}_{\mathbb{R}}^{-1}(\tau') \rightarrow h_{S_H}^{-1}(\tau)$ induced by the diagrams (\ref{RealSheet1eqn}) and (\ref{RealHitchinGerbeeqn}) can be identified with the morphism ${\bf B}_{\Sigma}(\mathcal I_{(E,\mathcal L,\Phi)}^{sm})^\theta \rightarrow {\bf B}_{\Sigma}\mathcal I^{sm}_{(E,\mathcal L,\Phi)}$.
\end{corollary}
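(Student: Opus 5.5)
The plan is to deduce both statements by pulling back the pointwise result of Lemma \ref{SmCentraliserInvlem} along the classifying map of the Higgs bundle, and then applying the mapping-stack formalism to the gerbe descriptions in Theorem \ref{HiggsNonAbnthm} and Proposition \ref{RealHitchinGerbeprp}.

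For the first assertion, write $f=f_{(E,\mathcal L,\Phi)}:\Sigma\rightarrow[\mathfrak{m}^{reg}/H\times\mathbb{G}_m]$ for the map defined by the regular $G_\mathbb{R}$-Higgs bundle. Let $j$ be its composite with $[\mathfrak{m}^{reg}/H\times\mathbb{G}_m]\rightarrow[S_H/G\times\mathbb{G}_m]$, which is the map classifying the same object viewed as an $S_H$-valued Higgs bundle. By definition $\mathcal I^{sm}_{(E,\mathcal L,\Phi)}=f^*(\mathcal I^{sm}_{S,G\times\mathbb{G}_m}|_{[\mathfrak{m}^{reg}/H\times\mathbb{G}_m]})$, and by Remark \ref{RealHitchinGerbermk} we have $\mathcal I^H_{(E,\mathcal L,\Phi)}=f^*\mathcal I^H_{H\times\mathbb{G}_m}$.

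Lemma \ref{SmCentraliserInvlem} is a statement on $\mathfrak{m}^{reg}$: the involution $\theta$ preserves $\mathcal I^{sm}_S|_{\mathfrak{m}^{reg}}$, and its fixed subscheme is $\mathcal I^H$. Since $\theta$ is $H\times\mathbb{G}_m$-equivariant, both the involution and the fixed-point identification descend to the quotient stack $[\mathfrak{m}^{reg}/H\times\mathbb{G}_m]$. Fixed-point subschemes of a $\mathbb{Z}/2$-action are finite limits, so they commute with base change. Pulling back along $f$ therefore gives $\mathcal I^H_{(E,\mathcal L,\Phi)}\cong(\mathcal I^{sm}_{(E,\mathcal L,\Phi)})^\theta$, and the inclusion of fixed points gives a homomorphism of group schemes over $\Sigma$.

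For the second assertion, compare the two gerbe squares. One is the square of Proposition \ref{RealHitchinGerbeprp} for $\boldsymbol{\chi}_\mathbb{R}$, coming from \cite{GPPN} and \cite{HM}. The other is the square (\ref{GmEquivStackyCB1eqn}) for $\rho_{S_H}$. Diagram (\ref{RealSheet2eqn}) gives a map $\mathfrak{C}_\mathbb{R}\rightarrow\mathcal B$. To construct it, take the composite $\mathfrak{c}_\mathbb{R}\rightarrow\mathcal B$ built from a Kostant--Rallis section and precompose with $\mathfrak{C}_\mathbb{R}\rightarrow\mathfrak{c}_\mathbb{R}$. Alternatively, and more canonically, descend the composite $\mathfrak{m}^{reg}\rightarrow S_H\rightarrow\mathcal B$ along the gerbe $\boldsymbol{\chi}_\mathbb{R}$. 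This descent is possible because the $H$-stabilisers lie in $\mathcal I^{sm}_S$ by the first part, so they act trivially after rigidification (\cite[Theorem A.1]{AOV}).

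This yields a commutative square from $[\mathfrak{m}^{reg}/H]\rightarrow\mathfrak{C}_\mathbb{R}$ to $[S_H/G]\rightarrow\mathcal B$. On automorphism group schemes over a point it induces exactly the inclusion $\mathcal I^H\hookrightarrow\mathcal I^{sm}_S$. The induced map on fibres is then a morphism of trivialised gerbes, $\tilde h_\mathbb{R}^{-1}(\tau')\rightarrow h_{S_H}^{-1}(\tau)$. Applying the functor $Maps(\Sigma,-)$, as in the proof of Theorem \ref{HiggsNonAbnthm}, and using the base point $(E,\mathcal L,\Phi)$ in both fibres to trivialise, this map becomes ${\bf B}_\Sigma(\mathcal I^{sm}_{(E,\mathcal L,\Phi)})^\theta\rightarrow{\bf B}_\Sigma\mathcal I^{sm}_{(E,\mathcal L,\Phi)}$. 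Concretely, a torsor $P$ is sent to $P\times^{\mathcal I^H}\mathcal I^{sm}$, which is twisting the base point by the torsor.

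The main obstacle is checking that the two trivialisations are compatible. One must confirm that the identifications of Proposition \ref{RealHitchinGerbeprp} and Theorem \ref{HiggsNonAbnthm} are both "twist the base point by a torsor", so that the square of fibres commutes on the nose rather than up to an automorphism. I would verify this étale-locally on $\Sigma$, where both gerbes are neutral with section given by the base point. Compatibility of the $\mathbb{G}_m$-twists follows from the $\mathbb{G}_m$-equivariance of the maps in (\ref{RealSheet2eqn}).
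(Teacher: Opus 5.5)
Your proposal is correct and is essentially the argument the paper has in mind when it calls the corollary straightforward: you pull back Lemma \ref{SmCentraliserInvlem} along the classifying map, using the $H\times\mathbb{G}_m$-equivariance of $\theta$ and the fact that fixed points commute with base change, and then apply $Maps(\Sigma,-)$ to the gerbe squares of Corollary \ref{GmEquivStackyCBcor} and Proposition \ref{RealHitchinGerbeprp}, linked by Proposition \ref{RealSheetprp}. Your extra care in producing $\mathfrak{C}_\mathbb{R}\rightarrow\mathcal B$ and in matching the two base-point trivialisations, so that the map becomes extension of structure group along $\mathcal I^H_{(E,\mathcal L,\Phi)}\hookrightarrow\mathcal I^{sm}_{(E,\mathcal L,\Phi)}$, simply makes explicit what the paper leaves implicit.
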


We consider also how to adapt the abelianised Hitchin fibration of Section \ref{IntAbFibrnsbn} to the context of $G_{\mathbb{R}}$-Higgs bundles. We state the following important lemma. We let $G_\mathbb{R}$ be any real form, and let $S_H$ be the corresponding Dixmier sheet of Lemma \ref{RealSheetlem}. Recall the cameral homomorphism $\kappa_{S_H}:\mathcal I_{S_H}^{sm} \rightarrow \rho_{S_H}^*\hat{\mathcal J}_{S_H}$ of Proposition \ref{CameralHomprp}.

\begin{lemma}\label{CameralSmoothReallem}
    The cameral homomorphism $\kappa_{S_H}:\mathcal I_{S_H}^{sm} \rightarrow \rho_{S_H}^*\hat{\mathcal J}_{S_H}$ is smooth.
\end{lemma}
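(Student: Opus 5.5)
The plan is to reduce to Proposition \ref{CameralSmoothprp} whenever possible, and to handle the remaining real forms by the same Levi-reduction argument used in its proof. First, if $G_\mathbb{R}$ is quasi-split then $S_H$ is the regular sheet (Remark \ref{RealSheetrmk}), and smoothness is \cite[Proposition 12.5]{DG}. Otherwise, $S_H$ is the Dixmier sheet associated to $L = C_G(\mathfrak{a})$. We assume $S_H$ is non-singular (Theorem \ref{ClassicalSheetsthm} for classical $G$, \cite{Bulois} in general). When $S_H$ is of classical reduction type (Definition \ref{ClassRamTypedef}), we are done by Proposition \ref{CameralSmoothprp}. So the work lies in identifying which $S_H$ arise, via the Satake diagram, and in treating those that are not CRT.

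Next I would use the Satake diagram classification \cite{Araki} to enumerate the pairs $(G, L=C_G(\mathfrak{a}))$. The Levi $L$ is the one whose Dynkin subdiagram consists of the black (compact) nodes. A Levi $M$ containing $L$ minimally is obtained by adjoining one further node, which is possible only along the restricted root system. In the proof of Proposition \ref{CameralSmoothprp}, $M$ enters only through the pair $(M,L)$ with $L$ maximal in $M$, where $M$ is generated by $L$ together with one restricted root. By the structure of Satake diagrams, such $M$ is the complexification of the real rank-one Levi associated to a restricted simple root (or a pair of roots, in doubled-root cases). Inspecting the list of rank-one real forms, the semisimple part of such $M$ is classical in every case except the one coming from $F_{4(-20)}$, i.e. the exceptional real rank-one form with $M=F_4$, $L$ of type $B_3$. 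This would confirm the paper's claim that all but one example are CRT.

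Finally, I would treat the exceptional case by checking the pair $(M,L)$ directly, following the maximal-Levi argument sketched in the proof of Proposition \ref{CameralSmoothprp} and Proposition \ref{MLCameralSmoothprp}. Here $\mathfrak{z}$ is one-dimensional, so it suffices to check surjectivity of $\kappa_{S,Lie}$ at a nilpotent $e\in S_M$. If $p$ is unramified at $0$, I would exhibit an element of $\mathfrak{c}_\mathfrak{l}(e)$ with nonzero image in $\mathrm{Lie}(\bar Z)$, as in Lemma \ref{KTAblem}. If $p$ is ramified of order 2, I would find an $\mathfrak{sl}_2$-triple through $e$ compatible with the abelianisation and reduce to the regular case. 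The ramification is governed by $W_L^M\cong\mathbb{Z}/2$ acting by $-1$ on $\mathfrak{z}$, so this is the ramified case.

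An alternative for this step avoids case checking. It uses the real structure: $\mathfrak{a}\subset\mathfrak{m}^{reg}$, and the Kostant–Rallis theory gives, for the rank-one symmetric pair in $\mathfrak{m}$, a regular nilpotent $e\in\mathfrak{m}$ with a normal $\mathfrak{sl}_2$-triple ($e,f\in\mathfrak{m}$, $h\in\mathfrak{h}$) \cite{KR}. That triple would provide the required embedding, and the regular-case smoothness would transfer. The Levi reduction and codimension-two argument from the proof of Proposition \ref{CameralSmoothprp} then globalise the result. I expect this non-CRT exceptional check, specifically verifying that the chosen triple induces an isomorphism on the Lie algebras of the pseudo-cameral groups, to be the main obstacle.
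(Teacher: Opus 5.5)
\textbf{Overall route.} Your route is the paper's. Reduce to simple $G$, and check against the Satake classification that $S_H$ is of classical reduction type except for $F_{4(-20)}$, where $S_H$ is the Dixmier sheet of the $B_3$ Levi in $F_4$. In the CRT cases Proposition \ref{CameralSmoothprp} applies, and the exceptional case is handled by the Type 2 argument of Proposition \ref{MLCameralSmoothprp}; this is exactly Proposition \ref{F4CameralSmoothprp}.

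Deriving the CRT list via real-rank-one Levis is a reasonable, more conceptual substitute for the paper's direct inspection of Araki's tables. However, the claim that each minimal $M$ is ``generated by $L$ together with one restricted root'' is inaccurate when $\mathfrak z\supsetneq\mathfrak a$ (e.g. $SU(p,q)$, $E_{6(-14)}$). Minimal Levis over $L$ correspond to root hyperplanes in $\mathfrak z$, not in $\mathfrak a$. The argument is rescued by the fact that every such $M$ lies in $C_G(\ker\lambda)$ for some restricted root $\lambda$. Indeed, $\mathfrak z(\mathfrak m)\cap\mathfrak a$ must be a hyperplane of $\mathfrak a$, and it must be a restricted root hyperplane, since otherwise $M\subseteq C_G(\mathfrak a)=L$. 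So classicality of the rank-one Levis suffices.

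\textbf{The gap, in the exceptional case.} You conclude that $p$ is ramified at $0$ because $W_L\cong\mathbb Z/2$ acts by $-1$ on $\mathfrak z$. But ramification of $p:\mathfrak z\to\mathcal B$ is governed by $W_S=\ker(W_L\to F)$ (Corollary \ref{KatsyloGpWeylGpcor}, Lemma \ref{StackyZquotientlem}), not by $W_L$. Classes III and VIII of Table \ref{Levistbl} are counterexamples to your inference: there $|W_L|=2$ and $W_L$ acts by $-1$, yet $|F|=2$ and $p$ is unramified.

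More importantly, the Type 2 argument uses $F=1$ essentially. It identifies $\mathcal B\cong\mathfrak z/W_L\cong\mathfrak t/W_{\mathfrak s}$, and this is what makes $\zeta_J$ an isomorphism of Lie algebras of pseudo-cameral groups, which is precisely the step you flag as the main obstacle. The missing input is the computation of $F$. The nilpotent orbit in $S_H$ is $\tilde A_2$, which has trivial component group in $F_4$. Hence $F=1$ by Remark \ref{KatsyloGprmk}, and $W_S=W_L\cong\mathbb Z/2$.

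\textbf{The $\mathfrak{sl}_2$-triple.} With $F=1$ established, no Kostant--Rallis detour is needed. The weighted Dynkin diagram of $\tilde A_2$ has labels $0$ on the $B_3$ nodes and $2$ on the remaining node. It gives a triple with $h\in\mathfrak z\setminus\{0\}$, $C_G(h)=L$, and $e$ Richardson in the Jacobson--Morozov parabolic, so $e\in\mathfrak n^{reg}$. This triple satisfies (i) and (ii) of Lemma \ref{KTAblem}. By contrast, a normal triple from Kostant--Rallis has $h\in\mathfrak h$, not in $\mathfrak l=\mathfrak c_{\mathfrak g}(\mathfrak a)$. You would then have to show $C_G(h)$ is conjugate to $L$ and pass to that Levi via Remark \ref{CameralHomrmk}.
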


\begin{proof}
It is straightforward to reduce to checking this in the case that group $G$ is simple. We can check directly from the classification in \cite[5.11]{Araki} that unless $G_\mathbb{R}$ is the non-quasi-split real form $F_{4,(-20)}$, the sheet $S_H$ has classical reduction type, and so the statement follows from Proposition \ref{CameralSmoothprp}. For $F_{4,(-20)}$, the sheet $S_H$ is the Dixmier sheet of $F_4$ corresponding to the Levi subgroup of type $B_3$, and so the statement is Proposition \ref{F4CameralSmoothprp}.
\end{proof}

Thus all of the constructions of Sections \ref{Abelianisationscn} and \ref{NonabnHitchinscn} can be applied to the sheet $S_H$. Let $\mathcal J_{S_H}$ be the cameral group for the sheet $S_H$, defined by Definition \ref{CameralDescentprp}, and denote by $\mathcal J_{\mathfrak{c_\mathbb{R}}}$ its pullback to $\mathfrak{c}_\mathbb{R}$ under the map $\mathfrak{c}_\mathbb{R} \rightarrow \mathcal B$.

\begin{lemma}\label{CameralHomInvollem}
    There is an involution $\Theta$ on $\mathcal J_{\mathfrak{c}_{\mathbb{R}}}$ such that the cameral homomorphism $\kappa_S|_{\mathfrak{m}^{reg}}: \mathcal I^{sm}_{S_H}|_{\mathfrak{m}^{reg}} \rightarrow \chi_\mathbb{R}^*\mathcal J_{\mathfrak{c}_\mathbb{R}}$ (defined by Proposition \ref{CameralHomprp}) is equivariant with respect to the $\mathbb{Z}/2$-actions defined by $\theta$ and $\Theta$.
\end{lemma}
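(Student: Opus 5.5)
The plan is to build $\Theta$ concretely on the pseudo-cameral group $\hat{\mathcal J}_{S_H}$ pulled back to $\mathfrak{c}_\mathbb{R}$, and then show it preserves the open subgroup $\mathcal J_{\mathfrak{c}_\mathbb{R}}$. The Levi subgroup is $L = C_G(\mathfrak{a})$. Since $\theta$ acts by $-1$ on $\mathfrak{a}$, we have $\theta(L) = L$, so $\theta$ induces an involution $\bar\theta$ on $\bar Z = L/L^{der}$. It also acts by $-1$ on $\mathfrak{a}$, and hence on $\mathfrak{z} = Lie(Z(L))$ restricted to $\mathfrak{a}$. More precisely, $\mathfrak{z} \supseteq \mathfrak{a}$, and $\theta$ preserves $\mathfrak{z}$, acting by $-1$ on $\mathfrak{a}$.

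By Proposition \ref{StackyGSprp}, the pullback of $p:\mathfrak{z}\to\mathcal B$ to $\mathfrak{c}_\mathbb{R}$ is the cover $\mathfrak{c}_\mathbb{R}\times_{\mathcal B}\mathfrak{z}$. On this cover I will use the automorphism induced by $-\theta$ on $\mathfrak{z}$. This fixes $\mathfrak{a}$ pointwise, and $-\theta|_{\mathfrak{z}}$ normalises the $W_L$-action, since $\theta$ normalises $N_G(L)$ and $-1$ is central. Because $-\theta$ preserves the factorisation (\ref{StackyZquotient1eqn}) over $\mathfrak{c}_\mathbb{R}$, the uniqueness argument of Lemma \ref{StackyZquotientlem} and Lemma \ref{WLInvariancelem} (using \cite[Proposition A.1]{FMN} on the schematic locus) lifts it to the fibre product over $\mathfrak{c}_\mathbb{R}$. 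Concretely, for $X\to\mathfrak{c}_\mathbb{R}$ I define $\Theta$ on $X$-points $f\in Maps^{W_L}(X\times_{\mathcal B}\mathfrak{z},\bar Z)$ by $\Theta(f) = \bar\theta\circ f\circ(-\theta)$. This map is again $W_L$-equivariant because $\theta$ intertwines $W_L$ with itself. It is an involution and a group homomorphism, so it defines an involution of $\hat{\mathcal J}_{\mathfrak{c}_\mathbb{R}}$. One could twist the $\mathfrak{z}$-part by an element of $W_L$ to fix $\mathfrak{a}$ more naturally, but $-\theta$ already does so.

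Next I would check equivariance of $\kappa_S|_{\mathfrak{m}^{reg}}$ with respect to $\theta$ (from Lemma \ref{SmCentraliserInvlem}) and $\Theta$. Both source and target are smooth over $\mathfrak{m}^{reg}$, and the target is separated. It therefore suffices, as in the proofs of Propositions \ref{CameralHomprp} and \ref{CameralPindprp}, to check the identity $\kappa_S\circ\theta = \Theta\circ\kappa_S$ over the dense open locus $\mathfrak{m}^{rs}$ of semisimple regular elements. By $H$-equivariance, this further reduces to points $x\in\mathfrak{a}\cap\mathfrak{z}^{rs}$, where $C_G(x) = L$. There, by (\ref{CameralHom4eqn}) and (\ref{CameralHom5eqn}), $\kappa_{S,x}$ on the component indexed by $w\in W_L$ is $L\to\bar Z\xrightarrow{w}\bar Z$. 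Applying $\theta$ to $l\in L$ gives $w\bar\theta(\bar l)$. On the other side, $\Theta$ pulls back along $-\theta$, which fixes $x$ and sends the $w$-component to the $\theta w\theta^{-1}$-component, and then applies $\bar\theta$. This gives $\bar\theta(\theta w\theta^{-1}\bar l) = w\bar\theta(\bar l)$, so the two sides agree.

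Finally, since $\kappa_S$ is smooth (Lemma \ref{CameralSmoothReallem}) and $\mathcal J_{S_H}$ is its descended image (Proposition \ref{CameralDescentprp}), the equivariance just proved shows that $\Theta$ maps $\mathcal{I}m(\kappa_S)|_{\mathfrak{m}^{reg}}$ to itself. Descending along the smooth surjection $\mathfrak{m}^{reg}\to\mathfrak{c}_\mathbb{R}$ (via the Kostant–Rallis section) then shows that $\Theta$ restricts to $\mathcal J_{\mathfrak{c}_\mathbb{R}}$. I expect the main obstacle to be the lifting of $-\theta$ to $\mathfrak{c}_\mathbb{R}\times_{\mathcal B}\mathfrak{z}$ compatibly with the stack structure of $\mathcal B$, since $-\theta$ need not preserve the map $p$ globally on $\mathfrak{z}$. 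I would handle this by working only over $\mathfrak{c}_\mathbb{R}$, where the cover is a scheme, and verifying compatibility on the dense $\mathfrak{z}^{rs}$-locus before extending by normality.
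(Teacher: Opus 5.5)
Your proposal is correct and takes essentially the same route as the paper. You lift $-\theta$ to $\mathfrak{c}_\mathbb{R}\times_{\mathcal B}\mathfrak{z}$ by the uniqueness argument of \cite[Proposition A.1]{FMN}, set $\Theta(f)=\bar\theta\circ f\circ(-\theta)$ on $\hat{\mathcal J}_{\mathfrak{c}_\mathbb{R}}$, check equivariance over $\mathfrak{m}^{rs}$ and extend by density, and then use that $\mathcal J_{\mathfrak{c}_\mathbb{R}}$ is the descended image of $\kappa_S$; your explicit computation at $x\in\mathfrak{a}\cap\mathfrak{z}^{rs}$ just spells out the step the paper calls ``straightforward''.
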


\begin{proof} 
The construction is similar to that of \cite[Proposition 20]{GPPN}. Since $L = C_G(\mathfrak{a})$, the involution $\theta$ on $G$ sends $L$ to itself, and thus defines an involution on its abelianisation $\bar{Z}$; moreover, $\theta$ also defines an involution on $W_L = N_G(L)/L$. Similarly, the involution $\theta$ on $\mathfrak{g}$ restricts to an involution on $\mathfrak{z}$. These maps are compatible with the $W_L$-actions, in the sense that for any $w \in W_L$, $\bar{g} \in \bar{Z}$ and $x \in \mathfrak{z}$, we have $\theta(w\bar{g})=\theta(w)\theta(\bar{g})$ and $\theta(wx) = \theta(w)\theta(x)$. In particular, if $y \in \mathfrak{z}$ lies in the $W_L$-orbit of $x \in \mathfrak{a}$, $-\theta(y)$ also lies in this $W_L$-orbit. Thus the map
\begin{equation}\label{CameralHomInvol0eqn}
\begin{tikzcd}
\mathfrak{c}_\mathbb{R} \times_\mathcal B \mathfrak{z} \arrow[r] & \mathfrak{z} \arrow[r, "-\theta"] & \mathfrak{z} \arrow[r] & \mathcal B
\end{tikzcd}
\end{equation}
agrees with the structure map for $\mathfrak{c}_\mathbb{R} \times_\mathcal B \mathfrak{z}$ over the schematic locus of $\mathcal B$; and so as in the proof of Lemma \ref{StackyZquotientlem}, (\ref{CameralHomInvol0eqn}) coincides with the structure map by \cite[Proposition A.1]{FMN}. This defines an involution $-\theta$ on $\mathfrak{c}_\mathbb{R} \times_\mathcal B \mathfrak{z}$.

We denote by $\hat{\mathcal J}_{\mathfrak{c}_\mathbb{R}}$ the pullback of the pseudo-cameral group $\hat{\mathcal{J}}$, defined in Definition \ref{PseudoCameralgpdef}, under the map $\mathfrak{c}_\mathbb{R} \rightarrow \mathcal B$. We define an involution $\Theta$ on $\hat{\mathcal J}_{\mathfrak{c}_\mathbb{R}}$ as follows. Suppose we have a morphism of schemes $X \rightarrow \mathfrak{c}_\mathbb{R}$; the $X$-points of $\hat{\mathcal J}_{\mathfrak{c}_\mathbb{R}}$ over this morphism correspond to $W_L$-equivariant morphisms $f:X \times_{\mathcal B}\mathfrak{z} \rightarrow \bar{Z}$. We define $\Theta(f):X \times_{\mathcal B}\mathfrak{z} \rightarrow \bar{Z}$ by
\begin{equation}\label{CameralHomInvol2eqn}
\begin{tikzcd}
X \times_{\mathcal B}\mathfrak{z} \arrow[r, "-\theta"] & X \times_{\mathcal B}\mathfrak{z} \arrow[r, "f"] & \bar{Z} \arrow[r, "\theta"] & \bar{Z}.
\end{tikzcd}
\end{equation}
The morphism $\Theta(f)$ is $W_L$-equivariant, and thus defines an $X$-point of $\hat{\mathcal J}_{\mathfrak{c}_\mathbb{R}}$ over $\mathfrak{c}_\mathbb{R}$. This defines $\Theta$ on $\hat{\mathcal J}_{\mathfrak{c}_\mathbb{R}}$.

We now observe that $\kappa_S|_{\mathfrak{m}^{reg}}: \mathcal I^{sm}_S|_{\mathfrak{m}^{reg}} \rightarrow \chi_\mathbb{R}^*\hat{\mathcal J}_{\mathfrak{c}_\mathbb{R}}$ is $\mathbb{Z}/2$-equivariant. This is straightforward on the locus $\mathfrak{m}^{rs}$, since $\kappa_S$ can be identified fibrewise with the abelianisation map $L \rightarrow \bar{Z}$, and this is certainly equivariant with respect to the $\mathbb{Z}/2$-actions; so by continuity, $\kappa_S|_{\mathfrak{m}^{reg}}$ is $\mathbb{Z}/2$-equivariant on all of $\mathfrak{m}^{reg}$. But this proves the lemma, since $\mathcal J_{\mathfrak{c}_\mathbb{R}}$ descends from the image of $\kappa_S|_{\mathfrak{m}^{reg}}$ along the morphism $\chi_\mathbb{R}$ by definition.
\end{proof}

\begin{proposition}\label{HCameralgpprp}
    The image of $\mathcal I^H$ under $\kappa_S|_{\mathfrak{m}^{reg}}$ descends to a smooth subgroup scheme $\mathcal J^H$ of $\mathcal J_{\mathfrak{c}_\mathbb{R}}$ on $\mathfrak{c}_{\mathbb{R}}$. The group scheme $\mathcal J^H$ is an open subgroup scheme of the fixed point group scheme $(\mathcal J_{\mathfrak{c}_\mathbb{R}})^\Theta$.
\end{proposition}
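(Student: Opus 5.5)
Our approach is to combine the $\theta$-equivariance of Lemma \ref{CameralHomInvollem} with the descent argument of Proposition \ref{CameralDescentprp}, applied to the $H$-action on $\mathfrak{m}^{reg}$ instead of the $G$-action on $S_H$. By Lemma \ref{SmCentraliserInvlem}, $\mathcal I^H = (\mathcal I^{sm}_{S_H}|_{\mathfrak{m}^{reg}})^\theta$. The restriction of $\kappa_S$ to $\mathcal I^H$ therefore lands in $(\chi_\mathbb{R}^*\mathcal J_{\mathfrak{c}_\mathbb{R}})^\Theta = \chi_\mathbb{R}^*((\mathcal J_{\mathfrak{c}_\mathbb{R}})^\Theta)$, by Lemma \ref{CameralHomInvollem}. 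The fixed point scheme of an involution on a smooth group scheme is smooth in characteristic $0$ (the argument of \cite[Lemme 2.4.1]{Ngo2}), so $(\mathcal J_{\mathfrak{c}_\mathbb{R}})^\Theta$ is a smooth group scheme over $\mathfrak{c}_\mathbb{R}$. Similarly, $\mathcal I^H$ is smooth over $\mathfrak{m}^{reg}$, as the fixed points of $\theta$ on a smooth group scheme.

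The key step is to show that $\kappa^H := \kappa_S|_{\mathcal I^H}: \mathcal I^H \rightarrow \chi_\mathbb{R}^*(\mathcal J_{\mathfrak{c}_\mathbb{R}})^\Theta$ is smooth. Since both sides are smooth, this reduces, as in the proof of Proposition \ref{CameralSmoothprp}, to surjectivity of the induced map on relative Lie algebras. The map $\kappa_{S,Lie}: Lie(\mathcal I^{sm}_S)|_{\mathfrak{m}^{reg}} \rightarrow Lie(\chi_\mathbb{R}^*\mathcal J_{\mathfrak{c}_\mathbb{R}})$ is surjective, by the smoothness of $\kappa_S$ from Lemma \ref{CameralSmoothReallem}, and it is $\mathbb{Z}/2$-equivariant. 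Taking $\theta$-invariants is exact on representations of $\mathbb{Z}/2$ in characteristic $0$: averaging over $\{1,\theta\}$ gives an equivariant splitting. Hence the map on $+1$-eigenspaces is surjective. Moreover $Lie(\mathcal I^H) = Lie(\mathcal I^{sm}_S)^\theta$ and $Lie((\mathcal J)^\Theta) = Lie(\mathcal J)^\Theta$, so $\kappa^H$ is smooth. Its image $\mathcal Im(\kappa^H)$ is then an open subgroup scheme of $\chi_\mathbb{R}^*(\mathcal J_{\mathfrak{c}_\mathbb{R}})^\Theta$.

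Next we descend this image along $\boldsymbol{\chi}_\mathbb{R}$ or $\chi_\mathbb{R}$. We expect this to be the main obstacle, because $\chi_\mathbb{R}:\mathfrak{m}^{reg}\rightarrow\mathfrak{c}_\mathbb{R}$ is not a geometric quotient and the gerbe is only over $\mathfrak{C}_\mathbb{R}$. We plan to argue as in Proposition \ref{CameralDescentprp}. Since $\kappa^H$ is $H$-equivariant, the images of $\pi_1^*\kappa^H$ and $\pi_2^*\kappa^H$ agree over $\mathfrak{m}^{reg}\times_{\mathfrak{C}_\mathbb{R}}\mathfrak{m}^{reg}$, so the image descends to an open subgroup of the pullback of $(\mathcal J_{\mathfrak{c}_\mathbb{R}})^\Theta$ to $\mathfrak{C}_\mathbb{R}$, using that $\boldsymbol{\chi}_\mathbb{R}$ is a smooth gerbe. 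To push this further down to $\mathfrak{c}_\mathbb{R}$, we would use the Kostant--Rallis sections of \cite{KR}, which hit every point of $\mathfrak{c}_\mathbb{R}$. For two points $x,y\in\mathfrak{m}^{reg}$ over the same point of $\mathfrak{c}_\mathbb{R}$ but in different $H$-orbits, we need the images of $\kappa^H$ to coincide. Here we would use that the fibres of $\mathfrak{C}_\mathbb{R}\rightarrow\mathfrak{c}_\mathbb{R}$ are related by the component-group action described in \cite[Section 4.2]{GPPN} and \cite{HM}. Failing a direct argument, we would instead define $\mathcal J^H$ as the open subgroup scheme of $(\mathcal J_{\mathfrak{c}_\mathbb{R}})^\Theta$ given by the union of the images over all Kostant--Rallis sections. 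This union is open, and it is a subgroup because each image is an open subgroup. We would then check, by the fibrewise equality of identity components (both equal the identity component of $(\mathcal J_{\mathfrak{c}_\mathbb{R}})^\Theta$ by the Lie algebra computation), that all these images share the same identity component and differ only in finitely many components. Smoothness and openness of $\mathcal J^H$ in $(\mathcal J_{\mathfrak{c}_\mathbb{R}})^\Theta$ then follow from the smoothness of $\kappa^H$ and fppf descent.
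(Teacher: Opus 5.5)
Your smoothness argument for $\kappa^H$ is sound: averaging over $\mathbb{Z}/2$ on relative Lie algebras spells out the paper's appeal to \cite[Lemme 2.4.1]{Ngo2}. Openness in $(\mathcal J_{\mathfrak{c}_\mathbb{R}})^\Theta$ then follows once descent is known. The gap is in the descent step.

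\textbf{The missing idea.} You say $\chi_\mathbb{R}:\mathfrak{m}^{reg}\rightarrow\mathfrak{c}_\mathbb{R}$ is not a geometric quotient. That is only true for the $H$-action. The paper uses \cite[Theorem 9]{KR}: $\chi_\mathbb{R}|_{\mathfrak{m}^{reg}}$ is a geometric quotient for the larger group $G_\theta=\{g\in G \mid g^{-1}\theta(g)\in Z(G)\}$, which preserves $\mathfrak{m}^{reg}$. The argument then runs as follows.
\begin{itemize}
\item For $g\in G_\theta$ we have $\theta\circ I_g=I_{\theta(g)}\circ\theta=I_g\circ\theta$.
\item Hence conjugation by $g$ carries $\mathcal I^H_x=(\mathcal I^{sm}_{S,x})^\theta$ onto $\mathcal I^H_{Ad_g(x)}$.
\item By $G$-equivariance of $\kappa_S$, the images of $\mathcal I^H$ at any two points of a fibre of $\chi_\mathbb{R}$ coincide.
\item So the image descends directly along the smooth surjection $\chi_\mathbb{R}$, with no detour through the non-separated $\mathfrak{C}_\mathbb{R}$.
\end{itemize}
Your appeal to the component-group action of \cite{GPPN}, \cite{HM} points in this direction. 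But without the condition $g^{-1}\theta(g)\in Z(G)$, i.e. the commutation of $\theta$ with $I_g$, nothing forces that action to carry $\theta$-fixed subgroups to $\theta$-fixed subgroups, so your argument does not close.

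\textbf{Why the fallback fails.}
\begin{itemize}
\item A union of subgroups is not a subgroup in general. Fibrewise, two open subgroups with the same identity component give a union of two subgroups of a finite component group.
\item Even the subgroup they generate would only produce some open subgroup of $(\mathcal J_{\mathfrak{c}_\mathbb{R}})^\Theta$. The statement asserts more: that the image itself descends, i.e. $\mathcal Im(\kappa^H)_x=\mathcal Im(\kappa^H)_y$ whenever $\chi_\mathbb{R}(x)=\chi_\mathbb{R}(y)$. This is what is needed downstream, since $\mathcal I^H/(\mathcal N_{\mathfrak{m}^{reg}})^\theta\cong\mathcal Im(\kappa^H)$ is the band of the rigidified gerbe.
\item Comparing identity components does not help. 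When $G_\mathbb{R}$ is inner to a split form, $\mathcal J^H$ is quasi-finite (Proposition \ref{InnerSplitFormprp}). The identity components are then trivial, and the whole content of the statement sits in the component groups your argument leaves uncontrolled.
\end{itemize}
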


\begin{proof} 
By \cite[Theorem 9]{KR}, the map $\chi_\mathbb{R}:\mathfrak{m}^{reg} \rightarrow \mathfrak{c}_{\mathbb{R}}$ is a geometric quotient for the action of the group $G_\theta$, where
\begin{equation}\label{HCameralgpeqn}
    G_\theta = \{ g \in G \, | \, g^{-1}\theta(g) \in Z(G)\}.
\end{equation}
Thus, to see that the image of $\mathcal I^H$ descends to a smooth subgroup scheme $\mathcal J^H$ of $\mathcal J_{\mathfrak{c}_{\mathbb{R}}}$, it suffices to note that $\theta$ commutes with the $G_\theta$-action on $\mathcal I_S^{sm}$. By the $\mathbb{Z}/2$-equivariance statement of Lemma \ref{CameralHomInvollem}, $\mathcal J^H$ is a subgroup scheme of $(\mathcal J_{\mathfrak{c}_\mathbb{R}})^\Theta$. Moreover, the restriction $\kappa_S^\theta:\mathcal I^H \rightarrow \chi_\mathbb{R}^*(\mathcal J_{\mathfrak{c}^{reg}})^\Theta$ of $\kappa_S$ is smooth, by the same argument as, e.g., \cite[Lemme 2.4.1]{Ngo2}; thus $\mathcal J^H$ is open in $(\mathcal J_{\mathfrak{c}_\mathbb{R}})^\Theta$.
\end{proof}

\begin{remark}\label{HCameralgprmk}
    In the case that $G_\mathbb{R}$ is quasi-split, stronger versions of this statement are known (see \cite[Theorem 4.7]{Leslie} and \cite[Theorem 21]{GPPN}, with the correction of \cite[Proposition 4.14]{HM}).
    
    By the proposition, the fibres of $\mathcal J^H$ have finite index in $(\mathcal J_{\mathfrak{c}_\mathbb{R}})^\Theta$, as in Remark \ref{SmallCameralgprmk}. It is possible in general that $\mathcal J^H_x$ is a proper subgroup of $(\mathcal J_{\mathfrak{c}_\mathbb{R},x})^\Theta$ for every $x \in \mathfrak{c}_{\mathbb{R}}$; this occurs, for example, if $G_{\mathbb{R}}$ is the quaternionic special linear group $SU^*(2m)$ for $m >1$. In this case $\mathcal J^H$ is trivial, while the fibres of $(\mathcal J_{\mathfrak{c}_\mathbb{R}})^\Theta$ generically have order $2^{m-1}$ (corresponding to the $2$-torsion points of a torus of rank $m-1$), and always contain a non-identity element. See e.g. \cite[Section 12.3.2, Type AII]{GW} for further details.
\end{remark}

The involution $\theta$ stabilises the kernel $\mathcal N_{\mathfrak{m}^{reg}}$ of $\kappa_S|_{\mathfrak{m}^{reg}}$, and the fixed point group scheme $(\mathcal N_{\mathfrak{m}^{reg}})^\theta$ is a smooth closed subgroup scheme of $\mathcal I^H$ (from the proof of Proposition \ref{HCameralgpprp}). Since $\theta$ commutes with the actions of $H$ and $\mathbb{G}_m$, $(\mathcal N_{\mathfrak{m}^{reg}})^\theta$ descends to a smooth closed subgroup stack $\mathcal N^H_{H\times \mathbb{G}_m}$ of the inertia stack of $[\mathfrak{m}^{reg}/H \times \mathbb{G}_m]$. Thus we can define an algebraic stack $[\mathfrak{m}^{reg}/H\times \mathbb{G}_m]^{ab}$ as the rigidification of $[\mathfrak{m}^{reg}/H\times \mathbb{G}_m]$ by $ \mathcal N^H_{H \times \mathbb{G}_m}$ as in Definition \ref{AbelianisedStackdef}.

As in Proposition \ref{AbnGerbeprp}, this is a gerbe over $[\mathfrak{C}_{\mathbb{R}}/\mathbb{G}_m]$, banded by a group stack $\mathcal J^H_{\mathbb{G}_m}$ which descends from the pullback of $\mathcal J^H$ to $\mathfrak{C}_\mathbb{R}$.

\begin{definition}\label{AbnsdGRHiggsdef}
    The \emph{stack of abelianised regular $G_\mathbb{R}$-Higgs bundles on $\Sigma$} is the mapping stack
    \begin{equation}\label{AbnsdGRHiggs1eqn}
        \mathcal M^{ab}(G_\mathbb{R}) = Maps(\Sigma,[\mathfrak{m}^{reg}/H \times \mathbb{G}_m]^{ab}).
    \end{equation}
\end{definition}

\begin{remark}\label{AbnsdGRHiggsrmk}
    There is a commutative diagram
        \begin{equation}\label{AbnsdGRHiggs2eqn}
\begin{tikzcd}
{\mathcal M^{reg}(G_{\mathbb{R}})} \arrow[r, " Ab_\mathbb{R}"] \arrow[d] & {\mathcal M^{ab}(G_{\mathbb{R}})} \arrow[r, "{h}^{ab}_{\mathbb{R}}"] \arrow[d] & {\mathfrak{ A}(G_{\mathbb{R}})} \arrow[d] \\
{\mathcal M(G;{S_H})} \arrow[r, "Ab_{S_H}"']                                             & {\mathcal M^{ab}(G;{S_H})} \arrow[r, "h_{S_H}^{ab}"']                                                   & {\mathcal A(G;S_H)}                
\end{tikzcd}
        \end{equation}
        such that $\tilde{h}_{\mathbb{R}} = { h}_{\mathbb{R}}^{ab} \circ { Ab}_{\mathbb{R}}$.
\end{remark}

We state the analogy of Proposition \ref{AbnisedSHitchinFibresprp} in this context. For $\tau' \in \mathfrak{ A}(G_\mathbb{R})$, we denote by $\mathcal J_{\tau'}^H$ the pullback of $\mathcal J^H_{\mathbb{G}_m}$ under $\tau':\Sigma \rightarrow [\mathfrak{C}_\mathbb{R}/\mathbb{G}_m]$.
\begin{proposition}\label{RealNonAbnAbnprp}
        For any $\mathbb{C}$-point $\tau'$ of $\mathfrak{ A}(G_{\mathbb{R}})$, the fibre $({h}_{\mathbb{R}}^{ab})^{-1}(\tau')$ can be identified with the commutative group stack ${\bf B}_{\Sigma}(\mathcal J^H_{\tau'})$ of $\mathcal J^H_{\tau'}$-torsors.
\end{proposition}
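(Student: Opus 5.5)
The plan mirrors the proofs of Proposition \ref{AbnisedSHitchinFibresprp} and Theorem \ref{HiggsNonAbnthm}. The statement reduces to a local one about the gerbe $[\mathfrak{m}^{reg}/H\times\mathbb{G}_m]^{ab}\rightarrow[\mathfrak{C}_\mathbb{R}/\mathbb{G}_m]$, which the mapping stack construction then globalises.

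First I would establish the local statement: the induced map $\rho^{ab}_\mathbb{R}:[\mathfrak{m}^{reg}/H\times\mathbb{G}_m]^{ab}\rightarrow[\mathfrak{C}_\mathbb{R}/\mathbb{G}_m]$ is a gerbe banded by $\mathcal J^H_{\mathbb{G}_m}$. By \cite[Theorem A.1]{AOV}, the rigidification $[\mathfrak{m}^{reg}/H\times\mathbb{G}_m]\rightarrow[\mathfrak{m}^{reg}/H\times\mathbb{G}_m]^{ab}$ is a gerbe. Moreover, the gerbe $\boldsymbol{\chi}_\mathbb{R}$ of (\ref{RealChevGerbeeqn}) factors through it, because $\mathcal N^H_{H\times\mathbb{G}_m}$ is contained in the relative inertia of $\boldsymbol{\chi}_\mathbb{R}$. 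Since $\boldsymbol{\chi}_\mathbb{R}$ is a gerbe, the factor $\rho^{ab}_\mathbb{R}$ is also a gerbe. Its relative inertia is the quotient $\mathcal I^H/(\mathcal N_{\mathfrak{m}^{reg}})^\theta$.

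Next I would identify this quotient with the pullback of $\mathcal J^H$. The homomorphism $\kappa_S|_{\mathfrak{m}^{reg}}$ restricted to $\mathcal I^H$ has kernel $\mathcal I^H\cap\mathcal N_{\mathfrak{m}^{reg}}=(\mathcal N_{\mathfrak{m}^{reg}})^\theta$. By Proposition \ref{HCameralgpprp} its image descends to $\mathcal J^H$, and it is smooth onto that image, as in the proof of Proposition \ref{HCameralgpprp}. Together with Lemma \ref{SmCentraliserInvlem}, this gives the quotient identification. Since $\mathcal J^H$ is commutative, this inertia descends along the map to $[\mathfrak{C}_\mathbb{R}/\mathbb{G}_m]$, so the gerbe is banded by $\mathcal J^H_{\mathbb{G}_m}$. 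The same holds after the $\mathbb{G}_m$-quotient, by the argument of Corollary \ref{GmEquivAbnGerbecor}.

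Finally I would globalise. For a $\mathbb{C}$-point $\tau':\Sigma\rightarrow[\mathfrak{C}_\mathbb{R}/\mathbb{G}_m]$, the fibre $(h^{ab}_\mathbb{R})^{-1}(\tau')$ is the stack of sections over $\Sigma$ of the pulled-back gerbe $\Sigma\times_{\tau'}[\mathfrak{m}^{reg}/H\times\mathbb{G}_m]^{ab}$. This is an abelian gerbe over $\Sigma$ banded by $\mathcal J^H_{\tau'}$. The fibre is non-empty, since it contains the image under $Ab_\mathbb{R}$ of a point of $\tilde h_\mathbb{R}^{-1}(\tau')$, which Proposition \ref{RealHitchinGerbeprp} describes as a non-empty stack of torsors; if non-emptiness is not otherwise known, I would assume it as in Proposition \ref{AbnisedSHitchinFibresprp}. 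A section trivialises the pulled-back gerbe, so it is isomorphic to ${\bf B}\mathcal J^H_{\tau'}$ over $\Sigma$. Its stack of sections is then ${\bf B}_\Sigma\mathcal J^H_{\tau'}$, and the commutative group structure comes from the band.

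The main obstacle is the second step: checking that $(\mathcal N_{\mathfrak{m}^{reg}})^\theta$ is exactly the kernel and that the relative inertia is canonically the pullback of a sheaf on $\mathfrak{C}_\mathbb{R}$. The difficulty is that $\mathfrak{C}_\mathbb{R}$ may be non-separated, and the descent in Proposition \ref{HCameralgpprp} goes to $\mathfrak{c}_\mathbb{R}$ via $G_\theta$ rather than via $H$. I would handle this by pulling $\mathcal J^H$ back to $\mathfrak{C}_\mathbb{R}$ and checking commutativity of the descent data fibrewise. This works because $H$-stabilisers act trivially on the commutative quotient, which follows from the $H$-equivariance of $\kappa_S$.
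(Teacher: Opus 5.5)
Your proposal follows the paper's route. The paper gives no separate proof: it treats the statement as the real analogue of Proposition \ref{AbnisedSHitchinFibresprp}, resting on two facts. First, as observed just before Definition \ref{AbnsdGRHiggsdef} by the same argument as Proposition \ref{AbnGerbeprp}, $[\mathfrak{m}^{reg}/H\times\mathbb{G}_m]^{ab}\rightarrow[\mathfrak{C}_\mathbb{R}/\mathbb{G}_m]$ is a gerbe banded by $\mathcal J^H_{\mathbb{G}_m}$. Second, the mapping-stack construction is functorial. Your three steps supply the details the paper leaves implicit: the kernel is $(\mathcal N_{\mathfrak{m}^{reg}})^\theta$, and the band descends to $\mathfrak{C}_\mathbb{R}$ by the $H$-equivariance of $\kappa_S$.

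One point needs correcting. Proposition \ref{RealHitchinGerbeprp} presupposes a Higgs bundle over $\tau'$, so it does not show the fibre is non-empty. Since ${\bf B}_\Sigma(\mathcal J^H_{\tau'})$ is never empty, the identification genuinely needs a section of the pulled-back gerbe. Your fallback of imposing non-emptiness, as in Proposition \ref{AbnisedSHitchinFibresprp}, is therefore the correct formulation.
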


Using Proposition \ref{HCameralgpprp}, we can interpret this in terms of a $\mathbb{Z}/2$-equivariant version of the cameral data of Theorem \ref{SCameralDatathm} in the spirit of \cite[Section 5.2]{GPPN}. 

We use the constructions from the proof of Lemma \ref{CameralHomInvollem}. Let $\tau'$ be a $\mathbb{C}$-point of $\mathfrak{ A}(G_{\mathbb{R}})$ mapping to a $\mathbb{C}$-point $\tau$ of $\mathcal A^\heartsuit(G;S)$. The involution $-\theta$ on $\mathfrak{c}_\mathbb{R} \times_{\mathcal B} \mathfrak{z}$ induces an involution $-\theta$ on the $S_H$-cameral curve $\hat\Sigma_\tau$ (defined in Definition \ref{SCameralcurvedef}). Thus we can define an involution $\Theta$ on the stack $\hat{\mathcal P}_{S_H,\tau}$ (defined in Theorem \ref{SCameralDatathm}) which acts on $\mathbb{C}$-points by 
\begin{equation}\label{CameralDatInvoleqn}
    \Theta(\mathcal Z) = (-\theta)^*\mathcal Z^\theta
\end{equation}
for any $W_L$-equivariant $\overline{Z}$-torsor $\mathcal Z$ on $\hat\Sigma_\tau$; here, $\mathcal Z^\theta$ denotes the torsor twisted by the involution $\theta$ on $\overline{Z}$ as in (\ref{WTwistTorsoreqn}). We denote the fixed point stack by $(\hat{\mathcal P}_{S_H,\tau})^\Theta$.

\begin{theorem}\label{HCameralDatathm}
    There is a finite map $({ h}_{\mathbb{R}}^{ab})^{-1}(\tau') \rightarrow (\hat{\mathcal P}_{S_H,\tau})^\Theta$, which factors through an isogeny to an open subgroup stack of $(\hat{\mathcal P}_{S_H,\tau})^\Theta$.
\end{theorem}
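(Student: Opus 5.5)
The plan is to assemble the proof from three maps of commutative group stacks:
\begin{equation*}
({h}_{\mathbb{R}}^{ab})^{-1}(\tau') \cong {\bf B}_\Sigma(\mathcal J^H_{\tau'}) \longrightarrow {\bf B}_\Sigma((\mathcal J_{\tau})^\Theta) \longrightarrow {\bf B}_\Sigma((\hat{\mathcal J}_{\tau})^\Theta) \longrightarrow (\hat{\mathcal P}_{S_H,\tau})^\Theta .
\end{equation*}
Here $\mathcal J_\tau$ and $\hat{\mathcal J}_\tau$ are obtained by pulling back along $\tau'$, via the map $\mathfrak{C}_\mathbb{R}\rightarrow \mathfrak{c}_\mathbb{R}\rightarrow\mathcal B$ of Proposition \ref{RealSheetprp}. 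The involution $\Theta$ on them is the one constructed in Lemma \ref{CameralHomInvollem}; that construction extends verbatim to $\hat{\mathcal J}$, since it was already defined on $\hat{\mathcal J}_{\mathfrak{c}_\mathbb{R}}$.

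The first identification is Proposition \ref{RealNonAbnAbnprp}. The first arrow is induced by the open inclusion $\mathcal J^H\hookrightarrow(\mathcal J_{\mathfrak{c}_\mathbb{R}})^\Theta$ of Proposition \ref{HCameralgpprp}. The second arrow is induced by $\mathcal J\hookrightarrow\hat{\mathcal J}$ restricted to $\Theta$-fixed points. Both inclusions are open inclusions of smooth commutative group schemes over $\Sigma$, so each has finite index. The quotient sheaves $(\mathcal J_\tau)^\Theta/\mathcal J^H_{\tau'}$ and $(\hat{\mathcal J}_\tau)^\Theta/(\mathcal J_\tau)^\Theta$ are then sheaves of finite groups.

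For the first quotient I expect the sheaf to be nontrivial at every point of $\Sigma$ in general: Remark \ref{HCameralgprmk} gives the case $SU^*(2m)$, where this happens. This is exactly why the statement asks only for an isogeny onto an \emph{open} subgroup stack rather than a surjection. I will handle it as follows. The long exact sequence in fppf cohomology, used as in the proof of Lemma \ref{PicardStackIsogenylem}, shows that $H^1(\Sigma;\mathcal J^H_{\tau'})\rightarrow H^1(\Sigma;(\mathcal J_\tau)^\Theta)$ has finite kernel, being a quotient of the finite group $H^0$ of the quotient sheaf. Its cokernel injects into $H^1$ of a finite étale-locally-constant sheaf, which is finite. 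The map on $H^0$ has finite kernel (it is injective) and finite cokernel. Together these give a finite homomorphism whose image is a union of finitely many components, that is, an open subgroup stack.

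For the second quotient, the argument of Lemma \ref{PicardStackIsogenylem} applies directly: the quotient is supported at finitely many points, so the map is an isogeny.

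The final arrow is the $\Theta$-fixed part of the isomorphism ${\bf B}_\Sigma\hat{\mathcal J}_\tau\cong\hat{\mathcal P}_{S_H,\tau}$ of Proposition \ref{SCameralBunprp}. What must be checked is that this isomorphism intertwines the involution induced by $\Theta$ on $\hat{\mathcal J}_\tau$ with the involution $\mathcal Z\mapsto(-\theta)^*\mathcal Z^\theta$ of (\ref{CameralDatInvoleqn}). I will prove this by unwinding the construction. A $\hat{\mathcal J}_\tau$-torsor corresponds to a $W_L$-equivariant $\bar Z$-torsor on $\hat\Sigma_\tau=\Sigma\times_{\mathcal B}\mathfrak{z}$, and $\Theta$ acts on sections $f$ by $\theta\circ f\circ(-\theta)$. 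Transporting this action to torsors gives the pullback by $-\theta$ composed with the twist by $\theta$ on $\bar Z$. Compatibility with condition $(*)$ at ramification points holds because $-\theta$ permutes the ramification points and conjugates their stabilisers $W_x$ compatibly, via the relation $\theta(wx)=\theta(w)\theta(x)$. Taking homotopy fixed points of an equivariant isomorphism of group stacks then gives an isomorphism of fixed-point stacks. One subtlety is that fixed points of ${\bf B}_\Sigma$ of a group are not the same as ${\bf B}_\Sigma$ of the fixed group. The comparison map ${\bf B}_\Sigma(\hat{\mathcal J}_\tau^\Theta)\rightarrow({\bf B}_\Sigma\hat{\mathcal J}_\tau)^\Theta$ has finite kernel and cokernel controlled by $H^1(\mathbb{Z}/2;-)$ and $H^2(\mathbb{Z}/2;-)$ of $H^0(\Sigma;\hat{\mathcal J}_\tau)$ and $H^1(\Sigma;\hat{\mathcal J}_\tau)$. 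These groups are $2$-torsion. For the abelian-variety part they are finite, and the toric part contributes only finitely generated pieces. Hence this comparison map is again finite with open image.

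I expect the main obstacle to be this last step: both the compatibility of the involutions and the comparison between fixed points of ${\bf B}_\Sigma\hat{\mathcal J}_\tau$ and torsors for the fixed group. To show the latter has open image, I would first try writing $({\bf B}_\Sigma\hat{\mathcal J}_\tau)^\Theta$ as a quotient stack of cohomology groups as in (\ref{PicardStackIsogeny2eqn}) and bounding the obstruction classes by $2$-torsion. Composing the three finite maps, each with open image, gives the required finite map factoring through an isogeny onto an open subgroup stack.
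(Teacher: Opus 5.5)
Your proposal is correct and follows the paper's own proof. The paper uses the same factorisation ${\bf B}_\Sigma(\mathcal J^H_{\tau'})\to{\bf B}_\Sigma((\mathcal J_\tau)^\Theta)\to{\bf B}_\Sigma((\hat{\mathcal J}_\tau)^\Theta)\to({\bf B}_\Sigma\hat{\mathcal J}_\tau)^\Theta\cong(\hat{\mathcal P}_{S_H,\tau})^\Theta$: the first arrow is an isogeny onto an open and closed substack by an argument similar to Lemma \ref{PicardStackIsogenylem}, the second is an isogeny by that lemma, and the third is asserted without further comment to be the inclusion of an open and closed subgroup stack. Your extra checks on the last arrow (compatibility of the two involutions, and comparing fixed points of ${\bf B}_\Sigma\hat{\mathcal J}_\tau$ with torsors for the fixed group) fill in what the paper leaves implicit, and can be sharpened: the five-term sequence of $H^p(\Sigma,\mathcal H^q(\mathbb{Z}/2,\hat{\mathcal J}_\tau))\Rightarrow H^{p+q}_{\mathbb{Z}/2}(\Sigma,\hat{\mathcal J}_\tau)$ shows that $H^1(\Sigma,(\hat{\mathcal J}_\tau)^\Theta)\to H^1_{\mathbb{Z}/2}(\Sigma,\hat{\mathcal J}_\tau)$ is injective with finite cokernel, so the comparison is fully faithful (an open and closed embedding, as the paper claims) rather than merely finite.
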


\begin{proof} 
After identifying $({ h}_{\mathbb{R}}^{ab})^{-1}(\tau')$ with ${\bf B}_{\Sigma}(\mathcal J^H_{\tau'})$ by Proposition \ref{RealNonAbnAbnprp}, the morphism 
$$({ h}_{\mathbb{R}}^{ab})^{-1}(\tau') \rightarrow (\hat{\mathcal P}_{S_H,\tau})^\Theta$$
can be defined by
\begin{equation}\label{HCameralDataeqn}
\begin{tikzcd}
{\bf B}_{\Sigma}(\mathcal J^H_{\tau'}) \arrow[r] & {\bf B}_{\Sigma}((\mathcal J_{\tau})^\Theta) \arrow[r] & {\bf B}_{\Sigma}((\hat{\mathcal J}_{\tau})^\Theta) \arrow[r] & ({\bf B}_{\Sigma}(\hat{\mathcal J}_{\tau}))^\Theta,
\end{tikzcd}
\end{equation}
recalling that ${\bf B}_{\Sigma}(\hat{\mathcal J}_{\tau})$ can be identified with $\hat{\mathcal P}_{S_H,\tau}$ by Proposition \ref{SCameralBunprp}. The second arrow in (\ref{HCameralDataeqn}) is an isogeny by Lemma \ref{PicardStackIsogenylem}, and the first arrow is an isogeny to an open and closed subgroup stack of ${\bf B}_{\Sigma}((\mathcal J_{\tau})^\Theta)$ by a similar argument. The third arrow is an inclusion of an open and closed subgroup stack as in Proposition \ref{SCameralBunprp}. Thus (\ref{HCameralDataeqn}) is finite as required.
\end{proof}

\begin{remark}\label{HCameralDatarmk}
    A more precise generalisation of Theorem \ref{SCameralDatathm} could be obtained by also including a condition analogous to $(*)$ controlling the $\theta$-equivariant structure at $\theta$-fixed points on $\hat\Sigma_\tau$. However, the statement of Theorem \ref{HCameralDatathm} would not be made any stronger by such an alteration, since the first arrow in (\ref{HCameralDataeqn}) is not in general essentially surjective by Remark \ref{HCameralgprmk}. 
\end{remark}

In the quasi-split cases, this is covered more fully and explicitly in \cite{GPPN}.

\subsection{Abelianisation for non-quasi-split real forms}\label{NonQSRealFormsbn}

We will now consider the abelianised Hitchin fibration for non-quasi-split real forms in more detail.

We will first make a general observation which governs the form of the abelianised fibration. We recall the following definition.

\begin{definition}\label{InnerRealFormsdef}
  Two real forms $G^1_{\mathbb{R}}$ and $G^2_{\mathbb{R}}$ of a complex reductive group $G$ are \emph{inner equivalent} if there is a $g \in G$ such that $\theta_2 = \theta_1 \circ I_g$; here $\theta_1$ and $\theta_2$ are the involutions on $G$ corresponding to $G^1_{\mathbb{R}}$ and $G^2_{\mathbb{R}}$ respectively.
\end{definition}

\begin{lemma}\label{InnerRealFormslem}
    Suppose $G^1_{\mathbb{R}}$ and $G^2_{\mathbb{R}}$ are inner equivalent real forms of $G$, with corresponding involutions $\theta_1$ and $\theta_2$ respectively. Suppose we have $x_1, x_2 \in \mathfrak{g}$ with $\theta_j(x_j)=-x_j$, such that $x_1$ and $x_2$ are $G$-conjugate.

    Then each $\theta_j$ induces an involution on $C_G(x_j)^{ab}$, and the corresponding fixed point groups $(C_G(x_1)^{ab})^{\theta_1}$ and $(C_G(x_2)^{ab})^{\theta_2}$ are canonically isomorphic. 
\end{lemma}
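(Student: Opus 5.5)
The plan is to construct the isomorphism from a conjugating element and then show the result does not depend on that element.

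First, each $\theta_j$ induces an involution on $C_G(x_j)^{ab}$. Since $\theta_j(x_j)=-x_j$, we have $\theta_j(C_G(x_j)) = C_G(-x_j) = C_G(x_j)$. Hence $\theta_j$ restricts to an automorphism of $C_G(x_j)$. This automorphism preserves the derived subgroup, so it descends to an involution of $C_G(x_j)^{ab}$.

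Next, construct the comparison map. Choose $h\in G$ with $Ad_h(x_1)=x_2$. Then $I_h$ restricts to an isomorphism $C_G(x_1)\to C_G(x_2)$, and hence to an isomorphism $\bar I_h: C_G(x_1)^{ab}\to C_G(x_2)^{ab}$. Any two choices of $h$ differ by right multiplication by an element of $C_G(x_1)$. Conjugation by an element of a group acts trivially on its abelianisation, so $\bar I_h$ is independent of $h$. This gives the canonical identification $C_G(x_1)^{ab}\cong C_G(x_2)^{ab}$; the remaining task is to show it intertwines the two involutions.

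For this, transport $\theta_2$ back along $I_h$. The map $\theta_2' := I_h^{-1}\circ\theta_2\circ I_h$ is an automorphism of $G$ preserving $C_G(x_1)$. Using $\theta_2=\theta_1\circ I_g$, we get $\theta_2' = \theta_1\circ I_{k}$ for the element
$$k = \theta_1(h)^{-1} g h.$$
Now compare the actions on $x_1$. Since $\theta_2(x_2) = -x_2$, we get $\theta_2'(x_1) = -x_1 = \theta_1(x_1)$. Because $\theta_1$ is injective, $\theta_1(Ad_k x_1)=\theta_1(x_1)$ gives $Ad_k(x_1)=x_1$, that is, $k\in C_G(x_1)$. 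Therefore, on $C_G(x_1)$, the two automorphisms $\theta_2'$ and $\theta_1$ differ by the inner automorphism $I_k$ with $k\in C_G(x_1)$. This inner automorphism is trivial on $C_G(x_1)^{ab}$, so $\theta_2'$ and $\theta_1$ induce the same involution there.

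Consequently $\bar I_h$ is equivariant for $\theta_1$ and $\theta_2$. It therefore restricts to an isomorphism of fixed point groups $(C_G(x_1)^{ab})^{\theta_1}\cong (C_G(x_2)^{ab})^{\theta_2}$. This restriction is canonical because $\bar I_h$ was already shown to be independent of $h$.

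The main point needing care is the bookkeeping in computing $\theta_2'$: one must check the convention $\theta(I_a(y)) = I_{\theta(a)}(\theta(y))$ and confirm that the resulting element $k$ really lies in $C_G(x_1)$, not merely in its normaliser. I would verify this by evaluating both $\theta_2'$ and $\theta_1$ directly on $x_1$, as above. I also expect to note that $C_G(x_j)$ need not be connected. This causes no difficulty, since abelianisation is taken as $C/[C,C]$ for the whole group and inner automorphisms still act trivially on it.
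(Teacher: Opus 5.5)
Your proof is correct and follows the paper's argument: your conjugating element $h$ and correction element $k=\theta_1(h)^{-1}gh$ are the paper's $k$ and $l=\theta_1(k^{-1})gk$. Writing $I_h^{-1}\circ\theta_2\circ I_h=\theta_1\circ I_k$ with $k\in C_G(x_1)$ is exactly the paper's route, and your verification $\theta_1(Ad_k x_1)=\theta_2'(x_1)=-x_1=\theta_1(x_1)$ supplies the detail the paper leaves to the reader.
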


\begin{proof}
The first statement is clear, since the involution $\theta_j$ preserves the derived subgroup $C_G(x_j)^{der}$.

By assumption, there exists $g \in G$ such that $\theta_2 = \theta_1 \circ I_g$. We also have some $k \in G$ such that $x_2 = Ad_k(x_1)$, and thus $I_k:C_G(x_1) \rightarrow C_G(x_2)$ is an isomorphism, which also determines an isomorphism on the abelianisations independent of the choice of $k$. To conclude, it suffices to show that the diagram
\begin{equation}\label{InnerRealFormseqn}
\begin{tikzcd}
C_G(x_1)^{ab} \arrow[r, "I_k"] \arrow[d, "\theta_1"'] & C_G(x_2)^{ab} \arrow[d, "\theta_2"] \\
C_G(x_1)^{ab} \arrow[r, "I_k"']                       & C_G(x_2)^{ab}                      
\end{tikzcd}
\end{equation}
commutes, i.e. $\theta_1 = I_{k^{-1}} \circ \theta_2 \circ I_k$. But $I_{k^{-1}} \circ \theta_2 \circ I_k = \theta_1 \circ I_l$, where $l = \theta_1(k^{-1})gk$. One can use the properties of $x_1$ and $x_2$ to check that $l \in C_G(x_1)$, so $I_l$ acts as the identity on $C_G(x_1)^{ab}$.
\end{proof}

As a result, if $G_{\mathbb{R}}$ is inner to a \emph{split form}, i.e. one such that $\mathfrak{d} = 0$ in the decomposition (\ref{TorusCartandecompeqn}), the resulting abelianised fibration can only have finite fibres. 

\begin{proposition}\label{InnerSplitFormprp}
    Suppose $G_\mathbb{R}$ is inner equivalent to a split form. Then the group scheme $\mathcal J^H$ of Proposition \ref{HCameralgpprp} is a quasi-finite group scheme on $\mathfrak{c}_\mathbb{R}$, and the abelianised Hitchin map $ h_\mathbb{R}^{ab}:\mathcal M^{ab}(G_{\mathbb{R}}) \rightarrow \mathfrak{ A}(G_{\mathbb{R}})$ is quasi-finite.
\end{proposition}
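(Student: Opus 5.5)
The plan is to show that the fibres of $\mathcal J^H$ are finite; quasi-finiteness of $h_\mathbb{R}^{ab}$ will then follow from Proposition \ref{RealNonAbnAbnprp}.

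First I reduce to semisimple points. By Proposition \ref{HCameralgpprp}, $\mathcal J^H$ is a smooth group scheme on $\mathfrak{c}_\mathbb{R}$. Its fibres therefore all have dimension equal to its relative dimension, which is locally constant. So it suffices to show that the fibre over a generic point of $\mathfrak{c}_\mathbb{R}$ is finite. Take $x \in \mathfrak{a}$ with $C_G(x) = L$, which exists by \cite{KR} as in the proof of Lemma \ref{RealSheetlem}. By Proposition \ref{CameralHomprp}, $\kappa_{S,x}$ is the abelianisation $L \to \bar{Z}$. Hence $\mathcal J^H_{\chi_\mathbb{R}(x)}$ is the image of $C_H(x) = L^\theta$ in $\bar Z$, and this image lies in $(L^{ab})^{\theta}$. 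So it suffices to show that $(C_G(x)^{ab})^{\theta}$ is finite.

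Next I apply Lemma \ref{InnerRealFormslem}. Let $G^s_\mathbb{R}$ be a split form inner to $G_\mathbb{R}$, with involution $\theta_s$, and let $\mathfrak{m}_s$ be its $(-1)$-eigenspace. Since $\theta_s$ is split, its maximally split torus $T_s$ satisfies $\mathfrak{t}_s \subseteq \mathfrak{m}_s$. The semisimple element $x$ is $G$-conjugate to some element $x_s$ of $\mathfrak{t}_s$, because all Cartan subalgebras are conjugate. Lemma \ref{InnerRealFormslem} then gives $(C_G(x)^{ab})^{\theta} \cong (C_G(x_s)^{ab})^{\theta_s}$.

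Then I compute the right-hand side. Write $M = C_G(x_s)$, a Levi subgroup containing $T_s$. Since $\mathfrak{t}_s \subseteq \mathfrak{m}_s$, the involution $\theta_s$ acts by $-1$ on $\mathfrak{t}_s$, and so by inversion on $T_s$. The composite $T_s \subseteq M \to M^{ab}$ is surjective, because $M^{ab}$ is a torus and the maximal torus of $M$ surjects onto it. Moreover $\theta_s$ preserves $M$ and $T_s$, so it acts on $M^{ab}$ by inversion. The fixed points of inversion on a torus are its $2$-torsion, which is finite. Hence the generic fibre is finite, and by the dimension count above every fibre of $\mathcal J^H$ is finite, so $\mathcal J^H$ is quasi-finite.

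Finally, for $\tau' \in \mathfrak A(G_\mathbb{R})$, Proposition \ref{RealNonAbnAbnprp} identifies the fibre of $h_\mathbb{R}^{ab}$ with ${\bf B}_\Sigma \mathcal J^H_{\tau'}$. Here $\mathcal J^H_{\tau'}$ is a smooth quasi-finite commutative group scheme on $\Sigma$, so it is étale. Its torsors are classified by $[H^1(\Sigma,\mathcal J^H_{\tau'})/H^0(\Sigma,\mathcal J^H_{\tau'})]$. Both cohomology groups are finite, because étale cohomology of a finite étale-type sheaf on a projective curve is finite; a quasi-finite étale sheaf differs from a finite one only at finitely many points. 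So the fibres are finite.

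The main obstacle is justifying that semisimple $x$ and $x_s$ lie in the $(-1)$-eigenspaces of their respective involutions in the way Lemma \ref{InnerRealFormslem} requires. This is the conjugacy step, together with checking that inner equivalence is compatible with this choice. A secondary point needing care is the finiteness of $H^1$ for a quasi-finite rather than finite étale group scheme.
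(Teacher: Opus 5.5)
Your proposal is correct and follows essentially the paper's route: finiteness of $\mathcal J^H$ at a regular semisimple point via Lemma \ref{InnerRealFormslem} and the inversion action of the split involution on a torus surjecting onto the abelianisation, smoothness to pass from generic finiteness to quasi-finiteness, and then Proposition \ref{RealNonAbnAbnprp} with finiteness of $H^0$ and $H^1$. The only difference is that the paper obtains the cohomological finiteness by embedding $\mathcal J^H$ in the constant finite group $L^\theta/(L^{der})^\theta$ and arguing as in Lemma \ref{PicardStackIsogenylem}, where you invoke finiteness of étale cohomology of constructible sheaves; also, the ``obstacle'' you flag is not one, since $x\in\mathfrak a\subseteq\mathfrak m$ and $x_s\in\mathfrak t_s\subseteq\mathfrak m_s$ by construction, and replacing $\theta_s$ by a conjugate keeps it inner to $\theta$.
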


\begin{proof}
We let $\theta$ be the involution on $G$ corresponding to $G_\mathbb{R}$, and as in Section \ref{RealHitchinsbn}, we denote the $-1$-eigenspace of $\theta$ by $\mathfrak{m}$. We also let $\theta_s$ be an involution on $G$ corresponding to a split real form, and we assume that the torus $T$ is split for $\theta_s$, i.e. $\theta_s$ acts by inversion on $T$. We first show that for any $x \in \mathfrak{m}^{rs}$, $\mathcal J^H_x$ is finite. We note that $x$ is conjugate to some $y \in \mathfrak{t}$, and by assumption $\theta_s(y)=-y$. Hence, by Lemma \ref{InnerRealFormslem}, $(C_G(x)^{ab})^\theta = (C_G(y)^{ab})^{\theta_s}$. Since $y$ is semisimple, the centre $Z$ of the Levi subgroup $L = C_G(y)$ surjects onto the abelianisation $\bar{Z} = C_G(y)^{ab}$. Then since $y \in \mathfrak{t}$, $Z$ is a subgroup of $T$, so $\theta_s$ acts by inversion on $\bar{Z}$. Hence $(C_G(y)^{ab})^{\theta_s}$ is finite; and by Propositions \ref{CameralHomprp} and \ref{HCameralgpprp}, $\mathcal J^H_x$ is a subgroup of $(C_G(y)^{ab})^{\theta_s}$, so is also finite.

Since $\mathcal J^H$ is a smooth group scheme which is generically finite, it must be quasi-finite. In fact, it is an open subgroup scheme of a constant finite group scheme: since over $\mathfrak{c}_\mathbb{R}^{ss}$ (the image in $\mathfrak{m}^{rs}$ in $\mathfrak{c}_\mathbb{R}$), $\mathcal J^H$ is isomorphic to the constant group $L^\theta/(L^{der})^\theta$, we can construct a monomorphism $\mathcal J^H \hookrightarrow L^\theta/(L^{der})^\theta$ of group schemes across all of $\mathfrak{c}_\mathbb{R}$ using the same methods as in Lemmas \ref{RAQuasiSteinlem} and \ref{RAQSGroupoidlem}. By a similar argument to Lemma \ref{PicardStackIsogenylem}, and using Proposition \ref{RealNonAbnAbnprp}, we can deduce that ${ h}_{\mathbb{R}}^{ab}$ is quasi-finite.
\end{proof}

\begin{remark}\label{InnerSplitFormrmk}
    In particular, in the cases $G_\mathbb{R} = SU^*(2m)$, $Sp(2m,2m)$ and $SO^*(4m)$, the fibration $h^{ab}_{\mathbb{R}}$ is trivial. However, in these cases, the non-abelian spectral data of \cite{HS} indicates that a full (albeit entirely non-abelian) cameral description is possible for the $G_\mathbb{R}$-Hitchin fibres.
\end{remark}

We now sketch the constructions realising the abelianised fibration for $G_{\mathbb{R}} = SU(p,q)$,  when $p - q>1$, and for $G_{\mathbb{R}}=SO^*(4m+2)$. We choose these examples as, up to isogeny, they are the only non-compact non-quasi-split examples of simple classical real forms where the abelianised Hitchin map has positive-dimensional fibres.

As in Section \ref{Hitchinegscn} we will fix the twisting line bundle to be the canonical bundle $K$ on $\Sigma$. In both of these cases, by \cite[Theorem 1]{Sekiguchi}, $\mathfrak{ A}_K(G_\mathbb{R}) = \mathcal A_K(G_\mathbb{R})$.

\begin{example}\label{SUpqAbnexm}
    Let $G_{\mathbb{R}} = SU(p,q)$, the special unitary group on $\mathbb{C}^n$ of signature $(p,q)$ for $p+q=n$, where $p-q>1$.
    
    We use the notation of Section \ref{RealHitchinsbn}. The group $H = S(GL_p \times GL_q)$, and $\mathfrak{m}$ is the vector subspace of $\mathfrak{gl}_n$ of $n \times n$ matrices with block-diagonal form
    \begin{equation}\label{SUpqMeqn}
        \begin{pmatrix}
        0_p & B \\
        C & 0_q
    \end{pmatrix}.
    \end{equation}
     The Levi subgroup $L$ associated to the sheet $S_H$ containing $\mathfrak{m}^{reg}$ corresponds via Proposition \ref{GLninductionprp} to the partition $(r, 1^{2q})$, where $r = p-q$.

     Thus, an $SU(p,q)$-Higgs bundle on $\Sigma$ can be described by a pair $(V \oplus W,\Phi)$ where $V$ and $W$ are vector bundles on $\Sigma$ of rank $p$ and $q$ respectively such that $\wedge^p V \cong \wedge^q W^*$, and the Higgs field $\Phi$ has block diagonal form
        \begin{equation}
    \Phi = \begin{pmatrix}
        0 & \beta \\
        \gamma & 0
    \end{pmatrix}
\end{equation}
for $\beta \in H^0(\Sigma,Hom(W,V) \otimes K)$ and $\gamma \in H^0(\Sigma,Hom(V,W) \otimes K)$. We will also need to consider $U(p,q)$-Higgs bundles below: these are described similarly, but without the condition on the determinants, and have an associated topological invariant, the \emph{Toledo invariant}, given by
\begin{equation}\label{ToledoInvteqn}
    \tau = 2\frac{q\deg(V)-p\deg(W)}{p+q}.
\end{equation}
The Toledo invariant plays an important role in the moduli theory of $U(p,q)$-Higgs bundles \cite{BGPG}.

If $(V \oplus W, \Phi)$ is regular, then $N = Ker(
\gamma)$ is a vector bundle of rank $p-q$. This determines a morphism of stacks
\begin{equation}\label{Decompositionmapeqn}
    \mathcal M^{reg}_K(SU(p,q)) \rightarrow \mathcal M^{reg}_K(U(q,q);\tau_{\max}),
\end{equation}
where $\mathcal M_K^{reg}(U(q,q);\tau_{\max})$ is the locus of $U(q,q)$-Higgs bundles with Toledo invariant $\tau_{\max} = 2q(g-1)$; this morphism sends $(V \oplus W,\Phi)$ to $(V/N \oplus W, \overline{\Phi})$, where $\overline{\Phi}$ is the induced Higgs field on the quotient. By considering this as a special case of Proposition \ref{GLnAbHitchinprp}, we see that the morphism (\ref{Decompositionmapeqn}) realises the map ${ Ab}_{\mathbb{R}}$. Moreover the abelianised Hitchin map for $SU(p,q)$ corresponds to the usual Hitchin map for $U(q,q)$; spectral data has been calculated in the latter case in \cite{Schaposnik}. The fibres of ${ Ab}_{\mathbb{R}}$ can be described in terms of the moduli stack of rank $p-q$ vector bundles over $\Sigma$ together with extension data.

There are two subtleties to note here. First, if $p-q=1$, (\ref{Decompositionmapeqn}) can still be defined, but is not the abelianised fibration, since $SU(q+1,q)$ is quasi-split and the morphism (\ref{Decompositionmapeqn}) is nowhere injective. Secondly, while $\mathcal A_K(U(q,q)) = \mathfrak{A}_K(SU(p,q))$, the cover $\mathfrak{ A}_K(U(q,q)) \rightarrow \mathcal A_K( U(q,q))$ is non-trivial; however, this does not cause a problem, since fixing the maximal Toledo invariant induces a section $\mathcal A_K( U(q,q)) \rightarrow \mathfrak{ A}_K(U(q,q))$ \cite[Section 6]{HM}.

A more detailed consideration of spectral data in this case will appear in \cite{FruPN}.
\end{example}

\begin{example}\label{SOQuatAbnexm}
    Let $G_{\mathbb{R}} = SO^*(4m+2)$, the quaternionic special orthogonal group on $\mathbb{H}^{2m+1}$. We take a specific matrix form for $G = SO_{4m+2}$, namely the group of invertible matrices for the bilinear form $(x,y) = x^TJy$, where $J_{ij} = 1$ exactly when $|i-j| = 2m+1$.
    
    The group $H = GL_{2m+1}$ and the space $\mathfrak{m}$ consists of all matrices of the form (\ref{SUpqMeqn}) (with $p=q=2m+1$) such that $B$ and $C$ are skew-symmetric. The Levi $L$ associated to the sheet $S_H$ is isomorphic to a product of $m$ copies of $GL_2$ together with a copy of $\mathbb{G}_m$. 

    An $SO^*(4m+2)$-Higgs bundle on $\Sigma$ can be described by a pair $(V \oplus V^*, \Phi)$ where $V$ is a rank $2m+1$ vector bundle and $\Phi$ has block diagonal form
\begin{equation}
    \Phi = \begin{pmatrix}
        0 & \beta \\
        \gamma & 0
    \end{pmatrix}
\end{equation}
for $\beta \in H^0(\Sigma, \wedge^2 V^* \otimes K)$ and $\gamma \in H^0(\Sigma, \wedge^2 V \otimes K)$.

Let $L = Ker(\gamma)$; since $\gamma$ is skew-symmetric, $L$ has rank at least 1, and if $(V \oplus V^*, \Phi)$ is regular, then $L$ must be a line bundle. Moreover, for any local section $s$ of $V$, $s \in L$ if and only if for all local sections $t$ of $V$
\begin{equation}
    \gamma(s)(t) = -\gamma(t)(s) = 0.
\end{equation}
So $L$ is the annihilator of $Im(\gamma) \otimes K^{-1}$ in $V$ and we have a canonical isomorphism $Im(\gamma) \otimes K^{-1} \cong \overline{V}^*$, where $\overline{V} = V/L$. Thus $\gamma$ descends to a skew-symmetric map $\overline{\gamma}:\overline{V} \rightarrow \overline{V}^* \otimes K$ and $\beta$ automatically determines a skew-symmetric map $\overline{\beta}:\overline{V}^* \rightarrow \overline{V} \otimes K$ by restriction. In this way, we can deconstruct $(V \oplus V^*,\Phi)$ into a line bundle $L$ and an $SO^*(4m)$-Higgs bundle $(\overline{V} \oplus \overline{V}^*,\overline{\Phi})$. The $SO^*(4m+2)$-Higgs bundle can be reconstructed from this pair by compatible data for the extension of $\overline{V}$ by $L$ and the extension of $\overline{\beta}$ to $\beta$. By construction, $\overline{\gamma}$ is an isomorphism, so $\overline{V}$ has fixed degree $4m(g-1)$. We note that the apparent breaking of symmetry by choosing $\gamma$ over $\beta$ is resolved by observing from the arguments above that $L$ is canonically isomorphic to $\wedge^{2m+1} V \otimes K^{2m}$.

By Proposition \ref{CameralSmoothprp} and \ref{HCameralgpprp} we can calculate that $\mathcal J^H$ is the constant group $\mathbb{G}_m$, and moreover the map
\begin{equation}
    \mathcal M_K^{reg}(SO^*(4m+2)) \rightarrow {\bf Pic}(\Sigma) \times \mathcal A_K( SO^*(4m+2))
\end{equation}
which on the first factor sends $(V \oplus V^*, \Phi)$ to $L$ realises the map $ Ab_{\mathbb{R}}$ and the abelianised fibration. The fibres of $Ab_\mathbb{R}$ are connected components of the $SO^*(4m)$-Hitchin fibres, for which spectral interpretations have been given by \cite{HS} and \cite{Branco}.
\end{example}

We conclude this section by noting that a version of the abelianised fibration for $SO(p,q)$, where $p-q>2$, is implicit in the constructions of \cite[Section 3]{BS}; this describes a map from the regular fibres of the $SO(p,q)$-Hitchin fibration to the regular fibres of the $Sp(2q,\mathbb{R})$-Hitchin fibration. Comparing this construction with our abelianised fibration is somewhat complicated by the fact that $\mathfrak{A}_K(SO(p,q))$ is a non-trivial cover of $\mathcal A_K(SO(p,q))$.

\section{The geometry of sheets and representation theory}\label{AdjointQuotientKatsyloscn}
In this section, we prove auxiliary results on the geometry of sheets, and consider their relationship to the representation theory of $\mathfrak{g}$. We give two alternative descriptions of the Katsylo group defined in Definition \ref{KatsyloGpdef}, one as a subquotient of the Weyl group $W$, and one in terms of the Grothendieck-Springer theory for the sheet. If the sheet is Dixmier, the latter relates to its polarisations and, in general, gives a way of studying multiplicities attached to the orbits in the sheet. We also calculate a different multiplicity attached to the primitive ideals of the universal enveloping algebra $\mathcal U(\mathfrak{g})$ in the image of Losev's orbit method map using the Katsylo group. We deduce an asymptotic relation between these two multiplicities.

\subsection{The Katsylo group and the Weyl group}\label{KatsyloWeylsbn}
Let $S$ be a non-singular sheet for the action of a reductive algebraic group $G$ on its Lie algebra $\mathfrak{g}$. We choose decomposition data $(L, \mathcal O)$ for $S$ (as in Remark \ref{SheetClassificationrmk}), and we consider $\mathfrak{z} =Lie(Z(L))$ with its action of $W_L$. We will show that there is a normal subgroup $W_S \leq W_L$ such that for any choice of Katsylo slice $\mathfrak{K}$ for $S$, as in Definition \ref{KatsyloSlicedef}, $\mathfrak{K}$ is a quotient of $\mathfrak{z}$ by $W_S$ and the Katsylo group $F$ of Definition \ref{KatsyloGpdef} can be identified with $W_L/W_S$.

We use the following construction of \cite{Katsylo}. Let $e \in S$ be nilpotent, and complete it to an $\mathfrak{sl}_2$-triple $(e,h,f)$, and let $\mathfrak{K}$ be the corresponding Katsylo slice.

\begin{lemma}\label{KatsyloMorphismlem}\cite[Lemma 5.1]{Katsylo}
    There is a morphism $\mathcal E:\mathfrak{z} \rightarrow \mathfrak{K}$ such that for all $x \in \mathfrak{z}$, $\mathcal E(x)$ is $G$-conjugate to $x+e$, and $\mathcal E$ is $\mathbb{G}_m$-equivariant with respect to the square of the scaling action on $\mathfrak{z}$ and the Kazhdan action on $\mathfrak{K}$ (see Definition \ref{KazhdanActiondef}).
\end{lemma}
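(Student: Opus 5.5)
I plan to construct $\mathcal E$ via the Slodowy slice and a contracting $\mathbb{G}_m$-action. The idea is that the affine space $\mathfrak{z}+e$ lies in $S$ and meets each orbit in the slice transversally up to conjugation. I will produce a $G$-equivariant retraction onto the slice, using the unipotent group attached to the grading by $h$.

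\textbf{Step 1: $x+e \in S$ for $x\in\mathfrak{z}$.} First I show that for every $x\in\mathfrak{z}$ the element $x+e$ lies in $S$, after choosing $e$ suitably. The decomposition data $(L,\mathcal O)$ gives $e$ as an element of the orbit induced from $\mathcal O$. I choose $e\in \mathcal O+\mathfrak{n}$ for a parabolic $P=LU$, so that $e$ lies in the dense $P$-orbit of $\mathcal O+\mathfrak n$. Then $x+e\in x+\mathcal O+\mathfrak n$, which is contained in $\overline{S}$ by \cite[Theorem 5.4]{BK}. For $x$ generic, $x+e$ is conjugate to $x+e_{\mathcal O}$ with $e_{\mathcal O}\in\mathcal O\subset\mathfrak{l}$, which lies in the dense decomposition class. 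At $x=0$ it is $e$ itself. Both have centraliser dimension $\dim S$'s fixed value, and upper semicontinuity together with irreducibility of $\mathfrak{z}+e$ gives $\mathfrak{z}+e\subseteq S$. It may be necessary to conjugate the $\mathfrak{sl}_2$-triple so that $e$ has this form, which is harmless.

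\textbf{Step 2: Kazhdan compatibility and retraction to the slice.} I use the Kazhdan action from Definition \ref{KazhdanActiondef}. Since $[h,x]$ need not vanish, I replace $\mathfrak{z}$ by a conjugate commuting with $h$, or work with the weight filtration. The standard fact from \cite[Section 7.4]{Slodowy} (and Gan--Ginzburg) is that the action map $U_{>0}\times\mathfrak{S}\to e+\mathfrak{g}^{\geq -1}$ is an isomorphism, where $U_{>0}$ is the unipotent group with Lie algebra $\bigoplus_{w>0}\mathfrak{g}^w$, at least when $\mathfrak{g}^{-1}=0$; I would use the appropriate Lagrangian modification otherwise. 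Arranging $\mathfrak{z}\subseteq\mathfrak{g}^{0}$ (possible since $h$ may be chosen in $\mathfrak{l}$-compatible position), I get $x+e\in e+\mathfrak{g}^{\ge 0}$. This yields a unique $u(x)\in U_{>0}$ with $\mathrm{Ad}_{u(x)}(x+e)\in\mathfrak{S}$, depending algebraically on $x$. I then set $\mathcal E(x)=\mathrm{Ad}_{u(x)}(x+e)$. This lies in $\mathfrak{S}\cap S=\mathfrak{K}$ by Step 1 and $G$-invariance of $S$, and is $G$-conjugate to $x+e$.

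\textbf{Step 3: equivariance.} I check that $t\cdot(x+e)=\mathrm{Ad}_{\lambda(t^{-1})}(t^2x+t^2e)=t^2x+e$ for $x\in\mathfrak{g}^0$, which is the square of scaling on $\mathfrak{z}$. The isomorphism $U_{>0}\times\mathfrak{S}\to e+\mathfrak{g}^{\ge0}$ intertwines the Kazhdan actions, with $\mathbb{G}_m$ acting on $U_{>0}$ by conjugation through $\lambda$. The uniqueness of the decomposition then forces $\mathcal E(t^2x)=t\cdot\mathcal E(x)$.

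\textbf{Main obstacle.} I expect Step 1 to be the hardest: simultaneously placing $e$ in $\mathcal O+\mathfrak n$ and $\mathfrak{z}$ inside the degree-zero part for $h$. My first attempt would be to choose the triple inside $\mathfrak{l}+\mathfrak{p}$-adapted position via a Jacobson--Morozov argument for the induced orbit. If that fails, I would relax $\mathfrak{z}\subseteq\mathfrak{g}^0$ to $\mathfrak{z}\subseteq\mathfrak{g}^{\ge 0}$ and use a filtered version of Step 3.
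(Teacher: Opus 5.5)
Your strategy of retracting $e+\mathfrak z$ onto the slice along a unipotent group is the right kind of construction; the paper gives no proof of its own and only cites Katsylo. However, Step~2 fails as written.

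The slice $\mathfrak S=e+\mathfrak c_{\mathfrak g}(f)$ lies in $e+\mathfrak g^{\le 0}$, not in $e+\mathfrak g^{\ge -1}$: already $f\in\mathfrak c_{\mathfrak g}(f)$ has weight $-2$. So the claimed isomorphism $U_{>0}\times\mathfrak S\to e+\mathfrak g^{\ge -1}$ is false, and $U_{>0}$ cannot move $e+\mathfrak z$ into $\mathfrak S$. For instance, take $\mathfrak g=\mathfrak{sl}_2$, the regular sheet, and $x=ah$ with $a\neq 0$. Conjugation by upper unitriangular matrices leaves the diagonal entries $\pm a$ of $x+e$ unchanged, so it never reaches $e+\mathbb C f$, and $u(x)$ does not exist.

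The signs must be reversed. Let $N$ be the unipotent group with Lie algebra $\mathfrak g^{\le -2}$. It preserves $e+\mathfrak g^{\le 0}$, and the action map $N\times\mathfrak S\to e+\mathfrak g^{\le 0}$ is an isomorphism. Its differential at $(1,e)$ is bijective, because $\mathfrak g^{\le 0}=[e,\mathfrak g^{\le -2}]\oplus\mathfrak c_{\mathfrak g}(f)$ and $\mathrm{ad}\,e$ is injective on $\mathfrak g^{\le -2}$. Both sides are affine spaces on which the Kazhdan action (conjugation by $\lambda(t^{-1})$ on $N$) has only positive weights. No Lagrangian correction is needed, even when $\mathfrak g^{-1}\neq 0$. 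Since $\mathfrak z\subseteq\mathfrak g^0$ gives $e+\mathfrak z\subseteq e+\mathfrak g^{\le 0}$, your Steps~2 and~3 then go through with $N$ in place of $U_{>0}$. In the $\mathfrak{sl}_2$ example this yields $\mathcal E(ah)=e+a^2f$, and indeed $\mathcal E(t^2ah)=e+t^4a^2f=t\cdot\mathcal E(ah)$.

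Two other steps also need actual arguments.

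First, the normalisation you flag as the main obstacle has a short proof. The dense $P$-orbit in $\overline{\mathcal O}+\mathfrak n$ is stable under scaling, so $e\in[\mathfrak p,e]$; write $e=[y,e]$ with $y\in\mathfrak p$. By Lemma~\ref{BInductionlem}, $\mathfrak c_{\mathfrak g}(e)\subseteq\mathfrak p$. Hence for any $\mathfrak{sl}_2$-triple $(e,h,f)$ one has $h\in 2y+\mathfrak c_{\mathfrak g}(e)\subseteq\mathfrak p$. A suitable element of $P$ then conjugates the semisimple element $h$ into $\mathfrak l$ while keeping $e$ in the dense $P$-orbit. Conjugating the triple changes $\mathfrak K$, so the clean reading is that one replaces $(L,P)$ by a conjugate. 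Some such compatibility between $e$ and $\mathfrak z$ is implicit in the statement anyway, since for badly placed $e$ the elements $x+e$ need not lie in $S$ at all.

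Second, in Step~1, upper semicontinuity and irreducibility only show that $\{x\in\mathfrak z:\dim\mathfrak c_{\mathfrak g}(x+e)=d\}$ is an open subset of $\mathfrak z$ containing $0$ and the generic points. To get all of $\mathfrak z$, note that this set is stable under $x\mapsto t^2x$, because $t^2x+e=\mathrm{Ad}_{\lambda(t^{-1})}(t^2(x+e))$ once $h\in\mathfrak l$. An open neighbourhood of $0$ stable under a contracting $\mathbb G_m$-action is everything. Then $e+\mathfrak z\subseteq\overline S\cap\mathfrak g_d=S$, since $S$ is closed in $\mathfrak g_d$.
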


\begin{remark}
    By the construction in \cite[Section 5]{Borho2}, there is a commutative diagram
    \begin{equation}\label{KMPropseqn}
        \begin{tikzcd}
            \mathfrak{z} \arrow[r, "\mathcal E"] \arrow[d, two heads] & \mathfrak{K} \arrow[d, two heads] \\
            \mathfrak{z}/W_L \arrow[r, "\cong"]                       & \mathfrak{K}/F                   
        \end{tikzcd}
    \end{equation}
    where the lower horizontal arrow is the identification noted in Remark \ref{KatsyloGQrmk}. One can deduce that $\mathcal E:\mathfrak{z} \rightarrow \mathfrak{K}$ is finite and surjective (which can also be seen from \cite[Section 6]{Katsylo}).
\end{remark}

This induces a quotient description of the Katsylo group.

\begin{proposition}\label{KatsyloGpWeylGpprp}
    There is a surjective homomorphism $f_{\mathcal E}:W_L \rightarrow F$ such that 
    \begin{equation}\label{KatsyloGpWeylGp1eqn}
        f_{\mathcal E}(w)\mathcal E(x) = \mathcal E(wx)
    \end{equation}
    for all $x \in \mathfrak{z}$ and $w \in W_L$.
\end{proposition}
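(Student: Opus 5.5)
The plan is to build $f_{\mathcal E}$ pointwise on a dense open subset of $\mathfrak{z}$ where $F$ acts freely, and then show that it is constant and a homomorphism by a connectedness and continuity argument. By Lemma \ref{KatsyloMorphismlem} and diagram (\ref{KMPropseqn}), for $x \in \mathfrak{z}$ and $w \in W_L$, the points $\mathcal E(x)$ and $\mathcal E(wx)$ have the same image in $\mathfrak{K}/F$. By Theorem \ref{RedCentraliseractionthm}, two points of $\mathfrak{K}$ that are $G$-conjugate are conjugate under $A$, hence lie in the same $F$-orbit. So for each $x$ there is some $a \in F$ with $a\,\mathcal E(x) = \mathcal E(wx)$.

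Let $\mathfrak{K}^\circ$ be the dense open set on which $F$ acts freely (Remark \ref{FInertia1rmk}). Since $\mathcal E$ is finite and surjective, $U := \mathcal E^{-1}(\mathfrak{K}^\circ)$ is a nonempty open subset of $\mathfrak{z}$. We may also intersect with the $W_L$-translates, and since $\mathfrak{K}^\circ$ is $F$-stable, $U$ is then $W_L$-stable. For $x \in U$ the element $a = a_w(x)$ is unique. Consider the closed subsets
\[
Z_{w,a} = \{ x \in U \mid a\,\mathcal E(x) = \mathcal E(wx)\}.
\]
These cover $U$, and by freeness they are pairwise disjoint for fixed $w$. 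There are finitely many of them, so each is open and closed in $U$. Since $U$ is irreducible, being open in the affine space $\mathfrak{z}$, exactly one of them is all of $U$; call the corresponding element $f_{\mathcal E}(w)$. The identity (\ref{KatsyloGpWeylGp1eqn}) then holds on $U$, and by density and separatedness of $\mathfrak{K}$ it holds on all of $\mathfrak{z}$.

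For the homomorphism property, take $x \in U$. Then
\[
f_{\mathcal E}(w_1w_2)\,\mathcal E(x) = \mathcal E(w_1w_2x) = f_{\mathcal E}(w_1) f_{\mathcal E}(w_2)\,\mathcal E(x),
\]
and freeness gives $f_{\mathcal E}(w_1w_2) = f_{\mathcal E}(w_1)f_{\mathcal E}(w_2)$.

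The main obstacle is surjectivity. Take $a \in F$ and $x \in U$. Both $a\,\mathcal E(x)$ and $\mathcal E(x)$ map to the same point of $\mathfrak{K}/F \cong \mathfrak{z}/W_L$. Since $\mathcal E$ is surjective, choose $y \in \mathfrak{z}$ with $\mathcal E(y) = a\,\mathcal E(x)$; then $y = wx$ for some $w \in W_L$, by the commutativity of (\ref{KMPropseqn}) together with $\mathfrak{z} \to \mathfrak{z}/W_L$ having $W_L$-orbits as fibres. Hence $f_{\mathcal E}(w)\,\mathcal E(x) = a\,\mathcal E(x)$, and freeness on $\mathfrak{K}^\circ$ gives $f_{\mathcal E}(w) = a$.

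The delicate point is ensuring that $U$ is nonempty and $W_L$-stable, so that $wx \in U$. This rests on $\mathcal E$ being dominant, which we have, and on the $F$-stability of $\mathfrak{K}^\circ$.
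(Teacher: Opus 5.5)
Your proposal is correct and follows essentially the same route as the paper: restrict to $\mathcal E^{-1}(\mathfrak{K}^\circ)$, note that the finitely many closed equaliser sets are pairwise disjoint by freeness and hence clopen, extend the identity by density, and obtain the homomorphism property and surjectivity from freeness at $\mathcal E(x)$. The only difference is organisational: the paper runs the clopen argument on $W_L\times\mathfrak{z}^\circ$ all at once, while you run it separately for each $w$. Also, the $W_L$-stability of $U$ that you flag as the delicate point is never actually used, since every freeness argument is applied at $\mathcal E(x)$ with $x\in U$.
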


\begin{proof}
We first define a morphism $g_{\mathcal E}:W_L \times \mathfrak{z} \rightarrow \mathfrak{K}$ by $g_{\mathcal E}(w,x) = \mathcal E(wx)$. Let $\mathfrak{z}^\circ$ be the open subset $\mathcal{E}^{-1}(\mathfrak{K}^\circ) \subseteq \mathfrak{z}$, where $\mathfrak{K}^\circ$ is the locus of $\mathfrak{K}$ on which $F$ acts freely (as in Remark \ref{FInertia1rmk}). Then for any $x \in \mathfrak{z}^\circ$ and $w \in W_L$, since $\mathcal E(wx)$ is in the $F$-orbit of $\mathcal E(x)$, $\mathcal E(wx) \in \mathfrak{K}^\circ$. Hence, $g_{\mathcal E}^{-1}(\mathfrak{K}^\circ) = W_L \times \mathfrak{z}^\circ$, and we consider the restriction of $g_{\mathcal E}$ to $W_L \times \mathfrak{z}^\circ$. For each $a \in F$, we define
\begin{equation}\label{KatsyloGpWeylGp2eqn}
    X_a = \{ (w,x) \in W_L \times \mathfrak{z}^\circ \, | \, g_{\mathcal E}(w,x) = a\mathcal E(x) \},
\end{equation}
which is a closed subset of $W_L \times \mathfrak{z}^\circ$. The $X_a$ are disjoint by definition of $\mathfrak{K}^\circ$, and each $X_a$ is non-empty, since $\mathcal E$ maps the $W_L$-orbit of $x$ surjectively to the $F$-orbit of $\mathcal E(x)$. Hence, the collection $\{X_a\}_{a \in F}$ defines a partition of the connected components of $W_L \times \mathfrak{z}^\circ$. As a result, we can partition $W_L$ into a collection $\{W_{L,a}\}_{f \in F}$ such that $X_a = W_{L,a} \times \mathfrak{z}^\circ$.

Define the map of sets $f_{\mathcal E}:W_L \rightarrow F$ as the map sending each set $W_{L,a}$ to $a$. By construction, (\ref{KatsyloGpWeylGp1eqn}) holds for each $x \in \mathfrak{z}^\circ$ and $w \in W_L$, and since $\mathfrak{z}^\circ$ is dense in $\mathfrak{z}$, (\ref{KatsyloGpWeylGp1eqn}) also holds by continuity for all $x \in \mathfrak{z}$. Since each $X_a$ is non-empty, $f_{\mathcal E}$ is surjective. So it only remains to show that $f_{\mathcal E}$ is a group homomorphism.

Fix $x \in \mathfrak{z}^\circ$, and take $w_1, w_2 \in W_L$. Then we have
$$
f_{\mathcal E}(w_1w_2)\mathcal E(x) = \mathcal E(w_1w_2x) = f_{\mathcal E}(w_1)\mathcal E(w_2x) = f_{\mathcal E}(w_1)f_{\mathcal E}(w_2)\mathcal E(x)
$$
by (\ref{KatsyloGpWeylGp1eqn}), and since $F$ acts freely on $\mathfrak{K}^\circ$, we must have $f_{\mathcal E}(w_1w_2)=f_{\mathcal E}(w_1)f_{\mathcal E}(w_2)$.
\end{proof}

\begin{corollary}\label{KatsyloGpWeylGpcor}
    Let $W_S$ be the kernel of $f_\mathcal E$. The morphism $\mathcal E$ determines an isomorphism $\mathfrak{K} \cong \mathfrak{z}/W_S$ identifying the $F$-action on $\mathfrak{K}$ with the $W_L/W_S$-action on $\mathfrak{z}/W_S$. 
\end{corollary}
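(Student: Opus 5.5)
The plan is to show that $\mathcal E:\mathfrak{z}\rightarrow\mathfrak{K}$ descends to a finite birational morphism $\bar{\mathcal E}:\mathfrak{z}/W_S\rightarrow\mathfrak{K}$, and then to conclude with Zariski's main theorem, using that $\mathfrak{K}$ is non-singular and irreducible (Lemma \ref{KatsyloSmoothlem}), hence normal. The equivariance will follow from Proposition \ref{KatsyloGpWeylGpprp}.

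The first step is the descent. By (\ref{KatsyloGpWeylGp1eqn}), for $w\in W_S=\ker f_{\mathcal E}$ we have $\mathcal E(wx)=\mathcal E(x)$ for all $x\in\mathfrak{z}$. Hence $\mathcal E^*$ maps $\mathbb{C}[\mathfrak{K}]$ into $\mathbb{C}[\mathfrak{z}]^{W_S}$, which gives the morphism $\bar{\mathcal E}:\mathfrak{z}/W_S\rightarrow\mathfrak{K}$. Since $W_S$ is normal in $W_L$, the group $W_L/W_S$ acts on $\mathfrak{z}/W_S$, and (\ref{KatsyloGpWeylGp1eqn}) together with the isomorphism $W_L/W_S\cong F$ induced by $f_{\mathcal E}$ says exactly that $\bar{\mathcal E}$ is equivariant.

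Finiteness and surjectivity of $\bar{\mathcal E}$ come from the remark after Lemma \ref{KatsyloMorphismlem}. There, $\mathcal E$ is finite and surjective, so $\mathbb{C}[\mathfrak{z}]$ is finite over $\mathbb{C}[\mathfrak{K}]$. The subring $\mathbb{C}[\mathfrak{z}]^{W_S}$ is a submodule of this finite module over a Noetherian ring, so it is also finite over $\mathbb{C}[\mathfrak{K}]$. Surjectivity of $\bar{\mathcal E}$ follows from surjectivity of $\mathcal E$.

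The key step is showing $\bar{\mathcal E}$ is injective on a dense open set, so that it is birational. Take $y\in\mathfrak{K}^\circ$ and $x,x'\in\mathfrak{z}$ with $\mathcal E(x)=\mathcal E(x')=y$. By the commutative square (\ref{KMPropseqn}) with the isomorphism $\mathfrak{z}/W_L\cong\mathfrak{K}/F$, the points $x$ and $x'$ have the same image in $\mathfrak{z}/W_L$, so $x'=wx$ for some $w\in W_L$. Then $y=\mathcal E(wx)=f_{\mathcal E}(w)y$, and since $F$ acts freely on $\mathfrak{K}^\circ$ this forces $f_{\mathcal E}(w)=\mathrm{id}$, i.e.\ $w\in W_S$. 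So the fibres of $\bar{\mathcal E}$ over $\mathfrak{K}^\circ$ are single points.

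Birationality then follows. Over $\mathfrak{K}^\circ$ the map $\bar{\mathcal E}$ is finite and bijective on points between varieties in characteristic zero, hence of generic degree one, because the function field extension is separable. So $\bar{\mathcal E}$ is finite and birational onto the normal variety $\mathfrak{K}$. The source $\mathfrak{z}/W_S$ is normal and irreducible, so Zariski's main theorem makes $\bar{\mathcal E}$ an isomorphism.

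The step I expect to be the main obstacle is this birationality argument. It relies on using the square (\ref{KMPropseqn}) to identify fibres of $\mathcal E$ with $W_L$-orbits, and I would check carefully that this square really gives equality of orbits and not merely a map between the quotients.
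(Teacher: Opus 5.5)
Your proposal is correct and follows the same route as the paper's proof. You descend $\mathcal E$ to $\mathfrak{z}/W_S$, get finiteness from the finiteness of $\mathcal E$, show the induced map is an isomorphism generically over $\mathfrak{K}^\circ$ (you prove birationality), and conclude by Zariski's main theorem using the normality of $\mathfrak{K}$. Your injectivity argument over $\mathfrak{K}^\circ$, using the square and freeness of the $F$-action, spells out the step the paper only asserts, and your worry about the square is unfounded: its bottom arrow is an isomorphism, and the points of $\mathfrak{z}/W_L$ are exactly the $W_L$-orbits.
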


\begin{proof}
By construction, $\mathcal E$ is $W_S$-invariant, so it induces a morphism $\mathfrak{z}/W_S \rightarrow \mathfrak{K}$. This morphism is an isomorphism over $\mathfrak{K}^\circ$, and is finite, since it factors through the finite morphism $\mathcal E$; hence by Zariski's main theorem, it is an isomorphism.
\end{proof}

\begin{proposition}\label{WSEIndprp}
    The group $W_S$ depends only on $S$ and its decomposition data $(L, \mathcal O)$ (and not on the choice of $\mathcal E$).
\end{proposition}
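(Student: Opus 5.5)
The plan is to characterise $W_S$ intrinsically, without reference to $\mathcal E$, as
\[
W_S=\{w\in W_L \mid \mathcal E(wx)=\mathcal E(x)\ \text{for all } x\in\mathfrak{z}\},
\]
and then to show that this subgroup does not change when the choices that go into $\mathcal E$ are changed. The equality itself is immediate from Proposition \ref{KatsyloGpWeylGpprp}. Indeed, $f_{\mathcal E}(w)=1$ holds exactly when $\mathcal E(wx)=\mathcal E(x)$ on the dense set $\mathfrak{z}^\circ$, because $F$ acts freely on $\mathfrak{K}^\circ$; by continuity this is the same as $\mathcal E(wx)=\mathcal E(x)$ on all of $\mathfrak{z}$. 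Equivalently, $W_S$ is the largest subgroup of $W_L$ through whose quotient the finite map $\mathcal E:\mathfrak{z}\to\mathfrak{K}$ factors generically injectively. In other words, $W_S$ is the generic stabiliser in $W_L$ of a fibre of $\mathcal E$.

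The choices involved are the nilpotent $e\in S$, the $\mathfrak{sl}_2$-triple $(e,h,f)$, and the morphism $\mathcal E$ for a fixed slice. I will deal with them in that order. The nilpotent orbit in $S$ is unique, and the triples completing a given $e$ are conjugate under $C_G(e)$. So two choices of $(e,h,f)$ differ by some $g\in G$. Then $Ad_g$ carries one Katsylo slice isomorphically onto the other, and $Ad_g\circ\mathcal E$ is again a morphism $\mathfrak{z}\to\mathfrak{K}'$ satisfying the conclusions of Lemma \ref{KatsyloMorphismlem}. Since $Ad_g$ is an isomorphism, the intrinsic description gives the same subgroup $W_S$ for $Ad_g\circ\mathcal E$ as for $\mathcal E$. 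It therefore suffices to compare two morphisms $\mathcal E,\mathcal E':\mathfrak{z}\to\mathfrak{K}$ into one fixed slice.

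For such a pair, $\mathcal E(x)$ and $\mathcal E'(x)$ are both $G$-conjugate to $x+e$. By Theorem \ref{RedCentraliseractionthm} they are therefore $A$-conjugate, and hence lie in the same $F$-orbit. For each $a\in F$, consider the closed set
\[
Y_a=\{x\in\mathfrak{z}^\circ \mid \mathcal E'(x)=a\,\mathcal E(x)\}.
\]
Here $\mathfrak{z}^\circ$ is the preimage of $\mathfrak{K}^\circ$, which is the same for $\mathcal E$ and $\mathcal E'$ because both maps send $x$ into one $F$-orbit. The sets $Y_a$ are disjoint and cover $\mathfrak{z}^\circ$, and $\mathfrak{z}^\circ$ is irreducible, being a dense open subset of a vector space. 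So exactly one of them, say $Y_{a_0}$, is all of $\mathfrak{z}^\circ$, and by continuity $\mathcal E'=a_0\circ\mathcal E$ on all of $\mathfrak{z}$. Since $F$ acts by automorphisms of $\mathfrak{K}$, we get $\mathcal E'(wx)=\mathcal E'(x)$ if and only if $\mathcal E(wx)=\mathcal E(x)$. Thus $W_S$ is the same for $\mathcal E$ and $\mathcal E'$. A side consequence is $f_{\mathcal E'}=I_{a_0}\circ f_{\mathcal E}$, so $f_{\mathcal E}$ itself is only defined up to conjugation, but its kernel is not affected.

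Finally, replacing $(L,\mathcal O)$ by a $G$-conjugate replaces $\mathfrak{z}$ and $W_L$ by conjugates, and $W_S$ is transported accordingly. This is what "depends only on $S$ and $(L,\mathcal O)$" means.

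The main obstacle is the first reduction. It needs care on two points. One is checking that Katsylo's construction, or any morphism with the properties listed in Lemma \ref{KatsyloMorphismlem}, behaves well under transport by $Ad_g$. The other is the irreducibility argument, which requires $\mathfrak{z}^\circ$ to be the same open set for $\mathcal E$ and $\mathcal E'$; this holds because both maps land in the same $F$-orbit pointwise.
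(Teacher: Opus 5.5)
Your argument takes a different route from the paper's, and it works once one step is repaired. In the transport step you assert that $Ad_g\circ\mathcal E$ satisfies the conclusion of Lemma \ref{KatsyloMorphismlem} for the new triple. In the next paragraph you use that $\mathcal E(x)$ and $\mathcal E'(x)$ are both conjugate to $x+e$. But the condition ``$\mathcal E(x)$ is $G$-conjugate to $x+e$'' depends on how $e$ sits relative to $\mathfrak z$, so it is not a formal consequence of $g$ conjugating the triples. For instance, take $L=GL_2\times GL_1\leq GL_3$ and $x=\mathrm{diag}(a,a,b)$ with $a\neq b$. Both $E_{13}$ and $E_{12}$ lie in the nilpotent orbit of $S$. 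Yet $x+E_{13}$ is semisimple and lies in $S$, while $x+E_{12}$ is regular and not in $S$ at all.

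What your $Y_a$ argument actually needs is only that $Ad_g\mathcal E_1(x)$ and $\mathcal E_2(x)$ are $G$-conjugate. Take this from the commutative square (\ref{KMPropseqn}): for any admissible $\mathcal E$, the composite $\chi_S\circ\mathcal E$ is the canonical map $\mathfrak z\to\mathfrak z/W_L\cong\mathfrak c_S$. Since $\chi_S$ is a $G$-invariant geometric quotient, $\mathcal E(x)$ lies in the single orbit forming the fibre of $\chi_S$ over the image of $x$, whatever triple was used. The paper tacitly uses the same fact in the proof of Proposition \ref{KatsyloGpWeylGpprp}, when it places $\mathcal E(wx)$ in the $F$-orbit of $\mathcal E(x)$. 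With this in hand, Theorem \ref{RedCentraliseractionthm} puts $Ad_g\mathcal E_1(x)$ and $\mathcal E_2(x)$ in one $F$-orbit of the second slice. Your irreducibility argument then gives $\mathcal E_2=a_0\circ Ad_g\circ\mathcal E_1$, and the kernels agree.

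The paper argues differently. It uses the map $p:\mathfrak z\to\mathcal B$ of Lemma \ref{StackyZquotientlem}, which is unique because it is characterised by factoring $\mathfrak z\to\mathfrak c_S$ through the intrinsic stack $\mathcal B$. Since $\mathfrak K\to[\mathfrak K/F]$ is \'{e}tale, $p$ ramifies exactly where $\mathfrak z\to\mathfrak z/W_S$ does. Since $\mathfrak z/W_S\cong\mathfrak K$ is smooth, Shephard--Todd makes $W_S$ a reflection group, so $W_S$ is determined by that ramification.

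Your approach avoids the stack $\mathcal B$ and reflection-group theory. It also yields a sharper by-product: $\mathcal E$ is unique up to post-composition with $Ad_g$ and an element of $F$, so $f_{\mathcal E}$ is determined up to an inner automorphism of $F$. The paper's approach instead reads $W_S$ off the intrinsic geometry of the $S$-Chevalley base, and shows along the way that $W_S$ is generated by reflections. That is the form in which $W_S$ is used later, through the ramification of $p$.
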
 

\begin{proof} 
We use the map $p:\mathfrak{z} \rightarrow \mathcal B$ constructed in Lemma \ref{StackyZquotientlem}; we emphasise that this argument is not circular, as the construction of $p$ only uses the existence of the identification $\mathfrak{K} \cong \mathfrak{z}/W_S$ and does not rely on the uniqueness of the subgroup $W_S \leq W_L$.

The map $p$ is ramified exactly where the quotient map $\mathfrak{z} \rightarrow \mathfrak{z}/W_S$ is ramified. Since $W_S$ acts freely on $\mathfrak{z}$ and $\mathfrak{z}/W_S \cong \mathfrak{K}$ is smooth, $W_S$ is generated by reflections by the Shephard-Todd theorem; hence, the ramification locus of the quotient map determines the group $W_S$. Since $p$ is uniquely defined, and in particular does not depend on the choice of $\mathcal E$, the group $W_S$ is also independent of this choice.
\end{proof}

\begin{remark}\label{NamikawaWeylGprmk}
   The group $W_S$ can be described in terms of the symplectic geometry of the nilpotent orbit in $S$ via Grothendieck-Springer theory (see Section \ref{GSSheetssbn} below). There is a $W_L$-action on $\hat{S}^{reg}$ (defined in (\ref{GeneralisedGSRegeqn})) determined by Lemma \ref{WLInvariancelem} and Proposition \ref{GSCartesianprp}. This coincides with an action on $\hat{S}^{reg}$ constructed in \cite[Lemma 4.8]{Losev4}, and \cite[Proposition 4.7 (3)]{Losev4} implies that $W_S$ is the \emph{Namikawa-Weyl group} for an affine symplectic variety $X$ (see \cite[Section 2.3]{Losev4} and \cite[Section 1]{Namikawa}); $X$ is the affinization for a cover of the nilpotent orbit in $S$.  This also gives an identification of the Katsylo group $F$ with the group of $G$-equivariant Poisson automorphisms of $X$.
\end{remark}

We will sometimes denote $\mathfrak{z}/W_S$ by $\tilde{\mathfrak{c}}_S$. We have the following implications for the Katsylo slice.

\begin{corollary}\label{KatsyloAffinecor}
    The Katsylo slice $\mathfrak{K}$ for a non-singular sheet is isomorphic to an affine space. Moreover, the Kazhdan action admits a square-root; in particular, there are coordinates on $\mathfrak{K}$ such that the Kazhdan action is given by
        \begin{equation}\label{KatsyloAffine1eqn}
        \lambda \cdot (t_1,...,t_r)=(\lambda^{2e_1}t_1, ...,\lambda^{2e_r}t_r)
        \end{equation}
        for some fixed weights $e_i \in \mathbb{N}$.
\end{corollary}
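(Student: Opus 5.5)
The plan is to read both claims off Corollary \ref{KatsyloGpWeylGpcor}, which gives an isomorphism $\mathfrak{K} \cong \mathfrak{z}/W_S$ induced by the Katsylo morphism $\mathcal E$ of Lemma \ref{KatsyloMorphismlem}. Here $W_S \leq W_L$ acts on the vector space $\mathfrak{z}$ linearly, through the action of $N_G(L)$ on $\mathfrak{z}=Lie(Z(L))$. By Lemma \ref{KatsyloSmoothlem} the slice $\mathfrak{K}$ is non-singular, so $\mathfrak{z}/W_S$ is smooth. Since $W_S$ is a finite group acting linearly on $\mathfrak{z}$, the Chevalley--Shephard--Todd theorem applies. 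Smoothness of the quotient forces the action to be generated by pseudo-reflections, modulo the pointwise kernel, which changes nothing. The invariant ring $\mathbb{C}[\mathfrak{z}]^{W_S}$ is therefore a polynomial ring on homogeneous generators $t_1,\dots,t_r$ of degrees $e_1,\dots,e_r$, with $r=\dim\mathfrak{z}$. Hence $\mathfrak{K}\cong \mathbb{A}^r$. (This is the same appeal to Shephard--Todd as in the proof of Proposition \ref{WSEIndprp}, but here it is run in the direction from smoothness of the quotient to the polynomial structure of the invariants.)

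For the Kazhdan action, Lemma \ref{KatsyloMorphismlem} shows that $\mathcal E$ is $\mathbb{G}_m$-equivariant for the square of the scaling action on $\mathfrak{z}$ and the Kazhdan action on $\mathfrak{K}$. The scaling action on $\mathfrak{z}$ commutes with $W_S$, so it descends to $\mathfrak{z}/W_S$. On the invariant generators it acts by $\lambda\cdot t_i=\lambda^{e_i}t_i$, since these generators are homogeneous of degree $e_i$. The isomorphism $\mathfrak{z}/W_S\cong\mathfrak{K}$ is induced by $\mathcal E$, and the quotient map $\mathfrak{z}\to\mathfrak{z}/W_S$ is surjective. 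It follows that the Kazhdan action, transported to $\mathfrak{z}/W_S$, agrees with the descended squared scaling action, which is $\lambda\cdot t_i=\lambda^{2e_i}t_i$. This is exactly (\ref{KatsyloAffine1eqn}), and the descended scaling action itself supplies the required square root. The weights satisfy $e_i\geq 1$, because the invariants have no constant term beyond $\mathbb{C}$.

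The step I expect to need the most care is the transfer of equivariance. One has to check that $\mathcal E$ is genuinely $W_S$-invariant and that the induced map $\mathfrak{z}/W_S\to\mathfrak{K}$ intertwines the actions, and not merely $\mathcal E$ itself. I would handle this by noting that both composite actions on $\mathfrak{z}/W_S$ are morphisms $\mathbb{G}_m\times \mathfrak{z}/W_S\to\mathfrak{K}$ that agree after pulling back along the surjection $\mathbb{G}_m\times\mathfrak{z}\to\mathbb{G}_m\times\mathfrak{z}/W_S$. Since $\mathfrak{z}/W_S$ is reduced, they must be equal. A secondary point is that the Shephard--Todd theorem needs a faithful action. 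Replacing $W_S$ by its image in $GL(\mathfrak{z})$ leaves the quotient unchanged, so this costs nothing.
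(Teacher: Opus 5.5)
Your proposal is correct and follows the paper's proof: you identify $\mathfrak{K}$ with the smooth quotient $\mathfrak{z}/W_S$ using Corollary \ref{KatsyloGpWeylGpcor} and Lemma \ref{KatsyloSmoothlem}, apply Shephard--Todd to get a polynomial invariant ring on homogeneous generators of degrees $e_i$, and use the equivariance in Lemma \ref{KatsyloMorphismlem} to see that the descended scaling action is a square root of the Kazhdan action. Your extra checks (descending equivariance through the quotient map, and the faithfulness needed for Shephard--Todd) supply details the paper leaves implicit without changing the argument.
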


\begin{proof}
The statements follow from the Shephard-Todd theorem, since $\mathfrak{K}$ is isomorphic to a non-singular quotient of a vector space by a linear action of a finite group. The scalar action on $\mathfrak{z}$ induces a $\mathbb{G}_m$-action on $\tilde{\mathfrak{c}}_S$ which in suitable coordinates can be written as
    \begin{equation}\label{KatsyloAffine2eqn}
        \lambda \cdot (t_1,...,t_r)=(\lambda^{e_1}t_1, ...,\lambda^{e_r}t_r)
    \end{equation}
for weights $e_i \in \mathbb{N}$; this defines the required square-root of the Kazhdan action by the equivariance statement in Lemma \ref{KatsyloMorphismlem}.
\end{proof}

\begin{remark}\label{ClassicalKatsylormk}
    If the group $G$ is classical, the content of Corollaries \ref{KatsyloGpWeylGpcor} and \ref{KatsyloAffinecor} already appears in \cite{ImHof}. Indeed, Im Hof's strategy for proving that sheets are non-singular for classical groups is to show by explicit calculation that the morphism $\mathcal E$ is a quotient by a finite reflection group.

    For $G$ exceptional, the fact that $\mathfrak{K}$ is an affine space when $S$ is smooth follows from the calculations of \cite{Bulois}.
\end{remark}

\subsection{Grothendieck-Springer theory for sheets}\label{GSSheetssbn}
There is a generalisation of Grothendieck-Springer theory for sheets developed in \cite{Broer1}, building on work of \cite{BB}. We will use a somewhat different setup from that of \cite{Broer1}, in particular to match the setup of \cite{Losev4} for the applications in Section \ref{Multiplicitiessbn}, and so we prove some results analogous to those of \cite{Broer1} in this alternative setting. We also give a description of the Katsylo group as a quotient of a $G$-centraliser by a $P$-centraliser for a suitable parabolic subgroup $P \leq G$. 

Let $S$ be a sheet and choose decomposition data $(L,\mathcal O)$ for $S$ as in Remark \ref{SheetClassificationrmk}. Let $\mathfrak{z}$ be the centre of $\mathfrak{l}$ as usual, let $P$ be a parabolic subgroup of $G$ which contains $L$ as a Levi factor, and let $\mathfrak{n}$ be the nilradical of $\mathfrak{p}$. We consider the closed $P$-invariant subvariety $\mathfrak{z} + \overline{\mathcal O} +\mathfrak{n}$ of $\mathfrak{g}$, and the associated $G$-variety
\begin{equation}\label{GeneralisedGSeqn}
    \hat{S} = G \times^P (\mathfrak{z} + \overline{\mathcal O}+\mathfrak{n}).
\end{equation}
By \cite[Lemma 2.2]{Borho2} and \cite[Satz 3.1 (b)]{Borho2}, the $G$-action map
\begin{equation}\label{GeneralisedGSmapeqn}
    \hat{p}:\hat{S} \rightarrow \mathfrak{g}
\end{equation}
maps $\hat{S}$ to the Zariski closure $\overline{S}$ of $S$ in $\mathfrak{g}$. This is the \emph{(generalised) Grothendieck-Springer map} for the sheet $S$.

The following lemma is known \cite[Section 4.1]{Losev4}, but we provide a quick proof for completeness.

\begin{lemma}\label{BInductionlem}
    For every $z \in \mathfrak{z}$ there is a unique dense $P$-orbit in $z +\overline{\mathcal O} +\mathfrak{n}$, which we denote by $(z+ \overline{\mathcal O} +\mathfrak{n})^{reg}$. Moreover, for every $x \in (z+ \overline{\mathcal O} +\mathfrak{n})^{reg}$, the identity component $C_G^\circ(x)$ is contained in $P$. 
\end{lemma}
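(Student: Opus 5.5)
The plan is to reduce to the Levi factor, using the structure of $P = L \ltimes N$ (with $N$ the unipotent radical, $\mathfrak{n} = Lie(N)$). Fix $z \in \mathfrak{z}$. The affine space $z + \overline{\mathcal O} + \mathfrak{n}$ is $P$-stable: $L$ fixes $z$ and preserves $\overline{\mathcal O}$, $N$ acts trivially modulo $\mathfrak{n}$ on elements of $\mathfrak{p}$, and $\mathfrak{n}$ is an ideal. It is also irreducible, since it is isomorphic to $\overline{\mathcal O} \times \mathfrak{n}$. The orbits of $P$ on it therefore form a family, and existence of a dense orbit is equivalent to the existence of some $x$ in it with $\dim P\cdot x = \dim \mathcal O + \dim \mathfrak{n}$. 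Uniqueness is then automatic, because two dense orbits in an irreducible variety meet and so coincide.

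To produce such an $x$, I would compare with $G$-orbit dimensions. By \cite[Lemma 2.2, Satz 3.1]{Borho2} (the facts already used to define $\hat p$), the $G$-saturation of $z + \mathcal O + \mathfrak{n}$ contains elements of the induced orbit $\mathrm{Ind}_{\mathfrak{l}_z}^{\mathfrak{g}}(z+\mathcal O)$, where $\mathfrak{l}_z = \mathfrak{c}_{\mathfrak{g}}(z) \supseteq \mathfrak{l}$. Its dimension is
\[
\dim G - \dim L + \dim \mathcal O = 2\dim \mathfrak{n} + \dim \mathcal O .
\]
Pick $x$ in this intersection. The standard Lusztig--Spaltenstein argument gives
\[
\dim (G\cdot x \cap (z+\mathcal O+\mathfrak{n})) \le \dim P\cdot x ,
\]
and $\dim G\cdot x \le \dim G/P + \dim P\cdot x$ via the map $G\times^P \overline{P\cdot x}\to \overline{G\cdot x}$. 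Combining these yields
\[
\dim P\cdot x \ge 2\dim\mathfrak n+\dim\mathcal O-\dim\mathfrak n = \dim\mathcal O+\dim\mathfrak n ,
\]
so $P\cdot x$ is dense. (Equivalently, one can do the computation fibrewise: $C_P(x)$ surjects onto a subgroup of $C_L(z+e')$ with $e'\in\mathcal O$, and the $N$-part is controlled by the injectivity of $\mathrm{ad}(x)\colon \mathfrak{n}\to\mathfrak n$ modulo lower terms on the graded pieces.)

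For the second statement, for $x$ in the dense orbit we get
\[
\dim C_P(x) = \dim P - \dim\mathcal O - \dim\mathfrak n = \dim L - \dim\mathcal O .
\]
On the other hand, $x$ lies in the induced orbit, so
\[
\dim C_G(x) = \dim G - (2\dim\mathfrak n + \dim \mathcal O) = \dim L - \dim\mathcal O .
\]
Hence $C_P(x)$ is a closed subgroup of $C_G(x)$ of the same dimension. It follows that $C_P(x)^\circ = C_G(x)^\circ$, and therefore $C_G^\circ(x)\subseteq P$.

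The main obstacle is the equality $\dim G\cdot x = \dim G/P + \dim P\cdot x$, or equivalently that the dense $P$-orbit really lies in the induced $G$-orbit with the expected dimension. This is exactly the content of induction of orbits (the theorem of \cite{LS} applied to $\mathfrak{l}_z$ and the element $z+\mathcal O$), so I would cite \cite[Theorem 1.3]{LS} together with the fact that induction is compatible with translation by the central element $z$.
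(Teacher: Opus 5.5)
Your argument is correct, but it is organised differently from the paper's proof.

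The paper sets $M=C_G(z)$ and $P_M=P\cap M$. It applies \cite[Theorem 1.3]{LS} inside $\mathfrak m$, where $z$ is central, and gets two things from it: the dense $P_M$-orbit in $z+\overline{\mathcal O}+\mathfrak n_1$, and the containment $C_M^\circ(x)\subseteq P_M$. It then passes from $\mathfrak n_1$ to $\mathfrak n$ by noting that $C_G(x)\subseteq C_G(x_{ss})=M$ meets $U_2$ in a finite group. So $\dim P\cdot x$ gains exactly $\dim\mathfrak n_2$, and $C_G^\circ(x)=C_M^\circ(x)\subseteq P$ is inherited from LS.

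You instead extract one numerical input: a point $x\in z+\mathcal O+\mathfrak n$ with $\dim G\cdot x=\dim G-\dim L+\dim\mathcal O$. Everything else follows from two global counts. The bound $\dim G\cdot x\le\dim G/P+\dim P\cdot x$ gives density. The resulting equality $\dim C_P(x)=\dim L-\dim\mathcal O=\dim C_G(x)$ then gives $C_P^\circ(x)=C_G^\circ(x)$.

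This route is tidier. It uses only the existence and dimension of the induced orbit, never the centraliser clause of LS, and never the decomposition $P=P_MU_2$. The input itself, however, still comes from the paper's reduction. Take $y\in\mathrm{Ind}_{\mathfrak l}^{\mathfrak m}(\mathcal O)\cap(\mathcal O+\mathfrak n_1)$ by LS applied in $M$, and set $x=z+y$. Then use $C_G(z+y)=C_M(y)$ — the same fact the paper uses for the $U_2$-part — to get $\dim G\cdot x=\dim G-\dim M+(\dim M-\dim L+\dim\mathcal O)$.

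Three points need cleaning up.

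\begin{itemize}
\item The orbit you want is $G\cdot(z+\mathrm{Ind}_{\mathfrak l}^{\mathfrak m}(\mathcal O))$ with $\mathfrak m=\mathfrak c_{\mathfrak g}(z)$. Read literally, $\mathrm{Ind}_{\mathfrak l_z}^{\mathfrak g}(z+\mathcal O)$ induces the saturation $z+M\cdot\mathcal O$ from $\mathfrak m$, which in general has strictly smaller dimension. Your dimension formula shows you meant the right orbit.
\item The ``standard Lusztig--Spaltenstein'' inequality $\dim(G\cdot x\cap(z+\mathcal O+\mathfrak n))\le\dim P\cdot x$ is not justified by anything you cite. Since the left side sits inside $z+\overline{\mathcal O}+\mathfrak n$, it is about as strong as the density you are proving. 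You do not need it: the bound from $G\times^P\overline{P\cdot x}$ alone gives $\dim P\cdot x\ge\dim\mathcal O+\dim\mathfrak n$.
\item Drop the parenthetical claim that $\mathrm{ad}(x)$ is injective on $\mathfrak n$. It is false as stated: for $z=0$, $\mathcal O=0$ and $P\neq G$, the element $x\in\mathfrak n$ satisfies $\mathrm{ad}(x)(x)=0$.
\end{itemize}
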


\begin{proof}
Let $M = C_G(z)$ and define the parabolic subgroup $P_M = P \cap M$ of $M$; write $P = P_MU_2$ and $\mathfrak{n} = \mathfrak{n}_1 + \mathfrak{n}_2$, for $\mathfrak{n}_1$ the nilradical of $\mathfrak{p}_M$ and $\mathfrak{n}_2 = Lie(U_2)$. Then by \cite[Theorem 1.3]{LS},
\begin{equation}\label{BInductioneqn}
    (z +\text{Ind}_{\mathfrak{l}}^{\mathfrak{m}}(\mathcal O)) \cap \mathfrak{p}_M
\end{equation}
is the unique dense $P_M$-orbit in $z +\overline{\mathcal O} +\mathfrak{n}_1$, and moreover, for any representative $x$ of (\ref{BInductioneqn}), $C_M(x)^\circ = C_G(x)^\circ$ is contained in $P_M$. Note that $Ad(P)(x)$ is contained in $z +\overline{\mathcal O} +\mathfrak{n}$, and since $U_2 \cap C_G(x)$ is finite, we can calculate that the dimension of $Ad(P)(x)$ and $z +\overline{\mathcal O} +\mathfrak{n}$ are the same. Hence, since $z +\overline{\mathcal O} +\mathfrak{n}$ is irreducible, $Ad(P)(x)$ is the required dense $P$-orbit, and the second statement is clear.
\end{proof}

If we restrict $\hat{p}$ to the open subset 
\begin{equation}\label{GeneralisedGSRegeqn}
    \hat{S}^{reg} = G \times^P (\mathfrak{z} + \overline{\mathcal O}+\mathfrak{n})^{reg},
\end{equation}
where
\begin{equation}
    (\mathfrak{z} +\overline{\mathcal O} +\mathfrak{n})^{reg} = \bigcup_{z \in \mathfrak{z}}(z +\overline{\mathcal O} +\mathfrak{n})^{reg}
\end{equation}
then the image of $\hat{S}^{reg}$ is the sheet $S$. Moreover, by the construction of $\chi_S$ in \cite[5.1]{Borho2}, there is a commutative diagram
\begin{equation}\label{GSDiagrameqn}
        \begin{tikzcd}
         \hat{S}^{reg} \arrow[r, "\hat{\chi}_S"] \arrow[d, "\hat{p}"] & \mathfrak{z} \arrow[d] \\
            S \arrow[r, "\chi_S"]                                       & \mathfrak{c}_S.              
        \end{tikzcd}
    \end{equation}
\begin{remark}\label{DixmierGSrmk}
    In the case of primary interest for the main body of the work, $S$ is a Dixmier sheet, i.e. $\mathcal O = 0$; in this case $\hat{S} = G \times^P\mathfrak{r}$ and $\hat{S}^{reg} = G \times^P \mathfrak{r}^{reg}$, where $\mathfrak{r}$ is the solvable radical of $\mathfrak{p}$. 
\end{remark}

\begin{lemma}\label{GSDiagramlem}
    The restriction of the generalised Grothendieck-Springer morphism $\hat{p}$ to $\hat{S}^{reg}$ is finite.
\end{lemma}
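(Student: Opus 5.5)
The plan is to show that $\hat{p}|_{\hat{S}^{reg}}:\hat{S}^{reg}\rightarrow S$ is proper and quasi-finite, and hence finite. Quasi-finiteness will come from a dimension count on fibres. Properness will come from factoring through a closed embedding into $G/P \times S$.

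\emph{Properness.} First embed $\hat{S} = G \times^P (\mathfrak{z}+\overline{\mathcal O}+\mathfrak{n})$ as a closed subvariety of $G/P \times \mathfrak{g}$ via $P(g,x)\mapsto (gP, Ad_g(x))$. This is a closed embedding because $\mathfrak{z}+\overline{\mathcal O}+\mathfrak{n}$ is a closed $P$-stable subvariety of $\mathfrak{p}$. The map $\hat{p}$ is then the restriction of the projection $G/P\times\mathfrak{g}\rightarrow\mathfrak{g}$, which is proper since $G/P$ is projective. Next, I need $\hat{S}^{reg} = \hat{p}^{-1}(S)\cap \hat{S}$, i.e. that a point of $\hat{S}$ mapping into $S$ already lies in the regular locus. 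Once this holds, $\hat{S}^{reg}\rightarrow S$ is a base change of a proper map and so is proper.

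To prove $\hat{S}^{reg} = \hat{p}^{-1}(S)\cap \hat{S}$, take $y\in z+\overline{\mathcal O}+\mathfrak{n}$ with $y \in S$.
\begin{itemize}
\item Every $P$-orbit in $z+\overline{\mathcal O}+\mathfrak{n}$ other than the dense one has strictly smaller dimension.
\item The dimension of $C_G(y)$ is bounded above using $\dim C_P(y) \geq \dim P - \dim Ad(P)(y)$ together with $\dim G\cdot y \le \dim G/P + \dim Ad(P)(y)$.
\item For $y$ in the dense orbit, Lemma \ref{BInductionlem} and the induced-orbit dimension formula of \cite{LS} give equality, $\dim C_G(y)=\dim \mathfrak{l}$, which is the centraliser dimension on $S$.
\item For a non-dense $P$-orbit, $\dim G\cdot y$ is strictly smaller, so $\dim C_G(y) > \dim \mathfrak{l}$ and $y\notin S$.
\end{itemize}
This step is the main obstacle: it requires the inequality $\dim G\cdot y\le \dim G/P+\dim P\cdot y$ to be matched by the correct count on the dense orbit. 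I would handle it exactly as in \cite[Satz 3.1]{Borho2}.

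\emph{Quasi-finiteness.} The fibre of $\hat{p}$ over $x\in S$ is $\{gP : Ad_{g^{-1}}(x)\in(\mathfrak{z}+\overline{\mathcal O}+\mathfrak{n})^{reg}\}$, and $C_G(x)$ acts on it. I would show it is a finite union of $C_G(x)$-orbits, each of the form $C_G(x)/(C_G(x)\cap gPg^{-1})$. By Lemma \ref{BInductionlem}, $C_G^\circ(Ad_{g^{-1}}x)\subseteq P$, so each such orbit is a quotient of the finite component group and is finite. Finiteness of the number of orbits follows from a dimension count:
\begin{itemize}
\item $\dim \hat{S}^{reg} = \dim G/P + \dim\mathfrak{z}+\dim\overline{\mathcal O}+\dim\mathfrak{n}$, which equals $\dim S$ by Borho's dimension formula for sheets.
\item Since $\hat{p}$ is $G$-equivariant, all fibres over a fixed $G$-orbit are isomorphic.
\item By equivariance, the fibre dimension equals $\dim\{g P : Ad_{g^{-1}}x\in(\mathfrak{z}+\overline{\mathcal O}+\mathfrak{n})^{reg}\}$. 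The previous paragraph's count shows this locus is a union of finitely many $C_G(x)$-orbits, each of dimension $\dim C_G(x)-\dim C_P(\cdot)=0$.
\end{itemize}

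A proper quasi-finite morphism is finite by Zariski's main theorem, which completes the argument.
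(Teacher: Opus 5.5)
Your properness argument is sound. The closed embedding $\hat S\hookrightarrow G/P\times\mathfrak g$ together with $\hat p^{-1}(S)=\hat S^{reg}$ is enough. The second identity is easier than you suggest: for $y\in S$ one has $\dim C_P(y)\le \dim C_G(y)$, which forces $\dim Ad(P)(y)\ge \dim\mathfrak n+\dim\mathcal O=\dim(z+\overline{\mathcal O}+\mathfrak n)$. One correction: the centraliser dimension on $S$ is $\dim\mathfrak l-\dim\mathcal O$, which equals $\dim\mathfrak l$ only when $S$ is Dixmier. Your observation that each $C_G(x)$-orbit in a fibre is finite, because $C_G^\circ(y)\subseteq P$ by Lemma \ref{BInductionlem}, is exactly the first ingredient the paper uses.

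The gap is the claim that a fibre contains only finitely many $C_G(x)$-orbits. Your justification is circular. The third bullet invokes ``finitely many $C_G(x)$-orbits'' as if already shown, but nothing earlier proves it. The first two bullets cannot supply it either: $\dim\hat S^{reg}=\dim S$ plus equivariance only controls the generic fibre, and fibre dimension of a proper map can jump over special orbits such as the nilpotent one. Knowing that every orbit is finite only says $C_G^\circ(x)$ acts trivially on the fibre, which does not rule out a positive-dimensional fibre.

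The missing idea is the second observation in the paper's proof: $Ad(G)(x)\cap(\mathfrak z+\overline{\mathcal O}+\mathfrak n)^{reg}$ is a finite union of $P$-orbits. It rests on two facts.
\begin{itemize}
\item Only finitely many $z$ occur. If $y=Ad_{g^{-1}}(x)\in(z+\overline{\mathcal O}+\mathfrak n)^{reg}$, then the semisimple part of $y$ is conjugate to $z$, so $\chi(z)=\chi(x)$. Since $\mathfrak z\rightarrow\mathfrak t/W$ is finite, $z$ lies in a finite set.
\item Each such $z$ contributes one $C_G(x)$-orbit. For fixed $z$, $(z+\overline{\mathcal O}+\mathfrak n)^{reg}$ is a single $P$-orbit by Lemma \ref{BInductionlem}. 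So if $gP$ and $g'P$ in the fibre give the same $z$, then $Ad_{g'^{-1}}(x)=Ad_{pg^{-1}}(x)$ for some $p\in P$. Hence $c=g'pg^{-1}\in C_G(x)$ and $g'P=cgP$.
\end{itemize}
With this in place your argument closes: each fibre is a finite union of finite $C_G(x)$-orbits, and proper plus quasi-finite gives finite.
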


\begin{proof}
The properness of $\hat{p}$ follows in exactly the same way as the properness of the map $\Phi$ in \cite[7.9]{BK} (this is the version of the Grothendieck-Springer map considered in \cite{Broer1}). 

To see that $\hat{p}$ has finite fibres over $S$, it suffices to observe that, for any $x \in (\mathfrak{z}+\overline{\mathcal O}+\mathfrak{n})^{reg}$, the identity component of the $G$-centraliser $C_G^\circ(x)$ is contained in $P$ and $Ad(G)(x) \cap (\mathfrak{z}+\overline{\mathcal O} +\mathfrak{n})$ consists of finitely many $P$-orbits; then the arguments of \cite[Lemmata 7.8 \& 7.10]{BK} carry over to this setting. The first observation is contained in Lemma \ref{BInductionlem}, and the second can be deduced in the same way as \cite[Zusatz 5.5 (f)]{BK}.
\end{proof}

Unlike the case for the regular sheet, the diagram (\ref{GSDiagrameqn}) is no longer Cartesian in general, but it instead realises the normalisation of the fibre product. To avoid having to rule out the existence of embedded components in $S \times_{\mathfrak{c}_S} \mathfrak{z}$, we take the convention that the normalisation of an irreducible scheme is the normalisation of its reduced subscheme.

\begin{proposition}\label{GSCartesianprp}
    The morphism $\nu:\hat{S}^{reg} \rightarrow S \times_{\mathfrak{c}_S} \mathfrak{z}$ induced by the diagram (\ref{GSDiagrameqn}) is the normalisation map.
\end{proposition}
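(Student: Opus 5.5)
The plan is to show that $\nu$ is finite and birational onto the reduced subscheme of the (irreducible component of) $S \times_{\mathfrak{c}_S} \mathfrak{z}$ that it hits. Since $\hat{S}^{reg}$ is normal, this identifies $\nu$ with the normalisation map.

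\emph{Normality of the source.} $\hat{S}^{reg}$ is a fibre bundle over $G/P$ with fibre the open subset $(\mathfrak{z}+\overline{\mathcal O}+\mathfrak{n})^{reg}$ of $\mathfrak{z}+\overline{\mathcal O}+\mathfrak{n}$. By Lemma \ref{BInductionlem}, this fibre fibres over $\mathfrak{z}$ with each fibre a single $P$-orbit. So $(\mathfrak{z}+\overline{\mathcal O}+\mathfrak{n})^{reg}$ is smooth over $\mathfrak{z}$, as a family of homogeneous spaces of constant dimension; I will check flatness via constant fibre dimension, together with reducedness of the orbits. Hence $\hat{S}^{reg}$ is smooth, in particular normal and irreducible.

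\emph{Finiteness.} $\hat{p}$ is finite on $\hat{S}^{reg}$ by Lemma \ref{GSDiagramlem}, and $\nu$ composed with the separated projection $S\times_{\mathfrak{c}_S}\mathfrak{z}\to S$ equals $\hat{p}$. So $\nu$ is finite, hence closed, and its image is a closed irreducible subset. Dimensions agree: $\dim \hat{S}^{reg} = \dim S$, while $S\times_{\mathfrak{c}_S}\mathfrak{z}\to S$ is finite because $\mathfrak{z}\to\mathfrak{c}_S$ is finite. Since the target is pulled back from the finite surjective map $\mathfrak{z}\to\mathfrak{c}_S$, its components dominating $S$ all have dimension $\dim S$. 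So the image of $\nu$ is an irreducible component, and it is the unique component on which we take the normalisation.

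\emph{Birationality.} This is the main obstacle. I will work over the dense open locus of $S$ where the decomposition class is the dense one: there $x_{ss}$ is conjugate into $\mathfrak{z}^{rs}$ (for $\mathfrak{z}^{rs}=\{z: C_G(z)=C_G(L\text{-generic})\}$) and $\mathfrak{z}^{rs}\to\mathfrak{z}^{rs}/W_L$ is étale. Over such a point $x$ with $x_{ss}=z$, I need to check two things.

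First, the fibre of $S\times_{\mathfrak{c}_S}\mathfrak{z}$ over $x$ is the reduced set $W_L\cdot z$, of cardinality $|W_L|$, and it is étale locally trivial. Second, $\hat{p}^{-1}(x)\cap \hat{\chi}_S^{-1}(wz)$ is a single point for each $w$. For the second point, a point of $\hat{p}^{-1}(x)$ is a coset $gP$ with $Ad_{g^{-1}}x\in (wz+\overline{\mathcal O}+\mathfrak{n})^{reg}$. Then $Ad_{g^{-1}}z$ is the semisimple part, which lies in $wz+\mathfrak{n}$; conjugating by $P$ we may assume it equals $wz$. The parabolic $P$ is then determined inside $C_G(wz)=L^{w}$-containing data, as in the proof of Lemma \ref{BInductionlem} via $M=C_G(z)=L$. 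Uniqueness up to $C_G(x)$ follows because $C_G(x)\subseteq C_G(z)=L\subseteq P$ generically.

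Thus $\nu$ is bijective over a dense open set of the target component, and this open set is reduced and normal because it is étale over $S$. Hence $\nu$ is birational there, and by Zariski's main theorem $\nu$ is the normalisation of that component's reduced subscheme. Here I use the paper's convention to ignore embedded or nonreduced structure, and the fact that components not in the image would be excluded by the irreducibility convention.
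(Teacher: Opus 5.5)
Your overall strategy matches the paper's proof. You get finiteness of $\nu$ through $\hat{p}$, injectivity over the generic locus from $C_G(x)\leq C_G(x_{ss})=L\leq P$ after conjugation, normality of $\hat{S}^{reg}$, and then Zariski's main theorem. Your direct proof that $\hat{S}^{reg}$ is smooth is a valid and simpler substitute for the paper's Katsylo-slice argument. The cleanest justification is that $(\mathfrak{z}+\overline{\mathcal O}+\mathfrak{n})^{reg}$ is open in $\mathfrak{z}+\overline{\mathcal O}+\mathfrak{n}$ and lies inside the smooth variety $\mathfrak{z}+\mathcal O+\mathfrak{n}$; this avoids a flatness-from-fibre-dimension argument.

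The genuine gap is irreducibility of $S\times_{\mathfrak{c}_S}\mathfrak{z}$. Your dimension count only shows that the image of $\nu$ is \emph{one} irreducible component of dimension $\dim S$. It does not exclude other components. The paper's convention does not let you discard them either: it only says that for an \emph{irreducible} scheme one normalises its reduced subscheme, to avoid worrying about embedded components. Since $\hat{S}^{reg}$ is irreducible, a second component of $S\times_{\mathfrak{c}_S}\mathfrak{z}$ would mean $\nu$ is not its normalisation. So this step is part of the content of the statement.

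What is missing is surjectivity of $\nu$ on $\mathbb{C}$-points, which is the first step of the paper's proof. Given $(y,z)$ with $\chi_S(y)=W_Lz$, Borho's construction of $\chi_S$ gives $g\in G$ and $x\in(z+\overline{\mathcal O}+\mathfrak{n})^{reg}$ with $y=Ad_g(x)$. Then $(y,z)=\nu(P(g,x))$, and irreducibility of the target follows from that of $\hat{S}^{reg}$.

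This is not a formal consequence of surjectivity of $\hat{p}$ onto $S$. That only gives a preimage over $(y,wz)$ for \emph{some} $w\in W_L$. To reach every $w$ you need the $G$-saturation of $(wz+\overline{\mathcal O}+\mathfrak{n})^{reg}$ to be unchanged when $\mathfrak{n}$ is replaced by $Ad_n(\mathfrak{n})$ for $n\in N_G(L)$, i.e.\ independence of induction from the choice of parabolic. Borho's construction supplies exactly this.

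Relatedly, in your birationality step you assert that $\hat{p}^{-1}(x)\cap\hat{\chi}_S^{-1}(wz)$ is a single point for each $w$, but you only prove it has at most one point. Existence there is the same missing surjectivity. Once you have it, every component of the target contains points of the image, and the conclusion follows.
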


\begin{proof}
First, we show that the morphism is surjective on $\mathbb{C}$-points. Let $(y,z)$ be a $\mathbb{C}$-point of $S \times_{\mathfrak{c}_S} \mathfrak{z}$, i.e. $y \in S$ and $z \in \mathfrak{z}$ with $\chi_S(y) = W_Lz$. By the construction in \cite[5.1]{Borho2}, we have $y = Ad_g(x)$ for some $g \in G$ and $x \in (z+\overline{\mathcal O}+\mathfrak{n})^{reg}$; so $(y,z) = \nu(P(g,x))$. Hence $\nu$ is surjective, and in particular $S \times_{\mathfrak{c}_S} \mathfrak{z}$ is irreducible.

Next, we show that $\nu$ is injective on the open set 
\begin{equation}\label{GSRegSemisimpeqn}
    \hat{S}^{rs}=G\times^P(\mathfrak{z}^{rs}+\overline{\mathcal O} +\mathfrak{n})^{reg} = G \times^PAd(P)(\mathfrak{z}^{rs}+\mathcal O),
\end{equation}
where $\mathfrak{z}^{rs}$ is the locus of points $z \in \mathfrak{z}$ such that $C_G(z) \leq L$, as in (\ref{WeylLeviActioneqn}); note the second equality in (\ref{GSRegSemisimpeqn}) follows from the proof of Lemma \ref{BInductionlem}. Suppose $(y,z)$ is a point in the image of $\hat{S}^{rs}$, with $Ad_g(x) = y$ for $g \in G$ and $x \in (\mathfrak{z}^{rs}+\mathcal O)$. Every point in the fibre $\nu^{-1}(y,z)$ is of the form $P(g',x)$ for some $g' \in G$ such that $Ad_{g'}(x) = y$ by the uniqueness statement in Lemma \ref{BInductionlem}. Then 
\begin{equation}\label{GSCartesian1eqn}
    g^{-1}g' \in C_G(x) \leq C_G(z) = L \leq P,
\end{equation}
so $P(g',x) = P(g,x)$.

The morphism is finite since it factors through the finite morphism $\hat{p}$. Hence, by Zariski's main theorem, $\nu$ is the normalisation provided that $\hat{S}^{reg}$ is normal. Let $\mathfrak{K}$ be a Katsylo slice for $S$ and consider
\begin{equation}\label{GSCartesian2eqn}
    \hat{\mathfrak{K}} =  \hat{S} \times_S \mathfrak{K}.
\end{equation}
By the proof of \cite[Lemma 4.1]{Losev4}, the restriction of $\hat{\chi}_S$ to $\hat{\mathfrak{K}}$ is \'{e}tale, so  $\hat{\mathfrak{K}}$ is non-singular since $\mathfrak{z}$ is. Moreover, $G \times \mathfrak{K} \rightarrow S$ pulls back to a map $G \times \hat{\mathfrak{K}} \rightarrow \hat{S}^{reg}$ which is smooth by Proposition \ref{KatsyloSliceprp}. Hence, $\hat{S}^{reg}$ is non-singular, and this proves the proposition.
\end{proof}

\begin{remark}\label{GSCartesianrmk}
    Proposition \ref{GSCartesianprp}, together with \cite[Corollaries 4.6 \& 5.3(i)]{Broer1}, show that this version of the generalised Grothendieck-Springer map is the same as that of \cite{Broer1} once restricted to $\hat{S}^{reg}$.
\end{remark}

Assume now that $S$ is a non-singular sheet. We can use Proposition \ref{GSCartesianprp} and an alternative description of the normalisation of $S \times_{\mathfrak{c}_S} \mathfrak{z}$ to give another description of the Katsylo group. Fix a nilpotent $e \in \mathfrak{g}$, let $\mathfrak{K}$ be a Katsylo slice to $e$ for $S$, and let $F$ be the corresponding Katsylo group. We recall the $F$-inertia group scheme $\mathcal F$ on $\mathfrak{K}$ defined in Definition \ref{FInertiadef}, with fibres $\mathcal F_y = Stab_F(y)$.

\begin{theorem}\label{KatsylogpPolarisationthm}
    For any $x \in (\mathfrak{z} +\overline{\mathcal O} +\mathfrak{n})^{reg}$, $C_P(x)$ is a normal subgroup of $C_G(x)$ which depends only on $x$ and $S$ (and not on the parabolic $P$). Moreover, for any $y \in \mathfrak{K}$ and $g \in G$ with $Ad_g(y) \in (\mathfrak{z} +\overline{\mathcal O} +\mathfrak{n})^{reg}$, there is a canonical isomorphism $\mathcal F_y \cong C_G(y)/C_{g^{-1}Pg}(y)$. In particular, $F$ is isomorphic to $C_G(e)/C_P(e)$.
\end{theorem}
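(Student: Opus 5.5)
The plan is to compare the $P$-centraliser with the smooth centraliser $\mathcal I^{sm}_S$, and then read off $\mathcal F_y$ from the homomorphism $\sigma$ of Proposition \ref{QuasiSteinHomprp}.

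\textbf{Step 1: identify $C_P(x)$ with the smooth centraliser.} Fix $x \in (\mathfrak{z}+\overline{\mathcal O}+\mathfrak{n})^{reg}$. Consider the $\hat S^{reg}$-group scheme $\hat{\mathcal I}$ whose fibre over $P(g,x')$ is $I_g(C_P(x'))$; it is the stabiliser of the $G$-action on $\hat S^{reg}$. I would first show $\hat{\mathcal I}$ is smooth over $\hat S^{reg}$. Pulling back along the étale map $\hat{\mathfrak K}\to\mathfrak z$ from the proof of Proposition \ref{GSCartesianprp}, the $G$-action groupoid on $\hat S^{reg}$ restricted to $\hat{\mathfrak K}$ has stabiliser $\hat{\mathcal I}_{\hat{\mathfrak K}}$. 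Since $\hat p$ is finite and $\hat S^{reg}$ is non-singular, the argument of Lemmas \ref{RADecomplem}--\ref{RAQuasiSteinlem} applies: the restricted action scheme is smooth over $\hat{\mathfrak K}$, and generically its stabiliser is all of $C_G$. On the semisimple locus, Lemma \ref{BInductionlem} gives $C_G(x)\le L\le P$ for $x\in \mathfrak z^{rs}+\mathcal O$, so $C_P=C_G$ there. Hence $\hat p^*\mathcal I_S\supseteq \hat{\mathcal I}$, with equality over the dense open set $\hat S^{rs}$.

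Next I would argue that $\hat{\mathcal I}$ is smooth with fibres of constant dimension $\dim C_G(x)$. This holds because $C_G^\circ(x)\subseteq P$ (Lemma \ref{BInductionlem}), and because $\hat{\mathcal I}$ is the stabiliser of a free-modulo-stabiliser action with all orbits of the same dimension on a smooth variety. The maximality statement of Proposition \ref{SmCentraliserPropsprp} then gives $\hat{\mathcal I}\subseteq \hat p^*\mathcal I^{sm}_S$. Conversely, $\hat p^*\mathcal I^{sm}_S$ is smooth and agrees with $\hat{\mathcal I}$ generically, and $\hat{\mathcal I}$ is closed in $\hat p^*\mathcal I_S$ (because $P$ is closed). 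Therefore $\hat p^*\mathcal I^{sm}_S\subseteq\hat{\mathcal I}$, giving $C_P(x)=\mathcal I^{sm}_{S,x}$.

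Since $\mathcal I^{sm}_S$ is defined intrinsically from $S$ and is normal in $\mathcal I_S$ (Proposition \ref{SmCentraliserPropsprp}), this shows both that $C_P(x)$ is normal in $C_G(x)$ and that it is independent of $P$.

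\textbf{Step 2: identify the quotient with $\mathcal F_y$.} For $y\in\mathfrak K$ and $g$ with $Ad_g(y)\in(\mathfrak z+\overline{\mathcal O}+\mathfrak n)^{reg}$, $G$-equivariance of $\mathcal I^{sm}_S$ transports Step 1 to $C_{g^{-1}Pg}(y)=\mathcal I^{sm}_{S,y}$. By Corollary \ref{SmoothCentralisercor} and the surjectivity in Proposition \ref{QuasiSteinHomprp}, $\sigma_y$ induces an isomorphism $\mathcal I_{\mathfrak K,y}/\mathcal I^{sm}_{\mathfrak K,y}\cong\mathcal F_y$. Here I need $\mathcal I^{sm}_S|_{\mathfrak K}=\mathcal I^{sm}_{\mathfrak K}$, which holds by the descent construction in Proposition \ref{SmCentraliserDescentprp}. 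Canonicity follows because $\sigma$ is canonical and Step 1 does not depend on $g$.

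\textbf{Step 3: the final statement.} Take $y=e$. Since $e$ is the fixed point of the contracting Kazhdan action, it lies in every closed $F$-stable subset, so $\mathcal F_e=F$. It remains to see that some conjugate of $e$ lies in $(\mathfrak z+\overline{\mathcal O}+\mathfrak n)^{reg}$. Indeed $e\in S$ is nilpotent, and it is conjugate into the fibre over $z=0$, namely the dense $P$-orbit of $\overline{\mathcal O}+\mathfrak n$, which represents $\mathrm{Ind}(\mathcal O)$. This gives $F\cong C_G(e)/C_P(e)$.

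\textbf{Main obstacle.} The hardest step is the smoothness of $\hat{\mathcal I}$ in Step 1, which is what allows maximality to be applied. The difficulty is that $C_P(x)$ may be disconnected, so connectedness does not settle it. If the direct argument fails, I would instead prove smoothness using the Cartesian diagram of Proposition \ref{StackyGSprp} in the Dixmier case, and Losev's étaleness of $\hat{\mathfrak K}\to\mathfrak z$ in general, by exhibiting $[\hat S^{reg}/G]\to\mathfrak z$ as smooth with inertia $\hat{\mathcal I}$.
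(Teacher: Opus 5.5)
Your strategy is sound: show that the $G$-stabiliser scheme $\hat{\mathcal I}$ on $\hat S^{reg}$ is smooth, then squeeze it against $\hat p^*\mathcal I^{sm}_S$, using maximality in one direction and density plus closedness of $P$ in the other. Once $C_P(x)=\mathcal I^{sm}_{S,x}$ is known, your Steps 2 and 3 are how the theorem follows.

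\textbf{The gap.} The step carrying all the weight, smoothness of $\hat{\mathcal I}$, is not established by what you wrote. You appeal to the principle that an action on a smooth variety with equidimensional orbits, together with $C_G^\circ(x)\subseteq P$, has a smooth stabiliser scheme. That principle is false. The $G$-action on $S$ itself has equidimensional orbits on a smooth variety, yet $\mathcal I_S$ is not smooth whenever $F\neq 1$ (Corollary \ref{CentraliserKatsylocor}). Equidimensionality only controls identity components, whereas what must be excluded is a jump in component groups along special fibres. Rerunning Lemmas \ref{RADecomplem}--\ref{RAQuasiSteinlem} over $\hat{\mathfrak K}$ does not exclude it either. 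It only produces a smooth map from $\hat{\mathcal I}_{\hat{\mathfrak K}}$ onto the inertia of some finite group action, which you would still have to show is trivial.

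\textbf{The missing input.} Unlike $S\to\mathfrak c_S$, the orbit map of $\hat S^{reg}$ has reduced fibres, each a single orbit. Here $\hat S^{reg}=G\times^P U$ with $U=(\mathfrak z+\overline{\mathcal O}+\mathfrak n)^{reg}$ open in $\mathfrak z\times(\overline{\mathcal O}+\mathfrak n)$. So each scheme-theoretic fibre $U_z$ of $U\to\mathfrak z$ is reduced and, by Lemma \ref{BInductionlem}, a single $P$-orbit. Apply the fibrewise flatness criterion over $U$ (via the second projection) to the map $P\times U\to U\times_{\mathfrak z}U$, $(q,u)\mapsto(Ad_q(u),u)$. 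It is flat with fibres torsors under $C_P(u)$, hence smooth. The $P$-stabiliser scheme on $U$ is its base change along the diagonal, hence smooth, and $\hat{\mathcal I}$ is induced from it via $G\times^P(-)$.

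Alternatively, your fallback works, and more directly than you suggest. Proposition \ref{StackyGSprp} is proved for every non-singular sheet, not only Dixmier ones, so $[\hat S^{reg}/G]\cong[S/G]\times_{\mathcal B}\mathfrak z$. Since $\mathfrak z$ is a scheme, the inertia of the left-hand side, which pulls back to $\hat{\mathcal I}$, is the base change of the relative inertia $\mathcal I^{sm}_{S,G}$ of the gerbe $\rho_{S,G}$. This gives $\hat{\mathcal I}=\hat p^*\mathcal I^{sm}_S$ outright, with no maximality step needed. Mere smoothness of $[\hat S^{reg}/G]\to\mathfrak z$, as you phrase it, would not suffice; the gerbe property is what matters.

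\textbf{Smaller points.}
\begin{itemize}
\item Maximality in Proposition \ref{SmCentraliserPropsprp} is a statement over $S$. You must rerun its density argument over $\hat S^{reg}$, using that $\hat p^{-1}(S^\circ)$ is dense, rather than cite it.
\item $C_G(x)\le L$ for $x\in\mathfrak z^{rs}+\mathcal O$ comes from $C_G(x)\le C_G(x_{ss})$, not from Lemma \ref{BInductionlem}.
\item $\mathcal F_e=F$ holds because $F$ commutes with the contracting Kazhdan action, so it permutes that action's fixed points, of which $e$ is the only one.
\end{itemize}

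\textbf{Comparison with the paper.} The paper proves only your ``converse'' inclusion $\mathcal I^{sm}_{S,x}\le C_P(x)$, by the same density and closedness argument. It obtains equality by an index count. The fibre of the normalisation $\nu$ over $(y,z)$ has $|C_G(x)/C_P(x)|$ points. The decomposition $\hat{\mathfrak K}\cong\coprod_{a\in F}\mathfrak z_a$ shows the same fibre has $|Stab_F(y)|=[C_G(x):\mathcal I^{sm}_{S,x}]$ points. Your route, repaired via the fibrewise criterion, avoids the normalisation and the morphism $\mathcal E$ altogether. The gerbe version is shortest, but it leans on Proposition \ref{StackyGSprp}, which itself depends on Corollary \ref{KatsyloGpWeylGpcor}.
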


A weaker version of this statement can be deduced from \cite[Proposition 4.7 (3)]{Losev4}, but that in particular does not include the normality of $C_P(x)$ in $C_G(x)$.

The theorem is an immediate corollary of the following characterisation of the smooth centraliser $\mathcal I_S^{sm}$ over the locus $(\mathfrak{z} +\overline{\mathcal O} +\mathfrak{n})^{reg} \subseteq S$ (see Proposition \ref{QuasiSteinHomprp}, Corollary \ref{SmoothCentralisercor} and Proposition \ref{SmCentraliserDescentprp} in Section \ref{SmoothCentralisersbn}).

\begin{proposition}\label{ParabolicCentraliserprp}
    For $x \in (\mathfrak{z} +\overline{\mathcal O} +\mathfrak{n})^{reg}$, $\mathcal I_{S,x}^{sm} = C_P(x)$, as a subgroup of $\mathcal I_x = C_G(x)$.
\end{proposition}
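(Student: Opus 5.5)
We want to show that for $x \in (\mathfrak{z}+\overline{\mathcal O}+\mathfrak{n})^{reg}$ the fibre $\mathcal I^{sm}_{S,x}$ equals $C_P(x)$. The plan is to build a second smooth subgroup scheme of the centraliser out of the generalised Grothendieck-Springer space. We then compare it with $\mathcal I^{sm}_S$, using the maximality of Proposition \ref{SmCentraliserPropsprp} for one inclusion and a dimension count for the other.

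\textbf{Step 1: a $P$-centraliser group scheme.} Over $Y := (\mathfrak{z}+\overline{\mathcal O}+\mathfrak{n})^{reg}$ consider the $Y$-group scheme $\mathcal C_P$ with fibres $C_P(y)$, the stabiliser scheme of the $P$-action on $Y$. By Lemma \ref{BInductionlem}, $C_G^\circ(y) \leq P$, so $C_P(y)$ contains $C_G^\circ(y)$ and has dimension $\dim S$-independent, namely $\dim \mathcal I_S$ as the relative dimension. We must show $\mathcal C_P$ is smooth over $Y$. For this we use Proposition \ref{GSCartesianprp} and its proof: $\hat S^{reg}$ is non-singular, and $\hat\chi_S:\hat S^{reg}\to\mathfrak{z}$ is smooth (étale on $\hat{\mathfrak K}$ and $G$-equivariant). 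The map $Y \to \mathfrak{z}$ is then smooth, and its fibres $(z+\overline{\mathcal O}+\mathfrak{n})^{reg}$ are single $P$-orbits. So $P\times Y \to Y\times_{\mathfrak{z}} Y$ is a surjective morphism between smooth varieties whose fibres are the cosets of $\mathcal C_P$, all of constant dimension. Generic smoothness together with $P$-equivariance gives flatness, and hence smoothness, of $\mathcal C_P$ over $Y$.

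\textbf{Step 2: comparison.} Since $\mathcal C_P$ is a smooth subgroup scheme of $\mathcal I_S|_Y$, maximality (Proposition \ref{SmCentraliserPropsprp}, applied after pulling back along the smooth map $Y\to S$, or via descent through $G\times^P Y=\hat S^{reg}\to S$) gives $C_P(x) \subseteq \mathcal I^{sm}_{S,x}$. For the reverse inclusion, first note that both are smooth closed subgroup schemes of the same relative dimension. Next, over the dense open $Y^{rs} = Ad(P)(\mathfrak{z}^{rs}+\mathcal O)$ they agree. Indeed, there $C_G(x) \leq C_G(z)= L \leq P$, so $C_P(x)=C_G(x)$, and by Proposition \ref{SmCentraliserPropsprp} (where $\mathcal I^{sm}=\mathcal I$ generically) the comparison is immediate. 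Hence $\mathcal C_P$ is an open and closed subgroup of $\mathcal I^{sm}_S|_Y$ which agrees with it on a dense open subset. Every connected component of $\mathcal I^{sm}_S|_Y$ dominates $Y$ by smoothness, so it meets the locus over $Y^{rs}$ and therefore lies in $\mathcal C_P$. This gives equality.

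\textbf{Main obstacle.} The delicate step is the smoothness of $\mathcal C_P$ in Step 1, which requires proving flatness of the stabiliser scheme of $P$ on $Y$. A second issue arises in Step 2: the equality over $Y^{rs}$ must hold even when $\mathcal O\neq 0$, where $C_G(x)$ need not lie in $L$ and only $C_G(x)\le C_G(x_{ss})$ is available. If the orbit argument proves awkward, I would instead identify $\mathcal C_P$ with the pullback of the stabiliser of the $G$-action on $\hat S^{reg}$ along $Y\hookrightarrow\hat S^{reg}$. That stabiliser is $\hat p^*\mathcal I_S$ cut down to the fibrewise stabiliser of the point of $G/P$. Its smoothness would then follow because $\hat p$ is finite and étale-locally an $F$-quotient description, as in Proposition \ref{StackyGSprp}, although that result itself uses the present proposition, so it can only serve as a sanity check.
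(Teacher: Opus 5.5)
Your argument is sound in outline, and for the key inclusion $C_P(x)\le \mathcal I^{sm}_{S,x}$ it takes a genuinely different route from the paper. The paper proves $\mathcal I^{sm}_{S,x}\le C_P(x)$ exactly as in your ``reverse inclusion'': the part of $\mathcal I^{sm}_S|_Y$ over $Y^{rs}$ is dense because $\mathcal I^{sm}_S|_Y$ is smooth over $Y$, that part lies in $P\times Y$, and $P$ is closed. That step needs neither the smoothness of $\mathcal C_P$ nor your open-and-closed argument. For equality the paper does not use maximality at all. It uses an index count on a fibre $\nu^{-1}(y,z)$ of the normalisation $\hat{\mathfrak K}\to \mathfrak K\times_{\mathfrak c_S}\mathfrak z$, counting its points twice:
\begin{itemize}
\item as the $P$-orbits $P(gk,x)$ with $k\in C_G(x)$, which gives $|C_G(x)/C_P(x)|$;
\item via $\hat{\mathfrak K}\cong\coprod_{a\in F}\mathfrak z_a$, which gives $|Stab_F(y)|=[C_G(x):\mathcal I^{sm}_{S,x}]$ by Proposition~\ref{QuasiSteinHomprp}.
\end{itemize}
Your route replaces this global count, with its reliance on the Katsylo morphism $\mathcal E$ and the decomposition of the normalisation, by a local structural fact: smoothness of the $P$-stabiliser scheme over $Y$. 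It also gives equality directly as group schemes over $Y$. The paper's route produces the index identity of Theorem~\ref{KatsylogpPolarisationthm} along the way and sets up the counting reused in Lemma~\ref{BirIndlem}, though you recover that identity afterwards from Proposition~\ref{QuasiSteinHomprp}.

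Three details need fixing.

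First, ``generic smoothness together with $P$-equivariance'' does not give flatness of $P\times Y\to Y\times_{\mathfrak z}Y$. The $P\times P$-orbits on the target are the full fibres over $\mathfrak z$, so you only get smoothness over the preimage of a dense open subset of $\mathfrak z$, which can miss special values such as $z=0$. Use miracle flatness instead. Source and target are smooth, and every fibre is a $C_P(y)$-torsor of dimension $\dim P-\dim Y+\dim\mathfrak z=\dim(P\times Y)-\dim(Y\times_{\mathfrak z}Y)$. So the map is flat, and its fibres are smooth in characteristic $0$. Equivalently, for $\pi:Y\to\mathfrak z$, surjectivity of the differential reduces to $\ker(d\pi_{y'})=[\mathfrak p,y']$, which is smoothness of $\pi$ plus the fact that its fibres are $P$-orbits. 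Smoothness of $\pi$ is even elementary: $Y$ is open in $\mathfrak z\times\overline{\mathcal O}\times\mathfrak n$, so $\pi$ is flat, and its fibres are single $P$-orbits.

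Second, $Y\hookrightarrow S$ is not a smooth map, and $\hat p$ is ramified wherever $p:\mathfrak z\to\mathcal B$ is. So Proposition~\ref{SmCentraliserPropsprp} cannot be invoked by pullback. Instead, rerun its proof over $Y$: $\mathcal C_P$ is flat over the irreducible $Y$, so its restriction to the non-empty open set $Y\cap S^\circ$ is dense; there $\mathcal I_S=\mathcal I^{sm}_S$; and $\mathcal I^{sm}_S|_Y$ is closed.

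Third, your worry about $\mathcal O\neq 0$ is unfounded. For $z\in\mathfrak z^{rs}$ and $n\in\mathcal O\subseteq\mathfrak l$, $z$ is central in $\mathfrak l$, so $(z+n)_{ss}=z$ and $C_G(z+n)\le C_G(z)=L$. Since $Y^{rs}=Ad(P)(\mathfrak z^{rs}+\mathcal O)$, this gives $C_G(x)\le P$ on all of $Y^{rs}$.
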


\begin{proof} 
To ease notation, we will denote $U = (\mathfrak{z} +\overline{\mathcal O} +\mathfrak{n})^{reg}$ and $U^{rs} = (\mathfrak{z}^{rs} +\overline{\mathcal O} +\mathfrak{n})^{reg}$, where $\mathfrak{z}^{rs}$ is defined as in (\ref{WeylLeviActioneqn}). We first show that $\mathcal I_{S,x}^{sm} \leq C_P(x)$, i.e. that there is a factorisation
\begin{equation}\label{ParabolicCentraliser1eqn}
  \begin{tikzcd}
\mathcal I^{sm}_{U} \arrow[r, hook] \arrow[rd, hook] & P\times U \arrow[d, hook] \\
& G\times U                
\end{tikzcd}
\end{equation}
of group schemes; the argument is the same as that of \cite[Lemme 2.4.3]{Ngo2}. First, by the proof of Proposition \ref{GSCartesianprp}, we have the required factorisation over the open subset $U^{rs} \subseteq U$. Then since $\mathcal I_{U}^{sm}$ is smooth over $U$, in particular $\mathcal I_{U^{rs}}^{sm}$ is dense as a subscheme of $\mathcal I_{U}^{sm}$; so since $P$ is a closed subvariety of $G$, this factorisation extends over all of $U$.

Since $\mathcal I_{S,x}^{sm}$ is a finite index subgroup of $C_G(x)$, to complete the proof of the proposition, it suffices to show that $[C_G(x):C_P(x)] = [C_G(x): \mathcal I_{S,x}^{sm}]$, i.e.  $[C_G(x):C_P(x)] = |Stab_F(y)|$ for any $y \in \mathfrak{K} \cap Ad(G)(x)$ by Proposition \ref{QuasiSteinHomprp}. To do this, we consider the pullback of $\nu:\hat{S}^{reg} \rightarrow S \times_{\mathfrak{c}_S} \mathfrak{z}$ under the map $Ad:G \times \mathfrak{K} \rightarrow S$; using the notation of (\ref{GSCartesian2eqn}), this is the map
\begin{equation}\label{ParabolicCentraliser2eqn}
    \hat{\nu}:G \times\hat{\mathfrak{K}} \rightarrow G \times (\mathfrak{K} \times_{\mathfrak{c}_S} \mathfrak{z}).
\end{equation}
This is a normalisation map, so the restriction $\hat{\nu}|_{\hat{\mathfrak{K}}} = \nu|_{\hat{\mathfrak{K}}}:\hat{\mathfrak{K}} \rightarrow \mathfrak{K} \times_{\mathfrak{c}_S} \mathfrak{z}$ is also a normalisation. We count the number of $\mathbb{C}$-points in the fibres of $\nu$ in two different ways. 

Specifically, let $x \in U$ and $z = \pi_P(x) \in \mathfrak{z}$, and let $y = \mathcal E(z) \in \mathfrak{K}$ where $\mathcal E$ is the morphism defined in Lemma \ref{KatsyloMorphismlem}. Let $g \in G$ be such that $Ad_g(x) = y$, which exists since $x$ and $y$ map to the same point in $\mathfrak{c}_S$. Consider the $\mathbb{C}$-point $(y,z)$ of $ \mathfrak{K} \times_{\mathfrak{c}_S} \mathfrak{z}$; as in the proof of Proposition \ref{GSCartesianprp}, the $\mathbb{C}$-points in the fibre $\nu^{-1}(y,z)$ are exactly the $P$-orbits in $G\times U$ of the form $P(gk,x)$ for $k \in C_G(x)$. Moreover, two such orbits $P(gk_1,x)$ and $P(gk_2,x)$ coincide exactly when $k_1C_P(x)=k_2C_P(x)$; so $|\nu^{-1}(y,z)| = |C_G(x)/C_P(x)|$.

Now, we observe that $\mathfrak{K} \times_{\mathfrak{c}_S} \mathfrak{z}$ embeds into $\mathfrak{K} \times \mathfrak{z}$ as a closed subscheme, whose underlying reduced subscheme is
\begin{equation}\label{ParabolicCentraliser3eqn}
    ( \mathfrak{K} \times_{\mathfrak{c}_S} \mathfrak{z})_{red} = \bigcup_{a \in F}\mathfrak{z}_a
\end{equation}
where $\mathfrak{z}_a$ is the graph of the morphism $\mathfrak{z} \rightarrow \mathfrak{K}$ given by
$$z \mapsto a\mathcal E(z).$$
But then by uniqueness of normalisation, there is an isomorphism
\begin{equation}\label{ParabolicCentraliser4eqn}
    \hat{\mathfrak{K}} \cong \coprod_{a \in F} \mathfrak{z}_a.
\end{equation}

Under the identifications (\ref{ParabolicCentraliser3eqn}) and (\ref{ParabolicCentraliser4eqn}), the $\mathbb{C}$-point $(y,z)$ of $\mathfrak{K} \times_{\mathfrak{c}_S} \mathfrak{z}$ corresponds to the $\mathbb{C}$-point $(y,z)$ of $\mathfrak{z}_{\text{id}_F}$, and the number of $\mathbb{C}$-points in the fibre $\nu^{-1}(y,z)$ is the number of components $\mathfrak{z}_a$ containing $(y,z)$. But this is exactly $|Stab_F(y)|$, so we have the required equality $[C_G(x):C_P(x)] = |Stab_F(x)|$.
\end{proof}

If $S$ is a Dixmier sheet, we can deduce a result about polarisations for $S$; we recall the definition.

\begin{definition}\label{Polarisationdef}
    Let $x \in \mathfrak{g}$. A \emph{polarisation of $x$} is a subalgebra $\mathfrak{p}$ of $\mathfrak{g}$ such that:
    \begin{itemize}
        \item[(i)] $x$ is orthogonal to the derived subalgebra $[\mathfrak{p},\mathfrak{p}]$ with respect to the Killing form on $\mathfrak{g}$;
        \item[(ii)] $\mathfrak{p}$ has maximal dimension subject to condition (i).
    \end{itemize}
    A \emph{polarisation of $S$} is a subalgebra $\mathfrak{p}$ of $\mathfrak{g}$ such that for each $G$-orbit contained in $S$ there is a representative $x$ for which $\mathfrak{p}$ is a polarisation.
\end{definition}

\begin{remark}\label{Polarisationrmk}
    For any $x \in \mathfrak{g}$, every polarisation of $x$ is a parabolic subalgebra of $\mathfrak{g}$ containing $x$ \cite[Theorem 2.2]{OW}.

    If $L$ is a Levi subgroup of $G$ such that $L$ is conjugate to the centraliser of any semisimple element in $S^{ss}$ (i.e. $(L,0)$ is decomposition data for $S$), then any choice of parabolic subalgebra $\mathfrak{p}$ with $\mathfrak{l}$ as a Levi factor is a polarisation of $S$. More specifically, $\mathfrak{p}$ is a polarisation for any element of $\mathfrak{r}^{reg}$, where $\mathfrak{r}$ is the solvable radical of $\mathfrak{p}$ \cite[Lemma 6.5]{Borho1}. Conversely, every polarisation of $S$ arises in this way.
\end{remark}  

Fix a nilpotent element $e \in S$, and let $\mathscr P$ be the set of parabolic subalgebras $\mathfrak{p}$ such that $\mathfrak{p}$ is a polarisation of both $e$ and $S$. There is an action of $C_G(e)$ on $\mathscr P$ by conjugation and for any $\mathfrak{p} \in \mathscr P$ the stabiliser of $\mathfrak{p}$ under this action is $C_P(e)$. For $\mathfrak{p} \in \mathscr P$, let $P$ be the corresponding parabolic subgroup of $G$ and denote by $\mathscr P_{\mathfrak{p}}$ the orbit of $\mathfrak{p}$ under $C_G(e)$; note that the set $\mathscr P_{\mathfrak{p}}$ does not depend on $e$ but only on $S$ and $\mathfrak{p}$.

\begin{corollary}\label{Polarisationcor}
    The action of $C_G(e)$ on $\mathscr P_\mathfrak{p}$ induces an action of the Katsylo group $F$ on $\mathscr P_{\mathfrak{p}}$ making $\mathscr P_{\mathfrak{p}}$ an $F$-torsor. In particular, $|F| = |\mathscr P_{\mathfrak{p}}|$.
\end{corollary}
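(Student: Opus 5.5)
The plan is to combine Theorem \ref{KatsylogpPolarisationthm} with the orbit–stabiliser theorem. The set $\mathscr P_{\mathfrak{p}}$ is by definition a single $C_G(e)$-orbit. By the remark preceding the corollary, the stabiliser of $\mathfrak{p}$ is $C_P(e)$; here one uses that $N_G(\mathfrak{p}) = P$ for a parabolic subalgebra, so $g \in C_G(e)$ fixes $\mathfrak{p}$ exactly when $g \in P$. This gives a $C_G(e)$-equivariant bijection $C_G(e)/C_P(e) \cong \mathscr P_{\mathfrak{p}}$, $kC_P(e) \mapsto Ad_k(\mathfrak{p})$.

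First I will check that Theorem \ref{KatsylogpPolarisationthm} applies to $e$ with this $P$. Since $\mathfrak{p}$ is a polarisation of $S$, Remark \ref{Polarisationrmk} shows that $\mathfrak{p}$ has a Levi factor $\mathfrak{l}'$ such that $(L',0)$ is decomposition data for $S$. Since $\mathfrak{p}$ is also a polarisation of $e$, the same remark (via \cite[Lemma 6.5]{Borho1} and its converse) gives $e \in \mathfrak{r}^{reg} = (\mathfrak{z}'+\mathfrak{n})^{reg}$, the Dixmier case of Remark \ref{DixmierGSrmk}. I expect this identification of "$\mathfrak{p}$ polarises both $e$ and $S$" with "$e$ lies in the regular locus for $P$" to be the main point needing care. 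If the remark is not quite sufficient, I would argue by dimension: a polarisation of $e$ has dimension $\tfrac12(\dim\mathfrak{g}+\dim C_G(e))$, and this forces $Ad(P)(e)$ to be dense in $\mathfrak{n}$.

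With this in hand, Theorem \ref{KatsylogpPolarisationthm} gives two things. First, $C_P(e)$ is normal in $C_G(e)$. Second, there is a canonical isomorphism $F \cong C_G(e)/C_P(e)$, taking $y = e \in \mathfrak{K}$ and $g = \text{id}$, using that $\mathcal F_e = Stab_F(e) = F$ because $e$ is fixed by $A$.

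Because $C_P(e)$ is normal, $C_P(e)$ acts trivially on the whole orbit: for $k \in C_G(e)$ and $h \in C_P(e)$ we have $hk = k(k^{-1}hk)$ with $k^{-1}hk \in C_P(e)$, so $Ad_{hk}(\mathfrak{p}) = Ad_k(\mathfrak{p})$. Hence the $C_G(e)$-action on $\mathscr P_{\mathfrak{p}}$ descends to an action of the quotient group $C_G(e)/C_P(e) \cong F$. This action is transitive, because the orbit is a single $C_G(e)$-orbit. It is free, because the stabiliser of every point $Ad_k(\mathfrak{p})$ is $kC_P(e)k^{-1} = C_P(e)$, which is trivial in the quotient. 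So $\mathscr P_{\mathfrak{p}}$ is an $F$-torsor, and $|F| = |\mathscr P_{\mathfrak{p}}|$.

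Finally, I will note that the torsor structure does not depend on the choice of $\mathfrak{p}$ within $\mathscr P_{\mathfrak{p}}$. Replacing $\mathfrak{p}$ by a conjugate $Ad_k(\mathfrak{p})$ replaces $C_P(e)$ by $kC_P(e)k^{-1} = C_P(e)$. Theorem \ref{KatsylogpPolarisationthm} also says that $C_P(e)$ is independent of $P$, so the identification with $F$ is canonical.
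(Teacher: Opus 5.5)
Your proposal is correct and is essentially the argument the paper intends. The paper gives no separate proof and treats the corollary as immediate from the stabiliser observation (the stabiliser of $\mathfrak{p}$ in $C_G(e)$ is $C_P(e)$) together with Theorem \ref{KatsylogpPolarisationthm}: $C_P(e)$ is normal in $C_G(e)$ and $C_G(e)/C_P(e)\cong F$, so $\mathscr P_{\mathfrak{p}}$ is a free transitive $F$-set. You also check that $e\in\mathfrak{r}^{reg}$, so that the theorem applies with this $P$; the paper leaves this tacit, and your dimension argument for it is sound.
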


\begin{remark}\label{ClassicalPolrmk}
    If $\mathfrak{g} = \mathfrak{sp}_n$ or $\mathfrak{g}=\mathfrak{so}_n$, the values of $|\mathscr P_{\mathfrak{p}}|$ (and hence, the value of $|F|$) for any Dixmier sheet $S$ in $\mathfrak{g}$ were calculated in \cite{Hesselink}. One can verify that, for $\mathfrak{g}= \mathfrak{sp}_{2m}$ or $\mathfrak{g} = \mathfrak{so}_{2m+1}$, $F$ is trivial exactly when $S = S_{\mathfrak{gl}_n} \cap \mathfrak{g}$ for some sheet $S_{\mathfrak{gl}_n}$ in $\mathfrak{gl}_n$. There are exceptions to this for $\mathfrak{g}=\mathfrak{so}_{2m}$, but these can be resolved by considering Dixmier sheets for $O_{2m}$ instead of $SO_{2m}$.

    For Dixmier sheets in Dynkin types B and C, the group $\bar{A}_P = C_G(e)/C_P(e)$ (isomorphic to $F$ by Theorem \ref{KatsylogpPolarisationthm}) was considered in \cite{FRW} for its role in mirror symmetry for polarisable orbit closures.
\end{remark}

\subsection{Multiplicities of representations}\label{Multiplicitiessbn}
We give two connections between the Katsylo group and multiplicities arising in the representation theory of $G$ and its Lie algebra. As a by-product, we establish a link between two distinct notions of multiplicity. We suppose from now on that $G$ is a connected semisimple group, and we fix a $G$-equivariant isomorphism $\mathfrak{g}^* \cong \mathfrak{g}$ (e.g. via the Killing form). We let $S$ be a non-singular sheet in $\mathfrak{g}$, and $\mathfrak{K}$ be a Katsylo slice for $S$ corresponding to some $\mathfrak{sl}_2$-triple $(e,h,f)$ with $e \in S$ nilpotent.

Our first statement simply rewrites \cite[Theorem 7.2]{BK} using Theorem \ref{KatsylogpPolarisationthm}. We recall the relevant definitions for the statement. We fix a set of simple roots $\Delta \subseteq \mathfrak{t}^*$ for $G$.

 Let $\mathcal O$ be a $G$-orbit in $\mathfrak{g}$. The group $G$ acts on the coordinate ring $\mathbb{C}[\mathcal O]$, and by the reductivity of $G$, there is a direct sum decomposition
\begin{equation}\label{GRedActioneqn}
    \mathbb{C}[\mathcal O] = \bigoplus_{i \in \mathbb{N}} V_i
\end{equation}
as a $G$-module, where each $V_i$ is a finite-dimensional irreducible $G$-module.

\begin{definition}\label{VMultiplicitydef}
    For any finite-dimensional irreducible representation $V$ of $G$, the \emph{multiplicity of $V$ in $\mathbb{C}[\mathcal O]$} is the number of $V_i$ in the decomposition (\ref{GRedActioneqn}) isomorphic to $V$ as a $G$-module. We denote the multiplicity by $m_V(\mathcal O)$.
\end{definition}

By \cite[Proposition 8]{Kostant}, the multiplicity $m_V(\mathcal O)$ is finite for any $V$. Borho defined the following function in \cite{Borho1} which captures the asymptotic information of the multiplicities for a fixed orbit. We let $\Lambda^+$ be the set of dominant weights in the weight lattice $\Lambda \subset \mathfrak{t}^*$ for the choice of simple roots $\Delta$. For any $\lambda \in \Lambda^+$, we can uniquely write $\lambda = n_1\omega_1 + \cdots + n_r \omega_r$ for $n_i \in \mathbb{N}$, where $\omega_1$, ..., $\omega_r$ are the fundamental weights; we denote
$$|\lambda| = n_1 + \cdots + n_r.$$

\begin{definition}\label{Multiplicityfndef}
    The \emph{multiplicity function} $ M:\mathfrak{g}/G \times \mathbb{N} \rightarrow \mathbb{N}$ is defined as
    \begin{equation}\label{Multiplicityfneqn}
        M(\mathcal O;n) = \sum_{\lambda \in \Lambda^+, \,|\lambda|=n} m_{V_{\lambda}}(\mathcal O)
    \end{equation}
    where $V_\lambda$ is the irreducible representation of $G$ with highest weight $\lambda$.
\end{definition}

 The statement of \cite[Theorem 7.2]{BK} gives an asymptotic relationship between multiplicity functions for orbits of the same sheet. By Theorem \ref{KatsylogpPolarisationthm} we can rewrite this in terms of the Katsylo group.

\begin{proposition}\label{KatsyloMultiplicitiesprp}
    Let $\mathfrak{K}$ be a Katsylo slice for a non-singular sheet, and choose points $x,y \in \mathfrak{K}$ with corresponding $G$-orbits $\mathcal O(x)$, $\mathcal O(y)$. Then
    \begin{equation}\label{KatsyloMultiplicities1eqn}
       |\mathcal F_x|M(\mathcal O(x);n) \sim |\mathcal F_y|M(\mathcal O(y);n), 
    \end{equation}
    where $\mathcal F$ is the $F$-inertia group scheme of Definition \ref{FInertiadef}.
\end{proposition}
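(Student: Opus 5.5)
The plan is to translate \cite[Theorem 7.2]{BK} into the language of Katsylo groups via Theorem \ref{KatsylogpPolarisationthm}. Borho--Kraft show that for two orbits in the same sheet, the multiplicity functions are asymptotically proportional. For an orbit $\mathcal O(x)$ in $S$ the proportionality constant is an index: $|C_G(x')/C_{P}(x')|$, where $x'$ is a representative of $\mathcal O(x)$ lying in $(\mathfrak{z}+\overline{\mathcal O}+\mathfrak{n})^{reg}$. Equivalently, it is the degree of the fibre of the generalised Grothendieck--Springer map $\hat{S}^{reg}\to S$ over $x'$, measured relative to a generic point. I would first recall the precise form of that theorem: $M(\mathcal O(x);n)\cdot[C_G(x'):C_P(x')]$ is asymptotically independent of the orbit in $S$. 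This is the only external input, and I will read it in the formulation matching the generalised Grothendieck--Springer setup of Section \ref{GSSheetssbn}.

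Second, for each $x\in\mathfrak{K}$ I will choose $g\in G$ with $x'=Ad_g(x)\in(\mathfrak{z}+\overline{\mathcal O}+\mathfrak{n})^{reg}$. Such a $g$ exists by the surjectivity part of the proof of Proposition \ref{GSCartesianprp}, since $\hat{p}(\hat{S}^{reg})=S$. Theorem \ref{KatsylogpPolarisationthm} then gives a canonical isomorphism $\mathcal F_x\cong C_G(x)/C_{g^{-1}Pg}(x)$, and conjugating by $g$ yields $|\mathcal F_x|=[C_G(x'):C_P(x')]$. Proposition \ref{ParabolicCentraliserprp} gives this directly, since $C_P(x')=\mathcal I^{sm}_{S,x'}$ and $[C_G:\mathcal I^{sm}]$ equals the order of the stabiliser in $F$ by Proposition \ref{QuasiSteinHomprp}. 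Substituting into the Borho--Kraft relation for $x$ and $y$ gives (\ref{KatsyloMultiplicities1eqn}).

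The main obstacle is matching conventions. Borho--Kraft work with the closure $\overline{S}$ and a Grothendieck--Springer map in the sense of \cite{Broer1}/\cite{BK}, not with $\hat{S}^{reg}$, and their constant may be phrased as a fibre cardinality or a degree. I would handle this with Remark \ref{GSCartesianrmk}, which identifies the two maps over $\hat{S}^{reg}$. I would then check that their index is exactly $[C_G(x'):C_P(x')]$, including in the non-Dixmier case where $\mathcal O\neq0$. If their statement only covers the Dixmier case, or is normalised differently by a constant, the normalisation cancels because the relation is between two orbits of the same sheet. I would also note that the stabiliser orders are independent of the choice of $g$ and $P$, which is part of the content of Theorem \ref{KatsylogpPolarisationthm}.
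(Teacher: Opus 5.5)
Your proposal matches the paper's argument: the paper's proof consists precisely of rewriting \cite[Theorem 7.2]{BK}, using Theorem \ref{KatsylogpPolarisationthm} to identify the Borho--Kraft index $[C_G(x'):C_P(x')]$ (for a conjugate $x'$ of $x$ lying in $(\mathfrak{z}+\overline{\mathcal O}+\mathfrak{n})^{reg}$) with $|\mathcal F_x|$. One small correction to your contingency remark: cancellation does take care of a differing normalising constant, but it would not remove a restriction of the Borho--Kraft theorem to Dixmier sheets; on that point the paper simply relies on \cite[Theorem 7.2]{BK} applying to the sheet in question.
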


The second connection involves multiplicities for ideals of the universal enveloping algebra $\mathcal U(\mathfrak{g})$. Let $I \subset \mathcal U(\mathfrak{g})$ be a \emph{primitive ideal}, i.e. the annihilator of a simple module. By the Poincaré-Birkhoff-Witt Theorem, the associated graded algebra $\text{gr}\,\mathcal U(\mathfrak{g})$ is isomorphic to the coordinate algebra $\mathbb{C}[\mathfrak{g}]$. Thus $\text{gr}I$ is an ideal in $\mathbb{C}[\mathfrak{g}]$, and in fact, by \cite[Theorem 3.10]{Joseph}, the radical of $\text{gr}I$ is a prime ideal $Q$ associated to the closure of a nilpotent orbit $\mathcal O^{nil}$ in $\mathfrak{g}$.

\begin{definition}\label{Qmultiplicitydef}
    The \emph{multiplicity of $\overline{\mathcal O^{nil}}$ in $\mathcal U(\mathfrak{g})/I$} is the length of $(\mathbb{C}[\mathfrak{g}]/\text{gr}I)_Q$ as a module over $\mathbb{C}[\mathfrak{g}]_Q$, where the subscript denotes localisation by $Q$. We denote this multiplicity by $\mu(I)$.
\end{definition}

In \cite{Losev4}, Losev defined the \emph{orbit method map}
\begin{equation}\label{OrbitMethodeqn}
    \mathfrak{I}:\mathfrak{g}^*/G \rightarrow \mathscr X_\mathfrak{g}
\end{equation}
where $\mathfrak{g}^*/G$ is the space of coadjoint orbits and $\mathscr X_{\mathfrak{g}}$ is the space of primitive ideals of $\mathcal U(\mathfrak{g})$. Our second statement is the following multiplicity formula for ideals in the image of the orbit method map. We use the same set-up and notation as Proposition \ref{KatsyloMultiplicitiesprp}, and we identify coadjoint orbits with adjoint orbits via the identification $\mathfrak{g} \cong \mathfrak{g}^*$.

\begin{proposition}\label{OrbitMethodMultprp}
    For $x \in \mathfrak{K}$ with $G$-orbit $\mathcal O(x)$,
    \begin{equation}\label{OrbitMethodMulteqn}
        \mu(\mathfrak{I}(\mathcal O(x))) = |F|/|\mathcal F_x|.
    \end{equation}
\end{proposition}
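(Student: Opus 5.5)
We need to compute $\mu(\mathfrak{I}(\mathcal O(x)))$ using Losev's description of the orbit method map. The plan is to reduce the multiplicity of the primitive ideal to a degree computation on the associated variety, and to identify that degree with the index of $\mathcal F_x$ in $F$ using the Grothendieck-Springer description of Section \ref{GSSheetssbn}.

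\textbf{Step 1: relate $\mu$ to the orbit $\mathcal O(x)$.} In \cite{Losev4}, $\mathfrak{I}(\mathcal O)$ is built from a quantization of the affinization $X$ of a $G$-equivariant cover of $\mathcal O^{nil}$ (the cover attached to $S$, see Remark \ref{NamikawaWeylGprmk}). Deformations of $X$ are parametrised by $\mathfrak{z}$, and the orbit $\mathcal O(x)$ corresponds to the $W_L$-orbit of a point $z\in\mathfrak{z}$ with $\mathcal E(z)=x$. The associated graded of $\mathcal U(\mathfrak{g})/\mathfrak{I}(\mathcal O(x))$ is then a $G$-stable degeneration, and I would aim to show that, as a $\mathbb{C}[\mathfrak{g}]$-module generically along $\overline{\mathcal O^{nil}}$, it is $\mathbb{C}[X]^{\mathcal F_x}$-like. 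Concretely, $\mathbb{C}[\mathcal O^{nil}]$-rank equals the degree of the cover $X\to\overline{\mathcal O^{nil}}$ divided by a correction coming from the stabiliser. By Proposition \ref{KatsyloMultiplicitiesprp} applied with $y=e$, combined with the fact that $\mathbb{C}[X]$ over the open orbit has $\mathbb{C}[\mathcal O^{nil}]$-rank $\deg(X\to\overline{\mathcal O^{nil}})$, this reduces the claim to a degree computation.

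\textbf{Step 2: identify the degree with $|F|/|\mathcal F_x|$.} Use the Grothendieck-Springer map $\hat p:\hat S^{reg}\to S$ and Proposition \ref{GSCartesianprp}. Over the fibre above $z$, the family $\hat{\chi}_S^{-1}(z)=G\times^P(z+\overline{\mathcal O}+\mathfrak n)^{reg}$ maps to $\mathcal O(x)$ with fibre $C_G(x)/C_P(x)$, which has cardinality $|\mathcal F_x|$ by Theorem \ref{KatsylogpPolarisationthm}. At $z=0$, the fibre over $\mathcal O^{nil}$ has cardinality $|F|$. The flat family $\hat{\chi}_S^{-1}(\mathbb{C}z)$, of constant total degree over $\overline S$ (by finiteness, Lemma \ref{GSDiagramlem}), degenerates $\mathbb{C}[\hat\chi_S^{-1}(z)]$ to $\mathbb{C}[\hat\chi_S^{-1}(0)]$. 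Losev's ideal $\mathfrak{I}(\mathcal O(x))$ is the annihilator of the quantization, and its associated graded corresponds to the image of this degeneration in $\mathbb{C}[\overline{\mathcal O^{nil}}]$-modules. This image is a $\mathbb{C}[\mathfrak{g}]$-module of generic rank $|F|$ along $\mathcal O^{nil}$, but it is only the $G$-invariant part coming from $\mathcal O(x)$, i.e. the degree of $\mathcal O(x)$ over its limit. Comparing ranks gives $|F|/|\mathcal F_x|$.

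\textbf{Main obstacle.} The hard step is making precise that the associated graded of $\mathcal U(\mathfrak{g})/\mathfrak{I}(\mathcal O(x))$ coincides, at the generic point of $\overline{\mathcal O^{nil}}$, with the specialization of $\mathbb{C}[\mathcal O(x)]$. This amounts to a Borho–Kraft type statement that $\mathrm{gr}$ of a Dixmier/Losev ideal is the "asymptotic cone" ideal of $\overline{\mathcal O(x)}$. I would first try to prove this by using Losev's description of $\mathfrak{I}$ via the quantization of $\hat{\chi}_S^{-1}(\mathbb{C}z)$ with period $z$. The goal is to show that the localised length equals the number of sheets of the degeneration $\overline{\mathbb{C}^\times\mathcal O(x)}\cap\overline{\mathcal O^{nil}}$. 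Then, using the Katsylo slice, I would show that the closure of $\mathbb{G}_m\cdot(\mathcal O(x))$ meets $\mathfrak{K}$ in $|F|/|\mathcal F_x|$ branches through $e$: the $F$-orbit of $x$ in $\mathfrak{K}\cong\mathfrak{z}/W_S$ under the Kazhdan contraction.
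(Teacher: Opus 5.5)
Your proposal has a genuine gap, at exactly the step you flag as the main obstacle. Nothing in it connects $\mu(\mathfrak{I}(\mathcal O(x)))$, the length of $(\mathbb{C}[\mathfrak g]/\mathrm{gr}\,I)$ localised at the prime $Q$ of $\overline{\mathcal O^{nil}}$, to a geometric count.

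\begin{itemize}
\item The reduction in Step 1 is circular. Proposition \ref{KatsyloMultiplicitiesprp} with $y=e$ only yields $\lim_n M(\mathcal O(x);n)/M(\mathcal O^{nil};n)=|F|/|\mathcal F_x|$. Equating this limit with $\mu$ is Corollary \ref{QCMultiplicitiescor}, which the paper deduces from the statement you are proving.
\item In Step 2, reading off $\mathrm{gr}(\mathcal U(\mathfrak g)/I)$ as ``the image of the degeneration'' is not automatic. It is a cyclic, generally non-reduced, quotient of $\mathbb{C}[\mathfrak g]$, and the quotient filtration on $\mathcal U(\mathfrak g)/I$ need not agree with the filtration induced from the quantization. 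So its length at $Q$ cannot be read off from ranks of $\mathbb{C}[X]$, or of an asymptotic cone, without a further theorem.
\item You quantize $\hat\chi_S^{-1}(z)$, which is built from the decomposition-data parabolic $P\supseteq L$. When $\mathcal F_x\neq 1$, the map $\hat\chi_S^{-1}(z)\to\mathcal O(x)$ has degree $|\mathcal F_x|$, so it is not a birational induction, and $\mathfrak I(\mathcal O(x))$ is not by definition the annihilator of that quantization. Your claim that the ideal sees only the ``$\mathcal F_x$-invariant part'' is precisely the nontrivial content, and it is left unproved.
\end{itemize}

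The paper closes the gap with Losev's $\dagger$-construction at $e$ \cite{Losev3}, which turns $\mu$ into a dimension count.
\begin{itemize}
\item It uses $\mu(I)=\dim(\mathcal U(\mathfrak g)/I)_\dagger$, together with an $A$-equivariant inclusion $(\mathcal U(\mathfrak g)/I)_\dagger\hookrightarrow\mathcal A_\dagger$.
\item Here $\mathcal A_\dagger$ is the coordinate ring of the fibre over $e$ of $G\times^{P'}(\overline{\mathcal O'}+\mathfrak n')^{reg}\to\mathfrak g$, where $(L',\mathcal O',z)$ are the birational induction data of $\mathcal O(x)$, not the decomposition data of $S$.
\item Lemma \ref{BirIndlem} shows this fibre has $|F|/|\mathcal F_x|$ points, via $\mathfrak K\times_S Y\cong\coprod_{a\in F/\mathcal F_x}\mathfrak z'_a$. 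This is the rigorous form of your count of the $F$-orbit of $x$ in $\mathfrak K$, which is itself correct.
\item Equality $(\mathcal U(\mathfrak g)/I)_\dagger=\mathcal A_\dagger$ is forced by birationality. The subalgebra has the form $(\mathcal A_\dagger)^B$ for a finite-index $B\le A$. A nontrivial $B$-action would give nontrivial automorphisms of $G\times^{P'}(z+\overline{\mathcal O'}+\mathfrak n')^{reg}$ over $\mathcal O(x)$, contradicting birational induction.
\end{itemize}
Without an input of this kind, or an actual proof of your Borho--Kraft type statement, your argument does not reach $\mu$.
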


In order to prove Proposition \ref{OrbitMethodMultprp}, we first recall the \emph{birational induction} of \cite[Definition 1.2]{Losev4}. Namely, for every $x \in \mathfrak{g}$, there is a unique $G$-conjugacy class of triples $(L',\mathcal O', z)$ with $L'$ a Levi subgroup of $G$, $\mathcal O'$ a nilpotent orbit in $\mathfrak{l}'$, and $z$ an element of $\mathfrak{z}' = \mathfrak{z}(\mathfrak{l'})$, satisfying the following property: for any parabolic $P'$ with Levi factor $L'$ (and with $\mathfrak{n}'$ denoting the nilradical of $\mathfrak{p}'$), the generalised Springer map
\begin{equation}\label{GenSpringereqn}
    G \times^{P'}(z+\overline{\mathcal O'} + \mathfrak{n}')^{reg} \rightarrow \mathfrak{g}
\end{equation}
defines an isomorphism onto the $G$-orbit $\mathcal O(x)$ in $\mathfrak{g}$ \cite[Theorem 4.4]{Losev4}. We say that $\mathcal O(x)$ is birationally induced from $(L',\mathcal O', z)$.

\begin{lemma}\label{BirIndlem}
  Suppose $x \in \mathfrak{K}$ is birationally induced from $(L',\mathcal O',z)$, and denote $P'$ and $\mathfrak{n}'$ as in (\ref{GenSpringereqn}). Any non-empty fibre of the map
  \begin{equation}\label{BirIndeqn}
    G \times^{P'}(\overline{\mathcal O'} + \mathfrak{n}')^{reg} \rightarrow \mathfrak{g}
    \end{equation}
    is a finite reduced scheme of length $|F|/|\mathcal F_x|$.
\end{lemma}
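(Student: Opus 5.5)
The idea is to realise the map (\ref{BirIndeqn}) as the special fibre of a one-parameter degeneration. Its generic fibre is the birational induction isomorphism (\ref{GenSpringereqn}) onto $\mathcal O(x)$. I will then count the points over a nilpotent element by counting the branches of the total space through the Katsylo slice $\mathfrak{K}$, where the Katsylo group $F$ controls everything.

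\textbf{Step 1: the family over a line.} Consider the $G$-variety
$$\mathcal Y = G \times^{P'}(\mathbb{C}z + \overline{\mathcal O'} + \mathfrak{n}')^{reg},$$
with its projection $t:\mathcal Y \rightarrow \mathbb{C}z \cong \mathbb{C}$ and the action map $\hat{p}':\mathcal Y \rightarrow \mathfrak{g}$. The fibre of $t$ over $\lambda z$ with $\lambda \neq 0$ maps isomorphically onto $\lambda\mathcal O(x)$; this is birational induction applied to $\lambda x$, using $\mathbb{G}_m$-equivariance. The fibre over $0$ is the source of (\ref{BirIndeqn}). The image of $\hat{p}'$ lies in $S$: it contains $\mathbb{C}^\times \mathcal O(x) \subseteq S$, and every element of the image has the same centraliser dimension by Lemma \ref{BInductionlem}. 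As in Lemma \ref{GSDiagramlem}, $\hat{p}'$ is finite. Pulling back along the smooth cover $Ad:G\times\mathfrak{K}\rightarrow S$ (Proposition \ref{KatsyloSliceprp}), it suffices to study
$$\hat{\mathfrak{K}}' := \mathcal Y \times_S \mathfrak{K} \rightarrow \mathfrak{K}.$$
Following the proof of \cite[Lemma 4.1]{Losev4}, as used in Proposition \ref{GSCartesianprp}, I expect $t|_{\hat{\mathfrak{K}}'}:\hat{\mathfrak{K}}' \rightarrow \mathbb{C}$ to be \'etale. In particular $\hat{\mathfrak{K}}'$ is a smooth curve, and the fibre over $t=0$ is a reduced finite scheme. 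This gives reducedness of the fibres of (\ref{BirIndeqn}) once we pass back to the $G$-action, since a fibre over $e$ is the fibre of $\hat{\mathfrak{K}}'$ over $e$.

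\textbf{Step 2: the image in the slice.} The image of $\hat{\mathfrak{K}}'$ in $\mathfrak{K}$ is the closure of $\mathfrak{K} \cap \mathbb{C}^\times\mathcal O(x)$. By Theorem \ref{RedCentraliseractionthm}, this set is $\bigcup_{a\in F} a\cdot C_x$, where $C_x$ is the closure of the Kazhdan $\mathbb{G}_m$-orbit of $x$; this uses Remark \ref{KatsyloGQrmk}, which relates scaling on $S$ to the Kazhdan action on $\mathfrak{K}$. Each $C_x$ passes through $e$. Since the $F$-action commutes with the Kazhdan action, and $\mathbb{G}_m$ acts freely on $C_x\setminus\{e\}$ up to a finite stabiliser, the curves $a\cdot C_x$ are distinct exactly for distinct cosets $aF_x$. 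So the image consists of $|F|/|\mathcal F_x|$ irreducible curves meeting at $e$. For this I will use the fact that the $F$-stabiliser of the curve $C_x$ equals $\mathcal F_x$; this follows because $F$ acts freely generically and commutes with $\mathbb{G}_m$, so the stabiliser of a generic point of $C_x$ equals that of $x$.

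\textbf{Step 3: counting.} Over $t\neq 0$ the map $\hat{\mathfrak{K}}'\rightarrow\mathfrak{K}$ is injective, by Step 1 applied after $G$-translation. Hence $\hat{\mathfrak{K}}'$ is smooth and finite birational onto $\bigcup_a a\cdot C_x$, so by Zariski's main theorem it is the normalisation of this union. Each $a\cdot C_x$ is unibranch at $e$, being the closure of a $\mathbb{G}_m$-orbit in an affine space with positive weights (Corollary \ref{KatsyloAffinecor}). Therefore the normalisation has exactly one point over $e$ on each curve, giving $|F|/|\mathcal F_x|$ points in total, and the fibre is reduced by Step 1. Every non-empty fibre of (\ref{BirIndeqn}) lies over the nilpotent orbit in $S$, so it is $G$-conjugate to the fibre over $e$, which proves the lemma.

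\textbf{Main obstacle.} The step I expect to be hardest is the \'etaleness of $t$ on $\hat{\mathfrak{K}}'$ in Step 1. This is needed for both smoothness and reducedness, and it requires adapting Losev's transversality argument from the Levi $L$ of $S$ to the birational-induction Levi $L'$. If that fails, I would instead prove reducedness directly: I would show that the differential of (\ref{BirIndeqn}) is injective at points over $e$, using Lemma \ref{BInductionlem} to get $C^\circ_G(e)\leq P'$ and a dimension count. I would keep the branch count of Steps 2 and 3 for the length.
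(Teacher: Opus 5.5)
\textbf{There is a genuine gap in Steps 2 and 3.} By passing to the image of $\hat{\mathfrak K}'$ in $\mathfrak K$ you forget the parameter $t$, and both of your key claims there are false in general.

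First, the setwise stabiliser of $C_x$ in $F$ is $\{a\in F : a\cdot x\in\mathbb G_m\cdot x\}$, and this can be strictly larger than $\mathcal F_x$. That $F$ commutes with the Kazhdan action only shows that each point of $\mathbb G_m\cdot x$ has \emph{pointwise} stabiliser $\mathcal F_x$. It does not prevent $F$ from moving $x$ along its own Kazhdan orbit.

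Second, Step 1 only shows that each $t^{-1}(\lambda z)$ maps isomorphically onto $\lambda\mathcal O(x)$. If $c\,\mathcal O(x)=\mathcal O(x)$ for some scalar $c\neq 1$, then the points of $\mathcal Y$ over $\lambda z$ and over $c\lambda z$ have the same images. So $\hat{\mathfrak K}'\to\mathfrak K$ is not injective off $t=0$, and $\hat{\mathfrak K}'$ is not the normalisation of its image.

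Both failures occur for $S_{Dix}\subseteq\mathfrak{sp}_4$ in Example~\ref{Sp4Sheetexm}, with $x=x_1$ birationally induced from $(L,0,z)$.
\begin{itemize}
\item The Kazhdan action is $t\cdot x_s=x_{t^2s}$, so $C_x=\mathfrak K$ is a single curve.
\item $F=\mathbb Z/2$ acts by $x_s\mapsto x_{-s}$, so $\mathcal F_x=1$ and $|F|/|\mathcal F_x|=2$.
\item Moreover $-x_1\in\mathcal O(x_1)$, and $\hat{\mathfrak K}'\to\mathfrak K$ has degree $2$.
\end{itemize}
Counting one point over $e$ per curve $a\cdot C_x$ therefore gives $1$. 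The actual fibre of $G\times^P\mathfrak n^{reg}\to\mathfrak g$ over $e$ has $|C_G(e)/C_P(e)|=|F|=2$ points, by Theorem~\ref{KatsylogpPolarisationthm}.

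\textbf{The repair is to keep $t$.} This is what the paper does, over all of $\mathfrak z'$ rather than your line, after choosing decomposition data with $L\subseteq L'$ so that $\mathfrak z'\subseteq\mathfrak z$.
\begin{itemize}
\item Map $\hat{\mathfrak K}'$ to $\mathfrak K\times_{\mathfrak c_S}\mathbb Cz$ instead of to $\mathfrak K$. The reduced target is the union of the graphs $\Gamma_a$ of $\lambda z\mapsto a\mathcal E(\lambda z)$, for $a\in F$.
\item Each $\Gamma_a$ is a copy of the line. Since stabilisers are constant along Kazhdan orbits, two graphs either coincide or meet only over $t=0$. The distinct graphs are indexed by $F/\mathcal F_x$.
\item Over $t\neq 0$, Step 1 now genuinely gives injectivity. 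The curve $\hat{\mathfrak K}'$ is smooth, because $\mathcal Y$ is smooth and $G\times\hat{\mathfrak K}'\to\mathcal Y$ is a base change of the smooth map $G\times\mathfrak K\to S$. So you do not need the \'etaleness you were worried about.
\item Hence $\hat{\mathfrak K}'$ is the normalisation of the union of graphs. It consists of $|F|/|\mathcal F_x|$ disjoint lines, each with exactly one point over $e$.
\end{itemize}
Equivalently, $\hat{\mathfrak K}'$ is a trivial finite \'etale cover of $\mathbb Cz$. Its fibre over $t=1$ is $\mathfrak K\cap\mathcal O(x)=F\cdot x$ by Theorem~\ref{RedCentraliseractionthm}, which gives the same count.

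Reducedness needs neither \'etaleness nor a tangent computation. The source of (\ref{BirIndeqn}) is a single $G$-orbit and the map is $G$-equivariant. So each non-empty fibre is isomorphic to $C_G(y)/C_{P'}(y)$ for a point $y$ of the dense $P'$-orbit, which is smooth in characteristic zero.

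Finally, handle the case $z=0$ (i.e.\ $x=e$) separately, since your line degenerates there. The statement is trivial in that case: (\ref{BirIndeqn}) is then the birational induction isomorphism, and $\mathcal F_e=F$.
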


\begin{proof}
The source of the map (\ref{BirIndeqn}) is a homogeneous space, and the map is $G$-equivariant, so that the fibres are all reduced. 

We denote
\begin{equation}\label{BirInd1eqn}
    Y = G \times^{P'}(\mathfrak{z}'+\overline{\mathcal O'}+\mathfrak{n}')^{reg},
\end{equation}
and consider the generalised Grothendieck-Springer map $\hat{\chi}_Y:Y \rightarrow \mathfrak{g}$. Each $G$-orbit in the image of $\hat{\chi}_Y$ has the same dimension by \cite[Satz 3.3]{Borho2}, and $Y$ is irreducible, so that the image of $\hat{\chi}_Y$ is contained in the sheet $S$. Note that in particular this implies that $S$ contains the decomposition class $\mathcal D(L',\mathcal O')$ (using notation as in (\ref{CameralSmooth1eqn})), so we may choose decomposition data $(L,\mathcal O)$ for $S$ such that $L$ is contained in $L'$, by \cite[3.6]{Borho2}; then $\mathfrak{z}'$ is contained in $\mathfrak{z} = \mathfrak{z}(\mathfrak{l})$, so has a natural map to the adjoint quotient space $\mathfrak{c}_S$. To prove the lemma, it will be enough to show that the number of $\mathbb{C}$-points in the fibre $\nu_Y^{-1}(e,0)$ of the induced map $\nu_Y:Y \rightarrow S \times_{\mathfrak{c}_S} \mathfrak{z}'$ is exactly $|F|/|\mathcal F_x|$.

The same argument as in Proposition \ref{GSCartesianprp} shows that $\nu_Y$ is a normalisation map of the reduced subscheme of $S \times_{\mathfrak{c}_S} \mathfrak{z}'$. As in the proof of Proposition \ref{ParabolicCentraliserprp}, we consider the restriction ${\nu}_Y :\mathfrak{K} \times_S Y \rightarrow \mathfrak{K} \times_{\mathfrak{c}_S} \mathfrak{z}'$. We have
\begin{equation}\label{BirInd2eqn}
    ( \mathfrak{K} \times_{\mathfrak{c}_S} \mathfrak{z}')_{red} = \bigcup_{a \in F}\mathfrak{z}'_a
\end{equation}
as in (\ref{ParabolicCentraliser3eqn}), but now some of the $\mathfrak{z}'_a$ may coincide. By the assumption that $\mathcal O(x)$ is birationally induced from $(L,\mathcal O',z)$, the normalisation map ${\nu}_Y$ is a bijection over the point $z \in \mathfrak{z}'$, and we can deduce that $\mathfrak{z}'_{a_1} = \mathfrak{z}'_{a_2}$ exactly when $a_1 \mathcal F_x  = a_2\mathcal F_x$. Thus
\begin{equation}\label{BirInd3eqn}
    \mathfrak{K} \times_S Y\cong \coprod_{a \in F/\mathcal F_x} \mathfrak{z}_a,
\end{equation}
and the required statement follows.
\end{proof}

\begin{proof}[Proof of Proposition \ref{OrbitMethodMultprp}]
Let $I = \mathfrak{I}(\mathcal O(x))$. Fix a triple $(L',\mathcal O', z)$ which birationally induces $\mathcal O(x)$, and denote $P'$ and $\mathfrak{n}'$ as in (\ref{GenSpringereqn}). 

From the construction of $\mathfrak{I}$, there is an associative algebra $\mathcal A$ and an inclusion $\mathcal U(\mathfrak{g})/I \hookrightarrow \mathcal A$. Moreover, by \cite[Proposition 3.4.1]{Losev3}, there are algebras $(\mathcal U(\mathfrak{g})/I)_\dagger \hookrightarrow \mathcal A_\dagger$ with compatible actions of the reductive centraliser $A$ of $e$ with respect to the $\mathfrak{sl}_2$-triple $(e,h,f)$, as defined in Definition \ref{ReductiveCentraliserdef}; in fact, these algebras are representations for a finite $W$-algebra, but we will not require this structure. These algebras have the following properties:
\begin{itemize}
    \item $\mu(I)$ is equal to the dimension of $(\mathcal U(\mathfrak{g})/I)_\dagger$ as a complex vector space;
    \item $\mathcal A_\dagger$ is $A$-equivariantly isomorphic to the coordinate ring of the fibre of the generalised Springer map (\ref{BirIndeqn}), by \cite[Lemma 5.2 (1)]{Losev4} and \cite[Section 5.3]{Losev4}.
\end{itemize}
Then, the statement of the proposition follows from Lemma \ref{BirIndlem} once we justify that in fact $(\mathcal U(\mathfrak{g})/I)_\dagger$ coincides with $\mathcal A_\dagger$. By $A$-equivariance, and the structure of $\mathcal A_\dagger$ as the coordinate ring of a finite reduced scheme with a transitive $A$-action, $(\mathcal U(\mathfrak{g})/I)_\dagger$ is of the form $(\mathcal A_\dagger)^B$ for some finite index subgroup $B \leq A$. Thus there is an action of $B$ on $\mathcal A_\dagger$ which fixes $(\mathcal U(\mathfrak{g})/I)_\dagger$, and if the action is trivial, then we are done.

If this action is non-trivial, we can derive a contradiction in the same way as in the proof of \cite[Theorem 5.3]{Losev4}; we sketch the argument. The group $B$ defines a non-trivial action on $\mathcal A$ of $G$-equivariant automorphisms of filtered algebras, which corresponds to a non-trivial action on the coordinate ring of $G \times^{P'}(z+\overline{\mathcal O'} + \mathfrak{n}')^{reg}$ as a filtered Poisson $G$-algebra \cite[Remark 3.24]{Losev4}, \cite[Proposition 4.7]{Losev4}. But since $G$ is semisimple and the generalised Springer map (\ref{GenSpringereqn}) is the unique moment map for the $G$-action, $B$ acts by non-trivial automorphisms on $G \times^{P'}(z+\overline{\mathcal O'} + \mathfrak{n}')^{reg}$ which leave the map (\ref{GenSpringereqn}) invariant; this contradicts the birational induction assumption.
\end{proof}

\begin{remark}\label{Walgebrasrmk}
In the case that $\mathfrak{g}$ is classical, there is a more direct relationship between the Katsylo group and the space $\mathscr{E}_{\mathfrak{g},e}$ of 1-dimensional representations of the \emph{finite $W$-algebra} $\mathcal U(\mathfrak{g},e)$ for a nilpotent $e \in S$. The algebra $\mathcal U(\mathfrak{g},e)$ is an associative algebra which is a deformation of the coordinate ring of the Slodowy slice at $e$ \cite{Premet1}. If $\mathfrak{g}$ is classical, $\mathscr{E}_{\mathfrak{g},e}$ decomposes into components $\mathscr{E}_{S}$ labelled by the sheets containing $e$; moreover, for each such sheet, the component $\mathscr{E}_S$ can be degenerated to the Katsylo slice $\mathfrak{K}$ for $S$ at $e$ by \cite[Theorem 1.1]{Topley}. This induces an action of the Katsylo group $F$ on $\mathscr{E}_{S}$ which can be identified with an action on the representations of $\mathcal U(\mathfrak{g},e)$ defined in \cite[Section 4.9]{Premet3}.

There is a map $\mathfrak{J}_e:\mathscr E_{\mathfrak{g},e} \rightarrow \mathscr X_{\mathfrak{g}}$ constructed in Skryabin's appendix to \cite{Premet1}. A consequence of \cite[Theorem 1.2.2]{Losev3} is that the map $\mathfrak{J}_e$ defines a set-theoretic quotient onto its image for the action of $F$ on $\mathscr E_{S}$; moreover, for any $M \in \mathscr E_S$ the multiplicity $\mu(\mathfrak{J}_e(M))$ is equal to $[F:Stab_F(M)]$ \cite[Theorem 3.1.1]{Losev3}. Thus, if $\mathfrak{g}$ is classical, Proposition \ref{OrbitMethodMultprp} can in this case also be deduced from the results of \cite[Section 9]{Topley}, which in particular state that the image of $\mathfrak{I}$ in $\mathscr X_{\mathfrak{g}}$ is the union of the images of the maps $\mathfrak{J}_e$, where $e$ ranges over a collection of representatives for the nilpotent orbits of $\mathfrak{g}$.
\end{remark}

We finish by observing that Proposition \ref{KatsyloMultiplicitiesprp} and Proposition \ref{OrbitMethodMultprp} together imply the following link between the two notions of multiplicity defined above.

\begin{corollary}\label{QCMultiplicitiescor}
    Let $\mathcal O \in \mathfrak{g}/G$ be any $G$-orbit contained in a non-singular sheet $S$, and let $\mathcal O^{nil}$ be the nilpotent orbit in $S$. Then 
    \begin{equation}\label{QCMultiplicitieseqn}
        \mu(\mathfrak{I}(\mathcal O)) = \lim_{n \rightarrow\infty} \frac{M(\mathcal O;n)}{M(\mathcal O^{nil};n)}.
    \end{equation}
\end{corollary}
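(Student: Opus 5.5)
The corollary follows by combining Proposition \ref{KatsyloMultiplicitiesprp} with Proposition \ref{OrbitMethodMultprp}. I work throughout with a fixed Katsylo slice $\mathfrak{K}$ for $S$ at the nilpotent $e$.

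First I reduce to points of the slice. Every $G$-orbit in $S$ meets $\mathfrak{K}$ by Proposition \ref{KatsyloSliceprp}, so I can write $\mathcal O = \mathcal O(x)$ for some $x \in \mathfrak{K}$. The nilpotent orbit $\mathcal O^{nil}$ is unique in $S$, and $e \in \mathfrak{K}$, so $\mathcal O^{nil} = \mathcal O(e)$. The quantities $|\mathcal F_x|$ and $\mu(\mathfrak{I}(\mathcal O(x)))$ depend only on the orbit, since different points of $\mathfrak{K} \cap \mathcal O$ lie in one $F$-orbit (Theorem \ref{RedCentraliseractionthm}) and so have conjugate stabilisers.

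Next I compute $|\mathcal F_e|$. Since $e$ is fixed by the $A$-action on $\mathfrak{K}$ (it is fixed by $A$ by definition), we have $\mathcal F_e = Stab_F(e) = F$, so $|\mathcal F_e| = |F|$. This is the key identification connecting the two propositions.

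Finally I combine the results. Proposition \ref{KatsyloMultiplicitiesprp}, applied with $y = e$, gives
\[
|\mathcal F_x|\,M(\mathcal O(x);n) \sim |F|\,M(\mathcal O^{nil};n).
\]
Here I read $\sim$ in the sense of \cite[Theorem 7.2]{BK}, namely that the ratio tends to $1$; this presupposes that $M(\mathcal O^{nil};n)$ is nonzero for large $n$, which holds since $\mathbb{C}[\mathcal O^{nil}]$ is infinite-dimensional. Consequently
\[
\lim_{n\to\infty} \frac{M(\mathcal O;n)}{M(\mathcal O^{nil};n)} = \frac{|F|}{|\mathcal F_x|}.
\]
By Proposition \ref{OrbitMethodMultprp}, the right-hand side equals $\mu(\mathfrak{I}(\mathcal O(x)))$, which proves the corollary.

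The main point needing care is the reading of $\sim$ as an asymptotic ratio tending to $1$, rather than a weaker comparison such as equality of leading terms of differing growth. I would check against the statement in \cite{BK} that the leading polynomial growth has the same degree for all orbits in a sheet, because all orbits in $S$ have equal dimension. Given that, the ratio statement is exactly what \cite[Theorem 7.2]{BK} provides.
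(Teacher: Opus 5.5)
Your argument is the paper's: write $\mathcal O=\mathcal O(x)$ with $x\in\mathfrak K$, apply Proposition \ref{KatsyloMultiplicitiesprp} with $y=e$ (where $\mathcal F_e=F$ because $A$ fixes $e$), and conclude with Proposition \ref{OrbitMethodMultprp}. One correction to a side remark: infinite-dimensionality of $\mathbb{C}[\mathcal O^{nil}]$ does not force $M(\mathcal O^{nil};n)\neq 0$ for all large $n$ (for $\mathfrak{sl}_2$ only even $n$ contribute, and for the zero sheet $\mathbb{C}[\mathcal O^{nil}]=\mathbb{C}$), so the limit, like the relation $\sim$ of \cite{BK}, must be read along those $n$ with non-zero denominator, a point the paper also leaves implicit.
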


\appendix
\section{Dixmier sheets for maximal Levi subgroups}\label{DixmierClassicalscn}
In this appendix, we prove Proposition \ref{CameralSmoothprp} in two special cases. The primary case is that of a Dixmier sheet $S$ associated to a proper Levi subgroup $L$ of a classical group $G$ such that $L$ is maximal, i.e. there is no proper Levi subgroup $M \leq G$ with $L \subsetneq M$. This is a key step in the proof of the general statement, and requires a case-by-case analysis of the maximal Levi subgroups in the groups $G = GL_n$, $G=SO_n$ and $G=Sp_{2m}$. The second case is a specific example of a Dixmier sheet in $F_4$ which is required for the application to Hitchin fibrations for real forms in Section \ref{RealHitchinsbn}.

Throughout the appendix, for $a,b \in \mathbb{Z}$ we will write $a \equiv b$ to denote $a \equiv b \, ( \text{mod} \, 2)$.

If $G = GL_n$, by Example \ref{GLnSheetexm} we can label any Levi subgroup by a partition ${\bf m}$, which has exactly two parts when the Levi subgroup is maximal. 

If $G = Sp_{2m}$, any maximal Levi subgroup $L$ is of the form $GL_a \times Sp_{2p}$ where $a$ and $p$ are non-negative integers with $a+p =m$; moreover, the Levi subgroups are conjugate exactly when they are isomorphic. Thus we can label the maximal Levi subgroups of $Sp_{2m}$ by pairs $(a;p)$. Similarly, if $G = SO_n$, a maximal Levi subgroup $L$ is of the form $GL_a \times SO_{q}$ where $2a+q =n$ and $q \neq 2$. These Levi subgroups are not necessarily conjugate under $SO_n$ when they are isomorphic, but isomorphic Levi subgroups are conjugate under the larger group $O_n$. We label the maximal Levi subgroups of $SO_n$ by pairs $(a;q)$.

Let $L$ be a maximal Levi subgroup of $G = GL_n$, $SO_n$ or $Sp_{2m}$, and let $S$ be the associated Dixmier sheet as in Definition \ref{DixmierSheetdef} and Remark \ref{DixmierSheetrmk}. For $G=GL_n$, Proposition \ref{GLninductionprp} determines the nilpotent orbit in $S$, labelled by a partition ${\bf n}$ corresponding to its Jordan normal form. For $G=SO_n$ or $G=Sp_{2m}$, by viewing $G$ as a subgroup of a general linear group under the standard representation, we can still assign a partition to a nilpotent orbit in the same way. In these cases, the relationship between a Levi subgroup and the nilpotent orbit in the corresponding Dixmier sheet is not as simple as Proposition \ref{GLninductionprp}, but there is still a combinatorial algorithm for calculating it (e.g. see \cite[Lemma 7.3]{Hesselink}).

Table \ref{Levistbl} below outlines the possible cases which can occur, which have been grouped into nine classes according to the properties of the partition ${\bf m}$ (for $GL_n$), the label $(a;q)$ (for $G=SO_n$) or the label $(a;p)$ (for $G=Sp_{2m}$). The fourth column shows the order of the Katsylo group $F$ for the sheet, defined in Definition \ref{KatsyloGpdef}, which can be calculated using Theorem \ref{KatsylogpPolarisationthm} and \cite{Hesselink}. The order of $W_L$ can be calculated by inspection (noting in particular for Class VI that the usual Weyl group $W$ for type $D_n$, $n$ odd, contains no element acting by $-1$ on the centre $\mathfrak{z}$ of $\mathfrak{l}$).

\begin{table}[ht]
\centering
\caption{Summary of maximal Levi subgroups in classical groups}
\begin{tabular}{|c|c|c|c|c|}
\hline
$G$ & Levi subgroup & Nilpotent orbit in $S$ & $|F|$ & $|W_L|$ \\
\hline
\multirow{2}{*}{$GL_n$} & I. $m_1=m_2$ & $(2^{m_1})$ & 1 & 2 \\
& II. $m_1 > m_2$ & $(2^{m_2},1^{m_1-m_2})$ & 1 & 1 \\
\hline
\multirow{3}{*}{$SO_n$} & III. $a \geq q>0$, $a \not \equiv q$ & $(3^q, 2^{a-q-1},1^2)$ & 2 & 2 \\
&IV. $a \geq q$, $a \equiv q$& $(3^{q},2^{a-q})$ & 1 & 2 \\
& V. $a < q$ & $(3^a,1^{q-a}) $& 1 & 2\\
\hline
$SO_{2m}$&VI. $q = 0$, $a \not \equiv 0$& $(2^{a-1},1^2)$ & 1 & 1 \\
\hline
\multirow{3}{*}{$Sp_{2m}$} &VII. $a \geq 2p$ & $(3^{q},2^{a-q})$  & 1 & 2\\
&VIII. $a < 2p$, $a \not \equiv 0$ & $(3^{a-1},2^2,1^{q-a-1})$ & 2 & 2 \\
&IX. $a < 2p$, $a \equiv 0$ & $(3^a,1^{q-a})$ & 1 & 2 \\
\hline
\end{tabular}
\label{Levistbl}
\end{table}

We group these classes into two broader types according to the ramification of the map $p:\mathfrak{z} \rightarrow \mathcal B$ of Lemma \ref{StackyZquotientlem}, which can be detected by the value of $|W_S| = |W_L|/|F|$, where $W_S$ is the group defined in Corollary \ref{KatsyloGpWeylGpcor}. If $|W_S| = 1$, then $p$ is unramified; we call this Type 1, and it includes Classes II, III, VI and VIII in Table \ref{Levistbl}. If $|W_S|=2$, then $p$ is ramified exactly at $0 \in \mathfrak{z}$; we call this Type 2, and this covers the remaining classes. We will need the following lemma.

\begin{lemma}\label{KTAblem}
For each of the classes in Table \ref{Levistbl}, there is a choice of Levi subgroup $L \leq G$, a parabolic subgroup $P \leq G$ with Levi factor $L$, and an $\mathfrak{sl}_2$-triple $(e,h,f)$ with the following properties.
\begin{itemize}
    \item[(i)] The semisimple element $h$ is contained in $\mathfrak{l}$ and the nilpotent $e$ is contained in $\mathfrak{n}^{reg} = \mathfrak{n} \cap S$, where $\mathfrak{n}$ is the nilradical of $\mathfrak{p}$.
    \item[(ii)] The semisimple element $h$ is not in the kernel of the abelianisation map $Lie(L)=\mathfrak{l} \rightarrow \mathfrak{z}$.
    \item[(iii)] If $L$ is of Type 1, there is an element $h' \in \mathfrak{c}_{\mathfrak{g}}(e) \cap \mathfrak{l}$ such that $h'$ is not in the kernel of $\mathfrak{l} \rightarrow \mathfrak{z}$. Here, $\mathfrak{c}_{\mathfrak{g}}(e)$ denotes the Lie algebra centraliser of $e$.
\end{itemize}
\end{lemma}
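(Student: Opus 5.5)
The plan is to verify the lemma class by class using explicit matrix models, organising everything around one construction. For each maximal Levi $L$ we fix the standard block-diagonal embedding in the matrix model of $G$ (for $SO_n$ and $Sp_{2m}$ we use the antidiagonal forms, as in Example \ref{Sp4Sheetexm}, so that a Cartan subgroup is diagonal). We take $P$ to be the block upper-triangular parabolic with Levi factor $L$, and write $\mathfrak{z}$ as the span of a single element $z_0$. In each case $z_0$ is diagonal: $\mathrm{diag}(I_{m_1}\cdot m_2,-I_{m_2}\cdot m_1)$ for $GL_n$, and $\mathrm{diag}(I_a,0_{q\text{ or }2p},-I_a)$ for the orthogonal and symplectic cases. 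The abelianisation $\mathfrak{l}\to\mathfrak{z}$ is, up to a scalar, projection onto $z_0$ along $[\mathfrak{l},\mathfrak{l}]$. In every case this is computed by the trace of the $GL_a$ (or $GL_{m_1}$) block, adjusted for the $GL_n$ centre.

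\textbf{Choosing $e$ and checking (i).} We choose $e\in\mathfrak{n}$ explicitly, of the Jordan type listed in Table \ref{Levistbl}. In the $GL_n$ case it is an injection/surjection between the two blocks of maximal rank. In the other cases we take a maximal-rank map $\mathbb{C}^{q}\to\mathbb{C}^a$ (resp. $\mathbb{C}^{2p}\to\mathbb{C}^a$) together with, when needed, a maximal-rank skew or symmetric component $\mathbb{C}^{a*}\to\mathbb{C}^a$ in the corner block. This component is exactly what produces the $2$-blocks and the parity cases III/IV and VIII/IX. Since $\dim C_G(e)$ equals $\dim L$, which can be checked from the partition formula for centraliser dimensions, $e$ lies in $\mathfrak{n}\cap S$; this is the Richardson property of induction, using \cite[Theorem 1.3]{LS}. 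We then complete $e$ to an $\mathfrak{sl}_2$-triple with $h$ diagonal. Concretely, $h$ is built blockwise from the standard $h$'s of the Jordan chains, and each chain is arranged so that its vectors alternate between the blocks of $L$. This makes $h\in\mathfrak{t}\subseteq\mathfrak{l}$, giving (i).

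\textbf{Checking (ii).} Condition (ii) reduces to computing the $\mathfrak{z}$-component of the diagonal matrix $h$, i.e. its pairing with $z_0$ under the trace form. Because the chains start in the upper block (weights positive there) and end in the lower block, the trace of $h$ on the $GL_a$ block is positive. For example, it equals $\min(m_1,m_2)$-type counts in the $GL_n$ case, and $2q+(a-q)$-type counts in the orthogonal case. Hence it is nonzero.

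\textbf{Checking (iii), the main obstacle.} For the Type 1 classes II, III, VI and VIII we must exhibit a diagonal $h'\in\mathfrak{c}_{\mathfrak{g}}(e)\cap\mathfrak{t}$ with nonzero $z_0$-component. In a torus-adapted basis, $e$ is a sum of root vectors, so $h'$ must be a diagonal element annihilating the corresponding roots. The first attempt is to use the extra $1$-blocks or unmatched vectors. In class II, take $h'$ acting by $1$ on the $m_1-m_2$ fixed vectors of the upper block and by $0$ elsewhere. In classes III, VI and VIII, the partition contains $1^2$ (or $2^2$) blocks that pair via the form into a $\mathfrak{gl}_1$ (or $\mathfrak{gl}_2$) torus, and we take $h'$ acting by $\pm1$ on those paired vectors. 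We then check that this $h'$ kills all root vectors in $e$ and has nonzero trace on the $GL_a$ block. The delicate part is arranging the basis so that those paired vectors lie in the $GL_a$ block and its dual, rather than in the $SO_q$/$Sp_{2p}$ factor, and this depends on the parity bookkeeping. I expect this to be the hardest step. If it fails in some class, the fallback is to compute $\mathfrak{c}_{\mathfrak{g}}(e)\cap\mathfrak{t}$ directly as the joint kernel of the roots occurring in $e$, and to show that it is not contained in $\ker(\mathfrak{l}\to\mathfrak{z})$ by a dimension count: $\dim\mathfrak{c}_\mathfrak{g}(e)\cap\mathfrak{t}$ against the number of independent roots.
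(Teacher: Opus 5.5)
Your proposal is correct and is essentially the paper's proof. The paper takes a normalised Jordan basis (after Hesselink) adapted to $V=V^+\oplus W\oplus V^-$, lets $e$ shift along the chains and $h$ act by $n_j-2i+1$ on $v_{i,j}$, and reads off the abelianisation as the trace on $V^+$, namely $\sum_{j\le a}(n_j-1)>0$. In Type~1 it takes $h'$ to act by $+1$ and $-1$ on the two form-paired chains $a$ and $\beta(a)=a+1$, which are exactly your $1^2$ resp.\ $2^2$ blocks (for class II, a projection onto unmatched upper-block vectors, as you suggest).

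The step you flag as delicate needs less than you ask for: it suffices that one of the two paired chains starts in $V^+$, and the paper builds this into its normalisation ($v_{1,j}\in V^+$ for $j\le a$, $\beta(a)=a+1$). Your stronger requirement, that the paired vectors avoid the $SO_q$/$Sp_{2p}$ factor, cannot be met in class VIII, since there the $2^2$ chains necessarily pass through $W$ (already visible for $Sp_4$ with $L=\mathbb{G}_m\times Sp_2$). The dimension-count fallback would not substitute for this check in any case.
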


\begin{proof}
We will deal with the $GL_n$ classes separately.

We take the vector space $\mathbb{C}^n$ with standard basis $v_1$, ..., $v_n$, and identify $\mathbb{C}^{m_1}$ with the subspace with basis $v_1$, ..., $v_{m_1}$ and $\mathbb{C}^{m_2}$ with the subspace with basis $v_{m_1+1}$, ..., $v_n$. We identify $GL_n$ with $GL(\mathbb{C}^n)$ and $\mathfrak{gl}_n$ with $End(\mathbb{C}^n)$, and take a representative $L = GL(\mathbb{C}^{m_1}) \times GL(\mathbb{C}^{m_2})$ for the Levi subgroup $L$ of $GL_n$ associated with $(m_1 \geq m_2)$. The space $\mathfrak{n} = Hom(\mathbb{C}^{m_2},\mathbb{C}^{m_1}))$ is the nilradical of a parabolic subalgebra $\mathfrak{p}$ corresponding to a parabolic subgroup $P$ with Levi factor $L$. The abelianisation map $\mathfrak{l} \rightarrow \mathfrak{z} \cong \mathbb{C}$ is given by
$$x_1\oplus x_2 \mapsto m_2\text{tr}(x_1) - m_1\text{tr}(x_2),$$
where $x_1 \in End(\mathbb{C}^{m_1})$ and $x_2 \in End(\mathbb{C}^{m_2})$.

The nilpotent $e$ given by
    $$ev_j = 0 \text{ if } j \leq m_1, \, ev_j = v_{j-m_1} \text{ if } j >m_1$$
has $e \in Hom(\mathbb{C}^{m_2},\mathbb{C}^{m_1})$ and is of the correct Jordan type; hence $e \in \mathfrak{n}^{reg}$ by Proposition \ref{GLninductionprp}. We define $h \in \mathfrak{l}$ by
\begin{itemize}
    \item $hv_j = v_j$ for $1 \leq j \leq m_1$,
    \item $hv_j = 0$ for $m_1+1 \leq j \leq m_2$,
    \item and $hv_j = -v_j$ for $m_2+1 \leq j \leq n$.
\end{itemize}
We define the nilpotent $f$ by
$$ fv_j = v_{j+m_1} \text{ if } j \leq m_2, \, fv_j = 0 \text{ if } j >m_2. $$
By inspection, $(e,h,f)$ is an $\mathfrak{sl}_2$-triple, and $h$ maps to $n \neq 0$ under the abelianisation map. Hence, this proves statements (i) and (ii) in these cases.

If $L$ is of Type 1, i.e. $m_1 > m_2$, we can define $h' \in \mathfrak{c}_{\mathfrak{g}}(e) \cap \mathfrak{l}$ by $h'v_j = \delta_{m_2+1,j}v_j$; this is sent to $1$ under the abelianisation, so statement (iii) holds in this case.

We now assume $G = SO_n$ or $G=Sp_n$ (where $n=2m$); we will use the construction in \cite[Lemma 7.3]{Hesselink}. Specifically, let $V$ be a vector space of dimension $n$ with a non-degenerate bilinear form $(.,.)$, which is symmetric if $G=SO_n$ and anti-symmetric if $G=Sp_{2n}$, and identify $G$ with the group of linear automorphisms respecting the bilinear form. Fix a maximal Levi subgroup $L$ of $G$ corresponding to $(a;p)$ if $G=Sp_{2m}$ or $(a;q)$ if $G=SO_n$. There is a decomposition of $V$ as
\begin{equation}\label{MLCameralHom3eqn}
    V = V^+\oplus W \oplus V^-
\end{equation}
with the following properties:
\begin{itemize}
    \item The vector spaces $V^\pm$ have dimension $a$ and are isotropic with respect to $(.,.)$.
    \item The dimension of $W$ is $2p$ if $G=Sp_{2n}$, $q$ if $G=SO_n$.
    \item The restrictions of the bilinear form to $V^+ \oplus V^-$ and $W$ are non-degenerate.
    \item $L$ is the subgroup of $G$ of linear automorphisms respecting the decomposition (\ref{MLCameralHom3eqn}).
\end{itemize}

Let ${\bf n} = (n_1 \geq \,  ...\, \geq n_s)$ of $n$ be the partition for the nilpotent orbit in the sheet corresponding to $L$ (i.e. the partition in the third column of Table \ref{Levistbl}). We choose a basis $v_{i,j}$ of $V$, for $1 \leq j \leq s$ and $1\leq i \leq n_j$, and an involution $\beta$ on the set of indices $\{1,\, ..., s\}$ of the partition ${\bf n}$ with the following properties:
\begin{itemize}
    \item For all $1 \leq j \leq s$, $n_{\beta(j)} =n_j$.
    \item The basis vectors satisfy $(v_{i,j},v_{i',j'})=-(v_{i+1,j},v_{{i-1},j})$ for all $1\leq j, j'\leq s$ and $1\leq i,i' \leq n_j$; moreover, $(v_{i,j},v_{i',j'}) \neq 0$ exactly when $i+i' = n_j+1$ and $j = \beta(j')$.
    \item The vectors $v_{1,j}$ for $1 \leq j \leq a$ form a basis for $V^+$, the vectors $v_{n_j, \beta(j)}$ for $1 \leq j \leq a$ form a basis for $V^-$, and the remaining $v_{i,j}$ form a basis for $W$.
    \item If $L$ is of Type 1, $\beta(a) = a+1$.
\end{itemize}
This is possible by the properties of the decomposition (\ref{MLCameralHom3eqn}) and by the form of the partitions which occur in Table \ref{Levistbl}.

Then the endomorphism $e$ of $V$ which acts by $ev_{i,j} = v_{i-1,j}$ if $i >1$, and $ev_{1,j} = 0$, for all $1 \leq j \leq s$, defines a nilpotent element of $\mathfrak{g}$; indeed, the construction ensures that $v_{i,j}$ is a \emph{normalised Jordan basis} for $e$ \cite[5.1]{Hesselink}. Moreover, by the construction of $v_{i,j}$ and the form of the partition ${\bf n}$, $e$ fixes the flag
\begin{equation}\label{MLCameralHom4eqn}
    0 \subseteq V^+ \subseteq V^+ \oplus W \subseteq V;
\end{equation}
the subgroup of $G$ fixing the flag (\ref{MLCameralHom4eqn}) is a parabolic $P$ with Levi factor $L$. We see that $e \in \mathfrak{n}^{reg}$, where $\mathfrak{n}$ is the nilradical of $\mathfrak{p}$, as required.

Define the endomorphism $h$ by $hv_{i,j} = (n_j -2i+1)v_{i,j}$ for all $1 \leq j \leq s$ and $1 \leq i \leq n_j$; then $[h,e] = 2e$ by inspection. By the construction of the basis, $h \in \mathfrak{g}$; moreover it is clear that $h \in \mathfrak{l}$. The pair $(e,h)$ can be extended to an $\mathfrak{sl}_2$-triple e.g. by the calculations of \cite[Section 5.2]{CMcG}.

The abelianisation map for $\mathfrak{l}$ in this case sends an endomorphism $x \in \mathfrak{l}$ to $\text{tr}(x|_{V^+})$. Under this map, $h$ is sent to 
$$\sum_{j=1}^a (n_j-1);$$
each of the terms in the sum is greater than or equal to $0$, and since $n_1 > 1$, the sum must be strictly positive. Hence, $h$ does not map to $0$ under the abelianisation map; so statements (i) and (ii) are proven for these classes also.

If $L$ is of Type 1, then we define the endomorphism $h'$ by
\begin{itemize}
    \item $h'v_{i,a} = v{i,a}$ for $1 \leq i \leq n_a$,
    \item $h'v_{i,a+1}=-v_{i,a+1}$ for $1 \leq i \leq n_{a+1}$,
    \item and $h'v_{i,j} = 0$ otherwise.
\end{itemize}
   This endomorphism centralises $e$ and is contained in the Levi subalgebra $\mathfrak{l}$. The abelianisation map sends $h$ to $1$; hence this proves the statement (iii) in these classes.
\end{proof}

Now let $G$ be an arbitrary classical group, and let $S$ be the Dixmier sheet associated to a maximal Levi subgroup $L \leq G$. Let $\kappa_S:\mathcal I_S^{sm} \rightarrow \rho_S^*\hat{\mathcal J_S}$ be the cameral homomorphism as defined in Section \ref{Camgpsbn}.

\begin{proposition}\label{MLCameralSmoothprp}
    The cameral homomorphism $\kappa_S:\mathcal I_S^{sm} \rightarrow \rho_S^*\hat{\mathcal J_S}$ is smooth (where $S$ is a Dixmier sheet associated to a maximal proper Levi subgroup of $G$).
\end{proposition}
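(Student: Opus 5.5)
Since $L$ is a maximal proper Levi subgroup, $\mathfrak{z}$ is one-dimensional, and I would first reduce smoothness of $\kappa_S$ to a statement at a single nilpotent point. As in the proof of Proposition \ref{CameralSmoothprp}, both $\mathcal I_S^{sm}$ and $\rho_S^*\hat{\mathcal J}_S$ are smooth over $S$, so it suffices to show that $\kappa_{S,Lie}$ is surjective on fibres. At semisimple points this is Proposition \ref{CameralHomprp}. The non-semisimple points of $S$ form the single orbit $Ad(G)(e)$, because $\dim\mathfrak{c}_S=1$ and the scaling action contracts to $0$. By $G$-equivariance it therefore suffices to check surjectivity of $(\kappa_{S,Lie})_e$ for the $e$ provided by Lemma \ref{KTAblem}.

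Next I would compute $Lie(\rho_S^*\hat{\mathcal J}_S)_e$. Since $p$ is finite flat, the Lie algebra of $\Pi_S$ at $\rho_S(e)$ is $\bar{\mathfrak{z}}\otimes\mathcal O(p^{-1}(\rho_S(e)))$, and $Lie(\hat{\mathcal J}_S)$ is its $W_L$-invariant part. Using the cartesian square of Proposition \ref{StackyGSprp}, the fibre of $\hat p$ over $e$ identifies $\kappa_{S,e}$, via (\ref{CameralHom1eqn}), with the map $\mathfrak{c}_\mathfrak{g}(e)\cap\mathfrak{p}\to\bar{\mathfrak{z}}$ at the point $P(1,e)$, together with its $W_L$-translates.

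The argument then splits into the two types.

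\emph{Type 1} ($W_S$ trivial, so $p$ is étale and $\mathfrak{z}\to\mathcal B$ is a $W_L/W_S=F$-torsor locally). Here $p^{-1}(\rho_S(e))\times_{\mathcal B}$ is reduced. The invariants $Lie(\hat{\mathcal J}_S)_e$ identify with $\bar{\mathfrak{z}}^{Stab}$, which is the $W_L$-invariant functions on the $W_L$-orbit through the point lying over $e$, i.e. a copy of $\bar{\mathfrak{z}}$ (one-dimensional, or the relevant fixed part). Surjectivity then amounts to finding an element of $\mathfrak{c}_\mathfrak{g}(e)\cap\mathfrak{l}$ with nonzero image in $\mathfrak{z}$. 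Lemma \ref{KTAblem}(iii) supplies exactly this element $h'$.

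\emph{Type 2} ($p$ ramified of order 2 at $0$). The fibre of $p$ is $\mathrm{Spec}\,\mathbb{C}[t]/t^2$ with $W_L=\mathbb{Z}/2$ acting by $t\mapsto -t$ and acting on $\bar{\mathfrak{z}}$ by $-1$ (the nontrivial element of $W_L$ acts by $-1$ on $\mathfrak{z}$ and hence on $\bar{\mathfrak z}$). The invariants $(\bar{\mathfrak z}\otimes\mathbb{C}[t]/t^2)^{W_L}$ are therefore $\bar{\mathfrak z}\cdot t$, which is one-dimensional and lives in the first-order part. So surjectivity cannot be seen at the reduced point; it needs a first-order computation. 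For this I use the $\mathfrak{sl}_2$-triple from Lemma \ref{KTAblem}. Let $\mathfrak{s}\cong\mathfrak{sl}_2$ be its span and consider the curve $\mathbb{C}\to\mathfrak{g}$ through the regular sheet of $\mathfrak{s}$, namely $e+t\,h/2+\dots$, or equivalently the Kostant slice of $\mathfrak{s}$. By Lemma \ref{KTAblem}(i),(ii), $h\in\mathfrak{l}$ has nonzero image in $\mathfrak{z}$, so the embedding $\mathfrak{s}\hookrightarrow\mathfrak{g}$ carries the Cartan line $\mathbb{C}h$ isomorphically onto $\mathfrak{z}$ after projection. This intertwines the Weyl group of $\mathfrak{s}$ with $W_L$ and the quotient map $\mathbb{C}h\to\mathbb{C}h/\pm$ with $p$, which is ramified of order 2 on both sides. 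Using functoriality of (\ref{CameralHom1eqn}) with the Borel $\mathfrak{s}\cap\mathfrak{p}$ of $\mathfrak{s}$, one obtains a commutative square: the centraliser of $e$ in $\mathfrak{s}$ maps to $\mathfrak{c}_\mathfrak{g}(e)$, the cameral Lie algebras are identified, and the two cameral homomorphisms are compatible. The regular case for $SL_2$ (or $PGL_2$) is known to be smooth \cite[Proposition 12.5]{DG}, so the image is all of $\bar{\mathfrak z}\cdot t$.

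I expect the main obstacle to be this Type 2 compatibility: checking that the square really commutes, including the scaling by the nonzero image of $h$ in $\bar{\mathfrak z}$. This must be done carefully on first-order neighbourhoods, using the description of $\kappa_S$ over $\mathfrak{z}^{rs}$ from (\ref{CameralHom4eqn}), and then extended by continuity. A smaller point, also in Type 2, is verifying that the nontrivial element of $W_L$ really acts by $-1$ on $\bar{\mathfrak z}$; Table \ref{Levistbl} gives $|W_L|=2$ in the relevant classes.
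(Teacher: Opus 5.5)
Your reduction to the nilpotent point $e$, your Type 1 argument using the element $h'$ of Lemma \ref{KTAblem}(iii), and your computation that the fibre of $Lie(\hat{\mathcal J}_S)$ over the ramification point is $\bar{\mathfrak z}\cdot t$ all agree with the paper. That computation also correctly shows why a first-order argument is unavoidable.

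The gap is in Type 2, at exactly the step you flag. You propose to compare with the regular case along a curve in the regular locus of $\mathfrak s$ itself ($e+th/2+\cdots$, or the Kostant slice $e+\mathbb{C}f$), using the inclusion $\mathfrak s\hookrightarrow\mathfrak g$. But every regular semisimple element of $\mathfrak s$ is $G$-conjugate to a nonzero multiple of $h$; for instance $e+th=Ad_{\exp(-e/2t)}(th)$. In general $h\notin S$. Take Class IV of Table \ref{Levistbl} with $a>q>0$: there $e$ has Jordan type $(3^q,2^{a-q})$, so any $h$ completing $e$ to an $\mathfrak{sl}_2$-triple has the five eigenvalues $\pm2,\pm1,0$. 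Elements of $S^{ss}$ are conjugate into $\mathfrak z^{rs}$ and so have at most three. Hence $\dim C_G(th)<\dim L$ and $th\notin S$ for $t\neq 0$.

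So your curve leaves $S$ immediately after $e$, $\kappa_S$ is not defined along it, and there is no square of Lie algebra bundles over it to which a continuity argument from (\ref{CameralHom4eqn}) could apply. At the single point $e$ the inclusion $\mathfrak c_{\mathfrak s}(e)\subseteq\mathfrak c_{\mathfrak g}(e)$ does exist. However, the identification of the two non-reduced fibres over the ramification point, which is where the content lies, has to be induced by a map of families into $S$.

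The missing idea, which is what the paper does, is to deform only inside the Borel $\mathfrak b=\mathbb{C}h\oplus\mathbb{C}e$ and to replace $h$ by its image $\zeta(h)\in\mathfrak z$ under abelianisation. The map $\zeta_{\mathfrak b}(ah+be)=a\zeta(h)+be$ has the following properties:
\begin{itemize}
\item it sends $\mathfrak b^{reg}=\mathfrak b\setminus\{0\}$ into $\mathfrak r^{reg}\subseteq S$;
\item it satisfies $\chi_{\mathfrak s}=\chi_S\circ\zeta_{\mathfrak b}$;
\item it is not the restriction of $\mathfrak s\hookrightarrow\mathfrak g$.
\end{itemize}
Correspondingly, since $\mathfrak c_{\mathfrak s}(y)=\mathbb{C}y$, the map on centraliser Lie algebras must be $y\mapsto\zeta_{\mathfrak b}(y)\in\mathfrak c_{\mathfrak g}(\zeta_{\mathfrak b}(y))$, not the inclusion $y\mapsto y$. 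Indeed, $y$ need not even commute with $\zeta_{\mathfrak b}(y)$ when $ab\neq 0$.

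With these choices the square over $\mathfrak b^{reg}$ commutes trivially on $\mathfrak b^{rs}$, because both composites send $y$ to $\zeta(\pi_{\mathfrak b}(y))$. It therefore commutes everywhere by continuity. Surjectivity at $e$ then follows from \cite[Proposition 12.5]{DG}, since $\zeta_J$ is an isomorphism. Your version works in the classes where $h$ happens to lie in $\mathfrak z$, but not uniformly.
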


\begin{proof}
Since the statement of Proposition \ref{MLCameralSmoothprp} is equivalent to checking that the induced map on vector bundles $\kappa_{S, Lie}:Lie(\mathcal I_S^{sm}) \rightarrow Lie(\rho_S^*\hat{\mathcal J_S})$ is surjective, it is not affected by altering the centre of $G$ (by extensions or quotients) or by splitting $G$ into direct factors. In particular, we can reduce to checking the statement for the cases $G = GL_n$, $G = SO_n$ or $G=Sp_{2m}$. Moreover, by Remark \ref{CameralHomrmk}, for the statement of Proposition \ref{MLCameralSmoothprp} we do not need to distinguish between Levi subgroups which are related by an automorphism of $G$. Thus, it suffices to prove the proposition for any representative of each of the classes in Table \ref{Levistbl}. We let $L$ and $P$ be the choices of Levi and parabolic subgroup of $G$ in Lemma \ref{KTAblem} and let $\mathfrak{r}$ be the solvable radical of $\mathfrak{p} = Lie(P)$. We let $(e,h,f)$ be the $\mathfrak{sl}_2$-triple determined by Lemma \ref{KTAblem}.

In the notation of Section \ref{Camgpsbn}, we have $\bar{Z} \cong \mathbb{C}^*$ and $\mathfrak{z} \cong Lie(\bar{Z}) \cong \mathbb{C}$. The group $W_L$ has order $1$ or $2$, and in the latter case its non-trivial element acts on $\bar{Z}$ by inversion. Any element of the Dixmier sheet associated to $L$ is either semisimple or nilpotent.

By $G$-equivariance it suffices to show that $(\kappa_{S,Lie})_x$ is surjective for each $x \in \mathfrak{r}^{reg}$; moreover, this holds if $x$ is semisimple by Proposition \ref{CameralHomprp}, so we may assume that $x = e$.

We first assume that $L$ is of Type 1. Since $p$ is unramified, by the construction of $\kappa_S$ in Proposition \ref{CameralHomprp}, the pullback $\hat{p}$ is unramified over $e$, so the fibre $\rho_S^*\hat{\mathcal J}_{S,x}$ can be identified with $\bar{Z}$ such that the homomorphism $\kappa_{S,e}:C_P(e) \rightarrow \bar{Z}$ is given by
\begin{equation}\label{ML1CameralHomeqn}
\begin{tikzcd}
C_P(e) \arrow[r] & C_P(e)P^{der}/P^{der} \arrow[r, hook] & P/P^{der} = \bar{Z}. 
\end{tikzcd}
\end{equation}
Since $C_P(e)P^{der}/P^{der}$ is isomorphic to the image of $C_P(e) \cap L$ under the abelianisation of $L$, and $C_P(e)^\circ = C_G(e)^\circ$ \cite[Zusatz 5.5 (d)]{BK}, $(\kappa_{S,Lie})_e$ is surjective by Lemma \ref{KTAblem} (iii). 

Now suppose $L$ is of Type 2. Let $\mathfrak{s}$ be the copy of $\mathfrak{sl}_2$ generated by $(e,h,f)$, with fixed Cartan subalgebra $\mathfrak{t} = \mathbb{C}h$ and Borel subalgebra $\mathfrak{b} = \mathbb{C}h \oplus \mathbb{C}e$. We consider it as the Lie algebra of $SL_2$, with corresponding Cartan subgroup $T$ and Borel subgroup $B$, although the embedding $\mathfrak{s} \hookrightarrow\mathfrak{g}$ may not arise from any embedding $SL_2 \hookrightarrow G$, and the maps of Lie algebra bundles we construct below may not arise from group homomorphisms. Let $W_{\mathfrak{s}}$ be the Weyl group on $\mathfrak{t}$, which is the group generated by $-\text{id}_{\mathfrak{t}}$. 

By Lemma \ref{KTAblem} (ii), the map $\zeta:\mathfrak{t} \rightarrow \mathfrak{z}$ defined by sending $h$ to its image under the abelianisation is an isomorphism of vector spaces; moreover this map identifies the $W_{\mathfrak{s}}$-action on $\mathfrak{t}$ with the $W_L$-action on $\mathfrak{z}$. Thus, $\zeta$ induces an isomorphism of vector bundles
\begin{equation}\label{MLCameralSMooth4eqn}
    \zeta_J:Lie(\hat{\mathcal J}_{\mathfrak{s}}) \rightarrow Lie(\hat{\mathcal J}_S)
\end{equation}
over $\mathfrak{t}/W_{\mathfrak{s}} \cong \mathfrak{z}/W_L \cong \mathcal B$; here $\hat{\mathcal J}_{\mathfrak{s}}$ is the pseudo-cameral group for the regular sheet $\mathfrak{s}^{reg}$ as defined in Definition \ref{PseudoCameralgpdef}.

We also have a map $\zeta_{\mathfrak{b}}:\mathfrak{b}^{reg} \hookrightarrow \mathfrak{r}^{reg} \subseteq S$ induced by $\zeta$, since $\mathfrak{b}^{reg} = (\mathfrak{t} \oplus \mathbb{C}e) \backslash\{0\}$. For any $y \in \mathfrak{b}^{reg}$, the centraliser $\mathfrak{c}_{\mathfrak{s}}(y)$ is generated by $y$ as a vector space, so there is a map of vector bundles
\begin{equation}\label{MLCameralSmooth5eqn}
    \zeta_I:Lie(\mathcal I_{\mathfrak{s}}^{reg})|_{\mathfrak{b}^{reg}} \rightarrow Lie(\zeta_{\mathfrak{b}}^*\mathcal I_S^{sm})
\end{equation}
which fibrewise is given by the map $\zeta_{I,y}:\mathfrak{c}_\mathfrak{s}(y) \rightarrow \mathfrak{c}_{\mathfrak{g}}(\zeta_{\mathfrak{b}}(y))$ sending $y$ to $\zeta_{\mathfrak{b}}(y)$.

We now show that there is a commutative diagram of vector bundles over $\mathfrak{b}^{reg}$ given by 
\begin{equation}\label{MLCameralSmooth6eqn}
\begin{tikzcd}
Lie(\mathcal I_{\mathfrak{s}}^{reg}) \arrow[d, "{\kappa_{\mathfrak{s},Lie}}"'] \arrow[r, "\zeta_I"] & Lie(\zeta_{\mathfrak{b}}^*\mathcal I_S^{sm}) \arrow[d, "\zeta_{\mathfrak{b}}^*\kappa_{S, Lie}"] \\
Lie(\chi_{\mathfrak{s}}^*\hat{\mathcal J}_{\mathfrak{s}}) \arrow[r, "\chi_{\mathfrak{s}}^*\zeta_J"] & Lie( \chi_{\mathfrak{s}}^*\hat{\mathcal J_S}),                                                 
\end{tikzcd}
\end{equation}
where $\kappa_{\mathfrak{s}}$ is the cameral homomorphism for the regular sheet $\mathfrak{s}^{reg}$, and $\chi_{\mathfrak{s}}$ is the Chevalley map for $\mathfrak{s}$. We first observe that the diagram makes sense since $\chi_{\mathfrak{s}} = \chi_S \circ \zeta_{\mathfrak{b}}$ by \cite[Satz 5.6]{Borho2}. To see that it commutes, it suffices to do so over the dense open subset $\mathfrak{b}^{rs}$ of regular semisimple elements of $\mathfrak{b}$.

Since the quotient map $p_{\mathfrak{s}}:\mathfrak{t} \rightarrow \mathfrak{t}/W_{\mathfrak{s}}$ is unramified on the regular semisimple locus $\mathfrak{t}^{rs}$, to check the diagram (\ref{MLCameralSmooth6eqn}) commutes over $\mathfrak{b}^{rs}$, it suffices to check that the diagram
\begin{equation}\label{MLCameralSmooth7eqn}
\begin{tikzcd}
\mathfrak{c}_{\mathfrak{s}}(y) \arrow[d, "{(\hat \kappa_{\mathfrak{s},Lie})_y}"'] \arrow[r, "{\zeta_{I,y}}"] &  \mathfrak{c}_{\mathfrak{g}}(\zeta_{\mathfrak{b}}(y)) \arrow[d, "(\hat\kappa_{S, Lie})_{\zeta_{\mathfrak{b}}(y)}"] \\
\mathfrak{t} \arrow[r, "\zeta"]                                                                                                        & \mathfrak{z}                                                                                                               
\end{tikzcd}
\end{equation}
commutes for each $y \in \mathfrak{b}^{rs}$, where $\hat{\kappa}_S$ and $\hat{\kappa}_{\mathfrak{s}}$ are defined as in the proof of Proposition \ref{CameralHomprp}.  By construction, both maps send the generator $y$ of $\mathfrak{c}_{\mathfrak{s}}(y)$ to $\zeta(\pi_{\mathfrak{b}}(y))$, where $\pi_{\mathfrak{b}}:\mathfrak{b} \rightarrow \mathfrak{t}$ is the quotient by the nilradical.

So the diagram (\ref{MLCameralSmooth6eqn}) commutes; and $\zeta_J$ is an isomorphism by construction, while $\kappa_{\mathfrak{s},Lie}$ is an isomorphism by \cite[Proposition 12.5]{DG}. Hence $\zeta_{\mathfrak{b}}^*\kappa_{S, Lie}$ must be surjective, so $(\kappa_{S,Lie})_x$ is surjective for all $x \in \zeta_{\mathfrak{b}}(\mathfrak{b}^{reg}) =(\mathfrak{z} \oplus \mathbb{C}e)\backslash\{0\}$. In particular $(\kappa_{S,Lie})_e$ is surjective.
\end{proof}

We can also give an ad hoc proof for the following special case, which is the only example of a sheet not of classical reduction type containing the regular locus of a simple symmetric space.

\begin{proposition}\label{F4CameralSmoothprp}
    The cameral homomorphism $\kappa_S:\mathcal I_S^{sm}\rightarrow \rho_S^*\hat{\mathcal J}$ is smooth when $S$ is the Dixmier sheet associated to the Levi subgroup $L$ of type $B_3$ in the exceptional group $F_4$.
\end{proposition}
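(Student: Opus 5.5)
The plan is to follow the template of Proposition \ref{MLCameralSmoothprp}, since $L$ of type $B_3$ is a maximal Levi subgroup of $F_4$. First, $\mathfrak{z}$ is one-dimensional and $\bar{Z}\cong\mathbb{G}_m$. Every element of $S$ is therefore either semisimple or of the form $G$-conjugate to a nilpotent $e$. By $G$-equivariance and Proposition \ref{CameralHomprp}, it suffices to check that $(\kappa_{S,Lie})_e$ is surjective for a single nilpotent $e\in\mathfrak{r}^{reg}$. Here $\mathfrak{r}$ is the solvable radical of a parabolic $\mathfrak{p}$ with Levi factor $\mathfrak{l}$.

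Next I determine the type of the sheet. The relative Weyl group $W_L=N_G(L)/L$ has order $2$: the longest element of $W(F_4)$ is $-1$ and normalises $L$, so the non-trivial element acts by $-1$ on $\mathfrak{z}$. The nilpotent orbit in $S$ is the orbit induced from $0$ on $\mathfrak{l}$. It has dimension $2\dim\mathfrak{n}=2(52-22)/2=30$, which in $F_4$ is the orbit $\tilde A_1$ (dimension 22?) — I would instead identify it from tables of induced orbits as the orbit of dimension $30$, namely $A_2$ or $\tilde A_2$. I would then compute $F\cong C_G(e)/C_P(e)$ via Theorem \ref{KatsylogpPolarisationthm}, using the component group $A(e)$ from the tables (trivial for $\tilde A_2$, $\mathbb{Z}/2$ for $A_2$).

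Case of Type 2 ($|F|=1$, so $p$ is ramified at $0$). I would reproduce the $\mathfrak{sl}_2$ argument. Choose an $\mathfrak{sl}_2$-triple $(e,h,f)$ with $h\in\mathfrak{l}$ and $e\in\mathfrak{n}^{reg}$. This is possible because the Richardson element for $\mathfrak{p}$ can be chosen with its Dynkin characteristic supported in $\mathfrak{l}$; equivalently, one takes $h$ in the grading defining $\mathfrak{p}$ when $\mathfrak{p}$ is the Jacobson--Morozov parabolic. I then check that $h$ has non-zero image in $\mathfrak{z}$. This is automatic because $\mathrm{ad}\,h$ acts positively on $\mathfrak{n}$, so $h$ pairs non-trivially with the fundamental coweight of the removed node. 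Once this holds, the diagram (\ref{MLCameralSmooth6eqn}) and \cite[Proposition 12.5]{DG} give surjectivity exactly as before.

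Case of Type 1 ($|F|=2$, so $p$ is unramified). It suffices to find $h'\in\mathfrak{c}_{\mathfrak{g}}(e)\cap\mathfrak{l}$ with non-zero image in $\mathfrak{z}$. Then (\ref{ML1CameralHomeqn}), together with $C_P(e)^\circ=C_G(e)^\circ$, gives the result. To find $h'$, I would take a maximal torus of the reductive part $\mathfrak{c}_{\mathfrak{g}}(\mathfrak{s})$ lying in $\mathfrak{l}$, and check against explicit root data that it is not contained in $\mathfrak{l}^{der}=\mathfrak{so}_7$.

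The main obstacle is this explicit verification in $F_4$: pinning down the orbit, the group $F$, and a concrete $e\in\mathfrak{n}^{reg}$ together with $h$ or $h'$. I would first try a computation in a Chevalley basis with the grading by the coweight of the non-$B_3$ node. In that grading $\mathfrak{n}=\mathfrak{g}_1\oplus\mathfrak{g}_2$, with $\mathfrak{g}_1$ the $8$-dimensional spin representation and $\mathfrak{g}_2$ the $7$-dimensional vector representation of $\mathfrak{so}_7$. I would then pick generic $e=e_1+e_2$ and read off the centraliser directly.
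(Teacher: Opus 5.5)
Your approach is the paper's: since $L$ is a maximal Levi subgroup, you run the Type~2 argument of Proposition~\ref{MLCameralSmoothprp} with an $\mathfrak{sl}_2$-triple taken from a Jacobson--Morozov grading, and property (ii) of Lemma~\ref{KTAblem} is immediate because $h$ is a non-zero element of $\mathfrak{z}$. The case distinction you leave open is settled from the tables, as in the paper: the orbit in $S$ is $\tilde A_2$, whose weighted Dynkin diagram $0002$ has zero labels exactly on the $B_3$ subdiagram (whereas $A_2$ has $2000$, whose zero part is of type $C_3$), and whose component group is trivial; hence $F$ is trivial, $\mathfrak{p}$ is the Jacobson--Morozov parabolic with $L=C_G(h)$, and your Type~1 branch and the explicit Chevalley-basis computation are not needed.
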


\begin{proof}
We note first that the sheet $S$ is non-singular, so that the cameral homomorphism is well-defined \cite{Bulois}. The nilpotent orbit in $S$ is the orbit with Bala-Carter label $\tilde{A}_2$, and has trivial component group in $F_4$ (see e.g. \cite[Theorem 7.1.6 and Section 8.4]{CMcG}). In particular, the Katsylo group is trivial. 

On the other hand the group $W_L$ is of order 2 \cite{Howlett}. Since $S$ corresponds to a maximal Levi subgroup in $F_4$, the same proof as in Proposition \ref{MLCameralSmoothprp} (for Type 2) applies provided we can find an $\mathfrak{sl}_2$-triple $(e,h,f)$ satisfying properties (i) and (ii) in Lemma \ref{KTAblem}. But the form of the weighted Dynkin diagram for $\tilde{A}_2$ (with labels $0$ at each node in the subdiagram $B_3$, and $2$ at the remaining node) determines an $\mathfrak{sl}_2$-triple $(e,h,f)$ such that $L = C_G(h)$ and $e$ is a representative for $\tilde{A}_2$. This implies the required properties in Lemma \ref{KTAblem}, and thus proves the statement.
\end{proof}

\bibliographystyle{alpha}
\bibliography{bibliography}

@article{AOV,
    author = {D. Abramovich and M. Olsson and A. Vistoli},
    title = {Tame stacks in positive characteristic},
    journal = {Ann. Inst. Fourier},
    volume = {58},
    number = {4},
    pages = {1057-1091},
    year = {2008}
}

@article{Araki,
    author = {S. Araki},
    title = {On root systems and an infinitesimal classification of irreducible symmetric spaces},
    journal = {J. Math. Osaka City Univ.},
    volume = {13},
    number = {1},
    pages = {1-34},
    year = {1962}
}

@misc{BR,
    author = {K. Banerjee and S. Rayan},
    title = {A generalized spectral correspondence},
    howpublished = {arXiv preprint},
    note = {arXiv:2310.02413},
    year = {2023}
}

@article{BS,
    author = {D. Baraglia and L.P. Schaposnik},
    title = {Cayley and {L}anglands type correspondences for orthogonal {H}iggs bundles},
    journal = {Trans. Amer. Math. Soc.},
    volume = {371},
    pages = {7451-7492},
    year = {2019}
}

@article{BNR,
    author = {A. Beauville and M.S. Narasimhan and S. Ramanan},
    title = {Spectral curves and the generalised theta divisor},
    journal = {J. reine angew. Math.},
    volume = {398},
    pages = {169-179},
    year = {1989}
}

@incollection{BD,
    author = {A.A. Beilinson and V.G. Drinfeld},
    title = {Quantization of {H}itchin's fibration and the {L}anglands program},
    editor = {A.B. de Monvel and V. Marchenko},
    booktitle = {Algebraic and Geometric Methods in Mathematical Physics},
    series = {Mathematical Physics Studies},
    volume ={19},
    publisher = {Springer},
    address = {Dordrecht, NL},
    year = {1996}
}

@article{Borho1,
    author = {W. Borho},
    title = {Definition einer {D}ixmier-{A}bbildung für $\mathfrak{sl}(n,\mathbb{C})$},
    journal = {Invent. Math.},
    volume = {40},
    pages = {143-169},
    year = {1977}
}

@article{Borho2,
    author = {W. Borho},
    title = {\"{U}ber {S}chichten halbeinfacher {L}ie-{A}lgebren},
    journal = {Invent. Math.},
    volume = {65},
    pages = {283-317},
    year = {1981}
}

@article{BB,
    author = {W. Borho and J-L. Brylinski},
    title = {Differential operators on homogeneous spaces. {I}. {I}rreducibility of the associated variety for annihilators},
    journal = {Invent. Math.},
    volume = {69},
    pages = {437-476},
    year = {1982}
}

@article{BJ,
    author = {W. Borho and J.C. Jantzen},
    title = {\"{U}ber primitive {I}deale in {E}inh\"{u}llenden halbeinfacher {L}ie-{A}lgebren},
    journal = {Invent. Math.},
    volume = {39},
    pages = {1-53},
    year = {1977}
}

@article{BK,
    author = {W. Borho and H. Kraft},
    title = {\"{U}ber {B}ahnen und deren {D}eformationen bei linearen {A}ktionen reduktiver {G}ruppen},
    journal = {Comment. Math. Helv.},
    volume = {54},
    number = {1},
    pages = {61 - 104},
    year = {1979}
}

@book{BLR,
    author = {S. Bosch and W. L\"{u}tkebohmert and M. Raynaud},
    title = {N\'{e}ron Models},
    series = {Ergebnisse der Mathematik und ihrer Grenzgebiete 3. Folge/A Series of Modern Surveys in Mathematics},
    volume = {21},
    publisher = {Springer-Verlag},
    address = {Berlin, Heidelberg, DE},
    year = {1990}
}

@article{BGPG,
    author = {S.B. Bradlow and O. Garc\'{i}a-Prada and P.B. Gothen},
    title = {Surface group representations and {U}$(p,q)$-{H}iggs bundles},
    journal = {J. Differential Geom.},
    volume = {64},
    number = {1},
    pages = {111-170},
    year = {2003}
}

@article{BBS,
    author = {S.B. Bradlow and L. Branco and L. Schaposnik},
    title = {Orthogonal {H}iggs bundles with singular spectral curves},
    journal = {Commun. Anal. Geom.},
    volume = {28},
    number = {8},
    pages = {1895-1931},
    year = {2020}
}

@phdthesis{Branco,
    author = {L. Branco},
    title = {{H}iggs bundles, {L}agrangians and mirror symmetry},
    school = {University of Oxford},
    year = {2017}
}

@article{Broer1,
    author = {A. Broer},
    title = {Decomposition varieties in semisimple {L}ie algebras},
    journal = {Can. J. Math.},
    volume = {50},
    number = {5},
    pages = {929-971},
    year = {1998}
}

@unpublished{Bulois,
    author = {M. Bulois},
    title = {Geometry of sheets in ordinary and symmetric {L}ie algebras},
    note = {In preparation}
}

@incollection{BIW,
    author = {M. Burger and A. Iozzi and A. Wienhard},
    title = {Higher {T}eichm\"{u}ller Spaces: from ${S}{L}(2,\mathbb{R})$ to other {L}ie groups},
    editor = {A. Papadopoulos},
    booktitle = {Handbook of Teichm\"{u}ller Theory, Volume IV},
    series = {IRMA Lectures in Mathematics and Theoretical Physics},
    volume = {19},
    publisher = {EMS Press},
    address = {Z\"{u}rich, CH},
    pages = {539-618},
    year = {2014}
}

@phdthesis{Carbone,
    author = {R.M. Carbone},
    title = {The {N}orm map on the compactified {J}acobian, the {P}rym stack and spectral data for {G}-Higgs pairs},
    school = {Università Roma Tre},
    year = {2019}
}

@article{deCataldo,
    author = {M.A.A. de Cataldo},
    title = {A support theorem for the {H}itchin fibration: the case of ${S}{L}_n$},
    journal = {Compos. Math.},
    volume = {153},
    number = {6},
    pages = {1316-1347},
    year = {2017}
}

@misc{dCFFHM,
    author = {M.A.A. de Cataldo and R. Fringuelli and A. {Fernandez Herrero} and M. Mauri},
    title = {Hitchin fibrations are {N}gô fibrations},
    howpublished = {arXiv preprint},
    note = {arXiv:2502.04966},
    year = {2025}
}

@article{CL,
    author = {P-H. Chaudouard and G. Laumon},
    title = {Un théorème du support pour la fibration de {H}itchin},
    journal = {Ann. Inst. Fourier},
    volume = {66},
    number = {2},
    pages = {711-727},
    year = {2016}
}

@book{CMcG,
    author = {D.H. Collingwood and W.M. McGovern},
    title = {Nilpotent Orbits in Semisimple Lie Algebras},
    series = {Mathematics},
    publisher = {Van Nostrand Reinhold},
    address = {New York, NY},
    year = {1993}
}

@article{Dixmier1,
    author = {J. Dixmier},
    title = {Représentations irréductibles des algèbres de {L}ie nilpotentes},
    journal = {An. Acad. Brasil. Ci.},
    volume = {35},
    pages = {491-519},
    year = {1963}
}

@article{Dixmier2,
    author = {J. Dixmier},
    title = {Représentations irréductibles des algèbres de {L}ie résolubles},
    journal = {J. Math. Pure Appl.},
    volume  = {45},
    number  = {9},
    pages = {1-66},
    year = {1966}
}

@article{Dixmier4,
    author = {J. Dixmier},
    title = {Polarisations dans les algèbres de {L}ie semi-simples complexes},
    journal = {Bull. Sc. Math.},
    volume = {99},
    pages = {45-63},
    year = {1975}
}

@inproceedings{Donagi,
    author = {R.Y. Donagi},
    title = {Decomposition of spectral covers},
    booktitle = {Journée de géométrie algébrique d'Orsay - Juillet 1992},
    series = {Astérisque},
    volume = {218},
    pages = {145-175},
    year = {1993}
}

@article{DG,
    author = {R.Y. Donagi and D. Gaitsgory},
    title = {The gerbe of {H}iggs bundles},
    journal = {Transformation Groups},
    volume = {7},
    number = {2},
    pages = {109-153},
    year = {2002}
}

@article{DP,
    author = {R.Y. Donagi and T. Pantev},
    title = {{L}anglands duality for {H}itchin systems},
    journal = {Invent. Math.},
    volume = {189},
    pages = {653-735},
    year = {2012}
}

@article{Faltings,
    author = {G. Faltings},
    title = {Stable {$G$}-bundles and projective connections},
    journal = {J. Algebraic Geom.},
    volume = {2},
    pages = {507-568},
    year = {1993}
}

@article{FMN,
    author = {B. Fantechi and E. Mann and F. Nironi},
    title = {Smooth toric {D}eligne-{M}umford stacks},
    journal = {J. reine angew. Math.},
    volume = {640},
    pages = {201-244},
    year = {2010}
}

@article{FraPN,
    author = {E. Franco and A. Peón-Nieto},
    title = {Branes on the singular locus of the {H}itchin system via {B}orel and other parabolic subgroups},
    journal = {Math. Nachr.},
    volume = {296},
    number = {5},
    pages = {1803-1841},
    year = {2023}
}

@unpublished{FruPN,
author = {A. Fr\"{u}h and A. Pe\'{o}n-Nieto},
title = {Spectral and cameral data for {$U(p,q)$-H}iggs bundles via central gerbe extensions},
note = {In preparation}
}

@article{FRW,
    author = {B. Fu and Y. Ruan and Y. Wen},
    title = {Mirror symmetry of special nilpotent orbit closures},
    journal = {Sci. China Math.},
    volume = {67},
    year = {2024}
}

@article{GMN,
    author = {D. Gaiotto and G.W. Moore and A. Neitzke},
    title = {Wall-crossing, {H}itchin systems, and the {WKB} approximation},
    journal = {Adv. Math.},
    volume = {234},
    pages = {239-403},
    year = {2013}
}

@article{GPGMIR,
    author={O. Garc\'{i}a-Prada and P.B. Gothen and I. Mundet I Reira},
  title={The {H}itchin-{K}obayashi correspondence, {H}iggs pairs and surface group representations},
  journal={arXiv preprint arXiv:0909.4487},
  year={2009}
}

@article{GPPN,
    author = {O. Garc\'{i}a-Prada and A. Pe\'{o}n-Nieto},
    title = {Abelianization of {H}iggs bundles for quasi-split real groups},
    journal = {Transformation Groups},
    volume = {28},
    pages = {285-325},
    year = {2023}
}

@book{Giraud,
    author = {J. Giraud},
    title = {Cohomologie non abélienne},
    series = {Die Grundlehren der mathematischen Wissenschaften},
    volume = {179},
    publisher = {Springer-Verlag},
    address = {Berlin, Heidelberg, DE},
    year = {1971}
}

@book{GW,
    author = {R. Goodman and N.R. Wallach},
    title = {Symmetries, Representations and Invariants},
    series = {Graduate Texts in Mathematics},
    volume = {255},
    publisher = {Springer},
    address = {New York, NY},
    year = {2009} 
}

@article{GWZ,
    author = {M. Groechenig and D. Wyss and P. Ziegler},
    title = {Mirror symmetry for moduli spaces of {H}iggs bundles via p-adic integration},
    journal = {Invent. Math.},
    volume = {221},
    pages = {505-596},
    year = {2020}
}

@article{HR,
    author = {J. Hall and D. Rydh},
    title = {Coherent {T}annaka duality and algebraicity of {H}om-stacks},
    journal = {Algebra Number Theory},
    volume = {13},
    number = {7},
    pages = {1633-1675},
    year = {2019}
}

@article{HM,
    author = {T. Hameister and B. Morrissey},
    title = {The {H}itchin Fibration for Symmetric Pairs},
    journal = {Adv. Math.},
    volume = {482},
    number = {A},
    pages = {110560},
    year = {2025}
}

@article{HT,
    author = {T. Hausel and M. Thaddeus},
    title = {Mirror symmetry, {L}anglands duality, and the {H}itchin system},
    journal = {Invent. Math.},
    volume = {153},
    pages = {197-229},
    year = {2003}
}

@article{Hesselink,
    author = {W.H. Hesselink},
    title = {Polarizations in the classical groups},
    journal = {Math. Z.},
    volume = {160},
    pages = {217-234},
    year = {1978}
}

@article{Hitchin1,
    author = {N. Hitchin},
    title = {The self-duality equations on a {R}iemann surface},
    journal = {Proc. London Math. Soc.},
    volume = {55},
    number = {3},
    pages = {59-126},
    year = {1987}
}

@article{Hitchin2,
    author = {N. Hitchin},
    title = {Stable bundles and integrable systems},
    journal = {Duke Math. J.},
    volume = {54},
    number = {1},
    pages = {91-114},
    year = {1987}
}

@article{Hitchin3,
    author = {N. Hitchin},
    title = {Lie groups and {T}eichmüller space},
    journal = {Topology},
    volume = {31},
    pages = {449-473},
    year = {1992}
}

@article{Hitchin4,
    author = {N. Hitchin},
    title = {Critical loci for {H}iggs bundles},
    journal = {Commun. Math. Phys.},
    volume = {366},
    pages = {841-864},
    year = {2019}
}

@article{HS,
    author = {N. Hitchin and L.P. Schaposnik},
    title = {Nonabelianization of {H}iggs bundles},
    journal = {J. Differential Geom.},
    volume = {97},
    number = {1},
    pages = {79-89},
    year = {2014}
}

@phdthesis{ImHof,
    author = {A.E. Im Hof},
    title = {The sheets of a classical {L}ie algebra},
    school = {University of Basel},
    year = {2005}
}

@article{Horn1,
    author = {J. Horn},
    title = {Semi-abelian spectral data for singular fibres of the {$\textsf{SL}(2,\mathbb{C})$}-{H}itchin system},
    journal = {Int. Math. Res. Not.},
    volume = {2022},
    number = {5},
    pages = {3860-3917},
    year = {2022}
}

@article{Horn2,
    author = {J. Horn},
    title = {$\mathfrak{sl}_2$-Type singular fibres of the symplectic and odd orthogonal {H}itchin system},
    journal = {J. Topol.},
    volume = {15},
    number = {1},
    pages = {1-38},
    year = {2022}
}

@article{Howlett,
    author = {R.B. Howlett},
    title = {Normalizers of parabolic subgroups of reflection groups},
    journal = {J. Lond. Math. Soc.},
    volume = {21},
    number = {1},
    pages = {62-80},
    year = {1980}
}

@article{Joseph,
    author = {A. Joseph},
    title = {On the associated variety of a primitive ideal},
    journal = {J. Algebra},
    volume = {93},
    pages = {509-523},
    year = {1985}
}

@article{KW,
    author = {A. Kapustin and E. Witten},
    title = {Electric-magnetic duality and the geometric {L}anglands program},
    journal = {Commun. Number Theory Phys.},
    volume = {1},
    number = {1},
    pages = {1-236},
    year = {2007}
}

@article{Katsylo,
    author = {P.I. Katsylo},
    note = {English translation by L. Queen},
    title = {Sections of sheets in a reductive algebraic {L}ie algebra},
    journal = {Math. USSR Izv.},
    volume = {20},
    number = {3},
    pages = {449-458},
    year = {1983}
}

@article{KM,
    author = {S. Keel and S. Mori},
    title = {Quotients by Groupoids},
    journal = {Ann. Math.},
    volume = {145},
    number = {1},
    pages = {193-213},
    year = {1997}
}

@article{Kirillov,
    author = {A.A. Kirillov},
    title = {Unitary representations of nilpotent {L}ie groups},
    journal = {Russ. Math. Surv.},
    volume  = {17},
    number  = {4},
    pages = {53-104},
    year = {1962},
    note = {English translation by P.M. Cohn}
}

@book{Knapp,
    author = {A.W. Knapp},
    title = {Lie Groups Beyond an Introduction},
    series = {Progress in Mathematics},
    volume = {140},
    publisher = {Birkh\"{a}user},
    address = {Boston, MA},
    year = {1996}
}

@article{Kostant,
    author = {B. Kostant},
    title = {Lie Group Representations on Polynomial Rings},
    journal = {Amer. J. Math.},
    volume = {85},
    number = {3},
    pages = {327 - 404},
    year = {1963}
}

@article{KR,
    author = {B. Kostant and S. Rallis},
    title = {Orbits and Representations Associated with Symmetric Spaces},
    journal = {Amer. J. Math.},
    volume = {93},
    number = {3},
    pages = {753-809},
    year = {1971}
}

@article{Kraft,
    author = {H. Kraft},
    title = {Parametrisierung von {K}onjugationsklassen in
$\mathfrak{sl}_n$},
    journal = {Math. Ann.},
    volume = {234},
    pages = {209-220},
    year = {1978}
}

@book{LMB,
    author = {G. Laumon and L. Moret-Bailly},
    title = {Champs algébriques},
    series = {Ergebnisse der Mathematik und ihrer Grenzgebiete. 3. Folge / A Series of Modern Surveys in Mathematics},
    volume = {39},
    publisher = {Springer-Verlag},
    address = {Berlin, Heidelberg, DE},
    year = {2000}
}

@article{Leslie,
    author = {S. Leslie},
    title = {An analogue of the {G}rothendieck-{S}pringer resolution for symmetric spaces},
    journal = {Algebra Number Theory},
    volume = {15},
    number = {1},
    pages = {69-107},
    year = {2021}
}

@article{Losev3,
    author = {I. Losev},
    title = {Finite dimensional representations of {$W$}-algebras},
    journal = {Duke Math. J.},
    volume = {159},
    number = {1},
    pages = {99-143},
    year = {2011}
}

@article{Losev4,
    author = {I. Losev},
    title = {Deformations of symplectic singularities and {O}rbit method for semisimple {L}ie algebras},
    journal = {Sel. Math. New Ser.},
    volume = {28},
    number = {2},
    note = {Paper No. 30},
    year = {2022}
}

@article{LS,
    author = {G. Lusztig and N. Spaltenstein},
    title = {Induced {U}nipotent {C}lasses},
    journal = {J. Lond. Math.},
    volume = {19},
    number = {1},
    pages = {41 - 52},
    year = {1979}
}

@article{MS,
    author = {D. Maulik and J. Shen},
    title = {Cohomological {$\chi$}-independence for moduli of one-dimensional sheaves and moduli of {H}iggs bundles},
    journal = {Geom. Topol.},
    volume = {27},
    number = {4},
    pages = {1539-1586},
    year = {2023}
}

@article{Namikawa,
    author = {Y. Namikawa},
    title = {Poisson deformations of affine symplectic varieties, {II}.},
    journal = {Kyoto J. Math.},
    volume = {50},
    number = {4},
    pages = {727-752},
    year = {2010}
}

@article{Ngo1,
    author = {B.C. Ng{\^{o}}},
    title = {Fibration de {H}itchin et endoscopie},
    journal = {Invent. Math.},
    volume = {164},
    pages = {399-453},
    year = {2006}
}

@article{Ngo2,
    author = {B.C. Ng{\^{o}}},
    title = {Le lemme fondamental pour les algèbres de {L}ie},
    journal = {Publ. Mathémathiques},
    volume = {111},
    pages = {1-169},
    year = {2010}
}

@inproceedings{Ngo3,
    author = {B.C. Ng{\^{o}}},
    title = {On generalized {H}itchin fibrations and orbital integrals},
    booktitle = {The Langlands Program III. Periods, $L$-Functions and the Relative Trace Formula},
    editor = {P-H. Chaudouard and W.T. Gan and T. Kaletha and Y. Sakellaridis},
    series = {Proceedings of Symposia in Pure Mathematics},
    volume = {112},
    pages = {313-336},
    publisher = {American Mathematical Society},
    address = {Providence, RI},
    year = {2025},
    note = {\emph{Survey of unpublished work with B. Morrissey}}
}

@article{OW,
    author = {H. Ozeki and M. Wakimoto},
    title = {On polarizations of certain homogeneous spaces},
    journal = {Hiroshima Math. J.},
    volume = {2},
    pages = {445-482},
    year = {1972}
}

@misc{Pantev,
    author = {T. Pantev},
    title = {Classifying abstract {H}iggs bundles},
    howpublished = {Presented at ``Geometry, higher structures, and physics'' at ICMS, Edinburgh},
    month = {December},
    year = {2025},
    note = {\emph{Announcement of joint work in preparation with D. Arinkin and R. Donagi}}
}

@phdthesis{Peon-Nieto1,
    author = {A. Pe\'{o}n-Nieto},
    title = {{H}iggs bundles, real forms and the {H}itchin fibration},
    school = {Universidad Aut\'{o}noma de Madrid},
    year = {2013}
}

@article{Premet1,
    author = {A. Premet},
    title = {Special transverse slices and their enveloping algebras},
    journal = {Adv. Math.},
    volume = {170},
    pages = {1-55},
    year = {2002}
}

@article{Premet3,
    author = {A. Premet},
    title = {Commutative quotients of finite {$W$}-algebras},
    journal = {Adv. Math.},
    volume = {225},
    number = {1},
    pages = {269-306},
    year = {2010}
}

@article{PT,
    author = {A. Premet and L. Topley},
    title = {Derived subalgebras of centralisers and finite {$W$}-algebras},
    journal = {Compos. Math.},
    volume = {150},
    number = {9},
    pages = {1485-1584},
    year = {2014}
}

@article{Riche,
    author = {S. Riche},
    title = {Kostant section, universal centraliser, and a modular derived {S}atake equivalence}, 
    journal = {Math. Z.},
    volume = {286},
    number = {1-2},
    pages = {223-261},
    year = {2017}
}

@article{Romagny,
    author = {M. Romagny},
    title = {Group actions on stacks and applications},
    journal = {Mich. Math. J.},
    volume = {53},
    number = {1},
    pages = {209-236},
    year = {2005}
}

@article{Schaposnik,
    author = {L.P. Schaposnik},
    title = {Spectral data for {U}(m,m)-{H}iggs bundles},
    journal = {Int. Math. Res. Not.},
    volume = {2015},
    number = {11},
    pages = {3486-3498},
    year = {2014}
}

@article{Schaub,
    author = {D. Schaub},
    title = {Courbes spectrales et compactifications de jacobiennes},
    journal = {Math. Z.},
    volume = {227},
    pages = {295-312},
    year = {1998}
}

@article{Scognamillo,
    author = {R. Scognamillo},
    title = {An elementary approach to the abelianization of the {H}itchin system for arbitrary reductive groups},
    journal = {Compos. Math.},
    volume = {110},
    number = {1},
    pages = {17-37},
    year = {1998}
}

@article{Sekiguchi,
    author = {J. Sekiguchi},
    title = {The nilpotent subvariety of the vector space associated to a symmetric pair},
    journal = {Publ. Res. Inst. Math. Sci.},
    volume = {20},
    number = {1},
    pages = {155-212},
    year = {1984}
}

@article{Simpson1,
    author = {C.T. Simpson},
    title = {Higgs bundles and local systems},
    journal = {Publ. Mathémathiques},
    volume = {75},
    pages = {5-95},
    year = {1992}
}

@book{Slodowy,
    author = {P. Slodowy},
    title = {Simple Singularities and Simple Algebraic Groups},
    series = {Lecture Notes in Mathematics},
    volume = {815},
    publisher = {Springer-Verlag},
    address = {Berlin, Heidelberg, DE},
    year = {1980}
}

@article{Topley,
    author = {L. Topley},
    title = {One dimensional representations of finite {$W$}-algebras, {D}irac reduction and the orbit method},
    journal = {Invent. Math.},
    volume = {234},
    number = {3},
    pages = {1039-1107},
    year = {2023}
}

@misc{WWW1,
    author = {B. Wang and X. Wen and Y. Wen},
    title = {Springer correspondence and mirror symmetries for parabolic {H}itchin systems},
    howpublished = {arXiv preprint},
    note = {arXiv:2403.07552},
    year = {2024}
}

@misc{WWW2,
    author = {B. Wang and X. Wen and Y. Wen},
    title = {On the generic fibres and true base of parabolic {$\mathrm{SO}_{2n}$-Hitchin systems}},
    howpublished = {arXiv preprint},
    note = {arXiv:2508.15714},
    year = {2025}
}
\end{document}